\numberwithin{equation}{section}
\newtheorem{proposition}{Proposition}[section]
\newtheorem{lemma}[proposition]{Lemma}
\newtheorem{corollary}[proposition]{Corollary}
\newtheorem{theorem}[proposition]{Theorem}
\theoremstyle{definition}
\newtheorem{definition}[proposition]{Definition}
\newtheorem{remark}[proposition]{Remark}
\begin{document}

\title{Free Fermion Six Vertex Model:\\Symmetric Functions and Random Domino Tilings}
\author{Amol Aggarwal, Alexei Borodin, Leonid Petrov, Michael Wheeler}

\date{}

\maketitle

\begin{abstract}
	Our work deals with symmetric rational functions and probabilistic models based on the fully inhomogeneous six vertex (ice type) model satisfying the free fermion condition. Two families of symmetric rational functions $F_\lambda,G_\lambda$ are defined as certain partition functions of the six vertex model, with variables corresponding to row rapidities, and the labeling signatures $\lambda=(\lambda_1\ge \ldots\ge \lambda_N)\in \mathbb{Z}^N$ encoding boundary conditions. These symmetric functions generalize Schur symmetric polynomials, as well as some of their variations, such as factorial and supersymmetric Schur polynomials. Cauchy type summation identities for $F_\lambda,G_\lambda$ and their skew counterparts follow from the Yang--Baxter equation. Using algebraic Bethe Ansatz, we obtain a double alternant type formula for $F_\lambda$ and a Sergeev--Pragacz type formula for $G_\lambda$.

	In the spirit of the theory of Schur processes, we define probability measures on sequences of signatures with probability weights proportional to products of our symmetric functions. We show that these measures can be viewed as determinantal point processes, and we express their correlation kernels in a double contour integral form. We present two proofs: The first is a direct computation of Eynard--Mehta type, and the second uses non-standard, inhomogeneous versions of fermionic operators in a Fock space coming from the algebraic Bethe Ansatz for the six vertex model.
	
	We also interpret our determinantal processes as random domino tilings of a half-strip with inhomogeneous domino weights. In the bulk, we show that the lattice asymptotic behavior of such domino tilings is described by a new determinantal point process on $\mathbb{Z}^{2}$, which can be viewed as an doubly-inhomogeneous generalization of the extended discrete sine process.
\end{abstract}

\vspace{.6cm}

\setcounter{tocdepth}{1}
\tableofcontents
\setcounter{tocdepth}{3}

\section{Introduction}
\label{sec:introduction}

\subsection{Preface}
\label{sub:background}

\emph{Determinantal random point processes} (or \emph{fields}) 
originated in random matrix theory in the 1960s and were first singled out as a class by 
Macchi in 1975 \cite{macchi1975coincidence}
under the name \emph{fermion point processes}.
The book of Anderson--Guionnet--Zeitouni 
\cite[Section 4.6]{AndersonGuionnetZeitouniBook}
provides a brief historical summary of the random matrix origins.
The term \emph{determinantal} was adopted around the year 2000, 
see Borodin--Olshanski \cite{borodin2000distributions} and Soshnikov 
\cite{Soshnikov2000}.
By now there are quite a few surveys discussing various aspects of 
determinantal processes by 
Soshnikov \cite{Soshnikov2000},
Lyons \cite{lyons2003determinantal},
Johansson \cite{Johansson2005lectures},
K\"onig \cite{Konig2005},
Hough--Krishnapur--Peres--Virag \cite{peres2006determinantal},
Borodin \cite{Borodin2009},
Kulesza--Taskar \cite{kulesza2012determinantal},
and Decreusefond--Flint--Privault--Torrisi \cite{decreusefond2016determinantal}.

One of the most important determinantal processes
is the \emph{sine process} that goes back to Mehta--Gaudin \cite{mehta1960density} and Dyson
\cite{dyson1962brownian}. It describes universal bulk\footnote{The term ``bulk'' refers to the parts of the system
	where the space can be rescaled to form growing regions with unit
	particle density.} 
asymptotics of large determinantal systems in one space dimension, see, e.g.,
Yau \cite{yau2013wigner} for a historical overview. 
For one-dimensional discrete point processes (i.e., random subsets of
$\mathbb{Z}$), the corresponding universal object is the \emph{discrete sine process} 
introduced by Borodin--Okounkov--Olshanski \cite{Borodin2000b}.

Besides being universal bulk limits in one dimension, both
the continuous and the discrete sine processes
admit natural extensions to two dimensions arising, respectively,
from the Dyson Brownian motion, see Dyson \cite{dyson1962brownian},
Nagao--Forrester \cite{nagao1998multilevel},
and random plane partitions, see Okounkov--Reshetikhin 
\cite{okounkov2003correlation},
or more general dimer models, cf. Kenyon--Okounkov--Sheffield \cite{KOS2006} and
Johansson \cite{Johansson2005arctic}.
In the discrete case (which is the focus of the present paper),
the \emph{two-dimensional} (also called \emph{extended}) \emph{sine process} 
admits
a natural description
as a unique (cf. Sheffield \cite{Sheffield2008}) translation 
invariant ergodic Gibbs measure
on point configurations in $\mathbb{Z}^2$
of a given \emph{slope}. The slope consists of two real or a 
single complex parameter that encodes the particles' densities 
along the two coordinate directions.
In this case the Gibbs property means that the probability law
of the random configuration is invariant under 
uniform resampling in any finite window, conditioned
on the configuration on the boundary of this window.
The fact that such a rich family of Gibbs measures in 
$\mathbb{Z}^2$ enjoys a completely 
explicit determinantal description of their correlations is 
remarkable and very rare.

\medskip

The main probabilistic outcome of the present work is
the introduction of a wide class of new \emph{inhomogeneous
deformations} of the extended discrete sine process.
These deformations are determinantal point processes on $\mathbb{Z}^2$ with 
very explicit correlation kernels that depend, in addition to the complex slope 
parameter, on four bi-infinite sequences of real parameters
associated with the horizontal and the vertical coordinate directions (two sequences
per each direction). They seem to be out of reach of existing approaches to 
deformations of the extended sine processes such as various versions of the 
Schur processes, fermionic Fock space formalism with the Boson--Fermion 
correspondence, random matrix type ensembles, or periodic dimer models.

Free parameters varying by rows and columns is a salient feature of integrable 
lattice models, and those are indeed behind our construction. 
More precisely, we start with the
\emph{free fermion six vertex model},
show that it is described by determinantal (fermion) point processes,
and in a bulk limit obtain the inhomogeneous deformations of the 
extended discrete sine process.

\medskip

The six vertex model, first introduced as a two-dimensional model for residual entropy of
water ice
by Pauling in 
1935 \cite{pauling1935structure}, is a classical model in statistical
mechanics that gave birth to the domain of integrable
(exactly solvable) lattice models; see the book of Baxter
\cite{baxter2007exactly} for an introduction, and also 
Reshetikhin \cite{reshetikhin2010lectures} for a more recent survey of the six 
vertex model.
Integrable lattice models is a vast domain, and the present work belongs to a 
subdomain dealing with symmetric functions and associated stochastic systems.

The theory of symmetric functions, a classical introduction to which is 
Macdonald's book \cite{Macdonald1995}, studies remarkable families of symmetric 
and associated nonsymmetric polynomials with origins in diverse areas
of group theory, combinatorics, representation theory, 
noncommutative harmonic analysis, probability, and mathematical physics.
There are many works highlighting connections between symmetric
functions
and integrable vertex models; for some of the earlier papers see 
Kirillov--Reshetikhin \cite{kirillov1988bethe},
Fomin--Kirillov \cite{fomin1994grothendieck,fomin1996yang},
Lascoux--Leclerc--Thibon \cite{lascoux1997flag},
Gleizer--Postnikov \cite{gleizer2000littlewood},
Tsilevich \cite{tsilevich2006quantum},
Lascoux \cite{lascoux20076},
Zinn--Justin \cite{ZinnJustin20096Vertex},
Brubaker--Bump--Friedberg \cite{brubaker2011schur},
Bump--McNamara--Nakasuji \cite{bump2011factorial}, and
Korff \cite{korff2013cylindric}.

We focus on the (asymmetric) six vertex model
with vertex weights $a_1,a_2,b_1,b_2,c_1,c_2$
satisfying the \emph{free fermion condition}
$a_1a_2+b_1b_2=c_1c_2$.
This condition corresponds to the vanishing of the quantity~$\Delta$ 
associated to the model. See the references in Baxter
\cite[Ch. 8.10.III]{baxter2007exactly},
and also
Felderhof
\cite{felderhof1973direct},
\cite{felderhof1973diagonalization2},
\cite{felderhof1973diagonalization3}
for earlier works on 
the free fermion six vertex model.

We consider the six vertex model in which
the free fermion condition holds at each lattice site, 
but otherwise the weights are fully inhomogeneous and
are determined by the parameters
$(x_i,r_i)$ and
$(y_j,s_j)$ which are constant along the lattice rows and
columns, respectively.
The $x$'s and $y$'s are known as the \emph{rapidities}, while the $r$'s and $s$'s 
are the \emph{spin parameters}. 
This particular parametrization ensures that the vertex weights satisfy
a version of the \emph{Yang--Baxter equation}, which is a key algebraic 
property powering our results.
We mainly employ the
Yang--Baxter equation in the form of 
quadratic relations for the row operators $A,B,C,D$
that are standard in the Algebraic Bethe Ansatz, cf. 
Faddeev \cite{Faddeev_Lectures}, Korepin--Bogoliubov--Izergin \cite[Part~VII]{QISM_book}.

The structure of integrable lattice models
(in particular, in our six vertex model)
is very special as it is
powered by connections to quantum groups.
Quantum groups are deformations of universal enveloping algebras
of classical Lie groups (and their generalizations), 
which possess certain additional structure,
in particular, $R$-matrices satisfying the Yang--Baxter equation.
In this language, the free fermion six vertex weights
correspond to the $R$-matrix of the rank 1
quantum affine superalgebra $U_q(\widehat{\mathfrak{sl}}(1|1))$.
A recent paper \cite{agg-bor-wh2020-sl1n} by a subset of the authors
presented a detailed study of symmetric functions related to the higher
rank quantum affine superalgebras $U_q(\widehat{\mathfrak{sl}}(1|n))$
with $n>1$. In that case fermions are no longer `free', and most of the 
theory differs substantially. In particular, the results and proofs in the 
present work are largely independent from those of \cite{agg-bor-wh2020-sl1n}, 
although we do point to connections in a few places where those exist. 

\medskip

We define two families of 
functions $F_\lambda,G_\lambda$
indexed by integer tuples $\lambda=(\lambda_1\ge \ldots \ge \lambda_N\ge0 )$
as
certain partition functions of the 
free fermion six vertex model with boundary conditions
depending on $\lambda$. 
The functions $F_\lambda,G_\lambda$
are rational in
(a finite, $\lambda$-dependent subset of) the parameters
$x_i,r_i,y_j,s_j$.
Up
to a simple product factor in $F_\lambda$, 
both the functions are
symmetric with respect to simultaneous permutations 
of the row variables $(x_i,r_i)$,
which is a consequence of the Yang--Baxter equation.
When the horizontal parameters $y_j,s_j$ do not depend on $j$,
the functions $F_\lambda$ and $G_\lambda$ reduce,
respectively, to the ordinary symmetric Schur polynomials
and the supersymmetric Schur polynomials.
In another specialization of the parameters,
the functions $F_\lambda$ become the factorial Schur polynomials
(cf. Molev
\cite{molev2009comultiplication}, \cite{zinn2009littlewood}).

We establish the following results:
\begin{itemize}
	\item Cauchy type summation identities leading to a product form expression
		for $\sum_\lambda F_\lambda G_\lambda$, and their skew analogues.
	\item Torus biorthogonality of the functions $F_\lambda$ 
		and certain dual functions $F_\lambda^*$,
		with integration over the row rapidities $x_j$.
	\item A double alternant type formula for $F_\lambda$,
		and a Jacobi--Trudy type determinantal formula for~$G_\lambda$.
	\item Another explicit formula for $G_\lambda$
		involving a summation over 
		pairs of permutations which resembles (but does not imply)
		the Sergeev--Pragacz formula 
		for the supersymmetric
		Schur polynomials
		(cf. Hamel--Goulden
		\cite[(5)]{hamel1995lattice}).
\end{itemize}

By analogy with the Schur processes of \cite{okounkov2003correlation},
we define probability measures (called \emph{FG~measures})
on two-dimensional integer arrays
encoded by sequences $\lambda^{(1)},\ldots,\lambda^{(T)}$.
Under an FG measure, the 
probability weights are expressed through the 
functions $F_\lambda,G_\lambda$ and their skew analogues.
Thanks to the Cauchy type summation identities,
$\lambda^{(j)}$-marginals have weights proportional to $F_{\lambda^{(j)}}G_{\lambda^{(j)}}$
for certain specializations of $F$ and $G$ that vary with $j=1,\dots,T$.
We interpret the FG measures as
certain ensembles of random domino tilings of a half-strip,
in which the domino weights are inhomogeneous and depend 
on the parameters $(x_i,r_i)$ and $(y_j,s_j)$
varying in the two coordinate directions.

We show that the FG measures (and the corresponding random domino tilings) 
are \emph{determinantal}. 
Namely,
the random point configuration
\begin{equation*}
	\mathcal{S}^{(T)}=\{(t,\lambda^{(t)}_i+N+1-i)\colon 1\le t\le T,\,1\le i\le N\}\subset
	\{1,\ldots,T \}\times \mathbb{Z}_{\ge1}
\end{equation*}
has all correlation functions $\mathbb{P}[A\subseteq \mathcal{S}^{(T)}]$
(where $A$ is finite)
expressed as symmetric $|A|\times |A|$ determinants of a certain function
$K(t,a;t',a')$ called the \emph{correlation kernel}.
We write $K$ as a double contour integral which resembles (yet does not coincide with) 
some determinantal correlation kernels of multilevel $\beta=2$
random matrix ensembles.

Our kernel $K$ generalizes that of the Schur process 
first obtained in \cite{okounkov2003correlation}
via a vertex operator formalism in the fermionic Fock space.
We obtain our double contour integral 
formula for $K$ by employing an `inhomogeneous version' of the Fock space.
In particular, we establish an inhomogeneous analogue 
of the
Boson--Fermion correspondence (cf.~Kac \cite[Theorem 14.10]{Kac1990InfiniteDim}
for the homogeneous statement), which may be of independent interest.
The fermionic operators in our Fock space
arise as combinations of (doubly) infinite volume limits of the Algebraic Bethe Ansatz row operators
$A,B,C,D$ evaluated at certain special parameter values.
We realize the commutation relations for the inhomogeneous fermionic
operators, as well as the inhomogeneous Boson--Fermion
correspondence, as consequences of the
Yang--Baxter equation.

The double contour integral form of the correlation kernel $K$ of the FG measures
is well-suited for the asymptotic analysis in the bulk of the system by the method of 
steepest descent. Such analysis leads us to the generalization of
the extended discrete sine kernel that was mentioned above. 

\medskip

Having outlined our main results,
let us now proceed to describing them in greater detail.

\subsection{Symmetric rational functions}
\label{sub:intro_functions}

Let 
$\lambda=(\lambda_1\ge \ldots \ge\lambda_N\ge0)$, $\lambda_i\in \mathbb{Z}$,
be a nonincreasing integer sequence,
which we call a \emph{signature} with $N$ parts.
Central objects considered in the present work 
are families of rational functions
$F_\lambda(\mathbf{x};\mathbf{y};\mathbf{r};\mathbf{s})$
and
$G_\lambda(\mathbf{x};\mathbf{y};\mathbf{r};\mathbf{s})$
indexed by signatures~$\lambda$.
These functions depend on four sequences
of (generally speaking, complex) parameters
\begin{equation}
	\label{eq:xyrs_parameters}
	\mathbf{x}=(x_1,\ldots,x_k),
	\qquad 
	\mathbf{r}=(r_1,\ldots,r_k),
	\qquad
	\mathbf{y}=(y_1,y_2,\ldots ),
	\qquad 
	\mathbf{s}=(s_1,s_2,\ldots ).
\end{equation}
The functions $F_\lambda,G_\lambda$ are defined
as partition functions
of the \emph{free fermion six vertex model}.
By a partition function we mean the sum of weights
of all configurations of the six vertex model with 
given boundary conditions depending on $\lambda$,
where the weight of each particular configuration
is equal to the product of local single-vertex weights
$w_{\mathrm{6V}}\bigl(\hspace*{-2pt}
	\begin{tikzpicture}[baseline=-2.7,scale=.7]
    	\draw[fill] (0,0) circle [radius=0.025];
        \draw[line width = 1mm, red!30] (0.05, 0) -- (0.5, 0) 
				node[black, right,xshift=-3]{\scriptsize{$j_2$}};
        \draw[line width = 1mm, red!30] (0, 0.05) -- (0, 0.5) 
				node[black,left,xshift=2]{\scriptsize{$i_2$}};
        \draw[line width = 1mm, red!30] (-0.05, 0) -- (-0.5, 0) 
				node[black, left,xshift=3]{\scriptsize{$j_1$}};
        \draw[line width = 1mm, red!30] (0, -0.05) -- (0, -0.5) 
				node[black, left,xshift=2]{\scriptsize{$i_1$}};
			\end{tikzpicture}\hspace*{-2pt}
	\bigr)$, where $i_1,j_1,i_2,j_2\in \left\{ 0,1 \right\}$,
depending on the parameters $x,y,r,s$.
The parameters $x,y,r,s$, in their turn, depend on the lattice coordinates
of the vertex.
For the definition of $G_\lambda$ we take the vertex
weights
$w_{\mathrm{6V}}=W$ given by
\begin{equation}
	\label{eq:intro_6v_weights}
	\begin{split}	
	W
	\bigl( 
		\begin{tikzpicture}[baseline=-3,scale=.7,very thick]
        \draw[fill] (0,0) circle [radius=0.025];
        \draw [dotted] (0.5,0) -- (0.05,0);
        \draw [dotted] (-0.5,0) -- (-0.05,0);
        \draw [dotted] (0,0.05) -- (0, 0.5);
        \draw [dotted] (0,-0.05) -- (0,-0.5);
        \draw[dotted](-0,-0.05) -- (-0,-0.5);
        \draw[dotted](0,0.05) -- (0,0.5);
  \end{tikzpicture}
	\bigr)
	=a_1=1,\qquad 
	W
	\bigl( 
		\begin{tikzpicture}[baseline=-3,scale=.7,very thick]
			\draw[fill] (0,0) circle [radius=0.025];
        \draw [red] (0.5,0) -- (0.05,0);
        \draw [red] (-0.5,0) -- (-0.05,0);
        \draw [red] (0,0.05) -- (0, 0.5);
        \draw [red] (0,-0.05) -- (0,-0.5);
        \draw[red](-0,-0.05) -- (-0,-0.5);
        \draw[red](0,0.05) -- (0,0.5);
  \end{tikzpicture}
	\bigr)
	&=a_2=\frac{r^{-2}x-y}{s^{-2}y-x}
	,\qquad 
	W
	\bigl( 
		\begin{tikzpicture}[baseline=-3,scale=.7,very thick]
			\draw[fill] (0,0) circle [radius=0.025];
        \draw [dotted] (0.5,0) -- (0.05,0);
        \draw [dotted] (-0.5,0) -- (-0.05,0);
        \draw [dotted] (0,0.05) -- (0, 0.5);
        \draw [red] (0,-0.05) -- (0,-0.5);
        \draw[red](0,0.05) -- (0,0.5);
  \end{tikzpicture}
\bigr)
=b_1=\frac{s^{-2}y-r^{-2}x}{s^{-2}y-x},
	\\
	W
	\bigl( 
		\begin{tikzpicture}[baseline=-3,scale=.7,very thick]
 \draw[fill] (0,0) circle [radius=0.025];
        \draw [red] (0.5,0) -- (0.05,0);
        \draw [red] (-0.5,0) -- (-0.05,0);
        \draw [dotted] (0,0.05) -- (0, 0.5);
        \draw [dotted] (0,-0.05) -- (0,-0.5);
        \draw[dotted](-0,-0.05) -- (-0,-0.5);
        \draw[dotted](0,0.05) -- (0,0.5); 
  \end{tikzpicture}
	\bigr)=b_2=
	\frac{y-x}{s^{-2}y-x}
	,\qquad 
	W
	\bigl( 
		\begin{tikzpicture}[baseline=-3,scale=.7,very thick]
			\draw[fill] (0,0) circle [radius=0.025];
        \draw [red] (0.5,0) -- (0.05,0);
        \draw [dotted] (-0.5,0) -- (-0.05,0);
        \draw [red] (0,-0.05) -- (0,-0.5);
        \draw[dotted](0,0.05) -- (0,0.5);
  \end{tikzpicture}
	\bigr)&=c_1=
	\frac{x(r^{-2}-1)}{s^{-2}y-x}
	,\qquad 
	W
	\bigl( 
		\begin{tikzpicture}[baseline=-3,scale=.7,very thick]
\draw[fill] (0,0) circle [radius=0.025];
        \draw [dotted] (0.5,0) -- (0.05,0);
        \draw [red] (-0.5,0) -- (-0.05,0);
        \draw [dotted] (0,0.05) -- (0, 0.5);
        \draw [dotted] (0,-0.05) -- (0,-0.5);
        \draw[dotted](-0,-0.05) -- (-0,-0.5);
        \draw[red](0,0.05) -- (0,0.5);
  \end{tikzpicture}
	\bigr)
	=c_2=
	\frac{y(s^{-2}-1)}{s^{-2}y-x}
	\end{split}
\end{equation}
(notation $a_1,a_2,b_1,b_2,c_1,c_2$
is the classical convention in the six 
vertex model weights, see, e.g., 
\cite[Ch. 8]{baxter2007exactly}, 
\cite{reshetikhin2010lectures}).
The functions $F_\lambda$ involve the renormalized weights
\begin{equation*}
	\widehat{W}(i_1,j_1;i_2,j_2) := \frac{W(i_1,j_1;i_2,j_2)}{W(0,1;0,1)}.
\end{equation*}
This normalization is chosen so that $\widehat{W}(0,1;0,1)=1$.
One readily sees that 
each of the families of 
vertex weights $W$ and $\widehat{W}$ satisfies the
\emph{free fermion condition}
$a_1a_2+b_1b_2-c_1c_2=0$.
The free fermion condition is crucial throughout our work. 

\medskip

Having the vertex weights, 
we form partition functions
in the half-infinite strip
$\mathbb{Z}_{\ge1}\times\left\{ 1,\ldots,k  \right\}$
(where $k$ is the number of variables in $\mathbf{x},\mathbf{r}$
\eqref{eq:xyrs_parameters})
as in \Cref{fig:intro_F_G_defn},
and call them
$G_\lambda(\mathbf{x};\mathbf{y};\mathbf{r};\mathbf{s})$
and
$F_\nu(\mathbf{x};\mathbf{y};\mathbf{r};\mathbf{s})$.
\begin{figure}[ht]
	\centering
	\includegraphics[width=\textwidth]{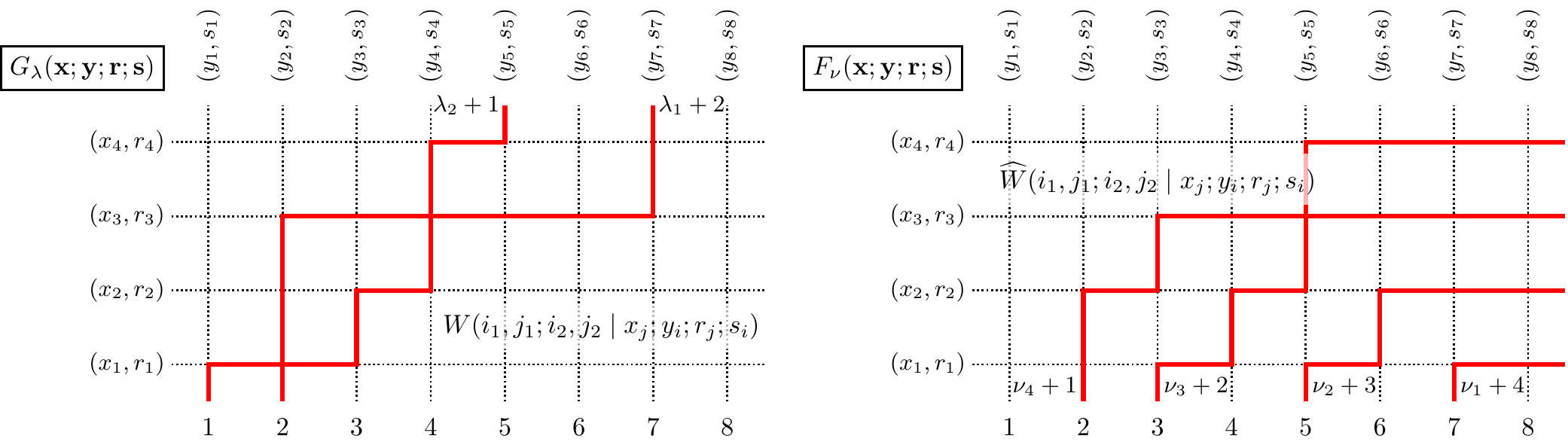}
	\caption{Left: An example of a 
		six vertex model configuration contributing to $G_\lambda$.
		The number $N$ of parts in $\lambda$ 
		and the number $k$ of the variables
		$\mathbf{x},\mathbf{r}$ \eqref{eq:xyrs_parameters}
		may differ (here $k=4,N=2$). The boundary conditions on the 
		left and right are empty, 
		are $\{N,N-1,\ldots,1 \}$
		at the bottom, 
		and are $\{\lambda_1+N, \lambda_2+N-1, \ldots, \lambda_N+1 \}$
		at the top.
		Right: An example of a configuration
		contributing to $F_\nu$. In contrast with $G_\lambda$, 
		the number of parts in $\nu$ must be equal to $k$ (here $k=4$).
		The boundary conditions are empty on the left 
		and at the top, fully packed on the right, 
		and are $\{ \nu_1+k, \nu_2+k-1, \ldots, \nu_k+1 \}$
		at the bottom.}
	\label{fig:intro_F_G_defn}
\end{figure}

\begin{remark}
	\label{rmk:intro_skew_functions}
	We also define skew
	functions
	$G_{\lambda/\mu}(\mathbf{x};\mathbf{y};\mathbf{r};\mathbf{s})$
	and 
	$F_{\nu/\varkappa}(\mathbf{x};\mathbf{y};\mathbf{r};\mathbf{s})$
	as partition functions.
	Namely, 
	for $G_{\lambda/\mu}$, the signature $\mu$ encodes the bottom 
	boundary in \Cref{fig:intro_F_G_defn}, 
	left, so that we have
	$G_{\lambda}=G_{\lambda/(0,0,\ldots,0 )}$.
	For $F_{\nu/\varkappa}$, the signature
	$\varkappa$ encodes the top boundary
	in \Cref{fig:intro_F_G_defn}, right, 
	so that $F_\nu=F_{\nu/\varnothing}$.
	See \Cref{sub:def_F_G_as_partition_functions}
	in the text for detailed definitions of all these functions.
	For brevity, in the Introduction
	we mostly stick to the non-skew functions.
\end{remark}

We have normalized the weights $W$ and $\widehat{W}$
so that the vertices occurring infinitely
many times
in \Cref{fig:intro_F_G_defn} 
have weight $1$. 
Therefore, the weights of individual six vertex model configurations
are well-defined.
Moreover, both partition functions 
$G_\lambda$ and $F_\nu$
involve only finitely many such
configurations, so there are no convergence issues. We see that 
$F_\nu(\mathbf{x};\mathbf{y};\mathbf{r};\mathbf{s})$
and
$G_\lambda(\mathbf{x};\mathbf{y};\mathbf{r};\mathbf{s})$
are rational functions in (a finite subset of) the parameters
\eqref{eq:xyrs_parameters}.

In \Cref{sec:particular_cases} we
consider particular cases
of the parameters $\mathbf{x},\mathbf{y},\mathbf{r},\mathbf{s}$
under which the functions $F_\lambda,G_\lambda$
become either the ordinary Schur
symmetric polynomials \cite[I.3]{Macdonald1995},
or their factorial 
or supersymmetric variations
\cite{BereleRegev},
\cite{macdonald1992schur_Theme},
\cite{molev2009comultiplication}.
Here let us formulate the supersymmetric setting.
\begin{proposition}[\Cref{prop:F_G_homogeneous_through_Schur} in the text]
	\label{prop:intro_supersymm}
	Take the horizontally homogeneous specialization
	$y_j=y$ and $s_j=s$ for all $j\ge1$.
	Then for any signature 
	$\lambda=(\lambda_1\ge \ldots\ge \lambda_N\ge0 )$ 
	we have
	\begin{equation*}
		\frac{F_\lambda(x_1,\ldots,x_N;\mathbf{y};\mathbf{r};\mathbf{s} )}
		{F_{(0,0,\ldots,0 )}(x_1,\ldots,x_N;\mathbf{y};\mathbf{r};\mathbf{s} )}
		=
		s_\lambda\left( \frac{1-s^2x_1}{s^2(1-x_1)},\ldots, \frac{1-s^2x_N}{s^2(1-x_N)} \right),
	\end{equation*}
	where $s_\lambda$ is 
	the ordinary Schur symmetric polynomial
	\cite[I.3]{Macdonald1995}.
	Moreover,
	\begin{equation*}
		\begin{split}
			&
			G_\lambda(x_1,\ldots,x_M;\mathbf{y};\mathbf{r};\mathbf{s} )
			\\&\hspace{30pt}
			=
			s_\lambda
			\left( 
				\biggl\{ \frac{s^2(1-x_j)}{1-s^2 x_j} \biggr\}_{j=1}^M
				\,\middle/\,
				\biggl\{ \frac{s^2(x_jr_j^{-2}-1)}{1-s^2r_j^{-2} x_j} \biggr\}_{j=1}^M
			\right)\,
			\prod_{i=1}^{M}\left( \frac{1-s^2r_i^{-2} x_i }{1-s^2x_i} \right)^N,
		\end{split}
	\end{equation*}
	where $s_\lambda(\cdots/\cdots)$ denotes the supersymmetric Schur 
	function
	\cite{BereleRegev}, \cite[(6.19)]{macdonald1992schur_Theme}.
\end{proposition}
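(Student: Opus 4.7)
The plan is to specialize both identities to the horizontally homogeneous case by invoking the two explicit closed-form expressions promised in the results list of this introduction: the double alternant formula for $F_\lambda$ and the Sergeev--Pragacz type double sum for $G_\lambda$. The Yang--Baxter / free fermion structure is what makes those formulas available, and they are exactly the tools adapted to the right-hand sides, which are themselves a Weyl bialternant and a Sergeev--Pragacz symmetrizer sum, respectively.

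For the $F_\lambda$ identity, I would begin with the double alternant representation $F_\lambda = \det[\varphi(x_i;\lambda_j+N-j)]_{i,j=1}^N$ divided by a $\lambda$-independent Vandermonde/Cauchy prefactor. Under the specialization $y_j=y,s_j=s$, the single-particle trajectory contributing to $\varphi(x_i;m)$ traverses $m$ identical columns, so the entry factorizes as $\varphi(x_i;m)=A(x_i)\cdot B(x_i)^m$, where $A,B$ depend on $x_i,y,s,r_i$ but not on $m$. The key calculation is that $B(x_i)=\frac{1-s^2x_i}{s^2(1-x_i)}$, which is precisely the variable appearing on the right. The factors $A(x_i)$ pull out row-by-row from the determinant, and together with the prefactor they cancel exactly in the ratio $F_\lambda/F_{(0,\ldots,0)}$, leaving $\det[u_i^{\lambda_j+N-j}]/\det[u_i^{N-j}]$ with $u_i=B(x_i)$. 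This is Weyl's bialternant, giving $s_\lambda(u_1,\ldots,u_N)$.

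For the $G_\lambda$ identity, I would apply the Sergeev--Pragacz type double sum for $G_\lambda$ in the same specialization. The dual occurrence of the parameters $x_i$ and $x_ir_i^{-2}$ inside the free fermion weights \eqref{eq:intro_6v_weights} is what produces two sets of summation arguments after homogenization: a ``bosonic'' monomial in $u_i=\frac{s^2(1-x_i)}{1-s^2x_i}$ coming from $\sigma\in S_M$, and a ``fermionic'' monomial in $v_i=\frac{s^2(x_ir_i^{-2}-1)}{1-s^2r_i^{-2}x_i}$ coming from $\tau\in S_M$, together with the combined Vandermonde-like denominator over the union of the two alphabets. Term-by-term matching with the classical Sergeev--Pragacz formula for the supersymmetric Schur polynomial $s_\lambda(u_1,\ldots,u_M/v_1,\ldots,v_M)$, as in Hamel--Goulden, completes the identification. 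The surviving factor $\prod_{i=1}^M\bigl(\frac{1-s^2r_i^{-2}x_i}{1-s^2x_i}\bigr)^N$ captures the $N$ identical row contributions at the empty top boundary in the right panel of \Cref{fig:intro_F_G_defn}.

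The main obstacle is book-keeping rather than conceptual: one must track the exact specialization of the six-vertex weights from \eqref{eq:intro_6v_weights} column-by-column, extract the change of variables $u_i$ and $v_i$ cleanly, and verify that the exponent in the leftover prefactor matches the number $N$ of rows. As a safeguard, once the $F_\lambda$ identity is established, the $G_\lambda$ identity can be cross-checked by combining the Cauchy-type summation $\sum_\lambda F_\lambda G_\lambda$ announced in the excerpt with the classical Cauchy identity for ordinary/supersymmetric Schur functions in the transformed alphabets; this independently pins down both the variable change and the $N$-th power normalization.
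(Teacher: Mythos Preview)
Your $F_\lambda$ argument is correct and is exactly the alternative proof the paper records (\Cref{cor:F_G_formulas_simpler}): under $y_j\equiv 1$, $s_j\equiv s$ one has $\varphi_k(x)=\frac{1}{1-x}\bigl(\frac{1-s^2x}{s^2(1-x)}\bigr)^k$, and the bialternant ratio drops out. The paper's primary proof of this half is different, quoting a theorem of Brubaker--Bump--Friedberg on free fermion six vertex partition functions to get \eqref{eq:F_homogeneous_as_Schur} directly, without invoking \Cref{thm:F_formula}.

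Your $G_\lambda$ argument has a genuine gap. The specialization of the Sergeev--Pragacz \emph{type} formula (\Cref{thm:G_formula}) in the homogeneous case is carried out in \Cref{prop:G_homogeneous}, and the resulting double sum over pairs $(\mathcal{I},\mathcal{J})$ of $d$-subsets of $\{1,\ldots,M\}$ is \emph{not} the classical Sergeev--Pragacz symmetrizer. The paper is explicit about this: ``there does not seem to be a direct relation between \Cref{cor:super_Schur_formula} and other known formulas for supersymmetric Schur polynomials, including the Sergeev--Pragacz formula'' (\Cref{rmk:SP_formula_comparison}). So the step ``term-by-term matching with the classical Sergeev--Pragacz formula \ldots\ completes the identification'' does not go through; what you would actually obtain is a new identity for $s_\lambda(\mathbf{u}/\mathbf{v})$ whose equivalence to any standard formula is not established independently in the paper. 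In fact the logic runs the other way: the paper first proves $G_\lambda=s_\lambda(\mathbf{u}/\mathbf{v})\cdot(\text{prefactor})$ by your ``safeguard'' route, and only then reads off \Cref{cor:super_Schur_formula} as a corollary.

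That safeguard \emph{is} the paper's proof of the $G_\lambda$ half: take the Cauchy identity in the form $\sum_\lambda G_\lambda\cdot F_\lambda/F_{0^N}=\prod_{i,j}\frac{x_i-\theta_j^{-2}w_j}{x_i-w_j}$, substitute the already-proven $F_\lambda/F_{0^N}=s_\lambda(t_1,\ldots,t_N)$ with $t_i=\frac{1-s^2x_i}{s^2(1-x_i)}$, rewrite the right-hand side as $\prod_j\bigl(\frac{1-s^2\theta_j^{-2}w_j}{1-s^2 w_j}\bigr)^N\prod_{i,j}\frac{1+t_i b_j}{1-t_i a_j}$, and invoke linear independence of the $s_\lambda(t_1,\ldots,t_N)$ together with the defining Cauchy identity \eqref{eq:supersymm_Schur_definition} for supersymmetric Schur functions. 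Promote that from cross-check to main argument and your plan is complete.
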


Thus, one may view our
functions $F_\lambda,G_\lambda$
as generalizations 
of various Schur-like symmetric functions,
based on the inhomogeneous parameters $y_j,s_j$.
In fact, many of the properties of $F_\lambda,G_\lambda$
discussed 
in the rest of this subsection
resemble the ones of the ordinary Schur polynomials.

\medskip

The concrete parametrization
of the
vertex weights $W,\widehat{W}$ by $x,y,r,s$
is chosen so that the weights satisfy 
the Yang--Baxter equation with the 
cross vertex weights independent of 
$(y_i,s_i)$.
These cross vertex weights
are given in \Cref{fig:cross_vertex_weights},
and 
we refer to \Cref{sub:YBE} in the text for a detailed formulation 
of the Yang--Baxter equation.
In particular, the Yang--Baxter equation implies 
(see \Cref{prop:F_G_symmetry} in the text)
that the functions 
\begin{equation*}
	G_\lambda(x_1,\ldots,x_k;\mathbf{y};r_1,\ldots,r_k;\mathbf{s} )
	\quad 
	\textnormal{and}
	\quad 
	F_\nu(x_1,\ldots,x_k;\mathbf{y};r_1,\ldots,r_k;\mathbf{s} )
	\prod_{1\le i<j\le k}
	(x_i-r_j^{-2}x_j)
\end{equation*}
are symmetric 
under simultaneous permutations 
of the pairs of variables
$(x_i,r_j)$.

Another application of the Yang--Baxter equation 
(together with an explicit 
formula for $F_\lambda$ from \Cref{thm:intro_F_formula}
below)
brings the following \emph{Cauchy type summation identity}:
\begin{theorem}[\Cref{thm:F_G_Cauchy_big} in the text]
	\label{thm:intro_Cauchy}
	Fix integers $N,k\ge1$ and sets of complex variables
	$\mathbf{x}=(x_1,\ldots,x_N )$,
	$\mathbf{r}=(r_1,\ldots,r_N )$, 
	$\mathbf{w}=(w_1,\ldots,w_k )$, 
	$\boldsymbol\uptheta=(\theta_1,\ldots,\theta_k)$,
	and
	$\mathbf{y}=(y_1,y_2,\ldots, )$, 
	$\mathbf{s}=(s_1,s_2,\ldots )$,
	satisfying
	\begin{equation}
		\label{eq:intro_Cauchy_condition}
		\sup_{p\ge 1}\,\left|
		\frac{s_p^{-2} y_p-x_i}{y_p-x_i}
		\frac{y_p-w_j}{s_p^{-2} y_p-w_j}
		\right|<1
		\qquad \textnormal{for all $1\le i\le N$, $1\le j\le k$}.
	\end{equation}
	Then we have
	\begin{equation}
		\label{eq:intro_Cauhy_identity}
		\begin{split}
			&\sum_{\lambda=(\lambda_1\ge \ldots\ge \lambda_N\ge0 )}
			G_{\lambda}(\mathbf{w};\mathbf{y};\boldsymbol\uptheta;\mathbf{s})\,
			F_{\lambda}(\mathbf{x};\mathbf{y};\mathbf{r};\mathbf{s})
			\\&\hspace{80pt}=
			\frac{\prod_{1\le i\le j\le N}(r_i^{-2}x_i-x_j)
			\prod_{1\le i<j\le N}(s_i^{-2}y_i-y_j)}
			{\prod_{i,j=1}^N(y_i-x_j)}
			\prod_{i=1}^{N}\prod_{j=1}^{k}
			\frac{x_i-\theta_j^{-2}w_j}{x_i-w_j}.
		\end{split}
	\end{equation}
\end{theorem}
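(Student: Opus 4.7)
The plan is to prove the identity via the standard Yang--Baxter ``train argument'': recognize $\sum_\lambda F_\lambda G_\lambda$ as a single free fermion six vertex partition function, then rearrange its row operators using the Yang--Baxter equation to collapse it into the product on the right.

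I would first stack the lattice computing $G_\lambda$ (row data $(\mathbf{w},\boldsymbol\uptheta)$, bottom boundary $\{N,\dots,1\}$, top boundary $\lambda$) immediately below the lattice computing $F_\lambda$ (row data $(\mathbf{x},\mathbf{r})$, bottom boundary $\lambda$, top boundary empty). Since both lattices share exactly the horizontal boundary $\{\lambda_1+N,\dots,\lambda_N+1\}$, summing over $\lambda$ eliminates the internal edge and yields a single six vertex partition function $Z$ on the half-infinite column strip with $k+N$ rows: the bottom $k$ rows carry $(w_j,\theta_j)$ with empty right boundary, and the top $N$ rows carry $(x_i,r_i)$ with fully packed right boundary. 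In algebraic Bethe ansatz language, the $w$-rows produce operators $\mathcal{A}(w_j,\theta_j)$ and the $x$-rows produce operators $\mathcal{D}(x_i,r_i)$ acting on the column Fock space, so $Z=\langle 0|\mathcal{D}(x_1,r_1)\cdots\mathcal{D}(x_N,r_N)\,\mathcal{A}(w_1,\theta_1)\cdots\mathcal{A}(w_k,\theta_k)|\{N,\dots,1\}\rangle$.

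The $RTT$ relation from the Yang--Baxter equation with the cross weights of \Cref{fig:cross_vertex_weights} supplies a commutation of $\mathcal{A}$ past $\mathcal{D}$. Under the free fermion condition the off-diagonal $\mathcal{B},\mathcal{C}$ corrections either annihilate the boundary sector at hand or telescope to zero, leaving the scalar swap $\mathcal{A}(w,\theta)\mathcal{D}(x,r)=\frac{x-\theta^{-2}w}{x-w}\,\mathcal{D}(x,r)\mathcal{A}(w,\theta)$. Performing all $Nk$ such swaps extracts exactly the factor $\prod_{i,j}\frac{x_i-\theta_j^{-2}w_j}{x_i-w_j}$ on the right-hand side of \eqref{eq:intro_Cauhy_identity}.

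What remains is the reordered matrix element $\langle 0|\mathcal{A}(w_1,\theta_1)\cdots\mathcal{A}(w_k,\theta_k)\,\mathcal{D}(x_1,r_1)\cdots\mathcal{D}(x_N,r_N)|\{N,\dots,1\}\rangle$, which I would evaluate directly: the $\mathcal{D}$-block acting on the dense state $|\{N,\dots,1\}\rangle$ admits a single ``staircase'' configuration whose weight, computed via \eqref{eq:intro_6v_weights}, equals the $(x,y,r,s)$-dependent prefactor $\prod_{1\le i\le j\le N}(r_i^{-2}x_i-x_j)\prod_{1\le i<j\le N}(s_i^{-2}y_i-y_j)/\prod_{i,j=1}^N(y_i-x_j)$; the $\mathcal{A}$-block then acts on the resulting empty state trivially since every vertex is of type $W(0,1;0,1)$ with normalized weight $1$. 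The main obstacle I expect is justifying the scalar reduction of the $\mathcal{A}\mathcal{D}$ commutation in the relevant sector, which is the one step where the free fermion condition plays an indispensable role. A secondary technical point is the infinite sum over $\lambda$: I would first prove the identity after truncating the column strip to a finite width $M$ (so only finitely many $\lambda$ contribute and all the manipulations above are term by term), then let $M\to\infty$ using the estimate \eqref{eq:intro_Cauchy_condition} to dominate the tail uniformly and pass the limit through.
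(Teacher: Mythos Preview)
Your overall architecture matches the paper's: interpret $\sum_\lambda F_\lambda G_\lambda$ as one partition function, push the $w$-rows through the $x$-rows via Yang--Baxter, and then evaluate the leftover piece. In the paper this is exactly the route taken---the commutation step is the skew Cauchy identity (\Cref{prop:F_G_Cauchy_skew}) specialized to $\lambda=0^N$, $\mu=\varnothing$, and the leftover piece is $F_{0^N}$ times the product $\prod_{i,j}\frac{x_i-\theta_j^{-2}w_j}{x_i-w_j}$. However, two of your claimed steps do not go through as written.

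\textbf{The scalar row commutation is an infinite-volume phenomenon, not a free fermion one.} In finite volume the relevant exchange relation is \eqref{eq:B2D1}, which has two terms; equivalently, pushing the cross vertex through a width-$L$ strip (the paper's proof of \Cref{prop:F_G_Cauchy_skew}) leaves, in addition to the desired term $R(0,1;0,1)\sum_\nu(\cdots)$, a boundary contribution proportional to $\prod_{k}^{L}W(0,1;0,1)\,\widehat W(0,0;0,0)$. That extra term does not vanish by the free fermion condition; it vanishes only as $L\to\infty$, and precisely because of the hypothesis \eqref{eq:intro_Cauchy_condition}. So your ``secondary technical point'' (truncate to width $M$, then $M\to\infty$) is not secondary at all---it is the actual mechanism that reduces the commutation to a scalar. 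In the paper's operator language this is \eqref{eq:intro_BD_relation} (or \eqref{eq:BD_infty}), which holds only under the $\oplus$-type condition.

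\textbf{The residual partition function is not a single staircase.} After the commutation, what remains is $F_{0^N}(\mathbf{x};\mathbf{y};\mathbf{r};\mathbf{s})$. Already for $N=2$ there are two path configurations (the path entering at column $1$ may go up or turn right in the first row), and neither has vanishing weight. The paper obtains the product form in two nontrivial steps: first invoke the determinantal formula for $F_\lambda$ (\Cref{thm:F_formula}) to write $F_{0^N}$ as a Vandermonde-like determinant in the $\varphi_{N-j}(x_i)$, and then evaluate that determinant to the product $\prod_{i<j}(x_i-x_j)(s_i^{-2}y_i-y_j)/\prod_{i,j}(y_i-x_j)$ by a Desnanot--Jacobi induction (see \eqref{eq:fully_deformed_Cauchy_determinant} and the surrounding argument). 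You will need some substitute for this---either the determinantal formula, or an Izergin--Korepin style characterization---to finish. Also, a small slip: once the $w$-rows are on the empty side, the repeated vertex is $W(0,0;0,0)=1$, not $W(0,1;0,1)$.
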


\begin{remark}
	An example of a fully inhomogeneous situation when condition
	\eqref{eq:intro_Cauchy_condition} 
	holds is $y_p=1-2^{-p}$, $s_p=1+2^p$, $p\ge1$, and 
	$\frac{1}{2}>w_j>x_i>\frac{1}{3}$ for all $i,j$.
\end{remark}

Let us now discuss
explicit formulas for the functions $F_\lambda$ and $G_\lambda$.
The first function $F_\lambda$ possesses an 
inhomogeneous analogue of the 
\emph{double alternant formula} for the Schur symmetric
polynomials
\cite[I.(3.1)]{Macdonald1995}.
Define inhomogeneous analogues of the power functions by
\begin{equation*}
	\varphi_k(x\mid \mathbf{y};\mathbf{s}):=
	\frac{1}{y_{k+1}-x}
	\prod_{j=1}^{k}
	\frac{x-s_j^{-2}y_j}{x-y_j},\qquad  k\ge0.
\end{equation*}

\begin{theorem}[\Cref{thm:F_formula} in the text]
	\label{thm:intro_F_formula}
	Let $\lambda$ be a signature with $N$ parts. 
	Then
	\begin{equation}
		\label{eq:intro_F_formula}
		F_\lambda(\mathbf{x};\mathbf{y};\mathbf{r};\mathbf{s})=
		\frac
		{\prod_{1\le i\le j\le N}(r_i^{-2}x_i-x_j)}
		{\prod_{1\le i<j\le N}(x_i-x_j)}\,
		\det\left[ \varphi_{\lambda_j+N-j}(x_i\mid \mathbf{y};\mathbf{s}) \right]_{i,j=1}^{N}.
	\end{equation}
\end{theorem}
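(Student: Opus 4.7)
The plan is to prove the double alternant formula through the algebraic Bethe Ansatz (ABA) machinery that the paper develops, viewing $F_\lambda$ as a free-fermion matrix element and applying the Slater-determinant identity characteristic of free fermion systems. First, I would reinterpret $F_\lambda$ as a matrix element of an ordered product of row operators $\mathcal{B}(x_i, r_i)$, constructed from the six-vertex monodromy matrix in the standard way (cf.\ Faddeev \cite{Faddeev_Lectures}). The boundary conditions in \Cref{fig:intro_F_G_defn} (right) make $\mathcal{B}(x_1, r_1) \cdots \mathcal{B}(x_N, r_N)$ act on a reference state encoding the fully-packed right and empty top boundaries, with the result projected onto the signature state $|\lambda\rangle$ encoding the bottom. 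The Yang--Baxter equation yields the $RTT$ commutation relations among $A, B, C, D$; crucially, under the free fermion condition $a_1 a_2 + b_1 b_2 = c_1 c_2$ these degenerate into a fermionic relation of the schematic form
\[
(x_j - r_i^{-2} x_i)\,\mathcal{B}(x_i, r_i)\,\mathcal{B}(x_j, r_j) = -(x_i - r_j^{-2} x_j)\,\mathcal{B}(x_j, r_j)\,\mathcal{B}(x_i, r_i),
\]
which, up to rational prefactors, makes $\mathcal{B}$ a fermionic creation operator. This also recovers the symmetry of $F_\lambda \cdot \prod_{i<j}(x_i - r_j^{-2}x_j)$ noted earlier in the excerpt.

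Next I would apply the Slater-determinant identity: the action of $\mathcal{B}(x_1, r_1) \cdots \mathcal{B}(x_N, r_N)$ on the vacuum decomposes into a sum over signatures $\lambda$, weighted by an $N \times N$ determinant of single-$\mathcal{B}$ matrix elements. The combinatorial prefactor produced by iterating the commutation relation---after accounting for the $\widehat{W}$-normalization, which contributes the diagonal terms $x_i(r_i^{-2}-1)$---is exactly $\prod_{1 \le i \le j \le N}(r_i^{-2}x_i - x_j)/\prod_{1 \le i < j \le N}(x_i - x_j)$. It then remains to identify the single-particle amplitude $\langle k | \mathcal{B}(x, r) | \mathrm{vac}\rangle$ with $\varphi_k(x \mid \mathbf{y}; \mathbf{s})$, which I would verify by a direct computation of the $N=1$ partition function: with only one row and one entering path at column $k+1$, the path's normalized weight factorizes over columns into a $c$-type vertex at the entering column (contributing $(y_{k+1} - x)^{-1}$) and ``turn versus pass'' ratios at columns $j \le k$ (contributing $(x - s_j^{-2}y_j)/(x - y_j)$).

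The main obstacle will be rigorously establishing the free-fermion commutation and Slater-determinant steps in the \emph{inhomogeneous, semi-infinite} setting: standard ABA works over a finite number of columns, but here the reference state lives on the half-infinite strip $\mathbb{Z}_{\ge 1}$, and one must confirm that suitable infinite-volume limits of the row operators exist and obey the claimed algebra. The paper does develop such an inhomogeneous Fock-space formalism later on, so the required machinery is available. Once this is in place, careful bookkeeping of the normalization factor $W(0,1;0,1)$ absorbs the diagonal terms $(r_i^{-2}x_i - x_i)$ into the prefactor and the rest of the computation is routine algebra.
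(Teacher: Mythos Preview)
Your plan is in the right spirit --- row operators, Yang--Baxter commutation relations, single-particle amplitudes --- and correctly identifies several ingredients. But there is a gap at the core step: the ``Slater-determinant identity'' you want to apply is not an off-the-shelf lemma here; it is essentially the content of the theorem. The commutation relation you wrote only says that two $\widehat{B}$ operators commute up to a scalar; by itself this does not imply that a product of $N$ of them acting on the reference state yields an $N\times N$ determinant of single-$\widehat{B}$ matrix elements. The $\widehat{B}(x,r)$ are row transfer operators acting on the whole tensor product via the coproduct \eqref{eq:abcdv_n2_hat}, not local creation operators at a single site, so the passage from ``commute up to scalar'' to ``determinant'' requires a concrete mechanism that you have not supplied.

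The paper's proof (\Cref{appA:F}, following \cite{BorodinPetrov2016inhom}) provides exactly this mechanism. It expands $\widehat{B}(x_N,r_N)\cdots\widehat{B}(x_1,r_1)$ on a two-fold tensor product $V_1\otimes V_2$, uses relations \eqref{eq:BD_hat_relation}--\eqref{eq:BA_hat_relation} to bring each resulting word into a canonical form indexed by pairs of subsets $(I,I')$, shows (\Cref{lemma:F_proof_lemma1}) that only $I'=I^c$ survives, and computes the coefficients $c_I$ explicitly. Iterating this decomposition to peel off one column at a time, the fact that $e_0^{(m)}\widehat{B}\widehat{B}=0$ forces $|I|\le 1$ at each step, and the accumulated choices produce the sum over $\mathfrak{S}_N$ that is the determinant. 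Separately, your stated obstacle (infinite-volume limits, Fock space) is a red herring: the normalization $\widehat{W}(0,1;0,1)=1$ already makes $\widehat{A},\widehat{B}$ well-defined on finitary states without any limiting procedure (\Cref{sub:signature_states}), and the Fock-space machinery of \Cref{sec:fermionic_operators} is used for correlation kernels, not for this formula.
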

We also obtain an explicit
formula for 
$G_\lambda(\mathbf{x};\mathbf{y};\mathbf{r};\mathbf{s})$,
see \Cref{thm:G_formula}
in the text. It involves summation over 
pairs of permutations which resembles (however, does not imply,
cf. 
\Cref{rmk:SP_formula_comparison})
the Sergeev--Pragacz formula 
\cite[(5)]{hamel1995lattice}
for the supersymmetric
Schur polynomials.
Our formula in \Cref{thm:G_formula} is 
quite long so we do not reproduce it here.

We prove explicit formulas for $F_\lambda$ and $G_\lambda$
in \Cref{appA:F_G_formula_proofs}
via computations with row operators
(for these operators, see \Cref{sub:intro_fermionic_operators} below
and \Cref{sub:row_operators} in the text).
These computations follow
\cite[Section 4.5]{BorodinPetrov2016inhom} 
(but are much more involved in the case of $G_\lambda$)
and are based on Algebraic Bethe Ansatz 
for quantum integrable systems, see, e.g.,
\cite[Part VII]{QISM_book}.
This approach can ultimately be traced
to our central tool,
the Yang--Baxter equation,
whose repeated application yields
quadratic relations for row operators.

\begin{remark}
	The inhomogeneous
	free fermion six vertex weights
	like \eqref{eq:intro_6v_weights}
	appeared
	(with a different parametrization)
	in 
	\cite{motegi2017izergin}.
	Moreover, in that paper
	a determinantal formula
	like \eqref{eq:intro_F_formula}
	for a partition function with $F_\lambda$-like boundary
	conditions
	was proven. 
	This was done 
	by an Izergin--Korepin approach, that is, by showing
	that
	both the partition function 
	and the right-hand side of 
	\eqref{eq:intro_F_formula}
	satisfy the same list of properties which 
	uniquely 
	determine a function.
\end{remark}

Along with the Sergeev--Pragacz type formula,
$G_\lambda$ admits another explicit expression
based on 
the Cauchy identity and the 
\emph{inhomogeneous
biorthogonality} associated with the 
functions $F_\lambda$.
Here we present a single-variable version 
of this biorthogonality,
see \Cref{prop:F_F_star_orthogonality} in the text
for a multivariable statement involving determinants.
Define
\begin{equation*}
	\psi_k(x\mid \mathbf{y};\mathbf{s}):=
	\frac{y_{k+1}(s_{k+1}^{-2}-1)}{x-s^{-2}_{k+1}y_{k+1}}\,
	\prod_{j=1}^{k}
	\frac{x-y_j}{x-s_j^{-2}y_j}, \qquad k\ge0.
\end{equation*}
Then
we have for all $k,l\ge 0$
(\Cref{lemma:phi_psi_orthogonal} below):
\begin{equation}
	\label{eq:intro_orthogonality}
		\frac{1}{2\pi\mathbf{i}}\oint_{\gamma}
		\varphi_k(z\mid \mathbf{y}, \mathbf{s})\,
		\psi_l(z\mid \mathbf{y},\mathbf{s})\,dz
		=
		\begin{cases}
			1,&k=l;\\
			0,&k\ne l,
		\end{cases}
\end{equation}
where the simple closed contour $\gamma$
separates the sets
$\{y_j \}_{j\ge1}$ and $\{ s_j^{-2}y_j \}_{j\ge1}$
and goes around the $y_j$'s in the positive direction.

Using \eqref{eq:intro_orthogonality},
we can extract $G_\lambda$ as
the coefficient by $F_\lambda$
from the right-hand side of the 
Cauchy identity \eqref{eq:intro_Cauhy_identity}.
This leads to the following 
\emph{Jacobi--Trudy type formula}:

\begin{proposition}[\Cref{prop:nonskew_G_Jacobi_Trudi} in the text]
	\label{prop:intro_G_JT}
	Let $\lambda$ be a signature with $N$
	parts. Then we have
	\begin{equation*}
		G_\lambda(\mathbf{w};\mathbf{y};\boldsymbol\uptheta;\mathbf{s})
		=
		\prod_{1\le i<j\le N}\frac{s_i^{-2}y_i-y_j}{y_j-y_i}
		\,
		\det\left[ \mathsf{h}_{\lambda_i+N-i,\,j}
		(\mathbf{w};\mathbf{y};\boldsymbol\uptheta;\mathbf{s}) \right]_{i,j=1}^{N},
	\end{equation*}
	where
	\begin{equation*}
		\mathsf{h}_{k,m}(\mathbf{w};\mathbf{y};\boldsymbol\uptheta;\mathbf{s})
		=
		\frac{1}{2\pi\mathbf{i}}\oint_{\gamma'}dz\,
		\frac{\psi_k(z\mid \mathbf{y};\mathbf{s})}{y_m-z}
		\prod_{j=1}^{M}\frac{z-\theta_j^{-2}w_j}{z-w_j}.
	\end{equation*}
	Here the positively 
	oriented
	integration contour $\gamma'$
	surrounds
	all the points $y_j,w_i$ 
	and leaves out all the points
	$s_j^{-2}y_j$.
\end{proposition}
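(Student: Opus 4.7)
The proof rests on three ingredients: the Cauchy summation identity (\Cref{thm:intro_Cauchy}), the double alternant formula for $F_\lambda$ (\Cref{thm:intro_F_formula}), and the biorthogonality \eqref{eq:intro_orthogonality}. The strategy is to extract the coefficient of $G_\lambda$ from the Cauchy sum by pairing with the dual probe $\det[\psi_{\lambda_a + N - a}(x_b)]_{a,b=1}^N$ and integrating in the $x_b$'s. Substituting \Cref{thm:intro_F_formula} into the Cauchy identity causes the common factor $\prod_{1\le i\le j\le N}(r_i^{-2}x_i - x_j)$ to cancel on both sides. Multiplying through by the Vandermonde $\prod_{i<j}(x_i - x_j)$, rewriting $\prod_{i<j}(x_i-x_j)/\prod_{i,j}(y_i-x_j)$ as $\det[1/(y_i-x_j)]/\prod_{i<j}(y_j - y_i)$ via the classical Cauchy determinant identity, and absorbing $\eta(x_j) := \prod_{l=1}^M (x_j - \theta_l^{-2} w_l)/(x_j - w_l)$ into column $j$ of that determinant, one obtains the clean intermediate identity
\begin{equation*}
\sum_{\mu} G_\mu(\mathbf{w};\mathbf{y};\boldsymbol\uptheta;\mathbf{s}) \det[\varphi_{\mu_j + N - j}(x_i)]_{i,j=1}^N = \frac{\prod_{i<j}(s_i^{-2}y_i - y_j)}{\prod_{i<j}(y_j - y_i)} \det\biggl[\frac{\eta(x_j)}{y_i - x_j}\biggr]_{i,j=1}^N,
\end{equation*}
where $\mu$ ranges over all $N$-part signatures.

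Next, fix the signature $\lambda$ of the statement, multiply both sides by $\det[\psi_{\lambda_a + N - a}(x_b)]_{a,b=1}^N$, integrate each $x_b$ along $\gamma'$ against $dx_b/(2\pi\mathbf{i})$, and divide by $N!$. On the left, Andr\'eief's identity turns the product of two determinants into a single determinant with entries $\oint_{\gamma'} \varphi_{\mu_a + N - a}(x) \psi_{\lambda_c + N - c}(x)\,dx/(2\pi\mathbf{i})$. Since $\gamma'$ still separates $\{y_j\}$ from $\{s_j^{-2}y_j\}$, the biorthogonality \eqref{eq:intro_orthogonality} applies and each entry is $\delta_{\mu_a + N - a,\, \lambda_c + N - c}$; strict decrease in $a$ of the sequences $\mu_a + N - a$ and $\lambda_a + N - a$ forces $\mu = \lambda$ for the determinant to be nonzero, so only one term of the sum survives and the left-hand side collapses to $G_\lambda(\mathbf{w};\mathbf{y};\boldsymbol\uptheta;\mathbf{s})$.

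On the right, a second application of Andr\'eief's identity turns the product of $\det[\eta(x_j)/(y_i - x_j)]$ and the probe into $\det\bigl[\oint_{\gamma'} \eta(x)\psi_{\lambda_c + N - c}(x)/(y_i - x)\,dx/(2\pi\mathbf{i})\bigr]_{i,c=1}^N$; by the very definition of $\mathsf{h}$, this equals $\det[\mathsf{h}_{\lambda_c + N - c,\, i}]_{i,c}$, which after transposition coincides with the determinant $\det[\mathsf{h}_{\lambda_i + N - i,\, j}]_{i,j=1}^N$ of the statement, and together with the prefactor $\prod_{i<j}(s_i^{-2}y_i - y_j)/\prod_{i<j}(y_j - y_i)$ yields the claimed formula. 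The main subtlety I anticipate is the choice of a single contour that must simultaneously surround $\{y_j\}$, enclose $\{w_l\}$ so the $\mathsf{h}$ integrals capture those residues, and avoid $\{s_j^{-2}y_j\}$ so the biorthogonality still applies; the contour $\gamma'$ in the statement is designed to satisfy all three conditions, and the interchange of the sum with integration is justified by the absolute convergence guaranteed by the Cauchy condition \eqref{eq:intro_Cauchy_condition}.
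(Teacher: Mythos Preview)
Your proof is correct and uses the same ingredients as the paper: the Cauchy identity, the determinantal formula for $F_\lambda$, the Cauchy determinant identity, biorthogonality, and Andr\'eief's identity. The paper factors the argument through an intermediate $N$-fold contour integral representation for $G_\nu$ (Proposition~\ref{prop:G_integral_formula}) and then applies the Cauchy determinant and Andr\'eief's identity to that formula, whereas you perform the Cauchy-determinant rewriting first and apply Andr\'eief directly to both sides of the Cauchy sum; the two routes are reorderings of the same computation.
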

In \eqref{eq:intro_orthogonality} and 
\Cref{prop:intro_G_JT} we assume
that the parameters are chosen in such a way that the integration contours
$\gamma$ and $\gamma'$
exist.

\subsection{Determinantal processes}
\label{sub:intro_det_pp}

Dividing the Cauchy identity
of \Cref{thm:intro_Cauchy} by its right-hand side,
we define a
probability measure 
on the space of 
signatures with $N$ parts which we call an \emph{FG measure}:
\begin{equation}
	\label{eq:intro_FG_measure}
	\mathscr{M}(\lambda):=
	\frac{1}{Z}
	\,
	F_\lambda(\mathbf{x};\mathbf{y};\mathbf{r};\mathbf{s})\,
	G_\lambda(\mathbf{w};\mathbf{y};\boldsymbol\uptheta;\mathbf{s}),
\end{equation}
where $Z$ is the normalizing constant given by the right-hand
side of \eqref{eq:intro_Cauhy_identity},
and
$\mathbf{x}=(x_1,\ldots,x_N )$,
$\mathbf{r}=(r_1,\ldots,r_N )$, 
$\mathbf{w}=(w_1,\ldots,w_k )$, and
$\boldsymbol\uptheta=(\theta_1,\ldots,\theta_k)$.
This definition is analogous to that of Schur
measures introduced in \cite{okounkov2001infinite}.
Further, by analogy with Schur processes
\cite{okounkov2003correlation} and Macdonald processes
\cite{BorodinCorwin2011Macdonald},
we define (\emph{ascending}) \emph{FG processes}
which are probability measures on sequences of signatures
$\lambda^{(1)},\ldots,\lambda^{(T)} $ (each with $N$ parts)
defined as
\begin{equation}
	\label{eq:intro_FG_process}
	\mathscr{AP}(\lambda^{(1)},\lambda^{(2)},\ldots,\lambda^{(T)})
	=
	\frac1{Z}\,
	G_{\lambda^{(1)}}(w_1;\mathbf{y};\theta_1;\mathbf{s})
	\ldots
	G_{\lambda^{(T)}/\lambda^{(T-1)}}(w_T;\mathbf{y};\theta_T;\mathbf{s})
	F_{\lambda^{(T)}}(\mathbf{x};\mathbf{y};\mathbf{r};\mathbf{s}).
\end{equation}
Here
$Z$ is the same normalizing constant
(the right-hand side of \eqref{eq:intro_Cauhy_identity}),
and
$G_{\lambda/\mu}$ are skew versions of the functions $G_\lambda$
(see \Cref{rmk:intro_skew_functions} above,
or \Cref{def:G_function} below).
For any fixed $t$, the marginal distribution
of $\lambda^{(t)}$ under 
\eqref{eq:intro_FG_process}
is the FG measure 
\eqref{eq:intro_FG_measure}
with the same $\mathbf{x},\mathbf{r},\mathbf{y},\mathbf{s}$,
and with $\mathbf{w}=(w_1,\ldots,w_t ),\boldsymbol\uptheta=(\theta_1,\ldots,\theta_t )$.

Sufficient conditions
under which
formulas
\eqref{eq:intro_FG_measure} and \eqref{eq:intro_FG_process}
define probability distributions
with nonnegative probability weights
are \eqref{eq:intro_Cauchy_condition} (so that the probability
weights are normalizable, i.e.,
the series for $Z$ converges) and 
\begin{equation*}
	x_i<y_j<r_i^{-2}x_i<s_j^{-2}y_j
	\quad\textnormal{and}\quad
	w_i<y_j<\theta_i^{-2}w_i<s_j^{-2}y_j
	\quad
	\textnormal{for all $i,j$}.
\end{equation*}
The latter conditions imply that 
all vertex
weights
$W(i_1,j_1;i_2,j_2),\widehat{W}(i_1,j_1;i_2,j_2)$
are nonnegative,
hence $F_\lambda$,
$G_\lambda$ and the $G_{\lambda/\mu}$'s are nonnegative, too.

\medskip

We show that the probability measure
$\mathscr{AP}(\lambda^{(1)},\ldots,\lambda^{(T)})$
gives rise to a \emph{determinantal point process},
which also implies determinantal structure for the measure
$\mathscr{M}$ \eqref{eq:intro_FG_measure}.
We refer to \cite{Soshnikov2000},
\cite{peres2006determinantal},
\cite{Borodin2009}
for generalities on determinantal processes.
Let us adapt general definitions to our setting.
Let $(\lambda^{(1)},\ldots,\lambda^{(T)})$ be a 
sequence of random signatures with joint distribution
\eqref{eq:intro_FG_process}, and define a random point configuration
\begin{equation*}
	\mathcal{S}^{(T)}:=
	\bigcup_{t=1}^{T}
	\bigl\{(t,\lambda^{(t)}_1+N),
	(t,\lambda^{(t)}_2+N-1),\ldots,
	(t,\lambda^{(t)}_N+1)\bigr\}
	\subset \left\{ 1,\ldots,T  \right\}\times \mathbb{Z}_{\ge1}.
\end{equation*}
Let $A\subset 
\left\{ 1,\ldots,T  \right\}\times \mathbb{Z}_{\ge1}$
be a fixed finite subset. A 
\emph{correlation function} associated with $A$
is, by definition, the probability
$\mathbb{P}_{\mathscr{AP}}[A\subset \mathcal{S}^{(T)}]$.
We show that
this correlation function, for any $A$,
is given by an $|A|\times |A|$
determinant of 
a fixed \emph{correlation kernel} 
defined as
(here 
$1\le t,t'\le T$ and $a,a'\ge 1$):
\begin{equation}
	\label{eq:intro_K_AP}
	\begin{split}
		&K_{\mathscr{AP}}(t,a;t',a')
		=
		\frac{1}{(2\pi\mathbf{i})^2}
		\oint_{\Gamma_{y,\theta^{-2}w}}du
		\oint_{\Gamma_{y,w}}
		dv
		\,
		\frac{1}{u-v}
		\prod_{k=1}^{N}\frac{(u-y_k)(v-x_k)}{(u-x_k)(v-y_k)}
		\\
		&\hspace{20pt}
		\times
		\frac{y_{a}(1-s_{a}^{-2})}{v-s_{a}^{-2}y_{a}}
		\frac{1}{u-y_{a'}}
		\prod_{j=1}^{a-1}
		\frac{v-y_j}{v-s_j^{-2}y_j}
		\prod_{j=1}^{a'-1}
		\frac{u-s_j^{-2}y_j}{u-y_j}
		\prod_{d=1}^t \frac{v-\theta_d^{-2}w_d}{v-w_d}
		\prod_{c=1}^{t'}\frac{u-w_c}{u-\theta_c^{-2}w_c},
	\end{split}
\end{equation}
where the integration contours are 
positively oriented circles one inside the other
(the $u$ contour is outside for $t\le t'$ while
the $v$ contour is outside for $t>t'$);
the $u$ contour
encircles all the points $y_i,\theta_j^{-2}w_j$,
and not $x_k$;
and
the $v$ contour encircles all the points $y_i,w_j$,
and not $s_k^{-2}y_k$. Here we assume that the parameters
are such that the contours exist.

\begin{theorem}[\Cref{thm:ascending_FG_process_kernel} in the text]
	\label{thm:intro_kernel}
	The ascending FG process 
	\eqref{eq:intro_FG_process}
	is determinantal with the kernel
	$K_{\mathscr{AP}}$. That is,
	for any 
	$A=\left\{ (t_1,a_1),\ldots,(t_m,a_m)  \right\}
	\subset \left\{ 1,\ldots,T  \right\}\times \mathbb{Z}_{\ge1}$,
	we have
	\begin{equation*}
		\mathbb{P}_{\mathscr{AP}}\bigl[ A\subset \mathcal{S}^{(T)} \bigr]
		=
		\det\left[ K_{\mathscr{AP}}(t_i,a_i;t_j,a_j) \right]_{i,j=1}^{m}.
	\end{equation*}
\end{theorem}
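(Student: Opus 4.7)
The plan is to follow the classical Eynard--Mehta recipe for processes whose density factorizes into one-step transition weights, and then use the biorthogonality \eqref{eq:intro_orthogonality} to convert the resulting finite-dimensional kernel into the double contour integral \eqref{eq:intro_K_AP}.

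First I would recast $\mathscr{AP}$ into standard Eynard--Mehta shape. Expanding the top factor via the double alternant of \Cref{thm:intro_F_formula}, $F_{\lambda^{(T)}}(\mathbf{x};\mathbf{y};\mathbf{r};\mathbf{s})$ becomes, up to an $(\mathbf{x},\mathbf{r})$-prefactor, the determinant $\det\bigl[\varphi_{\lambda^{(T)}_j+N-j}(x_i\mid \mathbf{y};\mathbf{s})\bigr]_{i,j=1}^{N}$. Each single-variable skew factor $G_{\lambda^{(t)}/\lambda^{(t-1)}}(w_t;\mathbf{y};\theta_t;\mathbf{s})$ is a one-row partition function in the six vertex model; a direct inspection of the admissible row configurations shows that it vanishes unless $\lambda^{(t-1)}$ and $\lambda^{(t)}$ interlace appropriately and otherwise equals a product of local weights, so it can be written as $\prod_{i=1}^N \phi_t\bigl(\lambda^{(t-1)}_i+N-i+1,\lambda^{(t)}_i+N-i+1\bigr)$ for an explicit single-step kernel $\phi_t$. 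The joint law then takes the canonical Eynard--Mehta form: a product of one-step transitions times a single top-level determinant.

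Next I would invoke the Eynard--Mehta theorem. The correlation kernel is produced from the $\phi_t$'s and the functions $\varphi_k$ once a dual family is chosen so that the Gram matrix is the identity; this is exactly what \eqref{eq:intro_orthogonality} accomplishes, since the $\psi_\ell$'s are dual to the $\varphi_k$'s for integration over a contour $\gamma$ separating $\{y_j\}$ from $\{s_j^{-2}y_j\}$. With these duals in hand the Gram matrix becomes diagonal and its inversion introduces no obstruction. The resulting finite-sum kernel can then be encoded as a double contour integral by writing its building blocks as Cauchy-type residues: the factor $\frac{1}{u-y_{a'}}$ arises from $\varphi_{a'-1}$, the factor $\frac{y_a(1-s_a^{-2})}{v-s_a^{-2}y_a}$ from $\psi_{a-1}$, and the telescoping products $\prod_{j=1}^{a-1}\frac{v-y_j}{v-s_j^{-2}y_j}$ and $\prod_{j=1}^{a'-1}\frac{u-s_j^{-2}y_j}{u-y_j}$ from the remaining factors in $\varphi,\psi$. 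Summing the one-step transitions over intermediate times produces the inhomogeneous products $\prod_d\frac{v-\theta_d^{-2}w_d}{v-w_d}$ and $\prod_c\frac{u-w_c}{u-\theta_c^{-2}w_c}$, while the prefactor $\prod_{1\le i\le j\le N}(r_i^{-2}x_i-x_j)$ coming from \Cref{thm:intro_F_formula} cancels against the same factor in $Z$ of \eqref{eq:intro_Cauhy_identity}, leaving (after a Cauchy--Binet style rewriting) the Cauchy combination $\prod_k\frac{(u-y_k)(v-x_k)}{(u-x_k)(v-y_k)}$.

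The main obstacle is the bookkeeping in this last stage: one has to verify that the two contours come out with precisely the stated residue content (the $u$-contour enclosing all $y_i,\theta_j^{-2}w_j$ and no $x_k$; the $v$-contour enclosing all $y_i,w_j$ and no $s_k^{-2}y_k$), that the geometric summation over intermediate time-steps actually converges on those contours, and that the nesting convention ($u$ outside $v$ for $t\le t'$, $v$ outside $u$ for $t>t'$) emerges correctly from the past-versus-future asymmetry built into Eynard--Mehta. Once the single-time kernel is pinned down in the homogeneous case, the inhomogeneous generalization differs only by shifted pole locations and is a direct adaptation.
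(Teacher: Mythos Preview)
Your high-level strategy---recast $\mathscr{AP}$ in Eynard--Mehta form and then repackage the kernel as a double contour integral---is exactly the route taken in Appendix~\ref{appB:Eynard_Mehta}. But two of the intermediate steps you sketch are incorrect, and they are precisely where the work lies.

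First, the one-variable skew factor $G_{\lambda^{(t)}/\lambda^{(t-1)}}(w_t;\theta_t)$ does \emph{not} factor as a product $\prod_i\phi_t(\ell^{(t-1)}_i,\ell^{(t)}_i)$ over particles. As soon as two neighboring particles of $\lambda^{(t-1)}$ and $\lambda^{(t)}$ touch, the row configuration contains a vertex of type $(1,1;1,1)$, and the free-fermion relation $a_1a_2+b_1b_2=c_1c_2$ (with $a_1=1$) shows $a_2\neq c_1c_2$, so no such single-particle factorization can hold. What Eynard--Mehta needs is a \emph{determinant} of a two-point transition, and the paper obtains $G_{\lambda^{(t)}/\lambda^{(t-1)}}=\det[\mathsf{g}_{(\ell^{(t)}_i-1)/(\ell^{(t-1)}_j-1)}]$ from the Jacobi--Trudy formula (Proposition~\ref{prop:skew_G_Jacobi_Trudi}), itself a consequence of the Cauchy identity and biorthogonality. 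This step is not a ``direct inspection of admissible row configurations.''

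Second, the Gram matrix is not made diagonal by \eqref{eq:intro_orthogonality}. That biorthogonality pairs $\varphi_k$ with $\psi_\ell$ under integration in a spectral variable $z$, but the Eynard--Mehta Gram matrix here is $N\times N$ and indexed by the boundary parameters $(y_i)$ and $(x_j)$ coming from $\det[\mathsf{h}_{\cdot,i}]$ and $\det[\varphi_\cdot(x_j)]$. The paper computes the full convolution $\mathsf{h}*\mathsf{g}*\cdots*\mathsf{g}*\varphi$ via the telescoping identity $\sum_{k\ge0}\varphi_k(u)\psi_k(v)=1/(u-v)$ (Lemma~\ref{lemma:phi_psi_summation}) and obtains the Cauchy-type matrix $M_{ij}=(y_i-x_j)^{-1}\prod_t\frac{x_j-\theta_t^{-2}w_t}{x_j-w_t}$ (Proposition~\ref{prop:Eynard_Gram_matrix}), which must then be inverted explicitly (Lemma~\ref{lemma:Eynard_M_inversion}). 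It is this inverse---not a Cauchy--Binet rewriting of an identity---that supplies the factor $\prod_k\frac{(u-y_k)(v-x_k)}{(u-x_k)(v-y_k)}$ in \eqref{eq:intro_K_AP}.
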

When $t=t'$, the kernel
\eqref{eq:intro_FG_process}
becomes the correlation kernel 
$K_\mathscr{M}(a,a')=K_{\mathscr{AP}}(t,a;t,a')$
for the FG measure
$\mathscr{M}(\lambda)$
\eqref{eq:intro_FG_measure}, where $\lambda=\lambda^{(t)}$
and $\mathbf{w}=(w_1,\ldots,w_t )$, 
$\boldsymbol\uptheta=(\theta_1,\ldots,\theta_t )$.

We give two proofs of \Cref{thm:intro_kernel}.
The first proof 
(presented in \Cref{appB:Eynard_Mehta})
uses an 
Eynard--Mehta type approach
based on \cite{borodin2005eynard},
see also \cite{eynard1998matrices}.
This approach is parallel to how the 
kernel for the Schur measures
is computed in \cite{borodin2005eynard}.
The second proof,
presented in 
\Cref{sec:fermionic_operators,sec:Fock_and_FG_process},
is based on 
fermionic operators in a Fock space coming
from the Algebraic Bethe Ansatz row operators. 
We discuss the main features of the second approach
in \Cref{sub:intro_fermionic_operators} below.

In the horizontally homogeneous case
$y_j=y$, $s_j=s$ for all $j\ge1$,
the correlation kernel
$K_{\mathscr{M}}(a,a')$
turns into the kernel for a certain
Schur measure, see \Cref{sub:Schur_process_part_case}
in the text.
A certain inhomogeneous 
analogue of Schur processes (describing
continuous time 
particle dynamics in inhomogeneous space generalizing the 
push-block process from \cite{BorFerr2008DF})
was defined recently in 
\cite{theodoros2019_determ}.
It is likely that the probability measures
of \cite{theodoros2019_determ}
could arise as degenerations of our FG processes, 
but we do not address this question here.

\subsection{Random domino tilings}
\label{sub:intro_random_tilings}

We interpret ascending FG processes 
\eqref{eq:intro_FG_process}
as random tilings by $1\times 2$ dominoes
of an infinite half-strip,
in the spirit of the
steep tiling representation of Schur processes
\cite{bouttier2017aztec}.
(The connection between states of the free fermion six
vertex model and random domino tilings has been long known
before, cf.
Elkies--Kuperberg--Larsen--Propp
\cite{elkies1992alternating},
Zinn-Justin
\cite{zinn2000six},
Ferrari--Spohn
\cite{ferrari2006domino}.)
While our boundary conditions
are not as general as those in arbitrary steep tilings in the cited work,
we are able to consider dominoes with more general
weights which depend on the many parameters of the 
FG process.

\begin{figure}[ht]
	\centering
	\includegraphics[width=.88\textwidth]{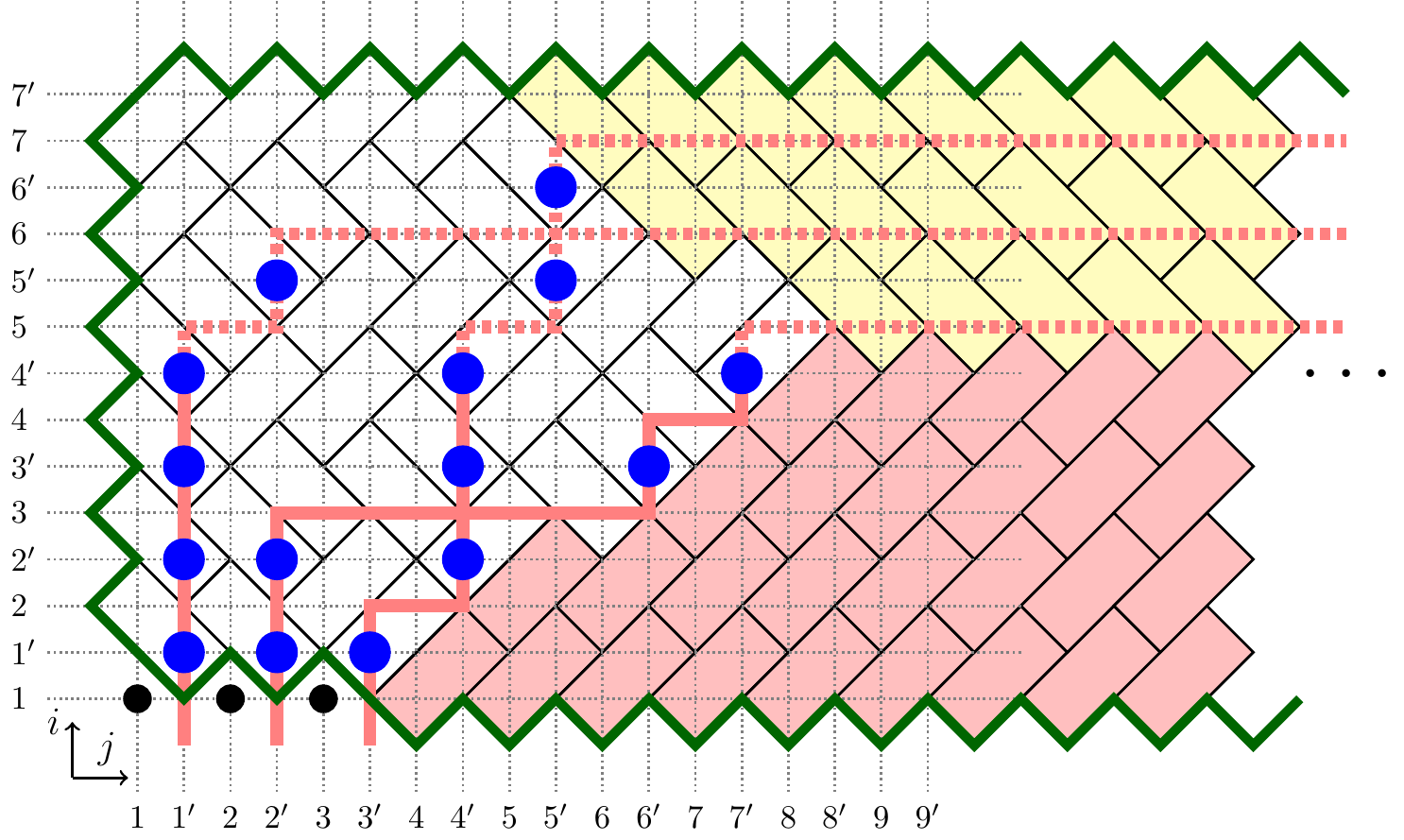}
	\caption{An example of a domino tiling 
	corresponding to the ascending FG process
	with $T=4$, $N=3$. 
	The dominoes which repeat infinitely many times
	far to the right are shaded.
	The large blue dots are particles associated to the tiling, i.e., the
	centers of the bottom
	squares of dominoes which have coordinates $(l',k')$.
	They are
	used to identify 
	the domino tiling with a sequence of signatures $\lambda^{(1)},\ldots,\lambda^{(T)}$.
	We also display the path ensembles
	leading to the partition
	functions $G_{\lambda^{(T)}}$ (in the bottom part)
	and 
	$F_{\lambda^{(T)}}$ (dashed paths in the top part).}
	\label{fig:intro_dimers}
\end{figure}

Recall that the ascending FG process
is associated with two integers, $N$ and $T$. 
Let the coordinates in the $\mathbb{Z}^2$ plane 
be numbered as 
$0'<1<1'<2<2'<\ldots $.
Consider the infinite half-strip with vertical coordinates
between $0'$ and $N+T+1$, and
with $N$ unit squares removed from the bottom left,
see \Cref{fig:intro_dimers}. 
We consider domino tilings of this strip
such that far to the right 
the dominoes stabilize to regular 
brick layers of two different
directions,
northeast and southeast 
in the regions $i\le T'$ and $i \ge T+1$, respectively.

Let us explain how a given domino tiling 
corresponds to a sequence
$\lambda^{(1)},\lambda^{(2)},\ldots,\lambda^{(T)}$
of signatures, each 
with $N$ parts.
Single out the dominoes
for which the center of the bottom unit square
has coordinates
of the form $(l',k')$.
There are only finitely many such dominoes,
and in \Cref{fig:intro_dimers}
we indicated the 
centers of the bottom squares.
Let us call these points the \emph{particles}
associated with the domino tiling.
More precisely, we have $N$ dominoes containing particles in the bottom
$T$ layers, 
and in the top $N$ layers 
there are $N-1,N-2,\ldots,1,0$ particles in each layer.
For $k=1,\ldots,T $, 
define the signature $\lambda^{(k)}$ 
so that the $N$ particles at layer $k$
have the horizontal coordinates 
\begin{equation}
	\label{eq:lambda_from_dominos}
	(\lambda^{(k)}_1+N)',(\lambda^{(k)}_{2}+N-1)',\ldots,(\lambda^{(k)}_{N-1}+2)', 
	(\lambda^{(k)}_N+1)'.
\end{equation}
For example, the sequence of signatures 
corresponding to the domino 
tiling in \Cref{fig:intro_dimers}
is
\begin{equation*}
	\lambda^{(1)}=(0,0,0),
	\qquad 
	\lambda^{(2)}=(1,0,0),
	\qquad 
	\lambda^{(3)}=(3,2,0),
	\qquad 
	\lambda^{(4)}=(4,2,0).
\end{equation*}

Let us now assign weights to dominoes
depending on the parameters $w_i,\theta_i,y_j,s_j$
in the top part, and 
$x_i,r_i,y_j,s_j$ in the bottom part,
as displayed in \Cref{fig:intro_dimer_weights}.
Note that the dominoes repeating infinitely
often (the shaded ones in 
\Cref{fig:intro_dimers,fig:intro_dimer_weights})
have weight $1$.
Assuming that the weights satisfy
\eqref{eq:intro_Cauchy_condition},
we see that the infinite series
for the 
normalizing constant 
of this probability measure on domino tilings converges.
Thus, the model of random domino tilings
is well-defined.

\begin{figure}[h]
	\centering
	\includegraphics[width=\textwidth]{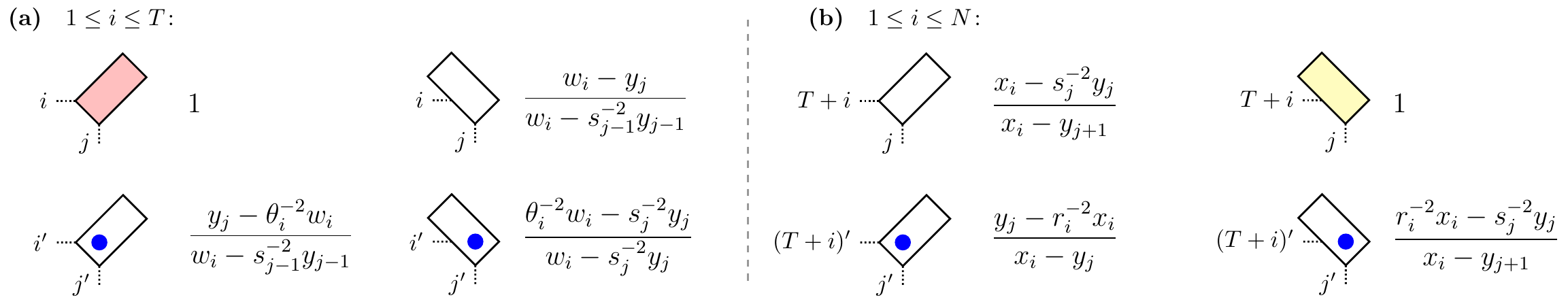}
	\caption{Domino weights
	in \Cref{fig:intro_dimers} leading
	to the ascending FG process.}
	\label{fig:intro_dimer_weights}
\end{figure}

In \Cref{sec:domino_tilings}
we establish the correspondence
between the random domino tiling
model just described,
and the ascending FG processes.
This correspondence is based on the 
known
mapping between the free fermion 
six vertex model
and a layered free fermion five vertex model,
e.g., see
\cite[Section 4.7]{wheeler2018hall}.

\begin{theorem}
	\label{thm:intro_dimers_dominoes}
	The joint distribution of the signatures
	$\lambda^{(1)},\ldots,\lambda^{(T)}$ (each with $N$ parts)
	associated via \eqref{eq:lambda_from_dominos} to the random
	domino tiling as in \Cref{fig:intro_dimers}
	with domino weights given in \Cref{fig:intro_dimer_weights}
	is described by the ascending FG process
	\eqref{eq:intro_FG_process}.
\end{theorem}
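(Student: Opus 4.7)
The plan is to realize the bijection between the given domino tilings and configurations of the free fermion six vertex model (in a layered form), and then simply match local weights. The starting point is the classical mapping between domino tilings and six vertex model configurations on a doubled lattice; in the form we need it, each pair of consecutive rows of the $(i,i')$ index structure in Figure \ref{fig:intro_dimers} corresponds to a single row of the six vertex model, with the $0$/$1$ edge states recorded by which squares contain a ``particle'' in the sense of \eqref{eq:lambda_from_dominos}. This is the five-vertex layering alluded to in the text (cf.\ \cite[Section 4.7]{wheeler2018hall}): the four domino orientations in each layer correspond to the four possible local configurations of a five-vertex model, and the free fermion six vertex weights are recovered after this two-row collapse.

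Concretely, I would proceed in four steps. First, I would describe the particle process associated to a tiling: by the definition \eqref{eq:lambda_from_dominos}, each horizontal layer of height one (with $y$-coordinate indexed by $k$, for $1\le k\le T+N$) contains a finite number of particles, and the configuration of particles in layer $k$ determines the local six vertex state on the row of height $k$ by reading horizontal and vertical occupations in the standard way. Second, I would check, by going through each of the four domino types in each of the two layer regimes of Figure \ref{fig:intro_dimer_weights}, that the weight assigned in that figure coincides (up to a common factor absorbed into the normalization) with the six vertex weight $W$ from \eqref{eq:intro_6v_weights} on the bottom $T$ rows with row-$t$ parameters $(w_t,\theta_t)$ and column-$j$ parameters $(y_j,s_j)$, and with the normalized weight $\widehat W$ on the top $N$ rows with row-$i$ parameters $(x_i,r_i)$ and the same column parameters. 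The shaded dominoes, which repeat infinitely far to the right, correspond to the vertices of weight $1$ in each of $W$ and $\widehat{W}$, so only finitely many nontrivial vertices contribute and all products are well-defined.

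Third, I would identify the boundary conditions. By construction, the $N$ notched squares at the bottom left force the bottom boundary of the six vertex part to be fully packed with the occupations $\{1,2,\ldots,N\}$, matching the $G_\lambda$ boundary in Figure \ref{fig:intro_F_G_defn}, left. Between layer $t-1$ and layer $t$ (for $1\le t\le T$, with layer $0$ being the bottom boundary) the intermediate horizontal line carries exactly the particle configuration encoding $\lambda^{(t-1)}$, respectively $\lambda^{(t)}$; thus summing over the tilings of strip $t$ with these boundaries produces precisely $G_{\lambda^{(t)}/\lambda^{(t-1)}}(w_t;\mathbf{y};\theta_t;\mathbf{s})$ (with $\lambda^{(0)}=(0,\ldots,0)$). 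In the top $N$ layers, the left and top boundaries are empty, while the particles stabilize at positions $\{1,2,\ldots,N\}$ far to the right, which is exactly the boundary structure in Figure \ref{fig:intro_F_G_defn}, right; hence the sum over these top tilings with fixed bottom profile $\lambda^{(T)}$ yields $F_{\lambda^{(T)}}(\mathbf{x};\mathbf{y};\mathbf{r};\mathbf{s})$. Fourth and last, multiplying these contributions together gives the formula \eqref{eq:intro_FG_process} up to the common normalization $Z$, which is forced to agree by the fact that both sides are probability measures.

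The main obstacle, in my view, is not the overall strategy (which is quite rigid once the bijection is fixed) but the bookkeeping in step two: one has to pick a precise convention for how a domino maps to a local six vertex state on the correct row, handle the two different regimes (bottom $T$ rows weighted by $W$ versus top $N$ rows weighted by the renormalized $\widehat W$) in a consistent way, and verify that the $W(0,1;0,1)$ renormalization used in defining $\widehat W$ is exactly what absorbs the weight of the infinitely repeating top-right dominoes so that the resulting partition function is finite and matches $F_\nu$ as defined in Figure \ref{fig:intro_F_G_defn}. Once these combinatorial identifications are checked once for each vertex type, the identification of the tiling measure with the ascending FG process becomes automatic.
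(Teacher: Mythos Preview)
Your proposal is correct and follows essentially the same route as the paper: decompose each six vertex row into a pair of five vertex rows (the paper isolates this as a separate lemma, verifying $\sum w_{5V}\cdot w'_{5V}=w_{6V}$ case by case using the free fermion condition), then bijectively map the layered five vertex configurations to dimer coverings of a rail-yard-type graph, and finally identify the dimer model with domino tilings via a half-shift and rotation. One point you gloss over that the paper handles explicitly: in the $\widehat W$ region the raw dimer weights coming from the five vertex decomposition do not literally equal the domino weights of \Cref{fig:intro_dimer_weights}; one needs a gauge transformation (multiplying around each white vertex $(j,i')$ by $\frac{x_i-y_j}{x_i-y_{j+1}}$) to make the weight of the infinitely repeating domino equal to $1$ rather than merely bounded, and this is a local multiplicative change rather than a global normalization constant.
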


\begin{remark}
	One can also consider the joint distribution
	of all $T+N-1$ signatures arising from \Cref{fig:intro_dimers}.
	Namely, let $\mu^{(1)},\ldots,\mu^{(N)} $, 
	where $\mu^{(i)}$ has $i$ parts and $\mu^{(N)}=\lambda^{(T)}$,
	be constructed 
	as in \eqref{eq:lambda_from_dominos}
	from the 
	coordinates $(l',k')$ of the particles in the top $N$ rows of the tiling.
	Then the sequence of signatures
	$(\lambda^{(1)},\ldots,\lambda^{(T)}=\mu^{(N)},\mu^{(N-1)},\ldots,\mu^{(1)})$
	has the joint distribution of a general FG process
	defined
	in \Cref{sub:general_FG} in the text.
	For such FG processes (and their further generalizations)
	we obtain the correlation kernel 
	as a certain series coefficient using fermionic operators,
	see
	\Cref{thm:corr_kernel_no_contours} in the text.
	It	
	should be possible to rewrite the coefficient representation for
	the correlation kernel 
	of the general FG processes as a contour integral.
	We do not pursue this here for brevity
	and also because
	the 
	lattice (bulk) asymptotic behavior (discussed in \Cref{sub:intro_sine_kernel}
	below) 
	throughout the 
	whole domino tiling in \Cref{fig:intro_dimers}
	is expected to be the same, up to renaming the parameters.
\end{remark}

\subsection{Bulk asymptotics and the inhomogeneous discrete sine kernel}
\label{sub:intro_sine_kernel}

We study bulk asymptotics of the domino tiling model
described in 
\Cref{sub:intro_random_tilings}
as $N,T\to +\infty$
and $T\gg N$. Here ``bulk'' means that we
zoom around a global position $(\lfloor \alpha N \rfloor ,\lfloor \tau N \rfloor )$,
so that the lattice structure is preserved in the limit.
For simplicity of the asymptotic analysis,
we let the inhomogeneity parameters
of the domino tiling vary only in a finite
neighborhood of this global location.
After taking the limit, we send the size of the
finite neighborhood to infinity as well.

In this limit 
we observe a probability measure
on the space of domino tilings of the whole plane $\mathbb{Z}^2$.
This measure is determinantal.
Its correlation kernel,
which we call the 
(\emph{two-dimensional}) \emph{inhomogeneous discrete sine kernel}
and denote by $K^z_{\textnormal{2d}}$ (defined below in this subsection),
describes the bulk asymptotic joint distribution of the 
particles associated with a random domino tiling (as in \Cref{fig:intro_dimers}).
We take arbitrary inhomogeneity
parameters
around the global position 
$(\lfloor \alpha N \rfloor ,\lfloor \tau N \rfloor )$, so that
the limiting bulk 
kernel $K^z_{\textnormal{2d}}$ on $\mathbb{Z}^2$ 
is also inhomogeneous. Moreover, it depends on 
four sequences of parameters
$\mathbf{w}=\{w_i \}_{i\in \mathbb{Z}}$, $\boldsymbol\uptheta=\{\theta_i \}_{i\in \mathbb{Z}}$,
$\mathbf{y}=\{y_j \}_{j\in \mathbb{Z}}$, and $\mathbf{s}=\{s_j \}_{j\in \mathbb{Z}}$.
Here the indexing is by
$i,j\in \mathbb{Z}$ because
in the bulk limit the parameters vary 
in all $\mathbb{Z}^2$ directions around the global scaling position
which becomes the origin.
Along with these four sequences,
$K^z_{\textnormal{2d}}$ also depends on a 
point $z$ in the upper half complex
plane. 
In the homogeneous case, 
this point
is responsible for the 
\emph{slope} of the tiling,
i.e., the 
densities of the particles in the horizontal and the vertical directions.
The presence of the complex slope
is typical in 
homogeneous 
two-dimensional bulk 
lattice asymptotics \cite{okounkov2003correlation},
\cite{KOS2006}, \cite{OkounkovKenyon2007Limit}.
However, the dependence on four extra sequences of parameters is a
novel feature of our kernel 
$K^z_{\textnormal{2d}}$
that is a consequence of the
inhomogeneity of our model.

\begin{theorem}[\Cref{thm:bulk_limit} in the text]
	\label{thm:intro_limit_bulk}
	Fix $z$ in the open upper half complex plane.
	Then there exists 
	a choice of parameters
	of the ascending FG process
	together with
	a global location $(\alpha,\tau)$
	(detailed in \Cref{sub:scaling_and_parameter_assumptions}), 
	such that in the 
	limit 
	as $N,T\to +\infty$, $T\gg N$,
	the correlation kernel
	$K_{\mathscr{AP}}$ 
	\eqref{eq:intro_K_AP}
	of the ascending FG process
	admits the limit
	\begin{equation*}
		\lim_{N\to+\infty}K_{\mathscr{AP}}
		(
		{t}+\lfloor \tau N \rfloor ,
		{a}+\lfloor \alpha N \rfloor ;
		{t}'+\lfloor \tau N \rfloor ,
		{a}'+\lfloor \alpha N \rfloor
		)=
		K_{\textnormal{2d}}^z({t},{a};{t}',{a}'),
	\end{equation*}
	where $t,a,t',a'\in \mathbb{Z}$
	are fixed.
\end{theorem}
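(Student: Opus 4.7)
The plan is to apply the method of steepest descent to the double contour integral formula \eqref{eq:intro_K_AP} for $K_{\mathscr{AP}}$. Under the scaling of \Cref{sub:scaling_and_parameter_assumptions}, the parameters $x_k, y_k, r_k, s_k$ and $w_d, \theta_d$ take fixed asymptotic values for indices outside a finite window of size $M$ centered at $\lfloor \alpha N \rfloor$ (respectively $\lfloor \tau N \rfloor$). Consequently, each of the five products in the integrand of \eqref{eq:intro_K_AP} factors as a finite product of window factors times a large power of a fixed rational function. Pulling the large powers out, the integrand takes the shape $e^{N S(u) - N S(v)}\, R(u, v; t, a, t', a')$ (times the Cauchy factor $1/(u - v)$), where $S$ is an explicit $N$-independent action depending on $(\alpha, \tau)$ and on the asymptotic parameter values, and $R$ collects the finite-window contributions together with the $O(1)$ shifts encoding $t, a, t', a'$.

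The next step is to locate the saddles of $S$. The equation $S'(u) = 0$ is rational in $u$, and under the parameter tuning of \Cref{sub:scaling_and_parameter_assumptions} it has a distinguished complex-conjugate pair of roots $z, \bar z$; in fact the parameters are selected precisely so that any prescribed $z$ in the open upper half plane arises this way. I would then deform the $u$-contour to a steepest descent contour through $z$ and, symmetrically, the $v$-contour through $\bar z$. The deformation must preserve the original pole-separation properties: the $u$-contour should continue to enclose $\{y_k\} \cup \{\theta_d^{-2} w_d\}$ and exclude $\{x_k\}$, and the $v$-contour should enclose $\{y_k\} \cup \{w_d\}$ and exclude $\{s_k^{-2} y_k\}$. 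The two cases $t \le t'$ and $t > t'$ must be handled separately, because they correspond to different relative nestings of the two contours.

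Along the steepest descent contours, $\mathrm{Re}(S(u) - S(v))$ is maximized at $(z, \bar z)$ and decays away from it, so the contribution from any portion of the contours bounded away from the saddles decays exponentially in $N$ and drops out of the limit. In the shrinking neighborhood of $(z, \bar z)$ that survives, the factor $e^{N(S(u) - S(v))}$ is of unit modulus on the limiting contour (which passes through $z, \bar z$ in directions along which $\mathrm{Re}\, S$ is stationary), so the limiting integrand is simply the finite-window factor $R(u, v; t, a, t', a')/(u - v)$, integrated along a fixed contour through $z$ and $\bar z$ in the appropriate pole-avoiding region. This is precisely $K^z_{\textnormal{2d}}$. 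The subsequent limit $M \to \infty$ then extends the window parameters to the full bi-infinite sequences $\mathbf{w}, \boldsymbol\uptheta, \mathbf{y}, \mathbf{s}$.

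The main technical obstacle is justifying the contour deformation with uniform control on the tails: one must verify that a steepest descent contour through $z$ (respectively $\bar z$) exists in the correct pole-avoiding region, that no pole is crossed in the deformation, and that $|e^{N(S(u) - S(v))}|$ decays uniformly along the tails as the window parameters vary and as $M$ eventually grows. A subtler point is matching a prescribed $z$ in the open upper half plane with a viable choice of asymptotic parameters and scaling location $(\alpha, \tau)$: the equation $S'(z) = 0$ together with its conjugate imposes constraints on these data that must be absorbed into the careful parameter selection of \Cref{sub:scaling_and_parameter_assumptions}.
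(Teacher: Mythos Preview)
Your steepest descent framework is correct in outline, but you have missed the central mechanism. You describe the limit as arising from a ``shrinking neighborhood of $(z,\bar z)$ that survives,'' with $e^{N(S(u)-S(v))}$ of unit modulus there and the double integral of $R/(u-v)$ giving $K^z_{\textnormal{2d}}$. That picture would yield a vanishing limit: on the deformed contours one arranges $\Re(S(u)-S(v))\le 0$ everywhere with equality only at $u=v\in\{z,\bar z\}$, and a standard Gaussian estimate near those points shows the whole double integral is $O(N^{-1/2})$. It does not survive.

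The actual source of the nonzero limit is the pole at $u=v$ from the Cauchy factor $1/(u-v)$. Both closed contours are deformed to pass through \emph{both} critical points $z,\bar z$, but along \emph{different} curves: the $u$-contour along steepest descent of $\Re S$, the $v$-contour along steepest ascent (see \Cref{lemma:moving_contours}). During this deformation the two contours necessarily cross each other, and one must pick up the residue at $u=v$. At $u=v$ the exponential factor $e^{N(S(u)-S(v))}$ is identically $1$, so what remains is a \emph{single} integral of the finite-window factor along the arc swept out by the crossing --- an arc from $\bar z$ to $z$, with its real-axis crossing point determined by the sign of $\Delta t=t'-t$. That single arc integral is exactly formula \eqref{eq:intro_inhom_sine} for $K^z_{\textnormal{2d}}$; this is \Cref{lemma:residue_u_v} in the paper. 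Your remark that one must check ``no pole is crossed in the deformation'' is therefore backwards for the crucial pole $u=v$: without crossing it and collecting the residue, nothing remains in the limit.
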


We establish \Cref{thm:intro_limit_bulk}
using the steepest descent method
for double contour integral correlation kernels
which essentially follows
\cite[Sections 3.1, 3.2]{Okounkov2002}.
This technique is quite standard, and we refer to 
\Cref{sec:asymptotics}
in the text for detailed formulations.

\medskip

Let us now proceed with the definition of
the inhomogeneous discrete sine kernel
$K^{z}_{\textnormal{2d}}$. First, we need some auxiliary notation.
For any two sequences
$\mathbf{b}=\{b_i \}_{i\in \mathbb{Z}}$ and
$\mathbf{c}=\{c_i \}_{i\in \mathbb{Z}}$,
define the following inhomogeneous analogues of the
power functions $U\mapsto U^n$, $n\in \mathbb{Z}$:
\begin{equation}
	\label{eq:intro_inhom_powers}
	\mathcal{P}_{n,n'}(u\mid \mathbf{b};\mathbf{c}):=
	\begin{cases}
		\displaystyle\prod_{j=n+1}^{n'}\frac{u-b_j}{u-c_j},&n<n';\\
		1,&n=n';\\
		\displaystyle\prod_{j=n'+1}^{n}\frac{u-c_j}{u-b_j},&n>n',
	\end{cases}
	\qquad \qquad n,n'\in \mathbb{Z}.
\end{equation}
Assume that the sequences satisfy
\begin{equation}
	\label{eq:intro_wyst_ordering}
		\sup\nolimits_i w_i< 
		\inf\nolimits_j y_j\le
		\sup\nolimits_j y_j<
		\inf\nolimits_i \theta_i^{-2}w_i\le 
		\sup\nolimits_i \theta_i^{-2}w_i<
		\inf\nolimits_j s_j^{-2}y_j.
\end{equation}
These ordering conditions are equivalent 
(as we show in \Cref{lemma:conditions_on_parameters_natural}
in the text)
to the 
fact that all the domino weights
given in \Cref{fig:intro_dimer_weights}, (a)
are positive and separated from zero and infinity.
We now define the 
two-dimensional inhomogeneous discrete sine kernel as
\begin{equation}
	\label{eq:intro_inhom_sine}
	K_{\textnormal{2d}}^{z}(t,a;t',a'):=
	-
	\frac{1}{2\pi\mathbf{i}}\int_{\bar z}^{z}
	\frac{y_a(1-s_a^{-2})}{(u-y_a)(u-s_{a'}^{-2}y_{a'})}
	\,\mathcal{P}_{a,a'}(u\mid \mathbf{s}^{-2}\mathbf{y};\mathbf{y})
	\,
	\mathcal{P}_{t,t'}(u\mid \mathbf{w};\boldsymbol\uptheta^{-2}\mathbf{w})
	\,
	du,
\end{equation}
where
$t,a,t',a'\in \mathbb{Z}$.
The integration contour 
is an arc
from $\bar z$ to $z$ which 
crosses the real line 
to the left of all
$w_i$ when $\Delta t=t'-t\ge 0$; and
between 
$\theta_i^{-2}w_i$ and $s_j^{-2}y_j$
when $\Delta t <0$.

From the fact 
that $K^z_{\textnormal{2d}}$
is a limit of $K_{\mathscr{AP}}$ (\Cref{thm:intro_limit_bulk}),
the correlation kernel of a 
determinantal random point process coming from the 
ascending FG process,
we deduce that $K^z_{\textnormal{2d}}$ 
with arbitrary inhomogeneity parameters 
$\mathbf{w},\boldsymbol\uptheta,\mathbf{y},\mathbf{s}$
satisfying the ordering 
\eqref{eq:intro_wyst_ordering}
has the following nonnegativity property:

\begin{theorem}[\Cref{thm:K_z_stochastic} in the text]
	\label{thm:intro_K_z}
	Under the above assumptions on the parameters and
	for any $z$ in the open upper half complex plane,
	the kernel $K^z_{\textnormal{2d}}$ 
	defines a determinantal random point process 
	on $\mathbb{Z}^2$. In particular,
	all symmetric minors 
	$\det[K^z_{\textnormal{2d}}(t_i,a_i;t_j,a_j)]$
	of any order are 
	between $0$ and $1$.
\end{theorem}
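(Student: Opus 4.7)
The plan is to deduce the theorem from \Cref{thm:intro_limit_bulk} together with the determinantal structure of the ascending FG process (\Cref{thm:intro_kernel}). The guiding principle is that on a countable state space, pointwise limits of correlation kernels of honest determinantal point processes automatically yield determinantal point processes, since cylinder probabilities are polynomial in the kernel entries.

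First I would fix arbitrary sequences $\mathbf{w},\boldsymbol\uptheta,\mathbf{y},\mathbf{s}$ satisfying the ordering \eqref{eq:intro_wyst_ordering} and a slope $z$ in the open upper half plane, and take any finite window $\Lambda=\{p_i\}_{i=1}^m=\{(t_i,a_i)\}_{i=1}^m\subset\mathbb{Z}^2$. For $N$ large enough, $\Lambda$ fits into the positive quadrant after the shift by $(\lfloor\tau N\rfloor,\lfloor\alpha N\rfloor)$. By \Cref{thm:intro_limit_bulk}, one may choose ascending FG parameters so that the pre-limit inhomogeneities at the shift location reproduce the prescribed $\mathbf{w},\boldsymbol\uptheta,\mathbf{y},\mathbf{s}$ on $\Lambda$ and so that the bulk limit has slope $z$. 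Pointwise convergence of the kernels on $\Lambda$ then gives
\begin{equation*}
\det\bigl[K_{\mathscr{AP}}(\widetilde p_i,\widetilde p_j)\bigr]_{i,j\in S}\;\longrightarrow\;\det\bigl[K^z_{\textnormal{2d}}(p_i,p_j)\bigr]_{i,j\in S},\qquad N\to\infty,
\end{equation*}
for every $S\subseteq\{1,\ldots,m\}$, where $\widetilde p$ denotes the shifted point $p+(\lfloor\tau N\rfloor,\lfloor\alpha N\rfloor)$.

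Second, I would write out cylinder probabilities by inclusion--exclusion: for any $B\subseteq\Lambda$,
\begin{equation*}
\mathbb{P}_{\mathscr{AP}}\bigl[\mathcal{S}^{(T)}\cap\widetilde\Lambda=\widetilde B\bigr]=\sum_{C:\,B\subseteq C\subseteq\Lambda}(-1)^{|C|-|B|}\det\bigl[K_{\mathscr{AP}}(\widetilde p,\widetilde q)\bigr]_{p,q\in C}.
\end{equation*}
These are genuine probabilities (nonnegative and summing to $1$ over $B\subseteq\Lambda$) because the ascending FG process is a bona fide probability measure. Passing entrywise to the limit yields a family $\{\Pi_\Lambda\}$ of candidate cylinder probabilities on $\mathbb{Z}^2$, built from $K^z_{\textnormal{2d}}$ by the same inclusion--exclusion, still satisfying $\Pi_\Lambda(B)\ge0$ and $\sum_{B\subseteq\Lambda}\Pi_\Lambda(B)=1$.

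Finally, I would invoke Kolmogorov's extension theorem for random subsets of the countable set $\mathbb{Z}^2$. Consistency of $\{\Pi_\Lambda\}$ under restriction to subwindows is an algebraic identity built into the inclusion--exclusion formula and hence automatic; together with nonnegativity and normalization it produces a unique probability measure on point configurations in $\mathbb{Z}^2$ whose $m$-point correlation functions are, by construction, $\det[K^z_{\textnormal{2d}}(p_i,p_j)]_{i,j=1}^m$, and these therefore lie in $[0,1]$. The main obstacle is the first step: checking that the scaling assumptions underlying \Cref{thm:intro_limit_bulk} are flexible enough that the ascending FG parameters can realize arbitrary $\mathbf{w},\boldsymbol\uptheta,\mathbf{y},\mathbf{s}$ on any prescribed finite window while still producing a prescribed slope $z$. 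This is not conceptually deep---the scaling regime only constrains the far-field behavior of the parameter sequences, leaving any finite middle block free---but it is the one place where the precise bulk-scaling conventions of the body of the paper must be verified to be compatible with the claim.
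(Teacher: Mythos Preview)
Your proposal is correct and follows essentially the same approach as the paper: exhibit $K^z_{\textnormal{2d}}$ as a pointwise limit of correlation kernels of genuine determinantal processes (the ascending FG processes via \Cref{thm:intro_limit_bulk}), and conclude by the standard fact that such limits on countable spaces inherit the determinantal structure. The paper packages the ``finite window'' flexibility you identify as the main obstacle through an explicit cutoff parameter $L$: it truncates the given sequences to $[-L,L]$, sets them equal to chosen global values outside, applies \Cref{thm:intro_limit_bulk} to obtain a process with kernel $K^{z,(L)}_{\textnormal{2d}}$, and then observes that for any fixed matrix element $K^z_{\textnormal{2d}}(t,a;t',a')$ the truncated and untruncated kernels agree once $L>\max\{|t|,|a|,|t'|,|a'|\}$, so the $L\to\infty$ limit is trivial.
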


Indeed, each symmetric minor
$\det[K^z_{\textnormal{2d}}(t_i,a_i;t_j,a_j)]$
is the probability of the
correlation event $\{\textnormal{the random configuration contains
all the points $(t_i,a_i)$}\}$. This is nonnegative since the 
process $K^z_{\textnormal{2d}}$ is a limit of a 
\emph{bona fide} determinantal random point process.
We refer to \cite{Soshnikov2000},
\cite{peres2006determinantal},
\cite{Borodin2009}
for generalities on determinantal processes.

In \Cref{sec:inhom_sine_kernel}
we discuss specializations of the kernel 
$K^z_{\textnormal{2d}}$
leading to known determinantal
correlation kernels arising in 
bulk lattice limits: 
\begin{enumerate}[$\bullet$]
	\item the one-dimensional discrete sine kernel \cite{Borodin2000b};
	\item one-dimensional periodic and inhomogeneous generalizations of
		the discrete sine kernel 
		\cite{borodin2007periodic},
		\cite{borodin2010gibbs},
		\cite{Mkrtchyan2014Periodic}, \cite{Mkrtchyan2019}; 
	\item the bulk
		kernel arising from uniformly random domino tilings which 
		is a bulk
		limit of \cite[(2.21)]{Johansson2005arctic} but also follows
		from the general theory of \cite{KOS2006};
	\item the incomplete beta kernel \cite{okounkov2003correlation}
		giving rise to the unique family of ergodic translation invariant
		Gibbs measures on lozenge tilings of the whole plane
		\cite{Sheffield2008}, \cite{KOS2006} indexed by the complex slope
		$z$;
	\item 
		and $k\mathbb{Z}\times \mathbb{Z}$ periodic
		generalizations of the incomplete beta kernel
		\cite{borodin2010gibbs}, \cite{Mkrtchyan2014Periodic}, \cite{Mkrtchyan2019}.
\end{enumerate}

Our kernel $K^z_{\textnormal{2d}}$
admits a $k\mathbb{Z}\times m\mathbb{Z}$
periodic specialization for all $k,m\ge1$
by taking the 
parameters $(w_i,\theta_i)$ to be $m$-periodic in $i$,
and the parameters $(y_j,s_j)$ to be $k$-periodic in $j$.
For general $m\ge2$ 
the arc integral representation
\eqref{eq:intro_inhom_sine}
of such a periodic kernel is new. 
It would be interesting to match this 
arc integral to the 
two-dimensional torus integral representation
of the doubly periodic kernels
which follows from the general theory of 
\cite{KOS2006}, but we do not pursue this here.
		
We also remark that our kernel 
$K^z_{\textnormal{2d}}$ corresponds
only to the so-called liquid
phase of the domino tiling model.
It is known 
\cite{KOS2006},
\cite{chhita2016domino},
\cite{duits2017two},
\cite{berggren2021domino}
that 
doubly periodic domino weights may
lead to the appearance of 
gaseous phase. The gaseous phase is not present in our FG processes
because our domino weights
are \emph{not} fully generic and depend on their 
many parameters in quite a special way.
In particular,
in the $2\mathbb{Z}\times 2\mathbb{Z}$ periodic case
we have verified that
the domino weights are gauge equivalent
(in the sense of \cite[Section 3.10]{Kenyon2007Lecture}),
in a nontrivial way, to 
weights periodic in only one direction.

\subsection{Fermionic operators and correlation functions}
\label{sub:intro_fermionic_operators}

In this final part of the Introduction
we outline definitions and main properties
of 
fermionic operators acting in a Fock, or ``infinite wedge'', space.
Detailed definitions and statements are in 
\Cref{sec:fermionic_operators,sec:Fock_and_FG_process} below.

Our fermionic operators are combinations of
Algebraic Bethe Ansatz row 
operators constructed
from the vertex weights
$W$ \eqref{eq:intro_6v_weights}.
The fermionic operators
allow to compute certain
generating function type series
involving the correlation functions
of the FG processes.
The correlation functions are then extracted as series 
coefficients using inhomogeneous biorthogonality similar to 
\eqref{eq:intro_orthogonality}.

Fock space and fermionic operators coming from
Pieri rules for Schur functions
were used 
in \cite{okounkov2001infinite},
\cite{okounkov2003correlation}
to compute correlation 
kernels of Schur measures and processes.
Expressions for local operators and correlations
in various quantum integrable systems
through the row operators $A,B,C,D$ also
appear in, e.g., 
\cite{oota2003quantum}, \cite{kitanine2002spin}, but our model and formulas
are quite different from those.
It is also worth noting that
fermionic operators in the homogeneous
Fock space associated to the
free fermion six vertex model (and again leading to Schur functions)
were considered recently in 
\cite{korff2021cylindric}.
However, our inhomogeneous Fock space 
and
the fermionic operators acting in it 
arising from the free fermion six vertex model
seem to be new.

\medskip

A 
subset $\mathcal{T}\subset \mathbb{Z}$
is called
semi-infinite 
(or densely packed towards $-\infty$)
if
there exists $M=M(\mathcal{T})>0$
with $i\notin \mathcal{T}$ for all $i>M$ and $i\in \mathcal{T}$ for all $i< -M$.
For a semi-infinite subset, define its 
charge
$c(\mathcal{T}):=\#(\mathcal{T}\cap \mathbb{Z}_{>0})-
\#(\mathbb{Z}_{\le 0}\setminus \mathcal{T})$.
For example, all zero-charge semi-infinite subsets
are finite permutations of 
$\mathbb{Z}_{\le0}$.

Let $\mathscr{F}$ be the (\emph{fermionic})
\emph{Fock space} 
spanned by $e_{\mathcal{T}}$, where $\mathcal{T}$
runs over all semi-infinite subsets of $\mathbb{Z}$.
We view $\mathscr{F}$ as a 
subspace of the formal
infinite tensor product 
$\bigotimes_{m=-\infty}^{+\infty} V^{(m)} $,
where each $V^{(m)}$ is isomorphic to 
$\mathbb{C}^2$ with standard basis $e^{(m)}_0,e^{(m)}_1$.
This is done by interpreting
each $e_{\mathcal{T}}$ as a tensor product
\begin{equation*}
	e_{\mathcal{T}}=
	\bigotimes_{m=-\infty}^{+\infty}
	e_{k_m}^{(m)},
	\qquad k_m=k_m(\mathcal{T})=\mathbf{1}_{m\in \mathcal{T}}.
\end{equation*}
Here and below
$\mathbf{1}_{\cdots}$
is the indicator.
Sometimes in the literature the wedge product symbol
$
\bigwedge_{m=-\infty}^{+\infty}
e_{k_m}^{(m)}
$
is used instead of the tensor product,
with the same meaning.
In more detail, we never implicitly use the wedge commutation relation
$v\wedge w=-w\wedge v$, and whenever signs are required we insert them 
explicitly, as in, e.g., the creation and annihilation operators
$\psi_j,\psi_j^*$ in \eqref{eq:create_annih_intro} below.

We also need 
an inner product in $\mathscr{F}$
under which the $e_{\mathcal{T}}$'s form an orthonormal basis,
that is,
$\langle e_{\mathcal{T}},e_{\mathcal{R}} \rangle =\mathbf{1}_{\mathcal{T}=\mathcal{R}}$.
Let us decompose
$\mathscr{F}$ into subspaces with fixed charge:
\begin{equation*}
	\mathscr{F}=\bigoplus_{n\in \mathbb{Z}}\mathscr{F}_n,\qquad 
	\mathscr{F}_n=\mathop{\mathrm{span}}
	\left\{ e_{\mathcal{T}}\colon c(\mathcal{T})=n \right\}.
\end{equation*}

We are now in a position to define the row operators
$A^{\mathbb{Z}}=A^{\mathbb{Z}}(x,r)$,
$B^{\mathbb{Z}}=B^{\mathbb{Z}}(x,r)$,
$C^{\mathbb{Z}}=C^{\mathbb{Z}}(x,r)$, and
$D^{\mathbb{Z}}=D^{\mathbb{Z}}(x,r)$
acting in $\mathscr{F}$. They act in the following
way with respect to the charge:
\begin{equation*}
	A^{\mathbb{Z}}\colon \mathscr{F}_n\to \mathscr{F}_n,\qquad 
	B^{\mathbb{Z}}\colon \mathscr{F}_n\to \mathscr{F}_{n-1},\qquad 
	C^{\mathbb{Z}}\colon \mathscr{F}_n\to \mathscr{F}_{n+1},\qquad 
	D^{\mathbb{Z}}\colon \mathscr{F}_n\to \mathscr{F}_n.
\end{equation*}
We define these operators pictorially
through their matrix elements.
For $A^{\mathbb{Z}}$ we have
\begin{equation*}
	\langle A^{\mathbb{Z}}(x,r)e_{\mathcal{T}},e_{\mathcal{R}} \rangle =
	\frac{W
	\left(
		\scalebox{.7}{
		\begin{tikzpicture}
			[scale=1,thick,baseline=-9pt]
		\draw[densely dotted] (-5.5,0)--(5.5,0);
		\foreach \ii in {-5,...,5}
		{
			\draw[densely dotted] (\ii,.5)--++(0,-1) node [below] {$\ii$};
		}
		\draw[red,ultra thick] (-6,0)--++(4,0)--++(0,.5);
		\draw[red,ultra thick] (-5,-.5)--++(0,1);
		\draw[red,ultra thick] (-4,-.5)--++(0,1);
		\draw[red,ultra thick] (-3,-.5)--++(0,1);
		\draw[red,ultra thick] (-1,-.5)--++(0,1);
		\draw[red,ultra thick] (0,-.5)--++(0,.5)--(2,0)--++(0,.5);
		\draw[red,ultra thick] (1,-.5)--++(0,1);
		\draw[red,ultra thick] (3,-.5)--++(0,1);
		\draw[red,ultra thick] (4,-.5)--++(0,.5)--++(2,0);
		\node at (.5,-.4) {$\mathcal{T}$};
		\node at (.5,.3) {$\mathcal{R}$};
		\node at (-5.5,.25) {\large$\dots$};
		\node at (5.5,.25) {\large$\dots$};
		\end{tikzpicture}}
		\right)
	}
	{
	W
	\left(
		\scalebox{.7}{
		\begin{tikzpicture}
			[scale=1,thick,baseline=-9pt]
		\draw[densely dotted] (-5.5,0)--(5.5,0);
		\foreach \ii in {-5,...,5}
		{
			\draw[densely dotted] (\ii,.5)--++(0,-1) node [below] {$\ii$};
		}
		\draw[red,ultra thick] (-6,0)--++(12,0);
		\draw[red,ultra thick] (-5,-.5)--++(0,1);
		\draw[red,ultra thick] (-4,-.5)--++(0,1);
		\draw[red,ultra thick] (-3,-.5)--++(0,1);
		\draw[red,ultra thick] (-2,-.5)--++(0,1);
		\draw[red,ultra thick] (-1,-.5)--++(0,1);
		\draw[red,ultra thick] (0,-.5)--++(0,1);
		\node at (-5.5,.25) {\large$\dots$};
		\node at (5.5,.25) {\large$\dots$};
		\end{tikzpicture}}
		\right)
	}.
\end{equation*}
Here the numerator is a formal infinite product
of vertex weights $W(\cdots\mid x;y_j;r;s_j)$
over all $j\in \mathbb{Z}$,
where the bottom and top boundary conditions are
$\mathcal{T}$ and $\mathcal{R}$, respectively,
and the far left and far right boundary conditions are occupied.
By definition, the product of the weights $W(\cdots)$
is zero if there are no six vertex model configurations with these
boundary conditions.
The denominator in $A^{\mathbb{Z}}$ 
is the normalization factor which is also a 
formal infinite product. The ratio is well-defined
as 
the product of the ratios 
of the weights at each lattice site $j\in \mathbb{Z}$,
because this product involves only finitely 
many factors not equal to $1$.

Similarly we define the other three operators,
$B^{\mathbb{Z}}$ with boundary conditions empty and full 
at far left and far right,
$C^{\mathbb{Z}}$
with boundary conditions full and empty at far left and far right,
and $D^{\mathbb{Z}}$
with empty boundary conditions on both sides:
\begin{align*}
&
	\langle B^{\mathbb{Z}}(x,r)e_{\mathcal{T}},e_{\mathcal{R}} \rangle =
	\frac{W
	\left(
		\scalebox{.7}{
		\begin{tikzpicture}
			[scale=1,thick,baseline=-9pt]
		\draw[densely dotted] (-5.5,0)--(5.5,0);
		\foreach \ii in {-5,...,5}
		{
			\draw[densely dotted] (\ii,.5)--++(0,-1) node [below] {$\ii$};
		}
		\draw[red,ultra thick] (-5,-.5)--++(0,1);
		\draw[red,ultra thick] (-4,-.5)--++(0,1);
		\draw[red,ultra thick] (-3,-.5)--++(0,1);
		\draw[red,ultra thick] (-1,-.5)--++(0,1);
		\draw[red,ultra thick] (0,-.5)--++(0,.5)--(2,0)--++(0,.5);
		\draw[red,ultra thick] (1,-.5)--++(0,1);
		\draw[red,ultra thick] (3,-.5)--++(0,.5)--++(3,0);
		\node at (.5,-.4) {$\mathcal{T}$};
		\node at (.5,.3) {$\mathcal{R}$};
		\node at (-5.5,.25) {\large$\dots$};
		\node at (5.5,.25) {\large$\dots$};
		\end{tikzpicture}}
		\right)
	}
	{
	W
	\left(
		\scalebox{.7}{
		\begin{tikzpicture}
			[scale=1,thick,baseline=-9pt]
		\draw[densely dotted] (-5.5,0)--(0.5,0);
		\foreach \ii in {-5,...,0}
		{
			\draw[densely dotted] (\ii,.5)--++(0,-1) node [below] {$\ii$};
		}
		\draw[red,ultra thick] (-5,-.5)--++(0,1);
		\draw[red,ultra thick] (-4,-.5)--++(0,1);
		\draw[red,ultra thick] (-3,-.5)--++(0,1);
		\draw[red,ultra thick] (-2,-.5)--++(0,1);
		\draw[red,ultra thick] (-1,-.5)--++(0,1);
		\draw[red,ultra thick] (0,-.5)--++(0,1);
		\node at (-5.5,.25) {\large$\dots$};
		\end{tikzpicture}}
		\right)
	W
	\left(
		\scalebox{.7}{
		\begin{tikzpicture}
			[scale=1,thick,baseline=-9pt]
		\draw[densely dotted] (.5,0)--(5.5,0);
		\foreach \ii in {1,...,5}
		{
			\draw[densely dotted] (\ii,.5)--++(0,-1) node [below] {$\ii$};
		}
		\node at (5.5,.25) {\large$\dots$};
		\draw[red,ultra thick] (0,0)--++(6,0);
		\end{tikzpicture}}
		\right)
	},
	\\
	&\hspace{20pt}
	\langle C^{\mathbb{Z}}(x,r)e_{\mathcal{T}},e_{\mathcal{R}} \rangle =
	\frac{W
	\left(
		\scalebox{.7}{
		\begin{tikzpicture}
			[scale=1,thick,baseline=-9pt]
		\draw[densely dotted] (-5.5,0)--(5.5,0);
		\foreach \ii in {-5,...,5}
		{
			\draw[densely dotted] (\ii,.5)--++(0,-1) node [below] {$\ii$};
		}
		\draw[red,ultra thick] (-6,0)--++(4,0)--++(0,.5);
		\draw[red,ultra thick] (-5,-.5)--++(0,1);
		\draw[red,ultra thick] (-4,-.5)--++(0,1);
		\draw[red,ultra thick] (-3,-.5)--++(0,1);
		\draw[red,ultra thick] (-1,-.5)--++(0,1);
		\draw[red,ultra thick] (0,-.5)--++(0,.5)--(2,0)--++(0,.5);
		\draw[red,ultra thick] (1,-.5)--++(0,1);
		\draw[red,ultra thick] (3,-.5)--++(0,1);
		\node at (.5,-.4) {$\mathcal{T}$};
		\node at (.5,.3) {$\mathcal{R}$};
		\node at (-5.5,.25) {\large$\dots$};
		\node at (5.5,.25) {\large$\dots$};
		\end{tikzpicture}}
		\right)
	}
	{
	W
		\left(
		\scalebox{.7}{
		\begin{tikzpicture}
			[scale=1,thick,baseline=-9pt]
		\draw[densely dotted] (-5.5,0)--(0.5,0);
		\foreach \ii in {-5,...,0}
		{
			\draw[densely dotted] (\ii,.5)--++(0,-1) node [below] {$\ii$};
		}
		\draw[red,ultra thick] (-6,0)--++(6.5,0);
		\draw[red,ultra thick] (-5,-.5)--++(0,1);
		\draw[red,ultra thick] (-4,-.5)--++(0,1);
		\draw[red,ultra thick] (-3,-.5)--++(0,1);
		\draw[red,ultra thick] (-2,-.5)--++(0,1);
		\draw[red,ultra thick] (-1,-.5)--++(0,1);
		\draw[red,ultra thick] (0,-.5)--++(0,1);
		\node at (-5.5,.25) {\large$\dots$};
		\end{tikzpicture}}
		\right)
	},
	\\
	&\hspace{40pt}
	\langle D^{\mathbb{Z}}(x,r)e_{\mathcal{T}},e_{\mathcal{R}} \rangle =
	\frac{W
	\left(
		\scalebox{.7}{
		\begin{tikzpicture}
			[scale=1,thick,baseline=-9pt]
		\draw[densely dotted] (-5.5,0)--(5.5,0);
		\foreach \ii in {-5,...,5}
		{
			\draw[densely dotted] (\ii,.5)--++(0,-1) node [below] {$\ii$};
		}
		\draw[red,ultra thick] (-5,-.5)--++(0,1);
		\draw[red,ultra thick] (-4,-.5)--++(0,1);
		\draw[red,ultra thick] (-3,-.5)--++(0,.5)--++(1,0)--++(0,.5);
		\draw[red,ultra thick] (-1,-.5)--++(0,1);
		\draw[red,ultra thick] (0,-.5)--++(0,.5)--(2,0)--++(0,.5);
		\draw[red,ultra thick] (1,-.5)--++(0,1);
		\draw[red,ultra thick] (3,-.5)--++(0,1);
		\node at (.5,-.4) {$\mathcal{T}$};
		\node at (.5,.3) {$\mathcal{R}$};
		\node at (-5.5,.25) {\large$\dots$};
		\node at (5.5,.25) {\large$\dots$};
		\end{tikzpicture}}
		\right)
	}
	{
	W
	\left(
		\scalebox{.7}{
		\begin{tikzpicture}
			[scale=1,thick,baseline=-9pt]
		\draw[densely dotted] (-5.5,0)--(0.5,0);
		\foreach \ii in {-5,...,0}
		{
			\draw[densely dotted] (\ii,.5)--++(0,-1) node [below] {$\ii$};
		}
		\draw[red,ultra thick] (-5,-.5)--++(0,1);
		\draw[red,ultra thick] (-4,-.5)--++(0,1);
		\draw[red,ultra thick] (-3,-.5)--++(0,1);
		\draw[red,ultra thick] (-2,-.5)--++(0,1);
		\draw[red,ultra thick] (-1,-.5)--++(0,1);
		\draw[red,ultra thick] (0,-.5)--++(0,1);
		\node at (-5.5,.25) {\large$\dots$};
		\end{tikzpicture}}
		\right)
	}.
\end{align*}
We refer to \Cref{appC:normalized_ops,appC:fermionic_operators}
below for formal definitions of these operators in the Fock space
via a limiting procedure 
$m\to-\infty$, $n\to+\infty$
starting 
from finite segments $\left\{ m,m+1,\ldots,n-1,n  \right\}$.


\medskip

Thanks to the Yang--Baxter equation,
the row operators satisfy a number of commutation relations,
for example,
\begin{equation}
	\label{eq:intro_BD_relation}
	B^{\mathbb{Z}}(x,r)
	D^{\mathbb{Z}}(w,\theta)
	=
	\frac{x-\theta^{-2}w}{x-w}\,
	D^{\mathbb{Z}}(w,\theta)
	B^{\mathbb{Z}}(x,r),
\end{equation}
provided that $x,w$ satisfy \eqref{eq:intro_Cauchy_condition},
which is required to avoid diverging infinite series.
Other commutation relations are listed in
\Cref{prop:ABCD_infinite_volume}
in the text.

In fact, relation \eqref{eq:intro_BD_relation} is closely related
to the Cauchy identity
(\Cref{thm:intro_Cauchy}). 
Let $\lambda$ be a signature with $N$ parts,
and set
$\mathcal{T}=\{\lambda_1+N,\lambda_2+N-1,\ldots,\lambda_N+1 \}\cup \mathbb{Z}_{\le0}$.
Then
\begin{equation}
	\label{eq:intro_F_G_via_B_D}
	\begin{split}
	F_\lambda(x_1,\ldots,x_N;\mathbf{y};r_1,\ldots,r_N ;\mathbf{s} )
	&=
	\langle e_\mathcal{T} , B^{\mathbb{Z}}(x_N,r_N)\ldots
	B^{\mathbb{Z}}(x_1,r_1) e_{\mathbb{Z}_{\le0}} \rangle,
	\\
	G_\lambda(w_1,\ldots,w_M;\mathbf{y};\theta_1,\ldots,\theta_M ;\mathbf{s} )
	&=
	\langle e_\mathcal{T} , 
	D^{\mathbb{Z}}(w_M,\theta_M)\ldots
	 D^{\mathbb{Z}}(w_1,\theta_1)
	 e_{\mathbb{Z}_{\le N}} \rangle.
	\end{split}
\end{equation}
Applying
\eqref{eq:intro_BD_relation} several times, 
we have
\begin{equation}
	\label{eq:intro_BD_Cauchy}
	\begin{split}
		&
		\langle e_{\mathbb{Z}_{\le0}},
		B^{\mathbb{Z}}(x_N,r_N)\ldots
		B^{\mathbb{Z}}(x_1,r_1)
		D^{\mathbb{Z}}(w_M,\theta_M)\ldots
		D^{\mathbb{Z}}(w_1,\theta_1)
		e_{\mathbb{Z}_{\le N}} \rangle 
		\\&\hspace{7pt}=
		\prod_{i=1}^{N}\prod_{j=1}^{M}\frac{x_i-\theta_j^{-2}w_j}{x_i-w_j}\,
		\langle e_{\mathbb{Z}_{\le0}},
		D^{\mathbb{Z}}(w_M,\theta_M)\ldots
		D^{\mathbb{Z}}(w_1,\theta_1)
		B^{\mathbb{Z}}(x_N,r_N)\ldots
		B^{\mathbb{Z}}(x_1,r_1)
		e_{\mathbb{Z}_{\le N}} \rangle 
		\\&\hspace{7pt}=
		F_{(0,\ldots,0 )}(\mathbf{x};\mathbf{y};\mathbf{r};\mathbf{s})
		\prod_{i=1}^{N}\prod_{j=1}^{M}\frac{x_i-\theta_j^{-2}w_j}{x_i-w_j},
	\end{split}
\end{equation}
where in the last step we removed the $D^{\mathbb{Z}}$ operators
thanks to $e_{\mathbb{Z}_{\le 0}}$ in the 
inner product, and used the definition of $F$ again.
The first line of \eqref{eq:intro_BD_Cauchy}
is precisely the left-hand side of the Cauchy identity
\eqref{eq:intro_Cauhy_identity}. 
In the third line we need
an explicit product formula for 
$F_{(0,\ldots,0 )}$ which follows from \Cref{thm:intro_F_formula}
(see the proof of \Cref{thm:F_G_Cauchy_big} for this computation),
and the resulting expression becomes the right-hand side 
of the Cauchy identity 
\eqref{eq:intro_Cauhy_identity}.

\medskip

Using \eqref{eq:intro_F_G_via_B_D},
we may express the probability 
weight
$\mathscr{M}(\lambda)$ \eqref{eq:intro_FG_measure}
under the FG measure
as the following evaluated inner product in the Fock space:
\begin{equation}
	\label{eq:intro_M_lambda_via_B_D}
	\mathscr{M}(\lambda)=\frac{1}{Z}\,
	\langle 
	e_{\mathbb{Z}_{\le 0}}
	,
	B^{\mathbb{Z}}(x_N,r_N)\ldots
	B^{\mathbb{Z}}(x_1,r_1)
	\, I_\lambda
	D^{\mathbb{Z}}(w_M,\theta_M)\ldots
	D^{\mathbb{Z}}(w_1,\theta_1)
	e_{\mathbb{Z}_{\le N}} \rangle 
\end{equation}
Here and throughout this subsection $Z$ 
denotes the right-hand side of 
the Cauchy identity 
\eqref{eq:intro_Cauhy_identity}.
The operator
$I_\lambda$ is the 
rank one projection in $\mathscr{F}$
onto the semi-infinite subset corresponding to $\lambda$:
\begin{equation*}
	I_\lambda e_{\mathcal{R}}=
	\begin{cases}
		e_{\mathcal{R}},&
		\text{if $\mathcal{R}=\left\{ \lambda_1+N,\lambda_2+N-1,\ldots,\lambda_N+1,0,-1,-2,\ldots  \right\}$};\\
		0,&\text{otherwise},
	\end{cases}
\end{equation*}
for any semi-infinite subset $\mathcal{R}\subset\mathbb{Z}$.
Expressions similar to 
\eqref{eq:intro_M_lambda_via_B_D}
are available for FG processes as well, see
\Cref{sub:FG_meas_proc_through_ops} in the text.
For simplicity of notation, below in this subsection we stick to the case
of FG measures.

If instead of $I_\lambda$ 
we insert into 
\eqref{eq:intro_M_lambda_via_B_D}
a product of creation and annihilation operators in the Fock space 
$\mathscr{F}$,
we would get a correlation function
of the FG measure. 
Recall that the creation and annihilation operators are defined
as
\begin{equation}
	\label{eq:create_annih_intro}
	\begin{split}
		\psi_j e_{\mathcal{T}}
		=
		\begin{cases}
			(-1)^{
				\#\left\{ t\in \mathcal{T}\colon t>j \right\}
			}e_{\mathcal{T}\cup\left\{ j \right\}},
			&j\notin \mathcal{T};\\
			0,&j\in \mathcal{T},
		\end{cases}
		\qquad 
		\psi_j^*e_{\mathcal{T}}=
		\begin{cases}
			(-1)^{
				\#\left\{ t\in \mathcal{T}\colon t>j \right\}
			}
			e_{\mathcal{T}\setminus \left\{ j \right\}},&j\in \mathcal{T};\\
			0,&j\notin \mathcal{T}.
		\end{cases}
	\end{split}
\end{equation}
They satisfy the canonical anticommutation relations
\begin{equation*}
	\psi_k\psi_k^*+\psi_k^*\psi_k=1,\qquad 
	\psi_k\psi_\ell^*+\psi_\ell^*\psi_k
	=
	\psi_k^*\psi_\ell^*+\psi_\ell^*\psi_k^*
	=
	\psi_k\psi_\ell+\psi_\ell\psi_k=0,\qquad 
	k\ne \ell.
\end{equation*}

For any finite subset $A=\{a_1,\ldots,a_m\}\subset\mathbb{Z}_{\ge1}$
we have the following expression 
for the correlation function:
\begin{equation}
	\label{eq:intro_corr_f_via_creation_annihilation}
	\begin{split}
		&
		\mathbb{P}_{\mathscr{M}}\left[ A\subset\{\lambda_1+N,\lambda_2+N-1,
		\ldots,\lambda_N+1 \} \right]
		\\&
		\hspace{10pt}=
		\frac{1}{Z}\,
		\langle 
		e_{\mathbb{Z}_{\le 0}}
		,
		B^{\mathbb{Z}}(x_N,r_N)\ldots
		B^{\mathbb{Z}}(x_1,r_1)
		\,
		\psi_{a_m}
		\psi^*_{a_m}
		\ldots 
		\psi_{a_1}
		\psi^*_{a_1}
		D^{\mathbb{Z}}(w_M,\theta_M)\ldots
		D^{\mathbb{Z}}(w_1,\theta_1)
		e_{\mathbb{Z}_{\le N}} \rangle .
	\end{split}
\end{equation}
Inserting pairs of creation and annihilation operators
between the $D$
operators above produces correlation functions
of ascending FG processes.

\medskip

Instead of computing \eqref{eq:intro_corr_f_via_creation_annihilation}
directly, we replace the 
creation and annihilation operators
with certain 
generating series 
$\Psi(u),\Psi^*(v)$
containing 
creation and annihilation operators.
The series 
$\Psi(u),\Psi^*(v)$
themselves are operators which are expressed through
the row operators with special parameters.
Namely, 
define for any semi-infinite subset $\mathcal{T}$:
\begin{equation}
	\label{eq:intro_Psi_defn}
	\begin{split}
		\Psi(u,\xi)\,e_{\mathcal{T}}
		&
		:=
		D^{\mathbb{Z}}(u,\sqrt{u/\xi})C^{\mathbb{Z}}(\xi,\sqrt{\xi/u})
		(-1)^{c(\mathcal{T})}e_{\mathcal{T}}
		,\\
		\Psi^*(\zeta,v)\,e_{\mathcal{T}}
		&
		:=
		D^{\mathbb{Z}}(\zeta,\sqrt{\zeta/v})
		B^{\mathbb{Z}}(v,\sqrt{v/\zeta})\,e_{\mathcal{T}}.
	\end{split}
\end{equation}

The statement below 
could be viewed 
as an inhomogeneous 
analogue of the Boson--Fermion
correspondence, 
cf.
\cite[Theorem 14.10]{Kac1990InfiniteDim}
for the homogeneous version.
\begin{theorem}[\Cref{thm:Psi_PsiStar_Fock} in the text]
	\label{thm:intro_boson_fermion}
	As operators on $\mathscr{F}$, we have
	\begin{equation}
		\label{eq:intro_boson_fermion}
		\begin{split}
			\Psi(u,\xi)&=\sum_{j\in \mathbb{Z}}
			\frac{y_j(1-s_j^{-2})}{u-s_j^{-2}y_j}
			\, 
			\mathcal{P}_{0,j-1}(u\mid \mathbf{y},\mathbf{s}^{-2}\mathbf{y})
			\,
			\psi_j,\\
			\Psi^*(\zeta,v)&=\sum_{j\in \mathbb{Z}}
			\frac{v-\zeta}{v-y_j}
			\,
			\mathcal{P}_{0,j-1}(v\mid \mathbf{s}^{-2}\mathbf{y},\mathbf{y} )
			\,
			\psi_j^*,
		\end{split}
	\end{equation}
	where we use the notation of inhomogeneous
	powers 
	\eqref{eq:intro_inhom_powers}.
\end{theorem}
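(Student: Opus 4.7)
The plan is to verify both identities in \eqref{eq:intro_boson_fermion} by matching matrix elements of both sides against the orthonormal basis $\{e_{\mathcal{R}}\}$ of the Fock space $\mathscr{F}$. Because $C^{\mathbb{Z}}$ raises charge by one, $B^{\mathbb{Z}}$ lowers it by one, and $D^{\mathbb{Z}}$ preserves charge, the operators $\Psi(u,\xi)$ and $\Psi^*(\zeta,v)$ shift charge by $+1$ and $-1$ respectively, matching the charge shifts of $\psi_j$ and $\psi_j^*$. Consequently it suffices to show that for every semi-infinite $\mathcal{T}$ and every $j\notin\mathcal{T}$ (respectively $j\in\mathcal{T}$), the only nonvanishing matrix element of $\Psi(u,\xi)e_{\mathcal{T}}$ (respectively $\Psi^*(\zeta,v)e_{\mathcal{T}}$) against the target basis is at $e_{\mathcal{T}\cup\{j\}}$ (resp. $e_{\mathcal{T}\setminus\{j\}}$), and to identify the resulting coefficient.

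I would then unwind the definition of $\Psi(u,\xi)$ as a two-row partition function in $\mathscr{F}$: the bottom row is the $C$-row with parameters $(\xi,\sqrt{\xi/u})$ carrying the left/right boundary full/empty, and the top row is the $D$-row with parameters $(u,\sqrt{u/\xi})$ carrying the left/right boundary empty/empty. The specializations $r^{-2}=\xi/u$ in the $D$-row and $r^{-2}=u/\xi$ in the $C$-row cause several single-vertex weights to simplify; crucially the $c_1$-weights of the two rows become $(\xi-u)/(s^{-2}y-u)$ and $(u-\xi)/(s^{-2}y-\xi)$, which are opposite in sign. This sign opposition drives the key cancellation, which I would make rigorous via a train argument: inserting a cross vertex of matching parameters and repeatedly applying the Yang--Baxter equation, one can push the cross vertex through the two rows; in the process all two-row configurations containing more than one "corner" in the combined red path cancel in signed pairs. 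What survives is the unique path in which the red line enters from the left of the bottom row, turns upward at some column $j\notin\mathcal{T}$, and continues vertically to the top boundary at column $j$, forcing $\mathcal{R}=\mathcal{T}\cup\{j\}$.

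Collecting the surviving contribution is then a routine telescoping. For the unique path with corner at column $j$, the intermediate weights in columns $1,\dots,j-1$ produce $\prod_{k=1}^{j-1}(u-y_k)/(u-s_k^{-2}y_k)=\mathcal{P}_{0,j-1}(u\mid\mathbf{y},\mathbf{s}^{-2}\mathbf{y})$, while the corner vertex produces $y_j(1-s_j^{-2})/(u-s_j^{-2}y_j)$; all other columns contribute $1$ under the normalization built into the infinite-volume row operators. The sign $(-1)^{c(\mathcal{T})}$ inserted in the definition \eqref{eq:intro_Psi_defn} of $\Psi$ is chosen precisely to convert the natural ordering sign of the two-row vertex model into the Jordan--Wigner sign $(-1)^{\#\{t\in\mathcal{T}:\,t>j\}}$ built into $\psi_j$, so the matrix element matches the $j$-th term of the right-hand side of \eqref{eq:intro_boson_fermion}. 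The identity for $\Psi^*(\zeta,v)$ is proved in the same way with $B^{\mathbb{Z}}$ in place of $C^{\mathbb{Z}}$: the red line now leaves from the right of the bottom row, the prefactor $(v-\zeta)$ emerges from the unique $c_1$-type corner weight, and the inhomogeneous power becomes $\mathcal{P}_{0,j-1}(v\mid\mathbf{s}^{-2}\mathbf{y},\mathbf{y})$ because the roles of $\mathbf{y}$ and $\mathbf{s}^{-2}\mathbf{y}$ are interchanged by the swap of boundary conditions.

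The main technical obstacle is making the Yang--Baxter train argument rigorous in the infinite-volume setting. Both $\Psi$ and $\Psi^*$ are constructed from the formal infinite products defining $C^{\mathbb{Z}},D^{\mathbb{Z}},B^{\mathbb{Z}}$, so the finite-row cancellation of multiple-corner configurations must be transported to $\mathscr{F}$ via the finite-segment approximations of the row operators from the earlier construction, with some uniform tail control to ensure the limit of the cancellation is again a cancellation. A secondary but delicate point is the sign bookkeeping: keeping a consistent convention for path signs through both rows is exactly what forces the compensating factor $(-1)^{c(\mathcal{T})}$ in \eqref{eq:intro_Psi_defn} to come out matching the Jordan--Wigner sign of $\psi_j$, and this is the step most easily botched in an otherwise routine computation.
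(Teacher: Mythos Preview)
Your overall strategy---compute matrix elements and show that only a single ``one-corner'' configuration survives---is the right picture, and it matches what the paper ultimately proves. But the mechanism you propose for the cancellation does not work. You want to insert a cross vertex with the row parameters $(u,\sqrt{u/\xi})$ and $(\xi,\sqrt{\xi/u})$ and push it through via Yang--Baxter. At exactly these values the cross weights degenerate: with $x_1=u,\ r_1^{-2}=\xi/u,\ x_2=\xi,\ r_2^{-2}=u/\xi$ one has $x_1r_1^{-2}=\xi$ and $x_2r_2^{-2}=u$, so $R(1,1;1,1)=(x_2-x_1r_1^{-2})/(x_1-x_2r_2^{-2})=0/0$ and $R(0,1;0,1)=(x_1-x_2)/(x_1-x_2r_2^{-2})=(u-\xi)/0$. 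The $R$-matrix is singular, and the train argument as stated is not available.

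The paper exploits this degeneration differently. Rather than inserting a cross vertex, it first specializes the quadratic commutation relations of \Cref{prop:ABCD_YBE} to these parameter values and extracts clean algebraic identities (\Cref{prop:simplified_commutation_relations_first_lemma}), e.g.\ $B(x,t)B(z,\sqrt{z/x})=0$, $B(z,\sqrt{z/x})D(x,\sqrt{x/z})+D(z,\sqrt{z/x})B(x,\sqrt{x/z})=0$, and an identity swapping $DA-CB$ with $DA+BC$. These are precisely what forces the cancellations you describe. The vanishing for $\mathcal{R}\not\subset\mathcal{T}$ (\Cref{prop:fermionic_first_correlation}) and the exact coefficient when $\mathcal{R}=\mathcal{T}\setminus\{t_j\}$ (\Cref{prop:normalized_Phi_star_action,prop:normalized_Phi_action}) are then obtained not by a global train, but by peeling off one tensor factor $V^{(k)}$ at a time via \eqref{eq:abcdv_n2} and applying the simplified relations at each step; the normalization $\langle e_i^{(k)},D(x,\sqrt{x/z})D(z,\sqrt{z/x})e_i^{(k)}\rangle=1$ makes this telescoping terminate. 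All of this is done on finite segments $[M,N]$ with the normalized operators, so the infinite-volume passage is trivial (matrix elements stabilize), which dissolves the tail-control worry you raise. Your sign discussion is on target: the signs $(-1)^{m-j+1}$ and $(-1)^{M-j}$ coming out of the recursion are matched to $(-1)^{h_j(\mathcal{T})}$ and $(-1)^{\mathfrak{c}(\mathcal{R})+h_j(\mathcal{R})}$ in the short proof of \Cref{thm:Psi_PsiStar_Fock}, and the $(-1)^{\mathfrak{c}}$ in the definition of $\Psi$ absorbs the charge-dependent piece.
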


Let us set $\Psi(u):=\Psi(u,0)$ and $\Psi^*(v)=\Psi^*(0,v)$.
From 
\eqref{eq:intro_boson_fermion}
we see that 
$\Psi(u,\xi)$ does not depend on $\xi$,
and 
$\Psi^*(v)$ is 
well-defined
by specializing 
the second line of \eqref{eq:intro_boson_fermion}.
Thanks to \Cref{thm:intro_boson_fermion},
operators $\Psi(u),\Psi^*(v)$ satisfy the
Wick determinantal formula
for the ``vacuum average'':
\begin{equation}
	\label{eq:intro_Wick}
	\bigl\langle 
	e_{\mathbb{Z}_{\le0}},
	\Psi(u_1)\Psi^*(v_1)
	\ldots
	\Psi(u_m)\Psi^*(v_m)\,
	e_{\mathbb{Z}_{\le0}}
	\bigr\rangle 
	=
	\det\left[ \frac{v_\alpha}{u_{\alpha'}-v_\alpha} \right]_{\alpha,\alpha'=1}^m.
\end{equation}
See \Cref{prop:wick,prop:action_Psi_many} in the text for details
on the Wick determinantal formula.

\medskip

There are two steps remaining in the computation 
of the correlation functions \eqref{eq:intro_corr_f_via_creation_annihilation}
of the FG measure $\mathscr{M}$.
First, using commutation relations between the row operators
and 
\eqref{eq:intro_Wick},
we show the following.
\begin{proposition}[A particular case of \Cref{prop:correlations_gen_function} 
	in the text]
	\label{prop:intro_correlation_computation}
	Let $u_1,\ldots,u_m ,v_1,\ldots,v_m $
	be independent variables satisfying
	certain conditions
	(see \eqref{eq:correlations_gen_function_conditions}
	for details).
	Then we have
	\begin{equation}
		\label{eq:intro_correlation_computation}
		\begin{split}
			&
			\frac{1}{Z}\,
			\langle 
			e_{\mathbb{Z}_{\le 0}}
			,
			B^{\mathbb{Z}}(x_N,r_N)\ldots
			B^{\mathbb{Z}}(x_1,r_1)
			\Psi(u_m)\Psi^*(v_m)
			\ldots 
			\Psi(u_1)\Psi^*(v_1)
			\\&\hspace{220pt}\times
			D^{\mathbb{Z}}(w_M,\theta_M)\ldots
			D^{\mathbb{Z}}(w_1,\theta_1)
			e_{\mathbb{Z}_{\le N}} \rangle
			\\&\hspace{20pt}=
			\prod_{i=1}^{M}\prod_{\alpha=1}^{m}
			\frac{(v_\alpha-\theta_i^{-2}w_i)(u_\alpha-w_i)}
			{(v_\alpha-w_i)(u_\alpha-\theta_i^{-2}w_i)}
			\prod_{\alpha=1}^{m}
			\prod_{j=1}^{N}
			\frac{u_\alpha-y_j}{v_\alpha-y_j}
			\frac{v_\alpha-x_j}{u_\alpha-x_j}
			\det\left[ \frac{v_\alpha}{u_{\alpha'}-v_\alpha} \right]_{\alpha,\alpha'=1}^m.
		\end{split}
	\end{equation}
\end{proposition}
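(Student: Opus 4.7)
The plan is to commute every row operator $B^{\mathbb{Z}}(x_i,r_i)$ to the right past the generating operator block $\Psi(u_m)\Psi^*(v_m)\cdots\Psi(u_1)\Psi^*(v_1)$, and every $D^{\mathbb{Z}}(w_j,\theta_j)$ to the left past it, collecting a scalar prefactor at each step, and then evaluate the remaining vacuum matrix element by a Wick-type formula. Since $\Psi(u)$ and $\Psi^*(v)$ are defined in \eqref{eq:intro_Psi_defn} as specialized compositions of $D^{\mathbb{Z}}, C^{\mathbb{Z}}, B^{\mathbb{Z}}$, all needed commutations reduce to iterated applications of the Yang--Baxter quadratic relations for $A^{\mathbb{Z}}, B^{\mathbb{Z}}, C^{\mathbb{Z}}, D^{\mathbb{Z}}$ (Proposition~\ref{prop:ABCD_infinite_volume} in the text, of which \eqref{eq:intro_BD_relation} is a prototype). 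The free fermion condition is decisive here: it collapses each such Yang--Baxter relation to a purely scalar exchange with no ``off-diagonal'' residual terms, so that each elementary commutation yields only a single rational prefactor.

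Careful accounting of these prefactors produces $\prod_{\alpha,j}\frac{v_\alpha-x_j}{u_\alpha-x_j}$ from the $B^{\mathbb{Z}}$-past-$\Psi,\Psi^*$ moves and $\prod_{i,\alpha}\frac{(v_\alpha-\theta_i^{-2}w_i)(u_\alpha-w_i)}{(v_\alpha-w_i)(u_\alpha-\theta_i^{-2}w_i)}$ from the $D^{\mathbb{Z}}$-past-$\Psi,\Psi^*$ moves, leaving a reduced matrix element of the form
\begin{equation*}
\tfrac{1}{Z}\,\bigl\langle e_{\mathbb{Z}_{\le 0}},\, D^{\mathbb{Z}}(w_M,\theta_M)\cdots D^{\mathbb{Z}}(w_1,\theta_1)\,\Psi(u_m)\Psi^*(v_m)\cdots\Psi(u_1)\Psi^*(v_1)\,B^{\mathbb{Z}}(x_N,r_N)\cdots B^{\mathbb{Z}}(x_1,r_1)\, e_{\mathbb{Z}_{\le N}}\bigr\rangle.
\end{equation*}
The outer $D^{\mathbb{Z}}$'s and $B^{\mathbb{Z}}$'s are absorbed into the respective vacua via the Cauchy-type computation \eqref{eq:intro_BD_Cauchy}, contributing precisely the scalar $Z$ and hence cancelling the $1/Z$ prefactor. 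Expanding the inner $\Psi\Psi^*$ block in the fermionic basis via \eqref{eq:intro_boson_fermion} and applying the standard Wick formula for the $\psi_j,\psi_j^*$, the shift of the vacuum from charge $0$ to charge $N$ is carried by the inhomogeneous power $\mathcal{P}_{0,N}(u\mid\mathbf{y},\mathbf{s}^{-2}\mathbf{y})$ from the expansions, producing exactly the extra factor $\prod_{\alpha,\,j=1}^{N}\frac{u_\alpha-y_j}{v_\alpha-y_j}$ multiplying the vacuum determinant $\det[v_\alpha/(u_{\alpha'}-v_\alpha)]$ of \eqref{eq:intro_Wick}.

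The main obstacle is the bookkeeping: tracking the exact rational prefactors through many elementary commutations, and ensuring that all infinite products and series arising in intermediate steps converge (this is the role of the technical conditions \eqref{eq:correlations_gen_function_conditions} on $u_\alpha,v_\alpha$). A subtle but critical step is the correct handling of the charge mismatch between the vacuum $e_{\mathbb{Z}_{\le 0}}$ and the $N$-particle state $e_{\mathbb{Z}_{\le N}}$, which is absorbed via the fermionic contractions of the $\Psi,\Psi^*$ modes with the ``extra'' particles at positions $1,\ldots,N$; this is precisely the mechanism that produces the $y_j$-dependent factor in \eqref{eq:intro_correlation_computation}.
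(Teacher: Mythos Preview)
Your first move---commuting the $D^{\mathbb{Z}}$'s to the left and the $B^{\mathbb{Z}}$'s to the right of the $\Psi\Psi^*$ block---matches the paper and is correct in spirit. But the execution breaks down in two places.

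First, the prefactors you record are incomplete. By \eqref{eq:BPsi}--\eqref{eq:BPsi_Star}, moving $B^{\mathbb{Z}}(x_j,r_j)$ past $\Psi(u_\alpha)\Psi^*(v_\alpha)$ produces
\[
\frac{(u_\alpha-r_j^{-2}x_j)(v_\alpha-x_j)}{(u_\alpha-x_j)(v_\alpha-r_j^{-2}x_j)},
\]
not just $\frac{v_\alpha-x_j}{u_\alpha-x_j}$. The $r_j^{-2}x_j$ factors are absent from the final answer (the ascending FG process is independent of the $r_j$'s), so they must cancel against something else later; your accounting loses them from the outset.

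Second, and more seriously, after commuting you claim that ``the outer $D^{\mathbb{Z}}$'s and $B^{\mathbb{Z}}$'s are absorbed into the respective vacua via the Cauchy-type computation \eqref{eq:intro_BD_Cauchy}, contributing precisely the scalar $Z$.'' This is false. The $D^{\mathbb{Z}}$'s on the left can indeed be dropped (they fix $\langle e_{\mathbb{Z}_{\le 0}}|$ and contribute nothing), but $B^{\mathbb{Z}}(x_N,r_N)\cdots B^{\mathbb{Z}}(x_1,r_1)\,e_{\mathbb{Z}_{\le N}}$ is \emph{not} a scalar multiple of $e_{\mathbb{Z}_{\le 0}}$; it is a superposition over all charge-$0$ states $e_{\mathcal{T}(\lambda)}$ with coefficients $F_\lambda(\mathbf{x})$. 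The Cauchy identity \eqref{eq:intro_BD_Cauchy} computes only the projection of this vector onto $e_{\mathbb{Z}_{\le 0}}$, and that is not enough once the $\Psi\Psi^*$ block sits in between. Your subsequent attempt to handle the ``charge mismatch'' via the inhomogeneous powers in \eqref{eq:intro_boson_fermion} would make sense for $\langle e_{\mathbb{Z}_{\le 0}},\Psi\Psi^*\cdots e_{\mathbb{Z}_{\le N}}\rangle$, but that matrix element is identically zero (the $\Psi\Psi^*$ block preserves charge).

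The paper resolves this with a nontrivial trick you are missing. Since any $D^{\mathbb{Z}}$ acts trivially on the left vacuum, one may \emph{reinsert} operators $D^{\mathbb{Z}}(r_j^{-2}x_j,r_j^{-1})$ on the left for free, with parameters tuned so that after commuting them to the right they pair with the $B^{\mathbb{Z}}(x_j,r_j)$'s to form exactly $\Psi^*(r_j^{-2}x_j,x_j)$ (cf.\ \eqref{eq:Psi_Psi_star_operators}). Simultaneously one writes $e_{\mathbb{Z}_{\le N}}=\mathrm{const}\cdot \Psi(y_N)\cdots\Psi(y_1)\,e_{\mathbb{Z}_{\le 0}}$. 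Now the entire expression is an enlarged $\Psi\Psi^*$ string on the genuine vacuum $e_{\mathbb{Z}_{\le 0}}$, Wick's formula gives a single $(m+N)\times(m+N)$ Cauchy-type determinant, and its product expansion (together with the commutation factors accumulated along the way, including those that cancel the stray $r_j^{-2}x_j$ terms) yields \eqref{eq:intro_correlation_computation}. This conversion of the $B^{\mathbb{Z}}$'s into $\Psi^*$'s is the key missing idea.
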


Using \eqref{eq:intro_boson_fermion},
we interpret \eqref{eq:intro_correlation_computation}
as an inhomogeneous generating series
of the correlation functions
\eqref{eq:intro_corr_f_via_creation_annihilation}
of the FG measure $\mathscr{M}$,
with the generating variables $u_i,v_j$.
The last step is to extract the
coefficients
\eqref{eq:intro_corr_f_via_creation_annihilation}
from this generating series.
The operation of a coefficient
extraction is linear, and we must apply
$2m$ such operations to the right-hand side of 
\eqref{eq:intro_correlation_computation}.
Due to the form of this right-hand side,
these operations may be put into the $m\times m$
determinant
(this is essentially the
Andr\'eief identity, cf.
\cite{forrester2019meet}).
This implies that the correlation functions have a determinantal
form.
In \Cref{thm:corr_kernel_no_contours}
in the text we write the resulting correlation kernel
of the general FG process as such a series coefficient.

Furthermore, the coefficient extraction can be 
done analytically by means of contour integrals. 
This is an extension of
the inhomogeneous biorthogonality 
\eqref{eq:intro_orthogonality},
to 
inhomogeneous powers indexed by both
positive and nonpositive integers,
see \Cref{lemma:Phi_extraction,lemma:PhiStar_extraction}
in the text.
The integration contours
for $u,v$ in the coefficient extraction must be chosen so that 
the commutation relations between the row operators
used to obtain \eqref{eq:intro_Wick}
and \eqref{eq:intro_correlation_computation}
are valid.
Using this, we 
determine the correct integration contours
for the correlation kernel $K_{\mathscr{AP}}$
\eqref{eq:intro_K_AP}
of the ascending FG process, 
see 
\Cref{thm:kernel_matching}
in the text. 
As a result, we have computed the
correlation kernel $K_{\mathscr{AP}}$
in two ways, via fermionic operators and 
via an Eynard--Mehta type approach, and
both computations led to the same expression.

\subsection*{Acknowledgments}

Amol Aggarwal 
was partially supported by a Clay Research Fellowship.
Alexei Borodin was partially supported by the NSF 
grants DMS-1664619, DMS-1853981, and the Simons Investigator program.
Leonid Petrov was partially supported by the NSF 
grant DMS-1664617, and the 
Simons Collaboration Grant for Mathematicians 709055. 
Michael Wheeler was partially supported by an Australian Research Council Future
Fellowship, grant FT200100981.
This material is based upon work supported by the National
Science Foundation under Grant No. DMS-1928930 while Aggarwal and Petrov
participated in program hosted by the Mathematical
Sciences Research institute in Berkeley, California, during
the Fall 2021 semester.
We are very grateful to the anonymous referees for numerous helpful remarks.

\newpage
\part{Symmetric Functions}
\label{partI}

In this part (accompanied by \Cref{appA:F_G_formula_proofs}) we develop
symmetric rational functions based on the free fermion six vertex model.

\section{Free fermion six vertex model}
\label{sec:vertex_weights_ff6v}

\subsection{Vertex weights}
\label{sub:vertex_weights_ff6v}

We consider the weights 
$w_{\mathrm{6V}}(i_1,j_1;i_2,j_2)$, $i_1,j_1,i_2,j_2
\in\left\{ 0,1 \right\}$,
of the asymmetric six vertex (square ice) 
model:
\begin{equation}
	\label{eq:six_vertex_weights_traditional}
	\begin{split}
		w_{\mathrm{6V}}(0,0;0,0)&=a_1,\qquad 
		w_{\mathrm{6V}}(1,1;1,1)=a_2,\qquad 
		w_{\mathrm{6V}}(1,0;1,0)=b_1,\\
		w_{\mathrm{6V}}(0,1;0,1)&=b_2,\qquad 
		w_{\mathrm{6V}}(1,0;0,1)=c_1,\qquad 
		w_{\mathrm{6V}}(0,1;1,0)=c_2,
	\end{split}
\end{equation}
see \Cref{fig:weights_6V} for the illustration.
By agreement, $w_{\mathrm{6V}}(i_1,j_1;i_2,j_2)$
is set to zero unless $i_1+j_1=i_2+j_2$, which corresponds to the
path preservation property: the number of paths coming 
into a vertex equals the number of paths coming out of it.
The notation
$a_1,a_2,b_1,b_2,c_1,c_2$ 
for the vertex weights follows the longstanding tradition, for example, see
\cite[Ch. 8]{baxter2007exactly}, \cite{reshetikhin2010lectures}.

\medskip
We further assume that our six vertex weights
obey the \emph{free fermion condition}
\begin{equation}
	\label{eq:6v_free_fermion_condition}
	a_1a_2+b_1b_2=c_1c_2.
\end{equation}
In other words, we impose the 
vanishing of the quantity
$\Delta=\dfrac{a_1a_2+b_1b_2-c_1c_2}{2\sqrt{a_1a_2b_1b_2}}$
associated with the six vertex weights.

\begin{figure}[htpb]
	\centering
	\includegraphics[width=\textwidth]{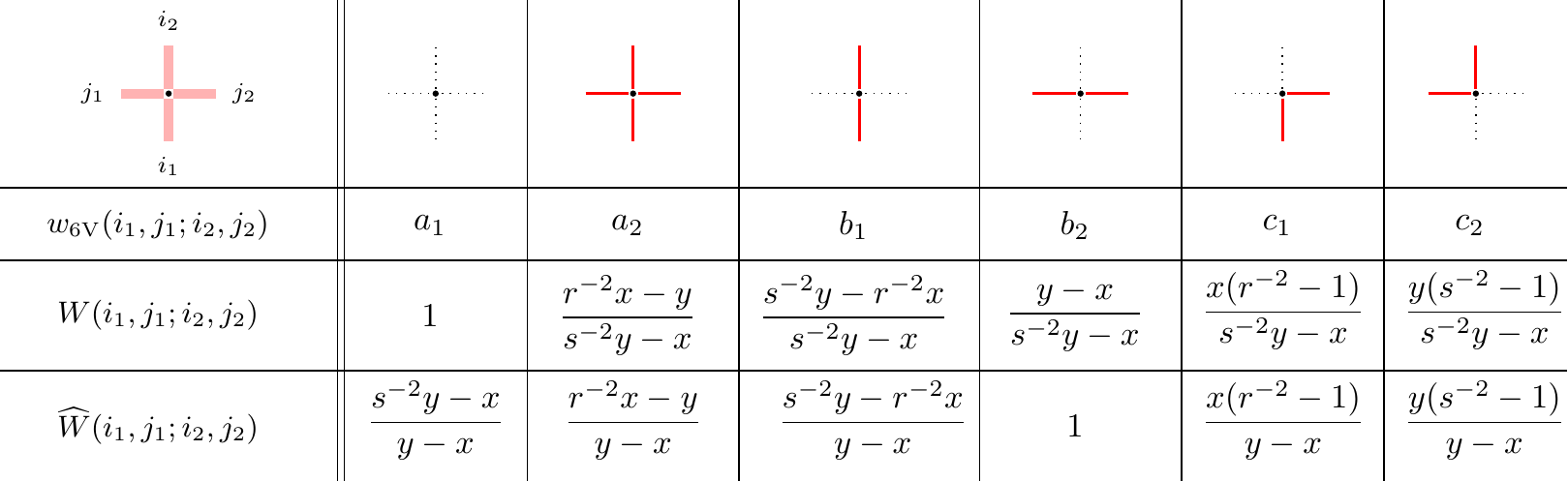}
	\caption{Weights \eqref{eq:six_vertex_weights_traditional}
		of the free fermion six vertex model, and their 
		parametrizations \eqref{eq:weights_W}, \eqref{eq:weights_W_hat} 
		with the four parameters $x,y,r,s$.}
	\label{fig:weights_6V}
\end{figure}

\medskip

The free fermion condition \eqref{eq:6v_free_fermion_condition}
leaves five out of six independent parameters. 
Furthermore, in order to build symmetric functions,
we normalize the weights so that 
one of them becomes equal to $1$, and we can repeat this type 
of vertices infinitely many times in a configuration.
The normalization leaves four independent parameters.
We make two different choices which of the weights to set to $1$:
\begin{enumerate}[$\bullet$]
	\item 
		Setting the weight $a_1$
		of the empty vertex $(0,0;0,0)$
		we get the weights
		\begin{equation}
			\label{eq:weights_W}
			\begin{split}
				W(0,0;0,0)=1,\quad 
				W(1,1;1,1)&=\frac{r^{-2}x-y}{s^{-2}y-x},\quad 
				W(1,0;1,0)=\frac{s^{-2}y-r^{-2}x}{s^{-2}y-x},\\
				W(0,1;0,1)=\frac{y-x}{s^{-2}y-x},\quad 
				W(1,0;0,1)&=\frac{x(r^{-2}-1)}{s^{-2}y-x},\quad 
				W(0,1;1,0)=\frac{y(s^{-2}-1)}{s^{-2}y-x}.
			\end{split}
		\end{equation}
	\item Setting the weight $b_2$ of the 
		vertex $(0,1;0,1)$,
		we get the weights
		\begin{equation}
			\label{eq:weights_W_hat}
			\begin{split}
				\widehat W(0,0;0,0)=\frac{s^{-2}y-x}{y-x},\quad 
				\widehat W(1,1;1,1)&=\frac{r^{-2}x-y}{y-x},\quad 
				\widehat W(1,0;1,0)=\frac{s^{-2}y-r^{-2}x}{y-x},\\
				\widehat W(0,1;0,1)=1,\quad 
				\widehat W(1,0;0,1)&=\frac{x(r^{-2}-1)}{y-x},\quad 
				\widehat W(0,1;1,0)=\frac{y(s^{-2}-1)}{y-x}.
			\end{split}
		\end{equation}
\end{enumerate}
See \Cref{fig:weights_6V} for an illustration.
The four parameters which they depend on are denoted by
$x,y,r,s$. Sometimes we will indicate this dependence explicitly
as 
\begin{equation*}
	W(i_1,j_1;i_2,j_2\mid x;y;r;s),\qquad 
	\widehat W(i_1,j_1;i_2,j_2\mid x;y;r;s).
\end{equation*}
The concrete choice of the parametrization
as in 
\eqref{eq:weights_W}--\eqref{eq:weights_W_hat}
is dictated by the form of the Yang--Baxter equation
(see the next \Cref{sub:YBE}), and 
by the overall goal of constructing symmetric functions.

One readily checks that the weights $W$ and $\widehat{W}$
satisfy the free fermion condition \eqref{eq:6v_free_fermion_condition}.
We also have
\begin{equation}
	\label{eq:W_What_renormalization}
	\widehat{W}(i_1,j_1;i_2,j_2)=
	\frac{W(i_1,j_1;i_2,j_2)}{W(0,1;0,1)}
\end{equation}
for all $i_1,j_1,i_2,j_2\in \left\{ 0,1 \right\}$, 
that is, the weights $W$ and $\widehat{W}$ differ only by normalization.

\begin{remark}
	\label{rmk:sl_11_symmetry}
	The free fermion six vertex weights 
	\eqref{eq:weights_W}--\eqref{eq:weights_W_hat}
	possess $U_q(\widehat{\mathfrak{sl}}(1|1))$ quantum affine superalgebra symmetry,
	and they are a special case of the higher rank 
	$U_q(\widehat{\mathfrak{sl}}(m|n))$
	integrable
	weights studied in
	the companion work
	\cite{agg-bor-wh2020-sl1n}.
\end{remark}

\subsection{Yang--Baxter equation}
\label{sub:YBE}

Define the following cross vertex weights:
\begin{equation}
	\label{eq:R_weights}
	\begin{split}
		R(0,0;0,0)=1,\quad
		R(1,1;1,1)&=\dfrac{x_2-x_1r_1^{-2}}{x_1-x_2r_2^{-2}},\quad
		R(1,0;1,0)=\dfrac{x_1r^{-2}_1-x_2r^{-2}_2}{x_1-x_2r^{-2}_2},\\
		R(0,1,0,1)=\dfrac{x_1-x_2}{x_1-x_2r^{-2}_2},\quad
		R(1,0;0,1)&=\dfrac{x_1(1-r^{-2}_1)}{x_1-x_2r^{-2}_2},\quad
		R(0,1;1,0)=\dfrac{x_2(1-r^{-2}_2)}{x_1-x_2r^{-2}_2}.
	\end{split}
\end{equation}
See \Cref{fig:cross_vertex_weights} for an illustration.
These weights, together with $W$ or $\widehat W$,
satisfy the Yang--Baxter equation:
\begin{figure}[htpb]
	\centering
	\includegraphics[width=\textwidth]{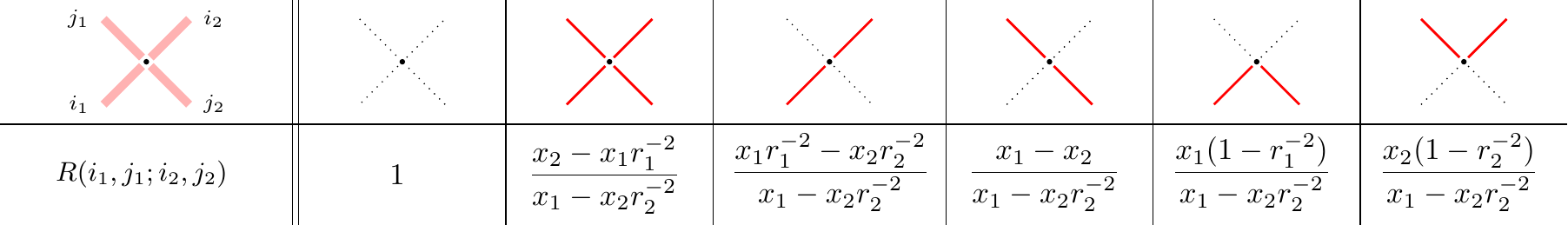}
	\caption{The cross vertex weights \eqref{eq:R_weights}.}
	\label{fig:cross_vertex_weights}
\end{figure}

\begin{proposition}[Yang--Baxter equation]
	\label{prop:YBE_first}
	For any fixed $i_1,i_2,i_3,j_1,j_2,j_3\in \left\{ 0,1 \right\}$ we have
	\begin{equation}
		\label{eq:YBE_first}
		\begin{split}
			&
			\sum_{k_1,k_2,k_3 \in \left\{ 0,1 \right\}}
			R(i_2,i_1;k_2,k_1)\,
			W(i_3,k_1;k_3,j_1\mid x_1;y;r_1;s)\,
			W(k_3,k_2;j_3,j_2\mid x_2;y;r_2;s)
			\\&\hspace{20pt}=
			\sum
			_{k_1',k_2',k_3'\in \left\{ 0,1 \right\}}
			R(k_2',k_1';j_2,j_1)\,
			W(i_3,i_2;k_3',k_2'\mid x_2;y;r_2;s)\,
			W(k_3',i_1;j_3,k_1'\mid x_1;y;r_1;s),
		\end{split}
	\end{equation}
	The same equation holds if we replace one or both of the weights
	\begin{equation*}
		W(\cdots\mid x_1;y;r_1;s),\qquad 
		W(\cdots\mid x_2;y;r_2;s)
	\end{equation*}
	everywhere
	by the corresponding $\widehat{W}$, which results in three other identities.
	See \Cref{fig:YBE_red} for a graphical illustration of the sums
	in both sides.
\end{proposition}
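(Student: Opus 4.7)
The plan is to reduce the four versions of \eqref{eq:YBE_first} to a single identity, and then verify that identity case by case using arrow conservation. First, observe from \eqref{eq:W_What_renormalization} that $\widehat W(\cdots\mid x;y;r;s) = W(\cdots\mid x;y;r;s)/W(0,1;0,1\mid x;y;r;s)$, and note that each term in \eqref{eq:YBE_first} contains exactly one $W$-factor with parameters $(x_1;y;r_1;s)$ and one with $(x_2;y;r_2;s)$. Consequently, replacing either factor by its hatted analogue multiplies every term on both sides by the same scalar, so the three variant identities follow automatically once the $W$-version is proved.

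Next, recall that all weights $W$, $\widehat W$, and $R$ satisfy arrow preservation: $W(i_1,j_1;i_2,j_2)$ vanishes unless $i_1+j_1=i_2+j_2$, and likewise for $R$. Applied to \eqref{eq:YBE_first}, this forces both sides to vanish unless $i_1+i_2+i_3=j_1+j_2+j_3$, leaving $\sum_{k=0}^{3}\binom{3}{k}^{2}=20$ external configurations to verify. The cases with total spin $0$ or $3$ are trivial (only the configuration of all $0$'s or all $1$'s contributes on either side, and both sides equal $1$ or the product of the two $W(1,1;1,1)\cdot R(1,1;1,1)$-type weights, which match by a direct check). The cases with total spin $1$ or $2$ are related in pairs by the spin-flip involution $(i,j)\mapsto (1-i,1-j)$, which on the weights interchanges $a_1\leftrightarrow a_2$, $b_1\leftrightarrow b_2$, $c_1\leftrightarrow c_2$, so it suffices to check, say, the nine total-spin-$1$ configurations.

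Among these nine, most have only one nonzero internal configuration $(k_1,k_2,k_3)$ and $(k_1',k_2',k_3')$ on each side and reduce to a single rational identity in $x_1,x_2,y,r_1,r_2,s$ which can be cleared of denominators and checked by expanding. The genuinely nontrivial cases are those in which the two sides are each sums of two terms, arising from the $c_1$/$c_2$ ``swap'' vertices in the middle row. These are exactly the situations where the free fermion identity $a_1a_2+b_1b_2=c_1c_2$ for the $W$ weights in \eqref{eq:weights_W} is responsible for the cancellation; after reducing each two-term sum to a common denominator, the numerator identity amounts to a short polynomial manipulation that uses both the form of $R$ in \eqref{eq:R_weights} and the free fermion condition.

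The main obstacle is purely bookkeeping: organizing the nine independent cases so that the polynomial checks are compact and transparent, rather than degenerating into a long case split. This can be streamlined by noting that the $R$-weights themselves are obtained by specializing the $W$-weights, namely $R(\cdots\mid x_1,x_2,r_1,r_2)=W(\cdots\mid x_1;x_2;r_1;r_2)$, so that \eqref{eq:YBE_first} becomes the standard $RLL=LLR$ relation for a single family of $R$-matrices; this is precisely the Yang--Baxter equation for the $U_q(\widehat{\mathfrak{sl}}(1|1))$ $R$-matrix alluded to in \Cref{rmk:sl_11_symmetry}, and the case-by-case verification described above is then a concrete realization of that general fact.
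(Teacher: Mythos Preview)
Your approach is essentially the paper's: the paper says only that the identity ``is checked in a straightforward way'' and that the three $\widehat W$ variants are scalar multiples of the $W$ version via \eqref{eq:W_What_renormalization}, which is exactly your first reduction. You supply more organizational detail (arrow conservation, the observation that $R$ is $W$ specialized to $(x_1;x_2;r_1;r_2)$) than the paper does, but the method is the same.

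One point needs care: your spin-flip reduction does not go through as stated. Complementing all arrows turns the spin-$1$ instance of \eqref{eq:YBE_first} for the weights $(R,W,W)$ into the spin-$2$ instance for the \emph{flipped} weights $(\tilde R,\tilde W,\tilde W)$, where $\tilde W(i_1,j_1;i_2,j_2)=W(1-i_1,1-j_1;1-i_2,1-j_2)$. The specific rational functions in \eqref{eq:weights_W} and \eqref{eq:R_weights} are not invariant under $a_1\leftrightarrow a_2$, $b_1\leftrightarrow b_2$, $c_1\leftrightarrow c_2$, nor do the flipped weights arise from the same parametrization at shifted $(x,y,r,s)$; so verifying the nine spin-$1$ cases for these particular weights does not yield the nine spin-$2$ cases for the same weights. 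The fix is harmless --- either check all eighteen nontrivial cases directly, or carry out the spin-$1$ verification at the level of abstract free-fermion six-vertex weights with compatible spectral structure and then note that this structure is itself preserved by the flip --- but the shortcut as written is not justified.
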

\begin{proof}
	Identity 
	\eqref{eq:YBE_first}
	or, 
	more precisely, the family of identities
	depending on the boundary conditions
	$i_1,i_2,i_3,j_1,j_2,j_3 $,
	is checked in a straightforward way.
	The three other families of identities involving the weights
	$\widehat{W}$ are multiples of \eqref{eq:YBE_first}, 
	thanks to \eqref{eq:W_What_renormalization}.
\end{proof}

\begin{remark}
	\label{rmk:YBE_from_slmn}
	The Yang--Baxter equation
	of \Cref{prop:YBE_first}
	is a consequence of 
	the more general 
	$U_q(\widehat{\mathfrak{sl}}(m|n))$ statement,
	see
	Proposition 5.1.4 and Example 8.1.1 
	in \cite{agg-bor-wh2020-sl1n}.
	This may be considered a conceptual
	reason 
	behind \Cref{prop:YBE_first}
	since our
	weights are obtained from the ones 
	in \cite{agg-bor-wh2020-sl1n} by fusion.
\end{remark}

\begin{figure}[htpb]
	\centering
	\includegraphics[width=.7\textwidth]{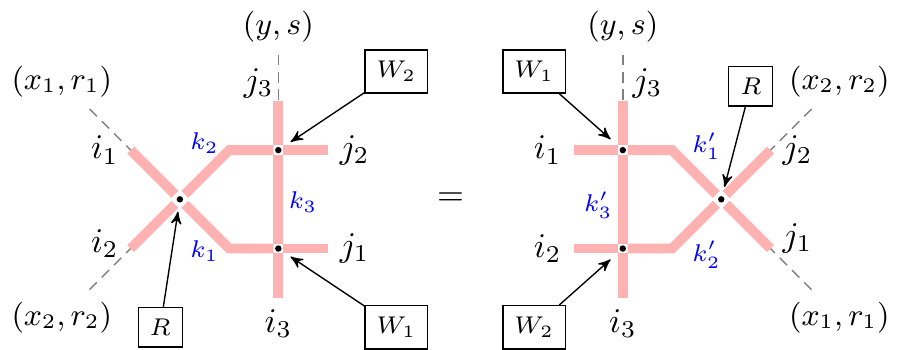}
	\caption{Illustration of the Yang--Baxter equation. 
		For fixed boundary values $i_1,i_2,i_3,j_1,j_2,j_3$,
		the partition functions in both sides are equal to each other.
		Here $W_j\equiv W(\cdots\mid x_j;y;r_j;s)$. The parameters
		$(x_1,r_1)$, $(x_2,r_2)$ and $(y,s)$ correspond to lines, which is also indicated.}
	\label{fig:YBE_red}
\end{figure}

\subsection{Row operators}
\label{sub:row_operators}

Based on the vertex weights, we define certain row operators
acting in tensor powers of $\mathbb{C}^2$. Thanks to the Yang--Baxter equation,
these operators satisfy certain commutation relations.

Let us fix sequences $\mathbf{s} = (s_1, s_2, \ldots ) \subset \mathbb{C}$ and $\mathbf{y}
= (y_1, y_2, \ldots ) \subset \mathbb{C}$ of complex numbers. For
any integer $k \ge 1$, we let $V^{(k)} \simeq \mathbb{C}^2$ denote
the two-dimensional complex vector space spanned by two vectors
$e_0^{(k)}$ and $e_1^{(k)}$. For notational convenience, we will
also set $e_j^{(k)} = 0$ for $j \notin \{ 0, 1 \}$. 

Next, for any complex numbers $x, r \in \mathbb{C}$, we define four
operators $A = A (x, r)$, $B = B (x, r)$, $C = C (x, r)$, and $D = D
(x, r)$ acting from the left on any $V^{(k)}$ through the equations
\begin{align}
\label{eq:abcdv}
\begin{aligned}
	& A e_i^{(k)} = W ( i, 1; i, 1 \mid x;y_k;r; s_k)\, e_i^{(k)}; 
	\qquad \qquad 
	B e_i^{(k)} = W (i, 0; i - 1, 1 \mid x;y_k;r; s_k )\, e_{i - 1}^{(k)}; \\
	& C e_i^{(k)} = W (i, 1; i + 1, 0 \mid x;y_k;r; s_k)\, e_{i + 1}^{(k)}; 
	\qquad 
	D e_i^{(k)} = W (i, 0; i, 0 \mid x;y_k;r; s_k)\, e_i^{(k)},
\end{aligned}
\end{align} 
where the weights $W$ are given in \eqref{eq:weights_W}.
	
We also define actions of these operators on tensor products
$V^{(k_1)} \otimes V^{(k_2)} \otimes \ldots \otimes V^{(k_n)}$. To do this
in the case $n = 2$, set 
\begin{align}
\label{eq:abcdv_n2} 
\begin{aligned}
& A \big( v_1 \otimes v_2) = C v_1 \otimes B v_2 + A v_1 \otimes A v_2; \\ 
& B (v_1 \otimes v_2) = D v_1 \otimes B v_2 + B v_1 \otimes A v_2; \\ 
& C (v_1 \otimes v_2) = C v_1 \otimes D v_2 + A v_1 \otimes C v_2; \\ 
& D (v_1 \otimes v_2) = D v_1 \otimes D v_2 + B v_1 \otimes C v_2,
\end{aligned} 
\end{align}
for all $v_1 \in V^{(k_1)}, v_2\in V^{(k_2)}$
(see \Cref{fig:tensor_ABCD} for an illustration). 
Then extend this action to $V^{(k_1)} \otimes V^{(k_2)}
\otimes \ldots \otimes V^{(k_n)}$ for $n > 2$ using the above relations
\eqref{eq:abcdv_n2} inductively on $n$.
The induction step consists of 
taking $v_1$ there to be
an element of $V^{(k_1)} \otimes V^{(k_2)} \otimes \ldots \otimes V^{(k_{n -1})}$,
and $v_2$ an element of~$V^{(k_n)}$. 
This action on tensor products
is associative.

\begin{figure}[htpb]
	\centering
	\includegraphics[width=.95\textwidth]{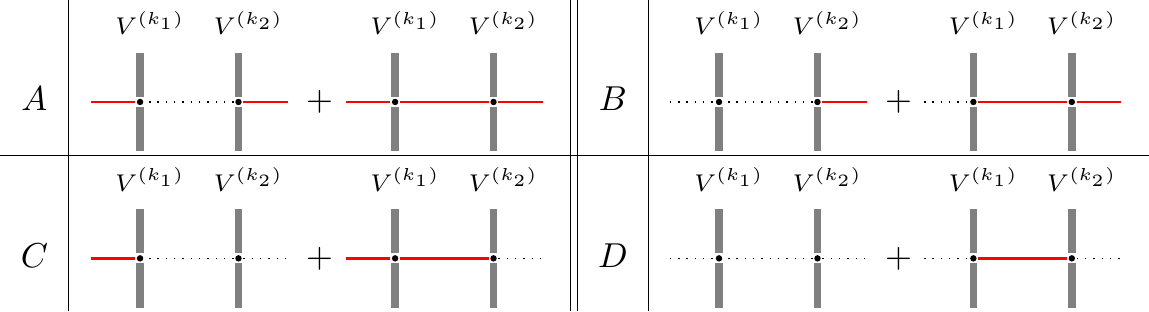}
	\caption{Graphical illustration of the action of the operators $A,B,C,D$
	on tensor products \eqref{eq:abcdv_n2}.
	The sums in \eqref{eq:abcdv_n2}
	correspond to various states of the inner horizontal edge. The vertical edges can have arbitrary states.}
	\label{fig:tensor_ABCD}
\end{figure}

The operators
$A$, $B$, $C$, and $D$
acting in any 
tensor product
$V^{(k_1)} \otimes V^{(k_2)} \otimes \ldots \otimes V^{(k_n)}$
satisfy the following commutation relations:
\begin{proposition}
	\label{prop:ABCD_YBE}
	For any $x_1, x_2, r_1, r_2 \in \mathbb{C}$, we have 
	\begin{align}
		\label{eq:A2A1}
	& A (x_2, r_2) A (x_1, r_1) = A (x_1, r_1) A (x_2, r_2); \\
	\label{eq:B2B1_commute}
	& B (x_2, r_2) B (x_1, r_1) = \displaystyle\frac{r^{-2}_1 x_1 - x_2}{r^{-2}_2 x_2 - x_1} B (x_1, r_1) B (x_2, r_2); \\
	\label{eq:C2C1_commute}
	& C (x_2, r_2) C (x_1, r_1) = \displaystyle\frac{r^{-2}_2 x_2 - x_1}{r^{-2}_1 x_1 - x_2} C (x_1, r_1) C (x_2, r_2); \\
	& D (x_2, r_2) D (x_1, r_1) = D (x_1, r_1) D (x_2, r_2); \label{eq:D2D1_commute}\\
	& \label{eq:B2D1}
	B (x_2, r_2) D (x_1, r_1) = \displaystyle\frac{r^{-2}_1 x_1 - x_2}{x_1 - x_2} D (x_1, r_1) B (x_2, r_2) + \displaystyle\frac{(1 - r^{-2}_2) x_2}{x_1 - x_2} D (x_2, r_2) B (x_1, r_1); \\
	& 
	\begin{aligned}
		\label{eq:D2_B1}
	&D (x_2, r_2) B (x_1, r_1)
	\\
	&\hspace{40pt}
	= \displaystyle\frac{r^{-2}_1 x_1 - x_2}{r^{-2}_1 x_1 - r^{-2}_2 x_2} B (x_1, r_1) D (x_2, r_2) + \displaystyle\frac{(1 - r^{-2}_1) x_1}{r^{-2}_1 x_1 - r^{-2}_2 x_2} B (x_2, r_2) D (x_1, r_1);
	\end{aligned}
	\\
	\label{eq:D2_C1}
	& 
	D (x_2, r_2) C (x_1, r_1) 
	= 
	\displaystyle\frac{r^{-2}_2 x_2 - x_1}{x_2 - x_1} C (x_1, r_1) D (x_2, r_2) + \displaystyle\frac{(1 - r^{-2}_2) x_2}{x_2 - x_1} C (x_2, r_2) D (x_1, r_1); 
	\\
	& 
	\begin{aligned}
		\label{eq:C2_D1}
		&
		C (x_2, r_2) D (x_1, r_1) 
		\\
		&\hspace{40pt}= \displaystyle\frac{r^{-2}_2 x_2 -
		x_1}{r^{-2}_2 x_2 - r^{-2}_1 x_1} D (x_1, r_1) C (x_2, r_2) +
		\displaystyle\frac{x_1 (1 - r^{-2}_1)}{r^{-2}_2 x_2 - r^{-2}_1 x_1}
		D(x_2, r_2) C (x_1, r_1); 
	\end{aligned}
	\\
	& A (x_2, r_2) C (x_1, r_1) = \displaystyle\frac{r^{-2}_2 x_2 - x_1}{x_1 - x_2} C (x_1, r_1) A (x_2, r_2) + \displaystyle\frac{x_2 (1 - r^{-2}_2)}{x_1 - x_2} C (x_2, r_2) A (x_1, r_1),
\end{align}
and
\begin{align}
	\begin{aligned}
		\label{eq:ADBC_1}
	\displaystyle\frac{x_1 (r^{-2}_1 - 1)}{r^{-2}_2 x_2 - x_1} & D (x_2, r_2) A (x_1, r_1) + \displaystyle\frac{r^{-2}_2 x_2 - r^{-2}_1 x_1}{r^{-2}_2 x_2 - x_1} C (x_2, r_2) B (x_1, r_1) \\
	& = \displaystyle\frac{x_1 (r^{-2}_ 1 - 1)}{r^{-2}_2 x_2 - x_1} D (x_1, r_1) A (x_2, r_2) + \displaystyle\frac{x_2 - x_1}{r^{-2}_2 x_2 - x_1} B (x_1, r_1) C (x_2, r_2);
	\end{aligned}\\
	\begin{aligned}
		\label{eq:ADBC_2}
	\displaystyle\frac{x_2 (r^{-2}_2 - 1)}{r^{-2}_2 x_2 - x_1} & A (x_2, r_2) D (x_1, r_1) + \displaystyle\frac{x_2 - x_1}{r^{-2}_2 x_2 - x_1} B (x_2, r_2) C (x_1, r_1) \\
& = \displaystyle\frac{x_2 (r^{-2}_2 - 1)}{r^{-2}_2 x_2 - x_1} A (x_1, r_1) D (x_2, r_2) + \displaystyle\frac{r^{-2}_2 x_2 - r^{-2}_1 x_1}{r^{-2}_2 x_2 - x_1} C (x_1, r_1) B (x_2, r_2).	
	\end{aligned}
\end{align}
\end{proposition}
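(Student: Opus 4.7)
My plan is to prove all eleven identities simultaneously by recognizing them as component extractions of the single master identity $RTT = TTR$ from the algebraic Bethe ansatz. For each pair $(x,r)$, assemble the row operators into a $2\times 2$ monodromy matrix
\begin{equation*}
T(x,r) = \begin{pmatrix} A(x,r) & B(x,r) \\ C(x,r) & D(x,r) \end{pmatrix}
\end{equation*}
whose entries are operators on $V^{(k_1)} \otimes \cdots \otimes V^{(k_n)}$ and which itself acts on an auxiliary space $V_{\mathrm{aux}} \simeq \mathbb{C}^2$. The single-site formulas \eqref{eq:abcdv} assert that on each $V^{(k)}$ the matrix $T(x,r)$ equals the $L$-operator $L^{(k)}(x,r)$ whose $(j_1, j_2)$-entry in $V_{\mathrm{aux}}$ acts on $V^{(k)}$ by the vertex weight $W(\,\cdot\,, j_1 ; \,\cdot\,, j_2 \mid x; y_k; r; s_k)$; and the tensor-product rule \eqref{eq:abcdv_n2} is precisely the statement that on $V^{(k_1)}\otimes \cdots \otimes V^{(k_n)}$ one has $T(x,r) = L^{(k_1)}(x,r) L^{(k_2)}(x,r) \cdots L^{(k_n)}(x,r)$, the composition being taken in the auxiliary space.

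The central identity I would then establish is the quadratic relation
\begin{equation*}
R_{12}(x_1,x_2;r_1,r_2)\, T_1(x_1,r_1)\, T_2(x_2,r_2) = T_2(x_2,r_2)\, T_1(x_1,r_1)\, R_{12}(x_1,x_2;r_1,r_2),
\end{equation*}
on $V_{\mathrm{aux},1}\otimes V_{\mathrm{aux},2}\otimes V^{(k_1)}\otimes\cdots\otimes V^{(k_n)}$, where $T_i$ acts on the $i$-th auxiliary copy and $R_{12}$ is the $4\times 4$ matrix whose entries are the cross vertex weights \eqref{eq:R_weights}. For a single site $n=1$ this is exactly the Yang--Baxter equation of \Cref{prop:YBE_first}, rewritten as an operator identity with $i_1, i_2$ running over inputs and $j_1, j_2$ over outputs in the two auxiliary spaces (the picture in \Cref{fig:YBE_red}). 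For $n>1$ I would proceed by the standard train argument: slide $R_{12}$ from the left through the chain $L^{(k_1)}_1 L^{(k_1)}_2 L^{(k_2)}_1 L^{(k_2)}_2 \cdots$ one site at a time using the single-site relation, which produces the full $RTT = TTR$ identity by induction on $n$.

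Finally, each of the eleven commutation relations \eqref{eq:A2A1}--\eqref{eq:ADBC_2} is obtained by projecting both sides of $RTT = TTR$ onto a fixed basis vector $e_{i_1}\otimes e_{i_2}$ in $V_{\mathrm{aux},1}\otimes V_{\mathrm{aux},2}$ and reading off the coefficient of a chosen output $e_{j_1}\otimes e_{j_2}$. The nonzero entries of $R_{12}$ from \eqref{eq:R_weights} supply precisely the rational coefficients appearing in the proposition: the diagonal relations \eqref{eq:A2A1}--\eqref{eq:D2D1_commute} come from components with a single nonzero $R$-entry on each side, while mixed relations such as \eqref{eq:B2D1}, \eqref{eq:D2_B1}, \eqref{eq:ADBC_1}, \eqref{eq:ADBC_2} come from components with two nonzero $R$-entries on one or both sides, yielding the two-term expressions after dividing through by the appropriate $R$-entry.

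The main obstacle is index bookkeeping rather than any hard algebra. One must carefully reconcile three conventions: the four-slot vertex labelling $(i_1, j_1; i_2, j_2)$ with its assignment of vertical vs.\ horizontal and incoming vs.\ outgoing edges; the identification of $A, B, C, D$ with the four auxiliary components via \eqref{eq:abcdv}; and the distinction between the first and second auxiliary copies in $RTT = TTR$. Once these are pinned down consistently, each of the eleven identities follows mechanically from the known entries of $R_{12}$, with no further computation beyond the finite case-check already present in \Cref{prop:YBE_first}.
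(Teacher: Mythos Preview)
Your proposal is correct and follows essentially the same route as the paper. The paper's proof also reduces to the single-site Yang--Baxter equation (\Cref{prop:YBE_first}), extends to tensor products by the ``zipper argument'' (your ``train argument''), and then reads off each commutation relation from a particular choice of boundary conditions on the two auxiliary edges --- exactly your projection onto $e_{i_1}\otimes e_{i_2}$ and extraction of the $e_{j_1}\otimes e_{j_2}$ coefficient, followed by division by an $R$-entry where necessary. The only difference is packaging: you phrase everything in monodromy-matrix $RTT=TTR$ language, while the paper works more pictorially with vertex configurations, but the mathematics is identical.
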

\begin{proof}
	First, assume that the operators act in a single
	two-dimensional space $V^{(k)}\simeq \mathbb{C}^{2}$. 
	Then
	all of the desired commutation relations 
	follow from the Yang--Baxter equation
	of \Cref{prop:YBE_first}. 
	Let us show how to get one of these relations,
	say, \eqref{eq:B2D1}, the others are obtained in a similar way.
	Write the Yang--Baxter equation \eqref{eq:YBE_first}
	with the boundary conditions
	$i_1=i_2=0$, $i_3=j_1=1$, $j_2=j_3=0$ and with 
	the parameters $(x_1,r_1)$ and $(x_2,r_2)$ interchanged.
	In the operator form, this Yang--Baxter equation reads
	\begin{equation}\label{eq:YBE_commutation_proof}
		D(x_1,r_1)B(x_2,r_2)=
		\frac{x_2(1-r^{-2}_2)}{x_2-x_1r^{-2}_1}D(x_2,r_2)B(x_1,r_1)+
		\frac{x_2-x_1}{x_2-x_1r^{-2}_1}B(x_2,r_2)D(x_1,r_1).
	\end{equation}
	Note that in the product $D(x_1,r_1)B(x_2,r_2)$,
	the $B$ and $D$ operator corresponds to the bottom and, respectively, top, vertex in
	the left-hand side of
	\Cref{fig:YBE_red},
	and same for all other products in this proposition.
	In \eqref{eq:YBE_commutation_proof}
	we then move the term containing
	$D(x_2,r_2)B(x_1,r_1)$ into the left-hand side, and 
	divide the identity by the prefactor $(x_2-x_1)/(x_2-x_1r^{-2}_1)$ in front of $B(x_2,r_2)D(x_1,r_1)$.
	This gives~\eqref{eq:B2D1} for the case of the two-dimensional space $V^{(k)}\simeq \mathbb{C}^{2}$.

	To extend the relations to arbitrary tensor products
	$V^{(k_1)} \otimes V^{(k_2)} \otimes \ldots \otimes V^{(k_n)}$,
	we apply the standard ``zipper argument'' to establish the Yang--Baxter equation for 
	a horizontal chain of two-vertex configurations, see \Cref{fig:YBE_chain}. 
	This equation follows by sequentially applying
	the original equation \eqref{eq:YBE_first} to move the cross vertex and swap the parameters
	$(x_i,r_i)$.
	Then the Yang--Baxter equation corresponding to the space
	$V^{(k_1)} \otimes V^{(k_2)} \otimes \ldots \otimes V^{(k_n)}$ 
	implies all the desired commutation relations.	
\end{proof}

\begin{figure}[htpb]
	\centering
	\includegraphics[width=.9\textwidth]{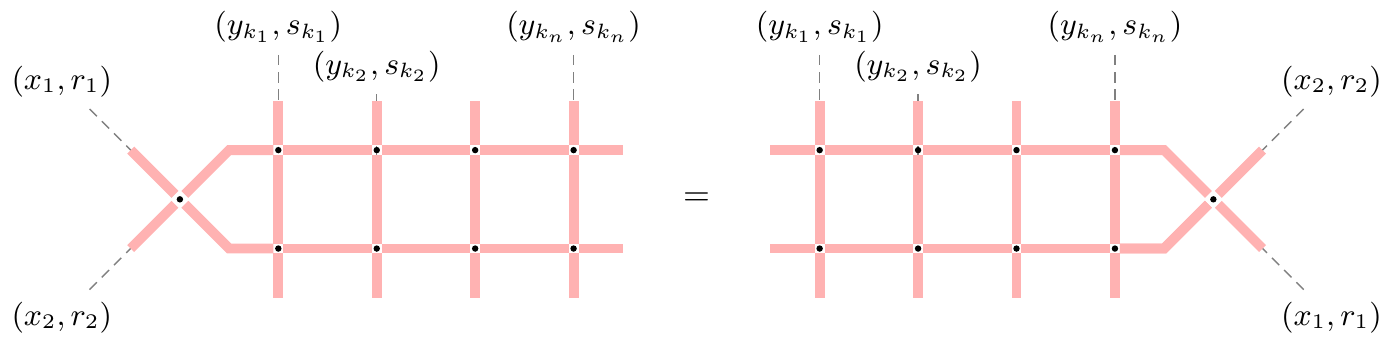}
	\caption{The Yang--Baxter equation for a horizontal chain of 
	two-vertex configurations.}
	\label{fig:YBE_chain}
\end{figure}

Using the weights $\widehat W$ \eqref{eq:weights_W_hat}, 
define four operators $\widehat{A}=\widehat{A}(x,r)$, 
$\widehat{B}=\widehat{B}(x,r)$, 
$\widehat{C}=\widehat{C}(x,r)$, 
and $\widehat{D}=\widehat{D}(x,r)$ 
acting from the right on each two-dimensional space $V^{(k)}$ as
\begin{equation}
	\label{eq:abcdv_hat}
	\begin{split}
		& e^{(k)}_i \, \widehat{A} = \widehat{W} (i, 1; i, 1 \mid x;y_k;r;s_k) \, e^{(k)}_i; 
		\qquad \qquad 
		e^{(k)}_i \, \widehat{B}  = \widehat{W} (i + 1, 0; i, 1\mid x;y_k;r;s_k) \, e^{(k)}_{i + 1}; 
		\\
		& e^{(k)}_i \, \widehat{C}  = \widehat{W} (i - 1, 1; i, 0\mid x;y_k;r;s_k) \, e^{(k)}_{i - 1};
		\qquad 
		e^{(k)}_i \, \widehat{D}  = \widehat{W} (i, 0; i, 0\mid x;y_k;r;s_k) \, e^{(k)}_i.
	\end{split}
\end{equation} 
Note the difference with the operators $A,B,C,D$ \eqref{eq:abcdv}
which read vectors $e_i^{(k)}$ corresponding to a vertex $(i_1,j_1;i_2,j_2)$ ``from 
bottom to top'' (i.e., map $e_{i_1}^{(k)}$ to $e_{i_2}^{(k)}$),
while $\widehat{A}, \widehat{B}, \widehat{C}, \widehat{D}$
read vectors ``from top to bottom''.
Note that the states of the left and right edges for the same-letter operators
in \eqref{eq:abcdv} and \eqref{eq:abcdv_hat} are same.

We extend the operators \eqref{eq:abcdv_hat} to tensor products
$V^{(k_1)}\otimes V^{(k_2)}\otimes \ldots \otimes V^{(k_n)}$ 
by first defining for $n=2$,
\begin{equation}
	\label{eq:abcdv_n2_hat}
	\begin{split}
		& (v_1 \otimes v_2)  \widehat{A} =  v_1 \widehat{C} \otimes v_2 \widehat{B} + v_1 \widehat{A} \otimes v_2 \widehat{A}; \\ 
		& (v_1 \otimes v_2)  \widehat{B} =  v_1 \widehat{D} \otimes v_2 \widehat{B} +  v_1 \widehat{B} \otimes v_2 \widehat{A}; \\ 
		& (v_1 \otimes v_2)  \widehat{C} =  v_1 \widehat{C} \otimes v_2 \widehat{D} + v_1 \widehat{A} \otimes v_2 \widehat{C}; \\ 
		& (v_1 \otimes v_2)  \widehat{D} =  v_1 \widehat{D} \otimes v_2 \widehat{D} + v_1 \widehat{B} \otimes v_2 \widehat{C},
	\end{split}
\end{equation}
and then for arbitrary $n$ by induction
similarly to 
\eqref{eq:abcdv_n2}.

The operators $\widehat{A},\widehat{B},\widehat{C},\widehat{D}$
acing in 
any tensor product
$V^{(k_1)} \otimes V^{(k_2)} \otimes \ldots \otimes V^{(k_n)}$
satisfy commutation relations which parallel the ones in \Cref{prop:ABCD_YBE}.
In the next proposition, let us list a few relations
which are
employed in proofs later in the paper.
They also follow from the Yang--Baxter equation (\Cref{prop:YBE_first})
and the ``zipper argument'', as in the proof of \Cref{prop:ABCD_YBE}.
\begin{proposition}
	\label{prop:ABCD_YBE_Hat}
	For any $x_1,x_2,r_1,r_2\in \mathbb{C}$, we have
	\begin{align}
		\label{eq:A2hatA1hat}
	& \widehat{A} (x_2, r_2) \widehat{A} (x_1, r_1) = \widehat{A} (x_1, r_1) \widehat{A} (x_2, r_2); \\
		\label{eq:B2hat_B1hat_commute}
	& \widehat{B}(x_2, r_2) \widehat{B}(x_1, r_1) = \displaystyle\frac{x_2 - r^{-2}_1 x_1}{x_1 - r^{-2}_2 x_2} \widehat{B} (x_1, r_1) \widehat{B}(x_2, r_2); \\
	\label{eq:BD_hat_relation}
	& \widehat{B} (x_2, r_2) \widehat{D}(x_1, r_1) = \displaystyle\frac{r^{-2}_1 x_1 - x_2}{x_1 - x_2} \widehat{D} (x_1, r_1) \widehat{B} (x_2, r_2) + \displaystyle\frac{(1 - r^{-2}_2) x_2}{x_1 - x_2} \widehat{D}(x_2, r_2) \widehat{B} (x_1, r_1); \\
	\label{eq:BA_hat_relation}
	& \widehat{B} (x_2, r_2) \widehat{A}(x_1, r_1) = \displaystyle\frac{r^{-2}_1 x_1 - x_2}{x_2 - x_1} \widehat{A}(x_1, r_1) \widehat{B} (x_2, r_2) + \displaystyle\frac{(1 - r^{-2}_2) x_2}{x_2 - x_1} \widehat{A}(x_2, r_2) \widehat{B} (x_1, r_1);\\
		\label{eq:D2hatD1hat}
		& \widehat{D} (x_2, r_2) \widehat{D} (x_1, r_1) = \widehat{D} (x_1, r_1) \widehat{D} (x_2, r_2).
	\end{align}
\end{proposition}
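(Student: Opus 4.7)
The plan is to follow the same two-step strategy as in the proof of \Cref{prop:ABCD_YBE}. First I would verify each of \eqref{eq:A2hatA1hat}--\eqref{eq:D2hatD1hat} in the case of a single two-dimensional space $V^{(k)}\simeq\mathbb{C}^2$ by invoking the Yang--Baxter equation for the $\widehat{W}$ weights. That equation is one of the three alternative families mentioned in \Cref{prop:YBE_first}, obtained from \eqref{eq:YBE_first} via the uniform rescaling \eqref{eq:W_What_renormalization}. For each target relation one selects a particular sextuple of boundary states $(i_1,i_2,i_3,j_1,j_2,j_3)$ so that the resulting scalar identity, after moving one term to the left-hand side and dividing through by the prefactor of the remaining term on the right, matches the desired commutation identity. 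For example, \eqref{eq:BD_hat_relation} arises from exactly the same boundary data that produces \eqref{eq:B2D1} in the unhatted case, and indeed the scalar coefficients in \eqref{eq:BD_hat_relation} and \eqref{eq:B2D1} coincide.

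The conceptual point requiring care is that the hat operators act from the right and read vertical labels top-to-bottom, whereas $A,B,C,D$ act from the left and read bottom-to-top. In both conventions, however, the two stacked operators in a product such as $\widehat{B}(x_2,r_2)\widehat{D}(x_1,r_1)$ or $B(x_2,r_2)D(x_1,r_1)$ correspond to the same rectangular picture: the top row carries the parameter $(x_2,r_2)$ and the bottom row carries $(x_1,r_1)$. Consequently the choice of boundary data in the YBE that yields, say, \eqref{eq:B2D1} also yields \eqref{eq:BD_hat_relation}, and similarly for the remaining identities. This is consistent with the observation that the numerical coefficients in \eqref{eq:A2hatA1hat}--\eqref{eq:D2hatD1hat} coincide with (or differ only by sign-rearrangement from) the corresponding coefficients in \Cref{prop:ABCD_YBE}.

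Second, I would extend from a single $V^{(k)}$ to an arbitrary tensor product $V^{(k_1)}\otimes V^{(k_2)}\otimes\cdots\otimes V^{(k_n)}$ by the zipper argument: iterate the single-site YBE to drag the cross vertex across the horizontal chain of $\widehat{W}$-vertices, swapping the row rapidities $(x_1,r_1)$ and $(x_2,r_2)$ one column at a time, as in \Cref{fig:YBE_chain}. Because the right-action tensor rule \eqref{eq:abcdv_n2_hat} has exactly the same combinatorial form as the left-action rule \eqref{eq:abcdv_n2} (with the horizontal edge labels interchanged between left and right), the induction on $n$ carries through verbatim, and the single-site commutations established in the first step propagate to the full tensor product.

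The main obstacle is essentially bookkeeping: matching each of the five listed relations to the correct boundary states in the YBE and verifying that the right-acting, top-to-bottom convention of the hat operators yields exactly the same planar identity as the left-acting, bottom-to-top one. No new algebraic input beyond \Cref{prop:YBE_first} is required, which is why the authors state this proposition with only a pointer back to the proof of \Cref{prop:ABCD_YBE}.
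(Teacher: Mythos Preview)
Your proposal is correct and matches the paper's approach exactly: the paper simply states that these relations ``also follow from the Yang--Baxter equation (\Cref{prop:YBE_first}) and the `zipper argument', as in the proof of \Cref{prop:ABCD_YBE},'' which is precisely the two-step scheme you outline. Your additional remarks about why the right-acting, top-to-bottom convention for the hat operators yields the same planar identities (and hence the same scalar coefficients) as the left-acting case are accurate and fill in the bookkeeping the paper leaves implicit.
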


\section{Symmetric functions}
\label{sec:F_G_symm_funct}

Here we define symmetric functions
$F_\lambda$ and $G_\lambda$ 
(indexed by signatures $\lambda$) 
which are
partition functions of certain configurations
of the free fermion six vertex model.

\subsection{Signature states}
\label{sub:signature_states}

A (generalized) signature with $N$ parts is a nonincreasing integer sequence
\begin{equation*}
	\lambda=(\lambda_1\ge \ldots\ge \lambda_N ), \qquad \lambda_i\in \mathbb{Z}.
\end{equation*}
Denote $|\lambda|:=\lambda_1+\ldots+\lambda_N $.
We will mostly deal with nonnegative signatures, i.e., such that $\lambda_N\ge0$,
and will omit the word ``nonnegative''.
To a signature $\lambda$ with $N$ parts
we associate a configuration
\begin{equation}
	\label{eq:S_lambda_notation}
	\mathcal{S}(\lambda)=(\lambda_1+N,\lambda_2+N-1,\ldots,\lambda_{N-1}+2,\lambda_N+1 )\subset \mathbb{Z}_{\ge1}
\end{equation}
of distinct points
in the integer half-line.

Consider the (formal) infinite tensor product $V^{(1)}\otimes V^{(2)}\otimes\ldots $,
where
$V^{(k)}\simeq \mathbb{C}^2$ for all $k$
with basis $e_0^{(k)}, e_1^{(k)}$. Let $\mathscr{V}$ be the 
subset of the infinite tensor product
spanned by the following 
vectors:
\begin{equation}
	\label{eq:e_calT_notation}
	e_{\mathcal{T}}=
	e_{m_1}^{(1)}\otimes e_{m_2}^{(2)}\otimes e_{m_3}^{(3)}\otimes\ldots,
	\qquad m_i=\begin{cases}
		1,&i\in \mathcal{T};\\
		0,&\textnormal{otherwise},
	\end{cases}
\end{equation}
where $\mathcal{T}\subset \mathbb{Z}_{\ge1}$ runs over arbitrary finite sets.
These basis vectors are called \emph{finitary}. 
(We will sometimes use the same notation
\eqref{eq:e_calT_notation} for arbitrary subsets $\mathcal{T}\subseteq \mathbb{Z}_{\ge1}$.)
In particular, 
with each signature $\lambda$ we associate a \emph{signature state}
$e_{\mathcal{S}(\lambda)}$.
Note that all but finitely many of the tensor components of $e_{\mathcal{S}(\lambda)}$ are 
$e_0^{(k)}$.

By $\mathscr{V}_\ell$, $\ell=0,1,\ldots $, denote the
subspace of $\mathscr{V}$ spanned by $e_{\mathcal{T}}$ with $\mathcal{T}$
running over all $\ell$-element subsets of $\mathbb{Z}_{\ge1}$.
Let us extend some of the row operators defined in \Cref{sub:row_operators}
to act in the space~$\mathscr{V}$. Namely, 
thanks to $W(0,0;0,0)=1$, the operators $C(x,r)$ and $D(x,r)$ act in each of the $\mathscr{V}_\ell$'s, and 
\begin{equation}
	\label{eq:CD_half_infinite_operators}
	C(x,r)\colon \mathscr{V}_{\ell}\to \mathscr{V}_{\ell+1},
	\qquad 
	D(x,r)\colon \mathscr{V}_{\ell}\to \mathscr{V}_{\ell}.
\end{equation}
Similarly, thanks to $\widehat{W}(0,1;0,1)=1$, 
the operators $\widehat{A}(x,r)$ and $\widehat{B}(x,r)$
act as
\begin{equation}
	\label{eq:hat_AB_half_infinite_operators}
	\widehat{A}(x,r)\colon \mathscr{V}_{\ell}\to \mathscr{V}_{\ell},
	\qquad 
	\widehat{B}(x,r)\colon \mathscr{V}_{\ell}\to \mathscr{V}_{\ell+1}.
\end{equation}
These operators satisfy the commutation relations
\eqref{eq:A2hatA1hat}, \eqref{eq:B2hat_B1hat_commute}, and \eqref{eq:BA_hat_relation}.
Note that while the spaces $\mathscr{V}_{\ell}$ involve infinite tensor products,
in the action of the operators $\widehat A,\widehat B,C,D$
the boundary condition at infinity is always determined uniquely.

\begin{remark}
	Later in \Cref{sec:fermionic_operators} 
	we employ a similar infinite tensor product over the whole lattice
	$\mathbb{Z}$, or, more precisely, a corresponding Fock space,
	to compute a generating function 
	for correlations of certain probability distributions
	based on our symmetric functions.
\end{remark}

\subsection{Symmetric functions as partition functions}
\label{sub:def_F_G_as_partition_functions}

Here we define the functions 
$F_{\lambda/\mu}(\mathbf{x};\mathbf{y};\mathbf{r};\mathbf{s})$ and
$G_{\lambda/\mu}(\mathbf{x};\mathbf{y};\mathbf{r};\mathbf{s})$ as certain partition functions
of the free fermion six vertex model. 
\begin{definition}
	\label{def:G_function}
	Fix a positive integer $k$, two signatures $\lambda,\mu$
	with the same number of parts,
	and parameters $\mathbf{x}=(x_1,\ldots,x_k )$, 
	$\mathbf{y}=(y_1,y_2,\ldots )$, $\mathbf{r}=(r_1,\ldots,r_k )$,
	$\mathbf{s}=(s_1,s_2,\ldots )$.

	Consider the following boundary data in the half-infinite rectangle 
	$\mathbb{Z}_{\ge1}\times \{1,\ldots, k \}$.
	A path vertically enters the rectangle from the bottom 
	at each $m\in \mathcal{S}(\mu)$;
	a path vertically exits the rectangle at the top at each $\ell\in \mathcal{S}(\lambda)$;
	the left and right boundaries of the rectangle, as well
	as all the other boundary edges on top and bottom, are empty
	(see \Cref{fig:F_G_partition_functions}, left, for an illustration).

	Let the vertex weight at each 
	$(i,j)$ in the rectangle be $W(i_1,j_1;i_2,j_2\mid x_j;y_i;r_j;s_i)$ \eqref{eq:weights_W}.
	That is, the parameters $(x,r)$ are constant along the horizontal, 
	and $(y,s)$ are constant along the vertical direction.
	Denote the partition function of thus defined vertex model in the half-infinite rectangle
	by $G_{\lambda/\mu}=G_{\lambda/\mu}(\mathbf{x};\mathbf{y};\mathbf{r};\mathbf{s})$.
	Even though the domain is infinite,
	this partition function is well-defined thanks to $W(0,0;0,0)=1$.

	In the particular case $\mu=(0,\ldots,0 )$ (with the same number of parts as in $\lambda$), 
	we abbreviate $G_{\lambda}=G_{\lambda/\mu}$.
\end{definition}

\begin{definition}
	\label{def:F_function}
	Within the notation of \Cref{def:G_function}, 
	let now the number of parts in $\lambda$ be $N+k$ and the number of parts in $\mu$ be $N$ 
	for some $N\in \mathbb{Z}_{\ge0}$.
	Consider the following (different) boundary data for the half-infinite rectangle $\mathbb{Z}_{\ge1}\times
	\left\{ 1,\ldots,k  \right\}$. Let a path enter vertically from the bottom
	at each $\ell \in \mathcal{S}(\lambda)=(\lambda_1+N+k,\lambda_2+N+k-1,\ldots,\lambda_{N+k}+1)$; 
	a path exit vertically at the top
	at each $m\in \mathcal{S}(\mu)=(\mu_1+N,\mu_2+N-1,\ldots,\mu_N+1)$; and a path 
	exit the rectangle far to the right at each horizontal layer. Let 
	all the other boundary edges
	of the rectangle be empty (see \Cref{fig:F_G_partition_functions}, right, for an illustration).

	Let the vertex weight at each 
	$(i,j)$ in the rectangle be $\widehat{W}(i_1,j_1;i_2,j_2\mid x_j;y_i;r_j;s_i)$ \eqref{eq:weights_W_hat}.
	Denote the partition function of this vertex model by $F_{\lambda/\mu}=F_{\lambda/\mu}(\mathbf{x};\mathbf{y};\mathbf{r};\mathbf{s})$.
	This partition function is well-defined because $\widehat{W}(0,1;0,1)=1$.

	In the particular case $N=0$ (i.e., when $\mu=\varnothing$
	is the empty signature with no parts),
	we abbreviate $F_\lambda=F_{\lambda/\mu}$.
\end{definition}
We extend the definitions of $G_{\lambda/\mu}$ and $F_{\lambda/\mu}$ to arbitrary
pairs of signatures $\lambda,\mu$ (without the restrictions on the number of parts),
by setting these functions to zero if there are no path configurations with the 
prescribed boundary data.

\begin{figure}[htpb]
	\centering
	\includegraphics[width=\textwidth]{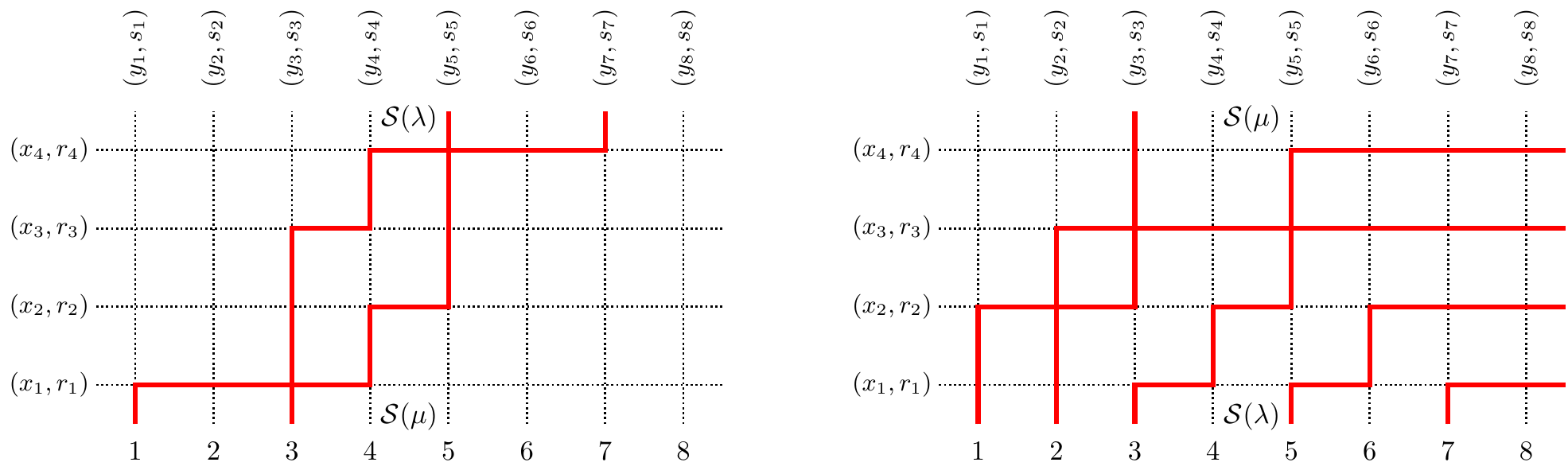}
	\caption{Examples of path configurations 
		contributing to the
		partition functions in \Cref{def:G_function,def:F_function}. 
		Left: $G_{\lambda/\mu}$
		with 
		$\lambda=(5,4)$ and
		$\mu=(1,0)$.
		Right: $F_{\lambda/\mu}$
		with $\lambda=(2,1,0,0,0)$ and
		$\mu=(2)$.}
	\label{fig:F_G_partition_functions}
\end{figure}

From the definitions it follows that
the partition functions $G_{\lambda/\mu}$ and $F_{\lambda/\mu}$ are
written in terms of the row operators from \Cref{sub:row_operators}
acting in the subspace $\mathscr{V}$
of the infinite tensor product (as explained in \Cref{sub:signature_states}):
\begin{proposition}
	\label{prop:F_G_row_op}
	The function
	$G_{\lambda/\mu}(x_1,\ldots,x_k;\mathbf{y};r_1,\ldots,r_k
	;\mathbf{s} )$ is equal to the coefficient 
	by $e_{\mathcal{S}(\lambda)}$ in 
	$D(x_k,r_k)\ldots D(x_1,r_1) e_{\mathcal{S}(\mu)}$.
	Here $e_{\mathcal{S}(\lambda)},e_{\mathcal{S}(\mu)}$
	belong to the same subspace $\mathscr{V}_N$, and the product of the $D(x_i,r_i)$'s preserves
	this subspace by \eqref{eq:CD_half_infinite_operators}.

	Similarly, the function
	$F_{\lambda/\mu}(x_1,\ldots,x_k;\mathbf{y};r_1,\ldots,r_k
	;\mathbf{s} )$ is equal to the coefficient 
	by $e_{\mathcal{S}(\lambda)}$ in 
	$e_{\mathcal{S}(\mu)}  \widehat{B}(x_k,r_k)\ldots \widehat{B}(x_1,r_1) $.
	Here $e_{\mathcal{S}(\mu)}\in \mathscr{V}_N$ and $e_{\mathcal{S}(\lambda)}\in \mathscr{V}_{N+k}$,
	and the product of the $\widehat{B}(x_i,r_i)$'s maps 
	$\mathscr{V}_N$ to $\mathscr{V}_{N+k}$, see \eqref{eq:hat_AB_half_infinite_operators}.
\end{proposition}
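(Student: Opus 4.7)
The plan is to verify both identities by unwinding the recursive tensor-product action \eqref{eq:abcdv_n2} (and its hatted counterpart \eqref{eq:abcdv_n2_hat}) and matching it term-by-term against the sum over horizontal edge states in a single row of the six vertex model. Once a single row is handled, iterating over the $k$ rows of the strip assembles the full partition function.

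For $G_{\lambda/\mu}$, I would first consider a single operator $D(x_j, r_j)$ acting on a finitary basis state $e_{\mathcal{T}} \in \mathscr{V}_N$ and show that the matrix element $\langle D(x_j,r_j) e_{\mathcal{T}}, e_{\mathcal{T}'}\rangle$ equals the partition function of a single row of the strip with bottom configuration $\mathcal{T}$, top configuration $\mathcal{T}'$, and empty horizontal boundaries on both ends. The proof is by induction on the number of columns: the one-column case is immediate from \eqref{eq:abcdv}, and the induction step uses the recursion $D(v_1 \otimes v_2) = D v_1 \otimes D v_2 + B v_1 \otimes C v_2$, whose two terms correspond precisely to the two possible states ($0$ or $1$) of the horizontal edge linking the already-processed block to the freshly added column (only two of the four conceivable combinations appear, since the left boundary of $D$ is $0$). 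The far-right empty boundary is automatic because beyond the rightmost point of $\mathcal{T} \cup \mathcal{T}'$ only the weight $W(0,0;0,0) = 1$ contributes, so the formally infinite product is in fact finite. Stacking these single-row identities, $D(x_k, r_k) \cdots D(x_1, r_1) e_{\mathcal{S}(\mu)}$ expands into the weighted sum over all path configurations of the strip with bottom boundary $\mathcal{S}(\mu)$, and the coefficient of $e_{\mathcal{S}(\lambda)}$ is exactly $G_{\lambda/\mu}$ by \Cref{def:G_function}.

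For $F_{\lambda/\mu}$ the argument is formally the same with top and bottom swapped. The hatted operators read basis vectors top-to-bottom and act on the right (see \eqref{eq:abcdv_hat}), so starting from $e_{\mathcal{S}(\mu)}$ at the top and applying $\widehat{B}(x_k, r_k)$ first will process row $k$ (empty left boundary, occupied right boundary, which is exactly the $F$ boundary data of \Cref{def:F_function}), and subsequent applications will peel off rows $k-1, \ldots, 1$ until $e_{\mathcal{S}(\lambda)}$ at the bottom is reached. The single-row matrix element of $\widehat{B}$ is analyzed column by column using \eqref{eq:abcdv_n2_hat}, and the formally infinite tensor sum again terminates because the fully packed right boundary forces the tail of columns to carry only the weight-$1$ vertex $\widehat{W}(0,1;0,1) = 1$. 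I do not foresee a genuine obstacle; the whole proposition amounts to a dictionary between the graphical row-by-row picture and the algebraic recursions \eqref{eq:abcdv_n2}, \eqref{eq:abcdv_n2_hat}, and the one step requiring real care is the bookkeeping that matches each of the four summands in these recursions with the correct horizontal edge state between adjacent columns.
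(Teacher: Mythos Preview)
Your proposal is correct and is precisely what the paper has in mind: the paper gives no explicit proof at all, stating only that the proposition ``follows from the definitions'' of the row operators and their tensor-product action (with \Cref{fig:tensor_ABCD} serving as the graphical dictionary). Your column-by-column induction using \eqref{eq:abcdv_n2} and \eqref{eq:abcdv_n2_hat} is exactly the unwinding that justifies this claim.
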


\subsection{Symmetry and branching}
\label{sub:symm_branching}

Let us derive a few basic properties of the functions $F,G$
defined in the previous subsection.
For each $i\ge1$, let $\sigma_i$ denote the elementary transposition of the indices 
$i\leftrightarrow i+1$, and define its action on 
(various)
sets of variables as
\begin{equation*}
	\sigma_i(x_1,\ldots,x_{i-1},x_i,x_{i+1},x_{i+2}\ldots  )=
	(x_1,\ldots,x_{i-1},x_{i+1},x_{i},x_{i+2},\ldots  ),
\end{equation*}
and similarly for $\mathbf{r},\mathbf{y},\mathbf{s}$.

\begin{proposition}[Symmetries]
	\label{prop:F_G_symmetry}
	For each $k\ge1$ and $i \in \left\{ 1,\ldots,k-1  \right\}$ we have
	the following symmetry properties:
	\begin{align*}
		G_{\lambda/\mu}(\sigma_i(\mathbf{x});\mathbf{y};\sigma_i(\mathbf{r});\mathbf{s})&=
		G_{\lambda/\mu}(\mathbf{x};\mathbf{y};\mathbf{r};\mathbf{s});\\
		F_{\lambda/\mu}(\sigma_i(\mathbf{x});\mathbf{y};\sigma_i(\mathbf{r});\mathbf{s})&=
		\frac{x_i-x_{i+1}r^{-2}_{i+1}}{x_{i+1}-x_i r^{-2}_i}\,
		F_{\lambda/\mu}(\mathbf{x};\mathbf{y};\mathbf{r};\mathbf{s}).
	\end{align*}
	In other words,
	the functions $G_{\lambda/\mu}(\mathbf{x};\mathbf{y};\mathbf{r};\mathbf{s})$ and 
	$\displaystyle F_{\lambda/\mu}(\mathbf{x};\mathbf{y};\mathbf{r};\mathbf{s})
	\prod_{1\le i<j\le k}( x_i-x_jr^{-2}_j )$
	are symmetric under simultaneous permutations of the variables
	$(x_j,r_j)$.
\end{proposition}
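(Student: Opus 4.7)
The plan is to derive both symmetry statements directly from the row-operator representations of $F_{\lambda/\mu}$ and $G_{\lambda/\mu}$ given by \Cref{prop:F_G_row_op}, combined with the commutation relations in \Cref{prop:ABCD_YBE,prop:ABCD_YBE_Hat} (which themselves are consequences of the Yang--Baxter equation). Since any permutation is a product of elementary transpositions $\sigma_i$, it suffices to handle each $\sigma_i$.

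For $G_{\lambda/\mu}$, I would invoke \Cref{prop:F_G_row_op} to write
\begin{equation*}
G_{\lambda/\mu}(\mathbf{x};\mathbf{y};\mathbf{r};\mathbf{s})
=\bigl\langle e_{\mathcal{S}(\lambda)},\,D(x_k,r_k)\cdots D(x_1,r_1)\,e_{\mathcal{S}(\mu)}\bigr\rangle.
\end{equation*}
The relation \eqref{eq:D2D1_commute} states $D(x_{i+1},r_{i+1})D(x_i,r_i)=D(x_i,r_i)D(x_{i+1},r_{i+1})$, so swapping the two adjacent factors corresponding to positions $i$ and $i+1$ in the product (equivalently, applying $\sigma_i$ to the list of variables) leaves the operator unchanged. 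This gives the $G$-symmetry at once.

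For $F_{\lambda/\mu}$, the same proposition yields
\begin{equation*}
F_{\lambda/\mu}(\mathbf{x};\mathbf{y};\mathbf{r};\mathbf{s})
=\bigl\langle e_{\mathcal{S}(\mu)}\,\widehat B(x_k,r_k)\cdots\widehat B(x_1,r_1),\,e_{\mathcal{S}(\lambda)}\bigr\rangle.
\end{equation*}
Relation \eqref{eq:B2hat_B1hat_commute} gives, after rearrangement,
\begin{equation*}
\widehat B(x_i,r_i)\widehat B(x_{i+1},r_{i+1})
=\frac{x_i-r_{i+1}^{-2}x_{i+1}}{x_{i+1}-r_i^{-2}x_i}\,
\widehat B(x_{i+1},r_{i+1})\widehat B(x_i,r_i).
\end{equation*}
Swapping the two adjacent $\widehat B$ factors in the product corresponds precisely to applying $\sigma_i$ to $(\mathbf{x},\mathbf{r})$, so
\begin{equation*}
F_{\lambda/\mu}(\sigma_i(\mathbf{x});\mathbf{y};\sigma_i(\mathbf{r});\mathbf{s})
=\frac{x_i-x_{i+1}r_{i+1}^{-2}}{x_{i+1}-x_ir_i^{-2}}\,F_{\lambda/\mu}(\mathbf{x};\mathbf{y};\mathbf{r};\mathbf{s}),
\end{equation*}
which is the second displayed formula in the statement.

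For the final assertion, I would check that $\prod_{1\le i<j\le k}(x_i-x_jr_j^{-2})$ transforms under $\sigma_i$ by the reciprocal factor. Indeed, the unique factor $(x_i-x_{i+1}r_{i+1}^{-2})$ becomes $(x_{i+1}-x_ir_i^{-2})$, while for $a<i$ the factors $(x_a-x_ir_i^{-2})$ and $(x_a-x_{i+1}r_{i+1}^{-2})$ simply interchange (and similarly for $b>i+1$), so the quotient of the products under $\sigma_i$ equals $\frac{x_{i+1}-x_ir_i^{-2}}{x_i-x_{i+1}r_{i+1}^{-2}}$. Multiplying this by the factor coming from $F_{\lambda/\mu}$ yields $1$, establishing symmetry of the product. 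There is no real obstacle here—the whole proof is a bookkeeping exercise once the commutation relations are in hand; the only subtlety is keeping careful track of which of the two forms of \eqref{eq:B2hat_B1hat_commute} to use so that the sign of the factor matches the statement.
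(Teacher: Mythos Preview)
Your proposal is correct and follows essentially the same approach as the paper: the paper's proof also cites the row-operator representation (\Cref{prop:F_G_row_op}) together with the commutation relations \eqref{eq:D2D1_commute} for $G$ and \eqref{eq:B2hat_B1hat_commute} for $F$, and your write-up simply makes these steps explicit (including the routine check that the product $\prod_{i<j}(x_i-x_jr_j^{-2})$ absorbs the scalar factor).
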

\begin{proof}
	The symmetry properties of the functions $G$ and $F$
	follow, respectively, from the commutation relations
	\eqref{eq:D2D1_commute} and \eqref{eq:B2hat_B1hat_commute}
	for the operators acting in the subspace $\mathscr{V}$
	of the infinite tensor product (see \Cref{sub:signature_states}).
\end{proof}

\begin{proposition}[Branching]
	\label{prop:F_G_branching}
	Fix integers $M,N\ge1$ and sets of complex variables
	\begin{align*}
		&\mathbf{x}=(x_1,\ldots,x_M ), \quad 
		\mathbf{x}'=(x_{M+1},\ldots,x_{M+N} ),
		\quad 
		\mathbf{r}=(r_1,\ldots,r_M ), \quad \mathbf{r}'=(r_{M+1},\ldots,r_{M+N} ),
		\\
		&\mathbf{y}=(y_1,y_2,y_3,\ldots, ),\qquad \mathbf{s}=(s_1,s_2,s_3,\ldots ).
	\end{align*}
	Define 
	$\mathbf{x}\cup \mathbf{x}'=(x_1,\ldots,x_{M+N} )$, 
	$\mathbf{r}\cup \mathbf{r}'=(r_1,\ldots,r_{M+N} )$.
	Then for any signatures $\lambda,\mu$ we have
	\begin{align*}
		\sum_{\nu}G_{\lambda/\nu}(\mathbf{x};\mathbf{y};\mathbf{r};\mathbf{s})\,G_{\nu/\mu}(\mathbf{x}';\mathbf{y};\mathbf{r}';\mathbf{s})
		&=
		G_{\lambda/\mu}
		(\mathbf{x}\cup \mathbf{x}';\mathbf{y};\mathbf{r}\cup \mathbf{r}';\mathbf{s});\\
		\sum_{\nu}F_{\lambda/\nu}(\mathbf{x};\mathbf{y};\mathbf{r};\mathbf{s})\, F_{\nu/\mu}(\mathbf{x}';\mathbf{y};\mathbf{r}';\mathbf{s})
		&=
		F_{\lambda/\mu}
		(\mathbf{x}\cup \mathbf{x}';\mathbf{y};\mathbf{r}\cup \mathbf{r}';\mathbf{s}).
	\end{align*}
\end{proposition}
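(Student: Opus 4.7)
Both branching identities follow immediately from the operator realizations in \Cref{prop:F_G_row_op}, by inserting a resolution of the identity in the appropriate weight subspace of $\mathscr{V}$ between the operators indexed by $\mathbf{x}$ and those indexed by $\mathbf{x}'$. Combinatorially, any configuration of the free fermion six vertex model on the half-infinite strip of height $M+N$ decomposes uniquely into configurations on the bottom $M$ rows and the top $N$ rows, with the horizontal layer at the interface carrying some finite set of occupied sites that encodes an intermediate signature $\nu$ via $\mathcal{S}(\nu)$; summing over all such $\nu$ gives the branching identities.

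\textbf{Details for $G$.} Let $\ell$ denote the common number of parts of $\lambda$ and $\mu$. By \Cref{prop:F_G_row_op},
\begin{equation*}
	G_{\lambda/\mu}(\mathbf{x}\cup \mathbf{x}';\mathbf{y};\mathbf{r}\cup \mathbf{r}';\mathbf{s})
	= \bigl\langle e_{\mathcal{S}(\lambda)},\, D(x_{M+N},r_{M+N})\cdots D(x_{M+1},r_{M+1})\, D(x_M,r_M)\cdots D(x_1,r_1)\, e_{\mathcal{S}(\mu)} \bigr\rangle,
\end{equation*}
where $\langle\cdot,\cdot\rangle$ denotes the bilinear pairing for which $\{e_{\mathcal{T}}\}$ is an orthonormal basis of $\mathscr{V}$. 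Each $D(x_j,r_j)$ preserves $\mathscr{V}_\ell$ by \eqref{eq:CD_half_infinite_operators}, and $\mathscr{V}_\ell$ has $\{e_{\mathcal{S}(\nu)}\}$ as basis, indexed by signatures $\nu$ with $\ell$ parts. Inserting the formal identity $\sum_\nu e_{\mathcal{S}(\nu)} \langle e_{\mathcal{S}(\nu)}, \cdot\rangle$ between $D(x_M,r_M)$ and $D(x_{M+1},r_{M+1})$, and then reading each of the two resulting factors back through \Cref{prop:F_G_row_op}, produces the branching identity for $G$.

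\textbf{Details for $F$.} The argument is entirely analogous, now using the right action of $\widehat{B}(x,r)\colon \mathscr{V}_k \to \mathscr{V}_{k+1}$ from \eqref{eq:hat_AB_half_infinite_operators}. If $\mu$ has $\ell$ parts (so $\lambda$ must have $\ell+M+N$ parts for nonzero contributions), then by \Cref{prop:F_G_row_op},
\begin{equation*}
	F_{\lambda/\mu}(\mathbf{x}\cup \mathbf{x}';\mathbf{y};\mathbf{r}\cup \mathbf{r}';\mathbf{s})
	= \bigl\langle e_{\mathcal{S}(\lambda)},\, e_{\mathcal{S}(\mu)}\,\widehat{B}(x_{M+N},r_{M+N})\cdots \widehat{B}(x_{M+1},r_{M+1})\,\widehat{B}(x_M,r_M)\cdots \widehat{B}(x_1,r_1) \bigr\rangle,
\end{equation*}
and we insert the resolution of identity on $\mathscr{V}_{\ell+N}$ between $\widehat{B}(x_{M+1},r_{M+1})$ and $\widehat{B}(x_M,r_M)$. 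Intermediate signatures $\nu$ with $\ell+N$ parts arise naturally from the basis of $\mathscr{V}_{\ell+N}$, and recognizing the two factors via \Cref{prop:F_G_row_op} yields the stated identity.

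\textbf{Main point to verify.} The one subtlety is that the inserted resolution of identity is \emph{a priori} an infinite sum, since $\mathscr{V}_\ell$ is infinite-dimensional for $\ell\ge1$. This is harmless because the free fermion weights \eqref{eq:weights_W}--\eqref{eq:weights_W_hat} permit only rightward and upward motion of paths within the strip (the only two turn vertices are ``up-to-right'' and ``left-to-up''). Hence for any fixed boundary signatures $\lambda,\mu$ there are only finitely many valid configurations of the combined vertex model, bounded by a total horizontal displacement determined by $|\lambda|-|\mu|$ and $\ell$, so only finitely many interface signatures $\nu$ give nonzero summands. Each branching identity is therefore an equality of finite sums of rational functions, with no convergence issue.
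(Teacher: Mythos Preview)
Your proof is correct and takes essentially the same approach as the paper: both argue that the partition function on the height-$(M+N)$ strip factors along the horizontal cut between rows $M$ and $M+1$, summing over the intermediate signature $\nu$ encoding that interface. The paper phrases this directly in the language of \Cref{def:G_function,def:F_function}, while you recast it as a resolution of the identity in the row-operator formulation of \Cref{prop:F_G_row_op}; these are the same argument, and your explicit verification that only finitely many $\nu$ contribute is a welcome elaboration of the paper's parenthetical ``note that this sum is finite.''
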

\begin{proof}
	These identities follow from \Cref{def:G_function,def:F_function}, respectively.
	For example, for the first identity
	consider the vertex model in $\mathbb{Z}_{\ge1}\times \left\{ 1,\ldots,M+N  \right\}$
	for the right-hand side $G_{\lambda/\mu}$. Encode the configuration of the vertical arrows
	between rows $M$ and $M+1$ by a signature $\nu$. Then the bottom and the top
	vertex models in 
	$\mathbb{Z}_{\ge1}\times\left\{ 1,\ldots,M  \right\}$
	and $\mathbb{Z}_{\ge1}\times\left\{ M+1,\ldots,N  \right\}$
	have partition functions
	$G_{\nu/\mu}$ and
	$G_{\lambda/\nu}$, respectively.
	Summing over $\nu$ leads to the desired identity (note that this sum is finite).
	The second identity is proven in the same way.
\end{proof}

\subsection{Cauchy identity}
\label{sub:F_G_Cauchy_identity}

The functions $F,G$ satisfy Cauchy-type summation identities
which follow from the Yang--Baxter equation.

\begin{proposition}
	[Skew Cauchy identity]
	\label{prop:F_G_Cauchy_skew}
	Fix two signatures $\lambda,\mu$, integers $M,N\ge1$, and 
	sequences of complex variables
	\begin{align*}
		\mathbf{x}=(x_1,\ldots,x_M ),\qquad \mathbf{r}&=(r_1,\ldots,r_M );\\
		\mathbf{w}=(w_1,\ldots,w_N ),\qquad \boldsymbol\uptheta&=(\theta_1,\ldots,\theta_N);\\
		\mathbf{y}=(y_1,y_2,y_3,\ldots, ),\qquad \mathbf{s}&=(s_1,s_2,s_3,\ldots ),
	\end{align*}
	satisfying
	\begin{equation}
		\label{eq:skew_Cauchy_analytic_condition}
		\left|
		\frac{y_k-s^2_k x_i}{y_k-x_i}
		\frac{y_k-w_j}{y_k-s^2_k w_j}
		\right|<1-\delta<1
		\qquad \textnormal{for all $1\le i\le M$, $1\le j\le N$, and $k\ge1$}.
	\end{equation}
	Then we have
	\begin{equation}
		\begin{split}
			&
			\sum_{\nu}
			G_{\nu/\lambda}(\mathbf{w};\mathbf{y};\boldsymbol\uptheta;\mathbf{s})
			F_{\nu/\mu}(\mathbf{x};\mathbf{y};\mathbf{r};\mathbf{s})
			\\&\hspace{80pt}=
			\prod_{i=1}^{M}\prod_{j=1}^{N}
			\frac{x_i-\theta_j^{-2}w_j}{x_i-w_j}
			\sum_{\varkappa}
			G_{\mu/\varkappa}(\mathbf{w};\mathbf{y};\boldsymbol\uptheta;\mathbf{s})
			F_{\lambda/\varkappa}(\mathbf{x};\mathbf{y};\mathbf{r};\mathbf{s}).
		\end{split}
		\label{eq:skew_Cauchy_F_G}
	\end{equation}
\end{proposition}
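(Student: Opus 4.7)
The plan is to prove the skew Cauchy identity \eqref{eq:skew_Cauchy_F_G} via the standard Yang--Baxter ``train argument''. Both sides of \eqref{eq:skew_Cauchy_F_G} admit a natural interpretation as partition functions of a combined free fermion six vertex model on the half-infinite strip $\mathbb{Z}_{\ge1}\times\{1,\ldots,N+M\}$. For the left-hand side, the bottom $N$ rows carry weights $W(\cdot\mid w_j,y_k,\theta_j,s_k)$ and the top $M$ rows carry weights $\widehat{W}(\cdot\mid x_i,y_k,r_i,s_k)$; the boundary conditions are $\mathcal{S}(\lambda)$ on the bottom, $\mathcal{S}(\mu)$ on top, empty on the left, and on the right, empty on the $W$-rows and carrying a horizontal path on the $\widehat{W}$-rows (this is natural since $W(0,0;0,0)=1$ and $\widehat{W}(0,1;0,1)=1$ in the respective far-right tails). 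The summation over $\nu$ corresponds to summing over configurations at the horizontal interface between the $G$- and $F$-portions of the model. The right-hand sum $\sum_\varkappa G_{\mu/\varkappa}F_{\lambda/\varkappa}$ is the partition function of the same model with the order of the rows reversed: $\widehat{W}$-rows on the bottom, $W$-rows on top.

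To relate these two partition functions, I would apply \Cref{prop:YBE_first} repeatedly in its mixed form (with one row of $W$-type and the other of $\widehat{W}$-type, as noted in the statement of that proposition). For each pair $(i,j)\in\{1,\ldots,M\}\times\{1,\ldots,N\}$, insert an auxiliary $R$-vertex at the far right between the two corresponding rows, with rapidities $(x_i,r_i)$ and $(w_j,\theta_j)$, and push it leftward through the entire strip using YBE moves. Each YBE application leaves the partition function unchanged, and each full sweep swaps the pair of rows in the bulk configuration. After $NM$ sweeps in an appropriate order, the rows are fully reordered into the RHS arrangement.

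It remains to collect the multiplicative prefactor produced by each $R$-vertex insertion. On the far left, all row states are empty, forcing the $R$-vertex into the entry $R(0,0;0,0)=1$. On the far right, the $\widehat{W}$-row carries a horizontal path while the $W$-row is empty; tracking the strand-crossing structure shows that the $R$-vertex is forced into the entry $R(0,1;0,1)$ in the notation of \eqref{eq:R_weights}, which with $(x_1,r_1)=(x_i,r_i)$ and $(x_2,r_2)=(w_j,\theta_j)$ equals $\tfrac{x_i-w_j}{x_i-\theta_j^{-2}w_j}$. Combining over all pairs and solving for the LHS produces exactly the prefactor $\prod_{i,j}\tfrac{x_i-\theta_j^{-2}w_j}{x_i-w_j}$ appearing in \eqref{eq:skew_Cauchy_F_G}. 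The assumption \eqref{eq:skew_Cauchy_analytic_condition} guarantees absolute convergence of the sums over $\nu$ and $\varkappa$, so the identity is well-defined as a statement of convergent series. The main technical obstacle I anticipate is careful bookkeeping of edge/strand orientations in the $R$-vertex evaluations --- verifying that the specific entry of the table \eqref{eq:R_weights} selected at the far right is indeed $R(0,1;0,1)$ rather than another entry --- together with justifying that the train argument transfers cleanly to the half-infinite strip, which should follow from the fact that all but finitely many vertices of each row lie in the trivial far-right region.
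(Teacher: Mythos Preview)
Your approach via the Yang--Baxter train argument is correct in spirit and is the same mechanism the paper uses. However, there is a genuine gap in your treatment of the right-hand boundary. You claim that at the far right the $R$-vertex is ``forced into the entry $R(0,1;0,1)$'', but this is not true at any finite truncation: fixing the cross vertex's right-hand output to $(j_2,j_1)=(0,1)$ leaves \emph{two} compatible cross states, $R(0,1;0,1)$ and $R(1,0;0,1)$, since both conserve occupation. The first gives the (truncated) left-hand side of \eqref{eq:skew_Cauchy_F_G}; the second contributes a nonzero error term that must be controlled. Your remark that the train argument transfers cleanly ``from the fact that all but finitely many vertices of each row lie in the trivial far-right region'' is where this is hidden: that triviality holds configuration-by-configuration in $\nu$ but not uniformly over the infinite sum, and the unwanted cross state is precisely where the non-uniformity lives.

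The paper handles this as follows. It first reduces to $M=N=1$ via branching (\Cref{prop:F_G_branching}). Then it runs the zipper in the opposite direction to yours: start from the \emph{right-hand side} of \eqref{eq:skew_Cauchy_F_G} (a finite sum over $\varkappa$, hence an honest partition function on the half-infinite strip), attach the cross on the \emph{left} (where the empty boundary genuinely forces it into the single state $R(0,0;0,0)=1$), and push it rightward through $L$ columns. After $L$ steps one obtains
\[
\sum_\varkappa G_{\mu/\varkappa}F_{\lambda/\varkappa}
= R(0,1;0,1)\sum_{\nu:\,\nu_1+K+1\le L} G_{\nu/\lambda}F_{\nu/\mu}
+ \tilde Z\cdot R(1,0;0,1)\prod_{k}^{L} W(0,1;0,1\mid w;y_k;\theta;s_k)\,\widehat W(0,0;0,0\mid x;y_k;r;s_k).
\]
The role of condition \eqref{eq:skew_Cauchy_analytic_condition} is not merely to make the sums well-defined, but specifically to force this last product to $0$ as $L\to\infty$, killing the second cross state. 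Only then does dividing by $R(0,1;0,1)$ yield \eqref{eq:skew_Cauchy_F_G}.
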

\begin{proof}
	With the help of 
	of \Cref{prop:F_G_branching} and induction on $M$ and $N$, 
	it suffices to prove 
	\eqref{eq:skew_Cauchy_F_G}
	for $M=N=1$.

	Fix $\lambda,\mu$ with 
	$K+1$ and $K$ 
	parts, respectively,
	for some $K\ge 0$ 
	(other choices for the numbers of parts 
	of $\lambda$ and $\mu$ lead to triviality of both sides).
	Interpret
	$
	\sum_{\varkappa}
	G_{\mu/\varkappa}(w;\mathbf{y};\theta;\mathbf{s})
	F_{\lambda/\varkappa}(x;\mathbf{y};r;\mathbf{s})
	$
	as a partition function of a vertex model in the half-infinite rectangle
	$\mathbb{Z}_{\ge1}\times \left\{ 1,2 \right\}$,
	with the boundary conditions $\mathcal{S}(\lambda)$ at the bottom, $\mathcal{S}(\mu)$ at the top,
	empty on the left, and $\left\{ \text{full},\text{empty} \right\}$ on the far right.
	The weights at vertices $(k,1)$ and $(k,2)$, $k\ge1$,  are $\widehat{W}(\cdots\mid x;y_k;r;s_k)$
	and $W(\cdots\mid w;y_k;\theta;s_k)$, respectively.
	Due to this choice of the weights, the partition function is well-defined
	(all vertices which are repeated infinitely often have weight $1$).
	See \Cref{fig:skew_Cauchy} for an illustration.

	\begin{figure}[ht]
		\centering
		\includegraphics{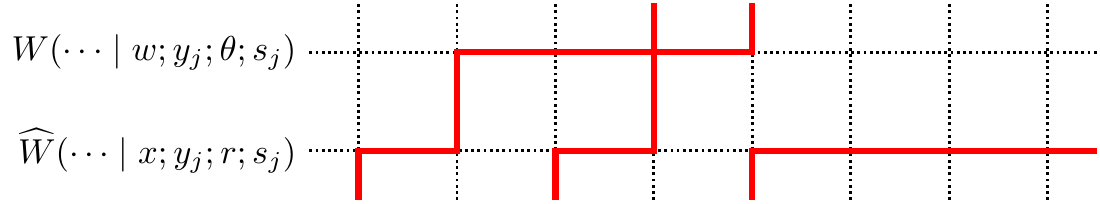}
		\caption{The partition function corresponding to the 
			sum over $\varkappa$ in the
			right-hand
			side of the skew Cauchy identity \eqref{eq:skew_Cauchy_F_G}.
			Adding the empty cross vertex on the left and dragging it to the right
			with the help of the Yang--Baxter equation
			produces the sum over $\nu$.}
		\label{fig:skew_Cauchy}
	\end{figure}

	Add the empty cross vertex 
	with weight $R(0,0;0,0)=1$
	to the left of 
	$\mathbb{Z}_{\ge1}\times \left\{ 1,2 \right\}$,
	and use the Yang--Baxter equation (\Cref{prop:YBE_first})
	to move it to the right. 
	After $L\ge \max(\lambda_1+K+1,\mu_1+K)$ steps we get the identity
	\begin{align*}
		&\sum_{\varkappa}
		G_{\mu/\varkappa}(w;\mathbf{y};\theta;\mathbf{s})
		F_{\lambda/\varkappa}(x;\mathbf{y};r;\mathbf{s})
		=
		R(0,1;0,1)
		\sum_{\nu\colon \nu_1+K+1\le L}
		G_{\nu/\lambda}(w;\mathbf{y};\theta;\mathbf{s})
		F_{\nu/\mu}(x;\mathbf{y};r;\mathbf{s})
		\\&\hspace{20pt}+
		\tilde Z\cdot
		R(1,0;0,1)
		\prod
		_{k=\max(\lambda_1+K+1,\,\mu_1+K)+1}^{L}
		W(0,1;0,1\mid w;y_k;\theta;s_k)\,
		\widehat{W}(0,0;0,0\mid x;y_k;r;s_k).
	\end{align*}
	Here $R(0,1;0,1)=\dfrac{x-w}{x-\theta^{-2}w}$, and
	$\tilde Z$
	is a quantity independent of $L$.
	Sending $L\to +\infty$, we see that 
	the product over $k$ in the second summand goes to zero thanks 
	to \eqref{eq:skew_Cauchy_analytic_condition}, 
	while in the first summand the restriction $\nu_1+K+1\le L$
	disappears. Dividing both sides by $R(0,1;0,1)$ produces the desired identity
	\eqref{eq:skew_Cauchy_F_G}.
\end{proof}

As a corollary of \Cref{prop:F_G_Cauchy_skew} and 
the determinantal formula for $F$ \eqref{eq:F_det_formula_in_theorem}
from the next \Cref{sub:symm_formulas_F_G},
we also get the following identity:

\begin{theorem}[Cauchy identity, \Cref{thm:intro_Cauchy} from Introduction]
	\label{thm:F_G_Cauchy_big}
	In the setting of \Cref{prop:F_G_Cauchy_skew}
	we have
	\begin{equation}
	\label{eq:Full_Cauchy_Identity}
		\begin{split}
			&\sum_{\nu}
			G_{\nu}(\mathbf{w};\mathbf{y};\boldsymbol\uptheta;\mathbf{s})\,
			F_{\nu}(\mathbf{x};\mathbf{y};\mathbf{r};\mathbf{s})
			\\&\hspace{30pt}=
			\prod_{i=1}^{M}x_i(r^{-2}_i-1)\,
			\frac{\prod_{1\le i<j\le M}(r_i^{-2}x_i-x_j)(s_i^{-2}y_i-y_j)}
			{\prod_{i,j=1}^M(y_i-x_j)}
			\prod_{i=1}^{M}\prod_{j=1}^{N}
			\frac{x_i-\theta_j^{-2}w_j}{x_i-w_j},
		\end{split}
	\end{equation}
	where the summation in the left-hand side 
	is over all signatures $\nu$ with $M$ parts.
\end{theorem}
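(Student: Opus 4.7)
The plan is to deduce the Cauchy identity from the skew version \Cref{prop:F_G_Cauchy_skew} by specializing the boundary signatures, then evaluate the resulting non-skew function $F_{(0^M)}$ via the double alternant formula of \Cref{thm:intro_F_formula}.

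Set $\lambda = (0, \ldots, 0)$ with $M$ parts and $\mu = \varnothing$ in \eqref{eq:skew_Cauchy_F_G}. With these choices the left-hand side reproduces $\sum_\nu G_\nu F_\nu$ over signatures $\nu$ with $M$ parts, since $G_{\nu/(0^M)} = G_\nu$ by convention and $F_{\nu/\varnothing}$ vanishes unless $\nu$ has $M$ parts (matching the cardinality of $\mathbf{x}$), in which case it equals $F_\nu$. On the right, $G_{\varnothing/\varkappa}$ is nonzero only for $\varkappa = \varnothing$ and equals $1$ there, so the $\varkappa$-sum collapses, leaving
\begin{equation*}
\sum_\nu G_\nu(\mathbf{w};\mathbf{y};\boldsymbol\uptheta;\mathbf{s})\, F_\nu(\mathbf{x};\mathbf{y};\mathbf{r};\mathbf{s}) = \prod_{i=1}^M\prod_{j=1}^N \frac{x_i - \theta_j^{-2}w_j}{x_i - w_j} \cdot F_{(0^M)}(\mathbf{x};\mathbf{y};\mathbf{r};\mathbf{s}).
\end{equation*}
Matching this against \eqref{eq:Full_Cauchy_Identity}, it remains to establish the closed-form evaluation
\begin{equation*}
F_{(0^M)}(\mathbf{x};\mathbf{y};\mathbf{r};\mathbf{s}) = \prod_{i=1}^M x_i(r_i^{-2}-1) \cdot \frac{\prod_{1\le i<j\le M}(r_i^{-2}x_i - x_j)(s_i^{-2}y_i - y_j)}{\prod_{i,j=1}^M(y_i - x_j)}.
\end{equation*}

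Feeding $\lambda = (0^M)$ into \Cref{thm:intro_F_formula}, the outer product $\prod_{1\le i\le j\le M}(r_i^{-2}x_i - x_j)$ splits into the diagonal factor $\prod_i x_i(r_i^{-2}-1)$ and the strict factor $\prod_{i<j}(r_i^{-2}x_i - x_j)$, both already present on the target right-hand side. The content of the identity thus reduces to the Cauchy-type determinant evaluation
\begin{equation*}
\det\bigl[\varphi_{M-j}(x_i \mid \mathbf{y};\mathbf{s})\bigr]_{i,j=1}^M = \frac{\prod_{1\le i<j\le M}(x_i - x_j)(s_i^{-2}y_i - y_j)}{\prod_{i,j=1}^M(y_i - x_j)}.
\end{equation*}

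The main obstacle is this determinant evaluation. The strategy is to rewrite $\varphi_{M-j}(x_i) = -\prod_{l=1}^{M-j}(x_i - s_l^{-2}y_l) \big/ \prod_{l=1}^{M-j+1}(x_i - y_l)$, clear all the $\mathbf{y}$-denominators by multiplying each row $i$ through by $\prod_{l=1}^M(x_i - y_l)$, and observe that the resulting entries
\begin{equation*}
P_j(x) := -\prod_{l=M-j+2}^M(x - y_l) \prod_{l=1}^{M-j}(x - s_l^{-2}y_l), \qquad j = 1, \ldots, M,
\end{equation*}
are polynomials of degree $M-1$ in $x$. Hence $\det[P_j(x_i)]$ is an antisymmetric polynomial in $\mathbf{x}$ of degree at most $M-1$ in each variable, forcing it to equal $c(\mathbf{y},\mathbf{s})\prod_{i<j}(x_i - x_j)$ for a scalar $c$ independent of $\mathbf{x}$. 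To pin down $c$, I would specialize $x_i = y_i$: after reversing the column order the matrix becomes lower triangular (since $P_{M+1-j}$ contains the factor $(x - y_m)$ for every $m > j$), and its determinant is the telescoping product of diagonal entries. Matching with $c\prod_{i<j}(y_i - y_j)$ gives $c = (-1)^M \prod_{i<j}(s_i^{-2}y_i - y_j)$, and accounting for the overall sign $\prod_i\prod_l(x_i - y_l) = (-1)^M \prod_{i,j}(y_i - x_j)$ introduced when clearing denominators completes the evaluation and the proof.
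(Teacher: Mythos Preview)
Your reduction via the skew Cauchy identity with $\lambda=0^M$, $\mu=\varnothing$ and the subsequent use of \Cref{thm:intro_F_formula} to reduce everything to the evaluation of $\det[\varphi_{M-j}(x_i)]$ is exactly what the paper does. The difference is in how you evaluate that determinant. The paper proves the product formula \eqref{eq:fully_deformed_Cauchy_determinant} by induction on $M$ using the Desnanot--Jacobi (Dodgson condensation) identity, tracking how the various corner minors are shifted versions of the same determinant. Your argument---clearing denominators, observing that each $P_j$ has degree $M-1$ so that antisymmetry forces the determinant to be a scalar multiple of the Vandermonde, then specializing $x_i=y_i$ to read off the constant from a triangular matrix---is more direct and avoids the inductive bookkeeping. (One small slip: after column reversal the matrix $[P_{M+1-j}(y_i)]$ is \emph{upper} triangular, not lower, since $P_{M+1-j}$ vanishes at $y_i$ for $i>j$; this does not affect your sign computation, which comes out correctly.) Both approaches are valid; yours is the more standard trick for Cauchy-type determinants, while the paper's has the mild advantage of not needing to guess the right specialization point.
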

\begin{proof}
	This is a particular case 
	of \Cref{prop:F_G_Cauchy_skew}
	when $\lambda=0^M$ (this notation stands for the signature $(0,\ldots,0)$ 
	with $0$ repeated $M$ times)
	and $\mu=\varnothing$.
	The sum in the right-hand side of \eqref{eq:skew_Cauchy_F_G}
	reduces to a single term with $\varkappa=\varnothing$,
	and $G_{\varnothing/\varnothing}=1$.
	We thus get
	\begin{equation}
		\label{eq:F_G_Cauchy_big_with_F_0_RHS}
		\sum_{\nu}
		G_{\nu}(\mathbf{w};\mathbf{y};\boldsymbol\uptheta;\mathbf{s})\,
		F_{\nu}(\mathbf{x};\mathbf{y};\mathbf{r};\mathbf{s})
		=
		F_{0^M}(\mathbf{x};\mathbf{y};\mathbf{r};\mathbf{s})
		\prod_{i=1}^{M}\prod_{j=1}^{N}
		\frac{x_i-\theta_j^{-2}w_j}{x_i-w_j}.
	\end{equation}
	Using \Cref{thm:F_formula} formulated below, we have
	\begin{equation*}
		F_{0^M}(\mathbf{x};\mathbf{y};\mathbf{r};\mathbf{s})
		=
		\Biggl(
			\prod_{i=1}^{M}x_i(r^{-2}_i-1)
			\prod_{1\le i<j\le M}\frac{r_i^{-2}x_i-x_j}{x_i-x_j}
		\Biggr)
		\det
		\biggl[  
			\frac{1}{y_{M-j+1}-x_i}
			\prod_{m=1}^{M-j}
			\frac{y_m-s_m^2x_i}{s_m^2(y_m-x_i)}
		\biggr]_{i,j=1}^{M}.
	\end{equation*}
	This determinant has an explicit product form:
	\begin{equation}
		\label{eq:fully_deformed_Cauchy_determinant}
		\det
		\biggl[  
			\frac{1}{y_{M-j+1}-x_i}
			\prod_{m=1}^{M-j}
			\frac{y_m-s_m^2x_i}{s_m^2(y_m-x_i)}
		\biggr]_{i,j=1}^{M}=
		\frac{\prod_{i<j}(x_i-x_j)(s_i^{-2}y_i-y_j)}
		{\prod_{i,j=1}^M(y_i-x_j)}.
	\end{equation}
	This can be established 
	by induction on $M$
	using the Desnanot--Jacobi 
	identity for determinants:
	\begin{equation}
		\label{eq:Desnanot_Jacobi}
		\det(A)=\frac{\det(A^1_1)\det(A^M_M)-\det(A^M_1)\det(A^1_M)}{\det(A^{1,M}_{1,M})},
	\end{equation}
	where $A$ is the $M\times M$ matrix in the left-hand side of 
	\eqref{eq:fully_deformed_Cauchy_determinant},
	and
	$A^\mathcal{I}_\mathcal{J}$, $\mathcal{I},\mathcal{J}\subset\left\{ 1,\ldots,M  \right\}$,
	$|\mathcal{I}| = |\mathcal{J}|$,
	is the matrix obtained from $A$ by deleting rows and columns
	indexed
	by $\mathcal{I}$ and $\mathcal{J}$, respectively.
	Each of the matrices in the right-hand side of sizes $M-1$ and $M-2$ 
	are essentially the ones in the left-hand side of \eqref{eq:fully_deformed_Cauchy_determinant},
	up to shifts in some of the parameters $x_i,y_i,s_i$, and diagonal factors.
	More precisely, denote the matrix elements by
	\begin{equation*}
		a_{ij}^{(M)}=\frac{1}{y_{M-j+1}-x_i}
		\prod_{m=1}^{M-j}
		\frac{y_m-s_m^2x_i}{s_m^2(y_m-x_i)}.
	\end{equation*}
	One can readily check that
	\begin{align*}
		(A^M_M)_{ij}\,
		\frac{x_i-y_1}{x_i-s_1^{-2}y_1}
		&=
		a_{ij}^{(M-1)}\Big\vert_{y_i\to y_{i+1},\ s_i\to s_{i+1}};\qquad 
		(A^1_1)_{ij}
		=
		a_{ij}^{(M-1)}\Big\vert_{x_i\to x_{i+1}};\\
		(A^1_M)_{ij}\,
		\frac{x_{i+1}-y_1}{x_{i+1}-s_1^{-2}y_1}
		&=
		a_{ij}^{(M-1)}\Big\vert_{x_i\to x_{i+1},\ y_i\to y_{i+1},\ s_i\to s_{i+1}};\qquad 
		(A^M_1)_{ij}=a_{ij}^{(M-1)};
		\\
		(A^{1,M}_{1,M})_{ij}\, \frac{x_i-y_1}{x_i-s_1^{-2}y_1}&=a_{ij}^{(M-2)}
		\Big\vert_{x_i\to x_{i+1},\ y_i\to y_{i+1},\ s_i\to s_{i+1}}.
	\end{align*}
	Thus, by the induction hypothesis, all the determinants in the right-hand side of
	\eqref{eq:Desnanot_Jacobi} are expressed as certain products.
	The induction step then follows by matching the combination \eqref{eq:Desnanot_Jacobi}
	of these products
	with the desired right-hand side of \eqref{eq:fully_deformed_Cauchy_determinant}.
	Putting all together produces the Cauchy identity \eqref{eq:Full_Cauchy_Identity}.
\end{proof}

\subsection{Determinantal and Sergeev--Pragacz type formulas}
\label{sub:symm_formulas_F_G}

The function $F_\lambda$
admits an explicit formula involving a determinant of the
single-variable functions~$F_{(m)}$. 
The function $G_\lambda$ 
admits a more complicated (yet still explicit)
formula in terms of 
a summation over the product of two symmetric groups.
In this section we formulate both expressions,
and their proofs based on commutation relations for the row operators
(\Cref{sub:row_operators})
are postponed to \Cref{appA:F_G_formula_proofs}.
See also the next \Cref{sec:particular_cases}
for simpler proofs in some particular cases.

For sequences of complex numbers
$\mathbf{s}=(s_1,s_2,\ldots )$, $\mathbf{y}=(y_1,y_2,\ldots )$ 
and any integer $k\ge0$
denote
\begin{equation}
	\label{eq:phi_def}
	\varphi_k(x)=
	\varphi_k(x\mid \mathbf{y};\mathbf{s}):=
	\frac{1}{y_{k+1}-x}
	\prod_{j=1}^{k}
	\frac{x-s_j^{-2}y_j}{x-y_j}.
\end{equation}
In particular, $\varphi_0(x\mid \mathbf{y};\mathbf{s})=1/(y_1-x)$.
From 
\Cref{def:F_function}
and the formula for the vertex weights 
$\widehat{W}$~\eqref{eq:weights_W_hat} we have
$F_{(k)}(x;\mathbf{y};r;\mathbf{s})=x(r^{-2}-1)\,\varphi_k(x\mid \mathbf{y};\mathbf{s})$
for all $k\ge0$.

\begin{theorem}[Determinantal formula for $F_\lambda$,
	\Cref{thm:intro_F_formula} from Introduction]
	\label{thm:F_formula}
	For any $N\ge1$, complex variables
	\begin{equation*}
		\mathbf{x}=(x_1,\ldots,x_N ),\qquad \mathbf{r}=(r_1,\ldots,r_N ),\qquad 
		\mathbf{y}=(y_1,y_2,\ldots ),\qquad \mathbf{s}=(s_1,s_2,\ldots ),
	\end{equation*}
	and 
	any signature $\lambda=(\lambda_1\ge \ldots\ge \lambda_N )$ we 
	have
	\begin{equation}
		\label{eq:F_det_formula_in_theorem}
		F_\lambda(\mathbf{x};\mathbf{y};\mathbf{r};\mathbf{s})=
		\Biggl(
			\prod_{i=1}^{N}x_i(r^{-2}_i-1)
			\prod_{1\le i<j\le N}\frac{r_i^{-2}x_i-x_j}{x_i-x_j}
		\Biggr)
		\det\left[ \varphi_{\lambda_j+N-j}(x_i\mid \mathbf{y};\mathbf{s}) \right]_{i,j=1}^{N},
	\end{equation}
	where $\varphi_k$ are defined by \eqref{eq:phi_def}.
\end{theorem}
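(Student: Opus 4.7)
The plan is to prove the formula via the row operator representation from \Cref{prop:F_G_row_op}, according to which $F_\lambda(\mathbf{x};\mathbf{y};\mathbf{r};\mathbf{s})$ equals the coefficient of $e_{\mathcal{S}(\lambda)}$ in $e_\varnothing\,\widehat{B}(x_N,r_N)\cdots\widehat{B}(x_1,r_1)$. The strategy is to iterate the $\widehat{B}$--$\widehat{B}$ commutation relation \eqref{eq:B2hat_B1hat_commute} so as to reduce the multivariable matrix element to a symmetrization of single-variable matrix elements, and then recognize the symmetrization as the claimed alternant.

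First I would verify the base case $N=1$ directly from \Cref{def:F_function}. For $\lambda=(m)$ there is a unique contributing configuration: empty vertices in columns $1,\dots,m$, one $(1,0;0,1)$-vertex at column $m+1$, and unit-weight $(0,1;0,1)$-vertices in all columns $>m+1$. Multiplying the corresponding $\widehat{W}$ weights gives precisely $x(r^{-2}-1)\varphi_m(x\mid\mathbf{y};\mathbf{s})$, matching the theorem at $N=1$.

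For general $N$, the argument rests on two matching symmetries. \Cref{prop:F_G_symmetry} states that $F_\lambda\prod_{i<j}(x_i-r_j^{-2}x_j)$ is symmetric under simultaneous permutations of the pairs $(x_i,r_i)$. The right-hand side of \eqref{eq:F_det_formula_in_theorem} shares this symmetry: the determinant $\det[\varphi_{\lambda_j+N-j}(x_i)]$ involves only the $x_i$'s and is antisymmetric in them, $\prod_{i<j}(x_i-x_j)$ is the corresponding Vandermonde, and $\prod_{i<j}(r_i^{-2}x_i-x_j)(x_i-r_j^{-2}x_j)$ equals, up to a global sign, $\prod_{i\ne j}(r_i^{-2}x_i-x_j)$, which is manifestly symmetric in the pairs. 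Using \eqref{eq:B2hat_B1hat_commute} iteratively, any reordering of $\widehat{B}(x_N,r_N)\cdots\widehat{B}(x_1,r_1)$ picks up an explicit product of commutation factors indexed by the inversions of the permutation. I would extract the coefficient of $e_{\mathcal{S}(\lambda)}$ by applying $\widehat{B}(x_{\sigma(N)},r_{\sigma(N)})\cdots\widehat{B}(x_{\sigma(1)},r_{\sigma(1)})$ to the vacuum one factor at a time, using the base-case formula at every step, and then averaging the resulting contributions over $\sigma\in S_N$ weighted by the appropriate commutation prefactor.

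The principal obstacle is the final algebraic collapse: one must show that the resulting sum
\begin{equation*}
\sum_{\sigma\in S_N}\Biggl(\prod_{\substack{i<j\\ \sigma^{-1}(i)>\sigma^{-1}(j)}}\frac{x_j-r_i^{-2}x_i}{x_i-r_j^{-2}x_j}\Biggr)\prod_{i=1}^N \varphi_{\lambda_{\sigma(i)}+N-\sigma(i)}(x_i\mid\mathbf{y};\mathbf{s})
\end{equation*}
equals $\prod_{i<j}\frac{r_i^{-2}x_i-x_j}{x_i-x_j}\det[\varphi_{\lambda_j+N-j}(x_i)]_{i,j=1}^N$ (after multiplication by the vacuum prefactor $\prod_i x_i(r_i^{-2}-1)$). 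This is an alternant-type identity in $\mathbb{Q}(\mathbf{x},\mathbf{r})$ that must cleanly separate the $r$-dependent factor from the $(y,s)$-dependent determinant; the careful bookkeeping of inversion weights and signs is the delicate step. I would anticipate handling it by a secondary induction on $N$ together with a row (or column) expansion of the determinant, in the spirit of the Algebraic Bethe Ansatz manipulations of \cite[Section 4.5]{BorodinPetrov2016inhom}.
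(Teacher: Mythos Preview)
Your proposal has a genuine gap. The $\widehat B$--$\widehat B$ relation \eqref{eq:B2hat_B1hat_commute} alone only tells you how $F_\lambda$ transforms under permuting the pairs $(x_i,r_i)$; it does not give you a way to \emph{compute} the matrix element. The phrase ``using the base-case formula at every step'' is where the argument breaks down: after one application of $\widehat B(x,r)$ to $e_\varnothing$ you are no longer at the vacuum, so the $N=1$ result cannot be invoked again. Nothing in the proposal justifies why the product $\prod_i\varphi_{\lambda_{\sigma(i)}+N-\sigma(i)}(x_i)$ should appear as a term in the expansion of $F_\lambda$.

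In fact the displayed symmetrization identity is false already at $N=2$. Comparing the coefficient of $\varphi_{\lambda_1+1}(x_1)\varphi_{\lambda_2}(x_2)$ on both sides gives $1$ on the left and $\dfrac{r_1^{-2}x_1-x_2}{x_1-x_2}$ on the right; these are not equal for generic $r_1$. So the ``principal obstacle'' you identify is not merely a delicate bookkeeping step but an incorrect target identity.

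The paper's proof (Appendix~\ref{appA:F}) uses a different mechanism: it expands $\widehat B(x_N,r_N)\cdots\widehat B(x_1,r_1)$ via the coproduct rule \eqref{eq:abcdv_n2_hat} on a tensor product $V_1\otimes V_2$, which brings in $\widehat A$ and $\widehat D$ operators. The commutation relations \eqref{eq:BD_hat_relation}, \eqref{eq:BA_hat_relation} (not just \eqref{eq:B2hat_B1hat_commute}) are then used to normal-order each tensor factor and compute explicit coefficients $c_I(\mathbf x;\mathbf r)$ indexed by subsets $I\subseteq\{1,\dots,N\}$. Applying this iteratively to $e_0^{(m)}\otimes v$ for $m=1,2,\dots$, and using that $e_0^{(m)}\widehat B\widehat B=0$ forces $|I|\le 1$ at each step, one obtains a sum over permutations $\tau\in\mathfrak S_N$ whose summand is already $\mathrm{sgn}(\tau)\prod_k\varphi_{\lambda_k+N-k}(x_{\tau(k)})$ times a $\tau$-\emph{independent} prefactor; this is directly the Leibniz expansion of the determinant. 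The separation of the $r$-dependent prefactor from the alternant is thus a consequence of the coproduct computation, not a separate identity to be proved.
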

This theorem is proven in \Cref{appA:F}.

\medskip

In the next theorem and throughout the text,
$\mathfrak{S}_m$ denotes the group of all permutations of $\left\{ 1,\ldots,m  \right\}$.
Let $\lambda=(\lambda_1\ge \ldots\ge \lambda_N )$ be an arbitrary signature with $N$ parts.
Let $d = d(\lambda) \ge 0$ denote
the integer such that $\lambda_d \ge d$ and $\lambda_{d + 1} < d + 1$.
Denote by $\ell_j=\lambda_j+N-j+1$, $j=1,\ldots,N $, the elements of the set $\mathcal{S}(\lambda)$.
Let 
\begin{equation}
	\label{eq:mu_depending_on_lambda_notation}
	\mu = (\mu_1< \mu_2< \ldots < \mu_d) =
	\{1,\ldots,N \}  \setminus \big( \mathcal{S}(\lambda) \cap \{1,\ldots,N \} \big).
\end{equation}

\begin{theorem}[Sergeev--Pragacz type formula for $G_\lambda$]
	\label{thm:G_formula}
	Let $M, N > 0$ denote integers. 
	For complex variables
	\begin{equation*}
		\mathbf{x} = (x_1, x_2, \ldots , x_M),
		\qquad 
		\mathbf{r} = (r_1, r_2, \ldots , r_M),
		\qquad 
		\mathbf{y} =
		(y_1, y_2, \ldots ),\qquad 
		\mathbf{s} = (s_1, s_2, \ldots ),
	\end{equation*}
	with the notation above, we have
	\begin{equation}
		\label{eq:G_SP_formula_in_theorem}
		\begin{split}
			&G_{\lambda} (\mathbf{x}; \mathbf{y}; \mathbf{r}; \mathbf{s}) 
			=
			\prod_{j=1}^{M}\prod_{k=1}^{N}
			\frac{y_k-s_k^2r_j^{-2}x_j}{y_k-s_k^2 x_j}
			\sum_{\substack{\mathcal{I},\mathcal{J}\subseteq \left\{ 1,\ldots,M  \right\}\\ |\mathcal{I}|=|\mathcal{J}|=d}}
			\prod_{\substack{i\in \mathcal{J}^c\\j\in \mathcal{J}}}\frac{1}{x_i-x_j}
			\prod_{\substack{i,j\in \mathcal{J}\\i<j}}\frac1{x_j-x_i}
			\\
			&\hspace{50pt}
			\times
			\prod_{\substack{i\in \mathcal{I}\\ 1\le j\le M}}(r_i^{-2}x_i-x_j)
			\prod_{\substack{i\in \mathcal{I}^c\\j\in \mathcal{J}}}(r_i^{-2}x_i-x_j)
			\prod_{\substack{i\in \mathcal{I}\\j\in \mathcal{I}^c}}\frac{1}{r_i^{-2}x_i-r_j^{-2}x_j}
			\prod_{\substack{i,j\in \mathcal{I}\\i<j}}\frac{1}{r_i^{-2}x_i-r_j^{-2}x_j}
			\\
			&\hspace{50pt}
			\times
			\sum_{\sigma,\rho\in \mathfrak{S}_d}
			\mathop{\mathrm{sgn}}(\sigma\rho)
			\prod_{h = 1}^d \biggl( \frac{y_{\ell_h}
			\big( 1 - s_{\ell_h}^2 \big)}{y_{\ell_h} - s_{\ell_h}^2
			x_{j_{\rho (h)}}} \prod_{i = N + 1}^{\ell_h - 1}
			\frac{s_i^2 \big(y_i - x_{j_{\rho (h)}} \big)}{y_i - s_i^2
			x_{j_{\rho (h)}}} \biggr)
			\\&\hspace{100pt}\times
			\prod_{m=1}^d
			\biggl(
				\frac{s_{\mu_m}^2 }{y_{\mu_m} - s_{\mu_m}^2
				r^{-2}_{i_{\sigma (m)}} x_{i_{\sigma (m)}}} 
				\prod_{k =
				\mu_m + 1}^N 
				\frac{s_k^2 \big(r^{-2}_{i_{\sigma (m)}}
				x_{i_{\sigma (m)}} - y_k \big)}{y_k - s_k^2 r^{-2}_{i_{\sigma (m)}}
				x_{i_{\sigma (m)}}}
			\biggr).
		\end{split}
	\end{equation} 	
	where
	$\mathcal{I}= (i_1< i_2< \ldots < i_d)$
	and $\mathcal{J}= (j_1< j_2< \ldots < j_d)$.
	Note that both sides of \eqref{eq:G_SP_formula_in_theorem}
	vanish if $d(\lambda)>M$, as it should be.
\end{theorem}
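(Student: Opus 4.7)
The plan is to follow the row-operator approach indicated in the text (following \cite[Section~4.5]{BorodinPetrov2016inhom}). By \Cref{prop:F_G_row_op}, we may write
\[
G_\lambda(\mathbf{x};\mathbf{y};\mathbf{r};\mathbf{s}) = \langle e_{\mathcal{S}(\lambda)},\, D(x_M,r_M)\cdots D(x_1,r_1)\,e_{\mathcal{S}(0^N)}\rangle,
\]
which is the partition function of the free fermion six vertex model in the half-infinite rectangle $\mathbb{Z}_{\ge1}\times\{1,\ldots,M\}$ with boundary conditions $\{1,\ldots,N\}$ at the bottom and $\mathcal{S}(\lambda)$ at the top. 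Such configurations correspond to tuples of non-intersecting paths, of which $N-d$ travel straight up (between common positions in $\mathcal{S}(\lambda)\cap\{1,\ldots,N\}$) and the remaining $d$ paths connect the vacated positions $\mu_1<\cdots<\mu_d$ to the new positions $\ell_1<\cdots<\ell_d$ lying above row~$N$.

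The first step is to strip off the bulk weight of the reference configuration in which every particle travels straight upward. A direct computation using \eqref{eq:weights_W} identifies this contribution with the outer prefactor $\prod_{j,k}\tfrac{y_k-s_k^{2}r_j^{-2}x_j}{y_k-s_k^{2}x_j}$ in \eqref{eq:G_SP_formula_in_theorem}. The residual partition function then counts the $d$ nontrivial trajectories from $\mu_m$ to $\ell_h$. I would next invoke the Lindström--Gessel--Viennot lemma, whose hypothesis is ensured by the free fermion condition \eqref{eq:6v_free_fermion_condition}, to express this residual function as a $d\times d$ signed determinant of single-particle partition functions $P(\mu_m,\ell_h)$.

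The key technical step is to decompose each $P(\mu,\ell)$ into a bilinear form over pairs of row indices. Schematically one writes
\[
P(\mu,\ell) \;=\; \sum_{i,j=1}^{M}\Phi^{-}_{\mu,i}\!\bigl(r_i^{-2}x_i\bigr)\;\Omega(i,j)\;\Phi^{+}_{\ell,j}(x_j),
\]
where $\Phi^{-}_{\mu,i}$ collects the weights of the trajectory portion below row $N$ (which use $W(1,1;1,1)$-type ``$r^{-2}x$'' factors along row $i$) and $\Phi^{+}_{\ell,j}$ collects the portion above row $N$ (using $W(0,1;0,1)$-type ``$x$'' factors along row $j$); the interface kernel $\Omega(i,j)$ encodes the transition at row~$N$. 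Once $P(\mu,\ell)$ is written in this product form, the LGV determinant becomes a three-matrix product, and two applications of the Cauchy--Binet identity yield the outer sum over subsets $\mathcal{I},\mathcal{J}\subseteq\{1,\ldots,M\}$ of size $d$, together with the two signed sums over $\sigma,\rho\in\mathfrak{S}_d$ in \eqref{eq:G_SP_formula_in_theorem}. The factors $\Phi^{\pm}$ match the $\prod_h$ and $\prod_m$ products in the innermost expression, while the Cauchy--Binet coefficients, combined with the symmetry in $(x_j,r_j)$ from \Cref{prop:F_G_symmetry}, generate the subset-dependent prefactors involving $\prod(r_i^{-2}x_i-x_j)^{\pm1}$ and $\prod(r_i^{-2}x_i-r_j^{-2}x_j)^{-1}$.

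\textbf{Main obstacle.} As the authors themselves emphasize, the row-operator calculation for $G_\lambda$ is substantially more involved than the corresponding one for $F_\lambda$. The hard part is establishing the bilinear decomposition of $P(\mu,\ell)$ above in a form to which Cauchy--Binet applies cleanly: one needs the interface kernel $\Omega(i,j)$ to be simple enough (essentially a normalization) that the two halves of the trajectory may be handled by independent row-index sums, and this must hold despite the fact that a single trajectory can in principle have arbitrarily many horizontal corners. The sign bookkeeping of the double alternating sum, along with verifying the automatic vanishing when $d(\lambda)>M$, is equally delicate. A useful consistency check is the horizontally homogeneous specialization $y_j\equiv y$, $s_j\equiv s$, under which \eqref{eq:G_SP_formula_in_theorem} must, by \Cref{prop:intro_supersymm}, reduce to the classical Sergeev--Pragacz formula \cite[(5)]{hamel1995lattice} for the supersymmetric Schur polynomial.
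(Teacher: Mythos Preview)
Your approach is genuinely different from the paper's, and it has real gaps. The paper does not use LGV or Cauchy--Binet at all; it works purely with the algebraic Bethe Ansatz commutation relations of \Cref{prop:ABCD_YBE}. Concretely, it splits the tensor space at column $N$, writes $D_M\cdots D_1$ on $V_1\otimes V_2$ as a sum over subsets $\mathcal{I}\subseteq\{1,\ldots,M\}$ of $B_{\mathcal{I}}D_{\mathcal{I}^c}\otimes D_{\mathcal{I}^c}C_{\mathcal{I}}$ with explicit rational coefficients (this is where $\mathcal{I}$ and $\sigma$ arise), then proves a separate commutation formula rewriting $D_{\mathcal{I}^c}C_{\mathcal{I}}$ as a sum of $C_{\mathcal{K}}C_{\mathcal{H}}D\cdots$ terms (this is where $\mathcal{J}=\mathcal{K}\cup\mathcal{H}$ and $\rho$ arise), and finally evaluates $\langle e_{\mathcal{T};N},C_{\mathcal{J}}e_0\rangle$ by a direct determinantal computation. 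The two subsets $\mathcal{I},\mathcal{J}$ thus come from two distinct operator-reordering steps, not from a symmetric bilinear decomposition of a single-particle kernel.

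Your sketch has three concrete problems. First, six-vertex paths are osculating (they share the vertex at $(1,1;1,1)$), so standard LGV does not apply; the free fermion condition gives determinantal structure, but not through that route. Relatedly, the $N-d$ ``stationary'' paths do not travel straight up---they can weave and interact with the moving ones---so you cannot simply strip them off as a multiplicative prefactor and reduce to $d$ independent trajectories. Second, the bilinear decomposition $P(\mu,\ell)=\sum_{i,j}\Phi^-_{\mu,i}\,\Omega(i,j)\,\Phi^+_{\ell,j}$ is exactly the step you flag as the obstacle, and you give no mechanism for it; a single path crosses the interface column once, which naturally gives a single row index, not two independent ones. Third, your proposed consistency check is incorrect: the paper explicitly states in \Cref{rmk:SP_formula_comparison} that even in the horizontally homogeneous case the formula does \emph{not} reduce to the classical Sergeev--Pragacz formula---it gives a different (apparently new) expression for the supersymmetric Schur polynomial, \Cref{cor:super_Schur_formula}.
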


This theorem is proven in \Cref{appA:G}.
The name ``Sergeev--Pragacz type formula'' 
for \eqref{eq:G_SP_formula_in_theorem}
is suggested by the connection between $G_\lambda$ (in the horizontally homogeneous case)
and supersymmetric Schur functions.
Even though we could not relate 
\eqref{eq:G_SP_formula_in_theorem} to the Sergeev--Pragacz
formula (e.g., see \cite[(5)]{hamel1995lattice}) itself,
the form of our formula is sufficiently similar to justify the name.
See
\Cref{sub:homogeneous_F_G} and in particular \Cref{rmk:SP_formula_comparison}
below for a detailed discussion.

\section{Specializations to known symmetric functions}
\label{sec:particular_cases}

Here we discuss how our functions 
$F$ and $G$ degenerate to certain known Schur-type 
symmetric functions. 
This leads to independent proofs of 
\Cref{thm:F_formula,thm:G_formula}
in some particular cases.

\subsection{Five vertex model and factorial Schur polynomials}
\label{sub:factorial_Schur}

We begin with a five vertex degeneration, when the 
weight of the vertex of type $(1,1;1,1)$ vanishes, and 
connect the functions $F_\lambda$ to the 
\emph{factorial Schur polynomials} (also sometimes called double 
Schur polynomials, cf. \cite{molev2009comultiplication}).
We adopt the definition 
from \cite[6th variation]{macdonald1992schur_Theme}. 
The factorial Schur polynomials
are indexed by signatures
$\lambda=(\lambda_1\ge \ldots\ge \lambda_N \ge0)$,
depend on the variables $\mathbf{x}=(x_1,\ldots,x_N )$ and on
a sequence
$\mathbf{y}=(y_1,y_2,\ldots )$ of complex parameters:
\begin{equation}
	\label{eq:factorialSchur_det_det}
	s_\lambda(\mathbf{x}\mid \mathbf{y})=\frac{\det\left[ (x_i\mid \mathbf{y})^{\lambda_j+N-j} \right]_{i,j=1}^{N}}
	{\prod_{1\le i<j\le N}(x_i-x_j)},\qquad 
	(x\mid \mathbf{y})^k:=(x+y_1)\ldots(x+y_k).
\end{equation}

These polynomials can also be represented as sums over semistandard Young tableaux.
We use the language of
Young tableaux only in this subsection, and so refer to, e.g., \cite[Ch. I]{Macdonald1995}
for the relevant definitions.
We have
\cite[(6.16)]{macdonald1992schur_Theme}
\begin{equation}
	\label{eq:factorialSchur_tableaux}
	s_\lambda(\mathbf{x}\mid \mathbf{y})=\sum_{T}(\mathbf{x}\mid \mathbf{y})^{T},\qquad 
	(\mathbf{x}\mid \mathbf{y})^{T}:=\prod_{(i,j)\in \lambda}\left(x_{T(i,j)}+y_{T(i,j)+j-i}\right).
\end{equation}
Here the sum is taken over all semistandard Young tableaux
of shape $\lambda$ filled with numbers from $1$ to $N$, 
the product is over all boxes $(i,j)$
in $\lambda$, where $i$ and $j$ are the row and column coordinates of the box, and
$T(i,j)$ is the tableau entry in this box.
See
an example in \Cref{fig:factorialSchur_tableaux}.

\begin{remark}
	\label{rmk:ordinary_Schur}
	In the particular case
	$\mathbf{y}=(0,0,\ldots )$,
	the factorial Schur polynomials $s_\lambda(\mathbf{x}\mid \mathbf{y})$
	turn into the ordinary Schur polynomials
	$s_\lambda(x_1,\ldots,x_N )$.
\end{remark}

\begin{figure}[htpb]
	\centering
	\includegraphics[width=\textwidth]{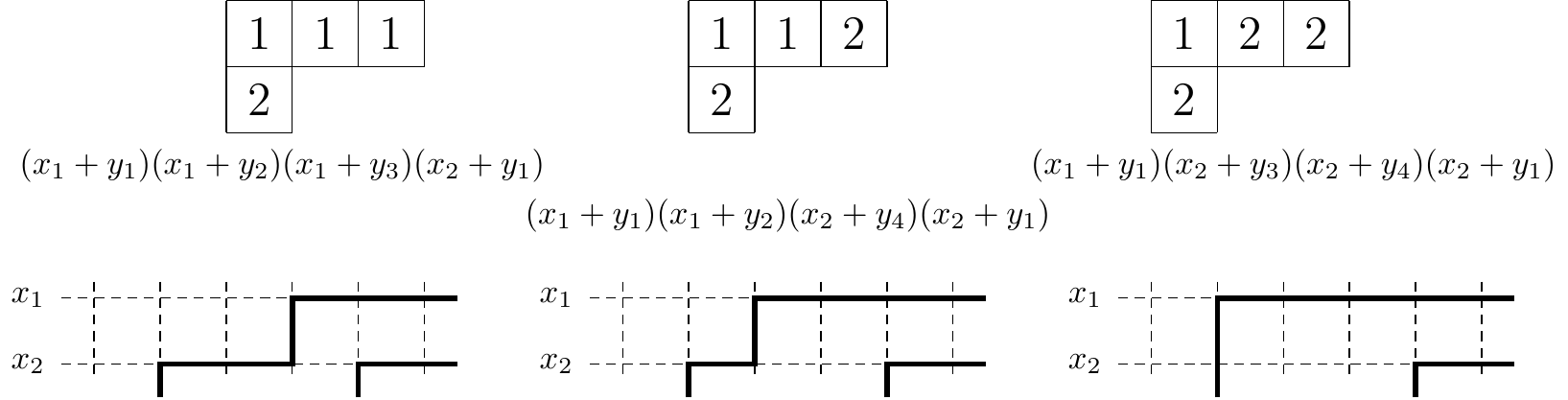}
	\caption{All three semistandard Young tableaux
	for $\lambda=(3,1)$ and $N=2$, the corresponding 
	summands in $s_{\lambda}(\mathbf{x}\mid \mathbf{y})$ given by formula \eqref{eq:factorialSchur_tableaux},
	and the corresponding vertex model configurations in the proof of \Cref{lemma:factorialSchur_as_vertex_model}.}
	\label{fig:factorialSchur_tableaux}
\end{figure}

The tableaux formula \eqref{eq:factorialSchur_tableaux} 
can be translated into the vertex model language.
This fact
is contained in either of 
\cite{lascoux20076}, \cite{mcnamara2009factorial},
\cite{bump2011factorial},
but for consistency we give a proof here.

\begin{lemma}
	\label{lemma:factorialSchur_as_vertex_model}
	There is a one-to-one correspondence between 
	(see \Cref{fig:factorialSchur_tableaux} for an example)
	\begin{enumerate}
		\item 
			Semistandard Young tableaux
			of shape $\lambda$ filled with numbers from $1$ to $N$, and
		\item 
			Path configurations of the six vertex model
			in the half-infinite rectangle $\mathbb{Z}_{\ge1}\times\{1,\ldots,N \}$
			with paths entering from the bottom at locations
			$\mathcal{S}(\lambda)=\{\lambda_j+N-j+1 \}_{1\le j\le N}$
			and exiting through the right boundary, such that the left and the top boundaries
			are empty. The paths must satisfy an additional \emph{five vertex
			condition} that the vertex $(1,1;1,1)$ is not present.
	\end{enumerate}
	Moreover, with the vertex weights at each $(i,j)\in\mathbb{Z}_{\ge1}\times\left\{ 1,\ldots,N  \right\}$ 
	equal to
	\begin{equation}
		\label{eq:fSchur_weights}
		\begin{split}
			&
			\widehat{W}_{\mathrm{fSchur}}(0,0;0,0)=x_{N+1-j}+y_i,
			\qquad 
			\widehat{W}_{\mathrm{fSchur}}(1,1;1,1)=0,
			\\
			&
			\widehat{W}_{\mathrm{fSchur}}(1,0;1,0)=
			\widehat{W}_{\mathrm{fSchur}}(0,1;0,1)=
			\widehat{W}_{\mathrm{fSchur}}(1,0;0,1)=
			\widehat{W}_{\mathrm{fSchur}}(0,1;1,0)=1,
		\end{split}
	\end{equation}
	the partition function in the half-infinite rectangle
	described above is equal to the factorial Schur polynomial
	$s_\lambda(\mathbf{x}\mid \mathbf{y})$.
\end{lemma}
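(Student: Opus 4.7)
The plan is to build the bijection and match weights simultaneously via the standard encoding of an SSYT $T$ of shape $\lambda$ with entries in $\{1,\ldots,N\}$ by its chain of sub-shapes $\varnothing = \lambda^{(0)} \subseteq \lambda^{(1)} \subseteq \cdots \subseteq \lambda^{(N)} = \lambda$, where $\lambda^{(k)}$ consists of the cells of $T$ with entries at most $k$ and each $\lambda^{(k)}/\lambda^{(k-1)}$ is a horizontal strip. Given a valid five-vertex configuration, I will record, for each $j=0,1,\ldots,N$, the set $M_j\subset\mathbb{Z}_{\ge 1}$ of columns carrying an occupied vertical edge at the top boundary of vertex-model row $j$ (with $j=0$ corresponding to the bottom boundary of row $1$, and $j=N$ to the top boundary of row $N$), so that $M_0=\mathcal{S}(\lambda)$ and $M_N=\varnothing$. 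Path preservation $i_1+j_1=i_2+j_2$ at every vertex, together with the prohibition of $(1,1;1,1)$, forces exactly one path to exit to the right in each row, giving $|M_j|=N-j$; non-intersection of the resulting northeast lattice paths then forces $M_j=\mathcal{S}(\lambda^{(N-j)})$ for a unique partition $\lambda^{(N-j)}$ padded to $N-j$ parts, with the horizontal-strip condition on consecutive $\lambda^{(k)}$'s being exactly the interlacing constraint. This establishes the bijection.

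The weight match reduces to a local analysis in each vertex-model row $j$. Given the boundary particles $(M_{j-1},M_j)$ at the bottom and top of the row, the five permitted local states together with the single-right-exit rule uniquely determine the state at every column, and the empty vertices $(0,0;0,0)$ occupy a distinguished subset of columns. The central claim I would establish is that this set of empty columns is in natural bijection with the cells of the horizontal strip $\lambda^{(k)}/\lambda^{(k-1)}$ where $k=N+1-j$, via the explicit formula: a cell at Young-diagram position $(r,c)$ filled with entry $k$ corresponds to the empty vertex at column $i=k+c-r$ in row $j=N+1-k$. I would prove this by tracking, for each particle of $\mathcal{S}(\lambda^{(k)})$, whether it rises straight upward or is displaced horizontally along the unique right-exiting segment in that row, and converting between the partition-based indexing inherent in $\mathcal{S}$ and the Young-diagram coordinates $(r,c)$.

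Under the weights \eqref{eq:fSchur_weights}, only the empty vertices contribute nontrivially, and each empty vertex at $(i,j)$ carries weight $x_{N+1-j}+y_i = x_k+y_{k+c-r}$, exactly the factor $(x_{T(r,c)}+y_{T(r,c)+c-r})$ produced by the corresponding cell in the tableau formula \eqref{eq:factorialSchur_tableaux}. Summing over all valid five-vertex configurations (equivalently, over all SSYTs) then yields $\sum_T (\mathbf{x}\mid\mathbf{y})^T = s_\lambda(\mathbf{x}\mid\mathbf{y})$, proving the lemma. The main obstacle is the combinatorial bookkeeping in the second paragraph: although the underlying bijection is classical and implicit in \cite{lascoux20076}, \cite{mcnamara2009factorial}, \cite{bump2011factorial}, a careful translation between partition-part indices, Young-diagram cells, and vertex-model columns --- and the verification that the particle trajectories in each row produce exactly the claimed pattern of empty columns --- is where most of the work lies.
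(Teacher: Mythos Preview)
Your approach is essentially the same as the paper's: both encode the SSYT by the chain of sub-shapes $\lambda^{(0)}\subseteq\cdots\subseteq\lambda^{(N)}$, identify row $j$ of the vertex model with the horizontal strip $\lambda^{(N-j+1)}/\lambda^{(N-j)}$ via interlacing, and match the tableau weight $(\mathbf{x}\mid\mathbf{y})^T$ to the product of empty-vertex weights cell by cell. Your version is a bit more explicit about the coordinate map (the formula $i=k+c-r$), whereas the paper simply writes out the row-$1$ product and observes it agrees with \eqref{eq:factorialSchur_tableaux}; one minor quibble is that ``exactly one path exits right per row'' follows from the boundary conditions and path conservation alone, not from the five-vertex condition, which is instead what yields the non-crossing/interlacing.
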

A similar bijection holds between semistandard tableaux of a skew shape $\lambda/\mu$, and 
five vertex model
configurations with top and bottom boundary conditions 
given by
$\mathcal{S}(\mu),\mathcal{S}(\lambda)$.
Here the boundary conditions are the same as
for the functions $F_{\lambda/\mu}$, see \Cref{def:F_function}.
\begin{proof}[Proof of \Cref{lemma:factorialSchur_as_vertex_model}]
	Take a semistandard tableau $T$ of shape $\lambda$, and let $\nu$ be the shape
	formed by numbers from $1$ to $N-1$. These signatures interlace:
	\begin{equation*}
		\lambda_1\ge\nu_1\ge\lambda_2\ge\nu_2\ge \ldots\ge\lambda_{N-1}\ge\nu_{N-1} \ge \lambda_N,
	\end{equation*}
	which in the language of the configurations 
	$\mathcal{S}(\lambda)=\left\{ \lambda_j+N-j+1 \right\}$ 
	and $\mathcal{S}(\nu)=\left\{ \nu_j+N-j \right\}$
	translates into
	\begin{equation}
		\label{eq:factorialSchur_as_vertex_model_proof}
		\lambda_1+N>\nu_1+N-1\ge\lambda_2+N-1>\nu_2+N-2\ge \ldots\ge\lambda_{N-1}+2>\nu_{N-1}+1 \ge \lambda_N+1.
	\end{equation}
	There is a unique one-layer six vertex model configuration
	in $\mathbb{Z}_{\ge1}$ with boundary conditions $\mathcal{S}(\lambda)$ at the bottom
	and $\mathcal{S}(\nu)$ at the top, with no paths entering from the left, and 
	a path exiting through the right boundary.
	The strict inequalities in \eqref{eq:factorialSchur_as_vertex_model_proof}
	imply that this configuration satisfies the five vertex condition.

	To construct the full desired bijection, look at shapes in the tableau formed by 
	numbers from $1$ to $j$ for every $j\le N-1$, and argue in the same manner as above.

	Finally, observe that under this bijection, for each tableau $T$
	the product $(\mathbf{x}\mid \mathbf{y})^T$ from \eqref{eq:factorialSchur_tableaux}
	is the same as the product of the weights
	\eqref{eq:factorialSchur_as_vertex_model_proof}
	in the corresponding five vertex model configuration. Indeed, 
	with the notation $\nu$ as above, observe that the
	variable $x_N$ enters the configuration weight only through empty vertices $(0,0;0,0)$,
	which gives
	\begin{equation*}
		\prod_{m=\nu_1+N}^{\lambda_1+N-1}(x_N+y_m)
		\prod_{m=\nu_2+N-1}^{\lambda_2+N-2}(x_N+y_m)
		\ldots 
		\prod_{m=1}^{\lambda_N}(x_N+y_m).
	\end{equation*}
	This is readily seen to be the same contribution as in \eqref{eq:factorialSchur_tableaux}.
	The argument similarly extends to all other variables $x_i$.
\end{proof}
\begin{remark}
	\label{rmk:symmetry_in_factorialSchur}
	The weight $x_{N+1-j}+y_i$ 
	in \eqref{eq:fSchur_weights}
	may be replaced by $x_j+y_i$ because the
	polynomial $s_\lambda(\mathbf{x}\mid \mathbf{y})$ is symmetric in the $x_j$'s.
\end{remark}

We can now specialize
$F_\lambda$ given by \Cref{def:F_function} into
$s_\lambda(\mathbf{x}\mid \mathbf{y})$:

\begin{proposition}
	\label{prop:from_F_lambda_to_factorialSchur}
	Let $\lambda$ be a signature with $N$ parts. 
	Take complex parameters
	\begin{equation}
		\label{eq:from_F_lambda_to_factorialSchur}
		s^{-2}\mathbf{x}^{-1}=(s^{-2}x_1^{-1},\ldots,s^{-2}x_N^{-1}),\ \, 
		-\mathbf{y}^{-1}=(-y_1^{-1},-y_2^{-1},\ldots),\ \, 
		\mathbf{r}=(r,\ldots,r ),\ \, 
		\mathbf{s}=(s,s,\ldots ).
	\end{equation}
	Then we have
	\begin{equation}
		\label{eq:from_F_lambda_to_factorialSchur1}
		\lim_{s\to0,\ r\to+\infty}F_\lambda(s^{-2}\mathbf{x}^{-1};-\mathbf{y}^{-1};\mathbf{r};\mathbf{s})=
		\frac{x_1^{N-1}x_2^{N-2}\ldots x_{N-1}}
		{
			\prod_{i\ge 1}
			y_i^{\# \{k\in \mathcal{S}(\lambda)\colon k>i \}}
		}
		\,
		s_\lambda(\mathbf{x}\mid \mathbf{y}).
	\end{equation}
\end{proposition}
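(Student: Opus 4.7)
The plan is to apply \Cref{thm:F_formula}, which writes $F_\lambda$ as a product of an explicit rational prefactor and a determinant $\det[\varphi_{\lambda_j+N-j}(x_i\mid\mathbf{y};\mathbf{s})]_{i,j=1}^{N}$, and to compute the $s\to 0$, $r\to+\infty$ limit of both ingredients separately. The result is then matched against the double alternant formula \eqref{eq:factorialSchur_det_det} defining the factorial Schur polynomial. No additional structural input is required beyond these two determinantal formulas; the entire proof reduces to careful bookkeeping of signs, powers of $s$, and monomial $x$- and $y$-factors.

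For the prefactor, substituting $x_i\mapsto s^{-2}x_i^{-1}$ and $r_i\mapsto r$ turns the difference $r_i^{-2}x_i-x_j$ into $s^{-2}(r^{-2}x_i^{-1}-x_j^{-1})$. Pulling out the common $s^{-2N}$ and clearing denominators by multiplying each factor by $x_ix_j/(x_ix_j)$, the prefactor becomes $(r^{-2}-1)^{N}\,s^{-2N}\,\prod_{i=1}^{N} x_i^{-1}\,\prod_{1\le i<j\le N}(r^{-2}x_j-x_i)/(x_j-x_i)$, which tends to $(-1)^{N+\binom{N}{2}}\,s^{-2N}\,\prod_{i=1}^{N}x_i^{-1}\,\prod_{1\le i<j\le N} x_i/(x_j-x_i)$ as $r\to+\infty$.

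For each entry $\varphi_{k}(s^{-2}x_i^{-1}\mid-\mathbf{y}^{-1};s,s,\ldots)$ of the determinant, the leading factor $1/(y_{k+1}-x)$ evaluated at the specialization equals $-1/(y_{k+1}^{-1}+s^{-2}x_i^{-1})=-s^{2}x_i+O(s^{4})$, while each factor $(x-s^{-2}y_m)/(x-y_m)$ becomes $(x_i^{-1}+y_m^{-1})/(x_i^{-1}+s^{2}y_m^{-1})\to (x_i+y_m)/y_m$ as $s\to 0$. Hence $\varphi_{k}\to -s^{2}x_i\cdot(x_i\mid\mathbf{y})^{k}/(y_1y_2\cdots y_k)$ in the notation of \eqref{eq:factorialSchur_det_det}. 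Factoring $-s^{2}x_i$ out of row $i$ and $(y_1\cdots y_{\lambda_j+N-j})^{-1}$ out of column $j$ reduces the determinant to $\det[(x_i\mid\mathbf{y})^{\lambda_j+N-j}]_{i,j=1}^{N}$, which by \eqref{eq:factorialSchur_det_det} equals $s_\lambda(\mathbf{x}\mid\mathbf{y})\prod_{1\le i<j\le N}(x_i-x_j)$.

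Multiplying the two limits, the powers $s^{\pm 2N}$ cancel and the sign factors $(-1)^{N+\binom{N}{2}}$ (one from the prefactor, one from $\prod_{i<j}(x_i-x_j)=(-1)^{\binom{N}{2}}\prod_{i<j}(x_j-x_i)$ together with the factor $(-1)^N$ coming from $(-s^2)^N$) combine to $+1$. After collecting the monomial factors, one is left with
\[
\Biggl(\prod_{i=1}^{N-1}x_i^{N-i}\Biggr)\,\prod_{j=1}^{N}\frac{1}{y_1y_2\cdots y_{\lambda_j+N-j}}\;s_\lambda(\mathbf{x}\mid\mathbf{y}),
\]
and the $y$-product is reorganized by the elementary counting identity $\sum_{j=1}^{N}\mathbf{1}_{\lambda_j+N-j\ge i}=\#\{k\in\mathcal{S}(\lambda)\colon k>i\}$, yielding exactly the denominator $\prod_{i\ge 1}y_i^{\#\{k\in\mathcal{S}(\lambda)\colon k>i\}}$ in \eqref{eq:from_F_lambda_to_factorialSchur1}. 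The only obstacle to watch is the sign-and-power bookkeeping, which is somewhat tedious but entirely mechanical; the proof contains no hidden conceptual step.
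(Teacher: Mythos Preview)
Your argument is correct, but it follows a genuinely different route from the paper. You invoke \Cref{thm:F_formula} (the determinantal formula for $F_\lambda$) and then compute the limit of the determinant entry by entry, matching the result against the double alternant \eqref{eq:factorialSchur_det_det}. The paper instead proves this proposition \emph{without} using \Cref{thm:F_formula}: it specializes the vertex weights $\widehat W$ directly, obtains the five-vertex weights $(1+x_j/y_i,\,0,\,x_j/y_i,\,1,\,1,\,x_j/y_i)$, and compares the resulting partition function to the tableau formula \eqref{eq:factorialSchur_tableaux} for $s_\lambda(\mathbf x\mid\mathbf y)$ via \Cref{lemma:factorialSchur_as_vertex_model}. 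The point of this choice is that the paper then turns around in \Cref{cor:simpler_proof_F_formula_factorial_Schur} and uses \Cref{prop:from_F_lambda_to_factorialSchur} together with \eqref{eq:factorialSchur_det_det} to give an \emph{independent} verification of \Cref{thm:F_formula} in this special case. Your proof is essentially that same computation run in the opposite logical direction: you assume \Cref{thm:F_formula} and deduce the proposition, whereas the paper proves the proposition combinatorially and deduces (a special case of) \Cref{thm:F_formula}. Since \Cref{thm:F_formula} is established independently in \Cref{appA:F_G_formula_proofs}, your approach is logically sound; it just forfeits the independent cross-check that the paper's ordering provides.
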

The factorial Schur polynomial
$s_\lambda(\mathbf{x}\mid \mathbf{y})$ is symmetric in the $x_i$'s, and
observe that the deformed symmetry in the $x_i$'s of the right-hand side of
\eqref{eq:from_F_lambda_to_factorialSchur1}
agrees with \Cref{prop:F_G_symmetry}.
\begin{proof}[Proof of \Cref{prop:from_F_lambda_to_factorialSchur}]
	Taking the weights $\widehat{W}$ \eqref{eq:weights_W_hat} 
	with
	the 
	parameters \eqref{eq:from_F_lambda_to_factorialSchur}
	and applying the limits $s\to0$, $r\to+\infty$ leads to the vertex weights
	at each $(i,j)\in \mathbb{Z}_{\ge1}\times\left\{ 1,\ldots,N  \right\}$
	(listed in the same order as in \eqref{eq:fSchur_weights} and \Cref{fig:weights_6V}):
	\begin{equation}
		\label{eq:from_F_lambda_to_factorialSchur_proof1}
		\left( 1+x_j/y_i,0,x_j/y_i,1,1,x_j/y_i \right).
	\end{equation}
	These weights are almost the same as $\widehat{W}_{\mathrm{fSchur}}$ 
	\eqref{eq:fSchur_weights}, and after a certain renormalization we can get 
	$s_\lambda(\mathbf{x}\mid \mathbf{y})$. Namely, denote by $Z$ the left-hand side of
	\eqref{eq:from_F_lambda_to_factorialSchur1}. Then 
	\begin{equation}
		\label{eq:from_F_lambda_to_factorialSchur_proof12}
		\frac{Z}{x_1^{N-1}x_2^{N-2}\ldots x_{N-1}}\prod_{i=1}^{\lambda_1+N-1}y_i^{N}
	\end{equation}
	is the partition function like $F_\lambda$ with the vertex weights
	\begin{equation}
		\label{eq:from_F_lambda_to_factorialSchur_proof2}
		\left( x_j+y_i,0,1,y_i,y_i,1 \right)
	\end{equation}
	in the part $\left\{ 1,2,\ldots,\lambda_1+N-1  \right\}\times \left\{ 1,\ldots,N  \right\}$ 
	of the half-infinite rectangle. 
	Indeed, multiplying by the $y_i$'s clears the all denominators, while the number of the extra $x_j$
	factors in \eqref{eq:from_F_lambda_to_factorialSchur_proof1} coming from the weights
	$(1,0;1,0)$ and $(0,1;1,0)$
	is $N-j$, independently of the path configuration.
	Note also that to the right of $\lambda_1+N-1$, every path configuration contains
	only vertices of the type
	$(0,1;0,1)$, and so the remaining weight is equal to $1$.

	Finally, one can readily check that the number of the extra $y_i$ factors 
	coming from the vertices of types $(0,1;0,1)$ and $(1,0;0,1)$
	corresponding to the weights \eqref{eq:from_F_lambda_to_factorialSchur_proof2}
	is also independent of the path configuration, and is equal to 
	$\prod_{i=1}^{\lambda_1+N-1}
	y_i^{\# \left\{ k\in \mathcal{S}(\lambda)\setminus\{\lambda_1+N \}\colon k\le i \right\}}$.
	This allows to remove the extra $y_i$ factors, and turn
	the weights \eqref{eq:from_F_lambda_to_factorialSchur_proof2}
	into $\widehat{W}_{\mathrm{fSchur}}$ \eqref{eq:fSchur_weights},
	up to the permutation of the $x_j$'s allowed by \Cref{rmk:symmetry_in_factorialSchur}.
	Combining all the extra factors leads to the result.
\end{proof}

Let us now present an alternative derivation of the determinantal 
formula \eqref{eq:F_det_formula_in_theorem}
for $F_\lambda$
in the special factorial Schur case.
This argument is simpler 
than in the general case discussed in \Cref{appA:F}.
\begin{corollary}
	\label{cor:simpler_proof_F_formula_factorial_Schur}
	The determinantal formula 
	for $F_\lambda$
	of \Cref{thm:F_formula}
	holds in the
	factorial Schur specialization 
	\eqref{eq:from_F_lambda_to_factorialSchur}--\eqref{eq:from_F_lambda_to_factorialSchur1}.
\end{corollary}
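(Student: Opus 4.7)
The proposal is simply to substitute the specialization \eqref{eq:from_F_lambda_to_factorialSchur} directly into the determinantal formula \eqref{eq:F_det_formula_in_theorem} of Theorem~\ref{thm:F_formula}, take the double limit $s\to 0$, $r\to+\infty$, and match the result with the right-hand side of \eqref{eq:from_F_lambda_to_factorialSchur1} computed via \eqref{eq:factorialSchur_det_det}. Since Proposition~\ref{prop:from_F_lambda_to_factorialSchur} already identifies the specialized $F_\lambda$ with factorial Schur polynomials, the task reduces to a bookkeeping of prefactors.

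First, I would compute the pointwise limit of the building block $\varphi_k$. Substituting $x\mapsto s^{-2}x^{-1}$, $y_j\mapsto -y_j^{-1}$, $s_j=s$ and clearing $s^{\pm 2}$ factors gives
\begin{equation*}
\varphi_k(s^{-2}x^{-1}\mid -\mathbf{y}^{-1};\mathbf{s})
=\frac{-xy_{k+1}}{x+s^{-2}y_{k+1}}\prod_{j=1}^{k}\frac{s^{-2}(x+y_j)}{x+s^{-2}y_j},
\end{equation*}
which as $s\to 0$ tends to $-xs^2\prod_{j=1}^{k}(x+y_j)/y_j=-xs^2\cdot(x\mid\mathbf{y})^k\prod_{j=1}^{k}y_j^{-1}$. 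Pulling the column-dependent scalars $(-s^2)\prod_{m=1}^{\lambda_j+N-j}y_m^{-1}$ out of column $j$ of the determinant, and pulling $x_i$ out of row $i$, one obtains
\begin{equation*}
\det\bigl[\varphi_{\lambda_j+N-j}(s^{-2}x_i^{-1}\mid -\mathbf{y}^{-1};\mathbf{s})\bigr]
\;\longrightarrow\;(-s^{2})^{N}\prod_{i=1}^{N}x_i\,
\prod_{j=1}^{N}\prod_{m=1}^{\lambda_j+N-j}\!\frac{1}{y_m}\,
\det\bigl[(x_i\mid\mathbf{y})^{\lambda_j+N-j}\bigr].
\end{equation*}

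Next I would handle the prefactor in \eqref{eq:F_det_formula_in_theorem}. With $r_i=r$ and $x_i\mapsto s^{-2}x_i^{-1}$, one checks that
\begin{equation*}
\prod_{i=1}^{N}x_i(r^{-2}_i-1)\prod_{i<j}\frac{r_i^{-2}x_i-x_j}{x_i-x_j}
=s^{-2N}\prod_{i=1}^{N}x_i^{-1}(r^{-2}-1)^{N}\prod_{i<j}\frac{r^{-2}x_j-x_i}{x_j-x_i},
\end{equation*}
which in the limit $r\to+\infty$ becomes $(-1)^{N}s^{-2N}\prod_{i}x_i^{-1}\prod_{i<j}\frac{x_i}{x_i-x_j}$. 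Multiplying the prefactor limit by the determinant limit, the $(-1)^N(-1)^N$ and $s^{-2N}s^{2N}$ cancel, and $\prod_i x_i^{-1}\cdot\prod_i x_i=1$, leaving
\begin{equation*}
\prod_{i<j}\frac{x_i}{x_i-x_j}\,\prod_{j=1}^{N}\prod_{m=1}^{\lambda_j+N-j}\frac{1}{y_m}\,
\det\bigl[(x_i\mid\mathbf{y})^{\lambda_j+N-j}\bigr].
\end{equation*}

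Finally I would match this with the claimed right-hand side of \eqref{eq:from_F_lambda_to_factorialSchur1}. Using \eqref{eq:factorialSchur_det_det} for $s_\lambda(\mathbf{x}\mid\mathbf{y})$, the equality reduces to the two combinatorial identities $\prod_{i<j}x_i=x_1^{N-1}x_2^{N-2}\cdots x_{N-1}$ and $\prod_{j=1}^{N}\prod_{m=1}^{\lambda_j+N-j}y_m^{-1}=\prod_{i\ge1}y_i^{-\#\{k\in\mathcal{S}(\lambda):k>i\}}$; the latter follows by swapping the order of the product and noting that the exponent of $y_i$ on the left equals $\#\{j:\lambda_j+N-j\ge i\}=\#\{k\in\mathcal{S}(\lambda):k>i\}$. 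The only real obstacle is purely cosmetic bookkeeping of signs and of the powers of $s^2$ and $r^{-2}-1$; there are no analytic subtleties because each entry of the determinant is a rational function with a well-defined finite limit.
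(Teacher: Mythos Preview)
Your proposal is correct and follows essentially the same approach as the paper's own proof: both substitute the specialization into the determinantal formula \eqref{eq:F_det_formula_in_theorem}, compute the limit of each $\varphi_k$ entry, track the prefactor, and match against \eqref{eq:from_F_lambda_to_factorialSchur1} using the determinantal definition \eqref{eq:factorialSchur_det_det} together with Proposition~\ref{prop:from_F_lambda_to_factorialSchur}. Your bookkeeping of the $y_i$ exponents and the $x_i$ powers is a bit more explicit than the paper's, but the argument is the same.
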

\begin{proof}[Alternative proof]
	\Cref{prop:from_F_lambda_to_factorialSchur} 
	was proven using only
	the partition function definition of $F_\lambda$ and
	the tableau formula for $s_\lambda(\mathbf{x}\mid \mathbf{y})$,
	to compare the two functions.
	We can now use the determinantal formula for $s_\lambda(\mathbf{x}\mid \mathbf{y})$
	\eqref{eq:factorialSchur_det_det}, and it remains to check that the formula of
	\Cref{thm:F_formula} specializes to \eqref{eq:factorialSchur_det_det}.
	We have
	\begin{equation*}
		\lim_{s\to0}
		s^{-2}
		\varphi_k(s^{-2}x^{-1}\mid -\mathbf{y}^{-1};\mathbf{s})
		=
		\lim_{s\to0}
		\frac{s^{-2}}{-y_{k+1}^{-1}-s^{-2}x^{-1}}
		\prod_{j=1}^{k}
		\frac{-y_j^{-1}-x^{-1}}{s^2(-y_j^{-1}-s^{-2}x^{-1})}
		=
		-x
		\prod_{j=1}^{k}
		\frac{x+y_j}{y_j}.
	\end{equation*}
	Therefore, 
	in the specialization of \Cref{prop:from_F_lambda_to_factorialSchur},
	the determinantal formula for
	$F_\lambda$
	turns into
	\begin{equation*}
		\begin{split}
			&
			\lim_{s\to0,\,r\to+\infty}
			\Biggl(
				\prod_{i=1}^{N}x_i^{-1}(r^{-2}-1)
				\prod_{1\le i<j\le N}\frac{r^{-2}x_j-x_i}{x_j-x_i}
			\Biggr)
			\det\left[ s^{-2}\varphi_{\lambda_j+N-j}(s^{-2}x_i^{-1}\mid -\mathbf{y}^{-1};\mathbf{s}) \right]_{i,j=1}^{N}
			\\&\hspace{20pt}
			=
			\Biggl(
				\prod_{i=1}^{N}(-x_i)^{-1}
				\prod_{1\le i<j\le N}\frac{x_i}{x_i-x_j}
			\Biggr)
			\det\left[ 
				-x_i
				\prod_{m=1}^{\lambda_j+N-j}
				\frac{x_i+y_m}{y_m}
			\right]_{i,j=1}^{N},
		\end{split}
	\end{equation*}
	which produces \eqref{eq:factorialSchur_det_det}
	up to the same prefactor as in \eqref{eq:from_F_lambda_to_factorialSchur1}.
\end{proof}

Let us now turn to the functions $G_\lambda$, and denote
\begin{equation}
	\label{eq:G_lambda_factorialSchur_limit}
	\check{s}_\lambda(\mathbf{x} \mid \mathbf{y}):=
	\frac{1}{\prod_{i\ge 1}
	y_i^{\# \{k\in \mathcal{S}(\lambda)\colon k>i \}}}
	\frac{y_1^{N-1}y_2^{N-2}\ldots y_{N-1} }{(x_1 \ldots x_M )^{N}
	}
	\lim_{s\to 0,\,\theta\to+\infty}
	G_\lambda(s^{-2}\mathbf{x}^{-1};-\mathbf{y}^{-1};\boldsymbol\uptheta;\mathbf{s}),
\end{equation}
where $\lambda$ is a signature with $N$ parts, and the parameters are
\begin{equation*}
	s^{-2}\mathbf{x}^{-1}=(s^2x_1^{-1},\ldots,s^2x_M^{-1} ),\quad
	-\mathbf{y}^{-1}=(y_1^{-1},y_2^{-1},\ldots ),\quad
	\boldsymbol\uptheta=(\theta,\ldots, \theta),\quad
	\mathbf{s}=(s,s,\ldots ).
\end{equation*}

\begin{proposition}
	\label{prop:from_G_lambda_to_factorialSchur}
	The limit \eqref{eq:G_lambda_factorialSchur_limit} exists. It is equal to the 
	five vertex partition function 
	in $\mathbb{Z}_{\ge1}\times \left\{ 1,\ldots,M  \right\}$
	with boundary conditions as for $G_\lambda$ (\Cref{def:G_function}),
	and the following vertex weight at each lattice point $(i,j)$:
	\begin{equation}
		\label{eq:fSchur_weights_W_for_G}
		\begin{split}
			&
			W_{\mathrm{fSchur}}(0,0;0,0)=1,
			\qquad 
			W_{\mathrm{fSchur}}(1,1;1,1)=0,
			\\
			&
			W_{\mathrm{fSchur}}(1,0;1,0)=
			W_{\mathrm{fSchur}}(0,1;0,1)
			\\&\hspace{50pt}=
			W_{\mathrm{fSchur}}(1,0;0,1)=
			W_{\mathrm{fSchur}}(0,1;1,0)=\frac1{x_j+y_i}.
		\end{split}
	\end{equation}
\end{proposition}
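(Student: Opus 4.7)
My plan is to compute the limits of the six vertex weights explicitly under the indicated specialization, verify that $W(1,1;1,1)\to 0$ so that only five-vertex configurations survive, and then absorb the residual factors of $x_j$ and $y_i$ into the prefactor of $\check{s}_\lambda$ by a short path-conservation bookkeeping argument.

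First I would substitute $x\to s^{-2}x_j^{-1}$, $y\to -y_i^{-1}$, $r\to \theta$ in \eqref{eq:weights_W}. The common denominator becomes $s^{-2}y-x=-s^{-2}(x_j+y_i)/(x_j y_i)$, and an elementary calculation yields
\begin{equation*}
W(0,0;0,0)\to 1,\qquad W(1,1;1,1)\to 0,
\end{equation*}
while
\begin{equation*}
W(1,0;1,0)\to\frac{x_j}{x_j+y_i},\ \ W(0,1;0,1)\to\frac{y_i}{x_j+y_i},\ \ W(1,0;0,1)\to\frac{y_i}{x_j+y_i},\ \ W(0,1;1,0)\to\frac{x_j}{x_j+y_i}.
\end{equation*}
Each configuration contributing to $G_\lambda$ has only finitely many non-$(0,0;0,0)$ vertices (paths live in columns $1,\ldots,\lambda_1+N$), so the limit commutes with the summation and with the infinite product of weights per configuration; in particular the limit exists and is a finite sum.

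Second, since $W(1,1;1,1)$ vanishes in the limit, only five-vertex configurations (with no $(1,1;1,1)$ vertex) contribute. For each of the four surviving non-trivial vertex types, exactly one of $v_T(i,j)=1$ or $h_R(i,j)=1$ holds, and the limiting weight equals $(x_j+y_i)^{-1}$ times $x_j$ in the former case and $y_i$ in the latter. Factoring these residual contributions out of every non-trivial vertex gives
\begin{equation*}
\lim_{s\to 0,\,\theta\to+\infty}G_\lambda(s^{-2}\mathbf{x}^{-1};-\mathbf{y}^{-1};\boldsymbol\uptheta;\mathbf{s})
=\Bigl(\prod_{j=1}^{M}x_j^{S_j}\Bigr)\Bigl(\prod_{i\ge1}y_i^{T_i}\Bigr)\,Z_{\mathrm{fSchur}},
\end{equation*}
where $Z_{\mathrm{fSchur}}$ is the partition function with weights \eqref{eq:fSchur_weights_W_for_G} and $G_\lambda$-type boundary conditions, and $S_j:=\sum_{i\ge1}v_T(i,j)$, $T_i:=\sum_{j=1}^{M}h_R(i,j)$.

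The key point is that $S_j$ and $T_i$ depend only on the boundary data. Applying path conservation to the horizontal slice directly above row $j$ shows that $S_j$ equals the number of paths crossing that slice; since the left/right boundaries are empty, this equals the number of paths entering from the bottom, namely $N$ (and for $j=M$ one instead uses the top boundary $\mathcal{S}(\lambda)$, again of size $N$). Applying conservation to the vertical strip $\{1,\ldots,i\}\times\{1,\ldots,M\}$ gives $T_i=\min(i,N)-|\mathcal{S}(\lambda)\cap\{1,\ldots,i\}|$, which vanishes for $i\ge\lambda_1+N$ so that $\prod_i y_i^{T_i}$ is finite.

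Finally, insert these identities into \eqref{eq:G_lambda_factorialSchur_limit}. The factor $\prod_j x_j^{S_j}=(x_1\cdots x_M)^N$ cancels the denominator $(x_1\cdots x_M)^N$ in the normalization. Using $\#\{k\in\mathcal{S}(\lambda):k>i\}=N-|\mathcal{S}(\lambda)\cap\{1,\ldots,i\}|$, the leftover exponent of $y_i$ equals $T_i-\#\{k\in\mathcal{S}(\lambda):k>i\}=\min(i,N)-N$, i.e.\ $i-N$ for $i<N$ and $0$ for $i\ge N$. These residual factors cancel precisely against $y_1^{N-1}y_2^{N-2}\cdots y_{N-1}$, yielding $\check{s}_\lambda(\mathbf{x}\mid\mathbf{y})=Z_{\mathrm{fSchur}}$, as claimed. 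The only real content of the argument is the conservation identities for $S_j$ and $T_i$ together with the exponent-matching at the end; everything else is an elementary limit.
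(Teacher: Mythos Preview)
Your proof is correct and follows essentially the same approach as the paper: compute the limiting vertex weights, observe that $W(1,1;1,1)\to 0$, and then extract the residual $x_j,y_i$ factors by counting. The paper's proof is terser---it simply notes that the limiting weights are $(1,0,\tfrac{x_j}{x_j+y_i},\tfrac{y_i}{x_j+y_i},\tfrac{y_i}{x_j+y_i},\tfrac{x_j}{x_j+y_i})$ and defers the factor-extraction to the analogous argument for $F_\lambda$---whereas you have spelled out the conservation-based bookkeeping for $S_j$ and $T_i$ explicitly, which is a welcome level of detail.
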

\begin{proof}
	Take the weights $W$ \eqref{eq:weights_W}
	used in the definition of the functions $G_\lambda$, 
	and apply the limit transition as in \eqref{eq:G_lambda_factorialSchur_limit}.
	We obtain the following weights (listed in the same order as in the claim):
	\begin{equation*}
		\left( 1,0,\frac{x_j}{x_j+y_i},\frac{y_i}{x_j+y_i},\frac{y_i}{x_j+y_i},\frac{x_j}{x_j+y_i} \right),
	\end{equation*}
	which are almost the same as the desired $W_{\mathrm{fSchur}}$ \eqref{eq:fSchur_weights_W_for_G}.
	The extra factors $x_j$ and $y_i$ can be taken out analogously 
	to the proof of \Cref{prop:from_F_lambda_to_factorialSchur},
	which results in the prefactor in \eqref{eq:G_lambda_factorialSchur_limit}.
\end{proof}

\begin{remark}
	The weights $W_{\mathrm{fSchur}}$
	\eqref{eq:fSchur_weights_W_for_G} 
	and their partition functions (coinciding with our
	$\check s_\lambda$ for special $\lambda$)
	appeared in 
	\cite{ikeda2009excited},
	\cite{MPP2}
	(see also
	\cite{MPP3},
	\cite{Pak_F_Petrov2020})
	in connection
	with enumeration of skew standard Young tableaux.
\end{remark}

\begin{proposition}
	\label{prop:factorialSchur_Cauchy}
	The functions
	$s_\lambda(x_1,\ldots,x_N \mid \mathbf{y})$
	and 
	$\check{s}_\lambda(w_1,\ldots,w_M \mid \mathbf{y})$ 
	satisfy the Cauchy summation identity
	\begin{equation}
		\label{eq:factorialSchur_Cauchy}
		\sum_{\lambda=(\lambda_1\ge \ldots \ge\lambda_N\ge0 )}
		s_\lambda(x_1,\ldots,x_N \mid \mathbf{y})\,
		\check{s}_\lambda(w_1,\ldots,w_M  \mid \mathbf{y})
		=
		\prod_{i=1}^{N}\prod_{j=1}^{M}
		\frac{1}{w_j-x_i},
	\end{equation}
	where
	$|x_i|< |w_j|$ for all $i,j$.
\end{proposition}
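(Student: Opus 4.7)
The plan is to derive \eqref{eq:factorialSchur_Cauchy} as a degeneration of the general Cauchy identity of \Cref{thm:F_G_Cauchy_big}. Swapping the roles of the indices $M$ and $N$ in that theorem so that $F_\nu$ involves $N$ variables and $G_\nu$ involves $M$ variables (with the sum running over signatures $\nu$ of length $N$), I would apply it with the specialization of \Cref{prop:from_F_lambda_to_factorialSchur} and \Cref{prop:from_G_lambda_to_factorialSchur}:
\begin{equation*}
\mathbf{x}\mapsto(s^{-2}x_1^{-1},\ldots,s^{-2}x_N^{-1}),\qquad \mathbf{w}\mapsto(s^{-2}w_1^{-1},\ldots,s^{-2}w_M^{-1}),\qquad \mathbf{y}\mapsto(-y_1^{-1},-y_2^{-1},\ldots),
\end{equation*}
with $\mathbf{r}=(r,\ldots,r)$, $\boldsymbol\uptheta=(\theta,\ldots,\theta)$, $\mathbf{s}=(s,s,\ldots)$, and then pass to the limit $s\to 0$, $r,\theta\to+\infty$.

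On the left-hand side, \Cref{prop:from_F_lambda_to_factorialSchur} and the defining relation \eqref{eq:G_lambda_factorialSchur_limit} for $\check s_\nu$ express each summand as a scalar multiple of $s_\nu(\mathbf{x}\mid\mathbf{y})\,\check s_\nu(\mathbf{w}\mid\mathbf{y})$ in the limit. The essential point is that the $\nu$-dependent factor $\prod_i y_i^{\#\{k\in\mathcal S(\nu)\colon k>i\}}$ appears in the denominator of the $F$-limit in \Cref{prop:from_F_lambda_to_factorialSchur} and in the numerator of the $G$-limit in \eqref{eq:G_lambda_factorialSchur_limit}, so the two copies cancel in the product $F_\nu G_\nu$. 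What remains is a single $\nu$-independent prefactor, and the limiting left-hand side equals this prefactor times $\sum_\nu s_\nu(\mathbf{x}\mid\mathbf{y})\,\check s_\nu(\mathbf{w}\mid\mathbf{y})$.

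On the right-hand side of \Cref{thm:F_G_Cauchy_big}, every factor has an unambiguous leading behavior in $s,r,\theta$: the alternants $\prod(r_i^{-2}x_i-x_j)$ and $\prod(s_i^{-2}y_i-y_j)$ are dominated by their $r^{-2}$- and $s^{-2}$-leading terms; the denominator $\prod(y_i-x_j)$ by its $s^{-2}$ piece; and $\prod(x_i-\theta_j^{-2}w_j)/(x_i-w_j)$ collapses (after inverting the $s^{-2}$-scalings of $x_i$ and $w_j$, and sending $\theta\to+\infty$) to $\prod_{i,j} w_i/(w_i-x_j)$. Collecting all diverging factors and dividing out the same $\nu$-independent prefactor identified on the left-hand side, the right-hand side should reduce to $\prod_{i,j}(w_j-x_i)^{-1}$, producing \eqref{eq:factorialSchur_Cauchy}.

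The main obstacle will be the multi-step bookkeeping of the overall prefactor through these limits; everything else is routine. One also needs to justify the interchange of the limit with the infinite sum over $\nu$, but this is not serious: under $|x_i|<|w_j|$ the series of rational functions on the left-hand side of \eqref{eq:factorialSchur_Cauchy} converges absolutely and uniformly in a neighborhood of the specialization, which legitimates termwise passage to the limit. Alternatively, one may first apply \Cref{thm:F_G_Cauchy_big} in a regime where \eqref{eq:intro_Cauchy_condition} is manifestly satisfied, and then analytically continue in $s,r,\theta$, using that each side of the identity, multiplied by appropriate prefactors, depends rationally on these deformation parameters.
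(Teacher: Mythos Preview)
Your approach is correct and is precisely the one taken in the paper: the paper's proof is a single sentence stating that the identity follows by combining the Cauchy identity of \Cref{thm:F_G_Cauchy_big} with the limit transitions of \Cref{prop:from_F_lambda_to_factorialSchur} and \Cref{prop:from_G_lambda_to_factorialSchur}. You have simply spelled out in more detail the cancellation of the $\nu$-dependent factor $\prod_i y_i^{\#\{k\in\mathcal S(\nu)\colon k>i\}}$ and the bookkeeping of the remaining prefactors, which the paper leaves implicit.
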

\begin{proof}
	This is a combination of the Cauchy identity of
	\Cref{thm:F_G_Cauchy_big} and the limit transitions
	from \Cref{prop:from_F_lambda_to_factorialSchur,prop:from_G_lambda_to_factorialSchur}.
\end{proof}
When $y_j\equiv 0$, the 
functions
$s_\lambda(\mathbf{x} \mid \mathbf{y})$
become the usual Schur polynomials $s_\lambda(\mathbf{x})$,
while for $\check{s}_\lambda$ we have
\begin{equation*}
	\check{s}_\lambda(\mathbf{w}\mid 0)=
	(w_1\ldots w_M)^{-N}s_\lambda(w_1^{-1}\ldots w_M^{-1} ).
\end{equation*}
This agrees with the fact that
for $y_j\equiv 0$, identity 
\eqref{eq:factorialSchur_Cauchy} reduces to the usual Cauchy identity for Schur functions
\cite[Ch. I, (4.3)]{Macdonald1995}.

\begin{remark}
	\label{rmk:Molev}
	There is another Cauchy identity involving the
	polynomials 
	$s_\lambda(\mathbf{x}\mid \mathbf{y})$
	together with the dual Schur functions $\widehat{s}_\lambda(\mathbf{w} \,\|\,\mathbf{y})$
	\cite[Theorem 3.1]{molev2009comultiplication}
	(in \cite{olshanski2019interpolation} a particular case of the dual Schur
	functions is included into the family of dual interpolation Macdonald functions).
	One can check (by comparing the Cauchy identities or 
	using the explicit determinantal formula for $\widehat{s}_\lambda$)	
	that the $\check{s}_\lambda$'s are 
	different from the dual Schur functions $\widehat{s}_\lambda$.
\end{remark}

After this paper was posted, \cite{gunna2022integrable} 
studied combinatorial properties of the functions 
$\check{s}_\lambda$ (in their notation, these are multiples of $E^\lambda$)
defined through the Cauchy identity with factorial Schur polynomials.

\subsection{Horizontally homogeneous model}
\label{sub:homogeneous_F_G}

Throughout this subsection we set all the column parameters (in the sense of 
\Cref{fig:F_G_partition_functions}) to be constant:
\begin{equation}
	\label{eq:homogeneous_specialization}
	s_j\equiv s,\qquad y_j\equiv 1,\qquad \textnormal{for all $j=1,2,\ldots $}.
\end{equation}
Denote this specialization by $(\mathbf{y};\mathbf{s})=(1;s)$.
In this special case we can relate the functions $F_\lambda$ and $G_\lambda$ to 
the ordinary and the supersymmetric Schur functions, respectively.

The supersymmetric Schur functions $s_\lambda(\mathbf{a}/\mathbf{b})$, where $\mathbf{a}=(a_1,\ldots,a_M )$, 
$\mathbf{b}=(b_1,\ldots,b_M )$ are two sequences of variables,
may be defined as coefficients in the following Cauchy identity involving the ordinary
Schur polynomials $s_\lambda(t_1,\ldots,t_N )$, where $N$ is an arbitrary large enough
integer:
\begin{equation}
	\label{eq:supersymm_Schur_definition}
	\sum_{\lambda=(\lambda_1\ge \ldots\lambda_N\ge0 )}
	s_\lambda(\mathbf{a}/\mathbf{b})
	\,
	s_\lambda(t_1,\ldots,t_N )=
	\prod_{i=1}^{N}\prod_{j=1}^{M}\frac{1+t_ib_j}{1-t_ia_j}.
\end{equation}
The supersymmetric Schur functions are related to the factorial Schur polynomials
which appeared in \Cref{sub:factorial_Schur},
but we do not need this connection here. See 
\cite{BereleRegev}, \cite[(6.19)]{macdonald1992schur_Theme} for details.
The next statement is independent of the explicit formulas of
\Cref{thm:F_formula,thm:G_formula} (while may also be derived as a corollary
of \Cref{thm:F_formula}, see \Cref{cor:F_G_formulas_simpler} below).

\begin{proposition}[\Cref{prop:intro_supersymm} from Introduction]
	\label{prop:F_G_homogeneous_through_Schur}
	Let $\lambda$ be a signature with $N$ parts. 
	Under the homogeneous specialization \eqref{eq:homogeneous_specialization},
	we have
	\begin{equation}
		\label{eq:F_homogeneous_as_Schur}
		F_\lambda(x_1,\ldots,x_N;1;\mathbf{r};s)
		=
		\det\left[ \left( \frac{1-s^2x_i}{s^2(1-x_i)} \right)^{\lambda_j+N-j} \right]_{i,j=1}^{N}
		\,
		\prod_{i=1}^{N}\frac{(r_i^{-2}-1)x_i}{1-x_i}
		\prod_{1\le i<j\le N}\frac{r_i^{-2}x_i-x_j}{x_i-x_j}
	\end{equation}
	and, in particular,
	\begin{equation}
		\label{eq:F_homogeneous_as_Schur1}
		\frac{F_\lambda(x_1,\ldots,x_N;1;\mathbf{r};s)}{F_{0^N}(x_1,\ldots,x_N;1;\mathbf{r};s)}=
		s_\lambda\left( \frac{1-s^2x_1}{s^2(1-x_1)},\ldots, \frac{1-s^2x_N}{s^2(1-x_N)} \right).
	\end{equation}
	For $G_\lambda$, we have the following expression:
	\begin{equation}
		\label{eq:G_homogeneous_as_Schur}
		\begin{split}
			&
			G_\lambda(x_1,\ldots,x_M;1;\mathbf{r};s)
			\\&\hspace{30pt}
			=
			s_\lambda
			\left( 
				\biggl\{ \frac{s^2(1-x_j)}{1-s^2 x_j} \biggr\}_{j=1}^M
				\,\middle/\,
				\biggl\{ \frac{s^2(x_jr_j^{-2}-1)}{1-s^2r_j^{-2} x_j} \biggr\}_{j=1}^M
			\right)\,
			\prod_{i=1}^{M}\left( \frac{1-s^2r_i^{-2} x_i }{1-s^2x_i} \right)^N.
		\end{split}
	\end{equation}
\end{proposition}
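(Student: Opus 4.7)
The plan is to first handle $F_\lambda$ by direct substitution into the determinantal formula of \Cref{thm:F_formula}, and then to derive the supersymmetric Schur expression for $G_\lambda$ by specializing the Cauchy identity of \Cref{thm:F_G_Cauchy_big} and matching it against the Cauchy identity \eqref{eq:supersymm_Schur_definition} that defines supersymmetric Schur functions.

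For the first claim, I would substitute $y_j = 1$, $s_j = s$ into the functions $\varphi_k$ defined in \eqref{eq:phi_def}; the product telescopes to yield
\[
\varphi_k(x \mid 1;s) \;=\; \frac{1}{1-x}\biggl(\frac{1-s^2 x}{s^2(1-x)}\biggr)^{k}.
\]
Plugging this into \eqref{eq:F_det_formula_in_theorem} and pulling the factor $1/(1-x_i)$ out of the $i$-th row of the determinant produces \eqref{eq:F_homogeneous_as_Schur} directly. Setting $u_i := (1-s^2x_i)/[s^2(1-x_i)]$ and applying Weyl's bialternant formula $s_\lambda(u_1,\ldots,u_N) = \det[u_i^{\lambda_j+N-j}]/\det[u_i^{N-j}]$, where the denominator equals the $\lambda=0^N$ case, then gives \eqref{eq:F_homogeneous_as_Schur1}.

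For $G_\lambda$, I would substitute the specialization into the Cauchy identity \eqref{eq:Full_Cauchy_Identity}, use the $F$-formula just established, and simplify. The key cancellations hinge on the computation
\[
u_i - u_j \;=\; \frac{(x_i-x_j)(1-s^2)}{s^2(1-x_i)(1-x_j)},
\]
together with the identity $(s^{-2}-1)^{N(N-1)/2} = (1-s^2)^{N(N-1)/2}/s^{N(N-1)}$; after these cancellations one is left with the clean relation
\[
\sum_{\lambda} G_\lambda(\mathbf{w};1;\boldsymbol\uptheta;s)\, s_\lambda(u_1,\ldots,u_N) \;=\; \prod_{i=1}^{N}\prod_{j=1}^{M}\frac{x_i - \theta_j^{-2}w_j}{x_i - w_j}.
\]
Next I would invert the change of variables (noting that the map $x \mapsto u$ is an involution), so that the general factor $x_i - c$ rewrites as $(1-cs^2)(1 - \alpha_c u_i)/[s^2(1-u_i)]$ with $\alpha_c = s^2(1-c)/(1-cs^2)$. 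Applying this to both numerator and denominator of the product, the $(1-u_i)$ and $s^2$ factors cancel pairwise, leaving
\[
\prod_{i,j}\frac{x_i-\theta_j^{-2}w_j}{x_i-w_j}
\;=\;\prod_{j=1}^{M}\biggl(\frac{1-s^2\theta_j^{-2}w_j}{1-s^2w_j}\biggr)^{\!N}
\prod_{i,j}\frac{1 + b_j u_i}{1 - a_j u_i},
\]
with $a_j, b_j$ the expressions appearing in \eqref{eq:G_homogeneous_as_Schur}. Comparing with the supersymmetric Cauchy identity \eqref{eq:supersymm_Schur_definition} and using the linear independence of the Schur polynomials $s_\lambda(u_1,\ldots,u_N)$ (in $\lambda$ with at most $N$ parts) as a function of generic $u_i$, one reads off \eqref{eq:G_homogeneous_as_Schur}.

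The step most likely to require care is the bookkeeping of $s$-powers and sign factors in the cancellation for $G_\lambda$: one has to track precisely how the Vandermonde $\Delta(u)$ recombines with the factors $\prod_i(1-x_i)^{-1}$, $\prod_{i<j}(r_i^{-2}x_i-x_j)$, and $(s^{-2}-1)^{N(N-1)/2}$ coming from the three places where they appear (the $F$-prefactor, Weyl's formula, and the right-hand side of Cauchy). Everything else is a direct matching of rational factors.
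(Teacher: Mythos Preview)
Your argument is correct, and for the $G_\lambda$ part it matches the paper's proof essentially line by line. The difference lies in how the $F_\lambda$ formula \eqref{eq:F_homogeneous_as_Schur} is obtained.

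You derive \eqref{eq:F_homogeneous_as_Schur} by specializing the general determinantal formula of \Cref{thm:F_formula}. The paper instead proves \eqref{eq:F_homogeneous_as_Schur} \emph{independently} of \Cref{thm:F_formula}, by invoking a result of Brubaker--Bump--Friedberg \cite{brubaker2011schur} that computes the free fermion six vertex partition function with $F_\lambda$-type boundary conditions directly in terms of a Schur polynomial. This is a deliberate choice: the paper remarks just before the proposition that the proof is independent of \Cref{thm:F_formula}, and then uses this independence in \Cref{cor:F_G_formulas_simpler} to give an alternative verification of \Cref{thm:F_formula} in the horizontally homogeneous case. Your route is shorter and entirely self-contained within the paper, but it forfeits this logical independence (and hence the corollary). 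Both are valid; which one is preferable depends on whether one wants \Cref{prop:F_G_homogeneous_through_Schur} to serve as a consistency check on the appendix proof of \Cref{thm:F_formula}.
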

Both identities \eqref{eq:F_homogeneous_as_Schur1} and \eqref{eq:G_homogeneous_as_Schur}
readily extend to skew functions thanks to the 
branching rules (\Cref{prop:F_G_branching})
and the fact that Schur and supersymmetric Schur functions form bases.
\begin{proof}[Proof of \Cref{prop:F_G_homogeneous_through_Schur}]
	First, observe that \eqref{eq:F_homogeneous_as_Schur1} immediately
	follows from \eqref{eq:F_homogeneous_as_Schur} and the 
	formula for the Schur polynomial as a ratio of two determinants.

	The claim \eqref{eq:F_homogeneous_as_Schur} about $F_\lambda$ follows from 
	the results of \cite{brubaker2011schur}. 
	This theorem deals with the 
	free fermion six vertex model
	whose weights 
	in the $k$-th row are given by 
	\begin{equation}
		\label{eq:abc_weights_for_BBF}
		\begin{split}
			& w (0, 0; 0, 0) = a_1^{(k)}; \qquad w (1, 0; 1, 0) = b_1^{(k)}; \qquad w (1, 0; 0, 1) = c_2^{(k)}; \\
			& w(1, 1; 1, 1) = a_2^{(k)}; \qquad w (0, 1; 0, 1) = b_2^{(k)}; \qquad w (0, 1; 1, 0) = c_1^{(k)},
		\end{split}
	\end{equation}
	such that 
	$a_1^{(k)}a_2^{(k)}+b_1^{(k)}b_2^{(k)}=c_1^{(k)}c_2^{(k)}$
	for all $k$. Note the swap $c_1 \leftrightarrow c_2$
	in \eqref{eq:abc_weights_for_BBF} compared to our usual conventions 
	\eqref{eq:six_vertex_weights_traditional}
	which is needed to match with \cite{brubaker2011schur}.

	By \cite[Theorem 9]{brubaker2011schur}
	the partition function with the same boundary conditions as for 
	$F_\lambda$ (\Cref{def:F_function}) 
	with the weights \eqref{eq:abc_weights_for_BBF}
	is equal to 
	\begin{equation}
		\label{eq:F_homogeneous_as_Schur_proof1}
		s_{\mu}\left( \frac{b_2^{(1)}}{a_1^{(1)}},\ldots,\frac{b_2^{(N)}}{a_1^{(N)}}  \right)
		\prod_{k=1}^{N}( a_1^{(k)} )^{\mu_1+\lambda_N}c_2^{(k)}
		\prod_{1\le i<j\le N}(a_1^{(j)}a_2^{(i)}+b_1^{(i)}b_2^{(j)}),
	\end{equation}
	where $\mu_i=\lambda_1-\lambda_{N+1-i}$.
	Note that here 
	we needed to flip both the horizontal and the vertical directions 
	for $F_\lambda$ compared to the boundary conditions $\mathfrak{S}_\lambda$
	in \cite{brubaker2011schur}. The extra factors $(a_1^{(k)})^{\lambda_N}$
	come from the fact that our lattice weights for $F_\lambda$ start from 
	horizontal position $1$ on the left boundary.

	Let us rewrite \eqref{eq:F_homogeneous_as_Schur_proof1}
	in a determinantal form, and specialize the weights
	\eqref{eq:abc_weights_for_BBF} to our $\widehat W$ given by \eqref{eq:weights_W_hat}.
	We obtain
	\begin{equation*}
		\begin{split}
			&
			F_\lambda(x_1,\ldots,x_N;1;\mathbf{r};s)
			\\&\hspace{20pt}=
			\det
			\left[ (b_2^{(i)}/a_1^{(i)})^{\lambda_1-\lambda_{N+1-j}+N-j} \right]
			\prod_{k=1}^{N}( a_1^{(k)} )^{\lambda_1}c_2^{(k)}
			(a_1^{(k)})^{N-1}
			\prod_{1\le i<j\le N}
			\frac{a_1^{(j)}a_2^{(i)}+b_1^{(i)}b_2^{(j)}}
			{a_1^{(j)}b_2^{(i)}-a_1^{(i)}b_2^{(j)}}
			\\&\hspace{20pt}=
			\det
			\left[ \left( \frac{1-s^2x_i}{s^2(1-x_i)} \right)^{\lambda_{N+1-j}+j-1} \right]
			\prod_{i=1}^{N}\frac{x_i(r_i^{-2}-1)}{1-x_i}
			\prod_{1\le i<j\le N}
			\frac{x_j-r_i^{-2}x_i}{x_i-x_j}.
		\end{split}
	\end{equation*}
	Replacing $\lambda_{N+1-j}+j-1$ by $\lambda_j+N-j$ in the determinant
	amounts to flipping the signs in all the factors $x_i-x_j$ in the denominator,
	which leads to the desired formula
	\eqref{eq:F_homogeneous_as_Schur}.

	\medskip

	To establish the claim \eqref{eq:G_homogeneous_as_Schur} about $G_\lambda$, take
	identity \eqref{eq:F_G_Cauchy_big_with_F_0_RHS}
	used in the proof of \Cref{thm:F_G_Cauchy_big}:
	\begin{equation*}
		\sum_{\lambda}
		G_{\lambda}(w_1,\ldots,w_M ;1;\boldsymbol\uptheta;s)\,
		\frac{F_{\lambda}(x_1,\ldots,x_N ;1;\mathbf{r};s)}{F_{0^N}(x_1,\ldots,x_N ;1;\mathbf{r};s)}
		=	
		\prod_{i=1}^{N}\prod_{j=1}^{M}
		\frac{x_i-\theta_j^{-2}w_j}{x_i-w_j}.
	\end{equation*}
	Observe that this does not use the explicit formula for $F_\lambda$
	from \Cref{thm:F_formula}. Employing \eqref{eq:F_homogeneous_as_Schur1}, 
	write this identity as
	\begin{equation*}
		\begin{split}
			\sum_{\lambda}G_\lambda(w_1,\ldots,w_M;1;\boldsymbol\uptheta;s)\,s_\lambda(t_1,\ldots,t_N )
			&=
			\prod_{i=1}^{N}\prod_{j=1}^{M}
			\frac
			{1+s^2 ((t_i-1) w_j\theta_j^{-2}- t_i)}
			{1+s^2 (t_i (w_j-1)-w_j)}
			\\&
			=
			\prod_{j=1}^{M}\left( \frac{1-s^2w_j\theta_j^{-2}}{1-s^2w_j} \right)^{N}
			\prod_{i=1}^{N}\prod_{j=1}^{M}
			\frac{1+t_i\frac{s^2(w_j\theta_j^{-2}-1)}{1-s^2w_j\theta_j^{-2}}}{1-t_i\frac{s^2(1-w_j)}{1-s^2w_j}}
			,
		\end{split}
	\end{equation*}
	where we have denoted
	\begin{equation*}
		t_i=\frac{1-s^2 x_i}{s^2(1-x_i)},\qquad \textnormal{so that}
		\qquad 
		x_i=\frac{1-s^2t_i}{s^2(1-t_i)}
	\end{equation*}
	(the map $t_i \leftrightarrow x_i$ is an involution).
	Comparing the previous summation identity with \eqref{eq:supersymm_Schur_definition}
	and using linear independence of the Schur polynomials (i.e., the fact that the 
	coefficients by $s_\lambda(t_1,\ldots,t_N )$ 
	are uniquely determined by the right-hand side),
	we get the claim about $G_\lambda$.
\end{proof}
\begin{corollary}
	\label{cor:F_G_formulas_simpler}
	In the horizontally homogeneous case \eqref{eq:homogeneous_specialization}
	the function $F_\lambda$ is given by the determinantal formula
	\eqref{eq:F_det_formula_in_theorem} of
	\Cref{thm:F_formula}.
\end{corollary}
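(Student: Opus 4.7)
The plan is to carry out a direct verification: specialize the parameters in the right-hand side of the determinantal formula \eqref{eq:F_det_formula_in_theorem} to the homogeneous values $\mathbf{y} = (1,1,\ldots)$ and $\mathbf{s} = (s,s,\ldots)$, and show that the resulting expression coincides with formula \eqref{eq:F_homogeneous_as_Schur} from Proposition \ref{prop:F_G_homogeneous_through_Schur}. Since \eqref{eq:F_homogeneous_as_Schur} has already been established independently via the Brubaker--Bump--Friedberg partition function identity \eqref{eq:F_homogeneous_as_Schur_proof1}, the corollary will follow once the two expressions are matched.

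Concretely, I first compute the homogeneous limit of the building block $\varphi_k$ defined in \eqref{eq:phi_def}. Under $y_j \equiv 1$, $s_j \equiv s$, all $N$ factors in the product telescope into a single power, yielding
\begin{equation*}
\varphi_k(x \mid 1;s) \;=\; \frac{1}{1-x}\left(\frac{x-s^{-2}}{x-1}\right)^k \;=\; \frac{1}{1-x}\left(\frac{1-s^2x}{s^2(1-x)}\right)^k,
\end{equation*}
where in the second equality I simply multiplied numerator and denominator by $-1$ inside the $k$-th power (equivalently by $-s^2/(-s^2)$).

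Next, plugging this into the determinant in \eqref{eq:F_det_formula_in_theorem}, the prefactor $1/(1-x_i)$ depends only on the row index $i$ and can be pulled out, giving
\begin{equation*}
\det\!\left[\varphi_{\lambda_j+N-j}(x_i\mid 1;s)\right]_{i,j=1}^N \;=\; \prod_{i=1}^N \frac{1}{1-x_i}\;\det\!\left[\left(\frac{1-s^2x_i}{s^2(1-x_i)}\right)^{\lambda_j+N-j}\right]_{i,j=1}^N.
\end{equation*}
Multiplying by the prefactor $\prod_i x_i(r_i^{-2}-1)\prod_{i<j}(r_i^{-2}x_i-x_j)/(x_i-x_j)$ from the right-hand side of \eqref{eq:F_det_formula_in_theorem} then produces exactly the right-hand side of \eqref{eq:F_homogeneous_as_Schur}, which by Proposition \ref{prop:F_G_homogeneous_through_Schur} equals $F_\lambda(x_1,\ldots,x_N;1;\mathbf{r};s)$.

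There is no real obstacle: the argument is a one-step algebraic check, and the only point that requires a moment of care is the sign-sensitive rewriting $(x-s^{-2})/(x-1) = (1-s^2x)/(s^2(1-x))$, which ensures that the power $k$ in $\varphi_k$ matches the exponent $\lambda_j+N-j$ in \eqref{eq:F_homogeneous_as_Schur} without introducing a global sign or extra factor of $s$. Once this matching is observed, the corollary is immediate, and the whole proof fits in essentially one displayed equation.
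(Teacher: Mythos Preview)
Your proof is correct and follows essentially the same route as the paper: both compute the homogeneous specialization $\varphi_k(x\mid 1;s)=\frac{1}{1-x}\bigl(\frac{1-s^2x}{s^2(1-x)}\bigr)^k$ and then match the resulting determinantal expression against the independently established formula \eqref{eq:F_homogeneous_as_Schur} of \Cref{prop:F_G_homogeneous_through_Schur}. Your write-up is slightly more detailed in making explicit the extraction of the row factor $1/(1-x_i)$ from the determinant, but the argument is the same.
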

\begin{proof}[Alternative proof]
	This proof does not rely on 
	\Cref{thm:F_formula} proven in \Cref{appA:F}.
	Under \eqref{eq:homogeneous_specialization} we have
	\begin{equation*}
		\varphi_k(x\mid \mathbf{y};\mathbf{s})=\frac{1}{1-x}\left( \frac{1-s^2x}{s^2(1-x)} \right)^k.
	\end{equation*}
	Thus, the claim follows 
	from the first part of
	\Cref{prop:F_G_homogeneous_through_Schur}.
\end{proof}

\begin{remark}
	It should be also possible to derive 
	the determinantal formulas 
	of \Cref{prop:F_G_homogeneous_through_Schur}
	using the Wick formula and Hamiltonian operators 
	for the free fermion six vertex model
	with horizontally homogeneous weights,
	recently studied in 
	\cite{hardt2021lattice}.
\end{remark}

Let us rewrite the explicit formula for $G_\lambda$ 
from \Cref{thm:G_formula} in the homogeneous case. 
Let $\lambda$ be a signature with $N$ parts.
Recall the integer $d=d(\lambda)$ for which $\lambda_d\ge d$
and $\lambda_{d+1}<d+1$.
Let $\tau=(\lambda_1-d,\ldots,\lambda_d-d )$ and $\eta=(\lambda_{d+1},\ldots,\lambda_N )$
be two auxiliary signatures. Let $\eta'$ denote the transposed signature
corresponding to the reflection of the Young diagram of $\eta$ with respect to the diagonal,
cf. \cite[I.(1.3)]{Macdonald1995}. Observe that both $\tau$ and $\eta'$ have $d$ parts.

\begin{proposition}
	\label{prop:G_homogeneous}
	With the notation given before this proposition, we have
	\begin{equation}
		\label{eq:G_homogeneous}
		\begin{split}
			&\prod_{j=1}^{M}
			\left( 
			\frac{1-s^2r_j^{-2}x_j}{1-s^2 x_j}
			\right)^{-N}
			G_{\lambda} (\mathbf{x}; 1; \mathbf{r}; s) 
			\\&\hspace{20pt}
			=
			\sum_{\substack{\mathcal{I},\mathcal{J}\subseteq \left\{ 1,\ldots,M  \right\}\\ |\mathcal{I}|=|\mathcal{J}|=d}}
			s_\tau(\mathsf{x}_\mathcal{J})
			s_{\eta'}(\mathsf{y}_{\mathcal{I}})
			\prod_{\substack{i\in \mathcal{J}^c\\j\in \mathcal{J}}}
			\frac1{(\mathsf{x}_j-\mathsf{x}_i)}
			\prod_{\substack{i\in \mathcal{I}\\j\in \mathcal{I}^c}}
			\frac{1}{(\mathsf{y}_i-\mathsf{y}_j)}
			\prod_{\substack{i\in \mathcal{I}\\ 1\le j\le M}}(\mathsf{x}_j+\mathsf{y}_i)
			\prod_{\substack{i\in \mathcal{I}^c\\j\in \mathcal{J}}}(\mathsf{x}_j+\mathsf{y}_i),
		\end{split}
	\end{equation}
	where
	\begin{equation}\label{eq:x_y_slf_notations_for_G_homogeneous}
		\mathsf{x}_i=\frac{s^2(1-x_i)}{1-s^2x_i},\qquad 
		\mathsf{y}_i=\frac{s^2(x_ir_i^{-2}-1)}{1-s^2r_i^{-2}x_i},
	\end{equation}
	and $\mathsf{x}_\mathcal{J}$, $\mathsf{y}_{\mathcal{I}}$ are subsets of 
	the variables with indices belonging to $\mathcal{J}$ and $\mathcal{I}$, respectively.
\end{proposition}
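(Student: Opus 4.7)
The plan is to specialize the Sergeev--Pragacz formula of \Cref{thm:G_formula} to $y_k\equiv 1,\, s_k\equiv s$, and recognize the two Schur polynomials on the right-hand side of \eqref{eq:G_homogeneous}. Under this specialization, a direct computation rewrites the two inner products appearing in \eqref{eq:G_SP_formula_in_theorem} as
\begin{equation*}
\prod_{h=1}^d \frac{1-s^2}{1-s^2 x_{j_{\rho(h)}}}\,\mathsf{x}_{j_{\rho(h)}}^{\lambda_h-h}
\quad\text{and}\quad
\prod_{m=1}^d \frac{s^2}{1-s^2 r_{i_{\sigma(m)}}^{-2}x_{i_{\sigma(m)}}}\,\mathsf{y}_{i_{\sigma(m)}}^{N-\mu_m},
\end{equation*}
with $\mathsf{x}_i,\mathsf{y}_i$ as in \eqref{eq:x_y_slf_notations_for_G_homogeneous}. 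Since $\tau_h=\lambda_h-d$, the $\mathsf{x}$-exponent equals $\tau_h+d-h$, which is the bialternant exponent for $s_\tau$. For the $\mathsf{y}$-exponent, the identity $N-\mu_m=\eta'_m+d-m$ is the classical Frobenius coordinate identification: the $d$ holes of $\mathcal{S}(\lambda)$ in $\{1,\ldots,N\}$ sit at $N-(\lambda'_m-m)$ for $m=1,\ldots,d$ (both listed in decreasing order of the corresponding value), while $\eta'_m=\lambda'_m-d$ for $m\le d$ since $\lambda_1,\ldots,\lambda_d$ all exceed any such $m$.

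Pulling the $\sigma,\rho$-independent factors out of the double sum and applying the bialternant formula,
\begin{equation*}
\sum_{\rho\in\mathfrak{S}_d}\mathop{\mathrm{sgn}}(\rho)\prod_{h=1}^d \mathsf{x}_{j_{\rho(h)}}^{\tau_h+d-h}
=s_\tau(\mathsf{x}_\mathcal{J})\prod_{\alpha<\beta}(\mathsf{x}_{j_\alpha}-\mathsf{x}_{j_\beta}),
\end{equation*}
and analogously for the $\sigma$-sum, produces the two Schur polynomials $s_\tau(\mathsf{x}_\mathcal{J})$ and $s_{\eta'}(\mathsf{y}_\mathcal{I})$ together with two Vandermonde factors indexed by $\mathcal{J}$ and $\mathcal{I}$. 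The remaining prefactors in \eqref{eq:G_SP_formula_in_theorem} are then rewritten using the three linear-fractional identities
\begin{equation*}
\mathsf{x}_i-\mathsf{x}_j=\frac{s^2(1-s^2)(x_j-x_i)}{(1-s^2x_i)(1-s^2x_j)},\qquad
\mathsf{y}_i-\mathsf{y}_j=\frac{s^2(1-s^2)(r_i^{-2}x_i-r_j^{-2}x_j)}{(1-s^2r_i^{-2}x_i)(1-s^2r_j^{-2}x_j)},
\end{equation*}
\begin{equation*}
\mathsf{x}_j+\mathsf{y}_i=\frac{s^2(1-s^2)(r_i^{-2}x_i-x_j)}{(1-s^2x_j)(1-s^2r_i^{-2}x_i)},
\end{equation*}
which follow by direct simplification from \eqref{eq:x_y_slf_notations_for_G_homogeneous}. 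The Vandermonde factors from the bialternant step then cancel the internal $\mathcal{J}$- and $\mathcal{I}$-differences, leaving precisely the target products $\prod_{i\in\mathcal{J}^c,j\in\mathcal{J}}(\mathsf{x}_j-\mathsf{x}_i)^{-1}$, $\prod_{i\in\mathcal{I},j\in\mathcal{I}^c}(\mathsf{y}_i-\mathsf{y}_j)^{-1}$, and the two $(\mathsf{x}_j+\mathsf{y}_i)$ products appearing in \eqref{eq:G_homogeneous}.

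The main obstacle is the bookkeeping of the scalar factors $[s^2(1-s^2)]^{\pm 1}$, $(1-s^2 x_j)^{\pm 1}$, and $(1-s^2 r_j^{-2}x_j)^{\pm 1}$ spawned by the three identities above, whose exponents depend on which of $\mathcal{I},\mathcal{I}^c,\mathcal{J},\mathcal{J}^c$ each index lies in. They must be combined with the $\sigma,\rho$-independent factors $(1-s^2)^d\prod_{j\in\mathcal{J}}(1-s^2 x_j)^{-1}$ and $s^{2d}\prod_{i\in\mathcal{I}}(1-s^2 r_i^{-2}x_i)^{-1}$ extracted from the inner sums, and with the global prefactor $\prod_{j=1}^M((1-s^2 r_j^{-2}x_j)/(1-s^2x_j))^N$ arising from the specialization of \eqref{eq:G_SP_formula_in_theorem}; after being moved to the left side it yields the factor on the left-hand side of \eqref{eq:G_homogeneous}. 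A careful tally partitioned by $i\in\mathcal{I}$ vs.\ $\mathcal{I}^c$ and $j\in\mathcal{J}$ vs.\ $\mathcal{J}^c$ shows that the net exponent of each $(1-s^2 x_j)$ and each $(1-s^2 r_j^{-2}x_j)$ vanishes, and likewise the total power of $s^2(1-s^2)$ is zero; this verification is routine but is the only delicate step in the derivation.
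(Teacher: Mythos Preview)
Your proposal is correct and follows essentially the same route as the paper's proof: specialize \Cref{thm:G_formula} to $y_k\equiv 1$, $s_k\equiv s$, recognize the inner $\sigma,\rho$-sums as bialternant determinants giving $s_\tau(\mathsf{x}_{\mathcal{J}})$ and $s_{\eta'}(\mathsf{y}_{\mathcal{I}})$ via the exponent identifications $\lambda_h-h=\tau_h+d-h$ and $N-\mu_m=\eta'_m+d-m$, and then convert all remaining factors using the three linear-fractional identities for $\mathsf{x}_i-\mathsf{x}_j$, $\mathsf{y}_i-\mathsf{y}_j$, $\mathsf{x}_j+\mathsf{y}_i$. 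The paper's proof records exactly these ingredients and, like your write-up, leaves the scalar bookkeeping of the $(1-s^2x_j)$, $(1-s^2r_j^{-2}x_j)$, and $s^2(1-s^2)$ factors as a routine check.
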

\begin{proof}
	Using notation \eqref{eq:x_y_slf_notations_for_G_homogeneous},
	we have
	\begin{align*}
		\mathsf{x}_i-\mathsf{x}_j&=
		\frac{s^2(s^2-1)(x_i-x_j)}{(1-s^2 x_i)(1-s^2 x_j)}
		,
		\\
		\mathsf{y}_i-\mathsf{y}_j&=
		\frac{s^2(1-s^2)(r_i^{-2}x_i-r_j^{-2}x_j)}{(1-s^2r_i^{-2}x_i)(1-s^2r_j^{-2}x_j)}
		,
		\\
		\mathsf{x}_i+\mathsf{y}_j&=\frac{s^2(s^2-1)(x_i-r_j^{-2}x_j)}{(1-s^2x_i)(1-s^2r_j^{-2}x_j)}.
	\end{align*}
	With the help of these formulas, we can express all products in \eqref{eq:G_SP_formula_in_theorem}
	through $\mathsf{x}_i-\mathsf{x_j}$, $\mathsf{y_i}-\mathsf{y}_j$, and
	$\mathsf{x}_j+\mathsf{y}_i$. The remaining sum over $\sigma,\rho\in \mathsf{S}_d$ 
	turns into a product of determinants leading to Schur polynomials in $d$ variables.
	This is due to the facts that in the homogeneous case \eqref{eq:homogeneous_specialization}
	we have
	\begin{equation*}
		\prod_{i = N + 1}^{\ell_h - 1}
			\frac{s_i^2 \big(y_i - x_{j_{\rho (h)}} \big)}{y_i - s_i^2
			x_{j_{\rho (h)}}}
			=
			\mathsf{x}_{j_{\rho(h)}}^{\lambda_h-h}=
			\mathsf{x}_{j_{\rho(h)}}^{\tau_h+d-h}
			,
			\quad 
			\prod_{k =
			\mu_m + 1}^N 
			\frac{s_k^2 \big(r^{-2}_{i_{\sigma (m)}}
			x_{i_{\sigma (m)}} - y_k \big)}{y_k - s_k^2 r^{-2}_{i_{\sigma (m)}}
			x_{i_{\sigma (m)}}}
			=
			\mathsf{y}_{i_{\sigma(m)}}^{N-\mu_m}
			=
			\mathsf{y}_{i_{\sigma(m)}}^{\eta_m'+d-m}.
	\end{equation*}
	In the last equality we used the notation $\mu$ \eqref{eq:mu_depending_on_lambda_notation} and
	an observation that $N-\mu_m=\lambda_m'-m=\eta_m'+d-m$.
	This completes the proof.
\end{proof}

Combining the second part of \Cref{prop:F_G_homogeneous_through_Schur} with
\Cref{prop:G_homogeneous}, we arrive at the following formula
for supersymmetric Schur polynomials:
\begin{corollary}
	\label{cor:super_Schur_formula}
	With the notation given before \Cref{prop:G_homogeneous}, we have
	\begin{equation}
		\label{eq:super_Schur_formula}
		\begin{split}
			&s_\lambda(\mathsf{x}_1,\ldots,\mathsf{x}_M/\mathsf{y}_1,\ldots,\mathsf{y}_M  )
			\\&\hspace{10pt}=
			\sum_{\substack{\mathcal{I},\mathcal{J}\subseteq \left\{ 1,\ldots,M  \right\}\\ |\mathcal{I}|=|\mathcal{J}|=d}}
			s_\tau(\mathsf{x}_\mathcal{J})
			s_{\eta'}(\mathsf{y}_{\mathcal{I}})
			\prod_{\substack{i\in \mathcal{J}^c\\j\in \mathcal{J}}}
			\frac1{(\mathsf{x}_j-\mathsf{x}_i)}
			\prod_{\substack{i\in \mathcal{I}\\j\in \mathcal{I}^c}}
			\frac{1}{(\mathsf{y}_i-\mathsf{y}_j)}
			\prod_{\substack{i\in \mathcal{I}\\ 1\le j\le M}}(\mathsf{x}_j+\mathsf{y}_i)
			\prod_{\substack{i\in \mathcal{I}^c\\j\in \mathcal{J}}}(\mathsf{x}_j+\mathsf{y}_i).
		\end{split}
	\end{equation}
\end{corollary}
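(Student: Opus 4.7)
The plan is to simply equate the two expressions for $G_\lambda$ in the horizontally homogeneous specialization that have just been established. From the second part of \Cref{prop:F_G_homogeneous_through_Schur}, under the specialization $y_j \equiv 1$, $s_j \equiv s$ we have
\begin{equation*}
	G_\lambda(\mathbf{x};1;\mathbf{r};s)
	=
	s_\lambda\bigl( \mathsf{x}_1,\ldots,\mathsf{x}_M / \mathsf{y}_1,\ldots,\mathsf{y}_M \bigr)
	\prod_{i=1}^{M}\left( \frac{1-s^2 r_i^{-2} x_i}{1-s^2 x_i} \right)^{N},
\end{equation*}
where the variables $\mathsf{x}_i,\mathsf{y}_i$ are precisely those of \eqref{eq:x_y_slf_notations_for_G_homogeneous}. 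On the other hand, \Cref{prop:G_homogeneous} rewrites the left-hand side of \eqref{eq:G_homogeneous}, i.e.\ the same $G_\lambda(\mathbf{x};1;\mathbf{r};s)$ divided by the very same product $\prod_{j=1}^{M}\bigl(\frac{1-s^2 r_j^{-2} x_j}{1-s^2 x_j}\bigr)^{N}$, as the double sum over subsets $\mathcal{I},\mathcal{J}\subseteq\{1,\ldots,M\}$ of size $d$.

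Substituting the first expression into the left-hand side of the identity of \Cref{prop:G_homogeneous}, the prefactor $\prod_{i=1}^{M}\bigl(\frac{1-s^2 r_i^{-2} x_i}{1-s^2 x_i}\bigr)^{N}$ cancels exactly, and what remains on the left is $s_\lambda(\mathsf{x}_1,\ldots,\mathsf{x}_M / \mathsf{y}_1,\ldots,\mathsf{y}_M)$. The right-hand side is unchanged, so we obtain \eqref{eq:super_Schur_formula}.

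There is essentially nothing else to verify: the only point to check is that the variables $\mathsf{x}_i,\mathsf{y}_i$ appearing in \Cref{prop:F_G_homogeneous_through_Schur} and in \Cref{prop:G_homogeneous} are defined by the same formulas, which they are by construction. The work has already been done in proving those two propositions — \Cref{prop:F_G_homogeneous_through_Schur} via the Brubaker--Bump--Friedberg partition function identity together with linear independence of Schur polynomials, and \Cref{prop:G_homogeneous} via specialization of the Sergeev--Pragacz type formula of \Cref{thm:G_formula}. Hence the proof of the corollary itself is purely a matter of matching and cancellation, with no real obstacle; the interest lies in the resulting combinatorial identity, which expresses the supersymmetric Schur polynomial as a sum indexed by ordered pairs of $d$-subsets of $\{1,\ldots,M\}$ where $d = d(\lambda)$ is the size of the Durfee square of $\lambda$, and which does not appear to reduce in an obvious way to the classical Sergeev--Pragacz formula.
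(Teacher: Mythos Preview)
Your proof is correct and matches the paper's approach exactly: the corollary is obtained simply by combining the second part of \Cref{prop:F_G_homogeneous_through_Schur} with \Cref{prop:G_homogeneous}, cancelling the common prefactor $\prod_{i=1}^{M}\bigl(\frac{1-s^2 r_i^{-2} x_i}{1-s^2 x_i}\bigr)^{N}$. The paper itself does not write out a proof beyond this one-line remark.
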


For $d=M$, formula \eqref{eq:super_Schur_formula} coincides with the Berele--Regev formula 
\cite{BereleRegev}. The latter provides an expression for supersymmetric Schur polynomials
in this special case $d(\lambda)=M$:
\begin{equation*}
	s_\lambda(\mathsf{x}_1,\ldots,\mathsf{x}_M \,/\, \mathsf{y}_1,\ldots,\mathsf{y}_M  )
	=
	s_\tau(\mathsf{x}_1,\ldots,\mathsf{x}_M )\,
	s_{\eta'}(\mathsf{y}_1,\ldots,\mathsf{y}_M )
	\prod_{i,j=1}^{M}(\mathsf{x}_i+\mathsf{y}_j).
\end{equation*}

\begin{remark}
	\label{rmk:SP_formula_comparison}
	In the general case $d(\lambda)<M$, a formula like
	\eqref{eq:super_Schur_formula} for factorial Grothendieck
	polynomials was proven using integrable lattice models in
	\cite{motegi2020integrability}. See also
	\cite{feher2012equivariant}, \cite{guo2019identities} for
	special cases. We remark that our identity
	\eqref{eq:super_Schur_formula} generalizes the ones in
	\cite{feher2012equivariant}, \cite{guo2019identities} in a
	different direction than what is shown in
	\cite{motegi2020integrability}.

	Moreover, there does not seem to be a direct
	relation between
	\Cref{cor:super_Schur_formula} and other known formulas for supersymmetric Schur polynomials, 
	including the Sergeev--Pragacz formula (e.g., see \cite[(5)]{hamel1995lattice}) and 
	the Moens--Van der Jeugt determinantal formula
	\cite{moens2003determinantal}.
\end{remark}

\section{Biorthogonality and contour integral formulas}
\label{sec:biorthogonality}

Here we discuss torus-like biorthogonality property for
the functions $F_\lambda$, and employ it to derive 
integral and determinantal formulas for the functions $G_\lambda$.

\subsection{Biorthogonality}
\label{sub:F_G_orthogonality}

The functions $F_\lambda(\mathbf{x};\mathbf{y};\mathbf{r};\mathbf{s})$ satisfy a certain biorthogonality 
property with respect to contour integration in the $\mathbf{x}$ variables.
This biorthogonality extends the torus orthogonality of Schur polynomials
as irreducible characters of unitary groups. 
To get the biorthogonality, we use the determinantal formula for $F_\lambda$
of \Cref{thm:F_formula}. 

Fix an integer $N\ge1$ and sequences of complex 
parameters $\mathbf{y}=(y_1,y_2,\ldots )$, $\mathbf{s}=(s_1,s_2,\ldots )$,
$\mathbf{x}=(x_1,\ldots,x_N )$, $\mathbf{r}=(r_1,\ldots,r_N )$.
Recall the functions
$\varphi_k(x\mid \mathbf{y};\mathbf{s})$ defined in \eqref{eq:phi_def}.
Let us also define
\begin{equation}
	\label{eq:psi_def}
	\psi_k(x)=\psi_k(x\mid \mathbf{y};\mathbf{s}):=
	\frac{y_{k+1}(s_{k+1}^2-1)}{y_{k+1}-s^2_{k+1}x}\,
	\prod_{j=1}^{k}
	\frac{s_j^2(y_j-x)}{y_j-s_j^2x}, \qquad k\ge0.
\end{equation}
In particular, $\psi_0(x\mid \mathbf{y},\mathbf{s})=\frac{y_1(s_1^2-1)}{y_1-s_1^2x}$.

\begin{lemma}
	\label{lemma:phi_psi_orthogonal}
	We have for all $k,l\ge 0$:
	\begin{equation}
		\label{eq:phi_psi_orthogonal}
		\frac{1}{2\pi\mathbf{i}}\oint_{\gamma}
		\varphi_k(z\mid \mathbf{y}, \mathbf{s})\,
		\psi_l(z\mid \mathbf{y},\mathbf{s})\,dz
		=
		\mathbf{1}_{k=l},
	\end{equation}
	where 
	$\gamma$ is a
	closed counterclockwise simple contour
	in the complex plane containing the points $y_j$ for all $j\ge1$
	and not $y_js_j^{-2}$ for all $j\ge1$,
	and
	$\mathbf{1}_{k=l}$ is the indicator that $k=l$ (i.e., the Kronecker delta).
\end{lemma}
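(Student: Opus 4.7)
The plan is to compute the integrand $\varphi_k(z)\,\psi_l(z)$ explicitly and then to analyze its poles relative to the contour $\gamma$. First, I would rewrite $\psi_l$ in a form whose factors match those of $\varphi_k$: using $y_j - s_j^2 x = -s_j^2(x - s_j^{-2}y_j)$ and $y_j - x = -(x - y_j)$, one obtains the cleaner expression
\begin{equation*}
\psi_l(x\mid\mathbf{y};\mathbf{s}) = \frac{y_{l+1}(s_{l+1}^{-2}-1)}{x - s_{l+1}^{-2}y_{l+1}}\prod_{j=1}^{l}\frac{x-y_j}{x-s_j^{-2}y_j}.
\end{equation*}
Combined with the analogous factored form of $\varphi_k$, the product $\varphi_k(x)\psi_l(x)$ will exhibit systematic cancellation between the numerators of one function and the denominators of the other, depending on $\min(k,l)$.

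Next I would split into the three cases $k=l$, $k<l$, $k>l$. In the diagonal case $k=l$, all the $\prod_{j=1}^{k}$ factors cancel, leaving
\begin{equation*}
\varphi_k(x)\psi_k(x) = \frac{-y_{k+1}(s_{k+1}^{-2}-1)}{(x-y_{k+1})(x-s_{k+1}^{-2}y_{k+1})},
\end{equation*}
whose only pole inside $\gamma$ is at $x=y_{k+1}$; a one-line residue computation (with denominator $y_{k+1}(1-s_{k+1}^{-2})$) yields exactly $1$.

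For $k<l$, after cancellation the surviving factor $(x-y_{k+1})$ in the denominator of $\varphi_k$ gets killed by the matching $(x-y_{k+1})$ coming from $\prod_{j=k+1}^{l}(x-y_j)$ in $\psi_l$, so every remaining pole of $\varphi_k\psi_l$ is of the form $s_j^{-2}y_j$ for some $j\in\{k+1,\ldots,l+1\}$; these all lie outside $\gamma$, giving integral $0$. For $k>l$, the symmetric cancellation makes every remaining pole of the form $y_j$ for $j\in\{l+1,\ldots,k+1\}$, all inside $\gamma$; here I would instead compute the integral via the residue at infinity. A quick degree count (numerator has $k-l-1$ $x$-factors, denominator has $k-l+1$) shows $\varphi_k(x)\psi_l(x) = O(x^{-2})$ as $x\to\infty$, so the residue at infinity vanishes and, by the residue theorem applied on the Riemann sphere, the integral again equals~$0$.

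I do not anticipate any serious obstacle: the entire argument is bookkeeping plus one residue calculation. The only care needed is to confirm that after cancellation the pole structure really is as claimed in each case (in particular that the $(x-y_{k+1})$ cancellation in Case 2 and the $(x-s_{l+1}^{-2}y_{l+1})$ cancellation in Case 3 go through), and to verify the $O(x^{-2})$ decay in Case 3 by carefully counting leading-order factors rather than just nominal degrees.
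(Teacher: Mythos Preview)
Your proposal is correct and follows essentially the same approach as the paper: both arguments proceed by analyzing the pole structure of $\varphi_k\psi_l$ in the three cases $k<l$, $k>l$, $k=l$, using that for $k<l$ all poles lie outside $\gamma$, for $k>l$ all poles lie inside and the $O(z^{-2})$ decay at infinity forces the integral to vanish, and for $k=l$ a single residue at $y_{k+1}$ gives~$1$. The only cosmetic difference is that the paper records the $O(z^{-1})$ behavior of each factor upfront rather than doing your degree count in Case~3.
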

\begin{remark}
	\label{rmk:formal_integrals}
	Here and below in this section 
	we assume that the parameters (here, sequences $\mathbf{y}$ and $\mathbf{s}$)
	are such that the integration contour exists.
	Alternatively,
	one may also think of the integration \emph{formally} as the 
	sum of residues at all the points $y_j$, $j\ge1$, which the contour
	must encircle.
\end{remark}
\begin{proof}[Proof of \Cref{lemma:phi_psi_orthogonal}]
	First, observe that at 
	$z=\infty$ both $\varphi_k(z),\psi_l(z)$
	behave as $\mathrm{const}\cdot z^{-1}$
	for all $k,l\ge0$. 
	For $k<l$, 
	the product $\varphi_k(z)\psi_l(z)$ 
	has only factors of the form $y_j-s_j^2z$ in the denominator, and thus
	has no poles 
	inside the integration contour $\gamma$. Therefore, the 
	integral \eqref{eq:phi_psi_orthogonal} vanishes for $k<l$.
	For $k>l$, the 
	product $\varphi_k(z)\psi_l(z)$ 
	has at least two factors of the form $z-y_j$ 
	and no factors of the form $y_j-s_j^2z$ in the denominator.
	Therefore, the integrand is regular outside the contour, so
	the 
	integral \eqref{eq:phi_psi_orthogonal} vanishes for $k>l$ as well.
	Finally, for $k=l$ we have
	\begin{equation*}
		\varphi_k(z)\psi_k(z)=
		\frac{1}{z-y_{k+1}}\cdot \frac{y_{k+1}(1-s_{k+1}^2)}{y_{k+1}-s_{k+1}^2z},
	\end{equation*}
	and the claim immediately follows.
\end{proof}

Using \Cref{lemma:phi_psi_orthogonal}, define the following counterparts of the functions
$F_\lambda$:
\begin{equation}
	\label{eq:F_star_det_definition}
	F_\lambda^*(\mathbf{x};\mathbf{y};\mathbf{r};\mathbf{s}):=
	\det\left[ \psi_{\lambda_j+N-j}(x_i\mid \mathbf{y};\mathbf{s}) \right]_{i,j=1}^{N}
	\prod_{N\ge i>j\ge 1}\frac{x_j-r_i^{-2}x_i}{x_j-x_i},
\end{equation}
where $\lambda$ is a signature with $N$ parts.
\begin{remark}
	In the horizontally homogeneous case $s_j \equiv s$, $y_j\equiv 1$,
	the functions $F_\lambda^*$ are almost the same as the $F_\lambda$'s, up to a factor and a 
	change of variables:
	\begin{multline*}
		F_\lambda^*(s^{-2}/x_1,\ldots,s^{-2}/x_N;1;\mathbf{r};s)
		\\=
		\frac{(1-s^2)^N (s^2)^{|\lambda|+N(N-1)/2}}{\prod_{i=1}^{N}(r_i^{-2}-1)}
		\prod_{1\le i<j\le N}
		\frac{r_j^{-2}x_i-x_j}{r_i^{-2}x_i-x_j}
		\,
		F_\lambda(x_1,\ldots, x_N;\mathbf{y};\mathbf{r};\mathbf{s}).
	\end{multline*}
	However, in general the $F_\lambda^*$'s cannot be expressed through the $F_\lambda$'s.
\end{remark}

\begin{proposition}
	\label{prop:F_F_star_orthogonality}
	For any signatures $\lambda,\mu$ with $N$ parts we have
	\begin{equation*}
		\frac{1}{N!(2\pi\mathbf{i})^{N}}
		\oint_\gamma dz_1
		\ldots 
		\oint_\gamma dz_N
		\,
		\frac{\prod_{1 \le i\ne j\le N}(z_i-z_j)}{\prod_{i,j=1}^{N}(r_i^{-2}z_i-z_j)}
		\,
		F_\lambda(\mathbf{z};\mathbf{y};\mathbf{r};\mathbf{s} )
		\,
		F_\mu^*(\mathbf{z};\mathbf{y};\mathbf{r};\mathbf{s} )
		=\mathbf{1}_{\lambda=\mu},
	\end{equation*}
	where the integration is over the variables
	$\mathbf{z}=(z_1,\ldots,z_N )$ belonging to the torus $\gamma^N$,
	and $\gamma$ is a contour around $y_j$ not encircling $y_js_j^{-2}$, $j\ge1$.
	Note that the integrand has no poles at $z_j=r_i^{-2}z_i$ for any $i,j$.
\end{proposition}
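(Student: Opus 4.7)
The plan is to substitute the determinantal formulas of Theorem~3.x (for $F_\lambda$) and definition~\eqref{eq:F_star_det_definition} (for $F_\mu^*$) directly into the integrand, watch the Cauchy/Vandermonde-style prefactors cancel against the measure, and then reduce to a single-variable computation via the Andr\'eief identity and Lemma~\ref{lemma:phi_psi_orthogonal}.

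First, I would collect the three products of factors that depend on $(z_i, r_i)$ but not on $\lambda, \mu$. From $F_\lambda$ one gets
$\prod_i z_i(r_i^{-2}-1) \prod_{i<j}\tfrac{r_i^{-2}z_i - z_j}{z_i - z_j},$
and from $F_\mu^*$ one gets $\prod_{i>j}\tfrac{z_j - r_i^{-2}z_i}{z_j - z_i}$; after relabeling, the latter equals $\prod_{i>j}\tfrac{r_i^{-2}z_i - z_j}{z_i - z_j}$, so together they give $\prod_{i \ne j}\tfrac{r_i^{-2}z_i - z_j}{z_i - z_j}$. Combining this with the integration-measure prefactor $\tfrac{\prod_{i\ne j}(z_i-z_j)}{\prod_{i,j}(r_i^{-2}z_i - z_j)}$ collapses everything except the $i=j$ terms in the denominator, leaving $\prod_i \tfrac{1}{(r_i^{-2}-1)z_i}$. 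This exactly cancels the remaining $\prod_i z_i(r_i^{-2}-1)$ from $F_\lambda$. Hence, up to the factor $\tfrac{1}{N!(2\pi\mathbf{i})^N}$, the full integrand reduces to the clean product of determinants
\begin{equation*}
\det\!\bigl[\varphi_{\lambda_j+N-j}(z_i \mid \mathbf{y}; \mathbf{s})\bigr]_{i,j=1}^{N}\,\det\!\bigl[\psi_{\mu_k+N-k}(z_i \mid \mathbf{y}; \mathbf{s})\bigr]_{i,k=1}^{N}.
\end{equation*}

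Next I would apply Andr\'eief's identity, which turns an $N$-fold integral of a product of two $N\times N$ determinants over $z_1,\dots,z_N$ into a single $N\times N$ determinant whose $(j,k)$-entry is $\tfrac{1}{2\pi\mathbf{i}}\oint_\gamma \varphi_{\lambda_j+N-j}(z)\,\psi_{\mu_k+N-k}(z)\,dz$. By Lemma~\ref{lemma:phi_psi_orthogonal} this entry equals $\mathbf{1}_{\lambda_j - j = \mu_k - k}$.

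Finally, since the sequences $\{\lambda_j + N - j\}$ and $\{\mu_k + N - k\}$ are both strictly decreasing (as $\lambda,\mu$ are signatures), the resulting matrix is a $0/1$ permutation-type matrix: it is the identity exactly when the two sequences coincide (equivalently, $\lambda = \mu$), and has at least one zero row otherwise. Hence the determinant is $\mathbf{1}_{\lambda=\mu}$, completing the proof.

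The only real bookkeeping step is the cancellation of the $r,z$-prefactors described in the first paragraph; beyond that the argument is a routine application of Andr\'eief together with the single-variable biorthogonality already established. A minor point of care is that the contour $\gamma$ used in \eqref{eq:phi_psi_orthogonal} is the \emph{same} contour used here, so one does not need to deform contours, and the integrand has no spurious poles on $\gamma^N$ (in particular no poles at $z_j = r_i^{-2}z_i$, as noted in the statement).
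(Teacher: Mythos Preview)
Your proposal is correct and follows essentially the same route as the paper: cancel the $(z_i,r_i)$-prefactors to reduce the integrand to $\det[\varphi_{\lambda_j+N-j}(z_i)]\det[\psi_{\mu_k+N-k}(z_i)]$, then pair the determinants via single-variable integrals and invoke Lemma~\ref{lemma:phi_psi_orthogonal}. The only cosmetic difference is that you package the second step as Andr\'eief's identity, whereas the paper expands both determinants as signed sums over $\sigma,\tau\in\mathfrak{S}_N$ and observes that only $\sigma=\tau$ (and $\lambda=\mu$) survives; these are the same computation.
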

\begin{proof}
	This proof is similar to the well-known proof of torus orthogonality of the Schur polynomials. 
	Cancel out the prefactors, and
	expand the determinants in $F_\lambda$ and $F_\mu^*$
	as sums over permutations:
	\begin{equation*}
		\begin{split}
			&\frac{1}{N!(2\pi\mathbf{i})^{N}}
			\oint_\gamma dz_1
			\ldots 
			\oint_\gamma dz_N
			\,
			\frac{\prod_{1\le i\ne j\le N}(z_i-z_j)}{\prod_{i,j=1}^{N}(r_i^{-2}z_i-z_j)}
			\,
			F_\lambda(\mathbf{z};\mathbf{y};\mathbf{r};\mathbf{s} )
			\,
			F_\mu^*(\mathbf{z};\mathbf{y};\mathbf{r};\mathbf{s} )
			\\
			&\hspace{20pt}=
			\frac{1}{N!(2\pi\mathbf{i})^{N}}
			\oint_\gamma dz_1
			\ldots 
			\oint_\gamma dz_N
			\,
			\det\left[ \varphi_{\lambda_j+N-j}(z_i\mid \mathbf{y};\mathbf{s}) \right]_{i,j=1}^{N}
			\det\left[ \psi_{\mu_j+N-j}(z_i\mid \mathbf{y};\mathbf{s}) \right]_{i,j=1}^{N}
			\\&\hspace{20pt}=
			\frac{1}{N!}
			\sum_{\sigma,\tau\in \mathfrak{S}_N}
			\mathop{\mathrm{sgn}}(\sigma\tau)
			\prod_{i=1}^{N}
			\frac{1}{2\pi\mathbf{i}}
			\oint_\gamma
			\varphi_{\lambda_{\sigma(i)}+N-\sigma(i)}(z_i)\,
			\psi_{\mu_{\tau(i)}+N-\tau(i)}(z_i)\,
			dz_i.
		\end{split}
	\end{equation*}
	Using \Cref{lemma:phi_psi_orthogonal}, we see that the product of the integrals is 
	nonzero only if $\sigma=\tau$ and $\lambda=\mu$. When the integral is nonzero, it is equal to $1$.
	Summing
	$N!$ terms
	corresponding to each $\sigma=\tau\in \mathfrak{S}_N$, we get the result.
\end{proof}

\subsection{Contour integral formula for \texorpdfstring{$F_{\lambda/\mu}$}{F}}
\label{sub:contour_int_F}

Using the branching rule (\Cref{prop:F_G_branching})
and the biorthogonality (\Cref{prop:F_F_star_orthogonality}),
we are able to get contour integral formulas for the functions $F_{\lambda/\mu}$.

Fix $N,M\ge1$, and signatures $\lambda$ with $N+M$ parts and $\mu$ with $M$ parts. 
Furthermore, fix sequences of complex numbers
\begin{equation*}
	\mathbf{x}=(x_1,\ldots,x_N ),
	\qquad 
	\mathbf{r}=(r_1,\ldots,r_N ),
	\qquad 
	\mathbf{y}=(y_1,y_2,\ldots, ),
	\qquad 
	\mathbf{s}=(s_1,s_2,\ldots ).
\end{equation*}

\begin{proposition}
	\label{prop:skew_F_contour}
	With the above notation, we have
	\begin{equation}
		\label{eq:skew_F_contour}
		\begin{split}
			&
			F_{\lambda/\mu}(\mathbf{x};\mathbf{y};\mathbf{r};\mathbf{s})
			=
			\prod_{i=1}^{N}x_i(r_i^{-2}-1)
			\prod_{1\le i<j\le N}\frac{r_i^{-2}x_i-x_j}{x_i-x_j}
			\\&\hspace{40pt}\times\frac{1}{M!(2\pi\mathbf{i})^{M}}
			\oint_\gamma dz_1
			\ldots
			\oint_\gamma dz_M
			\prod_{i=1}^{N}\prod_{j=1}^{M}\frac{z_j-r_i^{-2}x_i}{z_j-x_i}
			\\&\hspace{100pt}\times
			\det[\varphi_{\lambda_j+N+M-j}
			( (\mathbf{x}\cup \mathbf{z})_i\mid \mathbf{y};\mathbf{s})]_{i,j=1}^{N+M}
			\det\left[ \psi_{\mu_j+M-j}(z_i\mid \mathbf{y};\mathbf{s}) \right]_{i,j=1}^{M},
		\end{split}
	\end{equation}
	where 
	\begin{equation*}
		(\mathbf{x}\cup\mathbf{z})_i=\begin{cases}
			x_i,&1\le i\le M;\\
			z_{i-M},&M+1\le i\le M+N,
		\end{cases}
	\end{equation*}
	the integration is over 
	the torus $\gamma^M$,
	and $\gamma$ is a contour around $y_j$ not encircling $y_js_j^{-2}$, $j\ge1$.
	Per \Cref{rmk:formal_integrals}, 
	we either assume that the contour $\gamma$ exists, or 
	treat the integral
	formally.
\end{proposition}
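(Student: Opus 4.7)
The plan is to combine three ingredients established earlier: the branching rule (\Cref{prop:F_G_branching}), the biorthogonality of \Cref{prop:F_F_star_orthogonality}, and the double alternant formula (\Cref{thm:F_formula}). Introduce $M$ auxiliary parameters $\mathbf{t}=(t_1,\ldots,t_M)$, apply branching to split off the $\mathbf{z}$-variables, invert via biorthogonality to extract $F_{\lambda/\mu}$, and finally substitute the explicit determinantal expressions. The output must be independent of $\mathbf{t}$, so the bulk of the work is in verifying this cancellation.

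Concretely, first I would use the second identity of \Cref{prop:F_G_branching} with the two blocks $(\mathbf{x},\mathbf{r})$ and $(\mathbf{z},\mathbf{t})$ to write
\begin{equation*}
F_\lambda(\mathbf{x}\cup\mathbf{z};\mathbf{y};\mathbf{r}\cup\mathbf{t};\mathbf{s})=\sum_{\nu}F_{\lambda/\nu}(\mathbf{x};\mathbf{y};\mathbf{r};\mathbf{s})\,F_\nu(\mathbf{z};\mathbf{y};\mathbf{t};\mathbf{s}),
\end{equation*}
where the sum runs over signatures $\nu$ with $M$ parts. Next, I multiply both sides by the biorthogonality kernel
$\prod_{1\le i\ne j\le M}(z_i-z_j)\big/\prod_{i,j=1}^{M}(t_i^{-2}z_i-z_j)$
together with $F_\mu^{*}(\mathbf{z};\mathbf{y};\mathbf{t};\mathbf{s})$, and integrate over $\mathbf{z}\in\gamma^M$. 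By \Cref{prop:F_F_star_orthogonality} only the $\nu=\mu$ term survives, giving
\begin{equation*}
F_{\lambda/\mu}(\mathbf{x};\mathbf{y};\mathbf{r};\mathbf{s})=\frac{1}{M!(2\pi\mathbf{i})^{M}}\oint_\gamma\cdots\oint_\gamma\frac{\prod_{i\ne j}(z_i-z_j)}{\prod_{i,j=1}^{M}(t_i^{-2}z_i-z_j)}\,F_\lambda(\mathbf{x}\cup\mathbf{z};\mathbf{r}\cup\mathbf{t})\,F_\mu^{*}(\mathbf{z};\mathbf{t})\,d\mathbf{z}.
\end{equation*}

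At this point I substitute \Cref{thm:F_formula} for $F_\lambda(\mathbf{x}\cup\mathbf{z};\mathbf{r}\cup\mathbf{t})$ and the definition \eqref{eq:F_star_det_definition} of $F_\mu^{*}$; the two determinants produced are exactly those in \eqref{eq:skew_F_contour}. It remains to show that the ratio of scalar prefactors reduces to $\prod_i x_i(r_i^{-2}-1)\,\prod_{i<j}\tfrac{r_i^{-2}x_i-x_j}{x_i-x_j}\,\prod_{i,j}\tfrac{z_j-r_i^{-2}x_i}{z_j-x_i}$, independent of $\mathbf{t}$. Split the Vandermonde-type product in \eqref{eq:F_det_formula_in_theorem} into three pieces — pairs within $\mathbf{x}$, pairs within $\mathbf{z}$, and mixed pairs — and collect all $\mathbf{t}$-factors:
\begin{equation*}
\prod_{j=1}^{M}z_j(t_j^{-2}-1)\,\prod_{1\le i<j\le M}\!(t_i^{-2}z_i-z_j)\cdot\frac{1}{\prod_{i,j=1}^{M}(t_i^{-2}z_i-z_j)}\cdot\prod_{1\le j<i\le M}\!(z_j-t_i^{-2}z_i).
\end{equation*}
Merging the two partial products over pairs $i\ne j$ cancels the full double product up to the diagonal terms $t_i^{-2}z_i-z_i=z_i(t_i^{-2}-1)$, which in turn cancel the leading $\prod z_j(t_j^{-2}-1)$; only a sign $(-1)^{M(M-1)/2}$ remains, and this sign is absorbed by rewriting the mixed $\mathbf{x}$–$\mathbf{z}$ factors $\prod(r_i^{-2}x_i-z_j)/(x_i-z_j)$ as $\prod(z_j-r_i^{-2}x_i)/(z_j-x_i)$. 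What is left is precisely the claimed prefactor.

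The main obstacle will be the last bookkeeping step: the products in \Cref{thm:F_formula} and in $F_\mu^{*}$ are not symmetric in the pairs $(i,j)$, so one must orient each factor carefully and keep track of signs when reindexing. One also has to justify that the integration contour $\gamma$ indeed separates $\{y_j\}$ from $\{s_j^{-2}y_j\}$ uniformly for the combined argument $\mathbf{z}$, so that the biorthogonality of \Cref{prop:F_F_star_orthogonality} applies termwise; this is automatic under the assumption of \Cref{rmk:formal_integrals} and, if needed, follows formally by residue expansion as indicated there.
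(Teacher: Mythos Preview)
Your approach is exactly the one in the paper: branching to split off the $\mathbf{z}$-block, biorthogonality (\Cref{prop:F_F_star_orthogonality}) to isolate the $\nu=\mu$ term, then substitution of the determinantal formulas for $F_\lambda$ and $F_\mu^{*}$. The paper's proof is in fact terser than yours and simply asserts that the prefactors combine to the displayed form.

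One small slip in your bookkeeping: the leftover sign $(-1)^{M(M-1)/2}$ from the $\mathbf{t}$-cancellation is \emph{not} absorbed by rewriting the mixed factors $\frac{r_i^{-2}x_i-z_j}{x_i-z_j}$ as $\frac{z_j-r_i^{-2}x_i}{z_j-x_i}$ (each such rewrite contributes $(-1)/(-1)=1$). Rather, it cancels against the equal sign coming from the pure $\mathbf{z}$-Vandermonde pieces you omitted from your displayed product, namely $\prod_{i\ne j}(z_i-z_j)$ from the biorthogonality weight against $\prod_{i<j}(z_i-z_j)^{-1}$ from $F_\lambda$ and $\prod_{i>j}(z_j-z_i)^{-1}$ from $F_\mu^{*}$. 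With that correction the prefactor comes out as stated.
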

\begin{proof}
	Throughout the proof we use the notation 
	$\mathbf{z}=(z_1,\ldots,z_M)$
	and 
	$\boldsymbol\uptheta=(\theta_1,\ldots,\theta_M )$.
	We have 
	from the branching rule (\Cref{prop:F_G_branching})
	\begin{equation*}
		F_\lambda(\mathbf{x}\cup \mathbf{z};\mathbf{y};
		\mathbf{r}\cup \boldsymbol\uptheta;\mathbf{s})=
		\sum_{\nu}
		F_{\lambda/\nu}(\mathbf{x};\mathbf{y};\mathbf{r};\mathbf{s})
		\,
		F_\nu(\mathbf{z};\mathbf{y};\boldsymbol\uptheta;\mathbf{s})
		,
	\end{equation*}
	where the sum is over all signatures with $M$ parts.
	Multiply this (finite) sum by
	\begin{equation*}
		\begin{split}
			&
			\frac{1}{M!(2\pi\mathbf{i})^{M}}\,
			F_\mu^*(\mathbf{z};\mathbf{y};\boldsymbol\uptheta;\mathbf{s})\,
			\frac{\prod_{1 \le i\ne j\le M}(z_i-z_j)}{\prod_{i,j=1}^{M}(\theta_i^{-2}z_i-z_j)}
			\\&\hspace{90pt}=
			\frac{1}{M!(2\pi\mathbf{i})^{M}}\,
			\frac{\prod_{i<j}(z_i-z_j)}{\prod_{i\le j}(\theta_i^{-2}z_i-z_j)}
			\det\left[ \psi_{\mu_j+M-j}(z_i\mid \mathbf{y};\mathbf{s}) \right]_{i,j=1}^{M}
		\end{split}
	\end{equation*}
	and integrate over $\mathbf{z}=(z_1,\ldots,z_M )\in\gamma^M$,
	where $\gamma$ is a positively oriented contour 
	around all $y_j$ and not encircling $y_js_j^{-2}$, $j\ge1$.
	The integration extracts the single coefficient by
	$F_\nu$ with $\mu=\nu$, which together with 
	the determinantal formulas for $F_\lambda$ \eqref{eq:F_det_formula_in_theorem}
	and for $F_\mu^*$ \eqref{eq:F_star_det_definition}
	produces the desired expression.
\end{proof}

\subsection{Contour integral formula for \texorpdfstring{$G_{\nu/\lambda}$}{G}}
\label{sub:contour_int_G}

Using the skew Cauchy identity (\Cref{prop:F_G_Cauchy_skew})
and the biorthogonality (\Cref{prop:F_F_star_orthogonality}),
we can get contour integral formulas for the 
functions $G_{\nu/\lambda}$ and $G_\nu$.

Fix integers $N,M\ge1$ and signatures $\lambda,\nu$ with $N$ parts.
Also fix sequences of complex numbers
\begin{equation}
	\label{eq:wythetas_for_G_skew_contour}
	\mathbf{w}=(w_1,\ldots,w_M ),\qquad 
	\mathbf{y}=(y_1,y_2,\ldots ),\qquad 
	\boldsymbol\uptheta=(\theta_1,\ldots,\theta_N ),\qquad 
	\mathbf{s}=(s_1,s_2,\ldots ).
\end{equation}

\begin{proposition}
	\label{prop:G_integral_formula}
	With the above notation, 
	we have the following contour integral representation for the symmetric 
	functions $G_{\nu/\lambda}$:
	\begin{equation}
		\label{eq:skew_G_contour_integral}
		\begin{split}
			&
			G_{\nu/\lambda}(\mathbf{w};\mathbf{y};\boldsymbol\uptheta;\mathbf{s})=
			\frac{1}{N!(2\pi \mathbf{i})^N}
			\oint_{\gamma'} dz_1 \ldots \oint_{\gamma'} dz_N 
			\prod_{i=1}^{N}\prod_{j=1}^M\frac{z_i-\theta_j^{-2}w_j}{z_i-w_j}
			\\&\hspace{150pt}\times
			\det\left[ \varphi_{\lambda_i+N-i}(z_j\mid \mathbf{y};\mathbf{s}) \right]_{i,j=1}^N
			\det\left[ \psi_{\nu_i+N-i}(z_j\mid \mathbf{y};\mathbf{s}) \right]_{i,j=1}^N
			.
		\end{split}
	\end{equation}
	In particular, for $\lambda=\varnothing$ we have
	\begin{equation}
		\label{eq:nonskew_G_contour_integral}
		\begin{split}
			&G_\nu(\mathbf{w};\mathbf{y};\boldsymbol\uptheta;\mathbf{s})=
			\frac{\prod_{1\le i<j\le N}(s_i^{-2}y_i-y_j)}{N!(2\pi \mathbf{i})^N}
			\oint_{\gamma'} dz_1 \ldots \oint_{\gamma'} dz_N \,
			\det\left[ \psi_{\nu_i+N-i}(z_j\mid \mathbf{y};\mathbf{s}) \right]_{i,j=1}^N
			\\&\hspace{220pt}\times
			\frac{\prod_{1\le i<j\le N}(z_i-z_j)}{\prod_{i,j=1}^N(y_i-z_j)}
			\,
			\prod_{i=1}^{N}\prod_{j=1}^M\frac{z_i-\theta_j^{-2}w_j}{z_i-w_j}
			.
		\end{split}
	\end{equation}
	In both formulas \eqref{eq:skew_G_contour_integral} and \eqref{eq:nonskew_G_contour_integral}
	the contour $\gamma'$ encircles 
	all $y_j$, $j\ge1$, and $w_i$, $i=1,\ldots,M $, 
	and leaves out all $y_js_j^{-2}$, $j\ge1$.
	Per \Cref{rmk:formal_integrals}, 
	we either assume the contour $\gamma'$ exists, or 
	treat the integrals in 
	\eqref{eq:skew_G_contour_integral}--\eqref{eq:nonskew_G_contour_integral} formally.
\end{proposition}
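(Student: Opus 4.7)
The plan is to mirror the proof of \Cref{prop:skew_F_contour}, but with the roles of $F$ and $G$ interchanged. I would start by applying \Cref{prop:F_G_Cauchy_skew} with $\mu=\varnothing$ and with $\mathbf{x},\mathbf{r}$ of length $N$. Since $G_{\varnothing/\varkappa}$ is nonzero only for $\varkappa=\varnothing$, the right-hand sum collapses and the identity reads
\begin{equation*}
	\sum_{\nu} G_{\nu/\lambda}(\mathbf{w};\mathbf{y};\boldsymbol\uptheta;\mathbf{s})\,
	F_{\nu}(\mathbf{x};\mathbf{y};\mathbf{r};\mathbf{s})
	=
	\prod_{i=1}^N\prod_{j=1}^M \frac{x_i-\theta_j^{-2}w_j}{x_i-w_j}\,
	F_{\lambda}(\mathbf{x};\mathbf{y};\mathbf{r};\mathbf{s}),
\end{equation*}
where $\lambda$ and the summation index $\nu$ both have $N$ parts.

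To extract the single coefficient of $F_\nu$, I would multiply both sides by $\tfrac{1}{N!(2\pi\mathbf{i})^N}\tfrac{\prod_{i\ne j}(x_i-x_j)}{\prod_{i,j=1}^N(r_i^{-2}x_i-x_j)}F_{\nu}^*(\mathbf{x};\mathbf{y};\mathbf{r};\mathbf{s})$ and integrate $\mathbf{x}$ over $\gamma^N$, using the biorthogonality of \Cref{prop:F_F_star_orthogonality} to collapse the left-hand side to $G_{\nu/\lambda}(\mathbf{w};\mathbf{y};\boldsymbol\uptheta;\mathbf{s})$. Substituting the determinantal formulas of \Cref{thm:F_formula} for $F_\lambda$ and \eqref{eq:F_star_det_definition} for $F_\nu^*$ on the right, the $r$- and Vandermonde-type prefactors collapse via
\begin{equation*}
	\frac{\prod_{i\ne j}(x_i-x_j)}{\prod_{i,j=1}^N(r_i^{-2}x_i-x_j)}
	\cdot \prod_{i=1}^N x_i(r_i^{-2}-1)
	\prod_{i<j}\frac{r_i^{-2}x_i-x_j}{x_i-x_j}
	\cdot \prod_{i>j}\frac{x_j-r_i^{-2}x_i}{x_j-x_i} = 1,
\end{equation*}
which after transposing one of the two determinants yields exactly \eqref{eq:skew_G_contour_integral}.

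Formula \eqref{eq:nonskew_G_contour_integral} then follows from \eqref{eq:skew_G_contour_integral} by specializing $\lambda=(0,\ldots,0)$ with $N$ parts, so that $\lambda_i+N-i=N-i$, together with the closed-form evaluation
\begin{equation*}
	\det\bigl[\varphi_{N-i}(z_j\mid\mathbf{y};\mathbf{s})\bigr]_{i,j=1}^{N}
	=
	\prod_{1\le i<j\le N}(s_i^{-2}y_i-y_j)\,
	\frac{\prod_{1\le i<j\le N}(z_i-z_j)}{\prod_{i,j=1}^N(y_i-z_j)}.
\end{equation*}
To prove this identity I would clear the column denominators by multiplying column $j$ by $\prod_{k=1}^N(z_j-y_k)$, reducing the problem to $\det[\tilde\varphi_i(z_j)]$ for the polynomials $\tilde\varphi_i(z)=-\prod_{k=1}^{N-i}(z-s_k^{-2}y_k)\prod_{k=N-i+2}^{N}(z-y_k)$, each of degree $N-1$. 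This determinant is antisymmetric in the $z_j$'s and of total degree $\binom{N}{2}$, so it equals a $z$-independent scalar times $\prod_{i<j}(z_j-z_i)$; the scalar can then be identified either by induction on $N$ via the Desnanot--Jacobi identity, mirroring the argument in the proof of \Cref{thm:F_G_Cauchy_big}, or directly by successive specialization $z_j\mapsto s_j^{-2}y_j$.

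The main technical nuisance I expect is the careful sign bookkeeping in the Vandermonde cancellation and in the final determinant evaluation; beyond that, the argument is just a formal rearrangement of the skew Cauchy identity, the biorthogonality, and the determinantal formula for $F_\lambda$, all of which are already at hand.
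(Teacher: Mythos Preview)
Your approach is essentially identical to the paper's: apply the skew Cauchy identity with $\mu=\varnothing$, extract $G_{\nu/\lambda}$ via the biorthogonality of \Cref{prop:F_F_star_orthogonality}, simplify using the determinantal formulas for $F_\lambda$ and $F_\nu^*$, and for the non-skew case invoke the product evaluation \eqref{eq:fully_deformed_Cauchy_determinant} (which is exactly your determinant identity for $\det[\varphi_{N-i}(z_j)]$). The one point the paper treats more carefully, and which you should not dismiss as mere sign bookkeeping, is the justification for interchanging the infinite sum over $\nu$ with the contour integration: the paper imposes the convergence condition \eqref{eq:skew_Cauchy_analytic_condition} on the contour (this is why $\gamma'$ must encircle the $w_j$ as well as the $y_j$), checks this defines a nonempty open set of parameters, and then removes the restriction by analytic continuation of the rational identity.
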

\begin{proof}
	Since $G_\nu=G_{\nu/0^N}$, identity \eqref{eq:nonskew_G_contour_integral} follows from
	\eqref{eq:skew_G_contour_integral} and the product formula 
	\eqref{eq:fully_deformed_Cauchy_determinant}
	for 
	$F_{0^N}$.
	Next, by 
	\Cref{def:G_function}, for fixed $\lambda,\nu$ 
	the partition function $G_{\nu/\lambda}$ is a rational 
	function in $\mathbf{w}, \boldsymbol\uptheta$, 
	as well as in a finite subfamily of the parameters $\mathbf{y}$ and $\mathbf{s}$.
	The integral in the right-hand side of \eqref{eq:skew_G_contour_integral}
	is also a rational function of these parameters. Therefore, in proving the proposition
	we are allowed to impose any open conditions on the parameters, and then identity
	\eqref{eq:skew_G_contour_integral}
	would hold in general thanks to analytic continuation.

	Take the skew Cauchy identity 
	\eqref{eq:skew_Cauchy_F_G}
	with $\mu=\varnothing$ (and hence $\varkappa=\varnothing$ in the right-hand side, 
	which eliminates the summation):
	\begin{equation}
		\label{eq:skew_G_contour_integral_proof}
		\sum_{\nu}
		G_{\nu/\lambda}(\mathbf{w};\mathbf{y};\boldsymbol\uptheta;\mathbf{s})
		F_{\nu}(\mathbf{z};\mathbf{y};\mathbf{r};\mathbf{s})
		=
		F_{\lambda}(\mathbf{z};\mathbf{y};\mathbf{r};\mathbf{s})
		\prod_{i=1}^{M}\prod_{j=1}^{N}
		\frac{z_i-\theta_j^{-2}w_j}{z_i-w_j}.
	\end{equation}
	In fact, both sides of \eqref{eq:skew_G_contour_integral_proof}
	contain the same factor depending on $\mathbf{r}$ which can be canceled out,
	see the determinantal formula 
	for $F_\lambda$ \eqref{eq:F_det_formula_in_theorem}.
	In other words, \eqref{eq:skew_G_contour_integral_proof} essentially does not depend 
	on~$\mathbf{r}$.
	
	In \eqref{eq:skew_G_contour_integral_proof},
	we assume that
	$\mathbf{w},\mathbf{y},\mathbf{s}$, and $\mathbf{z}$ are such that
	\begin{enumerate}
		\item All $z_j$ belong to some contour $\gamma'$ encircling all $w_i$ and all $y_i$;
		\item For all $z\in \mathbf{\gamma}'$ and all
			$j,k$
			we have
			$\displaystyle\left|
			\frac{y_k-s^2_k z}{y_k-z}
			\frac{y_k-w_j}{y_k-s^2_k w_j}
			\right|<1-\delta<1$.
			This is the condition which implies convergence in \eqref{eq:skew_G_contour_integral_proof}, see
			\Cref{prop:F_G_Cauchy_skew}.
	\end{enumerate}
	One readily sees that these restrictions on $\mathbf{w},\mathbf{y}$, and $\mathbf{s}$
	place them into a nonempty open set, which is sufficient for analytic continuation.

	Now, multiply both sides
	of \eqref{eq:skew_G_contour_integral_proof}
	by 
	\begin{equation*}
		\displaystyle
		\frac{1}{N!(2\pi\mathbf{i})^{N}}\,
		F_\nu^*(\mathbf{z};\mathbf{y};\mathbf{r};\mathbf{s})\,
		\frac{\prod_{1 \le i\ne j\le N}(z_i-z_j)}{\prod_{i,j=1}^{N}(r_i^{-2}z_i-z_j)}
	\end{equation*}
	and integrate over $\mathbf{z}\in (\gamma')^N $. 
	Since the sum in the right-hand side of \eqref{eq:skew_G_contour_integral_proof}
	converges 
	uniformly
	on the contours, we can interchange summation and integration
	and use the biorthogonality of \Cref{prop:F_F_star_orthogonality}
	to extract the coefficient $G_{\nu/\lambda}(\mathbf{w};\mathbf{y};\boldsymbol\uptheta;\mathbf{s})$.
	After simplification with the help
	of determinantal formulas
	\eqref{eq:F_det_formula_in_theorem} and \eqref{eq:F_star_det_definition}, 
	the integration of the right-hand side of \eqref{eq:skew_G_contour_integral_proof}
	yields the right-hand side of the desired identity
	\eqref{eq:skew_G_contour_integral}. Observe that the dependence on the $r_i$'s 
	disappears, as it should be.
	Analytic continuation then allows to remove the restrictions stated above in the proof, and 
	we arrive at the result.
\end{proof}

\subsection{Jacobi--Trudy type formulas for \texorpdfstring{$G_{\nu/\lambda}$ and $G_\nu$}{G}}
\label{sub:G_det_formula}

Using the contour integral representation for $G_{\nu/\lambda}$
from \Cref{prop:G_integral_formula}, it is possible to derive a 
Jacobi--Trudy type determinantal formula for these symmetric functions.

For $m\ge1$ define the shift operator $\mathrm{sh}_m$ acting on the sequences $\mathbf{y},\mathbf{s}$
as
$(\mathrm{sh}_m \mathbf{y})_i=y_{m+i}$, 
$(\mathrm{sh}_m \mathbf{s})_i=s_{m+i}$. 
Also define for all $l\in \mathbb{Z}$:
\begin{equation}
	\label{eq:tilde_h_function_for_JT_G}
	\widetilde{\mathsf{h}}_{l}(\mathbf{w};\mathbf{y};\boldsymbol\uptheta;\mathbf{s}):=
	\frac{\mathbf{1}_{l\ge0}}{2\pi \mathbf{i}}\oint_{\gamma'}
		\frac{(s_{l+1}^2-1)y_{l+1}}{(y_{l+1}-s_{l+1}^2z)(y_1-z)}
		\prod_{j=1}^{l}\frac{s_j^2(y_j-z)}{y_j-s_j^2z}
		\prod_{j=1}^{M}\frac{z-\theta_j^{-2}w_j}{z-w_j}\,
		dz,
\end{equation}
where the integration is over a contour $\gamma'$ around all the points
$y_j$ and $w_i$,
leaving outside the points $y_js_j^{-2}$.
Observe that \eqref{eq:tilde_h_function_for_JT_G} is
symmetric under simultaneous
permutations of $(w_i,\theta_i)$.
Also denote 
\begin{equation}
	\label{eq:g_skew_better_notation}
	\mathsf{g}_{l/k}(\mathbf{w};\mathbf{y};\boldsymbol\uptheta;\mathbf{s})
	:=
	\widetilde{\mathsf{h}}_{l-k}(\mathbf{w};\mathrm{sh}_k \mathbf{y};\boldsymbol\uptheta;\mathrm{sh}_k \mathbf{s}).
\end{equation}

\begin{proposition}
	\label{prop:skew_G_Jacobi_Trudi}
	Fix $N\ge1$.
	For any signatures $\lambda,\nu$ with $N$ parts, 
	and sequences of complex numbers
	$\mathbf{w},\mathbf{y},\boldsymbol\uptheta,\mathbf{s}$ as in 
	\eqref{eq:wythetas_for_G_skew_contour},
	we have
	\begin{equation}
		\label{eq:skew_G_Jacobi_Trudi}
		G_{\nu/\lambda}(\mathbf{w};\mathbf{y};\boldsymbol\uptheta;\mathbf{s})=
		\det
		\Bigl[ 
			\mathsf{g}_{(\nu_i+N-i)/(\lambda_j+N-j)}
			(\mathbf{w};\mathbf{y};
			\boldsymbol\uptheta;\mathbf{s}) 
		\Bigr]_{i,j=1}^{N}.
	\end{equation}
\end{proposition}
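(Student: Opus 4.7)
The plan is to start from the multi-contour integral representation of $G_{\nu/\lambda}$ given by \eqref{eq:skew_G_contour_integral} in \Cref{prop:G_integral_formula}, and collapse it into a single determinant of one-variable contour integrals by means of the Andréief (Cauchy--Binet) identity. First I would observe that the prefactor $\prod_{i=1}^{N}\prod_{j=1}^{M}(z_i-\theta_j^{-2}w_j)/(z_i-w_j)$ factorizes over the variables $z_1,\ldots,z_N$, and hence can be absorbed column-wise into the $\psi$-determinant, producing
\begin{equation*}
	\det\Bigl[\,\Theta(z_j)\,\psi_{\nu_i+N-i}(z_j\mid \mathbf{y};\mathbf{s})\Bigr]_{i,j=1}^N,
	\qquad
	\Theta(z):=\prod_{m=1}^M\frac{z-\theta_m^{-2}w_m}{z-w_m}.
\end{equation*}
The integrand in \eqref{eq:skew_G_contour_integral} then becomes a product of two $N\times N$ determinants whose entries are univariate functions of $z_1,\dots,z_N$, and the classical Andréief identity applied on the torus $(\gamma')^N$ yields the single determinant
\begin{equation*}
	G_{\nu/\lambda}=
	\det\!\left[\,\frac{1}{2\pi\mathbf{i}}\oint_{\gamma'}
	\varphi_{\lambda_j+N-j}(z\mid\mathbf{y};\mathbf{s})\,
	\psi_{\nu_i+N-i}(z\mid\mathbf{y};\mathbf{s})\,
	\Theta(z)\,dz
	\right]_{i,j=1}^{N}.
\end{equation*}

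Next, for each pair $(i,j)$ with $l=\nu_i+N-i$ and $k=\lambda_j+N-j$, I would identify the entry integral with $\mathsf{g}_{l/k}$ defined in \eqref{eq:g_skew_better_notation}--\eqref{eq:tilde_h_function_for_JT_G}. The key algebraic step is the telescoping cancellation between the two products inside $\varphi_k(z)\psi_l(z)$: for every index $j\le\min(k,l)$, the factor $(z-s_j^{-2}y_j)/(z-y_j)$ from $\varphi_k$ cancels (up to sign and constant) with $s_j^2(y_j-z)/(y_j-s_j^2z)$ from $\psi_l$, since
\begin{equation*}
	\frac{z-s_j^{-2}y_j}{z-y_j}\cdot \frac{s_j^2(y_j-z)}{y_j-s_j^2 z}=1.
\end{equation*}
In the case $l\ge k$, this cancellation leaves exactly the integrand of $\widetilde{\mathsf{h}}_{l-k}$ with the sequences $\mathbf{y},\mathbf{s}$ shifted by $k$, so the entry equals $\mathsf{g}_{l/k}$ by \eqref{eq:g_skew_better_notation}.

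The main obstacle is the complementary case $l<k$, where the indicator in \eqref{eq:tilde_h_function_for_JT_G} forces $\mathsf{g}_{l/k}=0$ and the identity requires the corresponding entry integral to vanish. Here I would perform one further cancellation: the surviving factor $y_{l+1}(s_{l+1}^2-1)/(y_{l+1}-s_{l+1}^2z)$ from $\psi_l$ combines with the $j=l+1$ factor $(z-s_{l+1}^{-2}y_{l+1})/(z-y_{l+1})$ from $\varphi_k$ into a function with no pole at $z=s_{l+1}^{-2}y_{l+1}$. After this, all remaining surviving factors from $\varphi_k$ have the form $(z-s_j^{-2}y_j)/(z-y_j)$ for $j=l+2,\ldots,k$, together with $1/(y_{k+1}-z)$ and $\Theta(z)$, so the entire integrand is rational with poles only at the points $y_{l+1},y_{l+2},\ldots,y_{k+1}$ and $w_1,\ldots,w_M$, all located \emph{inside} $\gamma'$. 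A degree count (both $\varphi_k$ and $\psi_l$ are $O(z^{-1})$ at infinity, while $\Theta$ is $O(1)$) shows the integrand decays as $O(z^{-2})$, hence the sum of residues at infinity and outside $\gamma'$ vanishes, forcing the integral itself to vanish. Combining the two cases and substituting into the determinant gives \eqref{eq:skew_G_Jacobi_Trudi}.
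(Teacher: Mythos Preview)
Your proposal is correct and follows essentially the same route as the paper: apply the Andr\'eief identity to the multi-contour integral representation \eqref{eq:skew_G_contour_integral}, then identify each resulting one-variable integral with $\mathsf{g}_{l/k}=\widetilde{\mathsf{h}}_{l-k}(\mathbf{w};\mathrm{sh}_k\mathbf{y};\boldsymbol\uptheta;\mathrm{sh}_k\mathbf{s})$. The paper simply asserts that identification in one line, whereas you spell out the telescoping cancellation for $l\ge k$ and the pole-counting/decay argument for the vanishing when $l<k$; these extra details are correct and fill in exactly what the paper leaves to the reader.
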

\begin{remark}
	\label{rmk:9th_variation}
	The shifts of the indices in $\mathbf{y}$ and $\mathbf{s}$
	in \eqref{eq:skew_G_Jacobi_Trudi} (see \eqref{eq:g_skew_better_notation})
	are the same as in \cite[9th variation]{macdonald1992schur_Theme} (see also
	\cite{nakagawa2001tableau}),
	which makes our functions $G_{\nu/\lambda}$ a particular case of the 
	Macdonald's
	9-th variation of the Schur functions.
\end{remark}

\begin{proof}[Proof of \Cref{prop:skew_G_Jacobi_Trudi}]
	Observe that 
	\begin{equation*}
		\frac{1}{2\pi \mathbf{i}}\oint_{\gamma'}
		dz\,
		\varphi_k(z\mid \mathbf{y};\mathbf{s})\psi_l(z\mid \mathbf{y};\mathbf{s})
		\prod_{j=1}^{M}\frac{z-\theta_j^{-2}w_j}{z-w_j}
		=\widetilde{\mathsf{h}}_{l-k}(\mathbf{w};\mathrm{sh}_k \mathbf{y};\boldsymbol\uptheta;\mathrm{sh}_k \mathbf{s}).
	\end{equation*}
	Thus, the proposition follows by applying Andr\'eief identity 
	(cf. \cite{forrester2019meet})
	\begin{equation}
		\begin{split}
			\label{eq:Andreief}
			&
			\frac{1}{(2\pi\mathbf{i})^N}
			\oint_{\gamma'}\ldots\oint_{\gamma'} 
			\det[f_i(z_j)]_{i,j=1}^N
			\det[g_i(z_j)]_{i,j=1}^N\,dz_1\ldots dz_N \\&\hspace{220pt}=
			N!\,
			\det\left[ \frac{1}{2\pi\mathbf{i}}\oint_{\gamma'}f_i(z)g_j(z)\,dz \right]_{i,j=1}^N
		\end{split}
	\end{equation}
	to the contour integral formula for $G_{\nu/\lambda}$
	\eqref{eq:skew_G_contour_integral}. Indeed, here
	we can take $f_i(z)=\varphi_{\lambda_i+N-i}(z\mid \mathbf{y};\mathbf{s})$ 
	and $\displaystyle
	g_j(z)=\psi_{\nu_j+N-j}(z\mid \mathbf{y};\mathbf{s})\prod_{m=1}^M\frac{z-\theta_m^{-2}w_m}{z-w_m}$.
\end{proof}

\begin{remark}
	Using \Cref{prop:skew_G_Jacobi_Trudi},
	one can check that 
	in the case of horizontally homogeneous
	parameters
	$y_j\equiv 1$, $s_j\equiv s$
	the skew functions 
	$G_{\lambda/\nu}(\mathbf{w};\mathbf{y};\boldsymbol\uptheta;\mathbf{s})$ 
	turn (up to a simple product factor) into the supersymmetric skew
	Schur functions in the variables
	$\frac{w_i-1}{w_i-s^{-2}}\Big/
	\frac{1-\theta_i^{-2}w_i}{\theta_i^{-2}w_i-s^{-2}}$.
	In other words, identity
	\eqref{eq:G_homogeneous_as_Schur}
	extends from the 
	non-skew case to the skew one.
\end{remark}

For non-skew functions $G_\nu$
there is a 
simplification of the formula of \Cref{prop:skew_G_Jacobi_Trudi}.
Define
\begin{equation}
	\label{eq:nontilde_h_function_for_JT_G_nonskew}
	\mathsf{h}_{k,p}(\mathbf{w};\mathbf{y};\boldsymbol\uptheta;\mathbf{s})
	:=
	\frac{1}{2\pi\mathbf{i}}\oint_{\gamma'}dz\,
	\frac{\psi_k(z\mid \mathbf{y};\mathbf{s})}{y_p-z}
	\prod_{j=1}^{M}\frac{z-\theta_j^{-2}w_j}{z-w_j},
\end{equation}
where $\psi_k$ is given by \eqref{eq:psi_def},
and the integration contour $\gamma'$ surrounds
all the points $y_j,w_i$ and leaves out all
$y_js_j^{-2}$. Comparing \eqref{eq:tilde_h_function_for_JT_G} and \eqref{eq:nontilde_h_function_for_JT_G_nonskew},
we see that 
$\widetilde{\mathsf{h}}_l = \mathsf{h}_{l,1}$
for $l\ge0$.
\begin{proposition}[\Cref{prop:intro_G_JT} from Introduction]
	\label{prop:nonskew_G_Jacobi_Trudi}
	Fix $N\ge1$. For any signature $\nu$ with $N$ parts, and 
	sequences of complex numbers
	$\mathbf{w},\mathbf{y},\boldsymbol\uptheta,\mathbf{s}$ as in 
	\eqref{eq:wythetas_for_G_skew_contour},
	we have
	\begin{equation}
		\label{eq:nonskew_G_Jacobi_Trudi}
		G_\nu(\mathbf{w};\mathbf{y};\boldsymbol\uptheta;\mathbf{s})
		=
		\prod_{1\le i<j\le N}\frac{s_i^{-2}y_i-y_j}{y_j-y_i}
		\,
		\det\left[ \mathsf{h}_{\nu_i+N-i,\,j}(\mathbf{w};\mathbf{y};\boldsymbol\uptheta;\mathbf{s}) \right]_{i,j=1}^{N}.
	\end{equation}
\end{proposition}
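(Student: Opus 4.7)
The plan is to derive \eqref{eq:nonskew_G_Jacobi_Trudi} directly from the contour integral representation \eqref{eq:nonskew_G_contour_integral} for $G_\nu$ by recognizing the rational factor in the integrand as a Cauchy determinant and then applying the Andr\'eief identity \eqref{eq:Andreief}. This is a significant simplification of the skew case (Proposition~\ref{prop:skew_G_Jacobi_Trudi}) because the single-row prefactor in \eqref{eq:nonskew_G_contour_integral} is exactly of Cauchy form, whereas in the skew case one has two honest Vandermonde-like determinants $\det[\varphi_{\lambda_i+N-i}]\det[\psi_{\nu_i+N-i}]$ that already factor the integrand.

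First I would rewrite the factor $\dfrac{\prod_{1\le i<j\le N}(z_i-z_j)}{\prod_{i,j=1}^N(y_i-z_j)}$ appearing in \eqref{eq:nonskew_G_contour_integral} using the Cauchy determinant evaluation
\begin{equation*}
\det\left[\frac{1}{y_i-z_j}\right]_{i,j=1}^N
=
\frac{\prod_{1\le i<j\le N}(y_i-y_j)(z_j-z_i)}{\prod_{i,j=1}^N(y_i-z_j)}.
\end{equation*}
After collecting the sign $(-1)^{N(N-1)/2}$ from swapping $z_i-z_j$ to $z_j-z_i$ and absorbing a second $(-1)^{N(N-1)/2}$ from $\prod_{i<j}(y_i-y_j)=(-1)^{N(N-1)/2}\prod_{i<j}(y_j-y_i)$, the integrand in \eqref{eq:nonskew_G_contour_integral} becomes
\begin{equation*}
\frac{1}{\prod_{1\le i<j\le N}(y_j-y_i)}\,
\det\!\left[\psi_{\nu_i+N-i}(z_j\mid\mathbf{y};\mathbf{s})\right]_{i,j=1}^N
\det\!\left[\frac{1}{y_i-z_j}\right]_{i,j=1}^N
\prod_{i=1}^{N}\prod_{j=1}^{M}\frac{z_i-\theta_j^{-2}w_j}{z_i-w_j}.
\end{equation*}
The last $N\times M$ product is symmetric in the $z_i$'s, so I would absorb it into the first determinant by multiplying each row $i$ of $[\psi_{\nu_i+N-i}(z_j)]$ (equivalently, each column $j$) by $\prod_m(z_j-\theta_m^{-2}w_m)/(z_j-w_m)$.

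Next I would apply the Andr\'eief identity \eqref{eq:Andreief} with
\begin{equation*}
f_i(z)=\psi_{\nu_i+N-i}(z\mid\mathbf{y};\mathbf{s})\,\prod_{m=1}^{M}\frac{z-\theta_m^{-2}w_m}{z-w_m},
\qquad
g_j(z)=\frac{1}{y_j-z}.
\end{equation*}
The factor $\frac{1}{N!}$ in \eqref{eq:nonskew_G_contour_integral} cancels against the $N!$ produced by \eqref{eq:Andreief}, and the $(i,j)$ entry of the resulting $N\times N$ determinant reads
\begin{equation*}
\frac{1}{2\pi\mathbf{i}}\oint_{\gamma'}\frac{\psi_{\nu_i+N-i}(z\mid\mathbf{y};\mathbf{s})}{y_j-z}\prod_{m=1}^{M}\frac{z-\theta_m^{-2}w_m}{z-w_m}\,dz
=\mathsf{h}_{\nu_i+N-i,\,j}(\mathbf{w};\mathbf{y};\boldsymbol\uptheta;\mathbf{s})
\end{equation*}
by the definition \eqref{eq:nontilde_h_function_for_JT_G_nonskew}. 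Collecting the prefactor $\prod_{i<j}(s_i^{-2}y_i-y_j)/\prod_{i<j}(y_j-y_i)$ finishes the proof.

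There is essentially no obstacle: the argument is mechanical once one sees the Cauchy-determinant identification, and all contours are the same $\gamma'$ used in Proposition~\ref{prop:G_integral_formula}, so no contour-deformation issue arises. The only thing to watch is careful tracking of the two $(-1)^{N(N-1)/2}$ signs coming from the Vandermonde rearrangements; they cancel exactly, yielding the clean prefactor $\prod_{i<j}(s_i^{-2}y_i-y_j)/(y_j-y_i)$ in \eqref{eq:nonskew_G_Jacobi_Trudi}.
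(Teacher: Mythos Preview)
Your proposal is correct and follows essentially the same approach as the paper: rewrite the rational factor $\prod_{i<j}(z_i-z_j)/\prod_{i,j}(y_i-z_j)$ via the Cauchy determinant, then apply the Andr\'eief identity \eqref{eq:Andreief} to the two resulting determinants to produce the $N\times N$ matrix with entries $\mathsf{h}_{\nu_i+N-i,\,j}$. Your explicit sign tracking is a bit more detailed than the paper's presentation but amounts to the same computation.
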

\begin{proof}
	The integrand in the contour integral formula 
	for $G_\nu$
	\eqref{eq:nonskew_G_contour_integral}
	contains the terms which can be rewritten as a Cauchy determinant:
	\begin{equation*}
		\frac{\prod_{1\le i<j\le N}(z_i-z_j)}{\prod_{i,j=1}^{N}(y_i-z_j)}=
		\frac{1}{\prod_{1\le i<j\le N}(y_j-y_i)}\,
		\det\left[ \frac{1}{y_i-z_j} \right]_{i,j=1}^N.
	\end{equation*}
	Combining this with the other determinant 
	$\det[\psi_{\nu_i+N-i}(z_j)]$ in \eqref{eq:nonskew_G_contour_integral}
	and applying Andr\'eief identity \eqref{eq:Andreief},
	we arrive at the desired formula.
\end{proof}

\newpage
\part{Determinantal processes}
\label{partII}

In this part (accompanied by \Cref{appB:Eynard_Mehta}) we 
develop determinantal point processes based on the symmetric functions
$F_\lambda,G_\lambda$ from \Cref{partI}.
By analogy with Schur and Macdonald processes
\cite{okounkov2003correlation}, \cite{BorodinCorwin2011Macdonald},
we call them the \emph{FG processes}. We compute the 
correlation kernel for ascending FG processes (a particular subclass of FG processes) in a double contour integral 
form.

\section{FG measures and processes}
\label{sec:FG_measures}

\subsection{Specializations}
\label{sub:FG_spec}

Fix the parameter sequences 
\begin{equation*}
	\mathbf{y}=(y_1,y_2,\ldots ),
	\quad
	\mathbf{s}=(s_1,s_2,\ldots ),
\end{equation*}
for which there exists $\varepsilon>0$ such that
\begin{equation}
	\label{eq:ys_FG_parameters}
	\varepsilon<y_j<\varepsilon^{-1},\quad
	\varepsilon<s_j<1-\varepsilon\quad 
	\textnormal{for all $j$}.
\end{equation}

Under suitable restrictions on the other parameters, the 
values of $F_\lambda, G_\lambda$ become nonnegative. This leads to the 
following definition:

\begin{definition}
	\label{def:spec}
	Let $N\in \mathbb{Z}_{\ge1}$ and let
	$\mathbf{x}=(x_1,\ldots,x_N )$, $\mathbf{r}=(r_1,\ldots,r_N )$
	be such that 
	\begin{equation}
		\label{eq:x_r_spec_parameters}
		0< x_i<y_j<r_{i}^{-2}x_i<s_j^{-2}y_j,\quad
		\textnormal{for all $i,j$}.
	\end{equation}
	Under \eqref{eq:ys_FG_parameters} and \eqref{eq:x_r_spec_parameters}, 
	one readily sees that all the vertex weights 
	$W,\widehat W$ \eqref{eq:weights_W}, \eqref{eq:weights_W_hat} 
	are nonnegative. This implies that the values of the symmetric functions
	$F_{\lambda/\mu}(\mathbf{x};\mathbf{y};\mathbf{r};\mathbf{s})$
	and
	$G_{\lambda/\mu}(\mathbf{x};\mathbf{y};\mathbf{r};\mathbf{s})$
	are nonnegative.

	We call $(\mathbf{x};\mathbf{r})$ a \emph{nonnegative specialization}, 
	and denote this by $\rho\in \mathsf{Spec}_N$ (where $N$ indicates the number of 
	variables). 
	When 
	convenient, we denote the values of our symmetric functions at~$\rho$ by 
	$F_{\lambda/\mu}(\rho)$, $G_{\lambda/\mu}(\rho)$, and omit 
	explicitly specifying 
	their overall dependence on 
	$\mathbf{y},\mathbf{s}$.
\end{definition}

\begin{remark}
	\label{rmk:nonnegative_condition_on_x_for_convenience}
	The vertex weights 
	$W,\widehat W$ \eqref{eq:weights_W}, \eqref{eq:weights_W_hat} 
	depend only on differences between the parameters $x,y,r^{-2}x,s^{-2}y$.
	Therefore, conditions $x_i,y_j>0$ 
	in \eqref{eq:ys_FG_parameters}--\eqref{eq:x_r_spec_parameters}
	may be dropped, but we keep them 
	throughout this \Cref{partII}
	for convenience of dealing with various inequalities on the parameters.
\end{remark}

The empty specialization $\rho=\varnothing\in \mathsf{Spec}_0$ is nonnegative, and
\begin{equation}
	\label{eq:F_G_at_empty}
	F_{\lambda/\mu}(\varnothing)
	=\mathbf{1}_{\lambda=\mu},\qquad 
	G_{\lambda/\mu}(\varnothing)=\mathbf{1}_{\lambda=\mu}.
\end{equation}
For the function $G_{\lambda/\mu}$, 
we also get the same delta function by substituting the zero variables,
namely, 
$G_{\lambda/\mu}(0,\ldots,0;\mathbf{y};\mathbf{r} ;\mathbf{s})=\mathbf{1}_{\lambda=\mu}$. This is evident by looking at the vertex weights
$W$ \eqref{eq:weights_W}, as the weight of the vertex 
$(1,0;0,1)$ vanishes.

For two specializations 
$\rho=(\mathbf{x};\mathbf{r})$ and 
$\rho'=(\mathbf{x}';\mathbf{r}')$,
denote by $\rho\cup \rho'$ their union (concatenation)
with variables
$(\mathbf{x}\cup \mathbf{x}';\mathbf{r}\cup \mathbf{r}')$,
as in \Cref{prop:F_G_branching}.

In order to use the Cauchy identities, we need to make sure that the 
corresponding infinite series converge:

\begin{definition}
	\label{def:compatible_spec}
	Two nonnegative specializations $\rho=(\mathbf{x};\mathbf{r})\in \mathsf{Spec}_N$ 
	and
	$\rho'=(\mathbf{w},\boldsymbol\uptheta)\in \mathsf{Spec}_M$
	are called \emph{compatible}
	(notation $(\rho;\rho')\in \mathsf{Comp}$)
	if there exists $\delta>0$ such that
	\begin{equation}
		\label{eq:compatible_specializations}
		\left|
		\frac{s_k^{-2}y_k-x_i}{y_k-x_i}
		\frac{y_k-w_j}{s_k^{-2}y_k- w_j}
		\right|<1-\delta<1
		\qquad 
		\textnormal{for all $i,j$ and all sufficiently large $k>0$}.
	\end{equation}
	Compatibility depends on 
	the parameters $(\mathbf{y};\mathbf{s})$, which are assumed fixed.
	Note also that the relation 
	$(\rho;\rho')\in \mathsf{Comp}$
	is not symmetric
	in $\rho,\rho'$.
\end{definition}

Let us denote, for 
$\rho=(\mathbf{x};\mathbf{r})\in \mathsf{Spec}_N$,
$\rho'=(\mathbf{w},\boldsymbol\uptheta)\in \mathsf{Spec}_M$,
\begin{equation}
	\label{eq:Pi_Z_notation_}
	\begin{split}
		\Pi(\rho;\rho')&:=
		\prod_{i=1}^{N}\prod_{j=1}^{M}
		\frac{x_i-\theta_j^{-2}w_j}{x_i-w_j},
		\\
		Z(\rho)&:=
		\prod_{i=1}^{N}x_i(r^{-2}_i-1)\,
		\frac{\prod_{1\le i<j\le N}(r_i^{-2}x_i-x_j)(s_i^{-2}y_i-y_j)}
		{\prod_{i,j=1}^N(y_i-x_j)}.
	\end{split}
\end{equation}
Thus, the Cauchy identities (\Cref{prop:F_G_Cauchy_skew,thm:F_G_Cauchy_big})
take the following form for two compatible
specializations $\rho,\rho'$:
\begin{equation}
	\label{eq:Cauchy_identities_via_specs}
	\begin{split}
		\sum_{\nu}
		G_{\nu/\lambda}(\rho')
		F_{\nu/\mu}(\rho)&=
		\Pi(\rho;\rho')
		\sum_{\varkappa}
		G_{\mu/\varkappa}(\rho')
		F_{\lambda/\varkappa}(\rho),
		\\
		\sum_\nu
		F_\nu(\rho)G_\nu(\rho')&=\Pi(\rho;\rho')\,Z(\rho).
	\end{split}
\end{equation}

\subsection{Probability distributions from Cauchy identities}
\label{sub:probability_from_Cauchy}

Let $T\ge1,N\ge0$ be integers, and pick a nonnegative specialization
\begin{equation*}
	\rho=(x_1,\ldots,x_N;r_1,\ldots,r_N  )\in \mathsf{Spec}_N
\end{equation*}
and variables
$(w_1,\theta_1),\ldots,(w_T,\theta_T)$
such that each $(w_i,\theta_i)\in \mathsf{Spec}_1$ is also a nonnegative specialization
in the sense of \Cref{def:spec}.
Assume that these specializations are compatible
in the sense of \Cref{def:compatible_spec},
that is, 
\begin{equation}
	\label{eq:Eynard_asc_FG_condition}
	\left|
	\frac{x_i-s_k^{-2}y_k}{x_i-y_k}
	\frac{w_j-y_k}{w_j-s_k^{-2}y_k}
	\right|<1-\delta<1 \qquad \textnormal{for all $i,j$ and sufficiently large $k>0$}.
\end{equation}
\begin{definition}
	\label{def:FG_asc}
	The \emph{ascending FG process} is a probability measure on sequences 
	of signatures $\lambda^{(1)},\ldots,\lambda^{(T)} $
	(each with $N$ parts)
	defined by 
	\begin{equation}
		\label{eq:ascending_process}
		\begin{split}
			&	
			\mathscr{AP}(\lambda^{(1)},\lambda^{(2)},\ldots,\lambda^{(T)})
			\\&\hspace{20pt}=
			\frac1{Z}\,
			G_{\lambda^{(1)}}(w_1;\mathbf{y};\theta_1;\mathbf{s})
			G_{\lambda^{(2)}/\lambda^{(1)}}(w_2;\mathbf{y};\theta_2;\mathbf{s})
			\ldots
			G_{\lambda^{(T)}/\lambda^{(T-1)}}(w_T;\mathbf{y};\theta_T;\mathbf{s})
			F_{\lambda^{(T)}}(\rho),
		\end{split}
	\end{equation}
	where the normalizing constant is equal to 
	\begin{equation}
		\label{eq:Z_ascending_process}
		\begin{split}
			Z
			&=
			Z(\rho)
			\prod_{j=1}^{T}\Pi(\rho;(w_j,\theta_j))
			\\&=
			\prod_{i=1}^{N}x_i(r^{-2}_i-1)\,
			\frac{\prod_{1\le i<j\le N}(r_i^{-2}x_i-x_j)(s_i^{-2}y_i-y_j)}
			{\prod_{i,j=1}^N(y_i-x_j)}
			\prod_{i=1}^N
			\prod_{j=1}^{T}
			\frac{x_i-\theta_j^{-2}w_j}{x_i-w_j}.
		\end{split}
	\end{equation}
\end{definition}
This definition is parallel to a particular case of 
Schur processes \cite{okounkov2003correlation}, 
see also \cite{BorodinCorwin2011Macdonald}.
Later in \Cref{sec:domino_tilings} we connect ascending FG processes to a certain
dimer model.

\begin{remark}
	\label{rmk:no_r_in_ascending}
	From the explicit formula for $F_\lambda$ (\Cref{thm:F_formula})
	it is evident that $F_{\lambda^{(T)}}$ divided by 
	$Z$ \eqref{eq:Z_ascending_process} does not depend on the parameters $r_j$,
	and hence the whole ascending FG process is independent of these parameters, too.
\end{remark}

The marginal distribution 
of each $\lambda^{(j)}$ under the FG process has the following form:

\begin{definition}
	\label{def:FG_measure}
	Let 
	the parameters $(\mathbf{y};\mathbf{s})$
	satisfying \eqref{eq:ys_FG_parameters}
	be fixed.
	Let $M,N\ge1$, and take nonnegative specializations
	$\rho=(\mathbf{x};\mathbf{r})\in\mathsf{Spec}_N$, 
	$\rho'=(\mathbf{w};\boldsymbol\uptheta)\in \mathsf{Spec}_M$
	such that $(\rho,\rho')\in \mathsf{Comp}$.
	The \emph{FG measure} is a probability distribution on 
	signatures $\lambda$ with $N$ parts with probability weights 
	\begin{equation}
		\label{eq:FG_measure}
		\mathscr{M}(\lambda):=\frac{F_\lambda(\rho)G_\lambda(\rho')}
		{Z(\rho)\Pi(\rho;\rho')}.
	\end{equation}
\end{definition}

\Cref{def:FG_measure} is parallel to the definition of the Schur measure
\cite{okounkov2001infinite}. 
Using branching (\Cref{prop:F_G_branching}) and the first relation of 
\eqref{eq:Cauchy_identities_via_specs}, we see that 
for every $j=1,\ldots,T $, the signature
$\lambda^{(j)}$ (with $N$ parts) is distributed as the FG measure
with specializations $\rho=(\mathbf{x};\mathbf{r})\in \mathsf{Spec}_N$ 
and $\rho'=(w_1,\ldots,w_j;\theta_1,\ldots,\theta_j )\in \mathsf{Spec}_j$.

\subsection{Determinantal correlation kernel}
\label{sub:det_process_kernel}

Take a random sequence of signatures 
$\lambda^{(1)},\ldots,\lambda^{(T)} $
distributed according to the ascending FG process \eqref{eq:ascending_process}, and 
associate to it a random point configuration
\begin{equation}
	\label{eq:S_T_notation}
	\mathcal{S}^{(T)}:=\bigcup_{t=1}^{T}
	\bigl(\left\{ t \right\}\times \mathcal{S}(\lambda^{(t)})\bigr)
\end{equation}
in 
$\left\{ 1,\ldots,T  \right\}\times \mathbb{Z}_{\ge1}$,
where we use notation $\mathcal{S}(\lambda)$ from \eqref{eq:S_lambda_notation}.

We are interested in \emph{correlation functions} of the ascending FG process, which are 
defined for
any fixed finite subset 
$A\subset \left\{ 1,\ldots,T  \right\}\times \mathbb{Z}_{\ge1}$
as the probabilities $\mathbb{P}_{\mathscr{AP}}\left[ A\subset \mathcal{S}^{(T)} \right]$.
We show that the ascending FG process is \emph{determinantal}, that is, 
all its correlation functions are determinants of a certain
\emph{correlation kernel} $K_{\mathscr{AP}}(t,a;t',a')$, $1\le t,t'\le T$, $a,a'\ge 1$. 
It has the following form:
\begin{equation}
	\label{eq:ascending_FG_process_kernel_text}
	\begin{split}
		&K_{\mathscr{AP}}(t,a;t',a')
		=
		\frac{1}{(2\pi\mathbf{i})^2}
		\oint_{\Gamma_{y,\theta^{-2}w}}du
		\oint_{\Gamma_{y,w}}
		dv
		\,
		\frac{1}{u-v}
		\prod_{k=1}^{N}\frac{(u-y_k)(v-x_k)}{(u-x_k)(v-y_k)}
		\\
		&\hspace{20pt}
		\times
		\frac{y_{a}(1-s_{a}^{-2})}{v-s_{a}^{-2}y_{a}}
		\frac{1}{u-y_{a'}}
		\prod_{j=1}^{a-1}
		\frac{v-y_j}{v-s_j^{-2}y_j}
		\prod_{j=1}^{a'-1}
		\frac{u-s_j^{-2}y_j}{u-y_j}
		\prod_{d=1}^t \frac{v-\theta_d^{-2}w_d}{v-w_d}
		\prod_{c=1}^{t'}\frac{u-w_c}{u-\theta_c^{-2}w_c},
	\end{split}
\end{equation}
where the integration contours are 
positively oriented circles one inside the other
(the $u$ contour is outside for $t\le t'$ while
the $v$ contour is outside for $t>t'$);
the $u$ contour
encircles all the points $y_i,\theta_j^{-2}w_j$
and not $x_k$;
and
the $v$ contour encircles all the points $y_i,w_j$
and not $s_k^{-2}y_k$.
Observe that $K_{\mathscr{AP}}$ is independent of the $r_j$'s,
which agrees with \Cref{rmk:no_r_in_ascending}.

\begin{theorem}[\Cref{thm:intro_kernel} from Introduction]
	\label{thm:ascending_FG_process_kernel}
	The random point configuration $\mathcal{S}^{(T)}$ 
	constructed from the ascending FG process
	is a determinantal point
	process with the kernel $K_{\mathscr{AP}}$ 
	\eqref{eq:ascending_FG_process_kernel_text}:
	\begin{equation}
		\label{eq:corr_function_ascending_FG_in_text}
		\mathbb{P}_{\mathscr{AP}}\bigl[ A\subset \mathcal{S}^{(T)} \bigr]
		=
		\det\left[ K_{\mathscr{AP}}(t_i,a_i;t_j,a_j) \right]_{i,j=1}^{m}
	\end{equation}
	for any
	$A=\left\{ (t_1,a_1),\ldots,(t_m,a_m)  \right\}
	\subset \left\{ 1,\ldots,T  \right\}\times \mathbb{Z}_{\ge1}$.
\end{theorem}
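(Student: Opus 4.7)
The plan is to establish the theorem via the Eynard--Mehta approach, using the determinantal formulas developed earlier in the paper to rewrite the joint density of $(\lambda^{(1)},\ldots,\lambda^{(T)})$ as a product of determinants of single-particle weights, and then recognizing the result as a classical biorthogonal ensemble whose kernel admits a double contour integral representation.

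First, I would translate signatures into particle configurations via $a_i^{(t)} := \lambda^{(t)}_i + N - i + 1$, and initialize $\lambda^{(0)} := 0^N$ (for which $G_{\lambda^{(1)}/\lambda^{(0)}} = G_{\lambda^{(1)}}$). Applying the Jacobi--Trudy formula of \Cref{prop:skew_G_Jacobi_Trudi} to each factor $G_{\lambda^{(t)}/\lambda^{(t-1)}}(w_t;\mathbf{y};\theta_t;\mathbf{s})$ writes it as an $N\times N$ determinant of single-particle transition weights $\phi_t(a,b) := \mathsf{g}_{(b-1)/(a-1)}(w_t;\mathbf{y};\theta_t;\mathbf{s})$, each given by a single contour integral via \eqref{eq:tilde_h_function_for_JT_G}. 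The final factor $F_{\lambda^{(T)}}(\rho)$ is handled by \Cref{thm:F_formula}, which yields an $N\times N$ determinant in $\varphi_{a_j^{(T)}-1}(x_i)$ times a prefactor depending only on $\rho$. After these substitutions the joint density takes the Eynard--Mehta form
\begin{equation*}
\frac{1}{Z}\,\det[\psi_{N-i}(a_j^{(0)})]\prod_{t=1}^{T}\det[\phi_t(a_i^{(t-1)},a_j^{(t)})]\,\det[\varphi_{N-j}\text{-type terms at }a_i^{(T)}],
\end{equation*}
where the leftmost and rightmost $N\times N$ determinants encode the frozen boundary at time $0$ (via $\psi$) and the random signature at time $T$ (via $\varphi$), with the $\varphi,\psi$ pair chosen to exploit the biorthogonality of \Cref{lemma:phi_psi_orthogonal}.

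Second, I would invoke the Eynard--Mehta theorem (in the form of \cite{borodin2005eynard}) to produce a correlation kernel of shape
\begin{equation*}
K_{\mathscr{AP}}(s,a;t,b) = -\phi_{[s,t)}(a,b)\,\mathbf{1}_{s<t} + \sum_{k,l=1}^{N}\Psi^s_k(a)\,[M^{-1}]_{kl}\,\Phi^t_l(b),
\end{equation*}
where $\phi_{[s,t)}=\phi_{s+1}*\cdots *\phi_t$ is the composed transition, $\Phi^t_l(b)$ propagates the initial $\psi$-functions forward through time $t$, $\Psi^s_k(a)$ propagates the final $\varphi$-functions backward from time $s$, and $M$ is the associated $N\times N$ Gram matrix. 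The critical simplification is that, by choosing the $\psi_k$ from \eqref{eq:psi_def} as the initial biorthogonal dual, \Cref{lemma:phi_psi_orthogonal} collapses the Gram pairing to a diagonal/triangular matrix whose inverse can be read off. This is the step where the biorthogonality apparatus developed in \Cref{sec:biorthogonality} pays off.

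Third, I would convert the finite double sum $\sum_{k,l}\Psi^s_k(a)[M^{-1}]_{kl}\Phi^t_l(b)$ into the stated double contour integral. Each $\Psi^s_k$ and $\Phi^t_l$, being a convolution of single-contour objects (products of $\mathsf{g}$-weights, $\varphi_k$, $\psi_k$), is itself a single contour integral; for example
\begin{equation*}
\Psi^s_k(a) \;=\; \frac{1}{2\pi\mathbf{i}}\oint \varphi_k(v)\,\frac{y_a(1-s_a^{-2})}{v-s_a^{-2}y_a}\prod_{j=1}^{a-1}\frac{v-y_j}{v-s_j^{-2}y_j}\prod_{d=1}^{s}\frac{v-\theta_d^{-2}w_d}{v-w_d}\,dv,
\end{equation*}
and similarly for $\Phi^t_l$ in a variable $u$. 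Using \Cref{lemma:phi_psi_orthogonal} again, the sum over $k,l$ with $M^{-1}$ is equivalent to the residue calculus identity producing the factor $\prod_{k=1}^{N}\frac{(u-y_k)(v-x_k)}{(u-x_k)(v-y_k)}$, and the bilinear $\frac{1}{u-v}$ pole arises from the telescoping between the $k$- and $l$-sums. The contour nesting in the statement ($u$ outside $v$ when $s\le t$, and vice versa) is dictated by whether the linear term $-\phi_{[s,t)}(a,b)\mathbf{1}_{s<t}$ is absorbed into the residue at $u=v$ or not.

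The main obstacle will be the third step: reconciling the detailed product structure of the integrand in \eqref{eq:ascending_FG_process_kernel_text}, in particular ensuring that the finite products $\prod_{j=1}^{a-1}\frac{v-y_j}{v-s_j^{-2}y_j}$, $\prod_{j=1}^{a'-1}\frac{u-s_j^{-2}y_j}{u-y_j}$, and the products over $(w_d,\theta_d)$ emerge with the correct indices, and that the contour prescriptions (encircling $y_i,\theta_j^{-2}w_j$ but not $x_k$ for $u$; encircling $y_i,w_j$ but not $s_k^{-2}y_k$ for $v$) match exactly. This amounts to careful bookkeeping rather than a deep new idea, but it has to be executed with attention to which singularities are picked up during the Gram-matrix inversion and during the contour deformation separating the $u$- and $v$-integrals.
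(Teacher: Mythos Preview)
Your overall strategy---rewrite the joint density in product-of-determinants form via \Cref{thm:F_formula} and \Cref{prop:skew_G_Jacobi_Trudi}, then apply the Eynard--Mehta theorem from \cite{borodin2005eynard}---is exactly the approach the paper takes in \Cref{appB:Eynard_Mehta}. The ingredients you identify (the Jacobi--Trudy determinant for each $G_{\lambda^{(t)}/\lambda^{(t-1)}}$, the determinantal formula for $F_{\lambda^{(T)}}$, and the $\varphi/\psi$ machinery) are the right ones.

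There is, however, a genuine gap in your second step. You assert that by choosing $\psi_k$ as the ``initial biorthogonal dual,'' \Cref{lemma:phi_psi_orthogonal} collapses the Gram matrix to something diagonal or triangular. This is not how that lemma can be used: the biorthogonality $\oint \varphi_k(z)\psi_l(z)\,dz=\mathbf{1}_{k=l}$ is with respect to \emph{contour integration in $z$}, whereas the Gram matrix in the Eynard--Mehta setup is a \emph{sum over particle positions}. These are different pairings, and the Gram matrix is in fact not triangular. In the paper's argument the initial factor $G_{\lambda^{(1)}}$ is handled by the non-skew Jacobi--Trudy formula of \Cref{prop:nonskew_G_Jacobi_Trudi}, giving boundary functions $\mathsf{h}_{\cdot,j}$ indexed by $j=1,\ldots,N$ and depending on $y_j$. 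The Gram matrix then pairs these with $\varphi_\cdot(x_i)$ through the $\mathsf{g}$-transitions, and the key computation (via the telescoping identity $\sum_{k\ge0}\varphi_k(u)\psi_k(v)=1/(u-v)$) yields the Cauchy-type matrix
\[
M_{ij}=\frac{1}{y_i-x_j}\prod_{t=1}^{T}\frac{x_j-\theta_t^{-2}w_t}{x_j-w_t},
\]
which is inverted explicitly via the Cauchy determinant. Your notation $\det[\psi_{N-i}(a_j^{(0)})]$ also does not quite parse, since $\psi_k$ takes a complex argument rather than an integer position; you have not actually specified what your left boundary functions are.

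So: same route, but your proposed mechanism for the Gram inversion would fail as written. The missing idea is that the Gram matrix is a genuine Cauchy matrix in $(y_i,x_j)$ rather than something that biorthogonality makes trivial, and that recognizing and inverting it is where the factor $\prod_{k=1}^{N}\frac{(u-y_k)(v-x_k)}{(u-x_k)(v-y_k)}$ in the kernel ultimately comes from.
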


We prove \Cref{thm:ascending_FG_process_kernel}
using an
Eynard--Mehta type approach
(e.g., see \cite{borodin2005eynard})
which is possible due to determinantal formulas for our symmetric functions
from
\Cref{sec:biorthogonality}. This approach is quite standard and is 
deferred to 
\Cref{appB:Eynard_Mehta}.

Moreover, in \Cref{sec:Fock_and_FG_process}
below we 
discuss
more general 
FG processes (having the structure similar to the general Schur processes of \cite{okounkov2003correlation})
and connect them to 
fermionic operators in the Fock space (developed in \Cref{sec:fermionic_operators}). 
We employ this connection 
to obtain a generating function for the correlation
kernel. In \Cref{sub:ascending_and_Fock}
we check that the Fock space approach leads to the same correlation kernel in the ascending case.

\begin{corollary}
	\label{cor:FG_measure_kernel}
	The FG measure (\Cref{def:FG_measure}) gives
	rise to a determinantal point
	process 
	$\mathcal{S}(\lambda)$ on $\mathbb{Z}_{\ge1}$
	with the correlation kernel
	\begin{equation}
		\label{eq:FG_measure_kernel}
		\begin{split}
			K_{\mathscr{M}}(a,a')
			&
			=
			\frac{1}{(2\pi\mathbf{i})^2}
			\oint_{\Gamma_{y,\theta^{-2}w}}du
			\oint_{\Gamma_{y,w}}
			dv
			\prod_{k=1}^{N}\frac{(u-y_k)(v-x_k)}{(u-x_k)(v-y_k)}
			\prod_{i=1}^M 
			\frac{(u-w_i)(v-\theta_i^{-2}w_i)}{(v-w_i)(u-\theta_i^{-2}w_i)}
			\\
			&\hspace{80pt}
			\times
			\frac{1}{u-v}
			\frac{y_{a}(1-s_{a}^{-2})}{v-s_{a}^{-2}y_{a}}
			\frac{1}{u-y_{a'}}
			\prod_{j=1}^{a-1}
			\frac{v-y_j}{v-s_j^{-2}y_j}
			\prod_{j=1}^{a'-1}
			\frac{u-s_j^{-2}y_j}{u-y_j}
			,
		\end{split}
	\end{equation}
	with the integration contours
	are the same as in \eqref{eq:ascending_FG_process_kernel_text}, 
	and the $u$ contour is outside the $v$ contour.
\end{corollary}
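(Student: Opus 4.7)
The plan is to realize the FG measure as the time-$T$ marginal of an ascending FG process with $T = M$ steps, using the single-variable specializations $(w_t, \theta_t)$, and then simply specialize $t = t' = M$ in Theorem~\ref{thm:ascending_FG_process_kernel}. This reduces the corollary to a direct consequence of what has already been proved. First I would check that the hypotheses line up: the condition $\rho' = (\mathbf{w}; \boldsymbol\uptheta) \in \mathsf{Spec}_M$ is imposed pairwise, so each $(w_t, \theta_t) \in \mathsf{Spec}_1$; likewise, the compatibility condition $(\rho, \rho') \in \mathsf{Comp}$ in \eqref{eq:compatible_specializations} is exactly the condition \eqref{eq:Eynard_asc_FG_condition} required to form the ascending FG process with the $(w_t, \theta_t)$.

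Next, I would apply the branching rule for $G$ (\Cref{prop:F_G_branching}) iteratively to the joint weight \eqref{eq:ascending_process}, summing over $\lambda^{(1)}, \ldots, \lambda^{(T-1)}$ to obtain
\begin{equation*}
	\sum_{\lambda^{(1)}, \ldots, \lambda^{(T-1)}} \mathscr{AP}(\lambda^{(1)}, \ldots, \lambda^{(T-1)}, \lambda) = \frac{1}{Z}\, F_\lambda(\rho)\, G_\lambda(w_1,\ldots,w_T; \mathbf{y}; \theta_1, \ldots, \theta_T; \mathbf{s}).
\end{equation*}
Comparing $Z$ in \eqref{eq:Z_ascending_process} with $Z(\rho)\Pi(\rho;\rho')$ from \eqref{eq:Pi_Z_notation_} confirms that the right-hand side is precisely $\mathscr{M}(\lambda)$ from \eqref{eq:FG_measure}. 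Consequently, the point configuration $\mathcal{S}(\lambda)$ coming from the FG measure coincides in distribution with the horizontal slice $\{T\} \times \mathcal{S}(\lambda^{(T)})$ of $\mathcal{S}^{(T)}$ from \eqref{eq:S_T_notation}.

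Finally, restricting the correlation functions in \eqref{eq:corr_function_ascending_FG_in_text} to points with $t_i = t_j = T$ yields determinantal correlations with kernel $K_{\mathscr{M}}(a,a') := K_{\mathscr{AP}}(T,a; T,a')$. Specializing \eqref{eq:ascending_FG_process_kernel_text} at $t = t' = T = M$ collapses the two products over $d$ and $c$ into a single product $\prod_{i=1}^M (u-w_i)(v-\theta_i^{-2}w_i)/[(v-w_i)(u-\theta_i^{-2}w_i)]$, and the contour convention ``$u$ outside for $t \le t'$'' gives exactly the ordering stated in the corollary. All other factors match \eqref{eq:FG_measure_kernel} literally, completing the argument.

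There is no genuine obstacle here: the main work has been absorbed into \Cref{thm:ascending_FG_process_kernel}, and the remaining bookkeeping is routine. The only points requiring a moment's attention are the equivalence of the two compatibility conditions and the correct interpretation of the $u,v$ contour nesting at the diagonal slice $t = t'$.
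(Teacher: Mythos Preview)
Your proposal is correct and follows exactly the approach the paper intends: the paper states this as a corollary with no separate proof, having already noted (just before \Cref{def:FG_measure}) that the marginal of $\lambda^{(j)}$ under the ascending FG process is the FG measure with $\rho' = (w_1,\ldots,w_j;\theta_1,\ldots,\theta_j)$, so the kernel is obtained by setting $t=t'$ in \Cref{thm:ascending_FG_process_kernel}. Your write-up simply makes the implicit steps explicit.
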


\subsection{Horizontally homogeneous model and Schur measure}
\label{sub:Schur_process_part_case}

In the horizontally homogeneous case 
$s_i=s$, $y_i=1$ for all $i\ge1$,
thanks to \Cref{prop:F_G_homogeneous_through_Schur},
the FG measure reduces to the Schur measure 
\begin{equation}
	\label{eq:compare_FG_measure_and_Schur_measure}
	\mathbb{P}_{\mathrm{Schur}}(\lambda)=\frac{1}{Z}
	s_\lambda\left( \frac{1-s^2x_1}{s^2(1-x_1)},\ldots, \frac{1-s^2x_N}{s^2(1-x_N)} \right)
	s_\lambda
	\left( 
		\biggl\{ \frac{s^2(1-x_j)}{1-s^2 x_j} \biggr\}_{j=1}^M
		\,\middle/\,
		\biggl\{ \frac{s^2(w_j\theta_j^{-2}-1)}{1-s^2\theta_j^{-2} w_j} \biggr\}_{j=1}^M
	\right)
\end{equation}
for a suitable normalization constant $Z$,
and
$\bigl|\frac{1-s^2x_i}{1-x_i}\frac{1-w_j}{1-s^2w_j}\bigr|<1-\delta<1$ for all $i,j$
(this condition follows from \Cref{def:compatible_spec}).
Therefore, by \cite{okounkov2001infinite}, \cite{borodin2005eynard}
its determinantal correlation kernel has a double contour integral form. 
Let us compare that expression with \Cref{cor:FG_measure_kernel}.

First, we recall the correlation kernel of the Schur measure from 
\cite{okounkov2001infinite}, \cite{borodin2005eynard}.
A function that enters the kernel is
\begin{equation*}
	\Phi_{\mathrm{Schur}}(U)
	=
	\underbrace{\prod_{i=1}^{N}\frac{1}{1-\frac{1-s^2x_i}{s^2(1-x_i)}U}}_{H_1(U)}
	\underbrace{\prod_{j=1}^{M}\frac{1-\frac{s^2(1-w_j)}{1-s^2w_j}U^{-1}}
	{1+\frac{s^2(\theta_j^{-2}w_j-1)}{1-s^2\theta_j^{-2}w_j}U^{-1}}}_{1/H_2(U^{-1})}
	,
\end{equation*}
where $H_1,H_2$ are the generating functions associated with the
two
specializations of the Schur functions in
\eqref{eq:compare_FG_measure_and_Schur_measure}.
The correlation kernel is then given by
\begin{equation}
	\label{eq:Schur_measure_kernel}
	\begin{split}
		K_{\mathrm{Schur}}(a,a')=
		\frac{1}{(2\pi\mathbf{i})^2}
		\oint
		\oint
		\frac{dU dV}{U-V}\frac{V^{a'-N-1}}{U^{a-N}}
		\frac
		{\Phi_{\mathrm{Schur}}(U)}
		{\Phi_{\mathrm{Schur}}(V)},\qquad 
		a,a'\in \mathbb{Z}_{\ge1}.
	\end{split}
\end{equation}
The shifts by $N+1$ in $a,a'$ come from 
the fact that our
encoding of the 
particle configurations in $\mathbb{Z}$ is different compared to the 
Schur measures. The integration contours are such that 
$|V| < |U|$ and the Taylor expansions of $H_{1}(U),H_1(V),
H_2(U^{-1}),H_2(V^{-1})$ on the contours are into suitable generating series in 
$U$ and $V$, respectively:
\begin{equation*}
	\left|\frac{1-s^2x_i}{s^2(1-x_i)}\,U\right|<1,\qquad 
	\left|\frac{s^2(\theta_j^{-2}w_j-1)}{1-s^2\theta_j^{-2}w_j}\,U^{-1}\right|<1,\qquad 
	\left|\frac{s^2(1-w_j)}{1-s^2 w_j}\,V^{-1}\right|<1,
\end{equation*}
for all $i,j$.

Let us change the variables in \eqref{eq:Schur_measure_kernel}
as 
\begin{equation*}
	U=\frac{u-1}{u-s^{-2}},\qquad V=\frac{v-1}{v-s^{-2}},
	\qquad 
	\frac{dUdV}{U-V}
	=
	\frac{s^2(s^2-1)}{(1-s^2u)(1-s^2v)}\frac{du dv}{u-v},
\end{equation*}
which yields
\begin{equation*}
	\begin{split}
		K_{\mathrm{Schur}}(a,a')
		&=
		\frac{1}{(2\pi\mathbf{i})^{2}}
		\oint\oint
		\frac{du dv}{u-v}
		\frac{(1-s^{-2})}{(u-s^{-2})(v-1)}
		\left( \frac{u-s^{-2}}{u-1} \right)^{a}
		\left( \frac{v-1}{v-s^{-2}} \right)^{a'}
		\\&
		\hspace{120pt}
		\times
		\left( \frac{u-1}{v-1} \right)^{N}\,
		\prod_{i=1}^{N}
		\frac{v-x_i}{u-x_i}
		\prod_{j=1}^{M}
		\frac
		{u-w_j}
		{v-w_j}
		\frac
		{v-\theta_j^{-2}w_j}
		{u-\theta_j^{-2}w_j},
	\end{split}
\end{equation*}
over the contours such that 
\begin{equation*}
	\begin{split}
		\left|\frac{v-1}{v-s^{-2}}
		\frac{u-s^{-2}}{u-1}\right|
		<1,
		\qquad
		&
		\left|\frac{s^{-2}-x_i}{1-x_i}\frac{u-1}{u-s^{-2}}\right|<1,
		\\
		\left|\frac{\theta_j^{-2}w_j-1}{s^{-2}-\theta_j^{-2}w_j}\frac{u-s^{-2}}{u-1}\right|<1,
		\qquad 
		&
		\left|\frac{1-w_j}{s^{-2}-w_j}\frac{v-s^{-2}}{v-1}\right|<1.
	\end{split}
\end{equation*}
One can check that these conditions hold on the contour $v$ around $1$ and $w$,
and the contour $u$ containing the $v$ contour and also encircling
$w/\theta^2$.
Thus, our \Cref{cor:FG_measure_kernel} 
reduces 
(up to the swap $a\leftrightarrow a'$ which does not affect the determinantal point process)
to the known kernel of the particular Schur measure \eqref{eq:compare_FG_measure_and_Schur_measure}.

\section{Fermionic operators}
\label{sec:fermionic_operators}

In this section
we develop fermionic operators in the Fock space 
which serve as inhomogeneous analogues of the operators
employed in studying Schur measures and processes in 
\cite{okounkov2001infinite}, \cite{okounkov2003correlation}.

\subsection{Simplified commutation relations}
\label{appC:simplified_relations}

Let $V^{(k)}$, $k\in \mathbb{Z}$, be the two-dimensional complex space
with basis $e_0^{(k)}$, $e_1^{(k)}$. We will consider tensor products of the form
\begin{equation}
	\label{eq:tensor_product_space_VMN}
	V^{[M,N]}:=V^{(M)}\otimes V^{(M+1)}\otimes \ldots\otimes V^{(N)},\qquad M\le N.
\end{equation}
As usual, when working with row operators 
(see the beginning of \Cref{sub:row_operators}),
we think that each $V^{(k)}$ carries two parameters 
$(y_k,s_k)$, $k\in \mathbb{Z}$. 
\begin{remark}
	\label{rmk:negative_indices_of_parameters}
	This is the first time when we allow the indices of the parameters $(y_k,s_k)$
	to be nonpositive. However, when applying our computations to actual probability measures,
	the indices of $(y_j,s_j)$ will always satisfy $j\in \mathbb{Z}_{\ge1}$.
\end{remark}

Recall the operators $A,B,C,D$ \eqref{eq:abcdv} acting in each $V^{(k)}$. 
They depend on $x,r$, and also on the parameters $(y_k,s_k)$ 
attached to $V^{(k)}$. We omit the latter in the notation, and write $A=A(x,r)$, and so on.
Via \eqref{eq:abcdv_n2}, these operators also act on any tensor products 
of the form $V^{([M,N])}$.
Our first observation is that with special values of the parameters $(x,r)$, 
the operators $A,B,C,D$ satisfy certain simplified relations:

\begin{proposition}
	\label{prop:simplified_commutation_relations_first_lemma}
	For any $x,z,t\in \mathbb{C}$ we have\footnote{All square roots involved in identities
	in this proposition and throughout the section are always squared in the 
	action of the operators, so we do not need to specify the branches.}
	\begin{equation}
		\label{eq:simplified_commutation_relations_first_lemma_1}
		\begin{split}
			& 
			B (x, t) B (z,\sqrt{z/x}) = 0 = 
			B(z,t)
			B (x,\sqrt{x/z}) 
			;
			\\ & 
			C(x,\sqrt{x/z})C(z,t)
			=0=
			C(z,\sqrt{z/x})C(x,t)
			,
		\end{split}
	\end{equation}
	and
	\begin{equation}
		\label{eq:simplified_commutation_relations_first_lemma_2}
		\begin{split}
			&
			B(z,\sqrt{z/x})
			D(x,\sqrt{x/z})
			+
			D(z,\sqrt{z/x})
			B(x,\sqrt{x/z})=0
			;
			\\
			&
			D(z,\sqrt{z/x})C(x,\sqrt{x/z})=C(z,\sqrt{z/x})D(x,\sqrt{x/z})
			;
			\\ 
			&
			D(x,\sqrt{x/z})A(z,\sqrt{z/x})-C(x,\sqrt{x/z})B(z,\sqrt{z/x})
			\\&\hspace{100pt}=
			D(z,\sqrt{z/x})A(x,\sqrt{x/z})+B(z,\sqrt{z/x})C(x,\sqrt{x/z})
			;
			\\ 
			& 
			A(x,\sqrt{x/z})D(z,\sqrt{z/x})
			-B(x,\sqrt{x/z})C(z,\sqrt{z/x})
			\\&\hspace{100pt}=
			A(z,\sqrt{z/x})D(x,\sqrt{x/z})+C(z,\sqrt{z/x})B(x,\sqrt{x/z})
			.
		\end{split}
	\end{equation}
\end{proposition}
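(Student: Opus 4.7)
The strategy is direct substitution into the commutation relations collected in \Cref{prop:ABCD_YBE}, and the key observation is that at the special parameter values $(x,\sqrt{x/z})$ and $(z,\sqrt{z/x})$ various rational coefficients in those relations simplify dramatically (often to $0$, $\pm 1$, or cancel identically). It suffices to verify each identity on any single tensor factor $V^{(k)}$ (equivalently, to use the relations as identities in the algebra generated by $A,B,C,D$), since both sides act as products of operators on arbitrary tensor products $V^{[M,N]}$.

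For the first group of identities (the vanishing $BB$ and $CC$ relations), I would invoke \eqref{eq:B2B1_commute} and \eqref{eq:C2C1_commute}. For instance, setting $(x_2,r_2)=(x,t)$ and $(x_1,r_1)=(z,\sqrt{z/x})$ in \eqref{eq:B2B1_commute} yields a coefficient with numerator $r_1^{-2}x_1 - x_2 = (x/z)\cdot z - x = 0$, so $B(x,t)B(z,\sqrt{z/x})=0$ outright; the remaining three vanishing relations come from the analogous substitutions.

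For the three ``sum'' or ``difference'' identities in \eqref{eq:simplified_commutation_relations_first_lemma_2}, the plan is to use \eqref{eq:B2D1}, \eqref{eq:D2_C1}, \eqref{eq:ADBC_1}, and \eqref{eq:ADBC_2} respectively. The first two are immediate: for example, substituting $(x_2,r_2)=(z,\sqrt{z/x})$, $(x_1,r_1)=(x,\sqrt{x/z})$ into \eqref{eq:B2D1} makes the first coefficient vanish and the second equal $-1$, giving the $BD + DB = 0$ identity. For the $A,B,C,D$ identities, the denominator $r_2^{-2}x_2 - x_1$ in \eqref{eq:ADBC_1}--\eqref{eq:ADBC_2} vanishes at the special values, so I would first clear denominators in those relations to obtain identities valid for all parameters, and then substitute; the resulting numerators $x_1(r_1^{-2}-1)$, $r_2^{-2}x_2-r_1^{-2}x_1$, and $x_2-x_1$ all become $\pm(x-z)$, yielding the desired identities after dividing by $x-z$.

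There is no essential obstacle here beyond bookkeeping: every identity reduces to a one-line algebraic check once the correct ``parent'' commutation relation from \Cref{prop:ABCD_YBE} is chosen. The only minor subtlety is the vanishing-denominator issue for \eqref{eq:ADBC_1}--\eqref{eq:ADBC_2}, which is handled by clearing denominators before specializing, a step that is justified because the relations in \Cref{prop:ABCD_YBE} hold as rational identities in the parameters.
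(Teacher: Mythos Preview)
Your proposal is correct and matches the paper's proof essentially line for line: both proceed by direct substitution of the special parameters $(x,\sqrt{x/z})$, $(z,\sqrt{z/x})$ into the relations \eqref{eq:B2B1_commute}, \eqref{eq:C2C1_commute}, \eqref{eq:B2D1}, \eqref{eq:D2_C1}, \eqref{eq:ADBC_1}, \eqref{eq:ADBC_2} of \Cref{prop:ABCD_YBE}. Your treatment of the vanishing denominator $r_2^{-2}x_2-x_1$ in \eqref{eq:ADBC_1}--\eqref{eq:ADBC_2} (clear denominators first, then specialize and divide by the common factor $x-z$) is in fact more explicit than the paper, which simply says the last two identities follow ``in a similar way''; one small expositional note is that the reduction to a single tensor factor is unnecessary, since the relations in \Cref{prop:ABCD_YBE} are already asserted for arbitrary tensor products.
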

\begin{proof}
	For \eqref{eq:simplified_commutation_relations_first_lemma_1}, we use 
	relations 
	\eqref{eq:B2B1_commute},
	\eqref{eq:C2C1_commute}.
	In particular, 
	to get the first identity in \eqref{eq:simplified_commutation_relations_first_lemma_1},
	take
	$(x_1,r_1)=(z,\sqrt{z/x})$ and $(x_2,r_2)=(x,t)$,
	which implies
	$(t^{-2}x-z)B(x,t)B(z,\sqrt{z/x})=0$.
	All other identities in \eqref{eq:simplified_commutation_relations_first_lemma_1}
	are established in a similar way.

	Let us now turn to \eqref{eq:simplified_commutation_relations_first_lemma_2}.
	For the first identity, use
	\eqref{eq:B2D1},
	\begin{equation*}
		(x_1-x_2)B (x_2, r_2) D (x_1, r_1) = (r^{-2}_1 x_1 -
		x_2)
		D (x_1, r_1) B (x_2, r_2) + 
		x_2 (1 - r^{-2}_2) D (x_2, r_2) B (x_1, r_1)
	\end{equation*}
	with $(x_1,r_1)=(x,\sqrt{x/z})$ and $(x_2,r_2)=(z,\sqrt{z/x})$, which yields
	\begin{equation*}
		(x-z)B (z,\sqrt{z/x}) D (x,\sqrt{x/z})=
		z(1-x/z)D(z,\sqrt{z/x})B(x,\sqrt{x/z}),
	\end{equation*}
	and thus we obtain the first identity from 
	\eqref{eq:simplified_commutation_relations_first_lemma_2}
	The second identity is analogous with the help of \eqref{eq:D2_C1}.
	The last two identities follow in a similar way from
	\eqref{eq:ADBC_1} and \eqref{eq:ADBC_2}, respectively.
\end{proof}

Recall that each subset $\mathcal{T}\subseteq\left\{ M,M+1,\ldots,N  \right\}$
(where $M\le N$) corresponds to a vector $e_{\mathcal{T}}\in V^{[M,N]}$ 
defined as
\begin{equation*}
	e_{\mathcal{T}}=e^{(M)}_{k_M}\otimes e^{(M+1)}_{k_{M+1}}\otimes
	\ldots\otimes e_{k_N}^{(N)},\qquad k_i=k_i(\mathcal{T})=\mathbf{1}_{i\in \mathcal{T}}.
\end{equation*}
Also recall the inner product $\langle \cdot,\cdot \rangle $ on tensor 
powers of $\mathbb{C}^2$ such as $V^{[M,N]}$, under which the vectors of the form
$e_{\mathcal{T}}$ are orthonormal.

\Cref{prop:fermionic_first_correlation,prop:normalized_Phi_action,prop:normalized_Phi_star_action}
below show that matrix elements 
of
$D(x,\sqrt{x/z})B(z,\sqrt{z/x})$
and 
$D(x,\sqrt{x/z})C(z,\sqrt{z/x})$
can be used
to detect if two subsets of $\left\{ M,\ldots,N  \right\}$
are different by a single element. 
This is summarized in 
\Cref{thm:Psi_PsiStar_Fock} below.

\begin{proposition}
	\label{prop:fermionic_first_correlation}
	Fix nonzero $x, z\in \mathbb{C}$; integers $m \ge
	0$ and $M \le N$; and two integer sets
	\begin{equation*}
		\mathcal{R}=(r_1<r_2<\ldots<r_m ),
		\quad 
		\mathcal{T}=(t_1<t_2<\ldots<t_m<t_{m+1} ),
		\quad 
		\mathcal{R},\mathcal{T}\subset\left\{ M,M+1,\ldots,N  \right\}.
	\end{equation*}
	If $\mathcal{R}$ is not a subset of $\mathcal{T}$, then 
	\begin{flalign}
	\label{eq:RT_subset_zero}
		\bigl\langle e_{\mathcal{R}}, D ( x, \sqrt{x/z}) B ( z, \sqrt{z/x} )
		e_{\mathcal{T}} \bigr\rangle = 0 = \bigl\langle e_{\mathcal{T}}, D ( x,
		\sqrt{x/z} ) C ( z, \sqrt{z/x} ) e_{\mathcal{R}} \bigr\rangle.
	\end{flalign}
\end{proposition}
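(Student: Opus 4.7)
The plan is to prove both identities by induction on $N - M$, the length of the ambient tensor product $V^{[M,N]}$, using the simplified commutation relations from Proposition 6.1 together with a crucial preliminary identity that I will establish first.

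\smallskip

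\textbf{Preliminary step.} I would begin by showing that the operator $D(x,\sqrt{x/z})\,D(z,\sqrt{z/x})$ acts as the identity on every $V^{[M,N]}$. On a single site this is a direct check: the scalars $W(1,0;1,0\mid x,y,\sqrt{x/z},s)$ and $W(1,0;1,0\mid z,y,\sqrt{z/x},s)$ are reciprocal, so $D(x,\sqrt{x/z})D(z,\sqrt{z/x})e_i=e_i$ for $i\in\{0,1\}$. For the inductive step in the number of sites, I would expand via \eqref{eq:abcdv_n2} into four terms and use Proposition 6.1 together with the easily-derived companion identities $B(x,\sqrt{x/z})D(z,\sqrt{z/x})=-D(x,\sqrt{x/z})B(z,\sqrt{z/x})$ and $C(x,\sqrt{x/z})D(z,\sqrt{z/x})=D(x,\sqrt{x/z})C(z,\sqrt{z/x})$, both obtained by specializing \eqref{eq:B2D1} and \eqref{eq:C2_D1}. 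Together with $B(x,\sqrt{x/z})B(z,\sqrt{z/x})=0$, these force the three cross terms to cancel in pairs, leaving the identity.

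\smallskip

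\textbf{Leibniz-type recursion.} The second preparatory step is a recursion for $\Phi:=D(x,\sqrt{x/z})B(z,\sqrt{z/x})$. Expanding $\Phi(v_1\otimes v_2)$ via \eqref{eq:abcdv_n2} produces four terms; applying $B^2=0$ kills one, $DD=\mathbb{1}$ simplifies another, $BD+DB=0$ turns the $BD$ term into a multiple of $\Phi v_1$, and the third identity of \eqref{eq:simplified_commutation_relations_first_lemma_2} consolidates the remainder into
\[
\Phi(v_1\otimes v_2)=v_1\otimes \Phi v_2\;+\;\Phi v_1\otimes \Xi v_2,
\]
where $\Xi:=\check{D}\hat{A}+\check{B}\hat{C}$ preserves particle number on a single site. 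An entirely analogous computation, using the second and fourth identities of \eqref{eq:simplified_commutation_relations_first_lemma_2} and the relation $C^2=0$, produces the corresponding recursion for $\Phi^*:=D(x,\sqrt{x/z})C(z,\sqrt{z/x})$.

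\smallskip

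\textbf{Induction.} With the recursion in hand, the base case $N=M$ is vacuous: in a single 2-dimensional $V^{(M)}$ the only data with $|\mathcal{T}|=|\mathcal{R}|+1$ is $\mathcal{T}=\{M\}$, $\mathcal{R}=\varnothing$, for which trivially $\mathcal{R}\subset\mathcal{T}$. For the inductive step I would write $V^{[M,N]}=V^{[M,N-1]}\otimes V^{(N)}$, split $\mathcal{T}=\mathcal{T}'\sqcup\mathcal{T}''$, $\mathcal{R}=\mathcal{R}'\sqcup\mathcal{R}''$ with $\mathcal{T}'',\mathcal{R}''\subseteq\{N\}$, and insert the recursion into $\langle e_{\mathcal{R}},\Phi e_{\mathcal{T}}\rangle$ to get two terms. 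I would then check the four cases based on whether $N$ lies in $\mathcal{T}$ and $\mathcal{R}$: if $N\in\mathcal{R}\setminus\mathcal{T}$, both terms vanish by particle-number considerations on the single site $V^{(N)}$; otherwise exactly one term survives, and it is either $\mathbf{1}_{\mathcal{R}'=\mathcal{T}'}\cdot(\mathrm{scalar})$ or $\langle e_{\mathcal{R}'},\Phi e_{\mathcal{T}'}\rangle\cdot(\mathrm{scalar})$, both of which vanish because the assumption $\mathcal{R}\not\subset\mathcal{T}$ translates (in each of these cases) to $\mathcal{R}'\ne\mathcal{T}'$ or $\mathcal{R}'\not\subset\mathcal{T}'$, so the induction hypothesis applies. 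The identity \eqref{eq:RT_subset_zero} for $D(x,\sqrt{x/z})C(z,\sqrt{z/x})$ is handled in the same manner using the $\Phi^*$-recursion.

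\smallskip

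\textbf{Main obstacle.} The technical heart of the argument is the preliminary identity $D(x,\sqrt{x/z})D(z,\sqrt{z/x})=\mathbb{1}$: without it the recursion in Step 2 would carry a nontrivial first-factor dressing and would not close cleanly under induction. Verifying this cancellation—and organizing the four commutation relations in Proposition 6.1 plus their $BD$, $CD$ specializations so that exactly the right pairs annihilate—is the one place where the special choice of parameters $r_1r_2=1$ (i.e., $r=\sqrt{x/z}$ paired with $r'=\sqrt{z/x}$) is crucial.
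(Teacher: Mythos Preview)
Your approach is correct and genuinely different from the paper's. The paper does not set up an induction: instead it locates a specific index $n$ where $\mathcal R$ first falls outside $\mathcal T$, splits $V^{[M,N]}$ at $r_n$ (and in one case at $r_n-1$ as well), and kills the matrix element directly using either $B(x,\sqrt{x/z})B(z,\sqrt{z/x})=0$ or the cancellation $B(z,\sqrt{z/x})D(x,\sqrt{x/z})+D(z,\sqrt{z/x})B(x,\sqrt{x/z})=0$. Your route front-loads the identity $D(x,\sqrt{x/z})D(z,\sqrt{z/x})=\mathbb{1}$ (which the paper only records later, inside the proof of Proposition~6.5) and packages everything into the clean Leibniz rule $\Phi(v_1\otimes v_2)=v_1\otimes\Phi v_2+\Phi v_1\otimes\Xi v_2$; this makes the induction on $N-M$ essentially mechanical and also sets you up nicely for the subsequent computations in Propositions~6.5 and~6.6.

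One asymmetry worth flagging: the analogous recursion for $\Phi^*=D(x,\sqrt{x/z})C(z,\sqrt{z/x})$ comes out as
\[
\Phi^*(v_1\otimes v_2)=\Phi^* v_1\otimes v_2+\Xi' v_1\otimes \Phi^* v_2,
\]
with the ``dressing'' $\Xi'=D_1A_2+B_1C_2$ on the \emph{left} factor. If you keep the split $V^{[M,N-1]}\otimes V^{(N)}$, then in the case $N\in\mathcal T\setminus\mathcal R$ the surviving term is $\langle e_{\mathcal T'},\Xi' e_{\mathcal R'}\rangle$ on the big space $V^{[M,N-1]}$, and ``$\Xi'$ preserves particle number'' alone does not force this to vanish. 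The fix is trivial: for $\Phi^*$ peel off the \emph{leftmost} site $V^{(M)}$ instead, so that $\Xi'$ acts on a single site (where it is diagonal) and the four-case check goes through exactly as you wrote it for $\Phi$. With that adjustment the argument is complete.
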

\begin{proof}
	We only establish the first equality in \eqref{eq:RT_subset_zero},
	the second equality follows in a similar manner.
	For convenience of notation, we set $r_{m+1}=t_{m+2}=+\infty$ throughout the proof.

	Since $\mathcal{R}$ is not a subset of $\mathcal{T}$, there exists an index 
	$1\le n\le m$ such that either $t_n<r_n<t_{n+1}$, 
	or $t_{n+1}<r_n<t_{n+2}$. Fix such $n$.
	Define sets $\mathcal{R}'=\mathcal{R}\cap (-\infty,r_n-1]$
	and $\mathcal{R}''=\mathcal{R}\cap [r_n,+\infty)$,
	and similarly $\mathcal{T}',\mathcal{T}''$.

	First, assume that $t_{n+1}<r_n<t_{n+2}$. Then
	since $|\mathcal{R}'| = n-1 = |\mathcal{T}'| - 2$, 
	we have 
	using $V^{[M,N]}=V^{[M,r_n-1]}\otimes V^{[r_n,N]}$
	and
	\eqref{eq:abcdv_n2}, picking 
	$B$ twice for the left tensor product:
	\begin{multline*}
		\bigl\langle 
		e_{\mathcal{R}},
		D(x,\sqrt{x/z})B(z,\sqrt{z/x})
		e_{\mathcal{T}}
		\bigr\rangle 
		\\=
		\bigl\langle 
		e_{\mathcal{R}'},
		B(x,\sqrt{x/z})B(z,\sqrt{z/x})
		e_{\mathcal{T}'}
		\bigr\rangle 
		\bigl\langle 
		e_{\mathcal{R}''},
		C(x,\sqrt{x/z})A(z,\sqrt{z/x})
		e_{\mathcal{T}''}
		\bigr\rangle .
	\end{multline*}
	The latter expression vanishes by 
	identity \eqref{eq:simplified_commutation_relations_first_lemma_1}
	from \Cref{prop:simplified_commutation_relations_first_lemma}.

	Now let us assume that $t_n<r_n<t_{n+1}$. Define sets
	$\mathcal{R}_0'=\mathcal{R}\cap(-\infty,r_n]$, 
	$\mathcal{R}_0''=\mathcal{R}\cap [r_n+1,+\infty)$,
	and similarly $\mathcal{T}_0',\mathcal{T}_0''$. We have
	$|\mathcal{R}_0'| =|\mathcal{T}_0'| =n$, so
	with
	$V^{[M,N]}=V^{[M,r_n]}\otimes V^{[r_n+1,N]}$,
	in the expansion
	\eqref{eq:abcdv_n2} we need to take the $D$ operator twice. We have
	\begin{multline*}
		\bigl\langle 
		e_{\mathcal{R}}, 
		D ( x, \sqrt{x/z}) B ( z, \sqrt{z/x} )
		e_{\mathcal{T}} 
		\bigr\rangle
		\\=
		\bigl\langle 
		e_{\mathcal{R}_0'}, 
		D ( x, \sqrt{x/z}) D ( z, \sqrt{z/x} )
		e_{\mathcal{T}_0'} 
		\bigr\rangle
		\bigl\langle 
		e_{\mathcal{R}_0''}, 
		D ( x, \sqrt{x/z}) B ( z, \sqrt{z/x} )
		e_{\mathcal{T}_0''}
		\bigr\rangle.
	\end{multline*}
In the first factor we apply \eqref{eq:abcdv_n2} 
with
$V^{[M,r_n]}=V^{[M,r_n-1]}\otimes V^{(r_n)}$.
Observe that $|\mathcal{R}'| = n-1 = |\mathcal{T}'| -1$, so we obtain
\begin{multline*}
	\bigl\langle 
	e_{\mathcal{R}_0'}, 
	D ( x, \sqrt{x/z}) D ( z, \sqrt{z/x} )
	e_{\mathcal{T}_0'} 
	\bigr\rangle
	\\=
	\bigl\langle 
	e_{\mathcal{R}'}, 
	D ( x, \sqrt{x/z}) B ( z, \sqrt{z/x} )
	e_{\mathcal{T}'} 
	\bigr\rangle
	\bigl\langle 
	e_1^{(r_n)},
	D ( x, \sqrt{x/z}) C ( z, \sqrt{z/x} )
	e_0^{(r_n)}
	\bigr\rangle
	\\+
	\bigl\langle 
	e_{\mathcal{R}'}, 
	B ( x, \sqrt{x/z}) D ( z, \sqrt{z/x} )
	e_{\mathcal{T}'} 
	\bigr\rangle
	\bigl\langle 
	e_1^{(r_n)},
	C ( x, \sqrt{x/z}) D ( z, \sqrt{z/x} )
	e_0^{(r_n)}
	\bigr\rangle.
\end{multline*}
This expression vanishes thanks to the first two identities in 
\eqref{eq:simplified_commutation_relations_first_lemma_2}.
\end{proof}

\subsection{Normalized operators}
\label{appC:normalized_ops}

Let us now introduce normalizations of our operators
$A,B,C,D$, which allow to 
take the limit as $M\to -\infty$, $N\to+\infty$
without running into infinite products:

\begin{definition}
	\label{def:normalized_ops}
	Fix $x, r \in \mathbb{C}$. For $M \le 0\le N$, define
	the normalized operators 
	$A^{[M, N]} (x, r)$, $B^{[M, N]} (x, r)$,
	$C^{[M, N]} (x, r)$, and $D^{[M, N]} (x, r)$ acting on $V^{[M, N]}$ by
	\begin{align*}
		A^{[M,N]}(x,r)
		&=
		\frac{A(x,r)}{\prod_{i=M}^0 W_i(1,1;1,1)\prod_{j=1}^N W_j(0,1;0,1)}
		=
		A(x,r)
		\prod_{i=M}^0
		\frac{r^2(y_i-s_i^2x)}{s_i^2(x-r^2 y_i)}
		\prod_{j=1}^N
		\frac{y_j-s_j^2x}{s_j^2(y_j-x)}
		;
		\\
		B^{[M,N]}(x,r)
		&=
		\frac{B(x,r)}{\prod_{i=M}^0 W_i(1,0;1,0)\prod_{j=1}^N W_j(0,1;0,1)}
		=
		B(x,r)
		\prod_{i=M}^0
		\frac{y_i-s_i^2x}{y_i-s_i^2r^{-2}x}
		\prod_{j=1}^N
		\frac{y_j-s_j^2x}{s_j^2(y_j-x)}
		;
		\\
		C^{[M,N]}(x,r)
		&=
		\frac{C(x,r)}{\prod_{i=M}^0 W_i(1,1;1,1)}
		=
		C(x,r)\prod_{i=M}^0
		\frac{r^2(y_i-s_i^2x)}{s_i^2(x-r^2 y_i)}
		;
		\\
		D^{[M,N]}(x,r)
		&=
		\frac{D(x,r)}{\prod_{i=M}^0 W_i(1,0;1,0)}
		=
		D(x,r)
		\prod_{i=M}^0
		\frac{y_i-s_i^2x}{y_i-s_i^2r^{-2}x}
		.
	\end{align*}
	Here $W_j$ are vertex weights \eqref{eq:weights_W}
	with the parameters 
	$W_j(\cdots)=W(\cdots\mid x;y_j;r;s_j)$.
  %
\end{definition}

\begin{definition}
	\label{def:Phi_PhiStar}
	Let us define expressions $\Phi_j,\Phi_j^*$ for $j\in \mathbb{Z}$ as 
	follows:
	\begin{align*}
		\Phi_j(x,z)&=
		\begin{cases}
			\displaystyle
			\frac{y_j (1-s_j^{-2})}{x-s_j^{-2}y_j} 
			\prod_{k = 1}^{j - 1} 
			\frac{x-y_k}{x-s_k^{-2}y_k},&j>0;\\
			\displaystyle
			\frac{y_j (1-s_j^{-2})}{x-s_j^{-2}y_j} 
			\prod_{k = j}^0 
			\frac{x-s_k^{-2}y_k}{x-y_k},&j\le 0,
		\end{cases}
		\\
		\Phi^*_j(x,z)&=
		\begin{cases}
			\displaystyle
			\frac{z-x}{z-y_j} 
			\prod_{k = 1}^{j - 1} 
			\frac{z-s_k^{-2}y_k}{z-y_k}
			,&j>0
			;\\
			\displaystyle
			\frac{z-x}{z-y_j} 
			\prod_{k = j}^0 
			\frac{z-y_k}{z-s_k^{-2}y_k}
			,&j\le 0.
		\end{cases}
	\end{align*}
\end{definition}
Note that while $\Phi_j(x,z)$ does not depend on $z$,
it is convenient to keep the
$\Phi_j,\Phi_j^*$
notation uniform.

\begin{proposition}
	\label{prop:normalized_Phi_star_action}
	Fix nonzero $x, z\in \mathbb{C}$; integers $m \ge
	0$ and $M \le N$; and two integer sets
	\begin{equation*}
		\mathcal{R}=(r_1<r_2<\ldots<r_m ),
		\quad 
		\mathcal{T}=(t_1<t_2<\ldots<t_m<t_{m+1} ),
		\quad 
		\mathcal{R},\mathcal{T}\subset\left\{ M,M+1,\ldots,N  \right\}.
	\end{equation*}
	If for some $j\in \left\{ 1,\ldots,m+1  \right\}$ we have
	$\mathcal{R}=\mathcal{T}\setminus\left\{ t_j \right\}$, then
	\begin{equation}
		\label{eq:normalized_Phi_star_action}
		\bigl\langle  
		e_{\mathcal{R}}, 
		D^{[M, N]} ( x, \sqrt{x/z}) 
		B^{[M, N]} ( z, \sqrt{z/x}) 
		e_{\mathcal{T}} 
		\bigr\rangle 
		=
		(-1)^{m - j + 1} \Phi_{t_j}^* (x,z).
	\end{equation}
\end{proposition}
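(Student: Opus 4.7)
The plan is to prove the identity by induction on $N$ (for fixed $M\le 0$), using the tensor product decomposition $V^{[M,N]} = V^{[M,N-1]} \otimes V^{(N)}$ together with the simplified commutation relations of \Cref{prop:simplified_commutation_relations_first_lemma}. I will write $D^x := D(x,\sqrt{x/z})$ and $B^z := B(z,\sqrt{z/x})$, and similarly $B^x, C^x, A^z, C^z, D^z$ for the analogous single-row operators with these special parameter assignments. A preliminary observation from \Cref{def:normalized_ops} is that in the product $D^{[M,N]}(x,\sqrt{x/z})\, B^{[M,N]}(z,\sqrt{z/x})$ the $\prod_{i=M}^{0}$ normalization factors cancel exactly between $D$ and $B$, leaving only $\prod_{j=1}^{N}\frac{s_j^{-2}y_j-z}{y_j-z}$. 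Consequently the normalized matrix element is independent of $M$ as long as $M\le \min \mathcal{T}$, and the induction proceeds on $N$ alone.

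A key auxiliary identity I would establish first is
\begin{equation*}
	D(x,\sqrt{x/z})\, D(z,\sqrt{z/x}) = \operatorname{id} \qquad \text{on every } V^{[M,N]}.
\end{equation*}
On a single factor $V^{(k)}$ this is immediate from the vertex weights. The extension to tensor products follows by induction using the tensor product formula \eqref{eq:abcdv_n2} for $D$, combined with $B^x B^z = 0$ and $C^x C^z = 0$ (from \eqref{eq:simplified_commutation_relations_first_lemma_1}) and $B^x D^z = -D^x B^z$, $D^x C^z = C^x D^z$ (from \eqref{eq:simplified_commutation_relations_first_lemma_2} with the roles of $x$ and $z$ swapped).

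The main induction splits into three cases depending on how $N$ relates to $\mathcal{T}$ and $\mathcal{R}$. If $N\notin \mathcal{T}\cup \mathcal{R}$, the expansion collapses since $B^z e_0^{(N)} = 0$; the residual column-$N$ vertex weight $(y_N-z)/(s_N^{-2}y_N-z)$ exactly cancels the column-$N$ normalization factor, and the statement reduces to the same claim on $V^{[M,N-1]}$. If $N = t_{m+1}\in \mathcal{T}\cap \mathcal{R}$ (so $j<m+1$), then expanding $B^z$ and $D^x$ on $e_{\mathcal{T}'}\otimes e_1^{(N)}$ and extracting the coefficient of $e_{\mathcal{R}'}\otimes e_1^{(N)}$ leaves exactly two contributions. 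One of them converts via $B^x D^z = -D^x B^z$, and after the two rational prefactors are added together (a short simplification in which the numerator factors as $-(y_N-z)(s_N^{-2}y_N-x)$) the combined factor becomes $(z-y_N)/(s_N^{-2}y_N-z)$ times the $[M,N-1]$ matrix element. Multiplying by the column-$N$ normalization ratio produces a clean overall factor of $-1$, so the inductive hypothesis (with $|\mathcal{T}|$ reduced by $1$ but $j$ unchanged) promotes $(-1)^{(m-1)-j+1}\Phi_{t_j}^*(x,z)$ to $(-1)^{m-j+1}\Phi_{t_j}^*(x,z)$. Finally, if $N = t_{m+1}\in \mathcal{T}\setminus \mathcal{R}$ (so $j = m+1$), the induction terminates: only one contribution survives, equal to $\frac{x-z}{s_N^{-2}y_N-z}\,\langle e_\mathcal{R}, D^x D^z e_\mathcal{R}\rangle$, the inner product equals $1$ by the auxiliary identity, and incorporating the normalization yields precisely $\Phi_{t_j}^*(x,z)$.

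The main obstacle is the algebraic bookkeeping in the middle case: showing that the two surviving rational prefactors collapse into the single clean factor $(z-y_N)/(s_N^{-2}y_N-z)$ requires a brief but crucial numerator factorization. This cancellation is ultimately powered by the identity $B^x D^z = -D^x B^z$, which in turn is a special consequence of the parametrization $r = \sqrt{x/z}$ baked into \Cref{prop:simplified_commutation_relations_first_lemma}. The auxiliary identity $D^x D^z = \operatorname{id}$ rests on the same package of simplified relations, so the entire proof is driven by the special structure of the free fermion six vertex weights at these symmetric parameter specializations.
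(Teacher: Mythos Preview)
Your proof is correct and follows essentially the same strategy as the paper's: both compute the un-normalized matrix element by peeling off one site at a time from the right, using the simplified commutation relations of \Cref{prop:simplified_commutation_relations_first_lemma}, and then incorporate the normalization factor $\prod_{j=1}^{N}\frac{s_j^{-2}y_j-z}{y_j-z}$.

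The differences are organizational rather than substantive. The paper reduces to $j=1$ without loss of generality and then, in the case $k\in\mathcal{T}\cap\mathcal{R}$, invokes the third identity of \eqref{eq:simplified_commutation_relations_first_lemma_2} (namely $D^xA^z-C^xB^z=D^zA^x+B^zC^x$, followed by $C^x e_1^{(k)}=0$) to read off the factor $\frac{s_k^2(z-y_k)}{y_k-s_k^2 z}$ directly; you instead keep the general $j$, use only $B^xD^z=-D^xB^z$, and verify the same factor via the numerator factorization $-(y_N-z)(s_N^{-2}y_N-x)$. Your auxiliary identity $D^xD^z=\operatorname{id}$ on all tensor products is a clean strengthening of what the paper records only at the single-site level in \eqref{eq:normalized_Phi_star_action_DD_action}; the paper implicitly relies on the same fact when it asserts that ``nontrivial contributions can only come from the configuration to the right of $t_j$.'' One small point to make explicit in your write-up: when $t_j\le 0$ the column-$N'$ normalization factor is absent for $N'\le 0$, so in Case~3 you should check separately that the un-normalized contribution $\frac{x-z}{s_{t_j}^{-2}y_{t_j}-z}$ times the accumulated factors from columns $t_j+1,\ldots,0$ (each of type $\frac{y_k-z}{s_k^{-2}y_k-z}$ or its negative) matches the $j\le 0$ branch of $\Phi_{t_j}^*$ in \Cref{def:Phi_PhiStar}.
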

\begin{proof}
	Observe that by \Cref{def:normalized_ops},
	\begin{equation}
		\label{eq:normalized_Phi_star_action_proof_norm}
		D^{[M, N]} ( x, \sqrt{x/z}) 
		B^{[M, N]} ( z, \sqrt{z/x}) 
		=
		D( x, \sqrt{x/z}) 
		B( z, \sqrt{z/x}) 
		\prod_{j=1}^{N}\frac{y_j-s_j^2 z}{s_j^2(y_j-z)}.
	\end{equation}
	Therefore, it suffices to evaluate $D(x,\sqrt{x/z})B(z,\sqrt{z/x})$. 
	In the action of this operator in 
	the tensor product of the spaces $V^{(k)}$, whenever we see 
	$D(x,\sqrt{x/z})D(z,\sqrt{z/x})$, we have
	\begin{equation}
		\label{eq:normalized_Phi_star_action_DD_action}
		\bigl\langle
		e_1^{(k)},
		D(x,\sqrt{x/z})D(z,\sqrt{z/x})
		e_1^{(k)}
		\bigr\rangle 
		=
		\bigl\langle
		e_0^{(k)},
		D(x,\sqrt{x/z})D(z,\sqrt{z/x})
		e_0^{(k)}
		\bigr\rangle 
		=1.
	\end{equation}
	This means that nontrivial
	contributions to the left-hand side of \eqref{eq:normalized_Phi_star_action}
	can only 
	come from the configuration to the right of $t_j$.
	Without loss of the generality, we may assume that $j=1$,
	and $r_i=t_{i+1}$ for $i=1,\ldots,m $.

	For any $k$, define $\mathcal{R}_k=\mathcal{R}\cap[M,k]$, and similarly for
	$\mathcal{T}_k$.
	If $k=t_i$ for some $i=2,\ldots,m+1 $, we have by \eqref{eq:abcdv_n2}:
	\begin{equation}
		\label{eq:normalized_Phi_star_action_proof1}
		\begin{split}
			&
			\bigl\langle e_{\mathcal{R}_k},
			D(x,\sqrt{x/z})B(z,\sqrt{z/x})
			e_{\mathcal{T}_k} \bigr\rangle 
			\\
			&\hspace{20pt}=
			\bigl\langle 
			e_{\mathcal{R}_{k-1}},
			D(x,\sqrt{x/z})B(z,\sqrt{z/x})
			e_{\mathcal{T}_{k-1}}
			\bigr\rangle
			\bigl\langle 
			e_1^{(k)},
			D(x,\sqrt{x/z})A(z,\sqrt{z/x})
			e_1^{(k)}
			\bigr\rangle 
			\\
			&\hspace{40pt}+
			\bigl\langle 
			e_{\mathcal{R}_{k-1}},
			B(x,\sqrt{x/z})D(z,\sqrt{z/x})
			e_{\mathcal{T}_{k-1}}
			\bigr\rangle
			\bigl\langle 
			e_1^{(k)},
			C(x,\sqrt{x/z})B(z,\sqrt{z/x})
			e_1^{(k)}
			\bigr\rangle.
		\end{split}
	\end{equation}
	The first identity in \eqref{eq:simplified_commutation_relations_first_lemma_2}
	states that the $D$ and $B$ operators can be swapped, 
	producing a negative sign. 
	Applying this to the second summand in the right-hand side of 
	\eqref{eq:normalized_Phi_star_action_proof1},
	we see that
	\begin{equation*}
		\begin{split}
			\eqref{eq:normalized_Phi_star_action_proof1}
			&=
			\bigl\langle 
			e_{\mathcal{R}_{k-1}},
			D(x,\sqrt{x/z})B(z,\sqrt{z/x})
			e_{\mathcal{T}_{k-1}}
			\bigr\rangle
			\\&\hspace{80pt}\times
			\bigl\langle 
			e_1^{(k)},
			\left( 
				D(x,\sqrt{x/z})A(z,\sqrt{z/x})-
				C(x,\sqrt{x/z})B(z,\sqrt{z/x})
			\right)
			e_1^{(k)}
			\bigr\rangle 
			\\&=
			\bigl\langle 
			e_{\mathcal{R}_{k-1}},
			D(x,\sqrt{x/z})B(z,\sqrt{z/x})
			e_{\mathcal{T}_{k-1}}
			\bigr\rangle
			\\&\hspace{80pt}\times
			\bigl\langle 
			e_1^{(k)},
			\left( 
				D(z,\sqrt{z/x})A(x,\sqrt{x/z})+
				B(z,\sqrt{z/x})C(x,\sqrt{x/z})
			\right)
			e_1^{(k)}
			\bigr\rangle,
		\end{split}
	\end{equation*}
	where in the second equality we applied the third identity
	in \eqref{eq:simplified_commutation_relations_first_lemma_2}. 
	Now observe that the operator $C$ maps $e_1^{(k)}$ to $0$,
	and we can continue
	(evaluating the eigenaction of
	$D(z,\sqrt{z/x})A(x,\sqrt{x/z})$ on the vector $e_1^{(k)}$):
	\begin{equation*}
		\eqref{eq:normalized_Phi_star_action_proof1}=
		\frac{s_k^2(z-y_k)}{y_k-s_k^2 z}\,
			\bigl\langle 
			e_{\mathcal{R}_{k-1}},
			D(x,\sqrt{x/z})B(z,\sqrt{z/x})
			e_{\mathcal{T}_{k-1}}
			\bigr\rangle.
	\end{equation*}

	Now let us compute the same quantity if 
	$k\notin \mathcal{T}$. Then we have by \eqref{eq:abcdv_n2}:
	\begin{equation*}
		\begin{split}
			&
			\bigl\langle e_{\mathcal{R}_k},
			D(x,\sqrt{x/z})B(z,\sqrt{z/x})
			e_{\mathcal{T}_k} \bigr\rangle 
			\\
			&\hspace{20pt}=
			\bigl\langle 
			e_{\mathcal{R}_{k-1}},
			D(x,\sqrt{x/z})B(z,\sqrt{z/x})
			e_{\mathcal{T}_{k-1}}
			\bigr\rangle
			\bigl\langle 
			e_0^{(k)},
			D(x,\sqrt{x/z})A(z,\sqrt{z/x})
			e_0^{(k)}
			\bigr\rangle 
			\\
			&\hspace{20pt}=
			\frac{s_k^2(y_k-z)}{y_k-s_k^2z}\,
			\bigl\langle 
			e_{\mathcal{R}_{k-1}},
			D(x,\sqrt{x/z})B(z,\sqrt{z/x})
			e_{\mathcal{T}_{k-1}}
			\bigr\rangle.
		\end{split}
	\end{equation*}

	We can now compute the action of 
	$D(x,\sqrt{x/z})B(z,\sqrt{z/x})$
	by successively splitting off the tensor factors,
	each time we obtain the factor $\pm \dfrac{s_k^2(y_k-z)}{y_k-s_k^2z}$.
	The overall number of negative signs is $(-1)^{m}$, which translates
	to $(-1)^{m-j+1}$ when dropping the assumption $j=1$.
	In the last step of the splitting, at $k=t_j$, we have
	\begin{equation*}
		\bigl\langle e_0^{(k)},
		D(x,\sqrt{x/z})B(z,\sqrt{z/x})
		e_1^{(k)}
		\bigr\rangle =
		\frac{s_k^2(x-z)}{y_k-s_k^2z}.
	\end{equation*}
	Recalling normalization 
	\eqref{eq:normalized_Phi_star_action_proof_norm}, 
	we get the desired identity.
\end{proof}

\begin{proposition}
	\label{prop:normalized_Phi_action}
	Fix nonzero $x, z\in \mathbb{C}$; integers $m \ge
	0$ and $M \le N$; and two integer sets
	\begin{equation*}
		\mathcal{R}=(r_1<r_2<\ldots<r_m ),
		\quad 
		\mathcal{T}=(t_1<t_2<\ldots<t_m<t_{m+1} ),
		\quad 
		\mathcal{R},\mathcal{T}\subset\left\{ M,M+1,\ldots,N  \right\}.
	\end{equation*}
	If for some $j\in \left\{ 1,\ldots,m+1  \right\}$ we have
	$\mathcal{R}=\mathcal{T}\setminus\left\{ t_j \right\}$, then
	\begin{equation}
		\label{eq:normalized_Phi_action}
		\bigl\langle 
		e_{\mathcal{T}},
		D^{[M,N]}(x,\sqrt{x/z})C^{[M,N]}(z,\sqrt{z/x})
		e_{\mathcal{R}}
		\bigr\rangle
		=
		(-1)^{M-j}
		\Phi_{t_j}(x,z).
	\end{equation}
\end{proposition}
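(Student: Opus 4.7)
The plan is to mirror, essentially line by line, the proof of Proposition~\ref{prop:normalized_Phi_star_action}, with the annihilation operator $B$ replaced throughout by the creation operator $C$, and with the ``complementary'' entries of \eqref{eq:simplified_commutation_relations_first_lemma_2} used in place of the ones appearing there: the second identity (the $DC=CD$ swap under argument reversal) in place of the first ($DB=-BD$), and the fourth identity (relating $A_xD_z-B_xC_z$ to $A_zD_x+C_zB_x$) in place of the third.

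First I would strip off the scalar normalization using Definition~\ref{def:normalized_ops}. A short substitution, analogous to the computation displayed just above \eqref{eq:normalized_Phi_star_action_proof_norm}, shows that
\[
D^{[M,N]}(x,\sqrt{x/z})\,C^{[M,N]}(z,\sqrt{z/x}) = D(x,\sqrt{x/z})\,C(z,\sqrt{z/x})\,\prod_{i=M}^{0}\frac{y_i-s_i^2 x}{s_i^2(x-y_i)},
\]
the key point being that $C^{[M,N]}$ carries no $\prod_{j=1}^{N}$ piece to cancel the corresponding piece of $D^{[M,N]}$, in contrast to the $DB$ case. This asymmetry of the $C$-normalization is precisely what produces the piecewise form of $\Phi_{t_j}(x,z)$ in Definition~\ref{def:Phi_PhiStar}, whose range of product depends on whether $t_j>0$ or $t_j\le 0$.

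Next I would evaluate the un-normalized matrix element $\langle e_\mathcal{T},D(x,\sqrt{x/z})C(z,\sqrt{z/x})e_\mathcal{R}\rangle$ by splitting off tensor factors $V^{(k)}$ one at a time via \eqref{eq:abcdv_n2}, analyzing positions one by one exactly as in the proof of Proposition~\ref{prop:normalized_Phi_star_action}. For a shared filled position $k\in\mathcal{R}\cap\mathcal{T}$, only the $D_xD_z$ branch survives (because $Ce_1^{(k)}=0$), and its eigenaction on $e_1^{(k)}$ telescopes to~$1$ in direct analogy with \eqref{eq:normalized_Phi_star_action_DD_action}. For a shared empty position $k\notin\mathcal{T}$, the three branches $D_xD_z$, $D_xA_z$ and $B_xC_z$ all contribute to the $e_0^{(k)}\!\to\!e_0^{(k)}$ channel; the fourth identity of \eqref{eq:simplified_commutation_relations_first_lemma_2} collapses their combination into a single scalar multiple of the reduced matrix element. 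At the distinguished site $k=t_j$, direct inspection of the branches of $D_xC_z$ sending $e_0^{(t_j)}$ to $e_1^{(t_j)}$, read off from \eqref{eq:abcdv}, produces after cancellation with the normalization the leading factor $\tfrac{y_{t_j}(1-s_{t_j}^{-2})}{x-s_{t_j}^{-2}y_{t_j}}$ of $\Phi_{t_j}$.

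The hard part is the sign bookkeeping, coupled with the reconciliation of the $z$-dependent site-wise factors produced by the tensor-product expansion against the $z$-free expression of Definition~\ref{def:Phi_PhiStar}. The sign $(-1)^{M-j}$ should arise as the product of $(-1)^{1-M}$ extracted from the normalization prefactor once one rewrites $y_i-s_i^2 x=-s_i^2(x-s_i^{-2}y_i)$, together with $(-1)^{j-1}$ coming from the $j-1$ applications of the fourth identity of \eqref{eq:simplified_commutation_relations_first_lemma_2} needed to carry $C_z$ past the $j-1$ occupied sites of $\mathcal{R}$ below $t_j$. Simultaneously, a nontrivial cancellation between the site-wise contributions at $k\in[M,0]$ and the normalization prefactor must leave only the $\prod_{k=1}^{t_j-1}$ product when $t_j>0$, or the $\prod_{k=t_j}^{0}$ product when $t_j\le 0$, of Definition~\ref{def:Phi_PhiStar}. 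Carrying out this cancellation uniformly in the two cases $t_j>0$ and $t_j\le 0$ is the delicate portion of the argument.
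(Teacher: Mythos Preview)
Your overall plan is correct and matches the paper: strip off the normalization (your displayed formula is precisely \eqref{eq:normalized_Phi_action_proof0}, before extracting the sign $(-1)^{M+1}$ from the $1-M$ factors of $-1$), then evaluate the unnormalized matrix element by peeling off sites one at a time, invoking the second and fourth identities of \eqref{eq:simplified_commutation_relations_first_lemma_2} in place of the first and third. The final sign and cancellation bookkeeping you sketch is also essentially right.

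However, your site-by-site analysis is muddled and has the roles of filled and empty sites backwards; the ``three branches $D_xD_z,D_xA_z,B_xC_z$'' claim at empty sites does not arise from any coherent tensor expansion of $D_xC_z$ or of $C_xD_z$ on a two-fold product. The source of the confusion is that the paper peels from the \emph{left} here (defining $\mathcal{R}_k=[k,N]\cap\mathcal{R}$), whereas Proposition~\ref{prop:normalized_Phi_star_action} peels from the right --- so ``mirroring line by line'' requires also reversing the peeling direction. The identity \eqref{eq:normalized_Phi_star_action_DD_action} is invoked not at individual filled sites but to discard, in one stroke, the entire \emph{segment} to the right of $t_j$ (where $\mathcal{T}$ and $\mathcal{R}$ coincide). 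Thereafter one swaps $D_xC_z=C_xD_z$ and peels leftmost sites $k<t_j$: at an \emph{empty} site only the single branch $A_xD_z$ survives on $V^{(k)}$, with eigenvalue $\tfrac{s_k^2(y_k-x)}{y_k-s_k^2x}$; at a \emph{filled} site two branches $A_xD_z$ and $C_xB_z$ survive, and the fourth identity (with $x\leftrightarrow z$) turns their sum into $A_zD_x-B_zC_x$, which on $e_1^{(k)}$ reduces to $A_zD_x$ (since $C_xe_1^{(k)}=0$) with eigenvalue $\tfrac{s_k^2(x-y_k)}{y_k-s_k^2x}$. The extra minus from each of the $j-1$ filled sites below $t_j$ is the $(-1)^{j-1}$ you anticipated.
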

\begin{proof}
	The proof follows along the same lines as the proof of the previous
	\Cref{prop:normalized_Phi_star_action}.
	First we observe that (cf. 
	\Cref{def:normalized_ops})
	\begin{equation}
		\label{eq:normalized_Phi_action_proof0}
		D^{[M,N]}(x,\sqrt{x/z})C^{[M,N]}(z,\sqrt{z/x})
		=
		(-1)^{M+1}
		D(x,\sqrt{x/z})C(z,\sqrt{z/x})
		\prod_{i=M}^{0}\frac{y_i-s_i^2x}{s_i^2(y_i-x)}.
	\end{equation}
	Thus, it suffices to evaluate
	$D(x,\sqrt{x/z})C(z,\sqrt{z/x})$.
	In the action of this operator in 
	the tensor product of the spaces $V^{(k)}$, whenever we see 
	$D(x,\sqrt{x/z})D(z,\sqrt{z/x})$, we may use
	\eqref{eq:normalized_Phi_star_action_DD_action}.
	This means that nontrivial
	contributions to the left-hand side of 
	\eqref{eq:normalized_Phi_action} 
	can only come from the 
	configuration to the left of $t_j$. 
	Without loss of the generality, we may assume that $j=m+1$, 
	and $r_i=t_i$ for $i=1,\ldots,m $.

	For any $k$, define $\mathcal{R}_k=[k,N]\cap \mathcal{R}$, and similarly 
	for $\mathcal{T}_k$.
	First, assuming that $k=t_i$ for some $i=1,\ldots,m $,
	we have by 
	\eqref{eq:abcdv_n2} and
	the second identity 
	in \eqref{eq:simplified_commutation_relations_first_lemma_2}:
	\begin{equation}
		\label{eq:normalized_Phi_action_proof1}
		\begin{split}
			&
			\bigl\langle e_{\mathcal{T}_k},
			D(x,\sqrt{x/z})C(z,\sqrt{z/x})e_{\mathcal{R}_k}\bigr\rangle 
			=
			\bigl\langle e_{\mathcal{T}_k},
			C(x,\sqrt{x/z})D(z,\sqrt{z/x})e_{\mathcal{R}_k}\bigr\rangle 
			\\&
			\hspace{10pt}
			=
			\bigl\langle e_{1}^{(k)},
			C(x,\sqrt{x/z})B(z,\sqrt{z/x})e_{1}^{(k)}\bigr\rangle 
			\bigl\langle e_{\mathcal{T}_{k+1}},
			D(x,\sqrt{x/z})C(z,\sqrt{z/x})e_{\mathcal{R}_{k+1}}\bigr\rangle 
			\\&
			\hspace{40pt}
			+
			\bigl\langle e_{1}^{(k)},
			A(x,\sqrt{x/z})D(z,\sqrt{z/x})e_{1}^{(k)}\bigr\rangle 
			\bigl\langle e_{\mathcal{T}_{k+1}},
			C(x,\sqrt{x/z})D(z,\sqrt{z/x})e_{\mathcal{R}_{k+1}}\bigr\rangle.
		\end{split}
	\end{equation}
	We next have by \eqref{eq:simplified_commutation_relations_first_lemma_2}:
	\begin{equation*}
		\begin{split}
			\eqref{eq:normalized_Phi_action_proof1}&=
			\bigl\langle e_{\mathcal{T}_{k+1}},
			C(x,\sqrt{x/z})D(z,\sqrt{z/x})e_{\mathcal{R}_{k+1}}\bigr\rangle
			\\&
			\hspace{40pt}\times
			\bigl\langle e_1^{(k)},
			\left( A(x,\sqrt{x/z})D(z,\sqrt{z/x})+C(x,\sqrt{x/z})B(z,\sqrt{z/x}) \right)
			e_1^{(k)}
			\bigr\rangle 
			\\&
			=
			\bigl\langle e_{\mathcal{T}_{k+1}},
			C(x,\sqrt{x/z})D(z,\sqrt{z/x})e_{\mathcal{R}_{k+1}}\bigr\rangle
			\\&
			\hspace{40pt}\times
			\bigl\langle e_1^{(k)},
			\left( A(z,\sqrt{z/x})D(x,\sqrt{x/z})-B(z,\sqrt{z/x})C(x,\sqrt{x/z}) \right)
			e_1^{(k)}
			\bigr\rangle 
			\\&=
			\bigl\langle e_{\mathcal{T}_{k+1}},
			C(x,\sqrt{x/z})D(z,\sqrt{z/x})e_{\mathcal{R}_{k+1}}\bigr\rangle
			\bigl\langle e_1^{(k)},
			A(z,\sqrt{z/x})D(x,\sqrt{x/z})
			e_1^{(k)}
			\bigr\rangle
			\\&=
			\frac{s_k^2(x-y_k)}{y_k-s_k^2x}\,
			\bigl\langle e_{\mathcal{T}_{k+1}},
			D(x,\sqrt{x/z})C(z,\sqrt{z/x})e_{\mathcal{R}_{k+1}}\bigr\rangle
			.
		\end{split}
	\end{equation*}
	Here we used the fact that the $C$ operator maps $e_1^{(k)}$ to $0$,
	and for the last equality we evaluated the eigenaction on
	$e_1^{(k)}$.

	Now assume that $k\notin \mathcal{T}$. Then we have
	\begin{equation*}
		\begin{split}
			&\bigl\langle e_{\mathcal{T}_k},
			D(x,\sqrt{x/z})C(z,\sqrt{z/x})e_{\mathcal{R}_k}\bigr\rangle
			=
			\bigl\langle e_{\mathcal{T}_k},
			C(x,\sqrt{x/z})D(z,\sqrt{z/x})e_{\mathcal{R}_k}\bigr\rangle
			\\&\hspace{20pt}=
			\bigl\langle e_0^{(k)},
			A(x,\sqrt{x/z})D(z,\sqrt{z/x})
			e_0^{(k)}
			\bigr\rangle 
			\bigl\langle e_{\mathcal{T}_{k+1}},
			C(x,\sqrt{x/z})D(z,\sqrt{z/x})e_{\mathcal{R}_{k+1}}\bigr\rangle
			\\&\hspace{20pt}=
			\frac{s_k^2(y_k-x)}{y_k-s_k^2x}\,
			\bigl\langle e_{\mathcal{T}_{k+1}},
			D(x,\sqrt{x/z})C(z,\sqrt{z/x})e_{\mathcal{R}_{k+1}}\bigr\rangle.
		\end{split}
	\end{equation*}
	We can now compute the action of $D(x,\sqrt{x/z})C(z,\sqrt{z/x})$
	by successively splitting off the tensor factors. Each time we 
	obtain the factor $\pm\dfrac{s_k^2(y_k-x)}{y_k-s_k^2x}$,
	and the overall number of negative signs is $(-1)^{m}$, which translates into
	$(-1)^{j-1}$ upon passing to the general case not assuming $j=m+1$.
	In the last splitting, at $k=t_{m+1}$, we have
	\begin{equation*}
		\bigl\langle e_1^{(k)},
		D(x,\sqrt{x/z})C(z,\sqrt{z/x})
		e_0^{(k)}
		\bigr\rangle 
		=\frac{y_k(1-s_k^2)}{y_k-s_k^2x}.
	\end{equation*}
	Recalling normalization 
	\eqref{eq:normalized_Phi_action_proof0},
	we get the desired identity.
\end{proof}

\subsection{Fermionic operators in the Fock space}
\label{appC:fermionic_operators}

We now pass to the infinite volume limit
as $M\to-\infty$ and $N\to+\infty$.
As a result, from the spaces $V^{[M,N]}$ we get the \emph{Fock space} $\mathscr{F}$.
By definition, $\mathscr{F}$ is spanned by the vectors $e_{\mathcal{T}}$,
\begin{equation*}
	e_{\mathcal{T}}=
	\bigotimes_{m=-\infty}^{+\infty}
	e_{k_m}^{(m)},
	\qquad k_m=k_m(\mathcal{T})=\mathbf{1}_{m\in \mathcal{T}},
\end{equation*}
where $\mathcal{T}$ runs over \emph{semi-infinite} subsets of $\mathbb{Z}$.
A subset $\mathcal{T}$ is called semi-infinite (also sometimes
referred to as densely packed towards $-\infty$)
if there exists $C=C(\mathcal{T})>0$
such that $i\notin \mathcal{T}$ for all $i>C$ and $i\in \mathcal{T}$ for all $i< -C$.

In $\mathscr{F}$, 
we can define an
inner product 
under which the $e_{\mathcal{T}}$'s form an orthonormal basis:
\begin{equation*}
	\langle e_{\mathcal{T}},e_{\mathcal{R}} \rangle =\mathbf{1}_{\mathcal{T}=\mathcal{R}}.
\end{equation*}
We do not need to consider convergence in $\mathscr{F}$ as all 
computations below are done in terms of this inner product. For example,
$\langle v,e_{\mathcal{T}} \rangle $
may be viewed as an operation of picking a 
coefficient of $e_{\mathcal{T}}$
in $v$, a formal infinite linear combination of the $e_{\mathcal{R}}$'s.

For a semi-infinite subset $\mathcal{T}$, define
\begin{equation}
	\label{eq:charge_height}
	c(\mathcal{T}):=\#(\mathcal{T}\cap \mathbb{Z}_{>0})-
	\#(\mathbb{Z}_{\le 0}\setminus \mathcal{T}),\qquad 
	h_j(\mathcal{T}):=
	\#\left\{ t\in \mathcal{T}\colon t>j \right\}.
\end{equation}
The quantity $c(\mathcal{T})$ is called \emph{charge}.
Define the change operator $\mathfrak{c}\colon \mathscr{F}\to\mathscr{F}$ on 
the basis by 
$\mathfrak{c}(e_{\mathcal{T}})=c(\mathcal{T})e_{\mathcal{T}}$, and then extend by linearity.

Clearly, $c(\mathcal{T})$ can be any integer, and we have the decomposition of 
$\mathscr{F}$ into subspaces with fixed charge:
\begin{equation*}
	\mathscr{F}=\bigoplus_{n\in \mathbb{Z}}\mathscr{F}_n,\qquad 
	\mathscr{F}_n=\mathop{\mathrm{span}}
	\left\{ e_{\mathcal{T}}\colon c(\mathcal{T})=n \right\}.
\end{equation*}

The normalized operators $A^{[M,N]}, B^{[M,N]}, C^{[M,N]}$, and $D^{[M,N]}$
from \Cref{def:normalized_ops} admit matrix element-wise
infinite volume limits as
$M\to-\infty$, $N\to+\infty$.
We denote the limiting operators by 
$A^{\mathbb{Z}}(x,r), B^{\mathbb{Z}}(x,r), C^{\mathbb{Z}}(x,r), D^{\mathbb{Z}}(x,r)$.
These operators act in the Fock space $\mathscr{F}$, more precisely,
\begin{equation*}
	A^{\mathbb{Z}}\colon \mathscr{F}_n\to \mathscr{F}_n,\qquad 
	B^{\mathbb{Z}}\colon \mathscr{F}_n\to \mathscr{F}_{n-1},\qquad 
	C^{\mathbb{Z}}\colon \mathscr{F}_n\to \mathscr{F}_{n+1},\qquad 
	D^{\mathbb{Z}}\colon \mathscr{F}_n\to \mathscr{F}_n.
\end{equation*}
With this understanding, it is clear that the
matrix elements like $\langle e_{\mathcal{T}},A^{\mathbb{Z}}e_{\mathcal{R}} \rangle$,
and so on, are well-defined for all possible values of 
the parameters $(x;\mathbf{y};r;\mathbf{s})$, simply as products 
(=~suitable partition functions)
of 
normalized weights as in \Cref{def:normalized_ops}, only finitely many of which 
differ from $1$.
See also \Cref{sub:intro_fermionic_operators} in Introduction
for a 
pictorial definition of these operators.

%
The operators $A^{\mathbb{Z}}, B^{\mathbb{Z}}, C^{\mathbb{Z}}, D^{\mathbb{Z}}$
satisfy a number of commutation relations:

\begin{proposition}
	\label{prop:ABCD_infinite_volume}
	We have
	\begin{align}
		\label{eq:BB_infty}
		B^{\mathbb{Z}}(x_1,r_1)B^{\mathbb{Z}}(x_2,r_2)&=
		\frac{r_2^{-2}x_2-x_1}{r_1^{-2}x_1-x_2}\,
		B^{\mathbb{Z}}(x_2,r_2)
		B^{\mathbb{Z}}(x_1,r_1);
		\\
		\label{eq:DD_infty}
		D^{\mathbb{Z}}(x_1,r_1)
		D^{\mathbb{Z}}(x_2,r_2)
		&=
		D^{\mathbb{Z}}(x_2,r_2)
		D^{\mathbb{Z}}(x_1,r_1)
		;
		\\
		\label{eq:BD_infty}
		B^{\mathbb{Z}}(x_1,r_1)D^{\mathbb{Z}}(x_2,r_2)&=
		\frac{r_2^{-2}x_2-x_1}{x_2-x_1}\,
		D^{\mathbb{Z}}(x_2,r_2)B^{\mathbb{Z}}(x_1,r_1)
		;
		\\
		\label{eq:CD_infty}
		C^{\mathbb{Z}}(x_1,r_1)D^{\mathbb{Z}}(x_2,r_2)&=
		\frac{r_1^{-2}x_1-x_2}{r_1^{-2}x_1-r_2^{-2}x_2}\,
		D^{\mathbb{Z}}(x_2,r_2)C^{\mathbb{Z}}(x_1,r_1)
		;
		\\
		\label{eq:BC_infty}
		B^{\mathbb{Z}}(x_1,r_1)C^{\mathbb{Z}}(x_2,r_2)&=
		\frac{r_2^{-2}x_2-r_1^{-2}x_1}{x_2-x_1}\,
		C^{\mathbb{Z}}(x_2,r_2)B^{\mathbb{Z}}(x_1,r_1)
		.
	\end{align}
	All identities are understood in the sense of matrix elements, for example,
	for \eqref{eq:BB_infty} we have
	\begin{equation*}
		\langle 
		e_{\mathcal{T}}, B^{\mathbb{Z}}(x_1,r_1)B^{\mathbb{Z}}(x_2,r_2)
		e_{\mathcal{R}}\rangle 
		=
		\frac{r_2^{-2}x_2-x_1}{r_1^{-2}x_1-x_2}\,
		\langle 
		e_{\mathcal{T}},
		B^{\mathbb{Z}}(x_2,r_2)
		B^{\mathbb{Z}}(x_1,r_1)
		e_{\mathcal{R}}\rangle,\quad 
		e_{\mathcal{R}}\in \mathscr{F}_{n},\ e_{\mathcal{T}}\in \mathscr{F}_{n-2}.
	\end{equation*}
	Identities \eqref{eq:BB_infty}, \eqref{eq:DD_infty} hold for arbitrary values of the 
	parameters, but the other ones require the following restrictions.
	For \eqref{eq:BD_infty}, we assume
	\begin{equation}
		\label{eq:oplus_condition}
		\oplus_{x_1;x_2}\colon \qquad 
		\left|\frac{(s_j^{-2}y_j-x_1)(y_j-x_2)}{(s_j^{-2}y_j-x_2)(y_j-x_1)}\right|<1-\delta<1
		\qquad \textnormal{for sufficiently large $j>0$}.
	\end{equation}
	For \eqref{eq:CD_infty}, we assume
	\begin{equation}
		\label{eq:ominus_condition}
		\ominus_{r_1^{-2}x_1;r_2^{-2}x_2}\colon \qquad 
		\left|
		\frac{(s_j^{-2}y_j-r_1^{-2}x_1)(y_j-r_2^{-2}x_2)}
		{(s_j^{-2}y_j-r_2^{-2}x_2)(y_j-r_1^{-2}x_1)}
		\right|<1-\delta<1
		\qquad \textnormal{for sufficiently small $j\le0$}.
	\end{equation}
	Finally, \eqref{eq:BC_infty} holds under 
	both conditions
	$\oplus_{x_1;x_2}$ and $\ominus_{r_2^{-2}x_2;r_1^{-2}x_1}$.
\end{proposition}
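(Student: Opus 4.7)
My plan is to derive each identity in Proposition~\ref{prop:ABCD_infinite_volume} by starting from the finite-volume quadratic relations of Proposition~\ref{prop:ABCD_YBE}, rewriting both sides in terms of the normalized operators $A^{[M,N]}, B^{[M,N]}, C^{[M,N]}, D^{[M,N]}$ of Definition~\ref{def:normalized_ops}, and then sending $M\to-\infty$, $N\to+\infty$ at the level of matrix elements. Since the normalized operators and their products have well-defined matrix elements in $\mathscr{F}$ (only finitely many local weights differ from $1$ in any $\langle e_{\mathcal{T}},(\cdots)e_{\mathcal{R}}\rangle$), it suffices to show that in every such matrix element the finite-volume identity converges, term by term, to the claimed infinite-volume identity.

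The two easy cases are \eqref{eq:BB_infty} and \eqref{eq:DD_infty}. For \eqref{eq:B2B1_commute} and \eqref{eq:D2D1_commute} the only operators on either side are products of two $B$'s (resp.\ two $D$'s) with the \emph{same} set of parameters, so the normalization factors $\prod_{i=M}^{0}(\cdot)\prod_{j=1}^{N}(\cdot)$ on the two sides agree identically, and the scalar coefficient survives the limit unchanged. For \eqref{eq:BD_infty}, \eqref{eq:CD_infty}, \eqref{eq:BC_infty} one must start from \eqref{eq:B2D1} (after swapping $1\leftrightarrow 2$), \eqref{eq:C2_D1}, and the Yang--Baxter relation between $B$ and $C$ respectively. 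Each such finite-volume identity produces, besides the term matching the claimed infinite-volume identity, an \emph{extra} term in which the arguments $(x_1,r_1)$ and $(x_2,r_2)$ are swapped between two operators. The key computation is to show that when the extra term is rewritten in normalized operators, the excess normalization ratio telescopes to a product
\[
\prod_{j=1}^{N}\frac{(s_j^{-2}y_j-x_1)(y_j-x_2)}{(s_j^{-2}y_j-x_2)(y_j-x_1)}
\quad\text{or}\quad
\prod_{i=M}^{0}\frac{(s_i^{-2}y_i-r_1^{-2}x_1)(y_i-r_2^{-2}x_2)}{(s_i^{-2}y_i-r_2^{-2}x_2)(y_i-r_1^{-2}x_1)},
\]
depending on whether the swap is ``to the right'' (hitting $D$ or $B$) or ``to the left'' (hitting $A$ or $C$); the former tends to $0$ as $N\to+\infty$ precisely under $\oplus_{x_1;x_2}$, and the latter tends to $0$ as $M\to-\infty$ precisely under $\ominus_{r_1^{-2}x_1;r_2^{-2}x_2}$. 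For \eqref{eq:BC_infty} both swaps occur simultaneously, which accounts for needing both hypotheses.

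Concretely, for each identity I would (i) write out the finite-volume relation from Proposition~\ref{prop:ABCD_YBE}; (ii) multiply both sides by the appropriate total normalization factor coming from Definition~\ref{def:normalized_ops} so that every operator product is expressed in the normalized form; (iii) for the ``expected'' term on the right-hand side observe that the normalization matches the left-hand side verbatim (yielding the prefactor listed in \eqref{eq:BD_infty}--\eqref{eq:BC_infty}); (iv) for the ``extra'' term track the surplus normalization as above and verify that, under $\oplus$ or $\ominus$, it is bounded by $(1-\delta)^{K}$ for some $K=K(M,N)\to\infty$ uniformly on the finite set of indices where $\langle e_{\mathcal{T}},(\cdot)e_{\mathcal{R}}\rangle$ is nonzero, hence vanishes in the limit; (v) conclude by taking $M\to-\infty$, $N\to+\infty$.

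The main technical point will be step (iv): while it is clear that each individual factor in the surplus product is $<1-\delta$ for large $|j|$, one needs uniform boundedness of the product over the finitely many ``active'' indices to pass to the limit inside the matrix element. This is straightforward because, for fixed $\mathcal{T},\mathcal{R}$ of prescribed charges, the operator products appearing in the finite-volume identity have matrix elements bounded by a fixed constant (depending on the active indices but not on $M,N$), so multiplying by a factor that tends to $0$ geometrically kills the extra term. No deeper estimate is needed; the only potential subtlety is to check signs and indexing conventions in $\oplus$ vs.\ $\ominus$, which I would verify by carefully tracking $y_j-s_j^2 x = s_j^2(s_j^{-2}y_j-x)$ in each normalization factor.
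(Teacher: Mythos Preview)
Your approach is essentially the same as the paper's: pass from the finite-volume commutation relations of Proposition~\ref{prop:ABCD_YBE} to the normalized operators of Definition~\ref{def:normalized_ops}, observe that for \eqref{eq:BB_infty} and \eqref{eq:DD_infty} the normalizations match identically, and for the remaining three identities show that the ``extra'' terms carry surplus normalization products that decay geometrically under the $\oplus$/$\ominus$ hypotheses. One small correction: for \eqref{eq:BC_infty} the relevant finite-volume relation is \eqref{eq:ADBC_1}, and the extra terms are not argument-swapped $CB$'s but rather two $D\cdot A$ terms; since the $A^{[M,N]}$ normalization carries products over both $i\le 0$ and $j\ge 1$, this is precisely why both $\oplus_{x_1;x_2}$ and $\ominus_{r_2^{-2}x_2;r_1^{-2}x_1}$ are needed, as you correctly anticipated.
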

\begin{proof}
	All the desired identities follow from the Yang--Baxter
	equation (\Cref{prop:ABCD_YBE})
	applied to the operators
	$A^{[M,N]},B^{[M,N]},C^{[M,N]},D^{[M,N]}$ (\Cref{def:normalized_ops}), after 
	taking the limit 
	$M\to-\infty$, $N\to+\infty$.
	This limit is straightforward for 
	identities
	\eqref{eq:BB_infty} and \eqref{eq:DD_infty}
	using \eqref{eq:B2B1_commute} and \eqref{eq:D2D1_commute}, respectively.
	Let us explain how to obtain the remaining identities.

	For \eqref{eq:BD_infty}, we use
	\eqref{eq:B2D1} to write
	\begin{multline*}
		B^{[M,N]} (x_1, r_1) D^{[M,N]} (x_2, r_2) = 
		\frac{r^{-2}_2 x_2 -
		x_1}{x_2 - x_1} \,
		D^{[M,N]} (x_2, r_2) B^{[M,N]} (x_1, r_1) 
		\\+
		\frac{(1 - r^{-2}_1) x_1}{x_2 - x_1}\,
		D^{[M,N]} (x_1, r_1) 
		B^{[M,N]} (x_2, r_2)
		\prod_{j=1}^N
		\frac{y_j-s_j^2x_1}{y_j-x_1}
		\frac
		{y_j-x_2}
		{y_j-s_j^2x_2}.
	\end{multline*}
	Thanks to the assumptions, the second term 
	(more precisely, its pairing with 
	two arbitrary vectors $\langle e_{\mathcal{T}},(\cdots)e_{\mathcal{R}} \rangle $)
	can be bounded in the absolute value by $C(1-\delta)^N$, and 
	hence vanishes as $N\to+\infty$.

	For \eqref{eq:CD_infty}, we write using \eqref{eq:C2_D1}:
	\begin{multline*}
		C^{[M,N]} (x_1, r_1) D^{[M,N]} (x_2, r_2) 
		= \frac{r^{-2}_1 x_1 -
		x_2}{r^{-2}_1 x_1 - r^{-2}_2 x_2}\, D^{[M,N]} (x_2, r_2) C^{[M,N]} (x_1, r_1) 
		\\
		+
		\frac{x_2 (1 - r^{-2}_2)}{r^{-2}_1 x_1 - r^{-2}_2 x_2}\,
		D^{[M,N]}(x_1, r_1) C^{[M,N]} (x_2, r_2)
		\prod_{i=M}^{0}
			\frac{y_i-s_i^2r_1^{-2}x_1}{r_1^{-2}x_1-y_i}
			\frac
			{r_2^{-2}x_2-y_i}
			{y_i-s_i^2r_2^{-2}x_2}
		.
	\end{multline*}
	Thanks to our assumptions, the second term vanishes in the infinite volume limit.

	Finally, for \eqref{eq:BC_infty}, we write using \eqref{eq:ADBC_1}:
	\begin{multline*}
		B^{[M,N]} (x_1, r_1) C^{[M,N]}(x_2, r_2)=
		\frac{r^{-2}_2 x_2 - r^{-2}_1 x_1}{x_2 - x_1}\,
		C^{[M,N]}(x_2, r_2) B^{[M,N]} (x_1, r_1) 
		\\
		+
		\frac{x_1 (r^{-2}_1 - 1)}{x_2 - x_1}
		\Biggl( 
		D^{[M,N]} (x_2, r_2) A^{[M,N]} (x_1,r_1)
		\prod_{i=M}^0
		\frac{y_i-s_i^2r_2^{-2} x_2}{y_i-s_i^2x_2}
		\frac{y_i-x_1/r_1^2}{x_1-y_i/s_i^2}
		\prod_{j=1}^N
		\frac{s_j^2(y_j-x_1)}{y_j-s_j^2x_1}
		\\-
		D^{[M,N]} (x_1, r_1) A^{[M,N]}(x_2, r_2)
		\prod_{i=M}^0
		\frac{y_i-s_i^2r_1^{-2} x_1}{y_i-s_i^2x_1}
		\frac{y_i-x_2/r_2^2}{x_2-y_i/s_i^2}
		\prod_{j=1}^N
		\frac{s_j^2(y_j-x_2)}{y_j-s_j^2x_2}
		\Biggr)
		\\\times
		\prod_{i=M}^0
		\frac{y_i-s_i^2x_1}{y_i-s_i^2r_1^{-2}x_1}
		\frac{x_2-y_i/s_i^2}{y_i-x_2/r_2^2}
		\prod_{j=1}^N
		\frac{y_i-s_i^2x_1}{s_i^2(y_i-x_1)}
		.
	\end{multline*}
	Again, thanks to our assumptions, 
	the terms involving the operators $A,D$ vanish in the infinite
	volume limit.
\end{proof}

\begin{definition}
	\label{def:psi_psi_star}
	For each $j\in \mathbb{Z}$, define the 
	fermionic \emph{creation and annihilation}
	operators $\psi_j,\psi_j^*\colon \mathscr{F}\to\mathscr{F}$ on the basis by 
	(and then extending by linearity):
	\begin{equation*}
		\begin{split}
			\psi_j e_{\mathcal{T}}
			=
			\begin{cases}
				(-1)^{h_j(\mathcal{T})}e_{\mathcal{T}\cup\left\{ j \right\}},
				&j\notin \mathcal{T};\\
				0,&j\in \mathcal{T},
			\end{cases}
			\qquad 
			\psi_j^*e_{\mathcal{T}}=
			\begin{cases}
				(-1)^{h_j(\mathcal{T})}
				e_{\mathcal{T}\setminus \left\{ j \right\}},&j\in \mathcal{T};\\
				0,&j\notin \mathcal{T}.
			\end{cases}
		\end{split}
	\end{equation*}
\end{definition}
Clearly, $\psi_j\colon \mathscr{F}_n\to\mathscr{F}_{n+1}$
and $\psi_j^*\colon \mathscr{F}_n\to\mathscr{F}_{n-1}$.
The operators $\psi_i,\psi_j^*$ satisfy
the anticommutation relations
(that are easy to check directly)
\begin{equation*}
	\psi_k\psi_k^*+\psi_k^*\psi_k=1,\qquad 
	\psi_k\psi_\ell^*+\psi_\ell^*\psi_k
	=
	\psi_k^*\psi_\ell^*+\psi_\ell^*\psi_k^*
	=
	\psi_k\psi_\ell+\psi_\ell\psi_k=0,\qquad 
	k\ne \ell.
\end{equation*}
Moreover,
\begin{equation}
	\label{eq:small_psi_correlation_property}
	\psi_j\psi_j^* e_{\mathcal{T}}=
	\mathbf{1}_{j\in \mathcal{T}}\,
	e_{\mathcal{T}}.
\end{equation}

Define the operators
\begin{equation}
	\label{eq:Psi_Psi_star_operators}
	\Psi(x,z)
	=
	D^{\mathbb{Z}}(x,\sqrt{x/z})
	C^{\mathbb{Z}}(z,\sqrt{z/x})
	(-1)^{\mathfrak{c}} 
	,
	\qquad 
	\Psi^*(x,z)
	=
	D^{\mathbb{Z}}(x,\sqrt{x/z})
	B^{\mathbb{Z}}(z,\sqrt{z/x}),
\end{equation}
where $\mathfrak{c}$ is the charge operator.
Clearly, $\Psi(x,z)\colon \mathscr{F}_n\to\mathscr{F}_{n+1}$ and 
$\Psi^*(x,z)\colon \mathscr{F}_n\to\mathscr{F}_{n-1}$.
Let us record several relations for 
$\Psi,\Psi^*$:
\begin{proposition}
	\label{prop:Psi_Psi_Star_relations_with_ABCD}
	We have
	\begin{align}
		\label{eq:BPsi}
		B^{\mathbb{Z}}(x,r)
		\Psi(u,\zeta)
		=-\frac{u-r^{-2}x}{u-x}\,
		\Psi(u,\zeta)
		B^{\mathbb{Z}}(x,r)
		&\quad\textnormal{under}\ 
		\oplus_{x;u},\oplus_{x;\zeta},\ominus_{u;r^{-2}x}
		;
		\\
		\label{eq:PsiD}
		\Psi(u,\zeta)
		D^{\mathbb{Z}}(w,\theta)
		=
		\frac{u-w}{u-\theta^{-2}w}\,
		D^{\mathbb{Z}}(w,\theta)
		\Psi(u,\zeta)
		&\quad
		\textnormal{under}\ \ominus_{u;\theta^{-2}w}
		;
		\\
		\label{eq:BPsi_Star}
		B^{\mathbb{Z}}(x,r)
		\Psi^*(\kappa,v)
		=
		\frac{v-x}{r^{-2}x-v}\,
		\Psi^*(\kappa,v)
		B^{\mathbb{Z}}(x,r)
		&\quad
		\textnormal{under}\ \oplus_{x;\kappa}
		;
		\\
		\label{eq:PsiStar_D}
		\Psi^*(\kappa,v)
		D^{\mathbb{Z}}(w,\theta)=
		\frac{\theta^{-2}w-v}{w-v}\,
		D^{\mathbb{Z}}(w,\theta)
		\Psi^*(\kappa,v)
		&\quad
		\textnormal{under}\ \oplus_{v;w}
		;
		\\
		\label{eq:Psi_PsiStar}
		\Psi^*(\kappa,v)
		\Psi(u,\zeta)
		=
		-
		\Psi(u,\zeta)
		\Psi^*(\kappa,v)
		&\quad
		\textnormal{under}\ 
		\oplus_{v;u},
		\oplus_{v;\zeta},
		\ominus_{u;\kappa},
		\ominus_{u;v}
		.
	\end{align}
	These identities 
	are again understood in the sense of matrix elements in the standard
	basis. For example,
	we have
	$\langle e_{\mathcal{T}}, \Psi^*(\kappa,v)
	\Psi(u,\zeta)e_{\mathcal{R}} \rangle 
	=-
	\langle e_{\mathcal{T}}
	\Psi(u,\zeta)
	\Psi^*(\kappa,v)
	e_{\mathcal{R}}
	\rangle $ for all $e_{\mathcal{T}},e_{\mathcal{R}}\in \mathscr{F}_n$, $n\in \mathbb{Z}$.
	Conditions $\oplus$ and $\ominus$ are defined
	in \eqref{eq:oplus_condition}--\eqref{eq:ominus_condition}.
\end{proposition}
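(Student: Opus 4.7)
The strategy is straightforward: expand $\Psi$ and $\Psi^*$ using their definitions in \eqref{eq:Psi_Psi_star_operators}, and then push operators past one another using the commutation relations of \Cref{prop:ABCD_infinite_volume}, keeping careful track of the sign produced by the charge operator $(-1)^{\mathfrak{c}}$. The basic observation is that $(-1)^{\mathfrak{c}}$ commutes with any charge-preserving operator ($A^{\mathbb{Z}}$ and $D^{\mathbb{Z}}$) and \emph{anticommutes} with any charge-changing one ($B^{\mathbb{Z}}$ and $C^{\mathbb{Z}}$), since $B^{\mathbb{Z}}$ shifts the charge by $-1$ and $C^{\mathbb{Z}}$ by $+1$. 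Once this bookkeeping is fixed, each identity \eqref{eq:BPsi}--\eqref{eq:Psi_PsiStar} reduces to a short calculation.

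For example, for \eqref{eq:BPsi} I would write $B^{\mathbb{Z}}(x,r)\Psi(u,\zeta) = B^{\mathbb{Z}}(x,r) D^{\mathbb{Z}}(u,\sqrt{u/\zeta}) C^{\mathbb{Z}}(\zeta,\sqrt{\zeta/u})(-1)^{\mathfrak{c}}$, apply \eqref{eq:BD_infty} to move $B^{\mathbb{Z}}$ past $D^{\mathbb{Z}}$, picking up $\frac{\zeta-x}{u-x}$ under $\oplus_{x;u}$, then apply \eqref{eq:BC_infty} with $x_1=x,r_1=r,x_2=\zeta,r_2=\sqrt{\zeta/u}$ to move $B^{\mathbb{Z}}$ past $C^{\mathbb{Z}}$, picking up $\frac{u-r^{-2}x}{\zeta-x}$ under $\oplus_{x;\zeta}$ and $\ominus_{u;r^{-2}x}$. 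The two factors telescope to $\frac{u-r^{-2}x}{u-x}$, and pulling the $B^{\mathbb{Z}}$ past the final $(-1)^{\mathfrak{c}}$ contributes the claimed overall minus sign. Identities \eqref{eq:PsiD}, \eqref{eq:BPsi_Star}, \eqref{eq:PsiStar_D} are analogous, each using a single relation from \Cref{prop:ABCD_infinite_volume} (together with \eqref{eq:BB_infty} or \eqref{eq:DD_infty} to reorder same-letter pairs): \eqref{eq:PsiD} uses only \eqref{eq:CD_infty} (hence $\ominus_{u;\theta^{-2}w}$), \eqref{eq:BPsi_Star} uses \eqref{eq:BD_infty} and \eqref{eq:BB_infty} (hence $\oplus_{x;\kappa}$), and \eqref{eq:PsiStar_D} uses \eqref{eq:BD_infty} (hence $\oplus_{v;w}$). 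No $(-1)^{\mathfrak{c}}$ appears in $\Psi^*$, so no extra sign issues arise for \eqref{eq:BPsi_Star}, \eqref{eq:PsiStar_D}.

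The main point of care will be \eqref{eq:Psi_PsiStar}. Here both sides involve $(-1)^{\mathfrak{c}}$; after moving $(-1)^{\mathfrak{c}}$ all the way to the right on the right-hand side (which costs exactly one sign, coming from $(-1)^{\mathfrak{c}} B^{\mathbb{Z}}=-B^{\mathbb{Z}}(-1)^{\mathfrak{c}}$), the claim reduces to the operator identity
\begin{equation*}
D^{\mathbb{Z}}(\kappa,\sqrt{\kappa/v})\,B^{\mathbb{Z}}(v,\sqrt{v/\kappa})\,D^{\mathbb{Z}}(u,\sqrt{u/\zeta})\,C^{\mathbb{Z}}(\zeta,\sqrt{\zeta/u})
= D^{\mathbb{Z}}(u,\sqrt{u/\zeta})\,C^{\mathbb{Z}}(\zeta,\sqrt{\zeta/u})\,D^{\mathbb{Z}}(\kappa,\sqrt{\kappa/v})\,B^{\mathbb{Z}}(v,\sqrt{v/\kappa}).
\end{equation*}
Starting from the left-hand side, I would successively apply \eqref{eq:BD_infty} to $B^{\mathbb{Z}}(v,\sqrt{v/\kappa})D^{\mathbb{Z}}(u,\sqrt{u/\zeta})$ (factor $\frac{\zeta-v}{u-v}$, condition $\oplus_{v;u}$), then \eqref{eq:DD_infty} to reorder the two $D^{\mathbb{Z}}$'s, then \eqref{eq:BC_infty} to $B^{\mathbb{Z}}(v,\sqrt{v/\kappa})C^{\mathbb{Z}}(\zeta,\sqrt{\zeta/u})$ (factor $\frac{u-\kappa}{\zeta-v}$, conditions $\oplus_{v;\zeta}$, $\ominus_{u;\kappa}$), and finally \eqref{eq:CD_infty} in reverse to $D^{\mathbb{Z}}(\kappa,\sqrt{\kappa/v})C^{\mathbb{Z}}(\zeta,\sqrt{\zeta/u})$ (factor $\frac{u-v}{u-\kappa}$, condition $\ominus_{u;v}$). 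These three rational factors telescope to $1$, delivering the right-hand side. The union of the required conditions is exactly the hypothesis stated in the proposition.

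The potential obstacle is not computational but conceptual: one must avoid attempting to commute $B^{\mathbb{Z}}(v,\sqrt{v/\kappa})$ directly past $D^{\mathbb{Z}}(\kappa,\sqrt{\kappa/v})$, since \eqref{eq:BD_infty} degenerates (the numerator $r_2^{-2}x_2 - x_1 = v - v = 0$) and the corresponding $\oplus$ condition fails; the $B^{\mathbb{Z}} D^{\mathbb{Z}}$ and $D^{\mathbb{Z}} B^{\mathbb{Z}}$ pairings inside $\Psi^*(\kappa,v)$ must be treated as indivisible. The calculation above respects this by always routing $B^{\mathbb{Z}}(v,\sqrt{v/\kappa})$ through the \emph{other} pair of operators, never through $D^{\mathbb{Z}}(\kappa,\sqrt{\kappa/v})$. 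Once this is observed, all five identities follow by short algebra with rational prefactors that visibly telescope.
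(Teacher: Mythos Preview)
Your proposal is correct and follows exactly the paper's approach: the paper's proof is a two-sentence remark that the identities ``immediately follow from \Cref{prop:ABCD_infinite_volume}'' together with the minus sign from commuting $(-1)^{\mathfrak{c}}$ past $B^{\mathbb{Z}}$, and you have simply written out those commutations in full. Your careful tracking of which relation from \Cref{prop:ABCD_infinite_volume} is used at each step, the telescoping of the rational prefactors, and the observation that one must never try to commute $B^{\mathbb{Z}}(v,\sqrt{v/\kappa})$ directly through $D^{\mathbb{Z}}(\kappa,\sqrt{\kappa/v})$ are all accurate and make explicit what the paper leaves implicit.
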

\begin{proof}
	These identities and the corresponding conditions
	immediately
	follow from \Cref{prop:ABCD_infinite_volume}.
	Note the extra minus signs in \eqref{eq:BPsi}
	and \eqref{eq:Psi_PsiStar}
	which arise from commuting $(-1)^{\mathfrak{c}}$ with $B^{\mathbb{Z}}$.
\end{proof}

The next statement is one of the key 
results on the operators 
$\Psi$ and $\Psi^*$
in the Fock space:
\begin{theorem}[\Cref{thm:intro_boson_fermion} from Introduction]
	\label{thm:Psi_PsiStar_Fock}
	As operators on $\mathscr{F}$, we have
	\begin{equation*}
		\Psi(x,z)=\sum_{j\in \mathbb{Z}}
		\Phi_j(x,z)\psi_j,\qquad 
		\Psi^*(x,z)=\sum_{j\in \mathbb{Z}}
		\Phi_j^*(x,z)\psi_j^*,
	\end{equation*}
	where the expressions $\Phi_j,\Phi_j^*$ are given in \Cref{def:Phi_PhiStar}.
	In particular, $\Psi(x,z)$ is independent of $z$, which is evident from the formula
	for $\Phi_j(x,z)$.
\end{theorem}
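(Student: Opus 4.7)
The plan is to verify both identities at the level of matrix elements with respect to the orthonormal basis $\{e_{\mathcal{T}}\}$ of $\mathscr{F}$. Since $\Psi,\Psi^*$ shift the charge by $\pm1$, it suffices to pair $\Psi^*(x,z)e_{\mathcal{T}}$ against $e_{\mathcal{R}}$ with $c(\mathcal{R})=c(\mathcal{T})-1$, and analogously for $\Psi$. For any such fixed pair of semi-infinite subsets, choose integers $M\ll 0\ll N$ so that $\mathcal{T}$ and $\mathcal{R}$ agree with the reference vacuum (densely packed below, empty above) outside $[M,N]$; by the very definition of the infinite-volume operators as matrix-element limits, the matrix elements of $D^{\mathbb{Z}}B^{\mathbb{Z}}$ and $D^{\mathbb{Z}}C^{\mathbb{Z}}$ then equal those of the normalized finite-volume operators $D^{[M,N]}B^{[M,N]}$ and $D^{[M,N]}C^{[M,N]}$ between the truncations $\tilde{\mathcal{T}}=\mathcal{T}\cap[M,N]$ and $\tilde{\mathcal{R}}=\mathcal{R}\cap[M,N]$.

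For $\Psi^*$, Proposition \ref{prop:fermionic_first_correlation} annihilates the matrix element unless $\tilde{\mathcal{R}}\subset\tilde{\mathcal{T}}$, and Proposition \ref{prop:normalized_Phi_star_action} evaluates the remaining case as $(-1)^{m-j+1}\Phi^*_{t_j}(x,z)$, where $\tilde{\mathcal{T}}=\{t_1<\cdots<t_{m+1}\}$ and $t_j$ is the unique element of $\tilde{\mathcal{T}}\setminus\tilde{\mathcal{R}}$. On the fermionic side, only the $k=t_j$ term of $\sum_k \Phi^*_k(x,z)\psi^*_k$ contributes to $\langle e_{\mathcal{R}},\,\cdot\, e_{\mathcal{T}}\rangle$, producing $(-1)^{h_{t_j}(\mathcal{T})}\Phi^*_{t_j}(x,z)$. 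The identity $h_{t_j}(\mathcal{T})=m+1-j$ matches the two signs, and the identification is stable under enlarging the window since decreasing $M$ by one shifts both $m$ and $j$ by the same integer.

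For $\Psi$, the parallel argument via Proposition \ref{prop:normalized_Phi_action} yields $(-1)^{M-j}\Phi_{t_j}(x,z)$ for the matrix element of $D^{\mathbb{Z}}C^{\mathbb{Z}}$ when $\tilde{\mathcal{T}}=\tilde{\mathcal{R}}\cup\{t_j\}$. The explicit $M$-dependence here traces back to the prefactor $(-1)^{M+1}$ in the normalization of $D^{[M,N]}C^{[M,N]}$, and is precisely what the twist $(-1)^{\mathfrak{c}}$ in the definition of $\Psi$ is designed to absorb: multiplying by its eigenvalue $(-1)^{c(\mathcal{R})}$ on $e_{\mathcal{R}}$, and using the identity $|\tilde{\mathcal{R}}|=-M+1+c(\mathcal{R})$ (valid for $M$ small enough and $N$ large enough), one computes
\begin{equation*}
c(\mathcal{R})+M-j\ \equiv\ |\tilde{\mathcal{R}}|+1-j\ =\ h_{t_j}(\mathcal{R}) \pmod{2},
\end{equation*}
which matches the fermionic sign $(-1)^{h_{t_j}(\mathcal{R})}$ coming from $\psi_{t_j}$. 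The $z$-independence of $\Psi(x,z)$ then follows at once from the explicit form of $\Phi_j(x,z)$.

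The only delicate point is the parity bookkeeping linking the sign $(-1)^{M-j}$ from the finite-volume computation to the fermionic sign $(-1)^{h_{t_j}}$ from the definitions of $\psi_j,\psi^*_j$; this same computation also explains conceptually why the $(-1)^{\mathfrak{c}}$ twist appears in $\Psi$ but is absent from $\Psi^*$, namely only the charge-raising direction carries a residual $M$-dependent normalization sign that must be cancelled. Once these parities are aligned, the theorem follows immediately from the three finite-volume propositions already established in Section \ref{sec:fermionic_operators}, with no further analytic input.
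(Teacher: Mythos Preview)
Your proposal is correct and follows essentially the same route as the paper's proof: both reduce to the finite-volume Propositions \ref{prop:fermionic_first_correlation}, \ref{prop:normalized_Phi_star_action}, \ref{prop:normalized_Phi_action}, and then match the signs $(-1)^{m+1-j}$ and $(-1)^{M-j}$ to the fermionic $(-1)^{h_{t_j}}$ via the relation between the charge and the size of the truncation. Your write-up is somewhat more explicit than the paper's about the parity bookkeeping (in particular the identity $|\tilde{\mathcal{R}}| = -M+1+c(\mathcal{R})$ and the remark on why the $(-1)^{\mathfrak{c}}$ twist is needed only for $\Psi$), but the argument is the same.
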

\begin{proof}
	This follows from \Cref{prop:fermionic_first_correlation,prop:normalized_Phi_action,prop:normalized_Phi_star_action}.
	Indeed, for $\psi_{t_j}^*$ in \Cref{prop:normalized_Phi_star_action}
	we have $(-1)^{m+1-j}=(-1)^{h_{t_j}(\mathcal{T})}$,
	and for $\psi_{t_j}$ in \Cref{prop:normalized_Phi_action} we have
	$(-1)^{M-j}=(-1)^{\mathfrak{c}(\mathcal{R})+h_{t_j}(\mathcal{R})}$.
	Here we used the fact that the charge of any $m$-subset of
	$\left\{ M,M+1,\ldots,N  \right\}$ for sufficiently small $M$ and large
	$N$ is equal to $m-(M+1)$.
	This completes the proof.
\end{proof}

The next statement follows from the previous
\Cref{thm:Psi_PsiStar_Fock} and is a key 
ingredient in getting determinantal correlation functions.
We refer to \cite[Lemma 1]{okounkov2003correlation} or
\cite[Lemma B.1]{betea2019periodic}
for its proof which dates back to at least \cite{gaudin1960demonstration}.

\begin{proposition}[Wick's determinant]
	\label{prop:wick}
	Fix an integer $k\ge1$ and two sequences of complex numbers
	$\left\{ a_{ij} \right\}, \left\{ b_{ij} \right\}$.
	Define the operators
	$A_i=\sum_{j\in \mathbb{Z}} a_{ij}\psi_j$ and 
	$B_i=\sum_{j\in \mathbb{Z}} b_{ij}\psi_j^*$
	(note that the operators
	$A_i\colon \mathscr{F}_n\to \mathscr{F}_{n+1}$
	and 
	$B_j\colon \mathscr{F}_n\to\mathscr{F}_{n-1}$
	are well-defined for arbitrary
	coefficients $a_{ij},b_{kl}$).
	Then
	\begin{equation*}
		\bigl\langle e_{\mathbb{Z}_{\le0}},A_1 B_1 A_2B_2 \ldots A_k B_k
		e_{\mathbb{Z}_{\le0}} \bigr\rangle 
		=
		\det\left[ 
			M_{ij}
		\right]_{i,j=1}^{k},\qquad 
		M_{ij}=\begin{cases}
			\bigl\langle e_{\mathbb{Z}_{\le0}},A_i B_j e_{\mathbb{Z}_{\le0}} \bigr\rangle,
			&i\ge j;\\
			-\bigl\langle e_{\mathbb{Z}_{\le0}},B_jA_i e_{\mathbb{Z}_{\le0}} \bigr\rangle,
			&i<j.
		\end{cases}
	\end{equation*}
\end{proposition}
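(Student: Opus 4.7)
My plan is to invoke the classical fermionic Wick theorem and match the resulting signed sum of pair contractions to the determinant $\det[M_{ij}]$. The setup is that each $A_i$ is a linear combination of the operators $\psi_l$ (creation-type with respect to the vacuum $e_{\mathbb{Z}_{\le 0}}$ for $l>0$, annihilation-type for $l \le 0$) and each $B_j$ is a linear combination of $\psi_l^*$. Using the canonical anticommutation relations recalled just before the proposition, direct expansion yields $\{A_i, A_j\} = 0 = \{B_i, B_j\}$ and
\begin{equation*}
\{A_i, B_j\} = A_i B_j + B_j A_i = \sum_l a_{il}\, b_{jl} \cdot \mathrm{Id},
\end{equation*}
which is a scalar operator. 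Taking the vacuum expectation on both sides gives the key identity $\langle A_i B_j\rangle + \langle B_j A_i\rangle = \sum_l a_{il}b_{jl}$, which is what allows one to pass between the two cases in the definition of $M_{ij}$.

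I would proceed by induction on $k$. The base case $k = 1$ is immediate since $M_{11} = \langle A_1 B_1\rangle$ by definition. For the inductive step, the approach is to move $A_1$ rightward through the product $A_1 B_1 A_2 B_2 \ldots A_k B_k$ one operator at a time. Passing $A_1$ past any $A_i$ produces a minus sign from $\{A_1, A_i\} = 0$. Passing $A_1$ past $B_j$ is accomplished via $A_1 B_j = \{A_1, B_j\} - B_j A_1$, yielding a "contraction" term in which $A_1$ and $B_j$ are absent and replaced by the scalar $\sum_l a_{1l}b_{jl}$, together with a "transport" term in which $A_1$ has moved past $B_j$ with a minus sign. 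Iterating produces a decomposition of the vacuum expectation as a signed sum indexed by $j \in \{1, \ldots, k\}$ of shorter vacuum expectations (with $A_1$ and $B_j$ removed), which by the inductive hypothesis equal the appropriate $(k-1) \times (k-1)$ minors of $[M_{ij}]$. This matches the cofactor expansion of $\det[M_{ij}]$ along its first row, closing the induction.

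The main technical obstacle is careful sign bookkeeping, particularly handling the residual term in which $A_1$ reaches the rightmost position of the string. This residue does not vanish on the vacuum for charge reasons, so it must be reorganized using the scalar identity $\langle A_iB_j\rangle + \langle B_jA_i\rangle = \sum_l a_{il}b_{jl}$ to combine with the contraction terms. After this reorganization, the coefficient multiplying the $(k-1)$-operator vacuum expectation with $A_1$ and $B_j$ removed is exactly $M_{1j}$ as stated, with the sign switch between the two cases of the definition emerging naturally from the anticommutation signs picked up along the way. This sign computation is classical, going back at least to \cite{gaudin1960demonstration}, and fully worked-out expositions appear in \cite[Lemma 1]{okounkov2003correlation} and \cite[Lemma B.1]{betea2019periodic}; the plan is to follow one of those arguments and verify that the sign conventions in that exposition align with the asymmetric definition of $M_{ij}$ given here.
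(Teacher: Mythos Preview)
Your proposal is correct and aligns with the paper's treatment: the paper does not prove this proposition at all but simply cites \cite[Lemma 1]{okounkov2003correlation}, \cite[Lemma B.1]{betea2019periodic}, and \cite{gaudin1960demonstration}, the very same references you invoke. Your sketch of the inductive anticommutation argument is the standard one found in those sources, so you are effectively filling in what the paper leaves to the reader.
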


\subsection{Action of the \texorpdfstring{$\Psi$}{Psi} operators}
\label{appC:action_Psi}

We now compute matrix elements
of various 
products of the operators $\Psi(u,\zeta)$ and $\Psi^*(\kappa,v)$
on the vectors $e_{\mathbb{Z}_{\le0}}, e_{\mathbb{Z}_{\le N}}$ from 
the Fock space $\mathscr{F}$.

\begin{lemma}
	\label{prop:action_Psi_one}
	Under $\ominus_{u;v}$ \eqref{eq:ominus_condition}, we have
	\begin{equation*}
		\bigl\langle e_{\mathbb{Z}_{\le0}},
		\Psi(u,\zeta)\Psi^*(\kappa,v)
		e_{\mathbb{Z}_{\le0}}
		\bigr\rangle
		=
		\frac{v-\kappa}{u-v}.
	\end{equation*}
	Under $\oplus_{v;u}$, we have
	\begin{equation*}
		-\bigl\langle e_{\mathbb{Z}_{\le0}},
		\Psi^*(\kappa,v)
		\Psi(u,\zeta)
		e_{\mathbb{Z}_{\le0}}
		\bigr\rangle
		=
		\frac{v-\kappa}{u-v}.
	\end{equation*}
\end{lemma}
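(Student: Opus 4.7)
The plan is to begin by expanding both $\Psi(u,\zeta)$ and $\Psi^*(\kappa,v)$ via \Cref{thm:Psi_PsiStar_Fock}, so that the two matrix elements become the double sums
$\sum_{j,k}\Phi_j(u,\zeta)\Phi_k^*(\kappa,v)\bigl\langle e_{\mathbb{Z}_{\le 0}},\psi_j\psi_k^* e_{\mathbb{Z}_{\le 0}}\bigr\rangle$ and
$\sum_{j,k}\Phi_j(u,\zeta)\Phi_k^*(\kappa,v)\bigl\langle e_{\mathbb{Z}_{\le 0}},\psi_k^*\psi_j e_{\mathbb{Z}_{\le 0}}\bigr\rangle$. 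A direct check from \Cref{def:psi_psi_star}, using $h_k(\mathbb{Z}_{\le 0})=-k$ for $k\le 0$ and $h_j(\mathbb{Z}_{\le 0}\cup\{j\})=0$ for $j\ge 1$, shows the signs cancel and yields
\begin{equation*}
\bigl\langle e_{\mathbb{Z}_{\le 0}},\psi_j\psi_k^* e_{\mathbb{Z}_{\le 0}}\bigr\rangle=\mathbf{1}_{j=k\le 0},\qquad
\bigl\langle e_{\mathbb{Z}_{\le 0}},\psi_k^*\psi_j e_{\mathbb{Z}_{\le 0}}\bigr\rangle=\mathbf{1}_{j=k\ge 1}.
\end{equation*}
So the two claims reduce respectively to
$\sum_{j\le 0}\Phi_j(u,\zeta)\Phi_j^*(\kappa,v)=\tfrac{v-\kappa}{u-v}$ and $\sum_{j\ge 1}\Phi_j(u,\zeta)\Phi_j^*(\kappa,v)=-\tfrac{v-\kappa}{u-v}$.

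The core of the proof will be a telescoping identity. For the first sum I would introduce, for $j\le 1$,
\begin{equation*}
F_j := -\frac{1}{u-v}\prod_{k=j}^{0}\frac{(u-s_k^{-2}y_k)(v-y_k)}{(u-y_k)(v-s_k^{-2}y_k)}
\end{equation*}
(empty product for $j=1$) and verify by direct computation that
$\Phi_j(u,\zeta)\Phi_j^*(\kappa,v)=(v-\kappa)(F_j-F_{j+1})$ for all $j\le 0$. The verification reduces to the elementary identity $(u-a)(v-b)-(u-b)(v-a)=(a-b)(u-v)$, applied with $a=y_k$ and $b=s_k^{-2}y_k$, which produces the needed factor $y_k(1-s_k^{-2})(u-v)$ when comparing consecutive ratios. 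An analogous $F_j$ defined for $j\ge 1$, with the opposite overall sign, will give $\Phi_j\Phi_j^*=(v-\kappa)(F_{j+1}-F_j)$ for the second sum.

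The remaining step is to collapse the telescoping series by showing that the boundary terms vanish. Observe that the ratios $\frac{(u-s_k^{-2}y_k)(v-y_k)}{(u-y_k)(v-s_k^{-2}y_k)}$ appearing in $F_j$ have moduli precisely equal to those controlled by $\ominus_{u;v}$ in \eqref{eq:ominus_condition} (respectively $\oplus_{v;u}$ in \eqref{eq:oplus_condition}) for the two cases. Therefore, under $\ominus_{u;v}$, each factor is bounded by $1-\delta<1$ for all sufficiently small $k\le 0$, so $F_j\to 0$ as $j\to-\infty$; the telescoping sum collapses to $-F_1=\tfrac{1}{u-v}$, proving the first identity after multiplying by $v-\kappa$. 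Under $\oplus_{v;u}$ the identical estimate for large $k>0$ yields $F_{j+1}\to 0$ as $j\to+\infty$, giving the second identity.

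I do not anticipate a significant obstacle: the only delicate points are the sign bookkeeping between the $j\le 0$ and $j\ge 1$ cases and checking that the telescoping factor $(u-a)(v-b)-(u-b)(v-a)$ indeed contains $u-v$ so that it cancels against the $\tfrac{1}{u-v}$ in $F_j$. If desired, the second identity can alternatively be obtained from the first by means of the anticommutation relation \eqref{eq:Psi_PsiStar} of \Cref{prop:Psi_Psi_Star_relations_with_ABCD}, but the direct telescoping argument requires only the single condition stated in the lemma.
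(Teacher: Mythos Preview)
Your proposal is correct and follows essentially the same approach as the paper: expand via \Cref{thm:Psi_PsiStar_Fock}, reduce to $\sum_{j\le 0}\Phi_j\Phi_j^*$ (respectively $\sum_{j\ge 1}$), and evaluate by telescoping under the stated $\ominus$/$\oplus$ condition. The paper's proof is written slightly more compactly---it factors out $\tfrac{v-\kappa}{u-v}$ directly and observes the residual sum telescopes to $1$---but your explicit introduction of $F_j$ and the identity $(u-a)(v-b)-(u-b)(v-a)=(a-b)(u-v)$ is exactly the same computation unpacked.
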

\begin{proof}
	We only prove the first identity, the second is analogous.
	Observe that 
	\begin{equation*}
		\bigl\langle e_{\mathbb{Z}_{\le0}},\psi_i\psi_j^* e_{\mathbb{Z}_{\le0}} \bigr\rangle 
		=\mathbf{1}_{i=j}\mathbf{1}_{i\le 0}.
	\end{equation*}
	Therefore, using \Cref{thm:Psi_PsiStar_Fock}
	and \Cref{def:Phi_PhiStar} we have
	\begin{equation*}
		\begin{split}
			&\bigl\langle e_{\mathbb{Z}_{\le0}},
			\Psi(u,\zeta)\Psi^*(\kappa,v)
			e_{\mathbb{Z}_{\le0}}
			\bigr\rangle
			=\sum_{j=-\infty}^0 \Phi_j(u,\zeta)\Phi_j^*(\kappa,v)
			\\&\hspace{20pt}=
			\sum_{j=-\infty}^{0}
			\frac{y_j (1 - s_j^2)}{y_j - s_j^2 u} 
			\frac{\kappa - v}{y_j - v} 
			\prod_{k = j}^0 \frac{y_k - s_k^2 u}{s_k^2 (y_k - u)}
			\frac{s_k^2 (y_k - v)}{y_k - s_k^2v}
			\\&\hspace{20pt}=
			\frac{v-\kappa}{u-v}
			\sum_{j=-\infty}^{0}
			\left( 1-\frac{(y_j-v)(y_j-s_j^2 u)}{(y_j-u)(y_j-s_j^2v)} \right)
			\prod_{k = j+1}^0 
			\frac{y_k - s_k^2 u}{y_k - u}
			\frac{y_k - v}{y_k - s_k^2v}
			=
			\frac{v-\kappa}{u-v}.
		\end{split}
	\end{equation*}
	In the last equality we used the fact that the infinite sum 
	telescopes to 1 under $\ominus_{u;v}$.
\end{proof}

\begin{proposition}
	\label{prop:action_Psi_many}
	Fix an integer $m\ge1$, and let $u_i,v_j$ satisfy
	\begin{equation*}
		\begin{cases}
			\ominus_{u_i;v_j},&i\ge j;\\
			\oplus_{v_j;u_i},&i<j.
		\end{cases}
	\end{equation*}
	Then we have
	\begin{multline*}
		\bigl\langle 
		e_{\mathbb{Z}_{\le0}},
		\Psi(u_1,\zeta_1)\Psi^*(\kappa_1,v_1)
		\ldots
		\Psi(u_m,\zeta_m)\Psi^*(\kappa_m,v_m)\,
		e_{\mathbb{Z}_{\le0}}
		\bigr\rangle 
		\\=
		\prod_{i=1}^{m}(\kappa_i-v_i)
		\prod_{1\le i<j\le m}(v_j-v_i)(u_i-u_j)
		\prod_{i,j=1}^m \frac{1}{v_j-u_i}.
	\end{multline*}
\end{proposition}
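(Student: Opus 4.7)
My plan is to reduce this vacuum average to a single Cauchy determinant via Wick's lemma (\Cref{prop:wick}), using the already established expansion of $\Psi,\Psi^*$ into fermionic operators from \Cref{thm:Psi_PsiStar_Fock}.

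First, I would invoke \Cref{thm:Psi_PsiStar_Fock} to write
\begin{equation*}
	\Psi(u_i,\zeta_i)=\sum_{j\in\mathbb{Z}}\Phi_j(u_i,\zeta_i)\,\psi_j,
	\qquad
	\Psi^*(\kappa_i,v_i)=\sum_{j\in\mathbb{Z}}\Phi^*_j(\kappa_i,v_i)\,\psi_j^*,
\end{equation*}
so that $\Psi(u_i,\zeta_i)$ and $\Psi^*(\kappa_i,v_i)$ fit the hypotheses of \Cref{prop:wick} with $A_i=\Psi(u_i,\zeta_i)$ and $B_i=\Psi^*(\kappa_i,v_i)$. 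The lemma then gives
\begin{equation*}
	\bigl\langle e_{\mathbb{Z}_{\le0}},\,
	\Psi(u_1,\zeta_1)\Psi^*(\kappa_1,v_1)\cdots\Psi(u_m,\zeta_m)\Psi^*(\kappa_m,v_m)\,
	e_{\mathbb{Z}_{\le0}}\bigr\rangle
	=\det[M_{ij}]_{i,j=1}^m,
\end{equation*}
where $M_{ij}=\langle e_{\mathbb{Z}_{\le0}},\Psi(u_i,\zeta_i)\Psi^*(\kappa_j,v_j)e_{\mathbb{Z}_{\le0}}\rangle$ for $i\ge j$ and $M_{ij}=-\langle e_{\mathbb{Z}_{\le0}},\Psi^*(\kappa_j,v_j)\Psi(u_i,\zeta_i)e_{\mathbb{Z}_{\le0}}\rangle$ for $i<j$.

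Next, by the single-pair evaluation \Cref{prop:action_Psi_one}, under the hypothesis $\ominus_{u_i;v_j}$ (valid for $i\ge j$) the first formula gives $M_{ij}=\dfrac{v_j-\kappa_j}{u_i-v_j}$, while under $\oplus_{v_j;u_i}$ (valid for $i<j$) the second formula gives the \emph{same} expression $M_{ij}=\dfrac{v_j-\kappa_j}{u_i-v_j}$. The crucial point is that the case split in Wick's lemma and the case split in the hypothesis of the present proposition line up exactly so that every matrix element has the uniform Cauchy form. Factoring the column-dependent numerators out,
\begin{equation*}
	\det[M_{ij}]=\prod_{j=1}^m(v_j-\kappa_j)\,\det\!\left[\frac{1}{u_i-v_j}\right]_{i,j=1}^m.
\end{equation*}

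Finally, I would apply the Cauchy determinant identity
\begin{equation*}
	\det\!\left[\frac{1}{u_i-v_j}\right]_{i,j=1}^m
	=\frac{\prod_{1\le i<j\le m}(u_j-u_i)(v_i-v_j)}{\prod_{i,j=1}^m(u_i-v_j)},
\end{equation*}
and reconcile signs: the prefactor $\prod_j(v_j-\kappa_j)$ contributes $(-1)^m$, flipping $u_j-u_i$, $v_i-v_j$, and $u_i-v_j$ into $u_i-u_j$, $v_j-v_i$, and $v_j-u_i$ contributes $(-1)^{m(m-1)/2}$, $(-1)^{m(m-1)/2}$, and $(-1)^{m^2}$ respectively. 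These combine to $(-1)^{m+m(m-1)-m^2}=1$, producing exactly the claimed right-hand side.

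The computation is essentially mechanical once \Cref{thm:Psi_PsiStar_Fock,prop:wick,prop:action_Psi_one} are in hand; the only mild obstacle is bookkeeping, namely checking that the conditions $\ominus_{u_i;v_j}$ for $i\ge j$ and $\oplus_{v_j;u_i}$ for $i<j$ imposed in the statement are precisely what is needed to legitimately apply \Cref{prop:action_Psi_one} in each case of Wick's determinant, and that the two cases produce the \emph{same} rational function $(v_j-\kappa_j)/(u_i-v_j)$ so the resulting matrix is genuinely of Cauchy type.
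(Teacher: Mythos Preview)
Your proposal is correct and follows exactly the paper's approach: apply Wick's lemma (\Cref{prop:wick}) together with the two-point evaluation (\Cref{prop:action_Psi_one}) to obtain $\det\bigl[(v_j-\kappa_j)/(u_i-v_j)\bigr]$, then factor via the Cauchy determinant. Your write-up is in fact more detailed than the paper's (which omits the sign bookkeeping); one tiny slip is that the denominator flip contributes $(-1)^{m^2}$ with a $+$ in the exponent rather than a $-$, but since $(-1)^{m^2}=(-1)^{-m^2}$ this does not affect the conclusion.
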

Note that $\Psi(u_i,\zeta_i)$ does not depend on $\zeta_i$, 
and the $\zeta_i$'s are not present in the right-hand side, as it should be.
\begin{proof}[Proof of \Cref{prop:action_Psi_many}]
	Employing \Cref{prop:wick} and \Cref{prop:action_Psi_one}, we have 
	\begin{equation*}
		\bigl\langle 
		e_{\mathbb{Z}_{\le0}},
		\Psi(u_1,\zeta_1)\Psi^*(\kappa_1,v_1)
		\ldots
		\Psi(u_m,\zeta_m)\Psi^*(\kappa_m,v_m)\,
		e_{\mathbb{Z}_{\le0}}
		\bigr\rangle 
		=
		\det\left[ \frac{v_j-\kappa_j}{u_i-v_j} \right]_{i,j=1}^m.
	\end{equation*}
	The determinant in the right-hand side 
	factorizes thanks to the Cauchy determinant formula,
	and we arrive at the desired identity.
\end{proof}

\section{Correlation kernel via fermionic operators}
\label{sec:Fock_and_FG_process}

In this section we study a generalization of the 
ascending FG process introduced in 
\Cref{sub:probability_from_Cauchy}, and compute a generating function type series
for its correlation kernel $K_{\mathscr{P}}$
using fermionic operators in the Fock space developed in \Cref{sec:fermionic_operators} above.

\subsection{General FG processes}
\label{sub:general_FG}

The following definition is parallel to the definition of the Schur process introduced in
\cite{okounkov2003correlation}.

Assume that the parameters $(\mathbf{y};\mathbf{s})$
satisfying \eqref{eq:ys_FG_parameters}
are fixed.
Fix $T\ge1$ and variables $(x_i;r_i)$ and $(w_i;\theta_i)$, $i=1,\ldots,T$,
such that these specializations are nonnegative in the sense of \Cref{def:spec}
and are compatible as in \Cref{def:compatible_spec},
i.e., the variables satisfy \eqref{eq:Eynard_asc_FG_condition}.
The \emph{FG process} with this data is a probability measure
on sequences of signatures 
$\lambda^{(1)},\mu^{(1)},\lambda^{(2)},\ldots,\mu^{(T-1)},\lambda^{(T)}$
with probability weights
\begin{equation}
	\label{eq:FG_process}
	\begin{split}
		&\mathscr{P}
		(\lambda^{(1)},\mu^{(1)},\lambda^{(2)},\mu^{(2)},\ldots,\mu^{(T-1)},\lambda^{(T)})
		:=
		\frac{1}{Z}
			G_{\lambda^{(1)}}(w_1;\theta_1)
			F_{\lambda^{(1)}/\mu^{(1)}}(x_1;r_1)
			\\&\hspace{20pt}\times
			G_{\lambda^{(2)}/\mu^{(1)}}(w_2;\theta_2)
			\ldots
			F_{\lambda^{(T-1)}/\mu^{(T-1)}}(x_{T-1};r_{T-1})
			G_{\lambda^{(T)}/\mu^{(T-1)}}(w_T;\theta_T)
			F_{\lambda^{(T)}}(x_T;r_T),
	\end{split}
\end{equation}
where the normalizing constant is computed using multiple applications of \eqref{eq:Cauchy_identities_via_specs}:
\begin{equation}\label{eq:Z_FG_appC_normalizing}
	Z=
	\prod_{i=1}^{T}x_i(r^{-2}_i-1)\,
	\frac{\prod_{1\le i<j\le T}(r_i^{-2}x_i-x_j)(s_i^{-2}y_i-y_j)}
	{\prod_{i,j=1}^T(y_i-x_j)}
	\prod_{1\le i\le j\le T}
	\frac{x_j-\theta_i^{-2}w_i}{x_j-w_i}.
\end{equation}
Note that the number of parts in the signatures in
\eqref{eq:FG_process} is fixed: $\lambda^{(1)}$ has $T$ parts, and 
both $\mu^{(j)}$ and $\lambda^{(j+1)}$
have $T-j$ parts,
$j=1,\ldots,T-1$.
In \eqref{eq:FG_process} and below when convenient
we omit 
the notation $\mathbf{y},\mathbf{s}$
in the functions 
$F_{\lambda/\mu}(x_i;\mathbf{y};r_i;\mathbf{s})$,
$G_{\nu/\varkappa}(w_i;\mathbf{y};\theta_i;\mathbf{s})$.

\begin{remark}
	\label{rmk:reduction_to_ascending}
	The FG process \eqref{eq:FG_process} reduces to the ascending FG process
	from \Cref{def:FG_asc}
	as follows. Fix some $1\le a\le T-1$, set
	$w_{a+1}=w_{a+2}=\ldots=w_T=0$, and replace each of the 
	specializations $(x_1;r_1),\ldots,(x_a;r_a)$ by $\varnothing$, 
	the empty specialization (see \eqref{eq:F_G_at_empty}).
	We discuss the reduction to the ascending case in \Cref{sub:ascending_and_Fock} below.
\end{remark}

\subsection{FG process via Fock space}
\label{sub:FG_meas_proc_through_ops}

Let us now express the probability weights
under the FG process
\eqref{eq:FG_process}
through matrix elements of
our operators acting in the Fock space $\mathscr{F}$.
Fix a 
signature
$\lambda=(\lambda_1\ge \ldots\ge \lambda_{m}\ge0 )$,
and let $I_\lambda$ be the 
rank one projection in $\mathscr{F}$
onto the semi-infinite subset corresponding to $\lambda$,
that is, which acts as
\begin{equation*}
	I_\lambda e_{\mathcal{T}}=
	\begin{cases}
		e_{\mathcal{T}},&
		\text{if $\mathcal{T}=\left\{ \lambda_1+m,\lambda_2+m-1,\ldots,\lambda_m+1,0,-1,-2,\ldots  \right\}$};\\
		0,&\text{otherwise},
	\end{cases}
\end{equation*}
for any semi-infinite $\mathcal{T}\subset \mathbb{Z}$.

\begin{lemma}
	\label{lemma:FG_process_via_operators}
	The probability weights of the FG process have the form
	\begin{equation*}
		\begin{split}
			&\mathscr{P}
			(\lambda^{(1)},\mu^{(1)},\lambda^{(2)},\mu^{(2)},\ldots,\mu^{(T-1)},\lambda^{(T)})
			=
			\frac{1}{Z}
			\bigl\langle e_{\mathbb{Z}_{\le 0}},
			B^{\mathbb{Z}}(x_T,r_T) I_{\lambda^{(T)}}
			\\&\hspace{40pt}\times
			D^{\mathbb{Z}}(w_T,\theta_T)I_{\mu^{(T-1)}}
			\ldots 
			I_{\lambda^{(2)}}
			D^{\mathbb{Z}}(w_2,\theta_2) I_{\mu^{(1)}}
			B^{\mathbb{Z}}(x_1,r_1) I_{\lambda^{(1)}}
			D^{\mathbb{Z}}(w_1,\theta_1)
			e_{\mathbb{Z}_{\le T }}
			\bigr\rangle .
		\end{split}
	\end{equation*}
	The normalizing constant \eqref{eq:Z_FG_appC_normalizing} is
	\begin{equation*}
		Z
		=
		\bigl\langle e_{\mathbb{Z}_{\le 0}},
		B^{\mathbb{Z}}(x_T,r_T) 
		D^{\mathbb{Z}}(w_T,\theta_T)
		B^{\mathbb{Z}}(x_{T-1},r_{T-1})
		\ldots 
		D^{\mathbb{Z}}(w_2,\theta_2) 
		B^{\mathbb{Z}}(x_1,r_1) 
		D^{\mathbb{Z}}(w_1,\theta_1)
		e_{\mathbb{Z}_{\le T }}
		\bigr\rangle .
	\end{equation*}
\end{lemma}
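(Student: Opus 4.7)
The plan is to interpret the probability weight $\mathscr{P}$ in \eqref{eq:FG_process} as a telescoping product of single-step Fock space matrix elements, with each factor $G_{\cdot/\cdot}(w_k;\theta_k)$ produced by an application of $D^{\mathbb{Z}}(w_k,\theta_k)$ and each factor $F_{\cdot/\cdot}(x_k;r_k)$ produced by an application of $B^{\mathbb{Z}}(x_k,r_k)$, with the projections $I_{\lambda^{(k)}}, I_{\mu^{(k)}}$ selecting the prescribed intermediate signature states.

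The key single-step identifications are
\begin{equation*}
\langle e_{\mathcal{S}(\lambda)\cup \mathbb{Z}_{\le 0}}, D^{\mathbb{Z}}(w, \theta)\, e_{\mathcal{S}(\mu)\cup \mathbb{Z}_{\le 0}}\rangle = G_{\lambda/\mu}(w;\mathbf{y};\theta;\mathbf{s})
\end{equation*}
for signatures $\lambda,\mu$ with the same number of parts, and
\begin{equation*}
\langle e_{\mathcal{S}(\mu)\cup \mathbb{Z}_{\le 0}}, B^{\mathbb{Z}}(x, r)\, e_{\mathcal{S}(\lambda)\cup \mathbb{Z}_{\le 0}}\rangle = F_{\lambda/\mu}(x;\mathbf{y};r;\mathbf{s})
\end{equation*}
when $\lambda$ has exactly one more part than $\mu$. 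Both follow by combining the pictorial definitions of $B^{\mathbb{Z}}$ and $D^{\mathbb{Z}}$ from \Cref{sub:intro_fermionic_operators} with \Cref{def:G_function} and \Cref{def:F_function}: the boundary data (empty on both sides for $D^{\mathbb{Z}}$; empty left, full right for $B^{\mathbb{Z}}$) agree, and the normalization in \Cref{def:normalized_ops} is chosen precisely to cancel the infinite product of trivial $W_i(1,0;1,0)$ weights from the particles packed at $\mathbb{Z}_{\le 0}$ and the trivial $W_j(0,1;0,1)$ weights from the empty columns far to the right; the remaining finite product of non-trivial weights reproduces $G_{\lambda/\mu}$ and $F_{\lambda/\mu}$ (for the latter one uses $\widehat{W} = W/W(0,1;0,1)$). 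Alternatively, one can invoke \Cref{prop:F_G_row_op} in the half-space and pass to the infinite volume limit as in \Cref{def:normalized_ops}.

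With these identifications in hand, the first claim follows by a direct factorization. Since each projector is rank one, $I_\nu = |e_{\mathcal{S}(\nu)\cup \mathbb{Z}_{\le 0}}\rangle \langle e_{\mathcal{S}(\nu)\cup \mathbb{Z}_{\le 0}}|$, inserting them into the Fock space inner product decouples it into an alternating product of the single-step matrix elements. Reading right-to-left, the intermediate states traversed are $e_{\mathbb{Z}_{\le T}}, e_{\mathcal{S}(\lambda^{(1)})\cup \mathbb{Z}_{\le 0}}, e_{\mathcal{S}(\mu^{(1)})\cup \mathbb{Z}_{\le 0}}, \ldots, e_{\mathcal{S}(\lambda^{(T)})\cup \mathbb{Z}_{\le 0}}, e_{\mathbb{Z}_{\le 0}}$, producing the factors $G_{\lambda^{(1)}/0^T}(w_1;\theta_1), F_{\lambda^{(1)}/\mu^{(1)}}(x_1;r_1), \ldots, F_{\lambda^{(T)}/\varnothing}(x_T;r_T)$ in order. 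Recognizing $e_{\mathbb{Z}_{\le T}} = e_{\mathcal{S}(0^T)\cup \mathbb{Z}_{\le 0}}$ so that $G_{\lambda^{(1)}/0^T} = G_{\lambda^{(1)}}$ and $F_{\lambda^{(T)}/\varnothing} = F_{\lambda^{(T)}}$, this product is exactly the numerator in \eqref{eq:FG_process}.

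For the formula for $Z$, observe that both $B^{\mathbb{Z}}$ and $D^{\mathbb{Z}}$ preserve the subspace spanned by the states $\{e_{\mathcal{S}(\nu)\cup \mathbb{Z}_{\le 0}}\}_\nu$ (the far-left boundary is empty in both cases, so the $\mathbb{Z}_{\le 0}$ tail of particles is never disturbed), and hence $\sum_{\nu}I_\nu$ acts as the identity on this subspace. Removing all projectors and summing the first part of the lemma over the intermediate signatures therefore yields
\begin{equation*}
\langle e_{\mathbb{Z}_{\le 0}}, B^{\mathbb{Z}}(x_T,r_T) D^{\mathbb{Z}}(w_T,\theta_T) \cdots B^{\mathbb{Z}}(x_1,r_1) D^{\mathbb{Z}}(w_1,\theta_1)\, e_{\mathbb{Z}_{\le T}}\rangle = Z \sum_{\lambda,\mu} \mathscr{P} = Z,
\end{equation*}
where the fact that $\mathscr{P}$ sums to $1$ (equivalently, the identification of $Z$ with \eqref{eq:Z_FG_appC_normalizing}) is exactly what is produced by iterating the skew Cauchy identity \eqref{eq:Cauchy_identities_via_specs}. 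The main obstacle is carefully verifying the single-step identifications above, specifically matching the infinite product normalization in \Cref{def:normalized_ops} with the finite-volume partition-function definitions of $F_{\lambda/\mu}$ and $G_{\lambda/\mu}$; once this bookkeeping is done, the rest of the argument is a formal rewriting.
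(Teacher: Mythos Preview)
Your proposal is correct and follows essentially the same approach as the paper: identify each single-row matrix element of $D^{\mathbb{Z}}$ (resp.\ $B^{\mathbb{Z}}$) with a skew $G$ (resp.\ skew $F$) by matching the normalization of \Cref{def:normalized_ops} against the partition-function definitions, then chain these together via the rank-one projectors.

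One imprecision worth flagging: the assertion that $B^{\mathbb{Z}}$ and $D^{\mathbb{Z}}$ \emph{preserve} the subspace spanned by $\{e_{\mathcal{S}(\nu)\cup\mathbb{Z}_{\le 0}}\}_\nu$ is not literally true. For example, $D^{\mathbb{Z}}e_{\mathbb{Z}_{\le 0}}$ has a nonzero component along $e_{(\mathbb{Z}_{\le -1})\cup\{1\}}$, since the particle at $0$ can turn right. What is correct --- and what the paper states directly --- is that in the full matrix element $\langle e_{\mathbb{Z}_{\le 0}},\,\cdots\,e_{\mathbb{Z}_{\le T}}\rangle$ only configurations in which the arrows at every position $j\le 0$ go straight up in every row can contribute: both operators have empty left boundary, so once a hole appears at some $j_0\le 0$ it can never be refilled (by induction from $-\infty$ there is no horizontal arrow entering $j_0$ from the left), contradicting the top state $e_{\mathbb{Z}_{\le 0}}$. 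With this correction your argument for $Z$ goes through; the argument for $\mathscr{P}$ is unaffected since the projectors $I_\nu$ already pin the intermediate states.
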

\begin{proof}
	The matrix element 
	$\langle e_{\mathbb{Z}_{\le0}},A e_{\mathbb{Z}_{\le N}} \rangle $,
	where $A$ is a product of $B^{\mathbb{Z}},D^{\mathbb{Z}}$, and $I_\lambda$'s,
	can be nonzero only if
	in the partition function representation of it
	the vertical arrows in each position $j\le 0$ 
	move vertically straight. The normalization of the 
	operators $B^{\mathbb{Z}}$, $D^{\mathbb{Z}}$ on $\mathbb{Z}_{\le0}$
	(\Cref{def:normalized_ops})
	ensures that this straight movement contributes the total weight $1$.
	Next, in the positive half-line $\mathbb{Z}_{\ge1}$, the 
	operators $D^{\mathbb{Z}}$ are not normalized and thus 
	yield vertex configurations for the functions $G_{\lambda/\mu}$.
	The normalization of the operators 
	$B^{\mathbb{Z}}$ is equivalent to passing from the 
	vertex weights $W$ to the weights $\widehat{W}$, see \eqref{eq:W_What_renormalization}.
	Therefore, the action of the operators $B^{\mathbb{Z}}$ yield
	vertex configurations for the functions $F_{\lambda/\mu}$.
	This shows the desired expressions for the probability
	weights of the FG process and measures.
\end{proof}

\begin{remark}
	\label{rmk:FG_process_independent_from_negative}
	The matrix element representation
	for the probability weights of the FG process
	in \Cref{lemma:FG_process_via_operators}
	is independent
	of $(y_k,s_k)$, $k\le0$, as it should be.
\end{remark}

\subsection{Extracting series coefficients}
\label{sub:series_coeffs}

In the rest of this section, we abbreviate 
\begin{equation*}
	\Psi(u):=\Psi(u,0),\qquad 
	\Psi^*(v):=\Psi^*(0,v).
\end{equation*}
Note that the operator $\Psi^*(v)$ is well-defined by setting $\kappa=0$ in 
the expansion of \Cref{thm:Psi_PsiStar_Fock}:
\begin{equation*}
	\Psi^*(v)=\sum_{j=1}^{+\infty}
	\left(
	\frac{v}{v-y_j} 
	\prod_{k = 1}^{j - 1} \frac{y_k - s_k^2v}{s_k^2 (y_k - v)}
	\right)\psi^*_j
	+
	\sum_{j=-\infty}^{0}
	\left(
	\frac{v}{v-y_j} \prod_{k = j}^0 \frac{s_k^2 (y_k - v)}{y_k - s_k^2v}
	\right)\psi^*_j.
\end{equation*}
Therefore, we have for any 
semi-infinite subset $\mathcal{T}\subset \mathbb{Z}$ and
$j\ge1$:
\begin{equation}
	\label{eq:correlation_as_a_coefficient}
	\psi_j\psi_j^* e_{\mathcal{T}}=
	\mathbf{1}_{j\in \mathcal{T}}\,
	e_{\mathcal{T}}
	.
\end{equation}
The quantity $\mathbf{1}_{j\in \mathcal{T}}\, e_{\mathcal{T}}$ can also be 
written as
$\bigl[\Phi_j(u,0)\Phi_j^*(0,v)\bigr]
\Psi(u)\Psi^*(v)\,e_{\mathcal{T}}$,
where the notation $\bigl[\cdots \bigr]$ means extracting the coefficient
in the generating series (recall \Cref{def:Phi_PhiStar}). 
The next two lemmas clarify what it means to extract such a 
coefficient.
\begin{lemma}
	\label{lemma:Phi_extraction}
	Let $f(u)=\sum_{i\in \mathbb{Z}}c_i \Phi_i(u,0)$, where $c_i\in \mathbb{C}$.
	Then for any $i\in \mathbb{Z}$ we have
	\begin{equation*}
		c_i=\frac{1}{2\pi \mathbf{i}}\oint 
		\frac{f(u)\,du}{u-y_i}\prod_{k=1}^{i-1}\frac{u-s_k^{-2}y_k}{u-y_k},\quad 
		i\ge 1;
		\qquad 
		c_i=\frac{1}{2\pi \mathbf{i}}\oint 
		\frac{f(u)\,du}{u-s_i^{-2}y_i}\prod_{k=i+1}^{0}
		\frac
		{u-y_k}
		{u-s_k^{-2}y_k}
		,\quad 
		i\le 0.
	\end{equation*}
	In both integrals the integration contour separates the families of the 
	points $\{y_k \}_{k\in \mathbb{Z}}$ and $\{ s_k^{-2}y_k \}_{k\in \mathbb{Z}}$,
	and 
	goes around the $y_k$'s in the positive direction.
	Moreover, the 
	series for $f(u)$
	must converge uniformly
	on the contour.
\end{lemma}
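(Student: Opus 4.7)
The claim amounts to a biorthogonality statement. Denote the extraction kernels by
\begin{equation*}
\Xi_i(u):=\begin{cases}\displaystyle\frac{1}{u-y_i}\prod_{k=1}^{i-1}\frac{u-s_k^{-2}y_k}{u-y_k}, & i\ge 1,\\[6pt]\displaystyle\frac{1}{u-s_i^{-2}y_i}\prod_{k=i+1}^{0}\frac{u-y_k}{u-s_k^{-2}y_k}, & i\le 0.\end{cases}
\end{equation*}
The plan is to establish the contour integral orthogonality
\begin{equation*}
\frac{1}{2\pi\mathbf{i}}\oint_\Gamma \Xi_i(u)\,\Phi_j(u,0)\,du=\mathbf{1}_{i=j},\qquad i,j\in\mathbb{Z},
\end{equation*}
for $\Gamma$ as in the statement. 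Granted this, the lemma follows by uniform convergence of the series for $f(u)$ on $\Gamma$, which lets us interchange the sum and the integral, so that only the $i=j$ term survives.

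To prove the biorthogonality I would split into four cases by the signs of $i$ and $j$. The case $i,j\ge 1$ reduces immediately to the orthogonality already established in Lemma \ref{lemma:phi_psi_orthogonal}: a direct rewriting (multiplying numerators and denominators by appropriate powers of $s_k^2$) identifies $\Xi_i(u)=-\varphi_{i-1}(u\mid\mathbf{y};\mathbf{s})$ and $\Phi_j(u,0)=-\psi_{j-1}(u\mid\mathbf{y};\mathbf{s})$, so the two minus signs cancel. In the remaining three cases I would argue by residues: in each case a direct count of the factors shows that the rational function $\Xi_i(u)\Phi_j(u,0)$ behaves as $O(u^{-2})$ as $u\to\infty$, so the sum of all its residues in $\mathbb{C}$ is zero and the contour integral equals the sum of residues strictly inside $\Gamma$, i.e., at those $y_k$ that survive after cancellation with the numerator factors $(u-y_k)$.

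Inspecting the four sub-cases: when exactly one of $i,j$ is $\ge 1$ and the other $\le 0$, the only poles of $\Xi_i\Phi_j$ are of the form $s_k^{-2}y_k$ (all outside $\Gamma$), giving integral $0$. When $i,j\le 0$ with $i<j$, all $y_k$-poles of $\Phi_j$ are cancelled by $y_k$-zeros of $\Xi_i$, and the remaining $s_k^{-2}y_k$-poles lie outside $\Gamma$, so again the integral is $0$. For $i,j\le 0$ with $i>j$ all surviving poles are of $y_k$-type and lie inside $\Gamma$, but the absence of any outside poles combined with the $O(u^{-2})$ decay forces their residue sum to vanish. Finally, for the diagonal case $i=j\le 0$ a direct simplification yields
\begin{equation*}
\Xi_i(u)\Phi_i(u,0)=\frac{y_i(1-s_i^{-2})}{(u-s_i^{-2}y_i)(u-y_i)},
\end{equation*}
whose unique residue inside $\Gamma$, located at $u=y_i$, equals $1$.

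The main technical point is that the decay at infinity has to be checked after all cancellations, which is a bookkeeping exercise on the pole/zero structure of $\Xi_i\Phi_j$; this is where I expect the most care, though no real obstacle. Once the biorthogonality is established, the interchange of integration with the (uniformly convergent) series for $f$ concludes the argument, and the same scheme of residues directly confirms the two explicit formulas for $c_i$ in the lemma.
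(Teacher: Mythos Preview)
Your approach is essentially the same as the paper's: interchange the uniformly convergent sum with the integral, then verify the biorthogonality $\frac{1}{2\pi\mathbf{i}}\oint_\Gamma \Xi_i\Phi_j\,du=\mathbf{1}_{i=j}$ case by case, invoking \Cref{lemma:phi_psi_orthogonal} when $i,j\ge 1$ and a residue/decay argument otherwise.

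There is one factual slip in your mixed-sign case. When $i\ge 1$ and $j\le 0$, the poles of $\Xi_i\Phi_j$ are \emph{not} of $s_k^{-2}y_k$-type: $\Xi_i$ has poles only at $y_1,\dots,y_i$, and for $j\le 0$ one simplifies (the $k=j$ factor in the product cancels the prefactor denominator) to
\[
\Phi_j(u,0)=\frac{y_j(1-s_j^{-2})}{u-y_j}\prod_{k=j+1}^{0}\frac{u-s_k^{-2}y_k}{u-y_k},
\]
whose poles are at $y_j,\dots,y_0$. So in this sub-case every pole is of $y_k$-type and lies \emph{inside} $\Gamma$. The integral still vanishes, but by the other mechanism you already use: there are no poles outside $\Gamma$ and the integrand is $O(u^{-2})$, so the total residue sum is zero. (This is exactly how the paper handles it.) Your statement is correct for the companion sub-case $i\le 0$, $j\ge 1$, where indeed all poles are of $s_k^{-2}y_k$-type. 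With this correction, your argument goes through and matches the paper's.
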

\begin{proof}
	This is essentially
	the single-variable biorthogonality 
	(\Cref{lemma:phi_psi_orthogonal}).
	Indeed, the hypothesis of \Cref{lemma:Phi_extraction}
	allows to interchange summation and integration.
	Then one can show that for all $m\in \mathbb{Z}$
	and $i\ge1$ we have
	\begin{equation*}
		\frac{1}{2\pi \mathbf{i}}\oint 
		\frac{\Phi_m(u,0)\,du}{u-y_i}
		\prod_{k=1}^{i-1}\frac{u-s_k^{-2}y_k}{u-y_k}=\mathbf{1}_{m=i}.
	\end{equation*}
	Note that when $m$ is nonpositive
	(the case not covered by \Cref{lemma:phi_psi_orthogonal}), the 
	$u$ contour has no poles outside 
	(as all the poles are of the form $u=y_k$ and are inside),
	so the integral vanishes. 
	When $i\le 0$, we similarly have for all $m\in \mathbb{Z}$:
	\begin{equation*}
		\frac{1}{2\pi \mathbf{i}}\oint 
		\frac{\Phi_m(u,0)\,du}{u-s_i^{-2}y_i}\prod_{k=i+1}^{0}
		\frac
		{u-y_k}
		{u-s_k^{-2}y_k}=\mathbf{1}_{m=i}.
	\end{equation*}
	This completes the proof.
\end{proof}

\begin{lemma}
	\label{lemma:PhiStar_extraction}
	Let $g(v)=\sum_{j\in \mathbb{Z}}d_j \Phi_j^*(0,v)$, where $d_j\in \mathbb{C}$.
	Then for $j\ge1$ and $j\le 0$ we have, respectively,
	\begin{equation*}
		d_j=
		\frac{y_j(s_j^{-2}-1)}{2\pi \mathbf{i}}\oint 
		\frac{v^{-1}g(v)\,dv}{v-s_j^{-2}y_j}
		\prod_{k=1}^{j-1}
		\frac{v-y_k}{v-s_k^{-2}y_k}
		;\quad 
		d_j=
		\frac{y_j(s_j^{-2}-1)}{2\pi \mathbf{i}}\oint 
		\frac{v^{-1}g(v)\,dv}{v-y_j}
		\prod_{k=j+1}^{0}\frac{v-s_k^{-2}y_k}{v-y_k}
		.
	\end{equation*}
	In both integrals the integration contour separates the families of the 
	points $\{y_k \}_{k\in \mathbb{Z}}$ and $\{ s_k^{-2}y_k \}_{k\in \mathbb{Z}}$,
	and 
	goes around the $s_k^{-2}y_k$'s in the positive direction.
	Moreover, the 
	series for $g(v)$
	must converge uniformly
	on the contour.
\end{lemma}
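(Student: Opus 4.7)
The plan is to mirror the strategy of \Cref{lemma:Phi_extraction} and reduce the claim to an orthogonality identity for the single functions $\Phi_m^*(0, v)$ against the integration kernels on the right-hand side. First I would invoke the uniform convergence hypothesis on $\gamma$ to interchange summation with the contour integral, so that the statement becomes equivalent to showing, for all $m, j \in \mathbb{Z}$, the single-function identities
\begin{equation*}
	\frac{y_j(s_j^{-2}-1)}{2\pi\mathbf{i}}\oint
	\frac{v^{-1}\Phi_m^*(0,v)\,dv}{v-s_j^{-2}y_j}
	\prod_{k=1}^{j-1}\frac{v-y_k}{v-s_k^{-2}y_k}
	=\mathbf{1}_{m=j}\qquad(j\ge 1),
\end{equation*}
and the analogous identity for $j\le 0$.

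Next I would split into the four cases $(j\ge 1, m\ge 1)$, $(j\ge 1, m\le 0)$, $(j\le 0, m\ge 1)$, $(j\le 0, m\le 0)$, and in each case simply multiply out the explicit formula for $\Phi_m^*(0,v)$ from \Cref{def:Phi_PhiStar} with the given integrand, cancelling common linear factors in numerator and denominator. For $j=m\ge 1$, for example, after cancellation the integrand becomes $\tfrac{y_j(s_j^{-2}-1)}{(v-y_j)(v-s_j^{-2}y_j)}$, and the residue at the unique enclosed pole $v=s_j^{-2}y_j$ evaluates to $1$. For $m>j\ge 1$, the cancellations eliminate every pole at $s_k^{-2}y_k$ so the integrand is holomorphic inside $\gamma$; for $m<j$ with both positive, the remaining integrand has poles only at $s_k^{-2}y_k$ and decays like $v^{-2}$ at infinity, so the sum of residues inside $\gamma$ equals the negative of the residue at infinity, which vanishes. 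The mixed-sign and doubly-non-positive cases are handled by exactly the same mechanism (cancellations annihilating all $s_k^{-2}y_k$ poles, or a decay-at-infinity argument).

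The main obstacle that I expect is purely bookkeeping: tracking the six index ranges that appear in the two piecewise formulas for $\Phi_j^*$ and $\Phi_m^*$ and their interactions in each of the four quadrants of $(j,m)$. A cleaner packaging is to introduce, for every $j\in\mathbb{Z}$, the single meromorphic test function
\begin{equation*}
	\Xi_j(v):=\frac{y_j(s_j^{-2}-1)}{v-s_j^{-2}y_j}\prod_{k=1}^{j-1}\frac{v-y_k}{v-s_k^{-2}y_k}\quad(j\ge 1),
	\qquad
	\Xi_j(v):=\frac{y_j(s_j^{-2}-1)}{v-y_j}\prod_{k=j+1}^{0}\frac{v-s_k^{-2}y_k}{v-y_k}\quad(j\le 0),
\end{equation*}
and verify
$
\tfrac{1}{2\pi\mathbf{i}}\oint_{\gamma} v^{-1}\Phi_m^*(0,v)\,\Xi_j(v)\,dv=\mathbf{1}_{m=j}
$
directly. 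In the $j\ge 1$ case, this relation is equivalent (up to a sign bookkeeping from $v^{-1}\Phi_m^*(0,v)=-\varphi_{m-1}(v)$) to the already-established biorthogonality of \Cref{lemma:phi_psi_orthogonal} for positive indices, since $\Xi_j(v)$ matches $\psi_{j-1}(v)$; the sign flip is absorbed by the reversed orientation of $\gamma$ relative to the contour there (around $s_k^{-2}y_k$'s rather than $y_k$'s). The remaining cases involving nonpositive indices are not covered by \Cref{lemma:phi_psi_orthogonal} and require the residue-at-infinity calculation sketched above, but in each non-diagonal instance the integrand is a proper rational function whose residues inside $\gamma$ sum to zero.
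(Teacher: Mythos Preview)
Your proposal is correct and follows essentially the same approach as the paper. The paper's proof is the one-liner ``This is proven in the same way as \Cref{lemma:Phi_extraction}'', and your write-up is precisely that: interchange sum and integral via uniform convergence, reduce to the single-function biorthogonality $\tfrac{1}{2\pi\mathbf{i}}\oint v^{-1}\Phi_m^*(0,v)\,\Xi_j(v)\,dv=\mathbf{1}_{m=j}$, identify the positive-index case with \Cref{lemma:phi_psi_orthogonal} via $v^{-1}\Phi_m^*(0,v)=-\varphi_{m-1}(v)$, $\Xi_j(v)=\psi_{j-1}(v)$ and the orientation flip, and dispatch the remaining cases by pole-counting and residue-at-infinity.
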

\begin{proof}
	This is proven in the same way as \Cref{lemma:Phi_extraction}.
\end{proof}

\begin{remark}
	\label{rmk:linear_independence_Phi_PhiStar}	
	\Cref{lemma:Phi_extraction,lemma:PhiStar_extraction} 
	imply linear independence of the products
	$\Phi_i(u,0)\Phi_j^*(0,v)$ for all $i,j\in \mathbb{Z}$.
	Therefore, the
	operation of extracting the coefficient 
	\begin{equation*}
		\bigl[\Phi_j(u,0)\Phi_j^*(0,v)\bigr]
		\Psi(u)\Psi^*(v)\,e_{\mathcal{T}}
	\end{equation*}
	discussed before
	\Cref{lemma:Phi_extraction}
	is indeed well-defined
	and can be realized by \Cref{lemma:Phi_extraction,lemma:PhiStar_extraction}.
\end{remark}


\subsection{Correlation generating function}
\label{appC:corr_FG_via_fermionic}

We are now in a position to 
compute a generating series type expression for the correlations
of the FG process.
Denote
\begin{equation*}
	\boldsymbol\Psi(\mathbf{u};\mathbf{v})
	:=
	\Psi(u_k)\Psi^*(v_k)\ldots\Psi(u_1)\Psi^*(v_1).
\end{equation*}
Applying
$\boldsymbol\Psi(\mathbf{u};\mathbf{v})$ to a
vector $e_{\mathcal{T}}$ (with semi-infinite $\mathcal{T}$)
produces a linear combination of 
terms $\mathbf{1}_{\{j_1,\ldots,j_k  \}\in \mathcal{T}}\,e_{\mathcal{T}}$
(corresponding to the desired $k$-point correlations),
together with some other terms.
More precisely, each desired term 
$\mathbf{1}_{\{j_1,\ldots,j_k  \}\in \mathcal{T}}\,e_{\mathcal{T}}$,
with $(j_1,\ldots,j_k )\in \mathbb{Z}_{\ge1}^{k}$ distinct and ordered,
arises 
from $\psi_{j_k}\psi_{j_k}^*\ldots\psi_{j_1}\psi_{j_1}^*$,
where each index $j_m$ comes from the pair 
of the generating functions $\Psi(u_m)\Psi^*(v_m)$.
All the other terms are ``parasite'' and should be excluded 
by extracting only the appropriate coefficients 
as in
\Cref{sub:series_coeffs}.
The generating function with all the terms put together
has an explicit product form:

\begin{proposition}
	\label{prop:correlations_gen_function}
	Take the following sequences of parameters for the generating series:
	\begin{equation*}
		\mathbf{u}^j=(u_1^j,\ldots,u_{k_j}^j ),\qquad
		\mathbf{v}^j=(v_1^j,\ldots,v_{k_j}^j ),\qquad
		k_j\ge0,
		\qquad 
		j=1,\ldots,T.
	\end{equation*}
	Assume that for all possible indices 
	$m=1,\ldots,T$, and
	$i,j,\alpha,\beta$, the parameters satisfy:
	\begin{equation}
		\label{eq:correlations_gen_function_conditions}
		\begin{split}
			&
			\oplus_{x_i;w_j},
			\ 
			\oplus_{v^j_\alpha;w_i},
			\  
			\oplus_{v^i_\beta;r_j^{-2}x_j},
			\ 
			\oplus_{x_j;u^i_\alpha},
			\ 
			\oplus_{v^j_\beta;y_m},
			\ 
			\oplus_{x_i;y_m},
			\ 
			\ominus_{u^j_\alpha;\theta_i^{-2}w_i},
			\ 
			\ominus_{u^i_\beta;r_j^{-2}x_j},
			\ 
			\ominus_{u^i_\beta;x_j},
			\ 
			\\&
			\begin{cases}
				\ominus_{y_j;x_i},
				\ \ominus_{y_j;r_i^{-2}x_i},& 1\le i\le j\le T;\\
				\oplus_{x_i;r_j^{-2}x_j},
				& 1\le j<i\le T,
			\end{cases}
			\qquad
			\begin{cases}
			\ominus_{u^i_\alpha;v^j_\beta},& \textnormal{$i>j$ or $i=j,\alpha\ge\beta$};
			\\
			\oplus_{v^j_\beta;u^i_\alpha},&\textnormal{$i<j$ or $i=j,\alpha<\beta$},
			\end{cases}
		\end{split}
	\end{equation}
	see \eqref{eq:oplus_condition},
	\eqref{eq:ominus_condition} for the notation.
	Then we have
	\begin{align*}
			&
			\frac{1}{Z}
			\bigl\langle e_{\mathbb{Z}_{\le 0}},
			B^{\mathbb{Z}}(x_T,r_T)
			\Psi(\mathbf{u}^T;\mathbf{v}^T)
			D^{\mathbb{Z}}(w_T,\theta_T)
			\\&\hspace{20pt}\times
			B^{\mathbb{Z}}(x_{T-1},r_{T-1}) 
			\ldots 
			\Psi(\mathbf{u}^{2};\mathbf{v}^{2})
			D^{\mathbb{Z}}(w_2,\theta_2) 
			B^{\mathbb{Z}}(x_1,r_1)
			\Psi(\mathbf{u}^1;\mathbf{v}^1)
			D^{\mathbb{Z}}(w_1,\theta_1)
			e_{\mathbb{Z}_{\le T}}
			\bigr\rangle
			\\&\hspace{2pt}
			=
			\prod_{1\le i\le j\le T}\prod_{\alpha=1}^{k_j}
			\frac{(v^j_\alpha-\theta_i^{-2}w_i)(u^j_\alpha-w_i)}{(v^j_\alpha-w_i)(u^j_\alpha-\theta_i^{-2}w_i)}
			\prod_{1\le i< j\le T}
			\prod_{\alpha=1}^{k_j}
			\frac{(u^j_\alpha-x_i)(v^j_\alpha-r_i^{-2}x_i)}{(u^j_\alpha-r_i^{-2}x_i)(v^j_\alpha-x_i)}
			\\&\hspace{14pt}\times
			\prod_{m,i=1}^{T}\prod_{\alpha=1}^{k_i}
			\frac{(u^i_\alpha-y_m)(v^i_\alpha-x_m)}{(v^i_\alpha-y_m)(u^i_\alpha-x_m)}
			\\&\hspace{14pt}\times
			\prod_{i=1}^{T}\biggl(\prod_{\alpha=1}^{k_i}
			\frac{v^i_\alpha}{u^i_\alpha-v^i_\alpha}
			\prod_{1\le \alpha<\beta\le k_i}
			\frac{(u^i_\alpha-u^i_\beta)(v^i_\alpha-v^i_\beta)}
			{(v^i_\alpha-u^i_\beta)(u^i_\alpha-v^i_\beta)}
			\biggr)
			\prod_{1\le i<j\le T}\biggl(
				\prod_{\alpha=1}^{k_i}\prod_{\beta=1}^{k_j}
				\frac{(u^i_\alpha-u^j_\beta)(v^i_\alpha-v^j_\beta)}
				{(v^i_\alpha-u^j_\beta)(u^i_\alpha-v^j_\beta)}
			\biggr)
			,
	\end{align*}
	where $Z$ is given by \eqref{eq:Z_FG_appC_normalizing}.
	If $k_j=0$ for some $j$, we
	omit the operator $\Psi(\mathbf{u}^j;\mathbf{v}^j)$ in the left-hand side; and
	in the right-hand side, the products $\prod_{\alpha=1}^{k_j}$
	are equal to $1$, by agreement.
\end{proposition}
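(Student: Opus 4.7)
The strategy is to follow the Schur-process-style computation (cf.~\cite{okounkov2003correlation}): commute all the $\Psi(u^i_\alpha)\Psi^*(v^i_\alpha)$ pairs outward through the chain of row operators using the commutation relations of \Cref{prop:Psi_Psi_Star_relations_with_ABCD}, and then evaluate the remaining expectation by the Wick determinantal formula of \Cref{prop:action_Psi_many}. The local commutation factors will reproduce the first two double products in the right-hand side, and the Wick determinant (together with a final computation of the two-point function) will reproduce the last two factors.

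In the first step I would move each package $\Psi(\mathbf u^j;\mathbf v^j)$ rightward past every $D^{\mathbb Z}(w_i,\theta_i)$ with $i\le j$ and every $B^{\mathbb Z}(x_i,r_i)$ with $i<j$. Distinct $\Psi$-packages commute with one another, because each contains an equal number of $\Psi$- and $\Psi^*$-factors so the signs from \eqref{eq:Psi_PsiStar} cancel pairwise; within a single package no reordering is needed. Each crossing of a pair $(\Psi(u),\Psi^*(v))$ past $D^{\mathbb Z}(w,\theta)$ yields the factor $\tfrac{(u-w)(v-\theta^{-2}w)}{(u-\theta^{-2}w)(v-w)}$ (combining \eqref{eq:PsiD} and \eqref{eq:PsiStar_D}), and past $B^{\mathbb Z}(x,r)$ it yields $\tfrac{(u-x)(v-r^{-2}x)}{(u-r^{-2}x)(v-x)}$ (combining \eqref{eq:BPsi} and \eqref{eq:BPsi_Star}, with the two minus signs cancelling). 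Collecting these factors over all admissible index triples $(i,j,\alpha)$ reproduces exactly the first two double products in the right-hand side, and the expression becomes $\tfrac{1}{Z}\bigl\langle e_{\mathbb Z_{\le 0}},\mathcal O_{BD}\,\mathcal O_\Psi\,e_{\mathbb Z_{\le T}}\bigr\rangle$, where $\mathcal O_{BD}=B^{\mathbb Z}(x_T)D^{\mathbb Z}(w_T)\cdots B^{\mathbb Z}(x_1)D^{\mathbb Z}(w_1)$ and $\mathcal O_\Psi$ is the stacked product of all $\Psi$-pairs (in any order, since the packages commute).

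In the second step I would invoke Wick's theorem keyed to the ``nontrivial vacuum'' $\mathcal O_{BD}\,e_{\mathbb Z_{\le T}}/Z$: since $\mathcal O_{BD}$ is built from operators quadratic in $\psi,\psi^*$, it implements a Gaussian rotation between $e_{\mathbb Z_{\le T}}$ and $e_{\mathbb Z_{\le 0}}$, so \Cref{prop:action_Psi_many} applies and produces a determinant $Z\cdot\det[\mathcal K(u^i_\alpha;v^j_\beta)]$ where $\mathcal K(u;v)=\tfrac{1}{Z}\bigl\langle e_{\mathbb Z_{\le 0}},\mathcal O_{BD}\,\Psi(u)\Psi^*(v)\,e_{\mathbb Z_{\le T}}\bigr\rangle$ (note that $\bigl\langle e_{\mathbb Z_{\le 0}},\mathcal O_{BD}\,e_{\mathbb Z_{\le T}}\bigr\rangle=Z$ by \Cref{lemma:FG_process_via_operators}). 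To compute $\mathcal K$ one commutes the single pair $\Psi(u)\Psi^*(v)$ all the way through the $BD$-chain to act directly on $e_{\mathbb Z_{\le T}}$, picking up the full-chain transport factor $\prod_{m=1}^T$ of $B$- and $D$-type ratios, and then evaluates
\[
\bigl\langle e_{\mathbb Z_{\le T}},\Psi(u)\Psi^*(v)\,e_{\mathbb Z_{\le T}}\bigr\rangle=\sum_{j\le T}\Phi_j(u,0)\Phi_j^*(0,v)=\frac{v}{u-v}\prod_{k=1}^T\frac{(u-y_k)(v-s_k^{-2}y_k)}{(u-s_k^{-2}y_k)(v-y_k)}
\]
by the telescoping argument used in the proof of \Cref{prop:action_Psi_one}. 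Combining these two ingredients with the explicit product form of $Z$ from \eqref{eq:Z_FG_appC_normalizing}, the $r_m^{-2}x_m$-, $\theta_m^{-2}w_m$- and $s_m^{-2}y_m$-dependencies cancel, and one obtains the clean expression $\mathcal K(u;v)=\tfrac{v}{u-v}\prod_{m=1}^T\tfrac{(u-y_m)(v-x_m)}{(v-y_m)(u-x_m)}$; substituting into the Wick determinant and expanding via the Cauchy determinant identity then yields the third (transport) and fourth (Wick) factors in the right-hand side.

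The main technical obstacle is the algebraic simplification in the second step: showing that the full-chain transport factors, when combined with the explicit form of $Z$ and the single-variable telescoping, collapse cleanly into $\tfrac{(u-y_m)(v-x_m)}{(v-y_m)(u-x_m)}$ rather than retaining $r_m^{-2}x_m$, $s_m^{-2}y_m$, or $\theta_m^{-2}w_m$ residues. A secondary concern is to check that each commutation direction chosen in both steps is compatible with the hypotheses \eqref{eq:correlations_gen_function_conditions}; those hypotheses are calibrated very precisely (one distinct condition per pair of operators and per ordering of their parameters) so that a globally consistent sequence of moves exists, and verifying this consistency amounts to a careful bookkeeping exercise matching each $\oplus$ or $\ominus$ assumption with the specific commutation relation in \Cref{prop:Psi_Psi_Star_relations_with_ABCD} or \Cref{prop:ABCD_infinite_volume} it enables.
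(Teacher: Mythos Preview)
Your first step matches the paper's opening move (equation \eqref{eq:correlations_gen_function_1}): commute all the $\boldsymbol\Psi$-packages past the $D^{\mathbb Z}$'s and $B^{\mathbb Z}$'s using \Cref{prop:Psi_Psi_Star_relations_with_ABCD}, collecting exactly the first two double products. So far so good.

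Your second step, however, has a genuine gap. You assert that $\mathcal O_{BD}$ is ``built from operators quadratic in $\psi,\psi^*$'' and therefore implements a Gaussian rotation, so that Wick's theorem (\Cref{prop:action_Psi_many}) applies to the expectation $\tfrac{1}{Z}\langle e_{\mathbb Z_{\le 0}},\mathcal O_{BD}\,\mathcal O_\Psi\,e_{\mathbb Z_{\le T}}\rangle$. But nothing in the paper establishes that the inhomogeneous row operators $B^{\mathbb Z}$ and $D^{\mathbb Z}$ are individually exponentials of fermion bilinears; indeed $B^{\mathbb Z}$ lowers the charge by one and so cannot be such an exponential on its own. \Cref{prop:wick} and \Cref{prop:action_Psi_many} are stated and proved only for the genuine vacuum $e_{\mathbb Z_{\le 0}}$ on both sides. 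Your subsequent computation of the two-point function $\mathcal K(u;v)$ is also not coherent as written: the pair $\Psi(u)\Psi^*(v)$ is already adjacent to $e_{\mathbb Z_{\le T}}$, so there is nothing to ``commute through'', and $\Psi(u)\Psi^*(v)\,e_{\mathbb Z_{\le T}}$ is not a scalar multiple of $e_{\mathbb Z_{\le T}}$, so one cannot simply replace the expectation by $\langle e_{\mathbb Z_{\le T}},\Psi(u)\Psi^*(v)\,e_{\mathbb Z_{\le T}}\rangle$.

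The paper avoids this entirely by a concrete trick you are missing. After your step one, it (i) uses $\langle e_{\mathbb Z_{\le 0}},D^{\mathbb Z}(w,\theta)\,e_{\mathcal T}\rangle=\mathbf 1_{\mathcal T=\mathbb Z_{\le 0}}$ to replace each $D^{\mathbb Z}(w_j,\theta_j)$ on the far left by $D^{\mathbb Z}(r_j^{-2}x_j,r_j^{-1})$; (ii) commutes these new $D$'s rightward so that each pairs with its matching $B^{\mathbb Z}(x_j,r_j)$ to form $\Psi^*(r_j^{-2}x_j,x_j)$ via \eqref{eq:Psi_Psi_star_operators}; and (iii) writes $e_{\mathbb Z_{\le T}}=\mathrm{const}\cdot\Psi(y_T)\cdots\Psi(y_1)\,e_{\mathbb Z_{\le 0}}$ (equation \eqref{eq:correlations_gen_function_proof_e}). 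After these moves the entire matrix element is a bona fide vacuum-to-vacuum expectation of a string of $\Psi$'s and $\Psi^*$'s, to which \Cref{prop:action_Psi_many} applies directly with the enlarged variable sets $\{u^i_\alpha\}\cup\{y_m\}$ and $\{v^i_\alpha\}\cup\{x_m\}$. The transport factor $\prod_{m}\frac{(u-y_m)(v-x_m)}{(v-y_m)(u-x_m)}$ then drops out of the resulting Cauchy determinant together with $Z$, rather than from any cancellation of $r_m^{-2}x_m$, $s_m^{-2}y_m$, $\theta_m^{-2}w_m$ terms against an explicit product form of $Z$ as you sketch.
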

\begin{proof}
	Throughout the proof, 
	conditions
	\eqref{eq:correlations_gen_function_conditions}
	arise from recording all the required commutations 
	of the operators which
	are obtained in
	\Cref{appC:fermionic_operators}.
	Moreover, we use Wick's determinant (\Cref{prop:action_Psi_many})
	which leads to the last condition 
	on $\mathbf{u}^i,\mathbf{v}^j$
	in \eqref{eq:correlations_gen_function_conditions}.

	Observe that
	\begin{equation*}
		\Psi(y_j)e_{\mathbb{Z}_{\le j-1}}=
		e_{\mathbb{Z}_{\le j}}
		\prod_{i=1}^{j-1}\frac{s_i^2(y_i-y_j)}{y_i-s_i^2y_j},
	\end{equation*}
	where $j\ge1$ (note the specific choice of the argument in $\Psi(\cdot)$).
	Therefore, 
	\begin{equation}
		\label{eq:correlations_gen_function_proof_e}
		e_{\mathbb{Z}_{\le T}}=
		\prod_{1\le i<j\le T}\frac{y_i-s_j^2y_j}{s_i^2(y_i-y_j)}
		\Psi(y_T)\ldots 
		\Psi(y_1)
		e_{\mathbb{Z}_{\le 0}}.
	\end{equation}

	First, we move each $D^{\mathbb{Z}}(w_i,\theta_i)$
	to the left of
	$\boldsymbol\Psi(\mathbf{u}^i;\mathbf{v}^i)$ and
	$B^{\mathbb{Z}}(x_i,r_i)$, $j\ge i$.
	Then we move each 
	$B^{\mathbb{Z}}(x_j,r_j)$ to the right of 
	$\boldsymbol\Psi(\mathbf{u}^i;\mathbf{v}^i)$, $i\le j$.
	This leads, by \Cref{prop:Psi_Psi_Star_relations_with_ABCD}, to 
	\begin{equation}
		\label{eq:correlations_gen_function_1}
		\begin{split}
			&
			B^{\mathbb{Z}}(x_T,r_T)
			\Psi(\mathbf{u}^T;\mathbf{v}^T)
			D^{\mathbb{Z}}(w_T,\theta_T)
			\ldots 
			\Psi(\mathbf{u}^{2};\mathbf{v}^{2})
			D^{\mathbb{Z}}(w_2,\theta_2) 
			B^{\mathbb{Z}}(x_1,r_1)
			\Psi(\mathbf{u}^1;\mathbf{v}^1)
			D^{\mathbb{Z}}(w_1,\theta_1)
			\\&\hspace{10pt}=
			D^{\mathbb{Z}}(w_T,\theta_T)
			\ldots 
			D^{\mathbb{Z}}(w_1,\theta_1)
			\Psi(\mathbf{u}^T;\mathbf{v}^T)
			\ldots 
			\Psi(\mathbf{u}^1;\mathbf{v}^1)
			B^{\mathbb{Z}}(x_T,r_T)
			\ldots 
			B^{\mathbb{Z}}(x_1,r_1)
			\\&\hspace{30pt}\times
			\prod_{1\le i\le j\le T}
			\biggl(
				\frac{\theta_i^{-2}w_i-x_j}{w_i-x_j}
				\prod_{\alpha=1}^{k_j}
				\frac{\theta_i^{-2}w_i-v_\alpha^j}{w_i-v_\alpha^j}
				\frac{w_i-u_\alpha^j}{\theta_i^{-2}w_i-u_\alpha^j}
				\frac{u_\alpha^j-r_i^{-2}x_i}{u_\alpha^j-x_i}
				\frac{v_\alpha^j-x_i}{v_\alpha^j-r_i^{-2} x_i}
			\biggr).
		\end{split}
	\end{equation}

	Now, note that 
	$\langle 
	e_{\mathbb{Z}_{\le0}},D^{\mathbb{Z}}(w,\theta)e_{\mathcal{T}}
	\rangle =
	\mathbf{1}_{\mathcal{T}=\mathbb{Z}_{\le0}}$.
	Thus, we may replace the $D$ operators on the left
	by any other $D$ operators. So we have, using 
	\eqref{eq:correlations_gen_function_proof_e},
	\begin{equation}
		\label{eq:matrix_element_in_marked}
		\begin{split}
			& 
			\bigl\langle 
			e_{\mathbb{Z}_{\le0}},
			D^{\mathbb{Z}}(w_T,\theta_T)
			\ldots 
			D^{\mathbb{Z}}(w_1,\theta_1)
			\Psi(\mathbf{u}^T;\mathbf{v}^T)
			\ldots 
			\Psi(\mathbf{u}^1;\mathbf{v}^1)
			B^{\mathbb{Z}}(x_T,r_T)
			\ldots 
			B^{\mathbb{Z}}(x_1,r_1)
			e_{\mathbb{Z}_{\le T }}
			\bigr\rangle
			\\=
			&
			\prod_{1\le i<j\le T}\frac{y_i-s_i^2y_j}{s_i^2(y_i-y_j)}
			\,
			\bigl\langle 
			e_{\mathbb{Z}_{\le0}},
			D(r_T^{-2}x_T,r_T^{-1})
			\ldots 
			D(r_1^{-2}x_1,r_1^{-1})
			\Psi(\mathbf{u}^T;\mathbf{v}^T)
			\ldots 
			\Psi(\mathbf{u}^1;\mathbf{v}^1)
			\\&\hspace{180pt}\times
			B^{\mathbb{Z}}(x_T,r_T)
			\ldots 
			B^{\mathbb{Z}}(x_1,r_1)
			\Psi(y_T)\ldots\Psi(y_1) 
			e_{\mathbb{Z}_{\le 0 }}
			\bigr\rangle,
		\end{split}
	\end{equation}
	We chose the new $D$ operators such that together 
	with $B(x_i,r_i)$ they will lead to the operators $\Psi^*$, 
	cf. \eqref{eq:Psi_Psi_star_operators}.
	Now we commute again and have,
	using \eqref{eq:PsiD}, \eqref{eq:PsiStar_D}, and
	\eqref{eq:BD_infty}:
	\begin{align*}
			&
			D(r_T^{-2}x_T,r_T^{-1})
			\ldots 
			D(r_1^{-2}x_1,r_1^{-1})
			\Psi(\mathbf{u}^T;\mathbf{v}^T)
			\ldots 
			\Psi(\mathbf{u}^1;\mathbf{v}^1)
			B^{\mathbb{Z}}(x_T,r_T)
			\ldots 
			B^{\mathbb{Z}}(x_1,r_1)
			\\&=
			\Psi(\mathbf{u}^T;\mathbf{v}^T)
			\ldots 
			\Psi(\mathbf{u}^1;\mathbf{v}^1)
			D(r_T^{-2}x_T,r_T^{-1})
			\ldots 
			D(r_1^{-2}x_1,r_1^{-1})
			B^{\mathbb{Z}}(x_T,r_T)
			\ldots 
			B^{\mathbb{Z}}(x_1,r_1)
			\\&\hspace{30pt}\times
			\prod_{i,j=1}^T
			\prod_{\alpha=1}^{k_j}
				\frac{u_\alpha^j-x_i}{u_\alpha^j-r_i^{-2}x_i }
			\frac
			{v_\alpha^j-r_i^{-2}x_i }
			{v_\alpha^j-x_i}
			\\&=
			\Psi(\mathbf{u}^T;\mathbf{v}^T)
			\ldots 
			\Psi(\mathbf{u}^1;\mathbf{v}^1)
			\prod_{i=1}^T
			\underbrace{
				D^{\mathbb{Z}}(r_i^{-2}x_i,r_i^{-1})
				B^{\mathbb{Z}}(x_i,r_i)
			}
			_{\Psi^*(r_i^{-2}x_i,x_i)}
			\\&\hspace{30pt}\times
			\prod_{i,j=1}^T
			\prod_{\alpha=1}^{k_j}
				\frac{u_\alpha^j-x_i}{u_\alpha^j-r_i^{-2}x_i }
			\frac
			{v_\alpha^j-r_i^{-2}x_i }
			{v_\alpha^j-x_i}
			\prod_{1\le i<j\le T}
			\frac{r_i^{-2}x_i-x_j}{x_i-x_j}.
	\end{align*}
	
	Now in the matrix element \eqref{eq:matrix_element_in_marked} we have
	a total of $T $ operators
	$\Psi^*(r_i^{-2}x_i,x_i)$
	in front of the same number of operators
	$\Psi(y_j)$. We can commute these operators through each other
	to form pairs of the operators as $\Psi\Psi^*$. Thanks to
	\eqref{eq:Psi_PsiStar}, this only produces the sign 
	$(-1)^{T(T+1)/2}$.
	Putting this together, for the computation of the 
	matrix element $\bigl\langle e_{\mathbb{Z}_{\le 0}}, 
	(\cdots)e_{\mathbb{Z}_{\le0}} \bigr\rangle$,
	we apply \Cref{prop:action_Psi_many} 
	with the variables
	\begin{equation*}
		\begin{split}
			\left\{ u_i \right\}
			&=
			\left\{ u^i_\alpha\colon 1\le i\le T,\, 1\le \alpha\le k_i \right\}
			\cup \left\{ y_m\colon 1\le m\le T \right\}
			,\qquad 
			\zeta_i\equiv 0;\\
			\left\{ \kappa_i \right\}&=
			\left\{ 0\colon 1\le i\le T,\, 1\le \alpha\le k_i \right\}\cup
			\left\{ r_i^{-2}x_i\colon 1\le i\le T \right\};
			\\
			\left\{ v_i \right\}&=
			\left\{ v^i_\alpha\colon 1\le i\le T,\,1\le \alpha\le k_i \right\}
			\cup
			\left\{ x_i\colon 1\le i\le T \right\}.
		\end{split}
	\end{equation*}
	This produces the following expression for the final
	matrix element
	$\bigl\langle e_{\mathbb{Z}_{\le 0}}, 
	(\cdots)e_{\mathbb{Z}_{\le0}} \bigr\rangle$:
	\begin{align*}
		&
		\prod_{i=1}^T
			v_i
			x_i(1-r_i^{-2})
			\prod_{i,j=1}^{T}
		\biggl(
			\prod_{\alpha=1}^{k_i}\prod_{\beta=1}^{k_j}
			\frac{1}{v^j_\beta-u^i_\alpha}
			\prod_{\alpha=1}^{k_i}
		\frac{1}{x_j-u^i_\alpha}
		\biggr)
		\\&\hspace{30pt}\times
		\prod_{m,i=1}^T
		\biggl(
			\frac{1}{x_i-y_m}
		\prod_{\alpha=1}^{k_i}
		\frac{1}{v^i_\alpha-y_m}
		\biggr)
		\prod_{1\le i<j\le T}(y_i-y_j)(x_j-x_i)
		\\&\hspace{30pt}\times
		\prod_{1\le i<j\le T}
		\biggl(
			\prod_{\alpha=1}^{k_i}
			\prod_{\beta=1}^{k_j}
			(u^i_\alpha-u^j_\beta)(v^j_\beta-v^i_\alpha)
		\biggr)
		\prod_{m,i=1}^{T}\prod_{\alpha=1}^{k_i}
		(u^i_\alpha-y_m)
		\\&\hspace{30pt}\times
		\prod_{i=1}^{T}
		\biggl(
			\prod_{1\le \alpha<\beta\le k_i}(u^i_\alpha-u^i_\beta)(v^i_\beta-v^i_\alpha)
		\biggr)
		\prod_{i,j=1}^{T}
		\prod_{\alpha=1}^{k_i}
		(x_j-v^i_\alpha).
	\end{align*}
	Combining this with all the factors resulting from 
	commutations at previous steps of the proof,
	and with the denominator \eqref{eq:Z_FG_appC_normalizing},
	we get the desired identity.
\end{proof}	

\begin{remark}
	\label{rmk:correlation_functions_gen_function_conditions_nonempty}
	Recall that we assume that
	for some fixed $\varepsilon>0$, we have
	$\varepsilon<y_i<\varepsilon^{-1}$
	and $\varepsilon<s_i<1-\varepsilon$ for all $i$.
	One can check 
	that there exist parameters for which
	all conditions \eqref{eq:correlations_gen_function_conditions} hold
	and the FG process is well-defined 
	(see \Cref{def:spec,def:compatible_spec}).
	For example, we may take the following parameters:
	\begin{equation}
		\label{eq:concrete_parameters}
		\begin{split}
			&
			y_i\approx 1, \ i\ge1;\quad y_i\approx 0.9,\ i\le 0;\quad 
			s_i\approx 0.25, \ i\ge1;\quad 
			s_i\approx 0.95,\ i\le 0;\
			\\
			&
			r_i\approx 0.84;\quad \theta_i\approx 0.84;
			\quad 
			x_i\approx 0.8;\quad
			w_i\approx 0.85.
		\end{split}
	\end{equation}
	Here ``$\approx$'' means that the parameters are very
	close to the corresponding values (for all $i$), 
	but are allowed to be distinct.
	Given \eqref{eq:concrete_parameters}, 
	one readily sees that $u_i,v_j$ satisfying
	\eqref{eq:correlations_gen_function_conditions} 
	also exist.
\end{remark}

\subsection{Correlation kernel}
\label{sub:corr_kernel_general_FG}

We can now compute the correlation kernel for the FG process.

\begin{theorem}
	\label{thm:corr_kernel_no_contours}
	The point process $\mathcal{S}^{(T)}$ \eqref{eq:S_T_notation}
	corresponding to the FG process \eqref{eq:FG_process}
	is determinantal. That is, for any finite
	$A=\left\{ (t,\mathsf{a}^t_\alpha)\colon 1\le t\le T,\,1\le \alpha\le k_t \right\}\subset
	\left\{ 1,\ldots,T  \right\}\times \mathbb{Z}_{\ge1}$ we have
	\begin{equation}
		\label{eq:corr_A_P_equals_det_K_P}
		\mathbb{P}_{\mathscr{P}}
		\bigl[
			A\subset \mathcal{S}^{(T)}
		\bigr]
		=\det
		\left[ K_{\mathscr{P}}(t,\mathsf{a}^t_\alpha;t',\mathsf{a}^{t'}_{\alpha'}) \right].
	\end{equation}
	Here the determinant is of size $k_1+\ldots+k_T$ corresponding to $1\le t,t'\le T$,
	$1\le \alpha\le k_t$, $1\le \alpha'\le k_{t'}$.
	The kernel $K_{\mathscr{P}}$ has the form
	\begin{equation}
		\label{eq:corr_kernel_no_contours}
		\begin{split}	
		&
		K_{\mathscr{P}}(t,a;t',a')=
		\bigl[\Phi_{a'}(u,0)\Phi_{a}^*(0,v)\bigl]
			\,\frac{v}{u-v}
			\prod_{m=1}^{T}
			\frac{(u-y_m)(v-x_m)}{(v-y_m)(u-x_m)}
			\\&\hspace{100pt}\times
			\prod_{i=1}^{t'}
			\frac{u-w_i}{u-\theta_i^{-2}w_i}
			\prod_{i=1}^{t}
			\frac{v-\theta_i^{-2}w_i}{v-w_i}
			\prod_{i=1}^{t'-1}
			\frac{u-x_i}{u-r_i^{-2}x_i}
			\prod_{i=1}^{t-1}
			\frac{v-r_i^{-2}x_i}{v-x_i}.
		\end{split}
	\end{equation}
\end{theorem}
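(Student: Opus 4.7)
My plan is to deduce the determinantal structure from the Fock space realization of the FG process (Lemma \ref{lemma:FG_process_via_operators}) combined with the Boson–Fermion correspondence (Theorem \ref{thm:Psi_PsiStar_Fock}), the generating function of Proposition \ref{prop:correlations_gen_function}, and a Cauchy / multilinearity argument. To begin, I would fix the finite configuration $A = \{(t, \mathsf{a}^t_\alpha)\}$ and note that, by \eqref{eq:small_psi_correlation_property}, the event $\mathsf{a}^t_\alpha \in \mathcal{S}(\lambda^{(t)})$ is detected by $\psi_{\mathsf{a}^t_\alpha}\psi^*_{\mathsf{a}^t_\alpha}$ acting on $e_{\mathcal{S}(\lambda^{(t)})\cup \mathbb{Z}_{\le 0}}$. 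Because projectors onto basis vectors containing specified distinct indices commute, the product $\Xi_t := \prod_{\alpha=1}^{k_t} \psi_{\mathsf{a}^t_\alpha}\psi^*_{\mathsf{a}^t_\alpha}$ is well-defined and equals the projector onto $\{e_{\mathcal{T}} : \{\mathsf{a}^t_\alpha\}_\alpha \subset \mathcal{T}\}$. Inserting $\Xi_t$ into the operator string of Lemma \ref{lemma:FG_process_via_operators} at the slot $I_{\lambda^{(t)}}$ and using $\sum_\lambda I_\lambda = \mathrm{Id}$ on each charge subspace to sum out the other $I_\lambda, I_\mu$ projectors, I would arrive at
\begin{equation*}
	\mathbb{P}_{\mathscr{P}}\!\bigl[A \subset \mathcal{S}^{(T)}\bigr]
	=\frac{1}{Z}\bigl\langle e_{\mathbb{Z}_{\le 0}},\,
	B^{\mathbb{Z}}(x_T,r_T)\,\Xi_T\, D^{\mathbb{Z}}(w_T,\theta_T)\cdots
	B^{\mathbb{Z}}(x_1,r_1)\,\Xi_1\, D^{\mathbb{Z}}(w_1,\theta_1)\,
	e_{\mathbb{Z}_{\le T}}\bigr\rangle.
\end{equation*}

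Next, by Theorem \ref{thm:Psi_PsiStar_Fock}, $\psi_a\psi_a^*$ is the coefficient of $\Phi_a(u,0)\Phi_a^*(0,v)$ in $\Psi(u)\Psi^*(v)$; by the linear independence of the products $\Phi_i(u,0)\Phi_j^*(0,v)$ (Remark \ref{rmk:linear_independence_Phi_PhiStar}) this coefficient extraction is well-defined. Introducing fresh variables $(u^t_\alpha, v^t_\alpha)$, one replaces each $\Xi_t$ by the appropriate products of $\Psi(u^t_\alpha)\Psi^*(v^t_\alpha)$ and extracts the coefficient of $\prod_{t,\alpha}\Phi_{\mathsf{a}^t_\alpha}(u^t_\alpha,0)\Phi^*_{\mathsf{a}^t_\alpha}(0,v^t_\alpha)$. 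Choosing the underlying contours in the extraction operations of Lemmas \ref{lemma:Phi_extraction}, \ref{lemma:PhiStar_extraction} close enough to the relevant real points (as illustrated by Remark \ref{rmk:correlation_functions_gen_function_conditions_nonempty}) ensures that conditions \eqref{eq:correlations_gen_function_conditions} are met, so Proposition \ref{prop:correlations_gen_function} applies and gives an explicit product formula for the matrix element.

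The final step is to recognize the determinantal structure inside this product. Indexing the $K := \sum_t k_t$ insertion labels lexicographically by $\kappa = (t,\alpha)$ and writing $U_\kappa = u^t_\alpha$, $V_\kappa = v^t_\alpha$, $t_\kappa = t$, I would observe that every prefactor in Proposition \ref{prop:correlations_gen_function} depends only on a single $\kappa$ and factors as $\phi_{t_\kappa}(U_\kappa)\cdot \psi_{t_\kappa}(V_\kappa)$; meanwhile the remaining "cross" factor is precisely the Cauchy determinant $\det\bigl[V_{\kappa'}/(U_\kappa - V_{\kappa'})\bigr]_{\kappa,\kappa'}$ (this is an immediate sign check using the Cauchy formula applied to $1/(U_\kappa - V_{\kappa'})$). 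Multilinearity then absorbs the single-variable prefactors into rows and columns, yielding
\begin{equation*}
	\frac{1}{Z}\bigl\langle e_{\mathbb{Z}_{\le 0}}, B^{\mathbb{Z}}\boldsymbol\Psi D^{\mathbb{Z}}\cdots e_{\mathbb{Z}_{\le T}}\bigr\rangle
	= \det\!\left[\frac{\phi_{t_\kappa}(U_\kappa)\,V_{\kappa'}\,\psi_{t_{\kappa'}}(V_{\kappa'})}{U_\kappa - V_{\kappa'}}\right]_{\kappa,\kappa'}.
\end{equation*}
Since row $\kappa$ only contains $U_\kappa$ and column $\kappa'$ only contains $V_{\kappa'}$, the coefficient extraction $\bigl[\Phi_{\mathsf{a}_\kappa}(U_\kappa,0)\bigr]$ (resp. $\bigl[\Phi^*_{\mathsf{a}_{\kappa'}}(0,V_{\kappa'})\bigr]$) acts entrywise and can be pushed inside the determinant by the linearity of the determinant in its rows (resp. columns)—this is the Andréief-type step. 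Matching the resulting $(\kappa,\kappa')$ entry against formula \eqref{eq:corr_kernel_no_contours} gives $K_{\mathscr{P}}(t_{\kappa'}, \mathsf{a}_{\kappa'}; t_\kappa, \mathsf{a}_\kappa)$, and by determinant transposition this is \eqref{eq:corr_A_P_equals_det_K_P}.

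The main technical obstacle I anticipate is verifying the factorization of the prefactors and the identification of the Cauchy portion with a single determinant — both steps are essentially bookkeeping but require careful tracking of ordered vs. unordered products, sign changes arising from $(V_\kappa - V_{\kappa'}) = -(V_{\kappa'} - V_\kappa)$, and the fact that some prefactors in Proposition \ref{prop:correlations_gen_function} only involve indices $i \le t$ or $i < t$, so it is not a priori obvious that the $t_\kappa$-dependence splits cleanly between $U_\kappa$ and $V_\kappa$. A secondary issue is that the conditions \eqref{eq:correlations_gen_function_conditions} needed for Proposition \ref{prop:correlations_gen_function} must be simultaneously compatible with the contour prescriptions in the coefficient-extraction Lemmas \ref{lemma:Phi_extraction}, \ref{lemma:PhiStar_extraction}; this is analytically delicate but can be handled on a nonempty open parameter region and then extended by analytic continuation in the rational dependence on $u,v$.
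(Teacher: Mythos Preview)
Your approach is essentially the same as the paper's: express the correlation function as a coefficient of $\prod_{t,\alpha}\Phi_{\mathsf{a}^t_\alpha}(u^t_\alpha,0)\Phi^*_{\mathsf{a}^t_\alpha}(0,v^t_\alpha)$ in the generating function of Proposition~\ref{prop:correlations_gen_function}, rewrite the cross factor as the Cauchy determinant $\det[v^t_\alpha/(u^{t'}_{\alpha'}-v^t_\alpha)]$, absorb the single-variable prefactors into rows and columns, and then push the coefficient extraction inside via Andr\'eief-type multilinearity. One minor point: since the theorem states the kernel purely as a formal coefficient rather than a contour integral, your worry about compatibility between conditions~\eqref{eq:correlations_gen_function_conditions} and the extraction contours is not needed here---that issue only arises later (in the paper, in the ascending specialization) when one wants an actual integral representation.
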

\begin{proof}
	From \Cref{prop:correlations_gen_function} we see that 
	$\mathbb{P}_{\mathscr{P}} \bigl[ A\subset \mathcal{S}^{(T)} \bigr]$
	is the coefficient 
	\begin{align*}
		&
		\left[ \prod_{t=1}^T\prod_{\alpha=1}^{k_t}
		\Phi_{\mathsf{a}^t_\alpha}(u^t_\alpha,0)\Phi^*_{\mathsf{a}^t_\alpha}(0,v^t_\alpha) \right]
			\prod_{1\le i\le j\le T}\prod_{\alpha=1}^{k_j}
			\frac{(v^j_\alpha-\theta_i^{-2}w_i)(u^j_\alpha-w_i)}{(v^j_\alpha-w_i)(u^j_\alpha-\theta_i^{-2}w_i)}
			\prod_{1\le i< j\le T}
			\frac{(u^j_\alpha-x_i)(v^j_\alpha-r_i^{-2}x_i)}{(u^j_\alpha-r_i^{-2}x_i)(v^j_\alpha-x_i)}
			\\&\hspace{60pt}\times
			\prod_{m,i=1}^{T}\prod_{\alpha=1}^{k_i}
			\frac{(u^i_\alpha-y_m)(v^i_\alpha-x_m)}{(v^i_\alpha-y_m)(u^i_\alpha-x_m)}
			\det\left[ \frac{v^{t}_{\alpha}}{u^{t'}_{\alpha'}-v^{t}_{\alpha}} \right],
	\end{align*}
	where we used the Cauchy determinantal formula, and the 
	last determinant is of the same size as in \eqref{eq:corr_A_P_equals_det_K_P}.
	The dependence of the remaining expression 
	is of a product form in the $u^j_\alpha$'s and $v^j_\beta$'s, 
	and we may put this product expression into the determinant. 
	Finally, the operation of extracting the series coefficient
	may also be placed inside the determinant
	thanks to Andr\'eief identity \eqref{eq:Andreief},
	see also \cite{forrester2019meet}.
\end{proof}

\subsection{Specialization to ascending FG processes}
\label{sub:ascending_and_Fock}

Let us now specialize the results for the general FG process 
\eqref{eq:FG_process}
obtained in this 
section to the case of the ascending process
(\Cref{def:FG_asc}). Recall that
in the latter case the correlation kernel is computed
via an Eynard--Mehta type approach (\Cref{thm:ascending_FG_process_kernel}
proven in \Cref{appB:Eynard_Mehta}).
Our aim is to establish the following result
whose proof occupies the rest of this subsection:

\begin{theorem}
	\label{thm:kernel_matching}
	Specialize the correlation kernel for the general FG process
	(given by \Cref{thm:corr_kernel_no_contours} 
	as a generating series coefficient)
	to the case of an ascending FG process.
	Then the series coefficient can be extracted 
	with the help of a double contour integration, 
	which results in the same expression 
	\eqref{eq:ascending_FG_process_kernel_text}
	for the correlation kernel
	$K_{\mathscr{AP}}$
	as the one obtained using 
	the Eynard--Mehta type approach.
\end{theorem}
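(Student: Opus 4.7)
The plan is to substitute the ascending specialization into the series coefficient representation \eqref{eq:corr_kernel_no_contours} and then convert the coefficient extraction into a double contour integral via \Cref{lemma:Phi_extraction,lemma:PhiStar_extraction}. First, I would realize the ascending FG process as a degeneration of the general FG process, using \Cref{rmk:reduction_to_ascending} (and branching, \Cref{prop:F_G_branching}, to collapse a product of single-variable $F$'s into the multi-variable $F_{\lambda^{(T)}}(\rho)$). Plugging these specialized parameters into the expression of \Cref{thm:corr_kernel_no_contours} yields
\[
K_{\mathscr{AP}}(t,a;t',a')
 = \bigl[\Phi_{a'}(u,0)\Phi^*_a(0,v)\bigr]\,G(u,v),
\]
where $G(u,v)$ is the product appearing in \eqref{eq:corr_kernel_no_contours} after the ascending substitution (the single-variable $x_i$ parameters are replaced by the $N$-variable specialization, and extraneous $w_i,\theta_i$ factors drop out).

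Next, since the probabilistically relevant coordinates satisfy $a,a'\in\mathbb{Z}_{\ge 1}$, I would apply the $i\ge 1$ branches of \Cref{lemma:Phi_extraction,lemma:PhiStar_extraction} to rewrite the two extractions as contour integrals. The factor $v^{-1}$ from \Cref{lemma:PhiStar_extraction} cancels the explicit $v$ in the $v/(u-v)$ part of $G$, and the prefactor $y_a(s_a^{-2}-1)$ accounts (up to sign) for the factor $y_a(1-s_a^{-2})$ in \eqref{eq:ascending_FG_process_kernel_text}. At this stage the $u$-contour encircles $\{y_j\}_{j\ge 1}$ and the $v$-contour encircles $\{s_j^{-2}y_j\}_{j\ge 1}$, separating the two families. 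I would then deform the $v$-contour outward (through infinity) so that it encircles $\{y_j,w_i\}$ instead of $\{s_j^{-2}y_j\}$; the integrand decays like $v^{-2}$ at infinity in the ascending specialization, so this flips the overall sign (matching the $(s_a^{-2}-1)\to (1-s_a^{-2})$ discrepancy) and introduces residues only at the $w_i$'s, which are already present in the target integrand. Similarly deform the $u$-contour outward to enclose $\{y_j,\theta_i^{-2}w_i\}$. The pole of $1/(u-v)$ at $u=v$ is what produces the swap in nesting: keeping $u$ outside $v$ is legitimate precisely when the $\oplus/\ominus$ conditions of \Cref{prop:correlations_gen_function} hold in the order $t\le t'$, while the reverse order $t>t'$ forces the opposite nesting, exactly as stated after \eqref{eq:ascending_FG_process_kernel_text}.

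The hard part will be Step 3, the contour deformation bookkeeping, specifically checking that (i) the decay at infinity is strong enough to flip the $v$-contour without boundary terms, (ii) the relative nesting of $u$ and $v$ is forced by the same data that determines the $t\le t'$ versus $t>t'$ case split in \eqref{eq:ascending_FG_process_kernel_text}, and (iii) the conditions \eqref{eq:correlations_gen_function_conditions} used in the Fock space computation are compatible with (equivalently, imply the existence of) the contours demanded in the Eynard--Mehta formula \eqref{eq:ascending_FG_process_kernel_text}. Once these contour bookkeeping checks are done, matching the remaining algebraic factors
\[
\prod_{k=1}^N\frac{(u-y_k)(v-x_k)}{(u-x_k)(v-y_k)},\quad
\prod_{d=1}^t\frac{v-\theta_d^{-2}w_d}{v-w_d},\quad
\prod_{c=1}^{t'}\frac{u-w_c}{u-\theta_c^{-2}w_c}
\]
against the specialized form of \eqref{eq:corr_kernel_no_contours} is a direct calculation, independent of $r_i$ (as it must be by \Cref{rmk:no_r_in_ascending}), and completes the identification $K_{\mathscr{P}}=K_{\mathscr{AP}}$.
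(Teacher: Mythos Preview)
Your overall strategy—specialize \eqref{eq:corr_kernel_no_contours} to the ascending case and then realize the coefficient extraction as a double contour integral via \Cref{lemma:Phi_extraction,lemma:PhiStar_extraction}—is the paper's strategy, and your identification of the algebraic factors and of the sign flip coming from reversing the $v$-orientation are both correct. The gap is in how you pin down the contours. \Cref{lemma:Phi_extraction,lemma:PhiStar_extraction} only apply when the relevant series converges uniformly on the contour; they fix the contour's position relative to $\{y_j\}$ and $\{s_j^{-2}y_j\}$ but say nothing about its position relative to the remaining poles $w_i$, $\theta_i^{-2}w_i$, $x_k$, and $u=v$. The paper is explicit about this just after \eqref{eq:2k_contour_integral}: those additional residues ``are not yet specified because [the lemmas] involve series expansions and not actual rational functions.'' Your plan to start from a ``naive separating contour'' and then deform through infinity presupposes a definite starting position relative to these extra poles, which is exactly the ambiguity in question; a deformation cannot resolve it, because different starting positions yield genuinely different values for the extracted coefficient (e.g.\ $v/(u-v)$ has two inequivalent $\Phi/\Phi^*$ expansions, one valid under $\ominus_{u;v}$ and one under $\oplus_{v;u}$).

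The paper handles this not by deformation but by directly exhibiting contours on which the extraction lemmas and the convergence conditions \eqref{eq:ascending_weaker_conditions} hold simultaneously (\Cref{prop:final_K_AP_proof}). It first passes to nearly-homogeneous parameters \eqref{eq:particular_parameters_analytic_continuation}, uses the M\"obius map $\Xi(U)=(U-s^{-2}y)/(U-y)$ to convert each $\oplus/\ominus$ condition into a radial inequality $|\Xi(\cdot)|<|\Xi(\cdot)|$, and reads off from these orderings exactly which of $w_i,\theta_i^{-2}w_i,x_k$ lie inside each contour and which nesting of $u,v$ is forced—this is where the $t\le t'$ versus $t>t'$ split is actually derived, not merely asserted to be ``legitimate.'' It then extends to general parameters by analytic continuation, using that both the correlation function and the contour integral are a priori rational in finitely many parameters. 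Your outline is missing both ingredients: a concrete mechanism (like $\Xi$) tying the convergence conditions to the positions of the extra poles, and the analytic continuation step that removes the near-homogeneity restriction.
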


To match the notation, let us rename the parameter 
$T$ in the general FG process \eqref{eq:FG_process} to 
$N+T$, make the specializations
$(x_1;r_1),\ldots,(x_{T};r_{T})$ empty,
rename
$(x_{T+1};r_{T+1}),\ldots(x_{T+N};r_{T+N})$ to
$(x_{1};r_{1}),\ldots(x_{N};r_{N})$,
and set $w_{T+1}=\ldots=w_{T+N-1}=w_{T+N}=0$.
Furthermore, in \Cref{prop:correlations_gen_function} 
let us take $k_{T+N}=\ldots=k_{T+1}=0$.
One readily sees as in the proof of 
\Cref{prop:correlations_gen_function}
that the correlation generating function becomes
\begin{equation}
	\label{eq:ascending_correlation_genfunc}
	\begin{split}
		&
		\frac{1}{Z}
		\bigl\langle e_{\mathbb{Z}_{\le 0}},
		B^{\mathbb{Z}}(x_N,r_N)\ldots 
		B^{\mathbb{Z}}(x_{1},r_{1}) 
		\Psi(\mathbf{u}^{T};\mathbf{v}^{T})
		D^{\mathbb{Z}}(w_T,\theta_T) 
		\\
		&\hspace{60pt}
		\times
		\Psi(\mathbf{u}^{T-1};\mathbf{v}^{T-1})
		D^{\mathbb{Z}}(w_{T-1},\theta_{T-1})
		\ldots
		D^{\mathbb{Z}}(w_{2},\theta_{2})
		\Psi(\mathbf{u}^{1};\mathbf{v}^{1})
		D^{\mathbb{Z}}(w_{1},\theta_{1})
		e_{\mathbb{Z}_{\le N}}
		\bigr\rangle
		\\&\hspace{2pt}
		=
		\det\left[ \frac{v^{t}_{\alpha}}{u^{t'}_{\alpha'}-v^{t}_{\alpha}} \right]
		\prod_{1\le i\le j\le T}\prod_{\alpha=1}^{k_j}
		\frac{(v^j_\alpha-\theta_i^{-2}w_i)(u^j_\alpha-w_i)}
		{(v^j_\alpha-w_i)(u^j_\alpha-\theta_i^{-2}w_i)}
		\prod_{i=1}^{T}\prod_{\alpha=1}^{k_i}
		\prod_{m=1}^{N}
		\frac{u^i_\alpha-y_m}{v^i_\alpha-y_m}
		\frac{v^i_\alpha-x_m}{u^i_\alpha-x_m}
		,
	\end{split}
\end{equation}
where $Z$ is now given by \eqref{eq:Z_ascending_process},
and the determinant
is 
the same as in \Cref{sub:corr_kernel_general_FG}, that is,
of size $k_1+\ldots+k_T$ such that $1\le t,t'\le T$, 
$1\le \alpha\le k_t$, 
$1\le \alpha'\le k_{t'}$.

Identity \eqref{eq:ascending_correlation_genfunc} holds 
under assumptions \eqref{eq:correlations_gen_function_conditions}
on the parameters which are quite restrictive. In fact, 
some of these assumptions are artifacts of our proof of 
\Cref{prop:correlations_gen_function}
and can be removed:
\begin{lemma}
	\label{lemma:remove_conditions}
	Identity \eqref{eq:ascending_correlation_genfunc}
	holds under the weaker assumptions
	\begin{equation}
		\label{eq:ascending_weaker_conditions}
			\oplus_{x_i;w_j},
			\ 
			\oplus_{v^j_\alpha;w_i},
			\  
			\oplus_{x_j;u^i_\alpha},
			\ 
			\ominus_{u^j_\alpha;\theta_i^{-2}w_i},
			\
			\begin{cases}
			\ominus_{u^i_\alpha;v^j_\beta},& \textnormal{$i>j$ or $i=j,\alpha\ge\beta$};
			\\
			\oplus_{v^j_\beta;u^i_\alpha},&\textnormal{$i<j$ or $i=j,\alpha<\beta$},
			\end{cases}
	\end{equation}
	where we use notation 
	\eqref{eq:oplus_condition},
	\eqref{eq:ominus_condition}.
\end{lemma}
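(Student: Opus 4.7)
The plan is to deduce the lemma from \Cref{prop:correlations_gen_function} by analytic continuation. With $\mathbf{y},\mathbf{s}$ fixed to satisfy \eqref{eq:ys_FG_parameters}, both sides of \eqref{eq:ascending_correlation_genfunc} depend on only the finitely many free parameters $(x_i,r_i)_{i=1}^N$, $(w_j,\theta_j)_{j=1}^T$, and $(u^i_\alpha,v^i_\alpha)$. Let $\mathcal{U}$ denote the open region carved out by \eqref{eq:ascending_weaker_conditions}, and let $\mathcal{V}\subset\mathcal{U}$ be the smaller open subset defined by the full list \eqref{eq:correlations_gen_function_conditions}; by \Cref{rmk:correlation_functions_gen_function_conditions_nonempty}, $\mathcal{V}$ is nonempty. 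On $\mathcal{V}$, identity \eqref{eq:ascending_correlation_genfunc} is a direct specialization of \Cref{prop:correlations_gen_function} as explained in the paragraph preceding the lemma. It therefore suffices to show that both sides of \eqref{eq:ascending_correlation_genfunc} extend to jointly holomorphic functions on the connected open set $\mathcal{U}$, for then agreement on $\mathcal{V}$ propagates to agreement on all of $\mathcal{U}$.

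The right-hand side of \eqref{eq:ascending_correlation_genfunc} is visibly rational in its arguments, with denominators non-vanishing on $\mathcal{U}$. For the left-hand side, the idea is to expand each $\Psi(u^i_\alpha)=\sum_{j\in\mathbb{Z}}\Phi_j(u^i_\alpha,0)\,\psi_j$ and $\Psi^*(v^i_\alpha)=\sum_{j\in\mathbb{Z}}\Phi^*_j(0,v^i_\alpha)\,\psi^*_j$ via \Cref{thm:Psi_PsiStar_Fock}, thereby rewriting the matrix element as a multiple series indexed by $(j_{i,\alpha},j'_{i,\alpha})$. Each summand factors as a product of explicit coefficient functions $\Phi_j,\Phi^*_j$ times a matrix element of the form
\[
\bigl\langle e_{\mathbb{Z}_{\le 0}},\ B^\mathbb{Z}(x_N,r_N)\cdots B^\mathbb{Z}(x_1,r_1)\,\psi_{\bullet}\psi^*_{\bullet}\cdots\psi_{\bullet}\psi^*_{\bullet}\,D^\mathbb{Z}(w_T,\theta_T)\cdots D^\mathbb{Z}(w_1,\theta_1)\,e_{\mathbb{Z}_{\le N}}\bigr\rangle,
\]
which is a well-defined finite partition function of the free fermion six vertex model, entire in $x,r,w,\theta$. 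Joint holomorphy of the LHS on $\mathcal{U}$ then follows from Weierstrass' theorem provided this multiple series converges absolutely, uniformly on compact subsets of $\mathcal{U}$.

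The main obstacle is exactly this absolute convergence estimate, which I plan to carry out by bounding each summand geometrically in the indices $j_{i,\alpha},j'_{i,\alpha}$. The asymptotics of $\Phi_j(u,0)$ as $j\to+\infty$ are governed by the ratio $|u-y_j|/|u-s_j^{-2}y_j|$ together with analogous ratios arising from the $B^\mathbb{Z}$- and $D^\mathbb{Z}$-weights at column $j$; these ratios are strictly bounded away from $1$ precisely under $\oplus_{x_i;u^i_\alpha}$, $\ominus_{u^i_\alpha;\theta_j^{-2}w_j}$, and $\oplus_{x_i;w_j}$ from \eqref{eq:ascending_weaker_conditions}. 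The analogous statement for $\Phi^*_j(0,v)$ uses $\oplus_{v^i_\alpha;w_j}$, and the $j\to-\infty$ tails are handled symmetrically with $y_j$ and $s_j^{-2}y_j$ swapped. The remaining Wick-type conditions on $(u^i_\alpha,v^j_\beta)$ keep the Cauchy-type factors $\tfrac{v^j_\beta}{u^i_\alpha-v^j_\beta}$ uniformly bounded. Crucially, the conditions dropped between \eqref{eq:correlations_gen_function_conditions} and \eqref{eq:ascending_weaker_conditions} all involve either the combinations $r_j^{-2}x_j$ or the $y_m$ parameters, and were invoked in the proof of \Cref{prop:correlations_gen_function} only to perform $B\leftrightarrow\Psi$ commutations and the $\Psi(y)$-substitution \eqref{eq:correlations_gen_function_proof_e}; they do not appear in the raw asymptotic bounds of the summands, and hence are not needed for convergence. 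Once uniform absolute convergence is established, analytic continuation from $\mathcal{V}$ to $\mathcal{U}$ completes the proof.
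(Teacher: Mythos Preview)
Your overall architecture --- show both sides are holomorphic on the larger region $\mathcal{U}$ and invoke the identity on the smaller $\mathcal{V}$ from \Cref{prop:correlations_gen_function} --- matches the paper's implicit strategy. The paper simply says: the right-hand side is rational, so it is enough to prove the left-hand side converges under \eqref{eq:ascending_weaker_conditions}; after that, the extra conditions can be dropped.

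The substantive gap is in your convergence argument for the left-hand side. You assert that after expanding each $\Psi,\Psi^*$ via \Cref{thm:Psi_PsiStar_Fock}, the resulting matrix element
\[
\bigl\langle e_{\mathbb{Z}_{\le 0}},\ B^\mathbb{Z}(x_N,r_N)\cdots B^\mathbb{Z}(x_1,r_1)\,\psi_{j_\bullet}\psi^*_{j'_\bullet}\cdots\,D^\mathbb{Z}(w_T,\theta_T)\cdots D^\mathbb{Z}(w_1,\theta_1)\,e_{\mathbb{Z}_{\le N}}\bigr\rangle
\]
is a ``well-defined finite partition function \ldots\ entire in $x,r,w,\theta$''. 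This is false: already $\langle e_{\mathbb{Z}_{\le 0}},B^\mathbb{Z}(x,r)D^\mathbb{Z}(w,\theta)\,e_{\mathbb{Z}_{\le 1}}\rangle$ is an infinite sum over intermediate states (the particle at position $1$ can travel arbitrarily far right under $D^\mathbb{Z}$), converging only under $\oplus_{x;w}$. Inserting fixed $\psi_{j},\psi^*_{j'}$ does not truncate these sums. Consequently you cannot decouple the problem into ``bound $\Phi_j\Phi^*_{j'}$'' times ``$M$ is entire and harmless'', and the subsequent term-by-term estimate is not justified as stated. You would additionally need uniform-in-$(j_\bullet,j'_\bullet)$ bounds on $M$, which is essentially the original convergence problem again.

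The paper avoids this nesting of infinite sums altogether. Rather than expand $\Psi,\Psi^*$, it treats the left-hand side as a single matrix element and asks: at which lattice column $L$ can an infinite tail arise? For $L\to+\infty$, some operator ($D^\mathbb{Z}$ or $\Psi$) must deposit a vertical arrow at $L$, and a later operator ($B^\mathbb{Z}$ or $\Psi^*$) must remove it; for $L\to-\infty$, $D^\mathbb{Z}$ or $\Psi^*$ removes and a later $\Psi$ must restore it. For each such operator pair, the combined weight at column $L$ is an explicit ratio read off from the $W$-weights and from \Cref{thm:Psi_PsiStar_Fock}, and summability of that ratio over $L$ is precisely one of the conditions in \eqref{eq:ascending_weaker_conditions}. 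The discarded conditions (those involving $r_j^{-2}x_j$ or $y_m$) never appear, because in the ascending specialization there are no $\Psi$'s to the left of the $B^\mathbb{Z}$'s, so the pairs that would have required them simply do not occur. Your intuition that ``$\Phi_j$ combines with $B^\mathbb{Z}/D^\mathbb{Z}$ weights at column $j$ to give a controlled ratio'' is exactly right --- but the clean way to implement it is this column-by-column operator-pair analysis, not a factorization into $\Phi_j\cdot M$.
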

\begin{proof}
	Since the right-hand side of \eqref{eq:ascending_correlation_genfunc}
	is rational, it suffices to show that 
	under \eqref{eq:ascending_weaker_conditions} the
	left-hand side of \eqref{eq:ascending_correlation_genfunc} converges.
	After establishing this, we may drop the unnecessary conditions
	from \eqref{eq:correlations_gen_function_conditions}.

	Observe that possible infinite summations in the left-hand side of
	\eqref{eq:ascending_correlation_genfunc} may arise in two cases.
	Either one of the operators $D^{\mathbb{Z}}$ or $\Psi$ adds a vertical arrow at some $L\ge 1$,
	and then one of the following operators 
	$B^{\mathbb{Z}}$ or $\Psi^*$ removes it;
	or one of the operators $D^{\mathbb{Z}}$
	or $\Psi^*$
	removes a vertical arrow at some $L\le 0$,
	and one of the following operators $\Psi$ adds it back.
	There are no operators $\Psi$ to the left of $B^\mathbb{Z}$, so
	removals of the arrows at $L\le 0$ by $B^{\mathbb{Z}}$
	cannot be compensated and thus do not contribute to the left-hand
	side of \eqref{eq:ascending_correlation_genfunc}.

	We now use \Cref{def:normalized_ops} and 
	$W$ \eqref{eq:weights_W} for $B^{\mathbb{Z}}, D^{\mathbb{Z}}$ and 
	\Cref{thm:Psi_PsiStar_Fock} for $\Psi,\Psi^*$.
	We see that 
	at sufficiently large $L\ge1$, the combination of 
	$D^{\mathbb{Z}}(w,\theta)$ and $B^{\mathbb{Z}}(x,r)$
	produces a factor $\prod_{i=m}^{L}
	\frac{x-s_i^{-2}y_i}{x-y_i}\frac{w-s_i^{-2}y_i}{w-y_i}$, where $m$
	is fixed and $L$ grows. This factor is summable over $L$ under 
	$\oplus_{x;w}$. All other pairs of operators
	are considered similarly. Namely, for $L\ge1$, pairs of operators
	lead to conditions as follows:
	\begin{equation*}
		\begin{pmatrix}
			D^{\mathbb{Z}}(w,\theta),\, \Psi^*(v)\\
			\Psi(u),\,B^{\mathbb{Z}}(x,r)\\
			\Psi(u),\,\Psi^*(v)
		\end{pmatrix}
		\quad\textnormal{leads to}\quad
		\begin{pmatrix}
			\oplus_{v;w}\\
			\oplus_{x;u}\\
			\oplus_{v;u}
		\end{pmatrix}
	\end{equation*}
	And for $L\le0$, we have
	\begin{equation*}
		\begin{pmatrix}
			D^{\mathbb{Z}}(w,\theta),\,\Psi(u)\\
			\Psi^*(v),\,\Psi(u)\\
		\end{pmatrix}
		\quad\textnormal{leads to}\quad
		\begin{pmatrix}
			\ominus_{u;\theta^{-2}w}\\
			\ominus_{u;v}
		\end{pmatrix}.
	\end{equation*}
	Finally, note that the two different 
	cases in \eqref{eq:ascending_weaker_conditions}
	are due to the fact that
	when $\Psi$ comes before or after $\Psi^*$, only the case $L\ge1$ or $L\le0$, respectively, may lead to 
	infinite sums. 
\end{proof}

Fix a finite set
$A=\left\{ (t,\mathsf{a}^t_\alpha)
\colon 1\le t\le T,\,1\le \alpha\le k_t \right\}$, and denote
$k=k_1+\ldots+k_T$ (this is the size of $A$).
Arguing as in the proof of \Cref{thm:corr_kernel_no_contours},
we see that
$\mathbb{P}_{\mathscr{AP}}[A\subset \mathcal{S}^{(T)}]$,
the correlation function 
of the ascending FG process, is equal to the coefficient by 
$\prod_{t=1}^T\prod_{\alpha=1}^{k_t}
\Phi_{\mathsf{a}^t_\alpha}(u^t_\alpha,0)
\Phi^*_{\mathsf{a}^t_\alpha}(0,v^t_\alpha)$
in the expansion of the right-hand side of 
\eqref{eq:ascending_correlation_genfunc}.
By \Cref{lemma:Phi_extraction,lemma:PhiStar_extraction},
this coefficient can be extracted with the help of 
a $2k$-fold contour integral
\begin{equation}
	\label{eq:2k_contour_integral}
	\begin{split}
		&
		\frac{1}{(2\pi \mathbf{i})^{2k}}
		\Biggl(
		\prod_{i=1}^{T}\prod_{\alpha=1}^{k_i}
			\oint_{\Gamma_{y,*}}du^{i}_\alpha
			\oint_{\Gamma_{s^{-2}y,*}}dv^{i}_\alpha
		\Biggr)
		\prod_{t=1}^{T}\prod_{\alpha=1}^{k_t}
		\Biggr(
		\frac{y_{\mathsf{a}^t_\alpha}
		(s_{\mathsf{a}^t_\alpha}^{-2}-1)}
		{v-s_{\mathsf{a}^t_\alpha}^{-2}y_{\mathsf{a}^t_\alpha}}
		\frac{1}{u-y_{\mathsf{a}^t_\alpha}}
		\prod_{j=1}^{\mathsf{a}^t_\alpha-1}
		\frac{v-y_j}{v-s_j^{-2}y_j}
		\frac{u-s_j^{-2}y_j}{u-y_j}
		\Biggr)
		\\&\hspace{40pt}\times
		\det\left[ \frac{1}{u^{t'}_{\alpha'}-v^{t}_{\alpha}} \right]
		\prod_{i=1}^{T}\prod_{\alpha=1}^{k_i}
		\prod_{m=1}^{N}
		\frac{u^i_\alpha-y_m}{v^i_\alpha-y_m}
		\frac{v^i_\alpha-x_m}{u^i_\alpha-x_m}
		\prod_{1\le i\le j\le T}\prod_{\alpha=1}^{k_j}
		\frac{(v^j_\alpha-\theta_i^{-2}w_i)(u^j_\alpha-w_i)}
		{(v^j_\alpha-w_i)(u^j_\alpha-\theta_i^{-2}w_i)}
		.
	\end{split}
\end{equation}
Here each contour
$u^i_\alpha$ goes around all $y_k$ in the positive direction and 
leaves all $s_k^{-2}y_k$ outside, while each contour
$v^j_\beta$ encircles all $s_k^{-2}y_k$
and leaves all $y_k$ outside. Moreover, the contours
might encircle some of the other poles 
$u^i_\alpha=v^j_\beta$, $u^i_\alpha=x_k$, 
$u^i_\alpha=\theta_k^{-2}w_k$,  or $v^j_\beta=w_k$
of the integrand.
These additional residues are not yet specified
because
\Cref{lemma:Phi_extraction,lemma:PhiStar_extraction}
involve series expansions and not actual rational functions.
Therefore, 
we need to
determine 
which of these additional poles the contours in \eqref{eq:2k_contour_integral}
encircle. This is done in the next statement.
\begin{proposition}
	\label{prop:final_K_AP_proof}
	The correlation function 
	$\mathbb{P}_{\mathscr{AP}}[ A\subset \mathcal{S}^{(T)} ]$
	is equal to the $2k$-fold contour integral 
	\eqref{eq:2k_contour_integral},
	where:
	\begin{enumerate}[$\bullet$]
		\item the integration contour
			for each $u^i_\alpha$ is positively oriented, encircles
			all $y_k,\theta_k^{-2}w_k$, and does not encircle any of $x_k$;
		\item 
			the integration contour for
			each $v^j_\beta$ is 
			negatively oriented, encircles 
			all $y_k,w_k$, and does not encircle any of $s_k^{-2}y_k$;
		\item 
			the contour $u^i_\alpha$ contains the contour
			$v^j_\beta$
			for $i>j$ or $i=j$, $\alpha\ge \beta$;
			and 
			the
			$v^j_\beta$
			contains $u^i_\alpha$ otherwise.
	\end{enumerate}
\end{proposition}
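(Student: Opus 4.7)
The starting point is the correlation generating function identity \eqref{eq:ascending_correlation_genfunc}, valid under the hypotheses \eqref{eq:ascending_weaker_conditions} by \Cref{lemma:remove_conditions}. By \eqref{eq:small_psi_correlation_property} together with the fermionic expansions from \Cref{thm:Psi_PsiStar_Fock}, the probability $\mathbb{P}_{\mathscr{AP}}[A\subset\mathcal{S}^{(T)}]$ is precisely the coefficient of $\prod_{(t,\alpha)}\Phi_{\mathsf{a}^t_\alpha}(u^t_\alpha,0)\,\Phi^*_{\mathsf{a}^t_\alpha}(0,v^t_\alpha)$ in the rational function on the right-hand side of \eqref{eq:ascending_correlation_genfunc}. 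This parallels the reduction already carried out in the proof of \Cref{thm:corr_kernel_no_contours}.

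The main step is then to extract this coefficient analytically via contour integration. Applying \Cref{lemma:Phi_extraction} to each $u^t_\alpha$ and \Cref{lemma:PhiStar_extraction} to each $v^t_\alpha$, and placing the $2k$ extractions inside the Cauchy determinant via the same Andr\'eief-type argument as in \Cref{thm:corr_kernel_no_contours}, one obtains precisely the integrand displayed in \eqref{eq:2k_contour_integral}. At this intermediate stage the $u^t_\alpha$ contour is only required to be positively oriented around $\{y_k\}$ and to separate it from $\{s_k^{-2}y_k\}$, while the $v^t_\alpha$ contour is positively oriented around $\{s_k^{-2}y_k\}$ --- equivalently, negatively oriented around $\{y_k\}$, which already matches the orientation prescribed in the Proposition.

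What remains, and what I regard as the main obstacle, is to show that each contour can be further deformed to enclose the additional points described in the Proposition ($\theta_k^{-2}w_k$ for the $u$-contour; $w_k$ and $y_m$ for the $v$-contour) while avoiding the forbidden ones ($x_k$ for $u$; $s_k^{-2}y_k$ for $v$), with the $(u,v)$ pairs nested as dictated by the third bullet. The required geometric configuration will follow pole-by-pole from a direct inspection of the integrand: the only potential additional poles in $u^t_\alpha$ are at $x_k$, at $\theta_i^{-2}w_i$ with $i\le t$, and at $v^{t'}_{\alpha'}$, and analogously for $v^t_\alpha$. The spec/compatibility inequalities \eqref{eq:x_r_spec_parameters} and \eqref{eq:Eynard_asc_FG_condition} ensure that a contour enclosing all $y_k$ can also be drawn to either include or exclude each of $\theta_i^{-2}w_i$, $w_i$, $x_k$ as needed. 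The nesting between $u$ and $v$ contours matches precisely the dichotomy $\ominus_{u^i_\alpha;v^j_\beta}$ versus $\oplus_{v^j_\beta;u^i_\alpha}$ in \eqref{eq:ascending_weaker_conditions}: in the first case the $u$-contour may be drawn so as to surround the $v$-contour, and in the second case conversely. The simultaneous realizability of such a nested arrangement is non-trivial, but it is guaranteed on an open set of parameters by \Cref{rmk:correlation_functions_gen_function_conditions_nonempty}, and the identity then extends by analytic continuation in the parameters of the ascending FG process to the full range for which the contours described in the Proposition exist.
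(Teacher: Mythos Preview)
Your overall strategy---extract the coefficient via \Cref{lemma:Phi_extraction,lemma:PhiStar_extraction}, verify the resulting contours have the stated encirclement and nesting properties for some special choice of parameters, then extend by analytic continuation---is exactly the paper's. The gap is in the step you yourself flag as ``the main obstacle''.

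The extraction lemmas require the series expansion of the right-hand side of \eqref{eq:ascending_correlation_genfunc} to converge uniformly on the contours, which forces the $\oplus$/$\ominus$ conditions from \eqref{eq:ascending_weaker_conditions} to hold for \emph{all} $u^i_\alpha,v^j_\beta$ on those contours simultaneously. You then need to exhibit closed contours on which these convergence conditions hold \emph{and} which encircle the prescribed sets of points with the prescribed nesting. Your appeal to \Cref{rmk:correlation_functions_gen_function_conditions_nonempty} does not accomplish this: that remark only asserts the existence of \emph{parameters} and individual \emph{points} $u,v$ satisfying the conditions, not closed contours with the required geometry. The paper closes this gap by specializing to a near-homogeneous regime $y_i\approx y$, $s_i\approx s$ for all $i\in\mathbb{Z}$ (including the auxiliary nonpositive indices, which you are free to choose), where the M\"obius map $\Xi(U)=(U-s^{-2}y)/(U-y)$ sends $y\mapsto\infty$, $s^{-2}y\mapsto 0$. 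Under $\Xi$ the $\oplus$/$\ominus$ conditions become inequalities among the moduli $|\Xi(\cdot)|$, and the contours can be taken as preimages of concentric circles $|\Xi|=c^i_\alpha$, $|\Xi|=d^j_\beta$ with interlacing radii; this makes both the convergence and the encirclement/nesting properties manifest at once. Your analytic-continuation conclusion is then correct, but it needs this explicit construction as its base case.
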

\begin{proof}
	First, let us take
	the parameters close to each other as follows:
	\begin{equation}
		\label{eq:particular_parameters_analytic_continuation}
		\begin{split}
			&
			y_i\approx y, \ i\in \mathbb{Z};\qquad 
			s_i\approx s, \ i\in \mathbb{Z};\qquad 
			r_i\approx r;\quad \theta_i\approx \theta;\quad
			x_i\approx x;\quad
			w_i\approx w,
		\end{split}
	\end{equation}
	where
	\begin{equation*}
		\left|\frac{x-s^{-2}y}{x-y}\right|<\left|\frac{w-s^{-2}y}{w-y}\right|,
	\end{equation*}
	and the nonnegativity of the specializations
	(\Cref{def:spec}) holds. 
	In \eqref{eq:particular_parameters_analytic_continuation},
	``$\approx$'' means that the parameters are very
	close to the corresponding values (for all $i$), 
	but are all distinct.
	One can check that such a choice of 
	$x,r,w,\theta,y,s$
	exists, for example,
	$y=1,s=0.25,x=0.3,r=0.5,w=0.43,\theta=0.64$.

	Under
	\eqref{eq:particular_parameters_analytic_continuation}, 
	conditions $\oplus_{a;b}$ and $\ominus_{a;b}$ are essentially the same since 
	there is no difference between $(y_j,s_j)$ with negative and positive indices.
	Consider the map (and its inverse)
	\begin{equation*}
		U\mapsto \Xi=\Xi(U):=\frac{U-s^{-2}y}{U-y},
		\qquad 
		\Xi\mapsto U=U(\Xi)=\frac{y(s^{-2}-\Xi)}{1-\Xi}.
	\end{equation*}
	Clearly, $\Xi$ maps $y_j$ close to $\infty$, and $s_j^{-2}y_j$ 
	close to $0$.
	We also see that conditions \eqref{eq:ascending_weaker_conditions}
	are satisfied if the variables $u^i_\alpha,v^j_\beta$ are chosen so that
	$|\Xi(x)|< |\Xi(u^i_\alpha)|< |\Xi(w)|$,
	$|\Xi(x)|< |\Xi(v^j_\beta)|< |\Xi(w)|$,
	$|\Xi(u^i_\alpha)|< |\Xi(\theta^{-2}w)|$,
	and
	\begin{equation*}
		\begin{cases}
			|\Xi(u^i_\alpha)|<|\Xi(v^j_\beta)|,& \textnormal{$i>j$ or $i=j,\alpha\ge\beta$};
			\\
			|\Xi(v^j_\beta)|<|\Xi(u^i_\alpha)|,&\textnormal{$i<j$ or $i=j,\alpha<\beta$}.
		\end{cases}
	\end{equation*}
	We claim that the integration 
	contours for $u^i_\alpha$, $v^j_\beta$ satisfying all the required conditions
	exist. Indeed, one can simply take 
	$u^i_\alpha=U(c^i_{\alpha}e^{-\mathbf{ i}t})$, 
	$v^j_\beta=U(d^j_{\beta}e^{\mathbf{ i}t})$, where
	$0<t<2\pi$
	and
	\begin{equation*}
		\begin{cases}
			0<c^i_\alpha<d^j_\beta,& \textnormal{$i>j$ or $i=j,\alpha\ge\beta$};
			\\
			0<d^j_\beta<c^i_\alpha
			,&\textnormal{$i<j$ or $i=j,\alpha<\beta$}.
		\end{cases}
	\end{equation*}
	In particular, the radii $c^{i}_\alpha,d^j_\beta$ are interlacing
	in a certain way.
	Since these radii can be arbitrarily close to each other, this can be achieved.
	Note the different orientation of the $u$ and the $v$ contours which is due to the 
	fact that $\Xi$ maps $y$ to infinity. 
	This agrees with \Cref{lemma:Phi_extraction,lemma:PhiStar_extraction}
	in that the
	$u^i_\alpha$ contours must go around all $y_j$ in the positive direction, 
	and 
	the $v^j_\beta$ contours must go around all $s_j^{-2}y_j$ in the negative
	direction. Both families of contours should separate
	$\{y_k \}$ from $\{ s_k^{-2}y_k \}$.

	Moreover, inequalities for 
	$|\Xi(u^i_\alpha)|$ and 
	$|\Xi(v^j_\beta)|$
	listed above imply that the integration contours
	are as described in the claim of the proposition.
	Therefore, by \Cref{lemma:Phi_extraction,lemma:PhiStar_extraction},
	in the case when all the similarly named parameters
	are close to each other as in 
	\eqref{eq:particular_parameters_analytic_continuation}, we may 
	extract the desired coefficient 
	$\mathbb{P}_{\mathscr{AP}}[ A\subset \mathcal{S}^{(T)} ]$ from the 
	right-hand side of \eqref{eq:ascending_correlation_genfunc}
	by means of integration over the $2k$
	contours described above in the proof.

	\medskip

	To complete the proof in the general case, we use analytic continuation.
	First, 
	a straightforward a priori argument
	(like in \cite[Lemma 8.10]{BorodinPetrov2016inhom})
	shows that under
	$\oplus_{x_i;w_j}$ (for all $i,j$),
	any correlation
	function
	$\mathbb{P}_{\mathscr{AP}}[ A\subset \mathcal{S}^{(T)} ]$
	of the FG process is a rational function depending on
	a finite subset of the parameters of the process (the size of the subset
	depends on~$A$). 
	Second, the $2k$-fold contour integral 
	\eqref{eq:2k_contour_integral} over the contours
	described above in the proof
	is also a rational function
	because it is a finite sum of residues of the integrand.
	These two rational functions are equal on an open 
	full-dimensional 
	subset in the finite-dimensional space of the 
	parameters that they depend on. Therefore, these
	functions are equal in general,
	provided that the correlation function 
	$\mathbb{P}_{\mathscr{AP}}[ A\subset \mathcal{S}^{(T)} ]$
	is well-defined (i.e., under $\oplus_{x_i;w_j}$)
	and the $2k$-fold integral is taken over the same residues
	as before the analytic continuation.
\end{proof}

By applying
Andr\'eief identity \eqref{eq:Andreief}
(see also \cite{forrester2019meet}),
the $2k$-fold contour integral 
\eqref{eq:2k_contour_integral}
(over the contours described in \Cref{prop:final_K_AP_proof})
is rewritten as a determinant
\begin{equation*}
	\mathbb{P}_{\mathscr{AP}}\bigl[ A\subset \mathcal{S}^{(T)} \bigr]
	=
	\det
	\bigl[
		K_{\mathscr{AP}}(t,\mathsf{a}^t_\alpha;t',\mathsf{a}^{t'}_{\alpha'}) 
	\bigr]
\end{equation*}
of the correlation kernel $K_{\mathscr{AP}}(t,a;t',a')$
given by the double contour integral \eqref{eq:ascending_FG_process_kernel_text}.
The determinant is of size $k$, indexed by
$1\le t,t'\le T$ with
$1\le \alpha\le k_t$, 
$1\le \alpha'\le k_{t'}$.
Let us add two remarks:
\begin{enumerate}[$\bullet$]
	\item 
		The sign difference in the term
		$1-s_a^{-2}$ between \eqref{eq:2k_contour_integral}
		and the kernel 
		$K_{\mathscr{AP}}$ \eqref{eq:ascending_FG_process_kernel_text},
		is due to reversing the direction of the $v$ contour.
	\item 
		The conditions
		$\ominus_{u^i_\alpha;v^j_\beta}$ 
		and
		$\oplus_{v^j_\beta;u^i_\alpha}$ 
		in \eqref{eq:ascending_weaker_conditions}
		depending on the relative order of the indices
		$(i,\alpha)$ and $(j,\beta)$
		translate to 
		$\ominus_{u;v}$ for $t'\ge t$ 
		and $\oplus_{v;u}$ for $t'<t$
		in the double contour integral kernel
		$K_{\mathscr{AP}}(t,a;t',a')$.
\end{enumerate}

Overall, we see that 
for the ascending FG process,
the fermionic operator approach
developed in 
\Cref{sec:fermionic_operators,sec:Fock_and_FG_process}
and the Eynard--Mehta type approach from
\Cref{appB:Eynard_Mehta}
produce the same correlation kernel $K_{\mathscr{AP}}$.
This completes the proof of \Cref{thm:kernel_matching}.

\newpage

\part{Random Tilings}
\label{partIII}

In this part we represent determinantal point processes from
\Cref{partII} as a certain inhomogeneous dimer model (which can also be viewed as 
a model of random domino tilings), and study the bulk
asymptotic behavior of the model.

\section{Dimers and domino tilings}
\label{sec:domino_tilings}

In this section we interpret the ascending FG process defined in \Cref{sec:FG_measures}
as a nonintersecting path model and a dimer model,
and prove \Cref{thm:intro_dimers_dominoes}
from Introduction.

\subsection{Layered five vertex model}
\label{sub:five_vertex_models}

\begin{figure}[htpb]
	\centering
	\includegraphics[width=.8\textwidth]{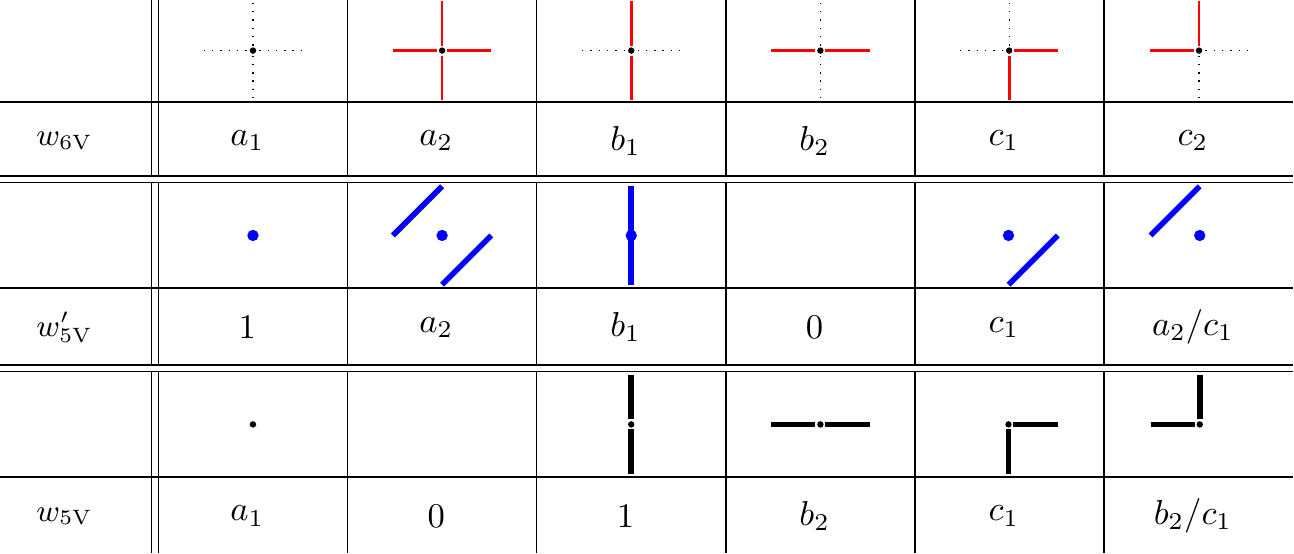}
	\caption{The six vertex weights and two families of five vertex weights.}
	\label{fig:6V_5V_5V}
\end{figure}

Let us take six vertex weights $w_{\mathrm{6V}}(i_1,j_1;i_2,j_2)$ as in 
\Cref{fig:6V_5V_5V}, top. Assume that they are free fermionic, that is, 
$a_1a_2+b_1b_2=c_1c_2$. Moreover, let $c_1\ne 0$.

Define two families of \emph{five vertex} weights, $w'_{\mathrm{5V}}$ and $w_{\mathrm{5V}}$, as
in \Cref{fig:6V_5V_5V}, middle and bottom, respectively. One readily sees that these vertex 
weights also satisfy the free fermion condition. 

The six vertex configuration can be replaced by a vertical
concatenation of two five vertex configurations
\cite[Section 4.7]{wheeler2018hall}.
Let us recall the construction.
Consider a stacked two-vertex configuration with the weight $w_{\mathrm{5V}}'$ at the top, and weight
$w_{\mathrm{5V}}$ at the bottom. Then we claim that 
these two five vertex weights produce the same partition function as $w_{\mathrm{6V}}$:
\begin{lemma}
	\label{lemma:from_6V_to_5V}
	For any fixed $I_1,I_2,J_1,j_2,j_2'\in  \left\{ 0,1 \right\}$, we have
	\begin{equation*}
		\sum_{j_1,j_1',k\in \left\{ 0,1 \right\}} 
		w_{\mathrm{5V}}(I_1,j_1;k,j_2)\,
		w_{\mathrm{5V}}'(k,j_1';I_2,j_2')\,\mathbf{1}_{j_1+j_1'=J_1}
		=
		w_{\mathrm{6V}}(I_1,J_1;I_2,j_2+j_2').
	\end{equation*}
	In particular, if $j_2+j_2'$ is greater than $1$, then the left-hand side vanishes.
\end{lemma}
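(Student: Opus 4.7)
The plan is to verify the identity by a finite case analysis, exploiting arrow conservation to collapse most of the casework, and isolating a single case in which the free fermion condition enters essentially.

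First I would use conservation. Each five-vertex weight $w_{\mathrm{5V}}$ and $w_{\mathrm{5V}}'$ still satisfies $i_1+j_1=i_2+j_2$ at its own vertex (being a restriction of a free-fermionic six-vertex weight). Adding the two conservation laws and imposing $j_1+j_1'=J_1$, the sum on the left-hand side vanishes unless $I_1+J_1=I_2+(j_2+j_2')$, which is exactly when the right-hand side is nonzero. This immediately reduces the analysis to the six assignments of $(I_1,J_1,I_2,j_2+j_2')$ corresponding to the six allowed six-vertex types.

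Next I would dispose of the vanishing claim. If $j_2+j_2'>1$, i.e. $j_2=j_2'=1$, I would use the fact (read directly from the forbidden vertex in the definition of $w_{\mathrm{5V}}$, respectively $w_{\mathrm{5V}}'$) that the tuple $(I_1,j_1;k,1)$ and $(k,j_1';I_2,1)$ cannot be simultaneously allowed for any choice of internal state $k\in\{0,1\}$ and any $j_1,j_1'$. This is the content of the two families being complementary five-vertex restrictions, and it forces each summand on the left to be zero.

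For the five remaining cases (corresponding to the weights $a_1,b_1,b_2,c_1,c_2$), arrow conservation inside the two-vertex column pins down the internal triple $(j_1,j_1',k)$ uniquely, so the sum collapses to a single product of five-vertex weights; comparing against the definitions in Figure \ref{fig:6V_5V_5V} one checks the equality by inspection. The one genuinely interesting case is the $a_2$ configuration $(I_1,J_1,I_2,j_2{+}j_2')=(1,1,1,1)$, where two internal states $k\in\{0,1\}$ both contribute. There the sum takes the schematic form $b_1b_2+c_1c_2\cdot(\text{something})/c_1$ (the exact split depending on which weight is located in which layer), and matching it with $a_2$ is equivalent to $a_1a_2+b_1b_2=c_1c_2$, i.e. precisely the free fermion condition. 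The hypothesis $c_1\ne 0$ is what allows the normalization of the intermediate five-vertex weights to be well defined.

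The main obstacle I expect is purely bookkeeping: making sure the assignment of $\{a_1,a_2,b_1,b_2,c_1,c_2\}$ to the two five-vertex families in Figure \ref{fig:6V_5V_5V} is consistent between the two layers, so that the single nontrivial sum really does reduce to the free fermion relation with the correct signs and normalizations. Once the layer conventions are fixed, the rest is routine.
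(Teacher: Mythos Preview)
Your approach---reduce via arrow conservation, handle the $j_2+j_2'=2$ vanishing, then verify the six allowed types---is exactly the paper's ``straightforward verification.'' One correction to your bookkeeping, though: you have the roles of the $a_2$ and $c_2$ cases reversed. For $(I_1,J_1,I_2)=(1,1,1)$ (the $a_2$ output), the two admissible configurations correspond to \emph{different} exit data $(j_2,j_2')\in\{(1,0),(0,1)\}$ and are therefore treated separately, each collapsing to a single product; the five-vertex restriction in the bottom layer kills the second internal state $k$ you expected. It is the $c_2$ case $(I_1,J_1,I_2,j_2,j_2')=(0,1,1,0,0)$ where both $k=0$ and $k=1$ survive and must be summed---and that is where the free fermion relation $a_1a_2+b_1b_2=c_1c_2$ (with the normalization requiring $c_1\ne 0$) actually enters. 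You anticipated this kind of layer-convention issue in your final paragraph; once you swap those two cases, the rest goes through as you describe.
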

\begin{proof}
	This is done by a straightforward verification.
	Let us illustrate just two cases. First, for $I_1=I_2=J_1=1$, we have
	two configurations to be considered separately (as they correspond to different 
	exit boundary conditions $(j_2,j_2')$):
	\begin{equation*}
		\includegraphics{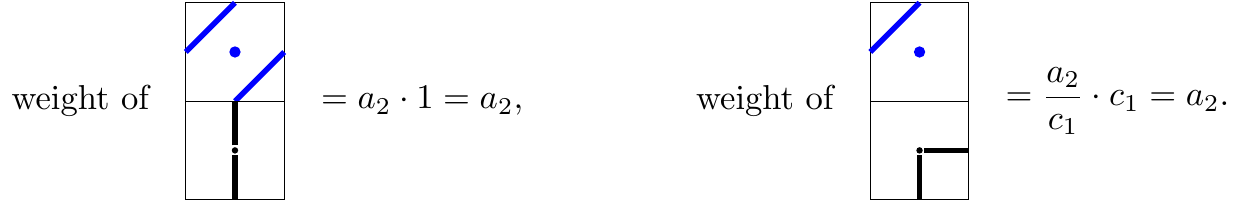}
	\end{equation*}
	Second, for $I_1=0$, $J_1=1$, $I_2=1$, we have two configurations 
	to be considered together (as they have the same $(j_2,j_2')$):
	\begin{equation*}
		\includegraphics{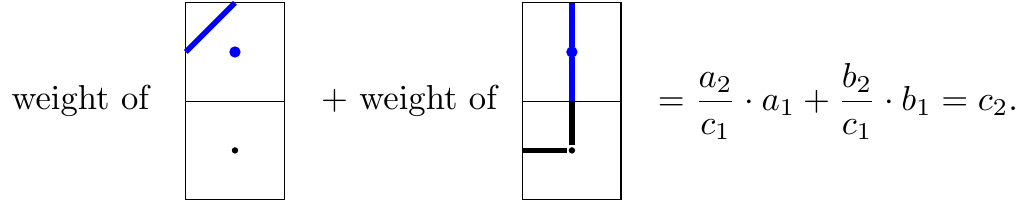}
	\end{equation*}
	All other cases are obtained similarly.
\end{proof}

\Cref{lemma:from_6V_to_5V} implies that the
ascending FG process $\mathscr{AP}(\lambda^{(1)},\ldots,\lambda^{(T)} )$
\eqref{eq:ascending_process} 
can be realized as a partition function of a 
path configuration in 
$\mathbb{Z}_{\ge1}\times \mathscr{I}_{T,N}$,
where 
\begin{equation}
	\label{eq:I_T_N_layered_stuff}
	\mathscr{I}_{T,N}=\{1,1',\ldots,T,T',T+1,(T+1)',\ldots,T+N,(T+N)'\}.
\end{equation}
Namely, take
the vertex weights at the odd horizontals 
(numbered $1\le i\le T+N$)
to be
$W_{\mathrm{5V}}$, $1\le i\le  T$ or $\widehat{W}_{\mathrm{5V}}$, $T+1\le i\le T+N$,
and the vertex weights at the even horizontals (numbered by $i'$, $1\le i\le T+N$)
to be 
$W'_{\mathrm{5V}}$ or $\widehat{W}_{\mathrm{5V}}$ in a similar way.
Here $W_{\mathrm{5V}}$, $\mathrm{W}_{5V}'$ are 
constructed from the six vertex weights
$W$ \eqref{eq:weights_W} as in \Cref{fig:6V_5V_5V}, 
and similarly $\widehat{W}_{\mathrm{5V}},\widehat{W}_{\mathrm{5V}}'$
are constructed from $\widehat{W}$ \eqref{eq:weights_W_hat}.
The boundary conditions in 
$\mathbb{Z}_{\ge1}\times \mathscr{I}_{T,N}$,
are the same as the boundary conditions for the ascending FG process: there are $N$ paths entering from below, and 
these $N$ paths exit far to the right through the topmost $N$ odd horizontals.
See \Cref{fig:FG_process_5vertex} for an illustration.

\begin{proposition}
	\label{prop:two_five_vertex_models}
	With the above notation, the joint distribution of the arrow configurations in the 
	layered five vertex model as in \Cref{fig:FG_process_5vertex}, bottom,
	joining 
	horizontals $i'$ and $i+1$, $1\le i\le T$,
	is the same as the joint distribution of $\mathcal{S}(\lambda^{(i)})$, $1\le i\le T$, 
	under the ascending FG process \eqref{eq:ascending_process}.
	(Here we are using notation $\mathcal{S}(\lambda)$ from \eqref{eq:S_lambda_notation}.)
\end{proposition}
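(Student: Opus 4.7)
The plan is to apply \Cref{lemma:from_6V_to_5V} vertex-by-vertex to the six-vertex model underlying the ascending FG process, thereby turning each row into a pair of five-vertex rows while keeping the joint distribution of the vertical edges that separate consecutive original rows unchanged.

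First I would rewrite $\mathscr{AP}(\lambda^{(1)},\ldots,\lambda^{(T)})$ using the partition-function definitions of $F$ and $G$ (\Cref{def:F_function,def:G_function}) together with branching (\Cref{prop:F_G_branching}): the product $G_{\lambda^{(1)}}G_{\lambda^{(2)}/\lambda^{(1)}}\cdots G_{\lambda^{(T)}/\lambda^{(T-1)}}F_{\lambda^{(T)}}$ is $Z^{-1}$ times the partition function of a single six-vertex model in $\mathbb{Z}_{\ge 1}\times\{1,\ldots,T+N\}$. The weights at site $(j,i)$ are $W(\,\cdot\,\vert\,w_i;y_j;\theta_i;s_j)$ for $1\le i\le T$ and $\widehat W(\,\cdot\,\vert\,x_{i-T};y_j;r_{i-T};s_j)$ for $T+1\le i\le T+N$; the vertical edges between rows $i$ and $i+1$ are frozen to $\mathcal{S}(\lambda^{(i)})$ for $1\le i\le T$; the top boundary and the left boundary are empty; and the right boundary is empty in the bottom $T$ rows and fully packed in the top $N$ rows.

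Next, I would apply \Cref{lemma:from_6V_to_5V} independently at every lattice site, using the free-fermion condition that holds throughout with the site-specific parameters. At each vertex the six-vertex weight is expanded as a sum over an internal vertical state $k$ and a pair of left horizontal states $(j_1,j_1')$ whose sum equals the original left horizontal state. Carried out at all sites, this expansion replaces each original row $i$ by two rows, indexed $i$ and $i'$ in $\mathscr{I}_{T,N}$, carrying the weights $W_{\mathrm{5V}},W_{\mathrm{5V}}'$ if $1\le i\le T$ and $\widehat W_{\mathrm{5V}},\widehat W_{\mathrm{5V}}'$ if $T+1\le i\le T+N$. The vertical edges between the original rows $i$ and $i+1$ are never summed over by the lemma: in the layered strip they sit between $i'$ and $i+1$ and retain their frozen values $\mathcal{S}(\lambda^{(i)})$.

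The last step is to check that the horizontal boundary conditions match those of \Cref{fig:FG_process_5vertex}, bottom. On the left edge of every row, the original boundary is empty, and the constraint $j_1+j_1'=J_1=0$ from the lemma forces both edges of each pair $(i,i')$ to be empty there. On the right edge, the same argument gives an empty boundary for $1\le i\le T$, while for $T+1\le i\le T+N$ the fully packed condition $j_2+j_2'=1$ far to the right forces exactly one of the two right horizontal edges of $(i,i')$ to carry a path, which is precisely how the layered model is set up. Summing the layered partition function over the internal vertical edges (the variables $k$) and the horizontal edges thus reproduces, for each fixed $(\lambda^{(1)},\ldots,\lambda^{(T)})$, the original six-vertex partition function, yielding the claimed equality of joint distributions. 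The only genuinely delicate point is the bookkeeping of the horizontal states at infinity in the $\widehat W$-rows; once this is pinned down, the conclusion is a direct site-by-site application of \Cref{lemma:from_6V_to_5V}.
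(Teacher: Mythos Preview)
Your proposal is correct and follows the same approach as the paper: the paper does not give a separate proof of this proposition but treats it as an immediate consequence of \Cref{lemma:from_6V_to_5V} together with the preceding paragraph describing the layered model on $\mathbb{Z}_{\ge1}\times\mathscr{I}_{T,N}$. Your write-up simply fills in the details (branching to a single six-vertex partition function, site-by-site application of the lemma, and matching boundary conditions), and your flag about the far-right horizontal states in the $\widehat W$-rows is exactly the small bookkeeping issue that the paper also leaves implicit.
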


Note that the paths in the layered five vertex model
are drawn to be nonintersecting (as in \Cref{fig:FG_process_5vertex}, bottom).
This allows to identify this model with a dimer model in \Cref{sub:dimer_model} below.

\begin{figure}[ht]
	\centering
	\includegraphics[width=.7\textwidth]{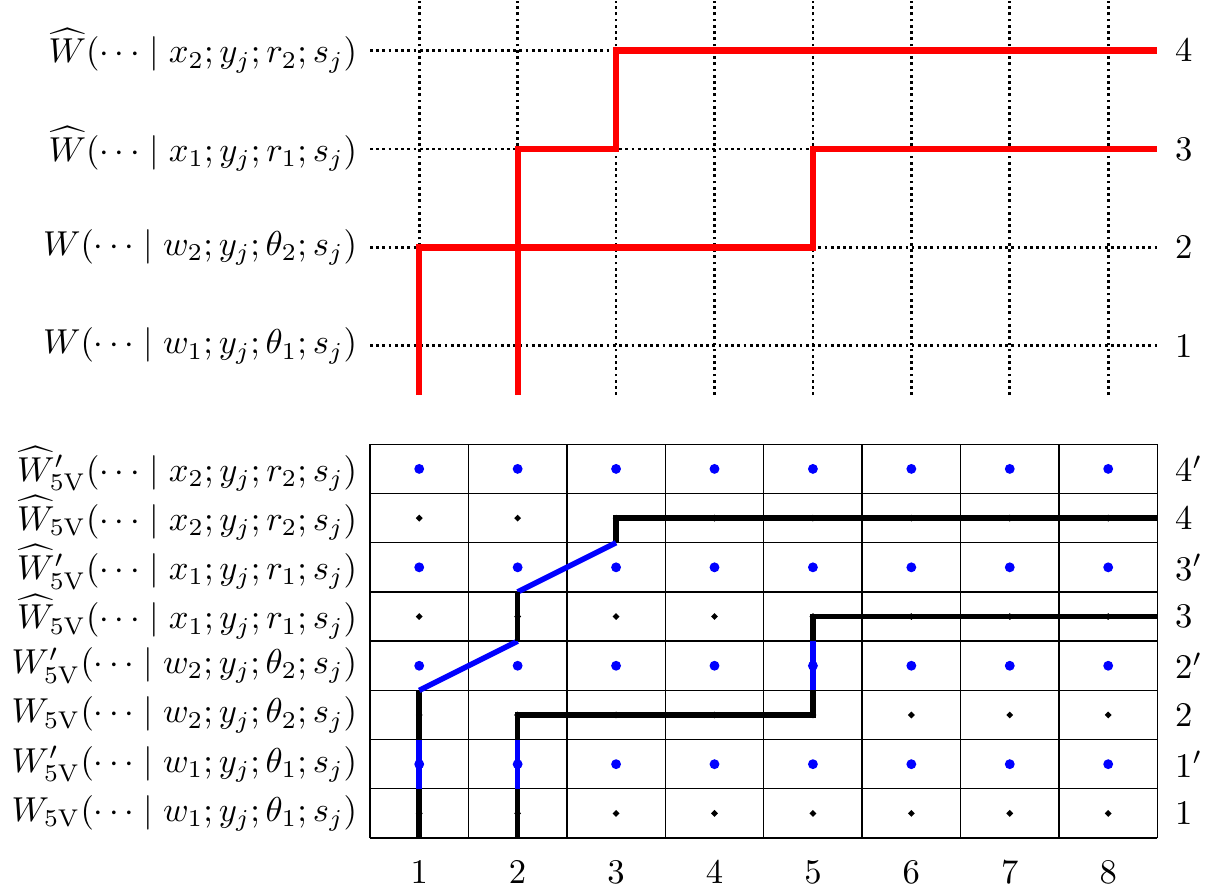}
	\caption{Top: A path configuration under an FG process with $N=T=2$,
		where $\lambda^{(1)}=(0,0)$ and $\lambda^{(2)}=(3,1)$.
		Bottom: 
	One of the possible path configurations under the five vertex realization of the 
	FG process, which corresponds to the particular six vertex configuration given at the top.
	The index $j$ in $y_j,s_j$ is the horizontal coordinate.}
	\label{fig:FG_process_5vertex}
\end{figure}
\begin{figure}[ht]
	\centering
	\includegraphics[width=.6\textwidth]{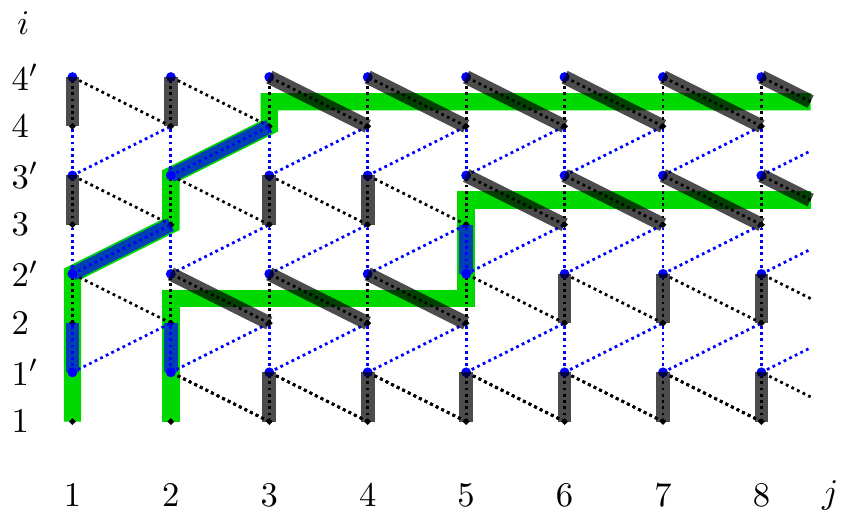}
	\caption{Graph in $\mathbb{Z}_{\ge1}\times \mathscr{I}_{T,N}$ 
		(with $T=N=2$)
		and a dimer covering corresponding to
		the layered five vertex 
		configuration in \Cref{fig:FG_process_5vertex}, bottom. The 
		paths of the layered five-vertex model are shown in green.}
	\label{fig:dimer_rail_yard}
\end{figure}

\begin{figure}[ht]
	\centering
	\includegraphics[width=.7\textwidth]{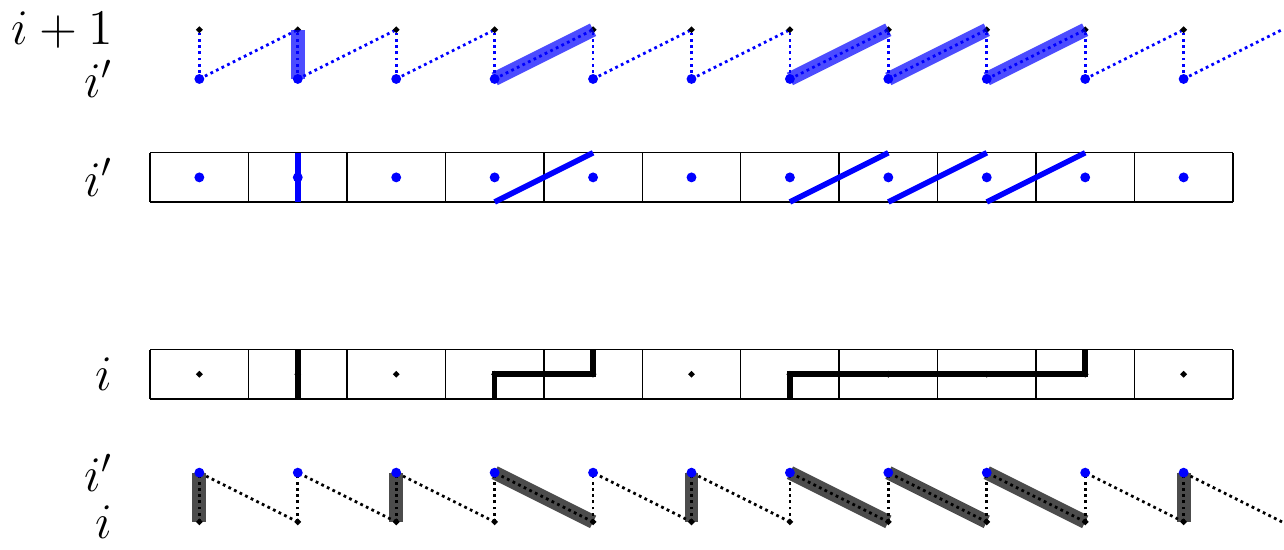}
	\caption{Illustration of the correspondence between five vertex path
	configurations and dimer configurations at two different types of
	layers.}
	\label{fig:dimer_rail_yard_layers}
\end{figure}

\subsection{Dimer model}
\label{sub:dimer_model}

Consider a layered bipartite graph $\mathscr{G}_{T,N}$ 
with vertices $\mathbb{Z}_{\ge1}\times \mathscr{I}_{T,N}$
(cf. \eqref{eq:I_T_N_layered_stuff}) in which edges connect 
the following vertices:
\begin{equation*}
	(j,i)-(j,i'),\qquad 
	(j,i)-(j-1,i'),\qquad 
	(j,i')-(j,i+1),\qquad 
	(j,i')-(j+1,i+1),
\end{equation*}
where $j\ge1$ and $1\le i\le T+N$.
In addition, remove vertices 
$(1,1),(2,1),\ldots,(N,1) $
from the graph together with all edges incident to these vertices.
See \Cref{fig:dimer_rail_yard} for an illustration.
This graph is equivalent to a particular case of a rail-yard graph \cite{boutillier2015dimers}.

Then we construct a one-to-one mapping from the layered five vertex configuration to 
a dimer covering
(i.e., a perfect matching)
of the graph $\mathscr{G}_{T,N}$. This is done layer by layer as in 
\Cref{fig:dimer_rail_yard_layers}.
Recall that the probability weight of a particular dimer covering
is proportional to the product of the weights of all edges that are covered.
We refer to 
\cite{KOS2006},
\cite{Kenyon2007Lecture},
\cite{gorin2021lectures}
for basics on dimer models on bipartite graphs.
Note that due to the behavior of the five vertex 
paths far to the right, in our dimer covering
far to the right 
we will 
almost surely see only dimers $(j,i)-(j,i')$ for $1\le i\le T$, and only dimers $(j,i)-(j-1,i')$ 
for
$T+1\le i\le T+N$. This also follows from 
the form of our edge weights:

\begin{figure}[ht]
	\centering
	\includegraphics[width=\textwidth]{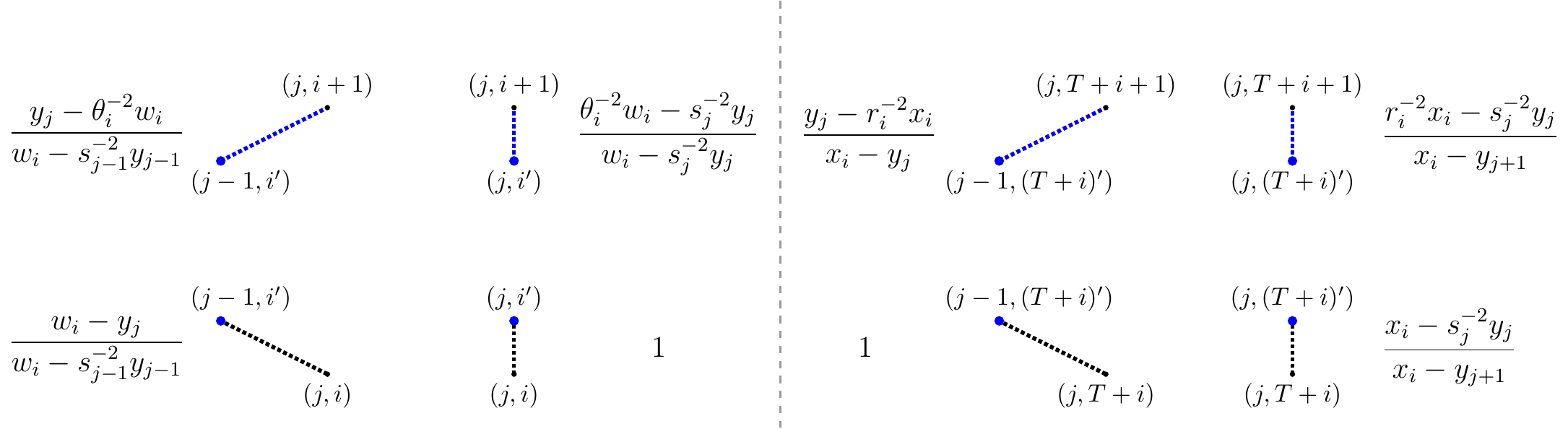}
	\caption{Edge weights in the dimer model representing the layered 
		five vertex model from \Cref{fig:FG_process_5vertex}, bottom. The left four weights correspond to the 
	lower $T$ rows, so $1\le i\le T$. The right four weights appear in the upper $N$ rows, 
	and there we have $1\le i\le N$.}
	\label{fig:dimer_edge_weights}
\end{figure}

\begin{proposition}
	\label{prop:five_vertex_to_dimers}
	Under the identification between the layered five vertex model
	and the dimer model
	as in 
	\Cref{fig:dimer_rail_yard,fig:dimer_rail_yard_layers}, 
	the probability measure (coming from the FG process)
	is equivalent to the dimer model
	with the edge weights given in \Cref{fig:dimer_edge_weights}.
\end{proposition}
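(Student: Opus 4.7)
The proof is essentially a case-by-case verification exploiting the bijection between layered five vertex configurations and dimer coverings of $\mathscr{G}_{T,N}$ implicit in Figures \ref{fig:dimer_rail_yard} and \ref{fig:dimer_rail_yard_layers}. The plan is as follows.

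First, I would verify that the pictorial correspondence in Figure \ref{fig:dimer_rail_yard_layers} defines a genuine bijection between the set of five vertex configurations at a single pair $(j,i),(j,i')$ of adjacent horizontal lines (with prescribed incoming/outgoing arrows on the four external edges) and the set of dimer configurations on the corresponding local piece of $\mathscr{G}_{T,N}$ (with boundary matching). This is a local, finite check: for each of the five admissible vertex types in $W_{\mathrm{5V}}$ and $W'_{\mathrm{5V}}$ (and similarly for $\widehat{W}_{\mathrm{5V}}$, $\widehat{W}'_{\mathrm{5V}}$), exactly one dimer configuration on the corresponding two faces is compatible with the arrow states on the four boundary edges. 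Together with the layering, this upgrades to a global bijection between layered five vertex configurations on $\mathbb{Z}_{\ge1}\times\mathscr{I}_{T,N}$ with the FG boundary conditions, and dimer coverings of $\mathscr{G}_{T,N}$ that stabilize as required far to the right.

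Second, I would check that under this bijection the weights match. For each layer $1\le i\le T$ (respectively $T+1\le i\le T+N$), I compare the weight of a single five vertex configuration at position $(j,i),(j,i')$ — which by construction (see Figure \ref{fig:6V_5V_5V} and Lemma \ref{lemma:from_6V_to_5V}) is a factor coming from $W(\cdots\mid x_i;y_j;r_i;s_j)$ (respectively $\widehat{W}(\cdots\mid x_{i-T};y_j;r_{i-T};s_j)$) — with the product of edge weights of the dimer it is mapped to under Figure \ref{fig:dimer_rail_yard_layers}, read off from Figure \ref{fig:dimer_edge_weights}. This is six elementary identities per row type (one for each admissible vertex), all reduced to rewriting the components of $W$ and $\widehat W$ in \eqref{eq:weights_W}--\eqref{eq:weights_W_hat} as products of the simple rational expressions of Figure \ref{fig:dimer_edge_weights}; because the five vertex weights factor multiplicatively through Lemma \ref{lemma:from_6V_to_5V}, this bookkeeping is direct.

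Third, I would confirm the normalization. The infinitely repeated dimers far to the right — vertical $(j,i)$–$(j,i')$ in rows $1\le i\le T$, and diagonal $(j,i)$–$(j-1,i')$ in rows $T+1\le i\le T+N$ — correspond, via the bijection of step one, exactly to the five vertex configurations of type $W(0,0;0,0)$ and $\widehat W(0,1;0,1)$, which are both equal to $1$ by \eqref{eq:weights_W}, \eqref{eq:weights_W_hat}. Hence the dimer edge weights in Figure \ref{fig:dimer_edge_weights} are chosen so that these repeating dimers have weight $1$, matching the normalizations used throughout \Cref{sub:def_F_G_as_partition_functions}, and the infinite product of weights is well-defined. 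Combining this with \Cref{prop:two_five_vertex_models} yields the equivalence of probability measures. The only genuine obstacle is keeping the orientation conventions of Figure \ref{fig:dimer_rail_yard_layers} consistent across the two row types; once a consistent convention is fixed, the weight matching is mechanical, and no nontrivial algebraic identities beyond \eqref{eq:weights_W}--\eqref{eq:weights_W_hat} are required.
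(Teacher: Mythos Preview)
Your overall strategy --- a local bijection plus a vertex-by-vertex weight check --- is exactly what the paper does, and for the bottom $T$ double rows (the $W$ part, with parameters $w_i,\theta_i$) your plan goes through verbatim: the five vertex weights translate directly into the left four edge weights of \Cref{fig:dimer_edge_weights}.

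The gap is in the top $N$ double rows (the $\widehat W$ part, with parameters $x_i,r_i$). There the local comparison you describe in step~2 does \emph{not} succeed as stated: the naive edge weights one reads off from the five vertex decomposition do not coincide with the right four weights of \Cref{fig:dimer_edge_weights}, and in particular the diagonal edge $(j,i)$--$(j-1,i')$ that repeats infinitely often does \emph{not} automatically carry weight~$1$. The paper resolves this by first writing down the direct dimer weights (its \Cref{fig:dimer_proof_new}) and then applying a gauge transformation around each vertex $(j,i')$ by the factor
\[
\frac{c_1(j+1,i)}{c_1(j,i)}=\frac{x_i-y_j}{x_i-y_{j+1}},
\]
which leaves the dimer measure on finite subgraphs unchanged (cf.\ \cite[Section~3.10]{Kenyon2007Lecture}) but forces the repeating diagonal edge to have weight~$1$ and turns the remaining weights into those of \Cref{fig:dimer_edge_weights}. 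So your claim that ``no nontrivial algebraic identities beyond \eqref{eq:weights_W}--\eqref{eq:weights_W_hat} are required'' and that the repeating dimers automatically have weight~$1$ is where the argument breaks: you need this gauge step for the $\widehat W$ rows, and without it the local weight match you propose will simply fail when you try to carry it out.
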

\begin{proof}
	From the
	identification between the five vertex paths 
	and the dimer covering (see \Cref{fig:dimer_rail_yard_layers}),
	using the six to five vertex conversion table
	in \Cref{fig:6V_5V_5V}, 
	we obtain 
	dimer weights as in \Cref{fig:dimer_proof_new}.%
	\begin{figure}[ht]
		\centering
		\includegraphics[width=.9\textwidth]{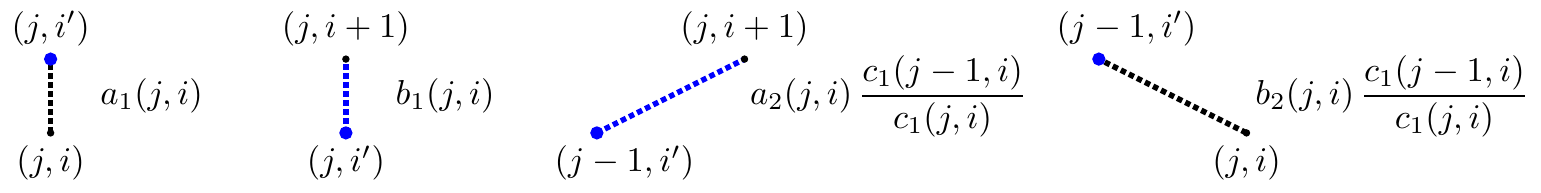}
		\caption{Dimer weights in the proof of \Cref{prop:five_vertex_to_dimers}.}
		\label{fig:dimer_proof_new}
	\end{figure}
	The concrete values of $a_1,a_2,b_1,b_2,c_1,c_2$
	depending on the coordinates $(j,i)$
	are equal to
	$W$
	\eqref{eq:weights_W}
	or $\widehat W$
	\eqref{eq:weights_W_hat},
	as depicted in \Cref{fig:FG_process_5vertex}.

	One readily sees that 
	in the $W$ part, 
	the weights in \Cref{fig:dimer_proof_new}
	produce the left four weights in \Cref{fig:dimer_edge_weights},
	as desired.

	In the $\widehat W$ part (containing the parameters $x_j,r_j$)
	let us in addition multiply the dimer weights around each vertex $(j,i')$ by 
	\begin{equation*}
		\frac{c_1(j+1,i)}{c_1(j,i)}=\frac{x_i-y_j}{x_i-y_{j+1}}.
	\end{equation*}
	This does not change the dimer model on finite subgraphs
	(cf. \cite[Section 3.10]{Kenyon2007Lecture}), but
	makes the weight of each edge $(j,i)-(j-1,i')$ to be $1$. This
	agrees with the fact that such dimers appear infinitely often 
	in the full dimer model on our infinite graph.
	This leads to the right four weights in \Cref{fig:dimer_edge_weights},
	and so we are done.
\end{proof}

\subsection{Random domino tilings}
\label{sub:tilings_from_dimers}

The representation of the FG process as a 
dimer model on a bipartite graph
described in \Cref{sub:dimer_model}
is useful for asymptotic analysis (performed below in this part),
mainly
due to 
the clear dependence of the edge weights on the Cartesian coordinates 
in the plane, cf.
\Cref{fig:dimer_edge_weights}.
Here we provide its equivalent interpretation as an
inhomogeneous domino tiling model.

In the lattice in \Cref{fig:dimer_rail_yard},
shift each row $i'$ to the right by $\frac{1}{2}$. This
transforms the lattice 
into a subset of the square lattice,
see
\Cref{fig:domino_from_dimer}, left.
Then rotate the whole picture
$45^\circ$ clockwise, and interpret dimers as $1\times 2$ dominoes. 
In this way, 
the ascending FG process \eqref{eq:ascending_process} is represented 
as a random domino tiling of the half-infinite strip
with the zigzag boundary
and
with $N$ additional
unit squares removed from the top of the
southwest part of the boundary.
The corresponding domino tiling is 
given in 
\Cref{fig:domino_from_dimer}, right. The domino weights
are inhomogeneous and, moreover,
depend on the parity of the 
coordinates. The weights are also given in 
\Cref{fig:domino_from_dimer}.
Note that this domino tiling model and the weights
are the same as in \Cref{fig:intro_dimers,fig:intro_dimer_weights}
from Introduction, up to a rotation by $45^\circ$.
Thus, we have completed the 
proof of
\Cref{thm:intro_dimers_dominoes}.

\medskip

\begin{figure}[htp]
	\centering
	\includegraphics[width=\textwidth]{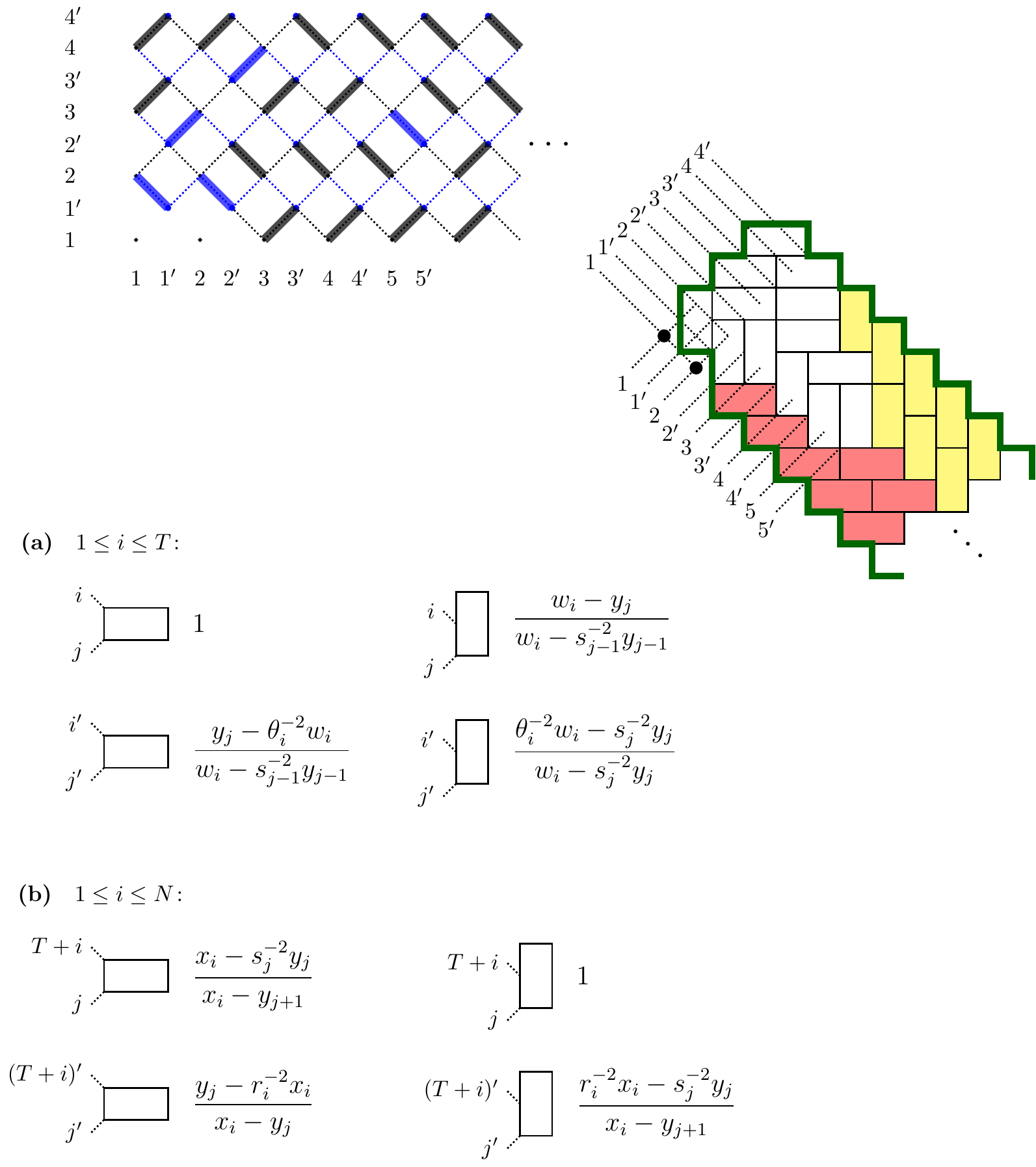}
	\caption{Mapping from the dimer model 
		in \Cref{fig:dimer_rail_yard}
		with weights given in \Cref{fig:dimer_edge_weights}
		to domino tilings of a half-strip.
		The dominoes which are repeated infinitely many times in the 
		down-right direction are 
		shaded.}
	\label{fig:domino_from_dimer}
\end{figure}

Random domino tilings (in particular, of the Aztec Diamond) is a classical subject
in combinatorics and probability
\cite{elkies1992alternating},
\cite{cohn-elki-prop-96},
\cite{CohnKenyonPropp2000},
\cite{KOS2006}.
Our model in 
\Cref{fig:domino_from_dimer}
is a particular case of a larger family of 
domino tilings, the 
\emph{steep tilings} of an infinite strip \cite{bouttier2017aztec}.
While our inhomogeneous domino weights are more general
than that in steep tilings,
the latter
allow for more general 
boundary conditions which are not yet available in 
our FG process setup. 
Moreover, in steep tilings
one can prescribe an arbitrary sequence 
$\in \left\{ +,- \right\}^{2\ell}$
of
asymptotic domino directions at infinity
(i.e., the directions of the shaded
dominoes in
\Cref{fig:domino_from_dimer} repeating infinitely many times).
We remark that different asymptotic directions of dominoes
may be modeled in our setup by 
passing to the fully general
FG processes (\Cref{sub:general_FG}),
but we will not consider this generality in the present work.

Below in this \Cref{partIII} we discuss bulk asymptotics of the
inhomogeneous domino tiling model displayed in \Cref{fig:domino_from_dimer}
coming from ascending FG processes.

\begin{remark}[Noncolliding lattice walks]
	\label{rmk:noncolliding_lattice_paths}
	When $w_i=y_j$ for all $i,j$, one of the dimer weights vanishes,
	see the left part of \Cref{fig:dimer_edge_weights}. 
	Thus, 
	in the bottom
	$T$ double rows in \Cref{fig:dimer_rail_yard}
	we can erase the edges carrying the zero weight. 
	In this way we obtain the hexagonal lattice. 
	Therefore, when $w_i \equiv y_j$,
	one can interpret the dimer model
	as a model of $N$ noncolliding lattice walks
	as in, e.g., \cite{konig2002non}
	(see also \cite{gorin2019universality} 
	for an equivalent lozenge tiling picture).
	The endpoints of these noncolliding lattice walks are 
	distributed according to the probability weights 
	coming from $F_{\lambda}(\mathbf{x};\mathbf{y};\mathbf{r};\mathbf{s})$.
	On the other hand, the bulk asymptotics of our dimer model 
	would lead to a certain class of random lozenge tilings of the whole plane.
	We briefly discuss these measures in 
	\Cref{sec:inhom_sine_kernel}.
\end{remark}

\section{Asymptotics in the bulk}
\label{sec:asymptotics}

\subsection{Scaling. Global parameters and local sequences}
\label{sub:scaling_and_parameter_assumptions}

In this section we perform bulk asymptotic analysis of the correlation
kernel $K_{\mathscr{AP}}(t,a;t',a')$ \eqref{eq:ascending_FG_process_kernel_text}.
By ``bulk asymptotics'' we mean the scaling around a global
position far from the boundary of the system, 
such that 
discrete lattice structure is preserved.
More precisely, take $N\to+\infty$ to be an independent parameter
going to infinity, and set for the variables in the kernel:
\begin{equation}
	\label{eq:scaling_parameters_1}
	T\gg N,\qquad 
	t=\lfloor \tau N \rfloor ,\quad t'=t+\Delta t ,\qquad
	a=\lfloor \alpha N \rfloor ,\quad a'=a+\Delta a,
\end{equation}
where $\tau,\alpha\in \mathbb{R}_{>0}$,
$\Delta t,\Delta a\in \mathbb{Z}$ are fixed.
Note that since the kernel 
$K_{\mathscr{AP}}$
does not depend on $T$, we just need to take $T$ large enough so that $t,t'\le T$
could grow linearly with $N$.

\begin{remark}
	\label{rmk:edge_limit}
	Along with the bulk limit behavior,
	dimer models typically possess other interesting scaling limits.
	In particular, the limit at the edge of the liquid region
	(cf. \Cref{fig:liquid_region} below)
	should bring the Airy kernel or its multiparameter
	deformations obtained in \cite{BorodinPeche2009}. 
	We do not anticipate
	new determinantal kernels at the edge in the case of generic inhomogeneous
	parameters, but it would be interesting to probe
	whether special choices of the parameters lead to interesting phase transitions in the
	edge behavior. We leave this question out of the scope of the present paper.
\end{remark}

As we aim to capture a nontrivial lattice limit in the bulk, we may set the parameters
$x_i,w_i,\theta_i,y_j,s_j$ of the ascending FG process
to be constant outside a finite neighborhood 
of the global position.
Let this neighborhood be of
size $L\in \mathbb{Z}_{\ge1}$, and later
(in \Cref{sec:inhom_sine_kernel}) we also take
the \emph{cutoff}
$L$ to infinity.
While this restricts the generality of the global limit shape
and global fluctuations
(such as the Gaussian Free Field fluctuations, cf. \cite{Kenyon2001GFF}, \cite{Petrov2012GFF}),
this specialization does not restrict the local lattice behavior.
More precisely, set
$x_i=x_*$ for all $i=1,\ldots,N$,
\begin{equation}
	\label{eq:w_local_global}
	w_i=\begin{cases}
		w_*,&|i-\lfloor \tau N \rfloor|> L;\\
		w^\circ_{i-\lfloor \tau N \rfloor },&
		|i - \lfloor \tau N \rfloor |\le L,
	\end{cases}
		\qquad
		\theta_i=\begin{cases}
			\theta_*,&|i-\lfloor \tau N \rfloor |> L;\\
		\theta^\circ_{i-\lfloor \tau N \rfloor },&
		|i - \lfloor \tau N \rfloor| \le L,
	\end{cases}
\end{equation}
and similarly 
\begin{equation}
	\label{eq:y_local_global}
	y_j=
	\begin{cases}
		y_*,& |j-\lfloor \alpha N \rfloor | > L;\\
		y^\circ_{j-\lfloor \alpha N \rfloor }, &
		|j-\lfloor \alpha N \rfloor | \le L,
	\end{cases}
	\qquad 
	s_j=
	\begin{cases}
		s_*,& |j-\lfloor \alpha N \rfloor | > L;\\
		s^\circ_{j-\lfloor \alpha N \rfloor }, &
		|j-\lfloor \alpha N \rfloor | \le L.
	\end{cases}
\end{equation}
In other words, we 
have passed to the global parameters $x_*,w_*,\theta_*,y_*,s_*$,
and the local
sequences $\{w_i^\circ\},\{\theta_i^\circ\},\{ y_j^\circ \},
\{ s_j^\circ \}$ with $|i|,|j|\le L$.
For convenience, let us also write $w_i^\circ=w_*$ for $|i| > L$, 
and similarly for
$y_j^\circ,\theta_i^\circ,s_j^\circ$.
Both the global parameters and the local sequences are fixed
and do not depend on
$N$.

\medskip

All the parameters
of the ascending FG process
must satisfy \eqref{eq:ys_FG_parameters},
\eqref{eq:x_r_spec_parameters},
and
\eqref{eq:compatible_specializations} in order for the
process to be well-defined as a probability distribution 
\eqref{eq:ascending_process}
on 
sequences of signatures. 
Under the scaling assumptions
\eqref{eq:w_local_global}--\eqref{eq:y_local_global},
these conditions read
\begin{equation}
	\label{eq:params_conditions_for_asymptotics}
	x_*<y_j,
	\qquad 
	w_i<y_j<\theta_{i}^{-2}w_i<s_j^{-2}y_j,
	\qquad 
	\left|
	\frac{x_*-s_*^{-2}y_*}{x_*-y_*}
	\frac{w_*-y_*}{w_*-s_*^{-2}y_*}
	\right|<1-\delta<1
\end{equation}
for all $i,j$.
Here we have dropped the assumptions $x_i,w_i,y_j>0$, 
cf. \Cref{rmk:nonnegative_condition_on_x_for_convenience}.
The ascending FG process does not depend on the parameters $r_i$,
so there are no conditions on the $r_i$'s.

The last inequality in
\eqref{eq:params_conditions_for_asymptotics}
is needed for the convergence of the 
series for the normalizing constant 
\eqref{eq:Z_ascending_process}
in the 
FG process weights 
(which in general is guaranteed by \eqref{eq:compatible_specializations}). 
Clearly, the convergence of this series is determined only by the global parameters.

By taking $x_*$ sufficiently small (close to $-\infty$), 
we see that the last inequality in \eqref{eq:params_conditions_for_asymptotics}
follows from $w_*<y_*<s_*^{-2}y_*$ and hence hold automatically.
Therefore, we may and will assume that the global
parameters
satisfy
\begin{equation}
	\label{eq:5_points}
	x_*<w_*<y_*<\theta_*^{-2}w_*<s_*^{-2}y_*.
\end{equation}

\subsection{Steepest descent}
\label{sub:steepest_descent}

Let us rewrite the correlation kernel
$K_{\mathscr{AP}}$
\eqref{eq:ascending_FG_process_kernel_text}
in the following form adapted to the scaling regime 
from \Cref{sub:scaling_and_parameter_assumptions}:
\begin{equation}
	\label{eq:K_AP_asymptotic_equivalence_1}
	\begin{split}
		&K_{\mathscr{AP}}(t,a;t',a')
		\\&\hspace{5pt}=
		\frac{1}{(2\pi\mathbf{i})^2}
		\oint\oint
		\frac{\mathscr{E}_N(u)/\mathscr{E}_N(v)\, du \, dv}{u-v}\,
		\frac{y_{a}(1-s_{a}^{-2})}{v-y_{a}}
		\frac{1}{u-s_{a'}^{-2}y_{a'}}
		\frac{
			\prod_{c=1}^{a'}
			\frac{u-s_c^{-2}y_c}{u-y_c}
		}{
			\prod_{c=1}^{a}
			\frac{u-s_c^{-2}y_c}{u-y_c}
		}
		\frac{
			\prod_{c=1}^{t'}\frac{u-w_c}{u-\theta_c^{-2}w_c}
		}{
			\prod_{c=1}^t \frac{u-w_c}{u-\theta_c^{-2}w_c}
		},
	\end{split}
\end{equation}
where 
\begin{equation*}
	\mathscr{E}_N(u):=
	(u-x_*)^{-N}
	\prod_{k=1}^{N}(u-y_k)
	\prod_{c=1}^{a}
	\frac{u-s_c^{-2}y_c}{u-y_c}
	\prod_{c=1}^{t}
	\frac{u-w_c}{u-\theta_c^{-2}w_c}.
\end{equation*}
In \eqref{eq:K_AP_asymptotic_equivalence_1}
both integration contours
are positively oriented simple curves.
The $u$ and $v$ contours encircle, respectively,
all the points
$y_*,y_i^\circ,
\theta_*^{-2}w_*,
(\theta_i^\circ)^{-2}w_i^\circ$
and 
all the points
$y_*,y_i^\circ,
w_{*},w_i^\circ$,
and no other poles of the integrand except $u=v$. For the latter pole,
the $u$ contour is outside the $v$ contour 
for $\Delta t=t'-t\ge0$, and inside for $\Delta t<0$.
The integration contours exist thanks to 
\eqref{eq:params_conditions_for_asymptotics}.
The ratios
$\prod_{c=1}^{a'}/\prod_{c=1}^a$ and
$\prod_{c=1}^{t'}/\prod_{c=1}^t$ in \eqref{eq:K_AP_asymptotic_equivalence_1}
outside of $\mathscr{E}_N$
are finite products
(which later will depend only on local sequences
of parameters).

Our aim is to perform the steepest descent analysis 
of $K_{\mathscr{AP}}$
based on critical points of 
\begin{equation*}
	\mathscr{S}_N(u) := \frac{1}{N}\log \mathscr{E}_N(u),
\end{equation*}
following \cite[Sections 3.1, 3.2]{Okounkov2002}.
Namely, if for all $N$ large enough on the integration contours we have 
\begin{equation}
	\label{eq:steep_condition}
	\Re \left(  
	\mathscr{S}_N(u)-\mathscr{S}_N(v)
	\right) 
	<0
\end{equation}
(here and below $\Re$ and $\Im$ stand for the real and imaginary parts, respectively),
then the integral containing
$e^{N(\mathscr{S}_N(u)-\mathscr{S}_N(v))}$
goes to zero exponentially with $N$.
We achieve \eqref{eq:steep_condition} by
deforming the original $u$ and $v$ integration contours
so that they pass through complex conjugate critical points
$z,\bar z$ of $\mathscr{S}_N(u)$ (first, we need to show that such critical points exist).
In the process of deforming the contours to 
those with
\eqref{eq:steep_condition},
certain residues will survive and contribute to the limit of
$K_{\mathscr{AP}}$.
Observe that this argument works for arbitrary branches of the logarithms
in $\log \mathscr{E}_N(u)$.

Since the integration 
contours in \eqref{eq:K_AP_asymptotic_equivalence_1}
can be chosen bounded, we have
\begin{equation}
	\label{eq:S_u_defn}
	\begin{split}
		&\mathscr{S}_N(u)=-\log(u-x_*)+(1-\alpha)
		\log(u-y_*)+\alpha \log(u-y_*/s_*^2)
		\\
		&\hspace{80pt}
		+\tau\log(u-w_*)-\tau\log (u-w_*/\theta_*^2)
		+\textnormal{remainder}=:
		\mathscr{S}(u)+\textnormal{remainder},
	\end{split}
\end{equation}
where $|\textnormal{remainder}|< C(L)/N$.
The constant $C(L)$ independent of $N$
comes from two sources.
First, we dropped the integer parts 
in $a\approx \alpha N$, $t\approx \tau N$
in $\mathscr{E}_N$. Second,
in $\mathscr{E}_N$
we have
replaced the
local parameters $y_{j-\lfloor \alpha N \rfloor }$, 
$|j|\le L$ (see \eqref{eq:w_local_global}--\eqref{eq:y_local_global}),
by $y_*$, and similarly for $w_i,\theta_i^{-2},s_j^{-2}y_j$. We see that the
critical points of $\mathscr{S}_N(u)$ are 
close (as $N\to+\infty$)
to the critical points of $\mathscr{S}(u)$.
One can check that the critical point
equation $\mathscr{S}'(u)=0$
reduces to a cubic polynomial equation in $u$. Indeed, the terms
containing $u^4$ cancel out because the sum of the coefficients 
by all the logarithms is zero.

\subsection{Moving the contours}
\label{sub:concrete_moving_contours}

Let us fix global parameters satisfying \eqref{eq:5_points},
and investigate the behavior of the function $\mathscr{S}(u)$
given by \eqref{eq:S_u_defn}.
The cubic polynomial equation for the critical points
of $\mathscr{S}(u)$
reads
\begin{equation}
	\label{eq:cubic}
		\left( \alpha y_*(s_*^{-2}-1)-\tau w_*(\theta_*^{-2}-1)+y_*-x_* \right)u^3
		+c_2 u^2+c_1 u+c_0
		=0,
\end{equation}
for certain polynomial functions $c_0,c_1,c_2$ of the global parameters
$x_*,w_*,\theta_*,y_*,s_*$ whose explicit expressions 
we omit for shorter notation.

\begin{definition}
	\label{def:liquid_region}
	Denote by $\mathscr{L}$ the region in the plane $(\alpha,\tau)\in \mathbb{R}_{\ge0}^2$ where the 
	discriminant of the cubic \eqref{eq:cubic}
	is negative (see
\Cref{fig:liquid_region} for an example).
	In $\mathscr{L}$ the cubic equation 
	has two nonreal complex conjugate roots. Denote by
	$z=z(\alpha,\tau)$ the root belonging to
	the upper half plane.
\end{definition}

\begin{figure}[ht]
	\centering
	\includegraphics[width=\textwidth]{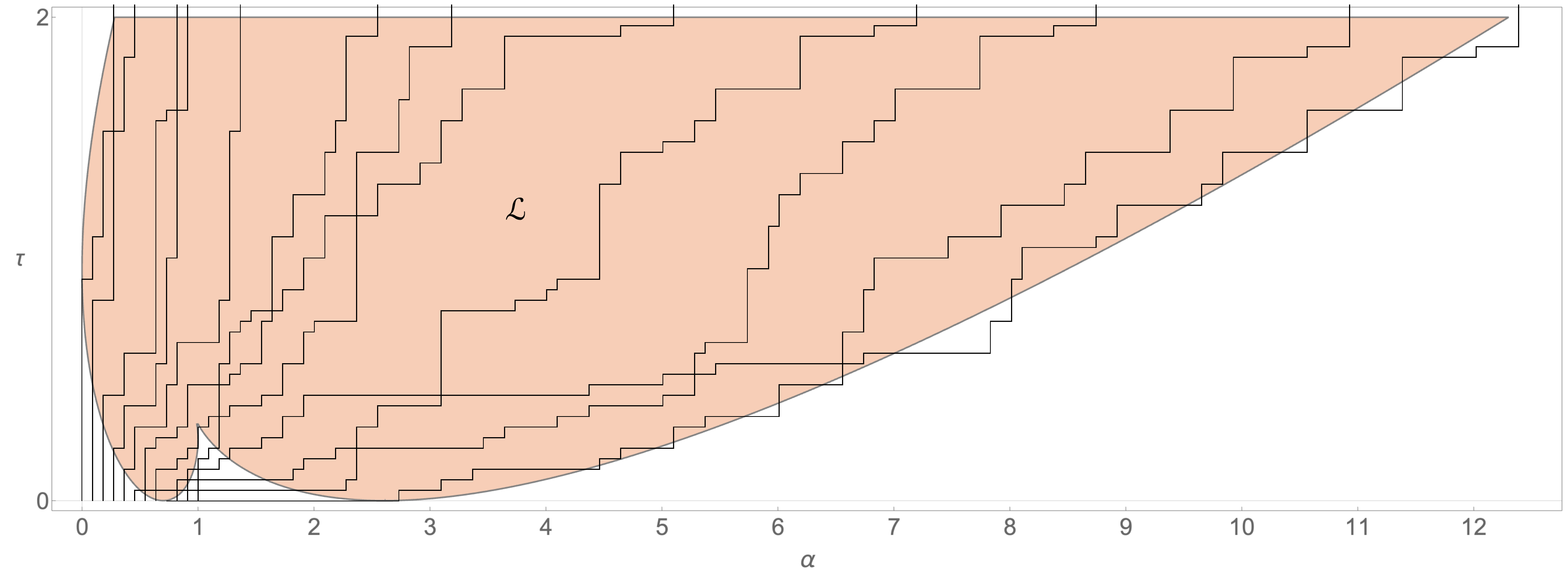}
	\caption{In the shaded region 
	the discriminant of the cubic equation
	\eqref{eq:cubic} is negative.
	We also sketch possible lattice path behavior
	under the FG process.
	The parameters in the figure are
	equal to 
	$x_*=\frac{1}{2}, w_*=\frac{2}{3}, 
	s_*=\frac{1}{2},
	y_*=\frac{9}{10}$, and
	$\theta_*=\frac{4}{5}$.}
	\label{fig:liquid_region}
\end{figure}

\begin{lemma}
	\label{lemma:z_map}
	The map $z\colon \mathscr{L}\to \mathbb{C}$
	is a diffeomorphism between $\mathscr{L}$
	and the open upper half plane.
	The region $\mathscr{L}$ is unbounded.
\end{lemma}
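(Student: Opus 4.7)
The plan is to study the map $z$ through its inverse. Setting $\mathscr{S}'(u)=0$ and grouping by $\alpha$ and $\tau$, the critical point equation rewrites as one complex-linear equation
\begin{equation*}
\alpha\, A(u) + \tau\, B(u) = C(u),
\end{equation*}
with
\begin{equation*}
A(u)=\frac{y_*(1-s_*^{-2})}{(u-y_*)(u-s_*^{-2}y_*)},\quad
B(u)=\frac{w_*(1-\theta_*^{-2})}{(u-w_*)(u-\theta_*^{-2}w_*)},\quad
C(u)=\frac{y_*-x_*}{(u-x_*)(u-y_*)}.
\end{equation*}
For $u=z$ with $\Im z>0$, this is a single $\mathbb{C}$-equation, i.e., two real linear equations in the two real unknowns $\alpha,\tau$, uniquely solvable whenever $A(z)$ and $B(z)$ are $\mathbb{R}$-linearly independent, equivalently $\Im\bigl[\overline{A(z)}\,B(z)\bigr]\ne 0$. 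Cramer's rule then produces smooth formulas for $\alpha$ and $\tau$ in terms of $z$ and $\bar z$.

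First I would verify $\mathbb{R}$-linear independence of $A(z),B(z)$ on the entire open upper half plane. The ratio $A/B$ is a rational function of degree two with real coefficients whose poles $\{y_*,s_*^{-2}y_*\}$ and zeros $\{w_*,\theta_*^{-2}w_*\}$ all lie on the real axis; using the strict ordering \eqref{eq:5_points} one shows that $(A/B)^{-1}(\mathbb{R})=\mathbb{R}$, so $A(z)/B(z)\notin\mathbb{R}$ for $\Im z>0$. With nondegeneracy in hand, $(\alpha,\tau)$ depend smoothly on $z$, and the implicit function theorem applied to the cubic \eqref{eq:cubic} (whose discriminant is nonzero on $\mathscr{L}$) gives the smoothness of $z$ as a function of $(\alpha,\tau)$.

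Injectivity of $z\colon \mathscr{L}\to\mathbb{C}$ is then immediate: if $z(\alpha,\tau)=z(\alpha',\tau')$, both $(\alpha,\tau)$ and $(\alpha',\tau')$ solve the same nondegenerate linear system, hence they coincide. For surjectivity onto the open upper half plane, given any $z$ with $\Im z>0$, define $(\alpha,\tau)\in\mathbb{R}^2$ by the Cramer formulas. Two things must be checked: (i) $(\alpha,\tau)\in\mathscr{L}$, and (ii) $\alpha,\tau\ge 0$. Point (i) is automatic because for this $(\alpha,\tau)$ the number $z$ is a root of \eqref{eq:cubic}, and since that cubic has real coefficients $\bar z$ is a root too, forcing the discriminant to be negative. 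Point (ii) is the main obstacle: I plan to establish it by analyzing the signs of $\Im[\overline{B(z)}\,C(z)]$ and $\Im[\overline{C(z)}\,A(z)]$ throughout the upper half plane, using the ordering \eqref{eq:5_points} together with the explicit signs $y_*(1-s_*^{-2})<0$, $w_*(1-\theta_*^{-2})<0$, $y_*-x_*>0$; since the Cramer denominator never vanishes, continuity plus verification of positivity at a convenient reference point (e.g.\ $z=\mathbf{i}y$ as $y\to+\infty$, or $z$ approaching a real point identifying a boundary of $\mathscr{L}$) propagates the desired sign globally.

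Unboundedness of $\mathscr{L}$ will follow by tracking the Cramer formulas as $z$ tends to a suitable real boundary point from above: for instance, as $z\to w_*$ or $z\to \theta_*^{-2}w_*$ in the upper half plane, $B(z)$ blows up while $A(z),C(z)$ stay bounded, so the solution $\tau$ diverges to $+\infty$, producing points of $\mathscr{L}$ arbitrarily far from the origin. The principal technical hurdle throughout will be the sign analysis ensuring $\alpha,\tau\ge 0$; the remaining steps are a routine implicit function theorem argument together with the elementary observation that a real cubic with one nonreal root automatically has a conjugate pair.
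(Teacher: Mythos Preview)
Your approach coincides with the paper's: both invert the map by treating $\mathscr{S}'(z)=0$ as a real $2\times2$ linear system in $(\alpha,\tau)$ and reading off bijectivity and smoothness from the explicit inverse. The paper simply substitutes $u=X+\mathbf{i}Y$, asserts that $(\alpha,\tau)$ come out as rational functions of $(X,Y)$, and then verifies by direct computation that the discriminant of \eqref{eq:cubic} equals $-(\text{rational expression})^2Y^2$, hence is negative whenever $Y\ne0$; your Cramer-rule formulation together with the interlacing argument showing $A(z)/B(z)\notin\mathbb{R}$ is an equivalent (and arguably cleaner) way to see that the system is uniquely solvable on the open upper half plane. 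You correctly flag the need to check $\alpha,\tau\ge0$ so that the image lands in $\mathscr{L}\subset\mathbb{R}_{\ge0}^2$---the paper's proof in fact omits this verification entirely. For unboundedness the paper sends $(X,Y)\to(x_*,0)$, at which point both $\alpha$ and $\tau$ diverge; your choice $z\to w_*$ (or $z\to\theta_*^{-2}w_*$) achieves the same end.
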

\begin{proof}
	For each $(\alpha,\tau)\in \mathscr{L}$, 
	there is a unique root $z(\alpha,\tau)$ in the upper half plane,
	so the map $z$ is injective.
	Substituting $u=X+i Y$ into the cubic equation 
	\eqref{eq:cubic}
	for $\mathscr{S}'(u)=0$,
	we may find $(\alpha,\tau)$ as rational functions of $(X,Y)$.
	These rational functions define the map 
	$z^{-1}\colon \mathbb{C}\to \mathscr{L}$.

	One can check that the image under $z^{-1}$
	of the real line (corresponding to $Y=0$)
	is precisely the curve where the discriminant of \eqref{eq:cubic}
	vanishes, i.e., the boundary of~$\mathscr{L}$.
	Moreover, an explicit computation shows that
	the discriminant of \eqref{eq:cubic} has the form
	$-(\textnormal{rational expression})^2Y^2$, so it is
	manifestly
	negative 
	for all $(\alpha,\tau)$ expressed through $(X,Y)$ with $Y\ne 0$.
	Since
	$Y$ enters $z^{-1}$
	only as $Y^2$, we see that $z^{-1}\colon \mathbb{C}\to \mathscr{L}$
	is two-to-one and in particular maps the upper half plane to~$\mathscr{L}$.
	As $z$ and $z^{-1}$ are clearly differentiable, 
	$z$ is indeed a diffeomorphism.

	To see that the region $\mathscr{L}$ is unbounded,
	one can check that the boundary point
	$(\alpha,\tau)$ of $\mathscr{L}$ corresponding to $(X,Y)=(x_*,0)$ is at 
	$\alpha=\tau=+\infty$.
\end{proof}

Fix $(\alpha,\tau)\in \mathscr{L}$.
Let us look at 
the \emph{steepest descent integration contours}
$\Im \mathscr{S}(u)=\Im \mathscr{S}(z(\alpha,\tau))$.
Since $\mathscr{S}(u)$ involves logarithms, let us now choose their branches
to have cuts in the lower half plane, so that $\mathscr{S}(u)$ is holomorphic
in the upper half plane and up to the real line except the 5 points
\eqref{eq:5_points}.

\begin{lemma}
	\label{lemma:Spp_nonzero}
	The second derivative $\mathscr{S}''(z(\alpha,\tau))$ at the critical point
	is nonzero.
\end{lemma}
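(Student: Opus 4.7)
The plan is to reduce the nonvanishing of $\mathscr{S}''(z)$ to the observation that $\mathscr{S}'(u) = 0$ is an honest cubic equation with real coefficients, and then to rule out a double root at $z$ by a complex-conjugation argument.

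First I would make the rational-function structure of $\mathscr{S}'$ explicit. From \eqref{eq:S_u_defn},
\begin{equation*}
\mathscr{S}'(u)= -\frac{1}{u-x_*}+\frac{1-\alpha}{u-y_*}+\frac{\alpha}{u-s_*^{-2}y_*}+\frac{\tau}{u-w_*}-\frac{\tau}{u-\theta_*^{-2}w_*}.
\end{equation*}
Putting this over the common denominator $Q(u)=(u-x_*)(u-y_*)(u-s_*^{-2}y_*)(u-w_*)(u-\theta_*^{-2}w_*)$ gives $\mathscr{S}'(u) = P(u)/Q(u)$, with $P, Q \in \mathbb{R}[u]$ of degrees $3$ and $5$ respectively; the drop from the naive degree $4$ to degree $3$ for $P$ occurs because the sum of the logarithmic coefficients in $\mathscr{S}$ is $0$, and the degree is exactly $3$ with leading coefficient as in \eqref{eq:cubic} (which is nonzero for $(\alpha,\tau)\in\mathscr{L}$ generically, and identifiable with the top coefficient of the discriminant analysis underlying \Cref{def:liquid_region}).

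Next, suppose for contradiction that $\mathscr{S}''(z(\alpha,\tau))=0$ for some $(\alpha,\tau)\in\mathscr{L}$. Since $(\alpha,\tau)\in\mathscr{L}$, by \Cref{def:liquid_region} the value $z=z(\alpha,\tau)$ lies strictly in the open upper half plane, while all five poles of $\mathscr{S}'$ in \eqref{eq:5_points} are real. Hence $Q(z)\neq 0$, and from $\mathscr{S}'(z)=0=\mathscr{S}''(z)$ one concludes that $P(z)=P'(z)=0$, i.e., $z$ is a root of $P$ of multiplicity at least $2$.

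Finally, since $P\in\mathbb{R}[u]$, the conjugate $\bar z$ is also a root of $P$ of multiplicity at least $2$, and $\bar z\neq z$ because $z$ is non-real. Counting with multiplicity this gives at least four roots of $P$, contradicting $\deg P=3$. Therefore $\mathscr{S}''(z(\alpha,\tau))\neq 0$, as claimed. The only potential subtlety, and the step I would double-check carefully, is that the leading coefficient of $P$ really is nonzero throughout $\mathscr{L}$, so that $\deg P=3$ is guaranteed; this is precisely the content of the factor in front of $u^3$ in \eqref{eq:cubic}, and on $\mathscr{L}$ the cubic is genuine since it already has three roots ($z$, $\bar z$, and one real root) counted by the discriminant discussion in \Cref{lemma:z_map}.
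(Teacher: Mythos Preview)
Your argument is correct and is genuinely different from the paper's. The paper proceeds by brute-force computation: it substitutes $\alpha,\tau$ as rational functions of $(X,Y)$ (via the inverse diffeomorphism from \Cref{lemma:z_map}) into $\mathscr{S}''(X+\mathbf{i}Y)$ and checks that the resulting rational expression vanishes only when $Y=0$. Your root-counting argument is cleaner and avoids computation entirely: writing $\mathscr{S}'=P/Q$ with $P\in\mathbb{R}[u]$, if $\mathscr{S}'(z)=\mathscr{S}''(z)=0$ at a non-real $z$ with $Q(z)\neq 0$, then $z$ and $\bar z$ are each double roots of $P$, forcing $\deg P\ge 4$, a contradiction.

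One remark on your final paragraph. The worry about the leading coefficient of $P$ is unnecessary, and your proposed resolution (``it already has three roots'') is not quite right: if that coefficient vanished, the equation would be quadratic, and a negative cubic discriminant at $a=0$ reduces to $b^2(c^2-4bd)<0$, i.e., the quadratic has two non-real roots---so there is no third real root to point to. But none of this matters: your contradiction only needs $\deg P\le 3$, which follows immediately from $\sum_i c_i=0$ in the partial-fraction form of $\mathscr{S}'$. So you can simply drop the last caveat and assert $\deg P\le 3$; the argument then goes through uniformly on all of $\mathscr{L}$.
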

\begin{proof}
	Fix a point $z=X+\mathbf{i} Y$ in the upper half plane, and 
	substitute $\alpha$ and $\tau$ as functions of $(X,Y)$ 
	under $z^{-1}$ (see the proof of the previous \Cref{lemma:z_map})
	into $\mathscr{S}''(X+\mathbf{i} Y)$.
	One can check that the resulting rational expression in $X,Y\in \mathbb{R}$ 
	(with complex coefficients)
	does not vanish unless $Y=0$, which is outside the upper half plane.
\end{proof}

By \Cref{lemma:Spp_nonzero},
the behavior of $\mathscr{S}(u)$ at the critical point $z$ is exactly quadratic. 
Therefore, there are four half-contours
with
$\Im \mathscr{S}(u)=\Im \mathscr{S}(z)$
leaving $z$. 
On two of them 
we have 
$\Re \mathscr{S}(u)<\Re \mathscr{S}(z)$, $u\ne z$,
and on the other two we have
$\Re \mathscr{S}(v)>\Re \mathscr{S}(z)$, $v\ne z$.
Going around $z$ these half-contours interlace. 
Let us denote these contours as read 
in the clockwise direction around $z$ by
$\Gamma_1^-,\Gamma_1^+,\Gamma_2^-,\Gamma_2^+$.
See \Cref{fig:contours} for an illustration.

\begin{figure}[ht]
	\centering
	\includegraphics[width=.69\textwidth]{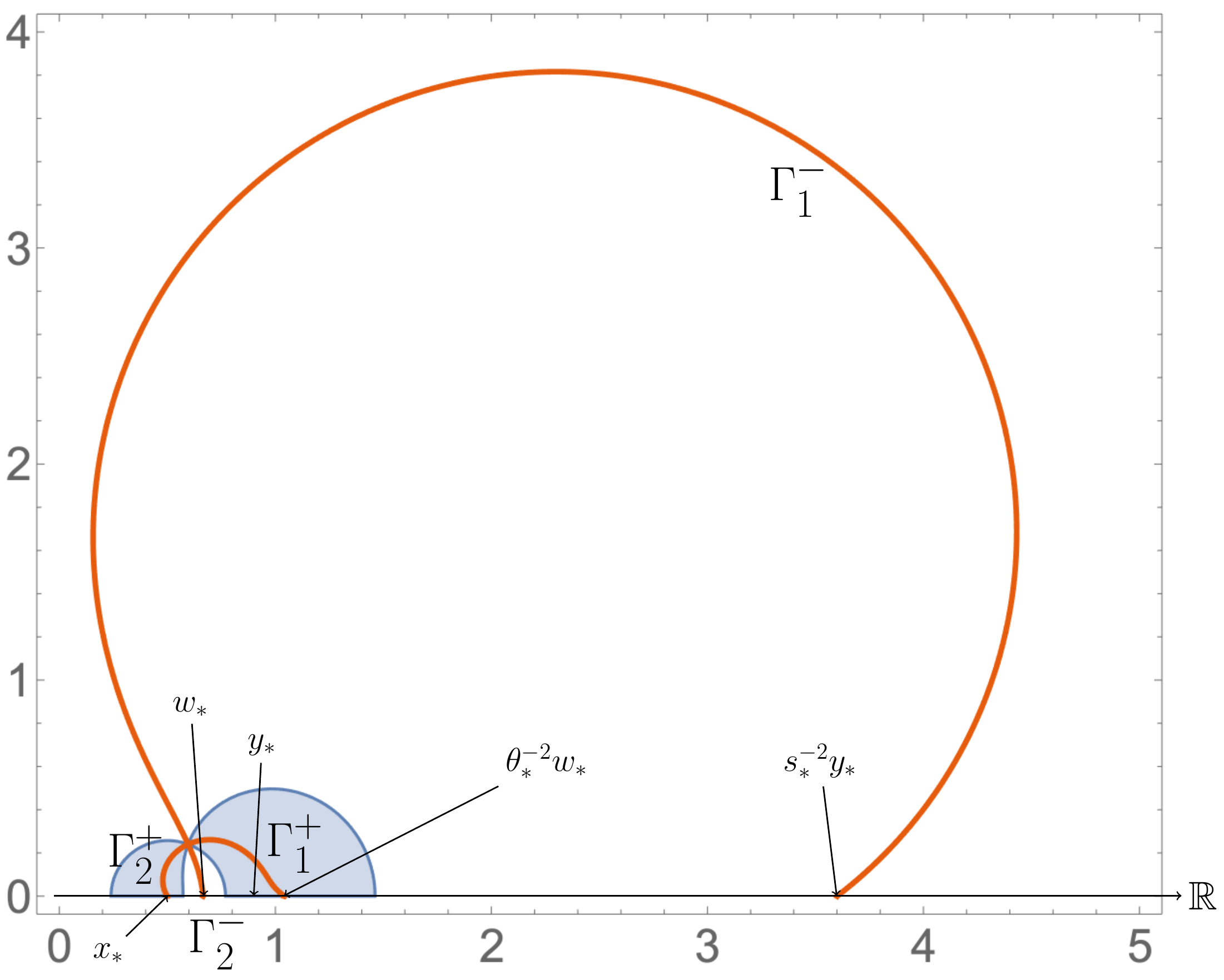}
	\caption{Steepest descent contours $\Gamma_{1,2}^{\pm}$ in the upper half plane. The four half-contours intersect at the critical point $z$.
	In the shaded regions we have $\Re(\mathscr{S}(u)-\mathscr{S}(z))>0$.
	The graph corresponds to $\alpha=\tau=\frac{3}{2}$ and other parameters
	as in \Cref{fig:liquid_region}.}
	\label{fig:contours}
\end{figure}

\begin{lemma}
	\label{lemma:at_infinity}
	We have $\lim_{R\to+\infty}\Re \mathscr{S}(R e^{\mathbf{i}p})=0$, uniformly in 
	$p\in[0,\pi]$.
\end{lemma}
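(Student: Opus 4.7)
The plan is straightforward: exploit the fact that the coefficients of the five logarithms in $\mathscr{S}(u)$ sum to zero, so that the leading logarithmic growth cancels and $\mathscr{S}(u)$ decays at infinity.

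First I would expand each logarithm for large $|u|$ using the branch chosen above (cuts in the lower half plane, so $\log(u-c)$ is holomorphic in a neighborhood of $\{\Im u\ge 0,\ |u|>C\}$ for any constant $c\in\mathbb{R}$). For $|u|>2\max(|x_*|,|y_*|,|y_*/s_*^2|,|w_*|,|w_*/\theta_*^2|)$ one has the uniform expansion
\begin{equation*}
\log(u-c) = \log u + \log\bigl(1-c/u\bigr) = \log u - \frac{c}{u} + O\bigl(|u|^{-2}\bigr),
\end{equation*}
with an $O$-constant depending only on $c$.

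Next I would add up these expansions with the coefficients $-1,\,1-\alpha,\,\alpha,\,\tau,\,-\tau$ appearing in \eqref{eq:S_u_defn}. The crucial observation is that
\begin{equation*}
-1 + (1-\alpha) + \alpha + \tau - \tau \;=\; 0,
\end{equation*}
so the $\log u$ terms cancel completely (this cancellation is precisely what was used in \Cref{sub:steepest_descent} to reduce the critical point equation to a cubic rather than a quartic). Consequently,
\begin{equation*}
\mathscr{S}(u) = -\frac{1}{u}\bigl(-x_* + (1-\alpha)y_* + \alpha y_*/s_*^2 + \tau w_* - \tau w_*/\theta_*^2\bigr) + O\bigl(|u|^{-2}\bigr) = O\bigl(|u|^{-1}\bigr)
\end{equation*}
as $|u|\to\infty$, with an implied constant that depends only on the global parameters $x_*,y_*,s_*,w_*,\theta_*,\alpha,\tau$ and not on the argument $\arg u$.

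Substituting $u = Re^{\mathbf{i}p}$ and taking real parts therefore gives $|\Re\mathscr{S}(Re^{\mathbf{i}p})|\le C/R$ for all $p\in[0,\pi]$ and all sufficiently large $R$, which is the claimed uniform limit. There is no real obstacle here; the only point that must be checked carefully is that the branch-cut choice makes $\log(u-c)$ single-valued and holomorphic on the closed upper half plane outside a large disk, so that the expansion above is valid uniformly in $p\in[0,\pi]$ including the endpoints $p=0$ and $p=\pi$ (where $u$ is real and positive or negative). Since all the branch points $x_*,y_*,y_*/s_*^2,w_*,w_*/\theta_*^2$ are real, the chosen cuts can be taken to lie strictly in the lower half plane, so this causes no issue.
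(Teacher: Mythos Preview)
Your proof is correct and follows essentially the same approach as the paper: Taylor-expand each logarithm for large $|u|$, observe that the coefficients $-1,\,1-\alpha,\,\alpha,\,\tau,\,-\tau$ sum to zero so the $\log u$ (equivalently $\log R$) terms cancel, leaving $\mathscr{S}(u)=O(|u|^{-1})$ uniformly in the argument. The paper's version takes real parts first and writes $\Re\log(Re^{\mathbf{i}p}-c)=\log R-\tfrac{c}{R}\cos p+O(R^{-2})$, but this is only a cosmetic difference.
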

\begin{proof}
	Taylor expanding each logarithm 
	in $\mathscr{S}$ \eqref{eq:S_u_defn}, we have
	\begin{equation*}
		\Re\log(Re^{\mathbf{i}p}-c)=
		\log R-\frac{c}{R}\cos p+O(R^{-2}),
	\end{equation*}
	where $O(R^{-2})$ is uniform in $p$.
	Since the coefficients by the logarithms in 
	\eqref{eq:S_u_defn} sum to $0$, the behavior
	of $\Re \mathscr{S}(R e^{\mathbf{i}p})$ is not $\log R$ but $O(R^{-1})$,
	uniformly in $p$.
\end{proof}

By \Cref{lemma:at_infinity}, the half-contours $\Gamma_{1,2}^{\pm}$
cannot escape to infinity because on them the real part of
$\mathscr{S}$ grows to $+\infty$ or decays to $-\infty$.
Since the function $\mathscr{S}$ is holomorphic in the upper half plane, 
these half-contours must end at the real line.
More precisely, each of these contours must end at one of the 
logarithmic singularities \eqref{eq:5_points} of $\mathscr{S}$. 
The signs of $\Re \mathscr{S}$ at these singularities are
\begin{equation}
	\label{eq:Re_behavior_new}
	\Re\mathscr{S}(x_*)=+\infty,\quad 
	\Re\mathscr{S}(w_*)=-\infty,\quad 
	\Re\mathscr{S}(y_*)=(\alpha-1)\infty,\quad 
	\Re\mathscr{S}(\theta_*^{-2}w_*)=+\infty,\quad 
	\Re\mathscr{S}(s_*^{-2}y_*)=-\infty.
\end{equation}
We see that 
$\Gamma_1^-$ must end at $s_*^{-2}y_*$;
$\Gamma_1^+$ must end at 
$y_*$ if $\alpha>1$ or at
$\theta_*^{-2}w_*$;
$\Gamma_2^-$ must end at $y_*$ if $\alpha<1$ or at $w_*$;
and $\Gamma_2^+$ must end at $x_*$.

\begin{lemma}
	\label{lemma:moving_contours}
	Assume that the parameters of the ascending FG process
	are 
	as described in \Cref{sub:scaling_and_parameter_assumptions}.
	Then
	\begin{enumerate}[$\bullet$]
		\item 
			The 
			$u$ contour in \eqref{eq:K_AP_asymptotic_equivalence_1}
			can be deformed,
			without picking residues other than at $u=v$,
			to a positively oriented contour which 
			crosses the real line at $s_*^{-2}y_*$,
			coincides 
			with 
			$\Gamma_1^-$ till $z$, 
			then with 
			$\Gamma_2^-$ till a small neighborhood of
			the real line, then crosses $\mathbb{R}$ again between 
			$x_*$ and all 
			$y_j$. Then the contour
			extends to the lower half plane symmetrically.
		\item 
			The 
			$v$ contour in \eqref{eq:K_AP_asymptotic_equivalence_1}
			can be deformed,
			without picking residues other than at $v=u$,
			to a positively oriented contour which crosses the real line between
			$\sup y_j$ and $\inf s_j^{-2}y_j$,
			in a small neighborhood of $\mathbb{R}$ joins 
			$\Gamma_1^+$ and coincides with it till $z$, 
			then coincides $\Gamma_2^+$ till the real line which
			it crosses again at $x_*$.
			Then the contour extends to the lower half plane symmetrically.
	\end{enumerate}
	Moreover, on the new contours we have 
	$\Re \left(  
	\mathscr{S}(u)-\mathscr{S}(v)
	\right) \le 0$, with equality only for $u=v=z$.
\end{lemma}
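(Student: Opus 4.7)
The plan is a standard steepest descent contour deformation, carried out in three logical steps following the strategy of \cite[Sections 3.1, 3.2]{Okounkov2002}: identify the endpoints of the four descent/ascent arcs $\Gamma_i^\pm$, glue them into closed contours with the prescribed real-axis crossings, and verify that the homotopy from the original contours of \eqref{eq:K_AP_asymptotic_equivalence_1} to these new ones meets no poles other than $u=v$.

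First I would pin down the endpoints of the $\Gamma_i^\pm$. By \Cref{lemma:Spp_nonzero}, the critical point $z$ is nondegenerate, so the four steepest arcs are disjoint away from $z$; by \Cref{lemma:at_infinity} none can escape to infinity, so each must terminate at a logarithmic singularity on $\mathbb{R}$ whose sign of $\Re \mathscr{S}$ at $\infty$ matches the sign of $\Re\mathscr{S}(u) - \Re\mathscr{S}(z)$ along that arc. Comparing with the list \eqref{eq:Re_behavior_new} of signs at the five singularities then forces the assignment $\Gamma_1^- \to s_*^{-2}y_*$, $\Gamma_2^+ \to x_*$, with $\Gamma_1^+$ and $\Gamma_2^-$ going to the remaining $+\infty$ and $-\infty$ singularities, respectively (with the choice between $y_*$ and $\theta_*^{-2}w_*$ or between $y_*$ and $w_*$ depending on whether $\alpha \gtrless 1$). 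To rule out a switch of endpoints between the two sign cases, I would use that $\mathscr{L}$ is connected (diffeomorphic to the upper half plane by \Cref{lemma:z_map}) and that an endpoint switch requires an arc to sweep across a singularity, which by the sign analysis cannot occur without one of the asymptotic signs in \eqref{eq:Re_behavior_new} flipping; the only such flip is along $\alpha = 1$, and on each of the two components of $\mathscr{L}\setminus\{\alpha=1\}$ a single base-case verification (for instance at large $\tau$, where the cubic \eqref{eq:cubic} degenerates) together with continuity in $(\alpha,\tau)$ determines the assignment everywhere.

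Next I would assemble the target contours. For $u$, traverse $\Gamma_1^-$ from $s_*^{-2}y_*$ to $z$, then $\Gamma_2^-$ from $z$ downward; inside a thin strip $\{|\Im u| < \delta\}$ detach from $\Gamma_2^-$ and bend leftward to cross $\mathbb{R}$ at a point of the open interval $(x_*, \inf_j\{y_*, y_j^\circ\})$; close the contour by conjugate reflection. For $v$, symmetrically follow $\Gamma_1^+$ from a right crossing in $(\sup_j\{y_*, y_j^\circ\}, \inf_j\{s_*^{-2}y_*, (s_j^\circ)^{-2}y_j^\circ\})$ up to $z$ and then $\Gamma_2^+$ down to $x_*$, and reflect. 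The bending region near $\mathbb{R}$ for each contour lies inside a bounded neighborhood of the local window of size $L$; the separation inequalities \eqref{eq:params_conditions_for_asymptotics} guarantee enough room between the finitely many perturbed real points $\{y_j^\circ\}$, $\{w_i^\circ\}$, $\{(\theta_i^\circ)^{-2}w_i^\circ\}$, $\{(s_j^\circ)^{-2}y_j^\circ\}$ to insert each bending arc avoiding all poles of the integrand. The residue bookkeeping is then routine: the poles of the $u$-integrand are $\{x_*\}\cup\{y_c\}_{c\le a}\cup\{\theta_c^{-2}w_c\}_{c\le t}\cup\{s_{a'}^{-2}y_{a'}\}$ and those of the $v$-integrand are an analogous list, and one checks that the new $u$ contour encloses the same subset of these poles as the original one (namely the $y_c$'s and $\theta_c^{-2}w_c$'s), and similarly for $v$; consequently the homotopy can be chosen to cross no pole except $u=v$, and the prescribed ordering (the $u$-contour outside the $v$-contour for $\Delta t \ge 0$, inside for $\Delta t<0$) is preserved by keeping the deformations disjoint.

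The inequality $\Re(\mathscr{S}(u)-\mathscr{S}(v))\le 0$ is immediate on the $\Gamma$-portions by definition of the steepest arcs and is preserved on the bending portions by continuity, provided the strip $\delta$ is chosen small enough; strictness away from $u=v=z$ follows since $\Re\mathscr{S}$ is strictly monotone along each $\Gamma_i^\pm$ away from $z$. The main obstacle is the first step, specifically ensuring that the cyclic assignment $\Gamma_1^-\mapsto s_*^{-2}y_*$, $\Gamma_2^+\mapsto x_*$, etc., is globally consistent across all of $\mathscr{L}$; the key technical content here is the combination of the nondegeneracy \Cref{lemma:Spp_nonzero} with the connectedness \Cref{lemma:z_map} of $\mathscr{L}$, which together prevent an unnoticed monodromy as $(\alpha,\tau)$ varies. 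Everything else is a careful but standard deformation argument.
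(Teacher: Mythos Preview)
Your approach is essentially the paper's: follow the steepest arcs $\Gamma_i^{\pm}$ through $z$, modify them in a thin strip near $\mathbb{R}$ so as to encircle the correct local-parameter poles, and check that the sign of $\Re(\mathscr{S}(u)-\mathscr{S}(v))$ is preserved. Two differences are worth noting.

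First, your concern about global consistency and monodromy of the endpoint assignment across $\mathscr{L}$ is unnecessary. The assignment is a statement for each fixed $(\alpha,\tau)$: the four arcs do not cross away from $z$, so their cyclic order around $z$ must match the linear order of their real endpoints; combined with the sign pattern \eqref{eq:Re_behavior_new} and the ordering \eqref{eq:5_points}, this already pins down the endpoints without any continuity in $(\alpha,\tau)$. The paper disposes of this in the paragraph preceding the lemma, and no base-case verification or connectedness of $\mathscr{L}$ is invoked.

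Second, and more substantively, your justification that the inequality survives the bend ``by continuity, provided the strip $\delta$ is chosen small enough'' is not sufficient as stated. The horizontal displacement needed to encircle the perturbed poles $y_j^\circ$ is of fixed size (set by the spread of the local parameters), not something that shrinks with $\delta$; making the strip thin does not by itself keep $\Re\mathscr{S}$ on the correct side of $\Re\mathscr{S}(z)$ along that fixed-length detour. The paper's argument here is different and is the actual content of the proof: whenever a bend is required---for instance when $\Gamma_1^+$ ends at $y_*$ but some $y_j^\circ>y_*$ must still be encircled---there is always an \emph{adjacent} singularity of $\mathscr{S}$ with the same sign of $\Re\mathscr{S}$ (here $\theta_*^{-2}w_*$, where $\Re\mathscr{S}\to+\infty$) lying beyond all the local $y_j^\circ$. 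Hence the region $\{\Re\mathscr{S}>\Re\mathscr{S}(z)\}$ extends past all of them, and the bent portion of the $v$ contour can remain inside it. The analogous use of $\Re\mathscr{S}(w_*)=-\infty$ (with all $y_j$ to the right of $w_*$) handles the left crossing of the $u$ contour when $\Gamma_2^-$ lands at $y_*$. Replace your continuity heuristic with this observation and the argument goes through.
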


Let us denote the new contours afforded by 
\Cref{lemma:moving_contours}
by $\Gamma^{st}_u,\Gamma^{st}_v$, 
see 
\Cref{fig:contours_deform} for an example.
They are 
positively oriented simple closed curves in the full complex plane.

\begin{proof}[Proof of \Cref{lemma:moving_contours}]
	Let the new contours 
	$\Gamma^{st}_u,\Gamma^{st}_v$
	coincide 
	with the steepest descent ones except 
	in a small neighborhood of 
	$\mathbb{R}$. In the neighborhood of $\mathbb{R}$,
	let us change the steepest descent
	contours $\Gamma^{\pm}_{1,2}$ such that 
	$\Re \left(  
	\mathscr{S}(u)-\mathscr{S}(v)
	\right) \le 0$ still holds on the new contours
	$\Gamma^{st}_u,\Gamma^{st}_v$.
	Moreover, we would like the 
	contour deformation
	from the contours \eqref{eq:K_AP_asymptotic_equivalence_1} to
	$\Gamma^{st}_u,\Gamma^{st}_v$,
	to not pick any residues
	at poles
	$w_i^\circ,
	y_j^\circ,
	(\theta_i^\circ)^{-2}w_i^\circ,
	(s_j^\circ)^{-2}y_j^\circ$. 
	coming from the local parameters
	(call these the \emph{local residues}).
	Thanks to 
	\eqref{eq:Re_behavior_new} and the previous statements
	in this subsection,
	such a contour deformation does not cross 
	the poles
	\eqref{eq:5_points} coming from the global parameters.

	Let us now show the absence of local residues.
	On the right, the original $v$ contour crossed $\mathbb{R}$
	between $y_j$ and $s_j^{-2}y_j$.
	The steepest descent contour $\Gamma_1^+$
	crosses $\mathbb{R}$ at $y_*$ or $\theta_*^{-2}w_*$.
	In the latter case, we let $\Gamma^{st}_{v}$ coincide with 
	$\Gamma_1^+$. In the former case, 
	we may need to change the contour (in a small neighborhood of $\mathbb{R}$)
	so that it is still around all the $y_j$'s 
	(this case is illustrated in \Cref{fig:contours_deform}). 
	As the $y_j$'s are all to the left of
	$\theta_*^{-2}w_*$ and 
	$\Re \mathscr{S}(\theta_*^{-2}w_*)=+\infty$, the new contour
	still satisfies
	$\Re \mathscr{S}(v)>\Re \mathscr{S}(z)$.
	Since $v=(\theta_i^\circ)^{-2}w_i^\circ$ are not poles of the integrand,
	this deformation does not pick any local residues.
	Then the new $v$ contour joins 
	$\Gamma_1^+$ and follows it till $z$, then follows $\Gamma_2^+$
	till $x_*$ where it crosses the real line.
	We see that the new $v$ contour is still around
	all $y_j,w_i$, and not $s_j^{-2}y_j$, so 
	no local residues are picked.

	The argument for the $u$ contour is similar. 
	On the right, the original $u$ contour crossed the real line between 
	$\theta_i^{-2}w_i$ and $s_j^{-2}y_j$,
	and can be deformed to 
	coincide with $\Gamma_1^-$ which
	crosses $\mathbb{R}$ 
	at $s_*^{-2}y_*$
	without picking
	local residues at $(\theta_i^\circ)^{-2}w_i^\circ$
	(there are no poles at $u=s_j^{-2}y_j$).
	On the left, the original $u$ contour crossed $\mathbb{R}$
	between $x_*$ and $y_j$.
	The steepest descent contour $\Gamma_2^-$
	can cross the real line at $w_*$ or $y_*$. In the
	former case, we simply make the new $u$ contour coincide with $\Gamma_2^-$ and cross
	$\mathbb{R}$ at $w_*$. In the latter case, 
	we deform the $u$ contour in a neighborhood of $\mathbb{R}$
	so that it still encircles all $y_j$. As 
	the $y_j$'s are all to the right of $w_*$
	and $\Re \mathscr{S}(w_*)=-\infty$, on the new
	$u$ contour we have 
	$\Re \mathscr{S}(u)<\Re \mathscr{S}(z)$.
	
	We see that the desired contour deformation
	exists, with
	$\Re \left(  
	\mathscr{S}(u)-\mathscr{S}(v)
	\right) \le 0$
	on the new contours.
\end{proof}

\begin{figure}[htpb]
	\centering
	\includegraphics[width=\textwidth]{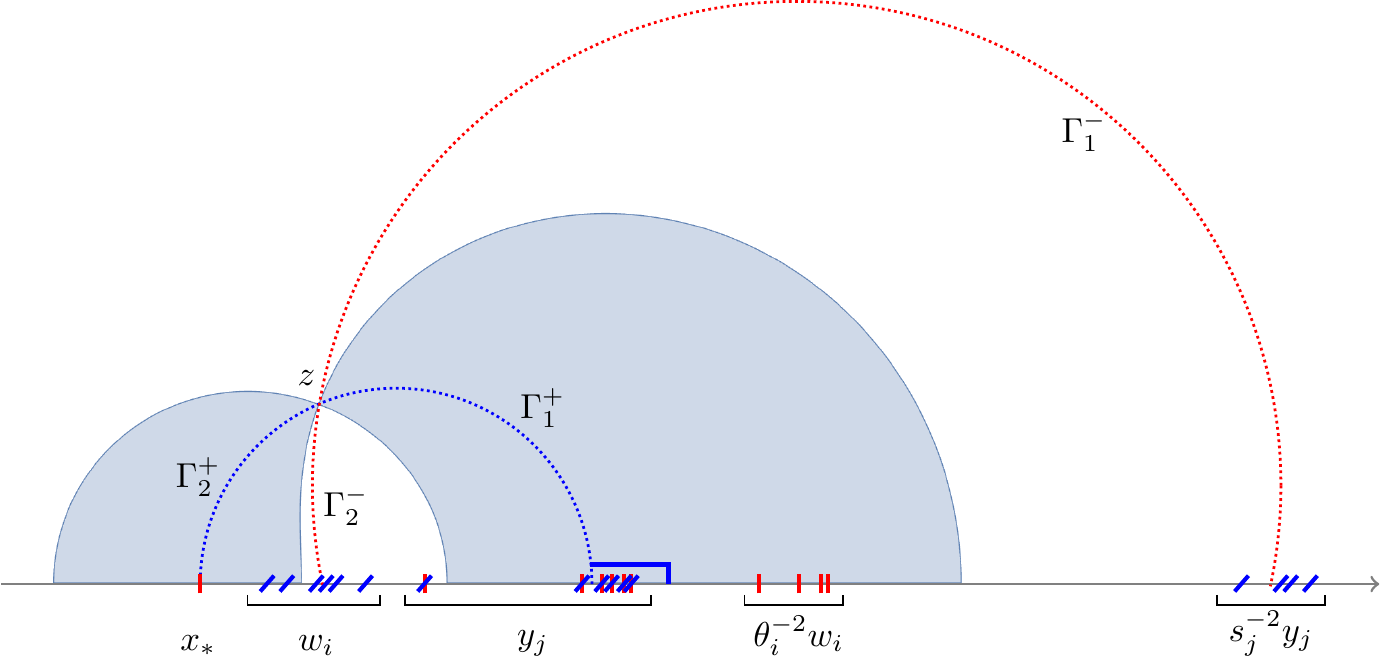}
		\caption{Deformation of the integration contours in the 
			proof of \Cref{lemma:moving_contours}.
			The shaded region is where $\Re \mathscr{S}(v)>\Re \mathscr{S}(z)$, 
			and the $v$ contour must be inside this region.
			The $u$ poles on $\mathbb{R}$ are $x_*,y_j$, and $\theta_i^{-2}w_i$, and
			the $v$ poles on $\mathbb{R}$ are $w_i,y_j,s_j^{-2}y_j$.
			We need
			to modify the new $v$ contour so that in a neighborhood of $\mathbb{R}$ it 
			diverges from 
			$\Gamma_1^+$ and encircles all $y_j$'s.}
	\label{fig:contours_deform}
\end{figure}

\subsection{Asymptotics of the kernel}
\label{sub:kernel_K_AP_asymptotics}

In the previous \Cref{sub:concrete_moving_contours}
we showed how to deform the integration contours
for the correlation kernel
$K_{\mathscr{AP}}$ \eqref{eq:K_AP_asymptotic_equivalence_1}
to the steepest descent contours $\Gamma^{st}_u,\Gamma^{st}_v$.
It remains to collect the residues at $u=v$ arising 
from this deformation. Denote by 
$\mathscr{I}_{t,a;t',a'}(u,v)$ the integrand in the 
double contour integral in \eqref{eq:K_AP_asymptotic_equivalence_1}.

\begin{lemma}
	\label{lemma:residue_u_v}
	The double contour integral 
	in \eqref{eq:K_AP_asymptotic_equivalence_1} is equal to 
	\begin{equation}
		\label{eq:kernel_single_plus_double}
		-\frac{1}{2\pi\mathbf{i}}\int_{\bar z}^{z}
		\mathop{\mathrm{Res}}_{u=v}\mathscr{I}_{t,a;t',a'}(u,v)\, dv
		+
		\frac{1}{(2\pi \mathbf{i})^2}\oint_{\Gamma^{st}_u}du
		\oint_{\Gamma^{st}_v}dv \, \mathscr{I}_{t,a;t',a'}(u,v),
	\end{equation}
	where in the 
	single integral the arc
	from $\bar z$ to $z$ is as follows:
	\begin{enumerate}[$\bullet$]
		\item If $\Delta t=t'-t \ge0$,
			the arc crosses the real line to the left of $w_*$ and all $w_i^\circ$;
		\item 
			If $\Delta t<0$,
			the arc crosses the real 
			line 
			between $\theta_*^{-2}w_*,(\theta_i^\circ)^{-2}w_i^\circ$ and $s_*^{-2}y_*,
			(s_j^\circ)^{-2}y_j^\circ$.
	\end{enumerate}
\end{lemma}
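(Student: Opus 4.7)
The plan is to start from \eqref{eq:K_AP_asymptotic_equivalence_1} and deform both integration contours to the steepest-descent contours $\Gamma_u^{st},\Gamma_v^{st}$ of \Cref{lemma:moving_contours}. Since that lemma already guarantees that no residues at the genuine $u$- or $v$-poles of the integrand (namely $u=x_*,y_j,(\theta_i^\circ)^{-2}w_i^\circ$ and $v=w_i^\circ,y_j,(s_j^\circ)^{-2}y_j^\circ$ together with their global counterparts) are picked up, the only residues contributed by the deformation come from the diagonal pole $u=v$ of the factor $\tfrac{1}{u-v}$. These residues add up to a single integral of $\mathop{\mathrm{Res}}_{u=v}\mathscr{I}_{t,a;t',a'}(u,v)$ along an arc with endpoints at the common intersection points $\Gamma_u^{st}\cap\Gamma_v^{st}=\{z,\bar z\}$, which gives the first summand in \eqref{eq:kernel_single_plus_double}, while the leftover double integral over $\Gamma_u^{st}\times\Gamma_v^{st}$ gives the second summand.

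To execute this, I would deform $v$ to $\Gamma_v^{st}$ first while keeping $u$ on its original large positive circle; at this stage the original $u$-contour still encloses the new $v$-contour (for $\Delta t\ge 0$) or is enclosed by it (for $\Delta t<0$), so no $u=v$ residue is triggered. Then I deform $u$ to $\Gamma_u^{st}$, picking up, for each $v\in \Gamma_v^{st}$ through which the $u$-contour must cross, the residue $\mathop{\mathrm{Res}}_{u=v}\mathscr{I}_{t,a;t',a'}(u,v)$. The topological description of $\Gamma_u^{st}$ and $\Gamma_v^{st}$ from \Cref{sub:concrete_moving_contours} identifies where the crossings happen: in the upper half plane, $\Gamma_v^{st}$ follows $\Gamma_1^+$ on the right and $\Gamma_2^+$ on the left of $z$, while $\Gamma_u^{st}$ follows $\Gamma_1^-$ on the right and $\Gamma_2^-$ on the left; hence to the right of $\{z,\bar z\}$ one has $\Gamma_u^{st}$ outside $\Gamma_v^{st}$, and to the left the opposite inclusion holds. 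For $\Delta t\ge 0$ the original configuration agrees with the new one on the right of $\{z,\bar z\}$ but disagrees on the left, so the crossings sweep out precisely the left arc from $\bar z$ to $z$ crossing $\mathbb{R}$ to the left of $w_*$ and of all $w_i^\circ$. For $\Delta t<0$ the roles are reversed and the arc is the right one crossing $\mathbb{R}$ between $(\theta_i^\circ)^{-2}w_i^\circ$ and $(s_j^\circ)^{-2}y_j^\circ$. In both cases a direct computation gives $\mathop{\mathrm{Res}}_{u=v}\mathscr{I}_{t,a;t',a'}(u,v)$ equal to the single-variable integrand displayed in \eqref{eq:intro_inhom_sine}.

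The main obstacle will be the sign bookkeeping: because both contours intersect transversally at $z$ and $\bar z$ and the $u$-contour moves through the $v$-contour rather than vice versa, it is easy to reverse either the orientation of the arc from $\bar z$ to $z$ or the sign of the residue. The cleanest way to pin down the final sign, and the one I would use, is to verify a single explicit reduction, for instance collapsing all local sequences to the global parameters and comparing with the horizontally homogeneous Schur-measure kernel of \Cref{sub:Schur_process_part_case}, which by continuity in the parameters determines the sign throughout the entire parameter range described in \Cref{sub:scaling_and_parameter_assumptions}.
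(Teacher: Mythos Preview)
Your approach is the same as the paper's: deform both contours to the steepest-descent ones from \Cref{lemma:moving_contours}, invoke that lemma to rule out all residues except $u=v$, and read off the arc from the geometry of $\Gamma_u^{st},\Gamma_v^{st}$ near $z,\bar z$. Your identification of the left arc for $\Delta t\ge0$ and the right arc for $\Delta t<0$ is exactly what the paper gets.

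One technical point in your execution needs repair. You propose deforming $v$ to $\Gamma_v^{st}$ first and assert that ``the original $u$-contour still encloses the new $v$-contour.'' This is not true: by \Cref{lemma:moving_contours} the left crossing of $\Gamma_v^{st}$ with $\mathbb{R}$ is at $x_*$, while the original $u$-contour must exclude the pole $u=x_*$, so $\Gamma_v^{st}$ necessarily protrudes outside the original $u$-contour on the left. Hence deforming $v$ first already drags $v$ across points of the original $u$-contour and you would pick up $u=v$ residues at that stage. The fix is simply to reverse the order (deform $u$ first; $\Gamma_u^{st}$ crosses $\mathbb{R}$ between $x_*$ and the $y_j$, so one can arrange that it still encloses the original $v$-contour), or, as the paper does, argue directly from the final picture: for $\Delta t\ge0$ the $u$-contour started outside $v$ and ends inside on the left half, so the residue at $u=v$ is picked up and integrated over the left portion of $\Gamma_v^{st}$ (which is $\Gamma_2^+$ and its conjugate); for $\Delta t<0$ the roles swap and one integrates minus the residue over the right portion. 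With the order corrected, your geometric bookkeeping of the arcs and the sign check via the homogeneous Schur case are fine.
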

\begin{proof}
	This follows by considering 
	the contour deformation in two cases. For $\Delta t\ge0$, the 
	$u$ contour is around the $v$ one, so we pick the residue
	at $u=v$ and integrate it over
	the left portion of $\Gamma^{st}_v$,
	which is $\Gamma_2^+$ and its symmetric
	copy in the lower half plane.
	When $\Delta t<0$, we integrate minus the 
	residue at $u=v$ over the right portion of the 
	contour $\Gamma^{st}_{v}$.
\end{proof}

Recall that the number of local parameters
differing from the global ones
participating in \eqref{eq:kernel_single_plus_double}
is at most $2L$, see \Cref{sub:scaling_and_parameter_assumptions}.
Therefore, 
in \eqref{eq:kernel_single_plus_double}
the double contour integral 
decays to zero.
More precisely, this rate of decay is bounded by
$C(L)\cdot N^{-\frac12}$,
where $C(L)$ is independent of $N$. 
All of this contribution comes from a small neighborhood
of $z$, and outside of a finite neighborhood of $z$ the decay is exponential in $N$.

The surviving term in \eqref{eq:kernel_single_plus_double}
given by the single integral
is a new determinantal correlation kernel on $\mathbb{Z}^{2}$:
\begin{equation}
	\label{eq:inhom_sine_first_definition}
	\begin{split}
		&K_{\textnormal{2d}}^z(\mathsf{t},\mathsf{a};\mathsf{t}',\mathsf{a}')
		:=
		-\frac{1}{2\pi\mathbf{i}}\int_{\bar z}^{z}
		\mathop{\mathrm{Res}}_{u=v}
		\mathscr{I}_{\mathsf{t}+\lfloor \tau N \rfloor ,
		\mathsf{a}+\lfloor \alpha N \rfloor ;
		\mathsf{t}'+\lfloor \tau N \rfloor ,
		\mathsf{a}'+\lfloor \alpha N \rfloor }(u,v)\, dv
		\\&\hspace{20pt}=
		-
		\frac{1}{2\pi\mathbf{i}}
		\int_{\bar z}^{z}
		\frac{y_{\mathsf{a}}^\circ
		(1-(s_{\mathsf{a}}^\circ)^{-2})\, dv}
		{(v-y_{\mathsf{a}}^\circ)
		(v-(s^\circ_{\mathsf{a}'})^{-2}y_{\mathsf{a}'}^\circ)}
		\frac{
			\prod_{c=-\infty}^{\mathsf{a}'}
			\frac{v-(s^\circ_c)^{-2}y_c^\circ}{v-y_c^\circ}
		}{
			\prod_{c=-\infty}^{\mathsf{a}}
			\frac{v-(s_c^\circ)^{-2}y_c^\circ}{v-y_c^\circ}
		}
		\frac{
			\prod_{c=-\infty}^{\mathsf{t}'}\frac{v-w_c^\circ}
			{v-(\theta_c^\circ)^{-2}w_c^\circ}
		}{
			\prod_{c=-\infty}^{\mathsf{t}} 
			\frac{v-w_c^\circ}{v-(\theta_c^\circ)^{-2}w_c^\circ}
		},
	\end{split}
\end{equation}
where 
$\mathsf{t},\mathsf{a},\mathsf{t}',\mathsf{a}'\in \mathbb{Z}$
are fixed and the integration contours are 
as in \Cref{lemma:residue_u_v}.
Note that the ratios of the products from $-\infty$ in the 
second line in \eqref{eq:inhom_sine_first_definition}
are actually finite.

Recalling the scaling in 
\Cref{sub:scaling_and_parameter_assumptions},
we see that the kernel 
$K^z_{\textnormal{2d}}$ is independent of $N$
and depends on the following data:
\begin{enumerate}[$\bullet$]
	\item Cutoff $L\in \mathbb{Z}_{\ge1}$;
	\item Four local sequences
		$\{w_i^\circ\},\{\theta_i^\circ\},\{ y_j^\circ \},
		\{ s_j^\circ \}$, $|i|,|j|\le L$;
	\item Four global parameters $w_*,\theta_*,y_*,s_*$, where
		in \eqref{eq:inhom_sine_first_definition}
		we use the notation 
		$w_i^\circ=w_*$ for $|i| > L$, and similarly
		for $\theta_i^\circ,y_j^\circ,s_j^\circ$;
	\item Complex number 
		$z$ in the upper half plane.
\end{enumerate}
We call $K^z_{\textnormal{2d}}$
the (two-dimensional) \emph{inhomogeneous discrete sine kernel}
and discuss it in detail in \Cref{sec:inhom_sine_kernel} below. 
\begin{remark}
	Note that with the cutoff $L$, the kernel
	$K^z_{\textnormal{2d}}$
	defines a determinantal process on the whole plane $\mathbb{Z}^{2}$,
	but its parameters $w_i^\circ,\theta_i^\circ,y_j^\circ,s_j^\circ$ only vary in 
	a window of size $2L$.
	In
	\Cref{sub:def_inhom_sine} below
	we take the limit $L\to+\infty$, and arrive at a fully inhomogeneous 
	kernel with parameters varying in the full plane.
\end{remark}

Let us summarize the notation and 
establish the main result of this section.
Fix a cutoff parameter $L\in \mathbb{Z}_{\ge1}$
and
four local sequences
$\{w_i^\circ\},\{\theta_i^\circ\},\{ y_j^\circ \},
\{ s_j^\circ \}$, $|i|,|j|$,
satisfying
$w_i^\circ<y_j^\circ<(\theta_{i}^\circ)^{-2}w_i^\circ<(s_j^\circ)^{-2}y_j^\circ$
for all $i,j$.
Take global parameters $x_*,w_*,y_*,\theta_*,s_*$
satisfying \eqref{eq:5_points}, where
$x_*$ is sufficiently close to $-\infty$,
$w_*\in (\min w_i,\max w_i)$,
$y_*\in (\min y_j,\max y_j)$,
$\theta_*\in (\min \theta_i,\max \theta_i)$,
and
$s_*\in (\min s_j,\max s_j)$.
This ensures that 
the ascending FG process 
\eqref{eq:ascending_process}
with the parameters 
given in \eqref{eq:w_local_global}--\eqref{eq:y_local_global}
exists thanks to
\eqref{eq:params_conditions_for_asymptotics}.
Fix a complex number $z$ in the upper half plane.

Recall from \Cref{def:liquid_region}
the region $\mathscr{L}$
in the plane $(\alpha,\tau)\in \mathbb{R}_{\ge0}^2$
determined by 
$x_*,w_*,\theta_*,y_*,s_*$.
Let $(\alpha,\tau)=(\alpha(z),\tau(z))$ 
be the image of our point $z$ under the 
diffeomorphism from the upper half plane
to $\mathscr{L}$ (\Cref{lemma:z_map}).

We take the scaling 
\eqref{eq:scaling_parameters_1}
of $T,N,t,t',a,a'$
determined by $(\alpha,\tau)$.
Take the ascending FG process and let its parameters
$x_i,w_i,\theta_i,y_j,s_j$ behave as defined in 
\eqref{eq:w_local_global}--\eqref{eq:y_local_global}.
That is, $x_i=x_*$ are all the same, the parameters $w_i,\theta_i$ vary for $i$
in the $L$-neighborhood
of $\tau N$, and $y_j,s_j$ vary for $j$ in the $L$-neighborhood
of $\alpha N$. 
Outside these neighborhoods the parameters are constant.
Adopt the notation 
$w_i^\circ=w_*$ for $|i| > L$, and similarly
for the other three families $\theta_i^\circ,y_j^\circ,s_j^\circ$. 

\begin{theorem}
	[\Cref{thm:intro_limit_bulk} from Introduction]
	\label{thm:bulk_limit}
	Under the scaling and assumptions described before the theorem,
	the correlation kernel $K_{\mathscr{AP}}$ 
	(given by \eqref{eq:ascending_FG_process_kernel_text} or
	\eqref{eq:K_AP_asymptotic_equivalence_1})
	of the 
	ascending FG process converges to the 
	two-dimensional
	inhomogeneous discrete sine kernel 
	\eqref{eq:inhom_sine_first_definition}:
	\begin{equation*}
		\lim_{N\to+\infty}K_{\mathscr{AP}}
		(
		\mathsf{t}+\lfloor \tau N \rfloor ,
		\mathsf{a}+\lfloor \alpha N \rfloor ;
		\mathsf{t}'+\lfloor \tau N \rfloor ,
		\mathsf{a}'+\lfloor \alpha N \rfloor
		)=
		K_{\textnormal{2d}}^z(\mathsf{t},\mathsf{a};\mathsf{t}',\mathsf{a}'),
	\end{equation*}
	where
	$\mathsf{t},\mathsf{a},\mathsf{t}',\mathsf{a}'\in \mathbb{Z}$
	are fixed. 
\end{theorem}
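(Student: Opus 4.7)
The plan is to execute a standard steepest descent analysis on the double contour integral representation \eqref{eq:K_AP_asymptotic_equivalence_1} of $K_{\mathscr{AP}}$, with the critical preparatory work already carried out in \Cref{sub:steepest_descent,sub:concrete_moving_contours}. First I would observe that in the scaling regime, the integrand factorizes as $e^{N(\mathscr{S}_N(u)-\mathscr{S}_N(v))}$ times an explicit rational prefactor, where $\mathscr{S}_N(u) = \mathscr{S}(u) + O(C(L)/N)$ uniformly on compact sets in $\mathbb{C}\setminus\{x_*,w_*,y_*,\theta_*^{-2}w_*,s_*^{-2}y_*\}$ (cf.\ \eqref{eq:S_u_defn}). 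Since $(\alpha,\tau)\in\mathscr{L}$ (which holds by \Cref{lemma:z_map} since $z$ lies in the open upper half plane), the cubic \eqref{eq:cubic} has a complex conjugate pair of roots, and $z=z(\alpha,\tau)$ is a nondegenerate critical point of $\mathscr{S}$ by \Cref{lemma:Spp_nonzero}.

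Next I would invoke \Cref{lemma:moving_contours} to deform the original $u$ and $v$ integration contours to the steepest descent contours $\Gamma^{st}_u, \Gamma^{st}_v$ passing through $z$ and $\bar z$. This deformation is legal because the interpolating family of contours avoids all local poles $w_i^\circ,y_j^\circ,(\theta_i^\circ)^{-2}w_i^\circ,(s_j^\circ)^{-2}y_j^\circ$ (these sit strictly inside the respective clusters), and by the sign analysis \eqref{eq:Re_behavior_new} together with \Cref{lemma:at_infinity} the steepest descent half-arcs $\Gamma_1^\pm,\Gamma_2^\pm$ terminate at the correct global singularities. The only residues picked up in the deformation come from $u=v$, giving the decomposition \eqref{eq:kernel_single_plus_double} of \Cref{lemma:residue_u_v} into a single arc integral plus the residual double contour integral.

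For the surviving double integral over $\Gamma^{st}_u\times\Gamma^{st}_v$, I would apply the standard quadratic saddle point bound: by \Cref{lemma:moving_contours}, $\Re(\mathscr{S}(u)-\mathscr{S}(v))\le 0$ on the contours with equality only at $u=v=z$ (and at $\bar z$ by symmetry), and \Cref{lemma:Spp_nonzero} gives $\mathscr{S}''(z)\ne 0$, so a local change of variables near $z,\bar z$ together with the uniform convergence $\mathscr{S}_N\to\mathscr{S}$ yields a bound of the form $C(L)\cdot N^{-1/2}$ for the double integral. The prefactor (the rational function in $u,v$ outside $e^{N\mathscr{S}_N}$) is bounded on the contours uniformly in $N$, since the finitely many local parameters in the window $|i|,|j|\le L$ are fixed.

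Finally, I would compute the limit of the arc integral in \eqref{eq:kernel_single_plus_double} directly. After shifting indices by $(\lfloor\tau N\rfloor,\lfloor\alpha N\rfloor)$, the residue $\mathop{\mathrm{Res}}_{u=v}\mathscr{I}_{t,a;t',a'}(u,v)$ becomes exactly the integrand of $K^z_{\textnormal{2d}}$ as written in \eqref{eq:inhom_sine_first_definition}, because the growing products telescope and only the $2L$-window of local parameters survives (the cancellations of the constant factors outside the window are ensured by the form $\prod_{c=1}^{a'}/\prod_{c=1}^{a}$ of the ratios). The arc $\bar z\to z$ remains valid with the two possible real-line crossings dictated by $\mathop{\mathrm{sgn}}(\Delta t)$ as in \Cref{lemma:residue_u_v}. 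The hardest part will be the contour management in \Cref{lemma:moving_contours}, specifically verifying that $\Gamma_1^+$ can be deformed near $\mathbb{R}$ to enclose all the local $y_j^\circ$'s without crossing any $(\theta_i^\circ)^{-2}w_i^\circ$; the ordering hypothesis \eqref{eq:5_points} on global parameters together with the fact that local parameters cluster around their respective global values guarantees the required separation, but care is needed because the cutoff $L$ enters the bounds $C(L)$ and one must keep these bounds $N$-independent.
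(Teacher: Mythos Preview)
Your proposal is correct and follows essentially the same steepest descent approach as the paper: deform to the contours $\Gamma^{st}_u,\Gamma^{st}_v$ via \Cref{lemma:moving_contours}, collect the $u=v$ residue as the arc integral of \Cref{lemma:residue_u_v}, and show the remaining double integral decays like $C(L)N^{-1/2}$. The only minor technical difference is that the paper routes the contours through the $N$-dependent critical point $z_N(\alpha,\tau)$ of $\mathscr{S}_N$ (noting $z_N\to z$) rather than through $z$ itself, which makes the single integral initially run from $\bar z_N$ to $z_N$ before passing to the limit; your variant of working directly with $z$ and absorbing the $O(C(L)/N)$ remainder into the exponent is equally valid.
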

\begin{proof}
	Fix $(\alpha,\tau)$ as the image of $z$ under the diffeomorphism
	from the upper half plane to $\mathscr{L}$.
	Let us look at the $N$-dependent kernel
	$K_{\mathscr{AP}}$ \eqref{eq:K_AP_asymptotic_equivalence_1}
	and consider the critical point $z_N(\alpha,\tau)$ 
	(in the upper
	half plane)
	of the $N$-dependent function $\mathscr{S}_N(u)$ \eqref{eq:S_u_defn}.
	Compared to $\mathscr{S}(u)$, 
	$\mathscr{S}_N(u)$ may depend on local parameters 
	$w_i^\circ,\theta_i^\circ,y_j^\circ,s_j^\circ$,
	but recall that 
	the difference is bounded by $C(L)/N$.
	Therefore, $z_N$
	is close 
	to the critical point $z(\alpha,\tau)$ 
	from \Cref{def:liquid_region}.

	Let us deform the integration 
	contours in \eqref{eq:K_AP_asymptotic_equivalence_1}
	to coincide,
	outside a neighborhood
	of $z_N$ (which at the same time is a neighborhood of $z$),
	with the contours
	$\Gamma^{st}_u,\Gamma^{st}_v$ described 
	in \Cref{lemma:moving_contours}.
	In this neighborhood, 
	let the contours pass through $z_N$ along the steepest descent
	directions. Thanks to \Cref{lemma:moving_contours},
	this deformation of contours
	does not cross any
	poles at
	$x_i,w_i,y_j,\theta_i^{-2}w_i,s_j^{-2}y_j$
	which could lead to residues.
	The only residues which this deformation of contours could produce
	are at 
	$u=v$, and these residues are accounted for in
	\Cref{lemma:residue_u_v}.

	After the contour deformation,
	$K_{\mathscr{AP}}$ becomes a sum of 
	a single integral from $\bar z_N(\alpha,\tau)$ to $z_N(\alpha,\tau)$
	and a double integral. In the $N\to+\infty$ limit,
	the double integral disappears, and the 
	single integral turns into an integral 
	from $\bar z$ to $z$,
	which is precisely the two-dimensional inhomogeneous sine kernel
	$K^z_{\textnormal{2d}}$. This completes the proof.
\end{proof}

\section{Inhomogeneous discrete sine kernel}
\label{sec:inhom_sine_kernel}

In this section we discuss the
two-dimensional inhomogeneous discrete sine
kernel $K^{z}_{\textnormal{2d}}$
defined by \eqref{eq:inhom_sine_first_definition},
and consider its many degenerations to known 
correlation kernels.
In particular, we prove
\Cref{thm:intro_K_z} from Introduction.
For simplicity, in this section 
we drop the ``${}^\circ$'' notation from
the parameters $w_i,\theta_i,y_j,s_j$ of the kernel.

\subsection{Definition of the kernel}
\label{sub:def_inhom_sine}

It is convenient to introduce 
the following inhomogeneous analogues of power
functions to write 
down the kernel $K^{z}_{\textnormal{2d}}$:
\begin{definition}[Inhomogeneous powers]
	\label{def:inhom_power}
	For any two sequences
	$\mathbf{b}=\{b_i \}_{i\in \mathbb{Z}}$ and
	$\mathbf{c}=\{c_i \}_{i\in \mathbb{Z}}$,
	define the following ``inhomogeneous powers'':
	\begin{equation}
		\label{eq:inhomogeneous_power}
		\mathcal{P}_{n,n'}(u\mid \mathbf{b};\mathbf{c}):=
		\begin{cases}
			\displaystyle\prod_{j=n+1}^{n'}\frac{u-b_j}{u-c_j},&n<n';\\
			1,&n=n';\\
			\displaystyle\prod_{j=n'+1}^{n}\frac{u-c_j}{u-b_j},&n>n',
		\end{cases}
		\qquad \qquad n,n'\in \mathbb{Z}.
	\end{equation}
\end{definition}

We will now define
the kernel $K_{\textnormal{2d}}^z$
depending on four sequences of parameters
\begin{equation}
	\label{eq:wtys_for_inhom_sine}
	\mathbf{w}=\{w_i \},\ 
	\boldsymbol\uptheta=\{\theta_i\},\ 
	\mathbf{y}=\{ y_j \},\ 
	\mathbf{s}=\{ s_j \}
	,\qquad 
	i,j\in \mathbb{Z},
\end{equation}
and on a point
$z$ in the upper
half complex plane.
Assume that the parameter sequences satisfy
\begin{equation}
	\label{eq:bulk_after_limit_parameters_condition}
	\begin{split}
		&
		\sup\nolimits_i w_i< 
		\inf\nolimits_j y_j\le
		\sup\nolimits_j y_j<
		\inf\nolimits_i \theta_i^{-2}w_i\le 
		\sup\nolimits_i \theta_i^{-2}w_i<
		\inf\nolimits_j s_j^{-2}y_j.
	\end{split}
\end{equation}

\begin{definition}
	\label{def:inhom_sine}
	The \emph{two-dimensional inhomogeneous extended sine kernel} is defined 
	as follows:
	\begin{equation}
		\label{eq:inhom_sine_kernel}
			K_{\textnormal{2d}}^{z}(t,a;t',a')=
			-
			\frac{1}{2\pi\mathbf{i}}\int_{\bar z}^{z}
			\frac{y_a(1-s_a^{-2})}{(u-y_a)(u-s_{a'}^{-2}y_{a'})}
			\,\mathcal{P}_{a,a'}(u\mid \mathbf{s}^{-2}\mathbf{y};\mathbf{y})
			\,
			\mathcal{P}_{t,t'}(u\mid \mathbf{w};\boldsymbol\uptheta^{-2}\mathbf{w})
			\,
			du,
	\end{equation}
	where
	$t,a,t',a'\in \mathbb{Z}$.
	The integration contour 
	is an arc
	from $\bar z$ to $z$ which 
	crosses the real line 
	\begin{enumerate}[$\bullet$]
		\item 
			To the left of all
			$w_i$ when $\Delta t=t'-t\ge 0$;
		\item 
			Between 
			$\theta_i^{-2}w_i$ and $s_j^{-2}y_j$
			when $\Delta t <0$.
	\end{enumerate}
	This integration arc
	exists thanks
	to \eqref{eq:bulk_after_limit_parameters_condition}.
\end{definition}

The kernel \eqref{eq:inhom_sine_kernel}
is the same as \eqref{eq:inhom_sine_first_definition},
up to changes in notation and the removal of the cutoff 
parameter $L\in \mathbb{Z}_{\ge1}$.

\begin{theorem}[\Cref{thm:intro_K_z} from Introduction]
	\label{thm:K_z_stochastic}
	Under the assumptions \eqref{eq:bulk_after_limit_parameters_condition}
	and for any $z$ in the upper half plane with $\Im z>0$, the 
	kernel $K^z_{\textnormal{2d}}$ 
	\eqref{eq:inhom_sine_kernel}
	defines a determinantal random point process 
	on $\mathbb{Z}^2$. 
\end{theorem}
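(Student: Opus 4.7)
The plan is to deduce the theorem from the bulk asymptotic result in \Cref{thm:bulk_limit} by a straightforward approximation argument. Since any minor of $K^z_{\textnormal{2d}}$ depends only on finitely many of the parameters $w_i,\theta_i,y_j,s_j$, I will fix an arbitrary finite test set $A = \{(t_1,a_1),\ldots,(t_m,a_m)\} \subset \mathbb{Z}^2$ and reduce to a finite-window problem, then pass to the limit along a sequence of ascending FG processes whose correlation kernels $K_{\mathscr{AP}}$ converge pointwise to $K^z_{\textnormal{2d}}$ on $A \times A$.

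More concretely, first I would choose an integer $L$ so large that $|t_i|,|a_i| < L$ for all $i$; then the values $K^z_{\textnormal{2d}}(t_i,a_i;t_j,a_j)$ depend only on $w_k,\theta_k,y_l,s_l$ with $|k|,|l|\le L$. Next I would select global parameters $w_*,\theta_*,y_*,s_*$ lying strictly inside the intervals determined by the local sequences $\{w_i\},\{\theta_i\},\{y_j\},\{s_j\}$ (this is possible by \eqref{eq:bulk_after_limit_parameters_condition}), together with $x_*$ sufficiently close to $-\infty$ so that \eqref{eq:5_points} and \eqref{eq:params_conditions_for_asymptotics} hold. By \Cref{lemma:z_map}, the diffeomorphism $z\colon \mathscr{L}\to \{\Im z > 0\}$ determined by $x_*,w_*,\theta_*,y_*,s_*$ is surjective, so I can find a unique $(\alpha,\tau)\in \mathscr{L}$ with $z(\alpha,\tau)$ equal to the given point $z$. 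This assembles a legitimate instance of the scaling set-up from \Cref{sub:scaling_and_parameter_assumptions}, with cutoff $L$ and local sequences as prescribed.

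Now \Cref{thm:bulk_limit} supplies a sequence of ascending FG processes, indexed by $N$, whose correlation kernels satisfy
\begin{equation*}
	\lim_{N\to+\infty}K_{\mathscr{AP}}\bigl(t_i+\lfloor \tau N\rfloor,a_i+\lfloor \alpha N\rfloor;\,t_j+\lfloor \tau N\rfloor,a_j+\lfloor \alpha N\rfloor\bigr) = K^{z}_{\textnormal{2d}}(t_i,a_i;t_j,a_j).
\end{equation*}
Since $m$ is finite and the determinant is continuous in its entries, the corresponding $m\times m$ minors of $K_{\mathscr{AP}}$ converge to the minor of $K^z_{\textnormal{2d}}$ on $A$. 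By \Cref{thm:ascending_FG_process_kernel}, each of the pre-limit minors equals $\mathbb{P}_{\mathscr{AP}}[(A+(\lfloor \tau N\rfloor,\lfloor \alpha N\rfloor))\subset \mathcal{S}^{(T)}]\in[0,1]$, so in the limit the minor of $K^z_{\textnormal{2d}}$ on $A$ also lies in $[0,1]$. Since $A$ was arbitrary, every symmetric minor of $K^z_{\textnormal{2d}}$ lies in $[0,1]$.

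To upgrade this nonnegativity of minors to the existence of a determinantal point process on $\mathbb{Z}^2$ with correlation kernel $K^z_{\textnormal{2d}}$, I would invoke the Macchi--Soshnikov construction (cf.\ \cite{Soshnikov2000,peres2006determinantal,Borodin2009}): the family of nonnegative functions $\rho_m(A):=\det[K^z_{\textnormal{2d}}(t_i,a_i;t_j,a_j)]_{i,j=1}^m$ defines a consistent family of correlation functions, because the same consistency identities hold for the pre-limit $K_{\mathscr{AP}}$ (as they come from \emph{bona fide} probability measures) and survive pointwise limits. I expect no real obstacle here, since the work has already been done in \Cref{thm:bulk_limit} and \Cref{thm:ascending_FG_process_kernel}; the only subtlety to keep in mind is that the global parameters depend on $A$ (through $L$), but this is harmless, as the limiting minor is intrinsic to $K^z_{\textnormal{2d}}$ and does not depend on which approximating scheme was chosen.
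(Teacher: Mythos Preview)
Your proposal is correct and follows essentially the same approach as the paper's proof: truncate the parameter sequences outside a window of size $L$ chosen large enough for the test set $A$, pick compatible global parameters (including $x_*$ small enough), invoke \Cref{lemma:z_map} to locate $(\alpha,\tau)$, apply \Cref{thm:bulk_limit} to realize the truncated kernel as a bulk limit of ascending FG kernels, and then observe that the minor on $A$ is independent of the truncation and the auxiliary global parameters. The paper phrases the last step as a two-stage limit (first $N\to\infty$ for fixed $L$, then $L\to\infty$ using stabilization of matrix elements), whereas you choose $L$ depending on $A$ from the start; these are reorderings of the same argument.
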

\begin{proof}
	We show 
	using \Cref{thm:bulk_limit}
	that the determinantal point process
	defined by $K^z_{\textnormal{2d}}$
	arises as a limit of a determinantal random point process
	coming from an ascending FG process. 

	Given the data \eqref{eq:wtys_for_inhom_sine}
	satisfying
	\eqref{eq:bulk_after_limit_parameters_condition},
	pick global parameters
	$x_*\in (-\infty,\inf w_i)$, 
	$w_*\in (\inf w_i,\sup w_i)$,
	$y_*\in (\inf y_j,\sup y_j)$,
	$\theta_*\in (\inf \theta_i,\sup \theta_i)$,
	and
	$s_*\in (\inf s_j,\sup s_j)$.
	For any cutoff $L\in \mathbb{Z}_{\ge1}$,
	define truncated local parameter sequences
	\begin{equation}
		\label{eq:cutoff_param_sequences}
		w_i^{(L)}=
		\begin{cases}
			w_i,& |i|\le L;\\
			w_*,& |i| >L,
		\end{cases}
		\quad
		\theta_i^{(L)}=
		\begin{cases}
			\theta_i,& |i|\le L;\\
			\theta_*,& |i| >L,
		\end{cases}
		\quad 
		y_j^{(L)}=
		\begin{cases}
			y_j,& |j|\le L;\\
			y_*,& |j| >L,
		\end{cases}
		\quad
		s_j^{(L)}=
		\begin{cases}
			s_j,& |j|\le L;\\
			s_*,& |j| >L.
		\end{cases}
	\end{equation}
	Taking $x_*$ smaller if necessary, one can make 
	sure that \eqref{eq:params_conditions_for_asymptotics}
	holds. Thus,
	the ascending FG process \eqref{eq:ascending_process}
	with global parameters
	$x_*,w_*,\theta_*,y_*,s_*$ and
	local sequences 
	\eqref{eq:cutoff_param_sequences}
	is well-defined.
	Take the scaling location $(\alpha,\tau)$
	corresponding to $z$ as in \Cref{lemma:z_map}.
	Applying \Cref{thm:bulk_limit},
	we see that the kernel $K^{z,(L)}_{\textnormal{2d}}$
	with the $L$-truncated parameter sequences
	\eqref{eq:cutoff_param_sequences} is the bulk lattice limit of the
	kernel $K_{\mathscr{AP}}$ of the FG process.
	Therefore, in the $L$-truncated case the kernel
	$K^{z,(L)}_{\textnormal{2d}}$ indeed
	defines a stochastic process.

	Now observe that
	for any $t,a,t',a'\in \mathbb{Z}$
	and $L>\max\{|t|,|a|,|t'|,|a'|\}$,
	the matrix element 
	$K^z_{\textnormal{2d}}(t,a;t',a')$ \eqref{eq:inhom_sine_kernel}
	does not depend on $L$ or the global parameters
	involved in the truncation \eqref{eq:cutoff_param_sequences}.
	Therefore, as $L\to+\infty$,
	the matrix elements of
	$K^{z,(L)}_{\textnormal{2d}}$ 
	stabilize to those of 
	$K^z_{\textnormal{2d}}$.
	This implies that $K^z_{\textnormal{2d}}$
	also defines a stochastic process, as desired.
\end{proof}

In fact, our conditions
on the parameters $w_i,\theta_i,y_j,s_j$ 
of the inhomogeneous discrete sine kernel
are 
natural in the following sense:

\begin{lemma}
	\label{lemma:conditions_on_parameters_natural}
	Conditions 
	\eqref{eq:bulk_after_limit_parameters_condition}
	are equivalent to the fact that the 
	domino weights
	depending on $w_i,\theta_i,y_j,s_j$ 
	(given in \Cref{fig:domino_from_dimer}, \textnormal{(a)\/})
	are positive and 
	separated from zero and infinity.
\end{lemma}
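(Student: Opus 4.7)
My plan is to read off the domino weights explicitly and then compare the resulting positivity-plus-boundedness conditions with the chain of inequalities \eqref{eq:bulk_after_limit_parameters_condition}. By \Cref{prop:five_vertex_to_dimers} together with the $45^\circ$ rotation sending dimers to dominoes (\Cref{fig:domino_from_dimer}), the weight of every domino type appearing in the bulk lower $T$ layers equals one of the six-vertex weights $W(i_1,j_1;i_2,j_2\mid w_i;y_j;\theta_i;s_j)$ from \eqref{eq:weights_W}. The constant weight $W(0,0;0,0)=1$ is trivial, and the remaining five weights share the common denominator $D_{i,j}:=s_j^{-2}y_j-w_i$; their five numerators are precisely the pairwise differences from the quadruple
\[
\mathcal{Q}_{i,j}:=\bigl\{w_i,\,y_j,\,\theta_i^{-2}w_i,\,s_j^{-2}y_j\bigr\},
\]
namely $y_j-w_i$, $\theta_i^{-2}w_i-y_j$, $s_j^{-2}y_j-\theta_i^{-2}w_i$, $\theta_i^{-2}w_i-w_i$, and $s_j^{-2}y_j-y_j$. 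Thus the claim reduces to showing that all six pairwise differences from $\mathcal{Q}_{i,j}$ have a common sign, uniformly in $i,j$ and with uniform gaps, iff \eqref{eq:bulk_after_limit_parameters_condition} holds.

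For the direction $\eqref{eq:bulk_after_limit_parameters_condition}\Rightarrow$ positive-and-bounded, the sup/inf chain in \eqref{eq:bulk_after_limit_parameters_condition} yields the pointwise strict ordering $w_i<y_j<\theta_i^{-2}w_i<s_j^{-2}y_j$ for every $i,j$, with strict positive gaps bounded below by a single constant $\eta>0$ independent of $i,j$. Every numerator and the denominator above is then a positive difference bounded below by $\eta$ and above by a finite constant (as each of the four quantities in $\mathcal{Q}_{i,j}$ is itself bounded, cf.\ the \emph{a priori} range of the $w_i,\theta_i,y_j,s_j$). Each quotient is therefore confined to a fixed compact subset of $(0,\infty)$.

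For the converse, positivity of the five quotients forces each of the five numerators to share the sign of the denominator $D_{i,j}$. Among the $4!=24$ possible orderings of $\mathcal{Q}_{i,j}$, only the two monotone chains $w_i<y_j<\theta_i^{-2}w_i<s_j^{-2}y_j$ or its reverse make every pairwise difference sign-consistent (any non-monotone ordering produces a pair with opposite sign, contradicting positivity of the corresponding quotient). The mirror regime is excluded by the standing sign convention of the paper: \eqref{eq:ys_FG_parameters} enforces $y_j>0$ and $s_j^{-2}>1$, hence $s_j^{-2}y_j>y_j$, which fixes $D_{i,j}>0$ and selects the forward chain. Uniform separation of each of the five quotients from $0$ and $\infty$ then translates verbatim into uniform positive lower bounds on the four consecutive gaps in $w_i<y_j<\theta_i^{-2}w_i<s_j^{-2}y_j$, which is exactly \eqref{eq:bulk_after_limit_parameters_condition}.

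The main (and only) non-routine point will be the sign-selection step in the converse: ruling out the ``mirror'' chain $s_j^{-2}y_j<\theta_i^{-2}w_i<y_j<w_i$, which is algebraically consistent with all five positivity constraints but incompatible with the conventions inherited through the bulk limit from \eqref{eq:ys_FG_parameters} and \eqref{eq:params_conditions_for_asymptotics}. Everything else is a direct inspection of the five explicit fractions and a quantitative bookkeeping of the sup/inf bounds.
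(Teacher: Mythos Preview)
Your argument is close to the paper's in structure, but there is a gap in how you dispose of the mirror chain. You invoke \eqref{eq:ys_FG_parameters} to force $s_j^{-2}y_j>y_j$, but the positivity assumption $y_j>0$ from \eqref{eq:ys_FG_parameters} is explicitly dropped in Part~\ref{partIII} (see \Cref{rmk:nonnegative_condition_on_x_for_convenience} and the sentence following \eqref{eq:params_conditions_for_asymptotics}), so it is not available in the context of this lemma. The paper handles the mirror case differently: rather than excluding the reversed ordering $w_i>y_j>\theta_i^{-2}w_i>s_j^{-2}y_j$, it observes that the domino weights are invariant under the simultaneous sign flip $w_i\mapsto-w_i$, $y_j\mapsto-y_j$, which sends the mirror chain to the forward one. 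Thus the two chains parametrize the same ensemble of domino weights, and \eqref{eq:bulk_after_limit_parameters_condition} is a normalization rather than a consequence of an external sign convention.

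The paper also makes explicit, via a mixed-index argument looking at $(i_1,j_2)$, that the chain direction must be uniform across all pairs $(i,j)$; in your outline this uniformity is automatic only because you import \eqref{eq:ys_FG_parameters} (which pins the sign of $s_j^{-2}y_j-y_j$ at every $j$), so without that assumption you would need to supply this step as well.
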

\begin{proof}
	For fixed $i_1,j_1$ 
	the positivity of the domino weights
	depending only on $w_{i_1},\theta_{i_1},y_{j_1},s_{j_1}$
	is equivalent to either
	$w_{i_1}<y_{j_1}<\theta_{i_1}^{-2}w_{i_1}<s_{j_1}^{-2}y_{j_1}$
	or 
	$w_{i_1}>y_{j_1}>\theta_{i_1}^{-2}w_{i_1}>s_{j_1}^{-2}y_{j_1}$. 
	Let us pick $i_2\ne i_1,j_2\ne j_1$, and 
	for 
	$w_{i_2},\theta_{i_2},y_{j_2},s_{j_2}$ we have similarly
	one of the two strings of inequalities.
	If, say,
	$w_{i_1}<y_{j_1}<\theta_{i_1}^{-2}w_{i_1}<s_{j_1}^{-2}y_{j_1}$
	but
	$w_{i_2}>y_{j_2}>\theta_{i_2}^{-2}w_{i_2}>s_{j_2}^{-2}y_{j_2}$,
	then at $i=i_1$, $j=j_2$ one readily sees that both
	possibilities
	\begin{equation*}
		w_{i_1}<y_{j_2}<\theta_{i_1}^{-2}w_{i_1}<s_{j_2}^{-2}y_{j_2}
		\quad\textnormal{or}
		\quad
		w_{i_1}>y_{j_2}>\theta_{i_1}^{-2}w_{i_1}>s_{j_2}^{-2}y_{j_2}
	\end{equation*}
	lead to a contradiction. 

	Therefore, 
	it must be either
	$w_{i}<y_{j}<\theta_{i}^{-2}w_{i}<s_{j}^{-2}y_{j}$
	or 	
	$w_{i}>y_{j}>\theta_{i}^{-2}w_{i}>s_{j}^{-2}y_{j}$
	simultaneously for all $i,j$.
	If it's the latter, observe that the domino
	weights are invariant under the simultaneous
	sign flips $w_i\mapsto-w_i$,
	$y_j\mapsto-y_j$ for all $i,j$,
	which turns the conditions with
	``$>$'' into those with ``$<$''.
	Thus, we see that picking
	the ``$<$'' sign in all conditions does not restrict the generality.
\end{proof}

Thus, by \Cref{thm:K_z_stochastic} and \Cref{lemma:conditions_on_parameters_natural},
the kernel $K^z_{\textnormal{2d}}$
defines a \emph{bona fide} stochastic determinantal point process on $\mathbb{Z}^{2}$
for a maximally generic open family of parameters.
Moreover, setting some of the domino
weights 
in \Cref{fig:domino_from_dimer}, (a)
to zero also leads to a stochastic process via a straightforward limit transition.
In the rest of this section we compare the kernel
$K_{\textnormal{2d}}^{z}$ to similar known kernels, in one and then in two dimensions.

\subsection{Discrete sine kernel in one dimension}
\label{sub:inhom_sine_1d_to_usual_sine}

In one-dimensional slices (corresponding to fixing $t=t'\in \mathbb{Z}$),
the process is independent of $t$, and the kernel 
\eqref{eq:inhom_sine_kernel}
becomes an inhomogeneous analogue of the discrete sine kernel:
\begin{equation}
	\label{eq:inhom_sine_kernel_1d}
	K_{\textnormal{1d}}^{z}(a,a')=-
	\frac{1}{2\pi\mathbf{i}}\int_{\bar z}^{z}
	\frac{y_a(1-s_a^{-2})}{(u-y_a)(u-s_{a'}^{-2}y_{a'})}\,
	\mathcal{P}_{a,a'}(u\mid \mathbf{s}^{-2}\mathbf{y};\mathbf{y})
	\,
	du.
\end{equation}
The integration contour passes to the left of all $y_j$.

The kernel $K^{z}_{1d}$ is clearly not translation 
invariant, and the density 
function is given by
\begin{equation*}
	\rho_a=
	K_{\textnormal{1d}}^{z}(a,a)=
	-
	\frac{1}{2\pi\mathbf{i}}\int_{\bar z}^{z}
	\frac{y_a(1-s_a^{-2})\,du}{(u-y_a)(u-s_{a}^{-2}y_{a})}
	=
	\frac{1}{2\pi\mathbf{i}}
	\int_{V_a(\bar z)}^{V_a(z)} \frac{dv}{v}
	=
	\frac{\arg V_a(z)}{\pi}
	,
\end{equation*}
where we used the change of variables
\begin{equation}
	\label{eq:notation_change_of_variables}
	v=V_a(u):=
	\frac{u-y_a}{u-s_a^{-2}y_a}.
\end{equation}
This change of variables swaps the lower and upper half planes, 
hence the minus sign in front of the integral disappears.
The integration in $v$ is over an
arc crossing the real line to
the right of the origin.

\medskip

In the homogeneous case $y_a=y$, $s_a=s$ for all $a\in \mathbb{Z}$,
the change of variables $V_a=V$ does not depend on $a$, and
the density
$\rho_a\equiv \rho
=\frac{1}{\pi}\arg V(z)$
is constant.
We see that \eqref{eq:inhom_sine_kernel_1d} essentially becomes the
usual \emph{discrete sine kernel}:
\begin{equation}
	\label{eq:discrete_sine_usual}
	K_{\textnormal{1d,\,hom}}^{z}(a,a')=
	\frac{1}{2\pi\mathbf{i}}\int_{V(\bar z)}^{V(z)}\frac{dv}{v^{a'-a+1}}
	=|V(z)|^{a-a'}\,\frac{\sin\left( \pi\rho(a'-a) \right)}{\pi(a'-a)},\qquad
	a,a'\in \mathbb{Z}.
\end{equation}
The factor $|V(z)|^{a-a'}$ is a so-called ``gauge transformation'',
and can be removed from the kernel without changing the determinantal process.
The discrete sine kernel in one dimension
and the corresponding determinantal point process
were obtained in \cite{Borodin2000b} as a bulk limit 
of Plancherel random partitions.
This point process
arises from many other discrete determinantal point processes
as a lattice (bulk) scaling limit.

\subsection{Periodic discrete sine kernel in one dimension}
\label{sub:inhom_sine_1d_periodic}

The fully inhomogeneous kernel
$K_{\textnormal{1d}}^{z}(a,a')$
\eqref{eq:inhom_sine_kernel_1d}
on $\mathbb{Z}$
can be specialized to a $k$-periodic kernel on $\mathbb{Z}$, for any $k\ge 2$
(the case $k=1$ leads to the discrete sine kernel \eqref{eq:discrete_sine_usual}).
Here let us consider the case with $k=2$, 
and take a further degeneration. Namely, set
\begin{equation*}
	y_i=s_i^2 c_i,\qquad c_i
	=
	c_0\mathbf{1}_{i\equiv 0 \, \mathrm{mod}\,2}
	+
	c_1\mathbf{1}_{i\equiv 1 \, \mathrm{mod}\,2},
\end{equation*}
and after that send $s_i\to0$. This leads to the following
kernel:
\begin{equation}
	\label{eq:even_odd_kernel}
	\begin{split}
		&\begin{bmatrix}
			K_{\textnormal{1d}}(2n,2n')&
			K_{\textnormal{1d}}(2n,2n'+1)\\
			K_{\textnormal{1d}}(2n+1,2n')&
			K_{\textnormal{1d}}(2n+1,2n'+1)
		\end{bmatrix}
		\\&\hspace{70pt}=
		\frac{1}{2\pi\mathbf{i}}\int_{\bar z}^{z}
		\begin{bmatrix}
			\frac{c_0}{1-c_0/u}
			&
			c_0
			\\
			\frac{c_1}{ \left( 1-c_0/u \right)\left( 1-c_1/u \right)}
			&
			\frac{c_1}{ 1-c_1/u }
		\end{bmatrix}
		\left( \bigl(1-\frac{c_0}{u}\bigr)\bigl( 1-\frac{c_1}{u} \bigr) \right)^{\Delta n}
		\frac{du}{u^2},
	\end{split}
\end{equation}
where the integration contour crosses the real line 
to the left of $0$,
and $\Delta n=n'-n$. Here the matrix form is just a shorthand for four
different integral expressions for the matrix elements of the kernel,
depending on the parity.
The kernel \eqref{eq:even_odd_kernel} is invariant under translations by $2\mathbb{Z}$,
but not by $\mathbb{Z}$ if $c_0\ne c_1$.

This correlation kernel \eqref{eq:even_odd_kernel}
in the $2\mathbb{Z}$ periodic case has appeared (together with
its $\mathbb{Z}$-translation invariant extension into the second dimension) in
\cite[Theorem 3.1]{Mkrtchyan2014Periodic}
in the study of
plane partitions with weights periodic in one direction.
See also an extension to the arbitrary $k\mathbb{Z}\times \mathbb{Z}$ periodic case in
\cite[Theorem 4.2]{Mkrtchyan2019}.

Note also that $k\mathbb{Z}\times \mathbb{Z}$
periodic kernels from \cite{Mkrtchyan2019}
also arise as particular cases of the
determinantal kernels for
Gibbs ensembles of nonintersecting paths
from \cite{borodin2010gibbs}
or as a bulk limit from the periodic Schur process
\cite{borodin2007periodic}.
Indeed, the setup of 
\cite{borodin2010gibbs} allows for fully inhomogeneous parameters
in one direction (but requires full $\mathbb{Z}$
translation invariance in the second direction).
Therefore, 
in one-dimensional slices the point processes
from \cite{borodin2010gibbs}
produce all
our one-dimensional kernels $K_{\textnormal{1d}}^{z}$ \eqref{eq:inhom_sine_kernel_1d}
with the most general parameters.

\subsection{Two-dimensional homogeneous kernel}
\label{sub:sine_2d_discussion}

Let us proceed to discussing two-dimensional kernels.
First, 
let us identify the fully homogeneous case 
$K_{\textnormal{2d}}^{z}$ \eqref{eq:inhom_sine_kernel}.
That is, let $w_i=w$ for all $i\in \mathbb{Z}$,
and similarly for $\theta,y,s$.
Recall the change of variables $V_a(u)$ \eqref{eq:notation_change_of_variables}
which is independent of $a$ in the homogeneous case, and we denote it simply by $V(u)$.
We have
\begin{equation}
	\label{eq:2d_hom_kernel}
	K_{\textnormal{2d,\,hom}}^{z}(t,a;t',a')=
	\frac{(\mathrm{const})^{\Delta t}}{2\pi\mathbf{i}}\int_{V(\bar z)}^{V(z)}
	\left(
		\frac{v-V(w)}
		{v-V( \theta^{-2}w)}
	\right)^{\Delta t}
	\frac{dv}{v^{\Delta a+1}},
\end{equation}
where
$\Delta t=t'-t$,
$\Delta a=a'-a$, and
the integration contour
crosses the real line between
$V(w)$ and $1$ for $\Delta t\ge0$, and 
to the left of $V(\theta^{-2}w)$ for $\Delta t<0$.

The correlation kernel $K_{\textnormal{2d,\,hom}}^{z}$ can be identified with
the bulk limiting kernel in the liquid phase of the model of
random
domino
tilings of the Aztec diamond, when the dominoes are mapped to a determinantal process
on $\mathbb{Z}^2$. The one-dimensional sine kernel for the Aztec diamond model was
obtained in
\cite[Theorem 2.10]{johansson2002non}, and the
two-dimensional bulk kernel may be read off from the more general theory of
\cite{KOS2006}, or deduced as a bulk limit from \cite[(2.21)]{Johansson2005arctic}.

Moreover,
$K_{\textnormal{2d,\,hom}}^{z}$ also arises as a particular member of
the family of extensions of the one-dimensional discrete sine kernel
constructed in
\cite{borodin2010gibbs}. Namely, to get \eqref{eq:2d_hom_kernel}, 
one should alternate the
``alpha'' and the ``beta'' factors,
$(1-\alpha^+ v)^{-1}$ and $(1+\beta^+ v)$,
in \cite[(2)]{borodin2010gibbs}.

\medskip

Setting $w=y$, that is, $V(w)=0$,
turns the correlation kernel
$K_{\textnormal{2d,\,hom}}^{z}$ \eqref{eq:2d_hom_kernel}
(up to a gauge factor which does not change the determinantal process)
into the \emph{incomplete beta kernel} introduced in 
\cite{okounkov2003correlation}.
The incomplete beta kernel is
the determinantal kernel of the
ergodic translation invariant Gibbs measure on lozenge tilings of the plane (viewed as
a determinantal process on $\mathbb{Z}^2$, cf. \Cref{rmk:noncolliding_lattice_paths}),
which is unique up to specifying the \emph{slope}. The slope
is a two-dimensional real parameter which can be mapped (in our notation)
to the point $z$ in the upper half plane.
We refer to
\cite{Sheffield2008},
\cite{KOS2006}
for further details.
Universality of the 
incomplete beta kernel in 
the model of uniformly random lozenge tilings
in general domains
was established recently in 
\cite{aggarwal2019universality}.

\subsection{Other dimer models with periodic weights}
\label{sub:other_dimer_models}

We see that our two-dimensional 
inhomogeneous sine kernel
$K_{\textnormal{2d}}^{z}$ (\Cref{def:inhom_sine})
generalizes the bulk lattice distributions
arising in domino and lozenge tilings.
Our generalization allows for 
inhomogeneous parameters in both lattice directions.

By taking the parameters to be periodic (as in \Cref{sub:inhom_sine_1d_periodic}
but in both directions), one gets doubly periodic determinantal kernels with
periods $k\mathbb{Z}\times m\mathbb{Z}$ for arbitrary $k,m\ge1$.
When both $k,m>1$, the explicit form of the doubly periodic kernels is new.
For $m=1$, the $k\mathbb{Z}\times \mathbb{Z}$ periodic kernels appeared
in
\cite{borodin2007periodic},
\cite{borodin2010gibbs}, 
\cite{Mkrtchyan2014Periodic}, 
\cite{Mkrtchyan2019}.

While in principle our doubly periodic kernels fall into the general framework of \cite{KOS2006},
rewriting the general double integral formula for the kernel from
\cite[Theorem 4.3]{KOS2006} in an arc integral form as in 
$K_{\textnormal{2d}}^{z}$ \eqref{eq:inhom_sine_kernel}
is a nontrivial transformation.
Moreover, the fully inhomogeneous (non-periodic) kernels in both directions
do not immediately follow from the general theory of \cite{KOS2006}.
It might be possible to obtain non-periodic fully inhomogeneous
kernels as limits of the periodic
ones from \cite{KOS2006}, but the 
double integral
form of the latter kernels
does not
seem well-suited for such a limit transition.

\medskip

Observe that our two-dimensional inhomogeneous discrete sine kernel
$K_{\textnormal{2d}}^{z}$
corresponds only
to the liquid (also called rough) phase of our path ensembles / domino tilings
coming from the ascending FG processes.
In the rough phase one expects 
the variance of the height difference to grow logarithmically with the distance \cite{KOS2006}.
This behavior is proven in many cases, and the fluctuations are
identified with the Gaussian Free Field, 
cf. \cite{Kenyon2001GFF}, \cite{Petrov2012GFF}, \cite{gorin2021lectures}.

The liquid phase local behavior described by $K^z_{\textnormal{2d}}$
should be contrasted with that in the gaseous (also called smooth) phase
in which the
height differences have bounded variance \cite{KOS2006}.
The gaseous phase is present in doubly periodic 
(in particular, $2\mathbb{Z}\times 2\mathbb{Z}$ periodic)
domino tilings
\cite{chhita2016domino},
\cite{duits2017two},
\cite{berggren2021domino}, see also, e.g., \cite{charlier2021doubly} for a 
discussion of the case of lozenge tilings.
We see that
our dimer edge weights (see \Cref{fig:domino_from_dimer})
do not produce
gaseous phases.
This is because our weights
are
\emph{not} fully generic like in, e.g., 
\cite{chhita2016domino},
and instead depend on the parameters
in quite a special way.
In particular,
in the $2\mathbb{Z}\times 2\mathbb{Z}$ periodic case
we have verified that
the domino weights are gauge equivalent
(in the sense of \cite[Section 3.10]{Kenyon2007Lecture}),
in a nontrivial way, to 
weights periodic in only one direction.
This seems to be the reason for not seeing gaseous phases in 
the bulk of ascending FG processes.


\appendix

\newpage

\part{Appendix}
\label{part:appendix}

\section{Formulas for $F_\lambda$ and $G_\lambda$}
\label{appA:F_G_formula_proofs}

Here we employ the row operators (defined in \Cref{sub:row_operators})
to get explicit formulas for the partition functions $F_\lambda$ and $G_\lambda$
of the free fermion six vertex model, and thus prove 
\Cref{thm:F_formula,thm:G_formula}.
This Appendix accompanies \Cref{sec:F_G_symm_funct} and 
employs algebraic Bethe Ansatz type computations. They follow
\cite[Section 4.5]{BorodinPetrov2016inhom} (but are more involved in the case of $G_\lambda$), see also
Part~VII and in particular Appendix VII.2 of 
\cite{QISM_book}.

\subsection{Proof of \texorpdfstring{\Cref{thm:F_formula}}{formula for F}}
\label{appA:F}

\subsubsection{Recalling the notation}
\label{appA:F_recall}

Throughout this subsection we fix a
signature
$\lambda=(\lambda_1,\ldots,\lambda_N \ge 0)$ with $N$ parts, 
and sequences 
\begin{equation*}
	\mathbf{x}=(x_1,\ldots,x_N )
	,\qquad 
	\mathbf{y}=(y_1,y_2,\ldots )
	,\qquad 
	\mathbf{r}=(r_1,\ldots,r_N )
	,\qquad  
	\mathbf{s}=(s_1,s_2,\ldots )
	.
\end{equation*}
Recall (\Cref{def:F_function})
that the function $F_\lambda(\mathbf{x};\mathbf{y};\mathbf{r};\mathbf{s})$
is the partition function of the free fermion six vertex model with 
weights $\widehat{W}$ \eqref{eq:weights_W_hat}
and with boundary conditions determined by $\lambda$.

In this subsection we prove \Cref{thm:F_formula} stating that $F_\lambda$
is given by the determinantal expression \eqref{eq:F_det_formula_in_theorem}
involving the functions $\varphi_k(x)$ \eqref{eq:phi_def}.
For convenience, let us explicitly reproduce the desired formula here:
\begin{equation}
	\label{eq:F_det_formula_appendix}
	\begin{split}
		&
		F_\lambda(\mathbf{x};\mathbf{y};\mathbf{r};\mathbf{s})
		=
		\Biggl(
			\prod_{i=1}^{N}x_i(r^{-2}_i-1)
			\prod_{1\le i<j\le N}\frac{r_i^{-2}x_i-x_j}{x_i-x_j}
		\Biggr)
		\\&\hspace{140pt}
		\times\det
		\biggl[ 
				\frac{1}{y_{\lambda_j+N-j+1}-x_i}
				\prod_{m=1}^{\lambda_j+N-j}
				\frac{y_m-s_m^2x_i}{s_m^2(y_m-x_i)}
		\biggr]_{i,j=1}^{N}.
	\end{split}
\end{equation}

For the proof we will need the row operators $\widehat{A},\widehat{B},\widehat{C},\widehat{D}$
defined by \eqref{eq:abcdv_hat}--\eqref{eq:abcdv_n2_hat}. These operators 
are built from the weights $\widehat{W}$,
depend on two numbers
$x,r$ and the sequences $\mathbf{y}, \mathbf{s}$, and 
act (from the right) on tensor products of two-dimensional spaces
$V^{(k)}=\mathop{\mathrm{span}}\{ e_0^{(k)},e_1^{(k)} \} \simeq \mathbb{C}^2$.
To the signature $\lambda$ we associate the element 
$e_{\mathcal{S}(\lambda)}$ in the (formal) infinite tensor product
$V^{(1)}\otimes V^{(2)}\otimes\ldots$,
where we take $e^{(k)}_1$ in 
the $k$-th place if and only if $k\in \mathcal{S}(\lambda)$ and $e_0^{(k)}$ otherwise,
see \Cref{sub:signature_states}.
For example, the empty signature~$\varnothing$ (which has $0$ parts)
corresponds to $e_{\varnothing}=e_0^{(1)}\otimes e_0^{(2)}\otimes \ldots $.

By \Cref{prop:F_G_row_op}, $F_\lambda(\mathbf{x};\mathbf{y};\mathbf{r};\mathbf{s})$ 
is the coefficient of 
$e_{\mathcal{S}(\lambda)}$ in 
$e_{\varnothing}\widehat{B}(x_N,r_N)\ldots \widehat{B}(x_1,r_1) $,
and for the proof of \Cref{thm:F_formula} we proceed to evaluate this coefficient.
One of our main tools is the Yang--Baxter equation stated as 
a family of commutation relations between the operators 
(see \Cref{prop:ABCD_YBE_Hat}).

\subsubsection{Action on a tensor product of two spaces}
\label{appA:F_two_spaces}

The crucial part of the argument is to consider the action of 
$\widehat{B}(x_N,r_N)\ldots \widehat{B}(x_1,r_1)$
on a tensor product of two spaces, $V_1\otimes V_2$.
Using the second identity from
\eqref{eq:abcdv_n2_hat}, namely, 
$(v_1 \otimes v_2)  \widehat{B} 
=  
v_1 \widehat{D} \otimes v_2 \widehat{B} +  v_1 \widehat{B} \otimes v_2 \widehat{A}$,
we see that 
\begin{equation}
	\label{eq:F_proof_B_product_1}
	\widehat{B}(x_N,r_N)\ldots \widehat{B}(x_1,r_1)
	=
	\sum_{\mathcal{I}\subseteq \left\{ 1,\ldots,N  \right\}}
	X_{\mathcal{I}}(\mathbf{x};\mathbf{r})\otimes Y_{\mathcal{I}}(\mathbf{x};\mathbf{r}),
\end{equation}
where
\begin{equation*}
	\begin{split}
		X_{\mathcal{I}}(\mathbf{x};\mathbf{r})&=X_N(\mathcal{I};x_N,r_N)X_{N-1}(\mathcal{I};x_{N-1},r_{N-1})\ldots X_1(\mathcal{I};x_1,r_1), 
		\\
		Y_{\mathcal{I}}(\mathbf{x};\mathbf{r})&=Y_N(\mathcal{I};x_N,r_N)Y_{N-1}(\mathcal{I};x_{N-1},r_{N-1})\ldots Y_1(\mathcal{I};x_1,r_1),
		\\
		X_i(\mathcal{I};x_i,r_i)&=\begin{cases}
			\widehat{D}(x_i,r_i),&i \in \mathcal{I};\\
			\widehat{B}(x_i,r_i),&i \notin \mathcal{I},
		\end{cases}
		\qquad \qquad 
		Y_i(\mathcal{I};x_i,r_i)=\begin{cases}
			\widehat{B}(x_i,r_i),&i \in \mathcal{I};\\
			\widehat{A}(x_i,r_i),&i \notin \mathcal{I}.
		\end{cases}
		\qquad 
	\end{split}
\end{equation*}
Now, using the commutation relations 
\eqref{eq:BD_hat_relation}--\eqref{eq:BA_hat_relation}
from \Cref{prop:ABCD_YBE_Hat}, 
we move all the operators 
$\widehat B$ to the right in both $X_\mathcal{I}$ and $Y_\mathcal{I}$,
which allows to rewrite \eqref{eq:F_proof_B_product_1}
as 
\begin{equation}
	\label{eq:F_proof_B_product_2}
	\begin{split}
		&
		\sum_{\substack{I \cup J = \{1,\ldots,  N \} \\ I' \cup J' = \{1,\ldots,  N\} }}
		c_{I; I'} (\mathbf{x}; \mathbf{r})  \,
		\widehat{D} (x_{j_{N - k}}, r_{j_{N - k}})  \ldots \widehat{D} (x_{j_1}, r_{j_1})  \widehat{B} (x_{i_k}, r_{i_k}) \ldots \widehat{B} (x_{i_1}, r_{i_1}) 
		\\
		&\hspace{120pt} \otimes \widehat{A} (x_{j_{N - m}'}, r_{j_{N - m}'}) \ldots \widehat{A} (x_{j_1'}, r_{j_1'}) \widehat{B} (x_{i_m'}, r_{i_m'}) \ldots \widehat{B} (x_{i_1'}, r_{i_1'}),
	\end{split}
\end{equation}
for some rational functions $c_{I; I'} (\mathbf{x}; \mathbf{r})$,
where we have denoted $|I| = k$ and $|I'| = m$,
defined $J = \left\{ 1,\ldots,N  \right\} \setminus I$ and
$J' = \left\{ 1,\ldots,N  \right\} \setminus I'$, and ordered the
indices such that $i_\alpha<i_\beta,i_\alpha'<i_\beta',j_\alpha<j_\beta$, and 
$j_\alpha'<j_\beta'$ for all $\alpha<\beta$.
Here we also employed the commutativity of 
$\widehat{A}$ \eqref{eq:A2hatA1hat} and $\widehat{D}$ \eqref{eq:D2hatD1hat}.
In fact, here one can already see from
\eqref{eq:BD_hat_relation}--\eqref{eq:BA_hat_relation}
that $m=N-k$, but we will get this relation (and a stronger relation between 
the sets $I,I',J,J'$) in the next \Cref{lemma:F_proof_lemma1}.

\begin{remark}
	\label{rmk:uniqueness_of_coeffs}
	Let us make an important observation about the coefficients 
	$c_{I;I'}(\mathbf{x};\mathbf{r})$. 
	Namely, these coefficients
	are computed using only the commutation relations 
	for the operators $\widehat{A},\widehat{B},\widehat{C},\widehat{D}$, and we 
	argue that the $c_{I;I'}(\mathbf{x};\mathbf{r})$'s do not depend on the order of 
	applying the commutation relations.
	This property is based on the fact that for generic parameters $(x,r)$, 
	there exists a representation
	of 
	$\begin{bmatrix}
		\widehat{A}(x,r)&\widehat{B}(x,r)\\\widehat{C}(x,r)&\widehat{D}(x,r)
	\end{bmatrix}$
	subject to the same commutation relations, and a highest weight vector 
	(annihilated by $\widehat{C}$ and an eigenfunctions of $\widehat{A},\widehat{D}$)
	$\mathsf{v}_0$
	in that representation, such that 
	vectors $\Big(\prod_{k\in \mathcal{K}}\widehat{B}(x_k,r_k)\Big)\mathsf{v}_0$, 
	with $\mathcal{K}$ ranging over all subsets of $\{1,2,\ldots,N\}$,
	are linearly independent.
	This fact is a corollary of \cite[Lemma\;14]{FelderVarchenko1996}:
	our operators are based on the free fermion 
	six vertex weights, and the cited paper deals with 
	more general eight vertex case.

	Therefore, if we apply 
	the commutation relations in two ways
	and get different coefficients 
	$c_{I;I'}(\mathbf{x};\mathbf{r})$ 
	in \eqref{eq:F_proof_B_product_2},
	then
	we can apply these commutation relations in the above highest weight representation,
	which contradicts the linear independence.
\end{remark}

\begin{lemma}
 	\label{lemma:F_proof_lemma1}
	We have $c_{I;I'}(\mathbf{x};\mathbf{r})=0$
	if $I\cap I'\ne \varnothing$ or $J\cap J'\ne \varnothing$.
\end{lemma}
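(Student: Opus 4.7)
The plan is to prove the lemma by induction on $N$, exploiting a sign cancellation between the swap terms in \eqref{eq:BD_hat_relation} and \eqref{eq:BA_hat_relation}.

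For the base case $N = 1$, identity \eqref{eq:abcdv_n2_hat} gives directly $c_{\varnothing; \{1\}} = 1 = c_{\{1\}; \varnothing}$ and all other coefficients vanish, so the claim holds trivially.

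For the inductive step I would split
\[
\prod_{k=N}^{1}\widehat{B}(x_k, r_k) = \widehat{B}(x_N, r_N) \cdot \prod_{k=N-1}^{1}\widehat{B}(x_k, r_k)
\]
and apply the inductive hypothesis to the second factor, obtaining an expansion supported only on partitions $(I_0, I_0')$ of $\{1, \ldots, N-1\}$. Applying \eqref{eq:abcdv_n2_hat} to $\widehat{B}(x_N, r_N)$ produces two contributions: one with $\widehat{D}(x_N) \otimes \widehat{B}(x_N)$ and one with $\widehat{B}(x_N) \otimes \widehat{A}(x_N)$. In the first case, $\widehat{D}(x_N)$ joins the $\widehat{D}$'s on the left (commuting trivially via \eqref{eq:D2hatD1hat}) while $\widehat{B}(x_N)$ must be moved through the right-hand $\widehat{A}_{J_0'}$'s using \eqref{eq:BA_hat_relation}. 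In the second case, symmetrically, $\widehat{B}(x_N)$ must be moved through the left-hand $\widehat{D}_{J_0}$'s using \eqref{eq:BD_hat_relation}.

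The key observation is that the two swap coefficients differ only in the sign of the denominator:
\[
\widehat{B}(x_b)\widehat{D}(x_a) = (\text{non-swap}) + \frac{(1 - r_b^{-2})x_b}{x_a - x_b}\,\widehat{D}(x_b)\widehat{B}(x_a),
\]
\[
\widehat{B}(x_b)\widehat{A}(x_a) = (\text{non-swap}) + \frac{(1 - r_b^{-2})x_b}{x_b - x_a}\,\widehat{A}(x_b)\widehat{B}(x_a).
\]
For a target $(I, I')$ with overlap $\{k\} \subseteq I \cap I'$, the contributions arise from two pairings of starting partition to swap sequence: one from the first summand with $(I_0, I_0') = (I, I' \setminus \{k\})$ and a single swap of $\widehat{B}(x_N)$ with $\widehat{A}(x_k)$ on the right, and one from the second summand with $(I_0, I_0') = (I \setminus \{k\}, I')$ and a single swap with $\widehat{D}(x_k)$ on the left. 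These carry the two coefficients above with opposite signs and otherwise match, so they cancel; this cancellation was already visible in the $N = 2$ computation that I walked through. More complicated overlaps $|I \cap I'| > 1$ would be handled by iterating this argument or by a sign-reversing involution on the set of contributing swap sequences.

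The main obstacle is making this pairing precise for general $N$: contributions to a given overlap $(I, I')$ can come from many starting partitions and from interleaved sequences of swaps with several different $\widehat{D}$'s and $\widehat{A}$'s, and the bookkeeping of matching up pass-through coefficients on the two sides (not just the swap coefficients) is intricate. A cleaner route that I would pursue in parallel is to invoke \Cref{rmk:uniqueness_of_coeffs}: the coefficients $c_{I; I'}$ are determined uniquely by the commutation relations among $\widehat{A}, \widehat{B}, \widehat{C}, \widehat{D}$, so they may be computed in any representation. Working in a free-fermion representation in which the $\widehat{B}(x_k)$ act as fermionic creation operators on a Fock space (cf.\ \Cref{sec:fermionic_operators}), the event ``$\widehat{B}$ appears at the same position on both sides of the tensor product'' corresponds to creating the same fermion twice and is manifestly zero, giving the vanishing of $c_{I; I'}$ for $I \cap I' \neq \varnothing$ without combinatorial accounting. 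The condition $J \cap J' \neq \varnothing$ then follows automatically from the preserved total counts $|I| + |J| = N = |I'| + |J'|$ combined with $|I| + |I'| = N$ (both of which are manifest from the structure of \eqref{eq:F_proof_B_product_1}).
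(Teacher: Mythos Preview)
Your proposal has a genuine gap: neither route you sketch is carried to completion, and both miss the simple idea the paper uses.

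Your inductive cancellation scheme is incomplete, as you yourself acknowledge. The specific pairing you propose is also problematic: the two starting partitions $(I, I' \setminus \{k\})$ and $(I \setminus \{k\}, I')$ lie inside $\{1, \ldots, N-1\}$ only when $N \notin I \cup I'$, i.e., when $N \in J \cap J'$, so the case you treat is already special. And the claim that these are the \emph{only} two contributions ignores that pushing $\widehat{B}(x_N)$ through a string of several $\widehat{A}$'s (or $\widehat{D}$'s) can produce many swap sequences landing on the same target. Your fermionic alternative is too vague to be a proof: the operators $\widehat{B}(x_k, r_k)$ are not fermionic creation operators at fixed sites, so ``creating the same fermion twice is zero'' has no direct meaning here.

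The idea you are missing is that by \eqref{eq:B2hat_B1hat_commute} the operators $\widehat{B}(x_k, r_k)$ commute up to a nonzero scalar, so one may permute the product on the left of \eqref{eq:F_proof_B_product_1} to bring any chosen index into position $N$; this only rescales all the $c_{I;I'}$ by a common nonzero factor, so it suffices to treat the case $N \in I \cap I'$. But then no cancellation is needed at all, and in fact no induction either: in any summand $X_{\mathcal{I}} \otimes Y_{\mathcal{I}}$ of \eqref{eq:F_proof_B_product_1}, the leftmost operator in the first factor is $X_N(\mathcal{I})$ and in the second is $Y_N(\mathcal{I})$, and exactly one of these is $\widehat{B}(x_N,r_N)$ while the other is $\widehat{D}(x_N,r_N)$ or $\widehat{A}(x_N,r_N)$. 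The non-$\widehat{B}$ one is never commuted past anything in the reordering to the form \eqref{eq:F_proof_B_product_2}, hence stays as is. So $\widehat{B}(x_N, r_N)$ cannot appear in both tensor factors, contradicting $N \in I \cap I'$. (Your final observation that $|I| + |I'| = N$, so $I \cap I' = \varnothing$ forces $J \cap J' = \varnothing$, is correct and is a clean way to handle the second claim.)
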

\begin{proof}
	The two claims with 
	$I\cap I'\ne \varnothing$ 
	and 
	$J\cap J'\ne \varnothing$
	are analogous,
	so we only prove the first one.

	Suppose $I\cap I'\ne \varnothing$. Since
	the operators $\widehat B(x,r)$ commute up to a 
	scalar factor (see
	\eqref{eq:B2hat_B1hat_commute}),
	we may assume that $I\cap I'\ni N$ by permuting terms in
	the left-hand side of \eqref{eq:F_proof_B_product_1}.
	
	Observe that no
	summand in \eqref{eq:F_proof_B_product_1} 
	with $X_N (\mathcal{I}; x_N, r_N) =
	\widehat{D} (x_N, r_N)$ (i.e., 
	$N\in \mathcal{I}$) contributes to a nonzero value of
	$c_{I; I'} (\mathbf{x}, \mathbf{r}) $. 
	Indeed, in this case
	the operator
	$\widehat{D}(x_N,r_N)$ is the leftmost term in 
	$X_{\mathcal{I}}$, and thus it does not get involved in the commutation relations
	of the form \eqref{eq:BD_hat_relation}, which means that one cannot 
	obtain $\widehat{B}(x_N,r_N)$ from this term.
	Similarly, no summand in \eqref{eq:F_proof_B_product_1} 
	with $Y_N
	(\mathcal{I}; x_N, r_N) = \widehat{A} (x_N, r_N)$ (i.e., $N\notin \mathcal{I}$)
	contributes to a
	nonzero value of $c_{I; I'} (\mathbf{x}; \mathbf{r})$.

	However, for any $\mathcal{I}\subset \left\{ 0,1 \right\}^{N}$
	we either have $X_N (\mathcal{I}; x_N, r_N) = \widehat{D} (x_N, r_N)$ or 
	$Y_N(\mathcal{I}; x_N, r_N) = \widehat{A} (x_N, r_N)$, and so we cannot obtain 
	$\widehat{B}(x_N,r_N)$ in both tensor factors. Therefore, terms with $I\cap I'\ne \varnothing$
	are zero.
\end{proof}

We see that in \eqref{eq:F_proof_B_product_2} it must be $I=J'$ and $I'=J$, 
and we may abbreviate $c_{I;I'}=c_I$. We thus rewrite 
\eqref{eq:F_proof_B_product_1}--\eqref{eq:F_proof_B_product_2} as
\begin{equation}
	\label{eq:F_proof_B_product_3}
	\begin{split}
		&
		\widehat{B}(x_N,r_N)\ldots \widehat{B}(x_1,r_1)
		\\
		&\hspace{20pt}=
		\sum_{\substack{I \cup J = \{1,\ldots,  N \}}}
		c_{I} (\mathbf{x}; \mathbf{r})  \,
		\widehat{D} (x_{j_{N - k}}, r_{j_{N - k}})  \ldots \widehat{D} (x_{j_1}, r_{j_1})  
		\widehat{B} (x_{i_k}, r_{i_k}) \ldots \widehat{B} (x_{i_1}, r_{i_1}) 
		\\
		&\hspace{150pt} \otimes 
		\widehat{A} (x_{i_k},r_{i_k}) \ldots \widehat{A} (x_{i_1}, r_{i_1}) 
		\widehat{B} (x_{j_{N-k}}, r_{j_{N-k}}) \ldots \widehat{B} (x_{j_1}, r_{j_1}).
	\end{split}
\end{equation}
We will now evaluate the coefficients $c_I(\mathbf{x};\mathbf{r})$.
First, set $I=\left\{ N-k+1,N-k+2,\ldots,N  \right\}$. Then the operator
\begin{equation}
	\label{eq:DB_AB_proof_B_product}
	\begin{split}
		&\widehat{D}(x_{N-k},r_{N-k})\ldots \widehat{D}(x_1,r_1)
		\widehat{B}(x_{N},r_N)\ldots \widehat{B}(x_{N-k+1},r_{N-k+1})
		\\&\hspace{90pt}\otimes
		\widehat{A}(x_{N},r_N)\ldots \widehat{A}(x_{N-k+1},r_{N-k+1})
		\widehat{B}(x_{N-k},r_{N-k})\ldots \widehat{B}(x_1,r_1)
	\end{split}
\end{equation}
might come from \eqref{eq:F_proof_B_product_1} only
for $\mathcal{I}=\{1,2,\ldots,N-k \}$, in which case
\begin{multline*}
	X_{\mathcal{I}}(\mathbf{x};\mathbf{r})\otimes
	Y_{\mathcal{I}}(\mathbf{x};\mathbf{r})=
	\widehat{B}(x_{N},r_N)\ldots \widehat{B}(x_{N-k+1},r_{N-k+1})
	\widehat{D}(x_{N-k},r_{N-k})\ldots \widehat{D}(x_1,r_1)
	\\\otimes
	\widehat{A}(x_{N},r_N)\ldots \widehat{A}(x_{N-k+1},r_{N-k+1})
	\widehat{B}(x_{N-k},r_{N-k})\ldots \widehat{B}(x_1,r_1).
\end{multline*}
In the first term, we use the commutation relation 
\eqref{eq:BD_hat_relation} to place the $\widehat{D}$ operators on the left
and 
extract the coefficient $c_{I}(\mathbf{x};\mathbf{r})$
of \eqref{eq:DB_AB_proof_B_product}.
We have thus established:

\begin{lemma}
	For $I=I_k:=\left\{ N-k+1,N-k+2,\ldots,N  \right\}$, the rational function $c_I$ is equal to
	\begin{equation*}
		c_{I_k}
		(\mathbf{x};\mathbf{r})=
		\prod_{i=1}^{N-k}
		\prod_{j=N-k+1}^{N}
		\frac{r_i^{-2}x_i-x_j}{x_i-x_j}.
	\end{equation*}
\end{lemma}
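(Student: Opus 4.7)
The plan is to single out the unique subset $\mathcal{I} \subseteq \{1, \ldots, N\}$ appearing in the expansion \eqref{eq:F_proof_B_product_1} that can possibly produce the specific operator product \eqref{eq:DB_AB_proof_B_product}, and then track how the desired coefficient arises via successive applications of the commutation relation \eqref{eq:BD_hat_relation}. For the uniqueness step, I would note that in $X_\mathcal{I}$ the indices carried by the $\widehat{D}$ operators are exactly the elements of $\mathcal{I}$ (and those carried by $\widehat{B}$ are the elements of $\{1,\ldots,N\}\setminus\mathcal{I}$); matching this against \eqref{eq:DB_AB_proof_B_product} forces $\mathcal{I} = \{1, 2, \ldots, N - k\}$. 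For this $\mathcal{I}$, the factor $Y_\mathcal{I}$ already coincides verbatim with the second tensor factor of \eqref{eq:DB_AB_proof_B_product}, and so contributes no scalar.

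Next, I would rearrange the first tensor factor $X_\mathcal{I} = \widehat{B}(x_N,r_N) \ldots \widehat{B}(x_{N-k+1},r_{N-k+1})\widehat{D}(x_{N-k},r_{N-k}) \ldots \widehat{D}(x_1,r_1)$ into the target ordering $\widehat{D}(x_{N-k},r_{N-k}) \ldots \widehat{D}(x_1,r_1)\widehat{B}(x_N,r_N) \ldots \widehat{B}(x_{N-k+1},r_{N-k+1})$ by iterating \eqref{eq:BD_hat_relation}. Each application of this relation produces two terms: a ``clean'' term proportional to $\widehat{D}(x_i,r_i)\widehat{B}(x_j,r_j)$ with the original index pairing, and a ``swapped'' term proportional to $\widehat{D}(x_j,r_j)\widehat{B}(x_i,r_i)$ in which the parameter labels are exchanged. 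Only the clean branch contributes to the coefficient of the target operator string in \eqref{eq:DB_AB_proof_B_product}, because a swapped term places some $\widehat{D}(x_j,r_j)$ with $j \geq N-k+1$ in a slot that the target reserves for indices $i \leq N-k$, and likewise for $\widehat{B}$; such a mismatch can only feed coefficients $c_I$ with $I \neq I_k$. By \Cref{rmk:uniqueness_of_coeffs}, the extracted coefficient does not depend on the order in which these commutations are carried out, so this bookkeeping is unambiguous.

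Selecting the clean term from each of the $k(N-k)$ required commutations yields a contribution $\frac{r_i^{-2} x_i - x_j}{x_i - x_j}$ for every pair $(i,j)$ with $1 \leq i \leq N-k < N-k+1 \leq j \leq N$, with each such pair arising exactly once. Multiplying these factors reproduces the claimed product $\prod_{i=1}^{N-k}\prod_{j=N-k+1}^{N} \frac{r_i^{-2} x_i - x_j}{x_i - x_j}$. The only real verification point, which is not hard but is the crux of the argument, is the assertion that swapped terms cannot leak into $c_{I_k}$: this follows from tracking the index pattern of the resulting $\widehat{D}\ldots\widehat{D}\widehat{B}\ldots\widehat{B}$ string, which changes strictly under every swap.
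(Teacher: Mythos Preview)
Your proof is correct and follows essentially the same approach as the paper: identify $\mathcal{I}=\{1,\ldots,N-k\}$ as the unique subset in \eqref{eq:F_proof_B_product_1} that can feed the target \eqref{eq:DB_AB_proof_B_product}, observe that $Y_\mathcal{I}$ already matches the second tensor factor verbatim, and then commute all $\widehat D$'s in $X_\mathcal{I}$ to the left via \eqref{eq:BD_hat_relation}, keeping only the non-swapping branch at each of the $k(N-k)$ steps. Your discussion of why the swapped branches (and hence any $\mathcal{I}\ne\{1,\ldots,N-k\}$) cannot contribute is in fact more explicit than the paper's one-line assertion; the monotonicity observation you allude to---that each swap strictly alters the index pattern of the $\widehat D$-block, so one can never return to the set $\{1,\ldots,N-k\}$---is exactly what underlies both arguments.
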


We are now in a position to compute $c_I(\mathbf{x};\mathbf{r})$ 
for arbitrary $I\subset \left\{ 1,\ldots,N  \right\}$ of size $k$ (where $k$ is also arbitrary)
by permuting the $\widehat{B}$ operators
in the left-hand side of \eqref{eq:F_proof_B_product_1} 
thanks to the commutation relation \eqref{eq:B2hat_B1hat_commute}.
For each such $I$,
let $\sigma$ be a permutation of $\left\{ 1,\ldots,N  \right\}$
which is increasing on the intervals $\left\{ 1,\ldots,N-k  \right\}$ and
$\left\{ N-k+1,\ldots,N  \right\}$, and 
sends $\left\{ N-k+1,N-k+2,\ldots,N  \right\}$ to $I$.

\begin{lemma}
	With the above notation, we have
	\begin{equation}
		\label{eq:F_proof_B_product_4}
		c_I(\mathbf{x};\mathbf{r})=\mathop{\mathrm{sgn}}(\sigma)
		\prod_{1\le i<j\le N}\frac{r_i^{-2}x_i-x_j}{x_i-x_j}
		\prod_{\substack{i,j\in I\\i<j}}
		\left( \frac{r_i^{-2}x_i-x_j}{x_i-x_j} \right)^{-1}
		\prod_{\substack{i,j\notin I\\ i<j}}
		\left( \frac{r_i^{-2}x_i-x_j}{x_i-x_j} \right)^{-1}.
	\end{equation}
\end{lemma}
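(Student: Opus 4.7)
The plan is to reduce the general case to the already-computed special case $I = I_k$ by permuting the arguments in $\widehat{B}(x_N, r_N) \cdots \widehat{B}(x_1, r_1)$ via the commutation relation \eqref{eq:B2hat_B1hat_commute}.

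First I will use \eqref{eq:B2hat_B1hat_commute}, together with the consistency asserted in \Cref{rmk:uniqueness_of_coeffs}, to produce an unambiguous scalar $C(\sigma)$ with
\begin{equation*}
\widehat{B}(x_N, r_N) \cdots \widehat{B}(x_1, r_1) = C(\sigma)\, \widehat{B}(x_{i_k}, r_{i_k}) \cdots \widehat{B}(x_{i_1}, r_{i_1})\, \widehat{B}(x_{j_{N-k}}, r_{j_{N-k}}) \cdots \widehat{B}(x_{j_1}, r_{j_1}).
\end{equation*}
Each adjacent swap $\widehat{B}(x_a, r_a)\widehat{B}(x_b, r_b) \to \widehat{B}(x_b, r_b)\widehat{B}(x_a, r_a)$ contributes a factor $\phi(a, b) := (x_a - r_b^{-2} x_b)/(x_b - r_a^{-2} x_a)$. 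Since $\sigma$ is a shuffle, the pairs on which the orderings $(N, N-1, \ldots, 1)$ and $(i_k, \ldots, i_1, j_{N-k}, \ldots, j_1)$ disagree are exactly those $(i, j)$ with $i \in I$, $j \in J$, $i < j$. Each such disagreement eventually amounts to one swap $\widehat{B}(x_j, r_j)\widehat{B}(x_i, r_i) \to \widehat{B}(x_i, r_i)\widehat{B}(x_j, r_j)$, so
\begin{equation*}
C(\sigma) = \prod_{\substack{i \in I,\, j \in J \\ i < j}} \phi(j, i).
\end{equation*}

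Next, the argument that produced $c_{I_k}$ applies verbatim to the rearranged product with the parameters $(x_i, r_i)$ relabeled by $\sigma$. This yields the coefficient of the target tensor product as
\begin{equation*}
\prod_{i \in I,\, j \in J} \psi(j, i), \qquad \psi(a, b) := \frac{r_a^{-2} x_a - x_b}{x_a - x_b}.
\end{equation*}
Combining the two steps gives $c_I(\mathbf{x}; \mathbf{r}) = C(\sigma) \cdot \prod_{i \in I, j \in J} \psi(j, i)$. Using the elementary identity $\phi(a, b) = -\psi(b, a)/\psi(a, b)$, this reorganizes into
\begin{equation*}
c_I(\mathbf{x}; \mathbf{r}) = \mathop{\mathrm{sgn}}(\sigma) \prod_{\substack{1 \le a < b \le N \\ |\{a, b\} \cap I| = 1}} \psi(a, b),
\end{equation*}
with the sign arising from the product of $-1$'s indexed by the shuffle inversions. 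This is equivalent to \eqref{eq:F_proof_B_product_4}, since the full product $\prod_{1 \le a < b \le N} \psi(a, b)$ divided by the $I \times I$ and $J \times J$ subproducts leaves exactly the mixed-pair factors.

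The main step requiring care is the last one, where all of the $\phi$'s and $\psi$'s must combine correctly: one needs the sign identity $\phi(a, b) = -\psi(b, a)/\psi(a, b)$, the matching of $(-1)^{\#\{(i, j):\, i \in I,\, j \in J,\, i < j\}}$ with $\mathop{\mathrm{sgn}}(\sigma)$ for the shuffle, and the telescoping of the remaining ratios. All other ingredients are immediate from \eqref{eq:B2hat_B1hat_commute} and \Cref{rmk:uniqueness_of_coeffs}; in particular, the consistency of $C(\sigma)$ requires no further work.
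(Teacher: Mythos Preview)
Your proof is correct and follows essentially the same approach as the paper: reduce to the already-computed case $I = I_k$ by permuting the factors in $\widehat{B}(x_N,r_N)\cdots\widehat{B}(x_1,r_1)$ via \eqref{eq:B2hat_B1hat_commute}, then simplify the resulting product. Your explicit bookkeeping with the auxiliary functions $\phi,\psi$ and the identity $\phi(a,b)=-\psi(b,a)/\psi(a,b)$ is slightly more detailed than the paper's presentation, but the content is the same.
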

\begin{proof}
	The claim follows
	from the fact that
	\begin{equation*}
		c_I(\mathbf{x};\mathbf{r})=
		c_{I_k}(\sigma(\mathbf{x});\sigma(\mathbf{r}))
		\prod_{\substack{j\le N-k,\, i\ge N-k+1\\ \sigma(i)<\sigma(j)}}
		\frac
		{r_{\sigma(i)}^{-2}x_{\sigma(i)}-x_{\sigma(j)}}
		{r_{\sigma(j)}^{-2}x_{\sigma(j)}-x_{\sigma(i)}},
	\end{equation*}
	which in turn holds thanks to \eqref{eq:B2hat_B1hat_commute} 
	via induction on the length of the permutation
	$\sigma$ (which is the minimal number of elementary
	transpositions required to represent $\sigma$ as their product).
	Then we can further simplify:
	\begin{equation*}
		\begin{split}
			&
			c_{I_k}(\sigma(\mathbf{x});\sigma(\mathbf{r}))
			\prod_{\substack{j\le N-k,\, i\ge N-k+1\\ \sigma(i)<\sigma(j)}}
			\frac
			{r_{\sigma(i)}^{-2}x_{\sigma(i)}-x_{\sigma(j)}}
			{r_{\sigma(j)}^{-2}x_{\sigma(j)}-x_{\sigma(i)}}
			=
			\prod_{i\in I,\,j\notin I}\frac{x_i-r_j^{-2}x_j}{x_i-x_j}
			\prod_{i\in I,\,j\notin I \, i<j}\frac{x_j-r_i^{-2}x_i}{x_i-r_j^{-2}x_j}
			\\&\hspace{30pt}=
			\prod_{i\in I, \, j\notin I}(x_i-x_j)^{-1}
			\prod_{i,j\in I, \, i<j}(x_j-r_i^{-2}x_i)^{-1}
			\prod_{i,j\notin I, \, i<j}(x_j-r_i^{-2}x_i)^{-1}
			\prod_{1\le i<j\le N}(x_j-r_i^{-2}x_i),
		\end{split}
	\end{equation*}
	which leads to the desired right-hand side of \eqref{eq:F_proof_B_product_4}.
	The signature of the permutation $\sigma$ arises by 
	turning $x_i-x_j$ into $x_j-x_i$ for each pair $i\notin I,\,j\in I$ with $i>j$.
\end{proof}

\subsubsection{Completing the proof}
\label{appA:F_complete_proof}

We are now in a position to prove the determinantal formula
\eqref{eq:F_det_formula_appendix}, which finalizes the proof of \Cref{thm:F_formula}.
The goal is to express the 
coefficient of
$e_{\mathcal{S}(\lambda)}$ in 
$e_{\varnothing}\widehat{B}(x_N,r_N)\ldots \widehat{B}(x_1,r_1) $.
We are going to repeatedly apply identity 
\eqref{eq:F_proof_B_product_3} (with the coefficients $c_I(\mathbf{x};\mathbf{r})$
given by \eqref{eq:F_proof_B_product_4})
to vectors of the form $e_0^{(m)} \otimes v$, $m=1,2,\ldots $.

Observe that $e_0^{(m)}\widehat{B}(x,r)\widehat{B}(x',r')=0$. Therefore,
any nonzero summand in \eqref{eq:F_proof_B_product_3}
must have $|I| \le 1$.
Moreover, 
when $I=\{i\}$ has one element,
any such nonzero contribution to the coefficient of $e_{\mathcal{S}(\lambda)}$ 
should have $i\in \mathcal{S}(\lambda)=\{\lambda_N+1, \lambda_{N-1}+2, \ldots, \lambda_1+N \}$.
Therefore, each
step of the repeated application of \eqref{eq:F_proof_B_product_3}
for which we choose $|I| = 1$ corresponds to a number from $1$ to $N$
(indicating which element of $\mathcal{S}(\lambda)$ is selected), and these numbers 
must be distinct. We encode this information by a permutation $\tau\in \mathfrak{S}_N$.
Using the facts that
\begin{equation*}
	\begin{split}
		c_{ \varnothing}(\mathbf{x};\mathbf{r})&= 1,
		\qquad \quad 
		c_{ \{k \} }(\mathbf{x};\mathbf{r})=(-1)^{N-k}
		\prod_{i=1}^{k-1}
		\frac{r_i^{-2}x_i-x_k}{x_i-x_k}
		\prod_{j=k+1}^{N}\frac{r_k^{-2}x_k-x_j}{x_k-x_j},
		\\
		e_0^{(m)} \widehat{A}(x,r)&= e_0^{(m)},\qquad 
		e_0^{(m)} \widehat{B}(x,r)=\frac{x(1-r^2)}{r^2(y_m-x)}\,e_1^{(m)},\qquad 
		e_0^{(m)} \widehat{D}(x,r)=\frac{y_m-s_m^2x}{s_m^2(y_m-x)}\,e_0^{(m)},
	\end{split}
\end{equation*}
we see that the coefficient of $e_{\mathcal{S}(\lambda)}$ in 
$e_{\varnothing}\widehat{B}(x_N,r_N)\ldots \widehat{B}(x_1,r_1) $
is equal to
\begin{equation*}
	\prod_{1\le i<j\le N}\frac{r_i^{-2}x_i-x_j}{x_i-x_j}
	\sum_{\tau\in \mathfrak{S}_N}
	\mathop{\mathrm{sgn}}(\tau)
	\prod_{k=1}^{N}
	\left( 
		\frac{x_{\tau(k)}(r_{\tau(k)}^{-2}-1)}{y_{\lambda_{k}+N-k+1}-x_{\tau(k)}}
		\prod_{m=1}^{\lambda_k+N-k}
		\frac{y_m-s_m^2x_{\tau(k)}}{s_m^2(y_m-x_{\tau(k)})}
	\right).
\end{equation*}
Note that the prefactor 
$\prod_{1\le i<j\le N}\frac{r_i^{-2}x_i-x_j}{x_i-x_j}$
arises by taking the product of the $c_{ \{k \} }$'s over all
$k=1,\ldots,N$, 
but in this product for each next term the number $N$ of variables decreases by one.
Therefore, we end up with a product over $i<j$ instead
of over all pairs $i\ne j$.
This completes the proof of \Cref{thm:F_formula}.

\subsection{Proof of \texorpdfstring{\Cref{thm:G_formula}}{formula for G}}
\label{appA:G}

\subsubsection{Recalling the notation}
\label{appA:G_recall}

Throughout this subsection we fix 
$M,N\ge1$,
a
signature $\lambda=(\lambda_1\ge\ldots\ge \lambda_N \ge0)$ with $N$ parts, 
and sequences of complex parameters
\begin{equation*}
	\mathbf{x}=(x_1,\ldots,x_M ),\qquad 
	\mathbf{y}=(y_1,y_2,\ldots ),\qquad 
	\mathbf{r}=(r_1,\ldots,r_M ),\qquad 
	\mathbf{s}=(s_1,s_2,\ldots ).
\end{equation*}
Recall from \Cref{def:G_function}
the function 
$G_\lambda(\mathbf{x};\mathbf{y};\mathbf{r};\mathbf{s})
=G_{\lambda/0^N}(\mathbf{x};\mathbf{y};\mathbf{r};\mathbf{s})$
which is the partition function of the free fermion six vertex model
with weights $W$ \eqref{eq:weights_W}
and with boundary conditions determined by $\lambda$.

Our aim is to prove \Cref{thm:G_formula} which gives an explicit formula for $G_\lambda$ 
\eqref{eq:G_SP_formula_in_theorem}
in terms
of a sum over a pair of permutations. 
The argument is longer than in the case of $F_\lambda$ from \Cref{appA:F}
but also involves manipulations with row operators.
Namely, we utilize the operators $A,B,C,D$ given by 
\eqref{eq:abcdv}--\eqref{eq:abcdv_n2}.
They are built from the vertex weights $W$
and depend on $x,r$ and the sequences $\mathbf{y}, \mathbf{s}$.
These operators act (from the left)
on 
tensor products of two-dimensional spaces
$V^{(k)}=\mathop{\mathrm{span}}\{ e_0^{(k)},e_1^{(k)} \} \simeq \mathbb{C}^2$,
where $k\ge1$.
Recall (\Cref{sub:signature_states})
that to $\lambda$ we associate the vector $e_{\mathcal{S}(\lambda)}$ 
in the finitary subspace $\mathscr{V}$ of the infinite tensor product
$V^{(1)}\otimes V^{(2)}\otimes \ldots $,
where we take $e^{(k)}_1$ in 
the $k$-th place if and only if $k\in \mathcal{S}(\lambda)$ and $e_0^{(k)}$ otherwise,
see \Cref{sub:signature_states}.
Let us also set 
\begin{equation}
	\label{eq:notation_state_e_1_N}
	e_{[1,N]}
	=
	e^{(1)}_1\otimes \ldots \otimes e^{(N)}_1\otimes e^{(N+1)}_0 \otimes e^{(N+2)}_0\otimes \ldots.
\end{equation}
Equip all tensor products
of the spaces $V^{(k)}$
with the inner product defined by
$\langle e_{\mathcal{T}},e_{\mathcal{T}'} \rangle = \mathbf{1}_{\mathcal{T}=\mathcal{T}'}$
(here we use the notation $e_{\mathcal{T}}$ as in \eqref{eq:e_calT_notation}). 
Then by \Cref{prop:F_G_row_op} we have
\begin{equation*}
	G_{\lambda}(\mathbf{x};\mathbf{y};\mathbf{r};\mathbf{s})=
	\left\langle e_{\mathcal{S}(\lambda)}, D(x_M,r_M)\ldots D(x_2,r_2)D(x_1,r_1)e_{[1,N]} \right\rangle .
\end{equation*}
We will compute the above coefficient of $e_{\mathcal{S}(\lambda)}$ in the action of the 
product of the $D$ operators 
using the Yang--Baxter equation stated in \Cref{prop:ABCD_YBE}
as a series of commutation relations between the operators
$A,B,C$, and $D$.

\begin{remark}
	\label{rmk:Shorthand_notation_ABCD_appA2}
	Sometimes, to shorten some formulas in the proofs, we will use notation
	$A_i,B_i,C_i$, or $D_i$ for 
	$A(x_i,r_i),B(x_i,r_i),C(x_i,r_i)$,
	and $D(x_i,r_i)$, respectively.
\end{remark}

\subsubsection{Action of $D$ operators on a two-fold tensor product}
\label{appA:G_action_two}

The next two statements, \Cref{lemma:C_commutation,lemma:D_action_twofold},
are parallel to the computations with the row operators
performed in \Cref{appA:F_two_spaces} in the proof of the formula for $F_\lambda$.

\begin{lemma}
	\label{lemma:C_commutation}
	Let $\sigma\in \mathfrak{S}_M$ be a permutation. Then
	\begin{equation*}
		C(x_{\sigma(M)},r_{\sigma(M)})\ldots 
		C(x_{\sigma(1)},r_{\sigma(1)}) =
		C(x_M,r_M)\ldots C(x_1,r_1)\prod_{\substack{1\le i<j\le M\\ \sigma(j)<\sigma(i)}} 
		\frac{r_{\sigma(j)}^{-2}x_{\sigma(j)}-x_{\sigma(i)}}{r_{\sigma(i)}^{-2}x_{\sigma(i)}-x_{\sigma(j)}}.
	\end{equation*}
\end{lemma}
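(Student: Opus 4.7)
The plan is to prove the identity by induction on the length $\ell(\sigma)$ of the permutation, where $\ell(\sigma)$ counts inversions, that is, pairs $1\le i<j\le M$ with $\sigma(i)>\sigma(j)$. This is the standard ``bubble sort'' strategy: we gradually reduce $\sigma$ to the identity by swapping adjacent entries, and at each swap we apply the two-variable commutation relation \eqref{eq:C2C1_commute}
\[
C(x_\alpha,r_\alpha)C(x_\beta,r_\beta)=\frac{r_\alpha^{-2}x_\alpha-x_\beta}{r_\beta^{-2}x_\beta-x_\alpha}\,C(x_\beta,r_\beta)C(x_\alpha,r_\alpha),
\]
keeping careful track of the scalar factor produced. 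Note that this commutation relation holds in arbitrary tensor products $V^{(k_1)}\otimes\cdots\otimes V^{(k_n)}$ by \Cref{prop:ABCD_YBE}, so no analytic convergence issues arise.

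The base case $\sigma=\mathrm{id}$ is trivial: both sides coincide and the product over inversions is empty. For the inductive step, assume $\ell(\sigma)>0$. Then there exists an index $k\in\{1,\ldots,M-1\}$ with $\sigma(k)>\sigma(k+1)$; set $\tau=\sigma\circ s_k$, so that $\ell(\tau)=\ell(\sigma)-1$ and $\tau(k)<\tau(k+1)$. In the product $C(x_{\sigma(M)},r_{\sigma(M)})\cdots C(x_{\sigma(1)},r_{\sigma(1)})$, the two operators at positions $k$ and $k+1$ (counted from the right) are $C(x_{\sigma(k+1)},r_{\sigma(k+1)})C(x_{\sigma(k)},r_{\sigma(k)})$; swapping them via \eqref{eq:C2C1_commute} introduces the scalar $\frac{r_{\sigma(k+1)}^{-2}x_{\sigma(k+1)}-x_{\sigma(k)}}{r_{\sigma(k)}^{-2}x_{\sigma(k)}-x_{\sigma(k+1)}}$ and transforms the operator product into the one indexed by $\tau$. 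The inductive hypothesis then expresses the $\tau$-product in terms of $C(x_M,r_M)\cdots C(x_1,r_1)$ and a product over the inversions of $\tau$, so it only remains to verify the scalar bookkeeping.

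The main (minor) obstacle is this bookkeeping step: one must check that
\[
\frac{r_{\sigma(k+1)}^{-2}x_{\sigma(k+1)}-x_{\sigma(k)}}{r_{\sigma(k)}^{-2}x_{\sigma(k)}-x_{\sigma(k+1)}}\prod_{\substack{1\le i<j\le M\\ \tau(j)<\tau(i)}}\frac{r_{\tau(j)}^{-2}x_{\tau(j)}-x_{\tau(i)}}{r_{\tau(i)}^{-2}x_{\tau(i)}-x_{\tau(j)}}=\prod_{\substack{1\le i<j\le M\\ \sigma(j)<\sigma(i)}}\frac{r_{\sigma(j)}^{-2}x_{\sigma(j)}-x_{\sigma(i)}}{r_{\sigma(i)}^{-2}x_{\sigma(i)}-x_{\sigma(j)}}.
\]
The inversion sets of $\sigma$ and $\tau$ agree except that $(k,k+1)$ is an inversion of $\sigma$ but not of $\tau$; one checks that all other inversions yield identical factors by the case analysis $\{i,j\}\cap\{k,k+1\}=\varnothing$; $i=k$, $j>k+1$; $i<k$, $j=k+1$; and $i<k$, $j>k+1$. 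Using $\sigma(k)=\tau(k+1)$, $\sigma(k+1)=\tau(k)$, and $\sigma(m)=\tau(m)$ for $m\ne k,k+1$, each matched pair contributes the same rational expression. The single unmatched contribution, arising from the inversion $(k,k+1)$ of $\sigma$, is exactly the scalar produced by the commutation relation, which completes the induction.
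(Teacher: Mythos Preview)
Your proof is correct and follows exactly the approach the paper indicates: induction on the length of $\sigma$ using the commutation relation \eqref{eq:C2C1_commute}. The paper's own proof is a one-line reference to this standard argument, and you have filled in the details accurately; your case analysis of how the inversion sets of $\sigma$ and $\tau=\sigma\circ s_k$ match up (pairing $(k,j)\leftrightarrow(k+1,j)$ and $(i,k)\leftrightarrow(i,k+1)$) is the right mechanism, even if the list of cases as written could be labeled a bit more carefully.
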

\begin{proof}
	This is proven by induction on the length of the permutation $\sigma$
	using the commutation relation 
	\eqref{eq:C2C1_commute}
	between the $C$ operators.
\end{proof}

\begin{lemma}
	\label{lemma:D_action_twofold}
	As operators on a tensor product of two spaces $V_1\otimes V_2$, we have
	\begin{equation}
		\label{eq:D_action_twofold}
		\begin{split}
			&
			D(x_M,r_M)D(x_{M-1},r_{M-1})\ldots D(x_1,r_1)
			\\&\hspace{10pt}
			= 
			\sum_{\mathcal{I}\subseteq \left\{ 1,\ldots,M  \right\}}
			\Biggl(
				\prod_{i\in \mathcal{I},\, j\notin \mathcal{I}}
				\frac{r_i^{-2}x_i-x_j}{r_i^{-2}x_i-r_j^{-2}x_j}
			\Biggr)
			B(x_{i_k},r_{i_k})\ldots B(x_{i_1},r_{i_1}) 
			D(x_{j_{M-k}},r_{j_{M-k}})\ldots D(x_{j_1},r_{j_1}) 
			\\&\hspace{180pt}\otimes
			D(x_{j_{M-k}},r_{j_{M-k}})\ldots D(x_{j_1},r_{j_1}) 
			C(x_{i_k},r_{i_k})\ldots C(x_{i_1},r_{i_1}).
		\end{split}
	\end{equation}
	Here
	$\mathcal{I}=(i_1<\ldots<i_k )$ and $\mathcal{J}=\left\{ 1,\ldots,M  \right\}\setminus \mathcal{I}=
	(j_1<\ldots<j_{M-k} )$.
\end{lemma}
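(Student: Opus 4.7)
The plan is to prove the lemma by induction on $M$, combining the tensor product action rule \eqref{eq:abcdv_n2} for $D$ with the commutation relations of \Cref{prop:ABCD_YBE}. The base case $M=1$ is immediate: $D_1(v_1\otimes v_2)=D_1v_1\otimes D_1v_2+B_1v_1\otimes C_1v_2$ matches the formula with the two summands $\mathcal{I}=\varnothing$ and $\mathcal{I}=\{1\}$, both with coefficient $1$ (the relevant products over mixed indices are empty). One can also directly verify the $M=2$ case as a warm-up, where the four raw terms $D_2D_1\otimes D_2D_1$, $B_2D_1\otimes C_2D_1$, $D_2B_1\otimes D_2C_1$, $B_2B_1\otimes C_2C_1$ from iterating \eqref{eq:abcdv_n2} rearrange (via \eqref{eq:D2_B1} and \eqref{eq:C2_D1}) into the four canonical summands indexed by $\mathcal{I}\subseteq\{1,2\}$, with the ``swap'' parts of the two-term commutations cancelling between the $\mathcal{I}=\{1\}$ and $\mathcal{I}=\{2\}$ contributions.

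For the inductive step, I would iterate \eqref{eq:abcdv_n2} to obtain the raw expansion $D_M\cdots D_1=\sum_{\mathcal{I}}Z^X_{\mathcal{I}}\otimes Z^Y_{\mathcal{I}}$, where $Z^X_{\mathcal{I}}$ is the product of $B_i$ ($i\in\mathcal{I}$) and $D_j$ ($j\notin\mathcal{I}$) in decreasing index order from left to right, and $Z^Y_{\mathcal{I}}$ is the same with $C$ in place of $B$. I then use \eqref{eq:D2_B1} repeatedly in the first tensor factor and \eqref{eq:C2_D1} in the second to sort each $Z^X_{\mathcal{I}}$ into the canonical form $B_{i_k}\cdots B_{i_1}D_{j_{M-k}}\cdots D_{j_1}$ and each $Z^Y_{\mathcal{I}}$ into $D_{j_{M-k}}\cdots D_{j_1}C_{i_k}\cdots C_{i_1}$. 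Each individual commutation produces two terms — one preserving the underlying index subset and one swapping a pair $(i,j)$ between $\mathcal{I}$ and $\mathcal{I}^c$ — so the coefficient of a given canonical form collects contributions from multiple starting subsets. To pin down these coefficients unambiguously, I would invoke an exact analog of \Cref{rmk:uniqueness_of_coeffs}: by \cite[Lemma 14]{FelderVarchenko1996} applied to the free fermion specialization, the canonical-form products are generically linearly independent as operators, so any expansion into this canonical basis is unique. It then suffices to verify the formula in the ``staircase'' case $\mathcal{I}=\{M-k+1,\ldots,M\}$ — where $Z^X_{\mathcal{I}}$ is already canonical and only the $C$-$D$ commutations in the second factor contribute — and to reduce the general case via the $B$-$B$ and $C$-$C$ permutation relations \eqref{eq:B2B1_commute} and \eqref{eq:C2C1_commute}, following the template of \Cref{lemma:C_commutation} and \Cref{appA:F_two_spaces}.

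The main obstacle is the final combinatorial identity: showing that after all commutations and the scalar factors from sorting the $B$'s (respectively $C$'s) among themselves, the coefficient of the canonical form indexed by $\mathcal{I}$ assembles exactly into $\prod_{i\in\mathcal{I},\,j\notin\mathcal{I}}\frac{r_i^{-2}x_i-x_j}{r_i^{-2}x_i-r_j^{-2}x_j}$. I expect this to fall out cleanly because each pair $(i,j)$ with $i\in\mathcal{I}$, $j\notin\mathcal{I}$ contributes one factor from the ``preserving'' branch of the unique commutation that crosses that pair during sorting, while all ``swap'' branches cancel across different starting subsets (as already observed explicitly for $M=2$). The careful bookkeeping of these cancellations is the principal technical step and is analogous in spirit to the argument leading to \eqref{eq:F_proof_B_product_4} in \Cref{appA:F_two_spaces}.
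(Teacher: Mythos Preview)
Your strategy is essentially the paper's: expand via \eqref{eq:abcdv_n2}, sort into canonical form using \eqref{eq:D2_B1} and \eqref{eq:C2_D1}, invoke uniqueness of coefficients as in \Cref{rmk:uniqueness_of_coeffs}, compute a special case, and reduce to it. Two implementation points differ and are worth noting. First, for the special case the paper does \emph{not} track cancellations of swap branches; instead it argues directly that only the starting subset $\mathcal{K}=I_k:=\{1,\ldots,k\}$ can contribute to the canonical form $T_{I_k}$: if $\mathcal{K}\ne I_k$, set $k_0=\min\mathcal{K}^c\le k$; then $D_{k_0}$ in $X_{\mathcal{K}}$ must be commuted right past $B_{k_0-1},\ldots,B_1$, and each application of \eqref{eq:D2_B1} keeps the $D$-index $\le k_0$, forcing $j_1\le k$ in any resulting canonical form, a contradiction. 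The cancellations you observe for $M=2$ are real, but they are the mechanism behind the vanishing of the \emph{off-diagonal} coefficients $h_{I;I'}$ with $I'\ne I^c$, which the paper handles separately by the argument of \Cref{lemma:F_proof_lemma1}; for the diagonal $h_I$ no cancellation is needed. Second, for the reduction from the special $I_k$ to arbitrary $I$ the paper simply uses commutativity \eqref{eq:D2D1_commute} of the $D_i$'s on the left-hand side, giving $h_{\sigma(I_k)}(\mathbf{x};\mathbf{r})=h_{I_k}(\sigma(\mathbf{x});\sigma(\mathbf{r}))$ immediately. Your $B$--$B$/$C$--$C$ route also works (the reordering factors from \eqref{eq:B2B1_commute} and \eqref{eq:C2C1_commute} are reciprocal and cancel between the two tensor factors), but the $D$-commutativity phrasing is shorter.
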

\begin{proof}
	In the proof we use the shorthand notation for the operators from \Cref{rmk:Shorthand_notation_ABCD_appA2}.
	By the last identity in \eqref{eq:abcdv_n2}, the action of $D_M\ldots D_1$
	on $V_1\otimes V_2$ is given by 
	\begin{equation}
		\label{eq:D_twofold_proof_1}
		\sum_{\mathcal{K}\subseteq \{1,\ldots,M \}}
		X_{\mathcal{K}}\otimes Y_{\mathcal{K}},
	\end{equation}
	where 
	$X_{\mathcal{K}}=X_M(\mathcal{K})\ldots X_1(\mathcal{K}) $,
	$Y_{\mathcal{K}}=Y_M(\mathcal{K})\ldots Y_1(\mathcal{K}) $,
	with 
	\begin{equation*}
		X_i(\mathcal{K})=\begin{cases}
			B_i,&i\in \mathcal{K};\\
			D_i,&i\notin \mathcal{K},
		\end{cases}
		\qquad 
		Y_i(\mathcal{K})=\begin{cases}
			C_i,&i\in \mathcal{K};\\
			D_i,&i\notin \mathcal{K}.
		\end{cases}
	\end{equation*}
	Next, by repeated use of relations
	\eqref{eq:D2_B1} and \eqref{eq:C2_D1},
	the sum \eqref{eq:D_twofold_proof_1}
	can be expressed in the form
	\begin{equation}
		\label{eq:D_twofold_proof_2}
		\sum_{I,I'\subseteq \left\{ 1,\ldots,M  \right\}}
		h_{I;I'}(\mathbf{x};\mathbf{r})\,
		B_{i_k}\ldots B_{i_1}D_{j_{M-k}}\ldots D_{j_1} \otimes
		D_{i_m'}\ldots D_{i_1'}
		C_{j'_{M-m}}\ldots C_{j_1'} 
		,
	\end{equation}
	where $h_{I;I'}(\mathbf{x};\mathbf{r})$ are 
	rational functions in $\mathbf{x}=(x_1,\ldots,x_M )$ and 
	$\mathbf{r}=(r_1,\ldots,r_M )$, and the indices are
	\begin{equation*}
		\begin{split}
			I=(i_1<\ldots<i_k ),
			\qquad 
			I'&=(i_1'<\ldots<i_m' ),
			\\ 
			J=I^c=(j_1<\ldots<j_{M-k} ),
			\qquad 
			J'&=(I')^c=(j_1'<\ldots<j_{M-m}' ).
		\end{split}
	\end{equation*}
	
	By looking at relations
	\eqref{eq:D2_B1}, \eqref{eq:C2_D1}
	closer,
	one can already see that 
	$m=M-k$ in \eqref{eq:D_twofold_proof_2}.
	By \Cref{rmk:uniqueness_of_coeffs},
	the coefficients 
	$h_{I;I'}(\mathbf{x};\mathbf{r})$ are independent of the 
	order in which we apply the commutation relations between 
	the operators $A,B,C,D$ to get from 
	\eqref{eq:D_twofold_proof_1} to 
	\eqref{eq:D_twofold_proof_2}.
	
	By the same argument as in \Cref{lemma:F_proof_lemma1},
	one can show that $h_{I;I'}(\mathbf{x};\mathbf{r})=0$
	if $I\cap I'\ne \varnothing$ or $J\cap J'\ne \varnothing$.
	Thus, it must be that $I=J'$ and $J=I'$, and
	we may rewrite $h_I(\mathbf{x};\mathbf{r})=h_{I;I'}(\mathbf{x};\mathbf{r})$.
	This implies that we may write \eqref{eq:D_twofold_proof_2}
	as
	\begin{equation}
		\label{eq:D_twofold_proof_3}
		\sum_{I\cup J= \{1,\ldots,M \}}
		h_{I}(\mathbf{x};\mathbf{r})\,
		B_{i_k}\ldots B_{i_1}D_{j_{M-k}}\ldots D_{j_1} \otimes
		D_{j_{M-k}}\ldots D_{j_1}
		C_{i_k}\ldots C_{i_1}. 
	\end{equation}

	It remains to evaluate the coefficients $h_I(\mathbf{x};\mathbf{r})$
	in
	\eqref{eq:D_twofold_proof_3}. This is simpler than for 
	the case of $F_\lambda$ considered in \Cref{appA:F_two_spaces}.
	First, assume that 
	$I=I_k:=\left\{ 1,2,\ldots,k   \right\}$.
	In this case, applying 
	\eqref{eq:D2_B1} and \eqref{eq:C2_D1}
	to a term $X_{\mathcal{K}} \otimes
	Y_{\mathcal{K}}$ in \eqref{eq:D_twofold_proof_1}
	only gives rise to a
	nonzero multiple of $B_{k} \cdots B_{1} 
	D_{M} \cdots D_{k+1} \otimes D_{M} \cdots D_{k+1} C_{k} \cdots C_{1}$ 
	as a summand only if $\mathcal{K} = I_k$.
	Indeed, otherwise let $k_0=\min \mathcal{K}^c\le k$.
	In any expression of $X_{\mathcal{K}} \otimes
	Y_{\mathcal{K}}$ as a linear combination of $B_{i_k}  \cdots B_{i_1}
	D_{j_{M - k}} \cdots D_{j_1} \otimes D_{j_{M - k}} \cdots D_{j_1}
	C_{i_k} \cdots C_{i_1}$,
	one needs to commute $D_{k_0}$ to the right through $X_{k_0-1},\ldots,X_1 $,
	which implies that $j_1\le k_0\le k$. Therefore, it must be $\mathcal{K}=I_k$.

	For $\mathcal{K}=I_k$,
	the only way of 
	obtaining 
	$B_{k} \cdots B_{1} 
	D_{M} \cdots D_{k+1} \otimes D_{M} \cdots D_{k+1} C_{k} \cdots C_{1}$ 
	from $X_{\mathcal{K}} \otimes Y_{\mathcal{K}}$ 
	is through using 
	\eqref{eq:D2_B1}
	to commute each $D_j$ to the
	right of each $B_i$. This produces a factor of 
	$(r_i^{-2}x_i-x_j)/(r_i^{-2}x_i-r_j^{-2}x_j)$
	for each such commutation, and so
	\begin{equation*}
		h_{I_k}(\mathbf{x};\mathbf{r})=
		\prod_{i\in I_k,\,j\notin I_k}
		\frac{r_i^{-2}x_i-x_j}{r_i^{-2}x_i-r_j^{-2}x_j}.
	\end{equation*}

	Finally, to get $h_I$ for general $I$,
	observe that the operators $D_i$ commute by \eqref{eq:D2D1_commute},
	and therefore 
	$h_{\sigma(I_k)}(\mathbf{x};\mathbf{r})=h_{I_k}(\sigma(\mathbf{x});\sigma(\mathbf{r}))$,
	which are precisely the coefficients in the claimed identity in the present lemma, where $\sigma$
	takes $I_k$ to an arbitrary $I$.
	This completes the proof.
\end{proof}

For the next proposition, recall the notation
$d = d(\lambda) \ge 0$ which is
the integer such that $\lambda_d \ge d$ and $\lambda_{d + 1} < d + 1$,
and
$\mu = (\mu_1< \mu_2< \ldots < \mu_d) =
\{1,\ldots,N \}  \setminus \big( \mathcal{S}(\lambda) \cap \{1,\ldots,N \} \big)$.
Also consider the $N$-fold tensor product
$V^{(1)}\otimes \ldots \otimes V^{(N)}$,
and take the following vectors in this space
\begin{equation}
	\label{eq:e_S_N_lambda_vector}
	e_{\mathcal{S}_N(\lambda)}:=e^{(1)}_{m_1}\otimes e^{(2)}_{m_2}\otimes \ldots\otimes e^{(N)}_{m_N},
	\qquad 
	e_{[1,N]}=e^{(1)}_{1}\otimes e^{(2)}_{1}\otimes \ldots\otimes e^{(N)}_{1},
\end{equation}
where $m_i=\mathbf{1}_{i\in \mathcal{S}(\lambda)}$,
and with $e_{[1,N]}$ we are slightly abusing the notation, cf. \eqref{eq:notation_state_e_1_N}.

\begin{proposition}
	\label{prop:G_proof_main_proposition_1}
	With the above notation, 
	for any vectors $v_1,v_2\in V^{(N+1)}\otimes V^{(N+2)}\otimes \ldots $
	we have
	\begin{equation}
		\label{eq:G_proof_main_proposition_1}
		\begin{split}
			&
			\left\langle e_{\mathcal{S}_N(\lambda)}\otimes v_2,
			D(x_M,r_M)\ldots D(x_2,r_2)D(x_1,r_1) (e_{[1,N]}\otimes v_1)\right\rangle 
			\\&
			\hspace{2pt}=
			\prod_{j=1}^{M}\prod_{k=1}^{N}
			\frac{y_k-s_k^2r_j^{-2}x_j}{y_k-s_k^2 x_j}
			\sum_{\substack{\mathcal{I}\subseteq \left\{ 1,\ldots,M  \right\}\\ |\mathcal{I}|=d}}
			\left\langle v_2,
			\biggl( 
				\prod_{j\notin \mathcal{I}}D(x_j,r_j)
			\biggr)
			C(x_{i_d},r_{i_d})
			\ldots 
			C(x_{i_1},r_{i_1})
			v_1
			\right\rangle 
			\\&\hspace{80pt}
			\times
			\prod_{i \in \mathcal{I},\, j\notin \mathcal{I}}
			\frac{r_i^{-2}x_i-x_j}{r_i^{-2}x_i-r_j^{-2}x_j}
			\prod_{i,j \in \mathcal{I},\,i<j}
			\frac{r_i^{-2}x_i-x_j}{r_i^{-2}x_i-r_j^{-2}x_j}
			\\&\hspace{80pt}
			\times
			\sum_{\sigma\in \mathfrak{S}_d}
			\mathop{\mathrm{sgn}}(\sigma)
			\prod_{j=1}^d
			\biggl(
				\frac{s_{\mu_j}^2 x_{i_{\sigma (j)}} \big(
				r^{-2}_{i_{\sigma (j)}} - 1\big)}{y_{\mu_j} - s_{\mu_j}^2
				r^{-2}_{i_{\sigma (j)}} x_{i_{\sigma (j)}}} 
				\prod_{k =
				\mu_j + 1}^N 
				\frac{s_k^2 \big(r^{-2}_{i_{\sigma (j)}}
				x_{i_{\sigma (j)}} - y_k \big)}{y_k - s_k^2 r^{-2}_{i_{\sigma (j)}}
				x_{i_{\sigma (j)}}}
			\biggr),
		\end{split}
	\end{equation}
	where 
	$\mathcal{I}=(i_1<\ldots<i_d )$.
\end{proposition}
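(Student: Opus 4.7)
The plan is to apply \Cref{lemma:D_action_twofold} to the tensor decomposition $V^{(1)}\otimes V^{(2)}\otimes\cdots = (V^{(1)}\otimes\cdots\otimes V^{(N)})\otimes(V^{(N+1)}\otimes V^{(N+2)}\otimes\cdots)$ and thereby reduce the problem to the evaluation of a scalar matrix element of a product of $B$ operators acting on the fully occupied state $e_{[1,N]}$. This immediately rewrites the left hand side of \eqref{eq:G_proof_main_proposition_1} as a sum over subsets $\mathcal{I}\subseteq\{1,\ldots,M\}$ of a product of two matrix elements, one on each tensor factor. Since each $B$ lowers the particle count by one, only $|\mathcal{I}|=d$ produces a nonzero overlap with $e_{\mathcal{S}_N(\lambda)}$ on the first factor, yielding the outer sum over $\mathcal{I}$ of size $d$ together with the rational prefactor $\prod_{i\in\mathcal{I},\,j\notin\mathcal{I}}(r_i^{-2}x_i-x_j)/(r_i^{-2}x_i-r_j^{-2}x_j)$ from \Cref{lemma:D_action_twofold}. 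The second-factor matrix element $\langle v_2,\,D_{j_{M-d}}\cdots D_{j_1}C_{i_d}\cdots C_{i_1}\,v_1\rangle$ matches the operator expression in the claim after using commutativity of the $D_j$ via \eqref{eq:D2D1_commute} to identify $D_{j_{M-d}}\cdots D_{j_1}$ with $\prod_{j\notin\mathcal{I}}D(x_j,r_j)$.

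On the first tensor factor, I note that each $D(x_j,r_j)$ with $j\notin\mathcal{I}$ acts diagonally on $e_{[1,N]}$ with eigenvalue $\prod_{k=1}^N W(1,0;1,0\mid x_j;y_k;r_j;s_k)=\prod_k(y_k-s_k^2r_j^{-2}x_j)/(y_k-s_k^2x_j)$, producing the $j\notin\mathcal{I}$ portion of the global prefactor $\prod_{j=1}^M\prod_{k=1}^N$ in \eqref{eq:G_proof_main_proposition_1}. The remaining task is then to establish
\begin{equation*}
\bigl\langle e_{\mathcal{S}_N(\lambda)},B(x_{i_d},r_{i_d})\cdots B(x_{i_1},r_{i_1})\,e_{[1,N]}\bigr\rangle
=\Bigl(\prod_{j\in\mathcal{I}}\prod_{k=1}^N\tfrac{y_k-s_k^2r_j^{-2}x_j}{y_k-s_k^2x_j}\Bigr)\prod_{\substack{i,j\in\mathcal{I}\\i<j}}\tfrac{r_i^{-2}x_i-x_j}{r_i^{-2}x_i-r_j^{-2}x_j}\sum_{\sigma\in\mathfrak{S}_d}\mathop{\mathrm{sgn}}(\sigma)\prod_{j=1}^d T_{j,\sigma(j)},
\end{equation*}
where $T_{j,k}$ denotes the $j$-indexed factor from \eqref{eq:G_proof_main_proposition_1} evaluated at $i_{\sigma(j)}=i_k$.

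I would evaluate the left hand side by expanding each $B$ successively via the tensor-product rule \eqref{eq:abcdv_n2}: each $B(x_{i_\alpha},r_{i_\alpha})$ must select an extraction position $p_\alpha$ where the vertex $c_1=W(1,0;0,1)$ removes a particle, with $b_1=W(1,0;1,0)$ or $a_1=1$ at sites to its left (occupied or empty respectively) and $a_2=W(1,1;1,1)$ or $b_2=W(0,1;0,1)$ at sites to its right. For the chain of extractions to land on $e_{\mathcal{S}_N(\lambda)}$ the set $\{p_1,\ldots,p_d\}$ must equal $\{\mu_1,\ldots,\mu_d\}$, and we parametrize the orderings by $\sigma\in\mathfrak{S}_d$ via the rule that $B(x_{i_{\sigma(j)}})$ removes the particle at $\mu_j$. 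A direct calculation in the case $d=1$ already confirms both the factor $\prod_{k=1}^N(y_k-s_k^2r_i^{-2}x_i)/(y_k-s_k^2x_i)$ and the stated $\prod_{k=\mu_1+1}^N$ form, after simplifying the partial products via the identity $\tfrac{y_k-s_k^2r^{-2}x}{y_k-s_k^2x}\cdot\tfrac{s_k^2(r^{-2}x-y_k)}{y_k-s_k^2r^{-2}x}=\tfrac{s_k^2(r^{-2}x-y_k)}{y_k-s_k^2x}$.

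The main obstacle is the general case $d\ge 2$: tracking the sign $\mathop{\mathrm{sgn}}(\sigma)$ and producing the rational factor $\prod_{i,j\in\mathcal{I},\,i<j}(r_i^{-2}x_i-x_j)/(r_i^{-2}x_i-r_j^{-2}x_j)$. To handle this, I would first permute the $B$'s into a canonical ordering via the commutation relation \eqref{eq:B2B1_commute}; fully reversing $d$ operators produces precisely the factor $\prod_{\alpha<\beta}(r_{i_\alpha}^{-2}x_{i_\alpha}-x_{i_\beta})/(r_{i_\beta}^{-2}x_{i_\beta}-x_{i_\alpha})$, which after absorbing signs yields the claimed $\prod_{i<j\in\mathcal{I}}$ factor. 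In the canonically ordered expansion, non-canonical extraction orderings $\sigma$ acquire $b_2$ weights at sites that become empty between two extractions; these weights combined with the commutation coefficients and the free fermion condition $a_1a_2+b_1b_2=c_1c_2$ of \eqref{eq:6v_free_fermion_condition} telescope to give exactly the sign $\mathop{\mathrm{sgn}}(\sigma)$ relative to the canonical extraction ordering, while the remaining vertex weights fit into the product-over-$j$ appearing in $T_{j,\sigma(j)}$. Substituting this evaluation back into the sum over $\mathcal{I}$ and collecting all factors then completes the identification with the right hand side of \eqref{eq:G_proof_main_proposition_1}.
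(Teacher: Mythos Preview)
Your reduction in the first paragraph—applying \Cref{lemma:D_action_twofold} to the splitting $V^{(1)}\otimes\cdots\otimes V^{(N)}$ versus the rest, using that $D(x_j,r_j)e_{[1,N]}=\prod_{k=1}^N W(1,0;1,0)\,e_{[1,N]}$, and isolating the scalar $\langle e_{\mathcal{S}_N(\lambda)},B_{i_d}\cdots B_{i_1}\,e_{[1,N]}\rangle$—is correct and is a genuinely different organization than the paper's. The paper never reduces to a $B$ matrix element; instead it peels off one site $V^{(n)}$ at a time from the left, using two recursive identities for $\langle e^{(n)}_{0\text{ or }1}\otimes w,\bigl(\prod_{j\notin\mathcal{H}}D_j\bigr)C_{h_k}\cdots C_{h_1}(e_1^{(n)}\otimes u)\rangle$ (the second of which invokes \Cref{lemma:D_action_twofold} only with $|\mathcal{I}|=1$). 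Your route trades that site-by-site recursion for a single finite-dimensional partition function.

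The gap is in your evaluation of $\langle e_{\mathcal{S}_N(\lambda)},B_{i_d}\cdots B_{i_1}\,e_{[1,N]}\rangle$ for $d\ge 2$. Your enumeration asserts that each $B(x_{i_\alpha},r_{i_\alpha})$ contributes a single $c_1$ vertex at a position $p_\alpha$, with only $b_1/a_1$ to the left and $a_2/b_2$ to the right. This is true for the first $B$ acting on the fully occupied $e_{[1,N]}$, but once a hole exists at some site $p$, a later $B$ can fire $c_1$ at $q<p$, then deposit via the $c_2$ vertex $W(0,1;1,0)$ at $p$, and then fire $c_1$ again at $q'>p$. These ``remove--deposit--remove'' configurations are genuine contributions to the partition function (the weight $c_2$ is generically nonzero), and they lie outside your parametrization by a single extraction position $p_\alpha$ per row. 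Reordering the $B$'s via \eqref{eq:B2B1_commute} does not eliminate them, and invoking the free fermion condition \eqref{eq:6v_free_fermion_condition} as a telescoping mechanism is not a proof without a concrete identity to telescope.

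To close the gap along your route, you would need an honest determinantal formula for the $B$ matrix element on $e_{[1,N]}$. One option is a site-by-site recursion for mixed $B/A$ products on $e_1^{(n)}\otimes u$ (the $B$-analogue of the paper's identities \eqref{eq:G_proof_main_proposition_1_proof1}--\eqref{eq:G_proof_main_proposition_1_proof2}); another is an Algebraic Bethe Ansatz argument parallel to \Cref{appA:F} but with $B$ in place of $\widehat{B}$ and a fully occupied rather than empty initial state. Either is comparable in effort to the paper's proof, so the real content has been relocated rather than bypassed.
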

The right-hand side of 
\eqref{eq:G_proof_main_proposition_1}
vanishes if $d(\lambda)>M$. Observe that the 
same is true for the left-hand side. Indeed,
a single $D$ operator moves at most one vertical arrow somewhere to the right,
and $d$ is the number of gaps (sites with no vertical arrows)
among $\left\{ 1,\ldots,N  \right\}$ in the configuration encoded by $e_{\mathcal{S}_N(\lambda)}$, 
so $d$ should not be larger than $M$.

\begin{proof}[Proof of \Cref{prop:G_proof_main_proposition_1}]
	In this proof we use the shorthand notation for the operators, see \Cref{rmk:Shorthand_notation_ABCD_appA2}.
	As a first step, we consider how the action of the 
	product of the $D$ and $C$ operators like in the right-hand side of \eqref{eq:G_proof_main_proposition_1} acts on 
	tensor products. Fix an integer $n>0$, a
	subset $\mathcal{H}=\left\{ h_1,h_2,\ldots,h_k  \right\}\subseteq \left\{ 1,\ldots,M  \right\}$,
	and $u,w\in V^{(n+1)}\otimes V^{(n+2)}\otimes \ldots $.
	Then we have
	\begin{equation}
		\label{eq:G_proof_main_proposition_1_proof1}
		\begin{split}
			\Bigg\langle e_1^{(n)} \otimes w, & \bigg( \prod_{j
			\notin \mathcal{H}} D_j \bigg) C_{h_k} \cdots C_{h_1} \big(
			e_1^{(n)} \otimes u \big) \Bigg\rangle \\ & = \Bigg\langle
			e_1^{(n)}, \bigg( \prod_{j \notin \mathcal{H}} D_j
			\bigg) A_{h_k} \cdots A_{h_1} e_1^{(n)} \Bigg\rangle
			\Bigg\langle w, \bigg( \prod_{j \notin \mathcal{H}}
			D_j \bigg) C_{h_k} \cdots C_{h_1} u \Bigg\rangle. 
		\end{split}
	\end{equation}
	and
	\begin{equation}
		\label{eq:G_proof_main_proposition_1_proof2}
		\begin{split}
			&
			\Bigg\langle e_0^{(n)} \otimes w, \bigg( \prod_{j
			\notin \mathcal{H}} D_j \bigg) C_{h_k} \cdots C_{h_1} \big(
			e_1^{(n)} \otimes u \big) \Bigg\rangle \\ &\hspace{20pt} =
			\sum_{i \notin \mathcal{H}}  \Bigg\langle
			e_0^{(n)}, B_i \bigg( \prod_{j \notin \mathcal{H}
			\cup \{ i \}} D_j \bigg) A_{h_k} \cdots A_{h_1} e_1^{(n)}
			\Bigg\rangle \Bigg\langle w, \bigg( \prod_{j \notin
			\mathcal{H} \cup \{ i \}} D_j \bigg) C_i C_{h_k} \cdots C_{h_1}
			u \Bigg\rangle \\ &\hspace{40pt} \times \prod_{j
			\notin \mathcal{H} \cup \{ i \} } \frac{r^{-2}_i x_i -
			x_j}{r^{-2}_i x_i - r^{-2}_j x_j}.
		\end{split}
	\end{equation}
	Indeed, observe that 
	$C(x,r)$ maps $e_1^{(n)}$ to $0$, so 
	by
	the third statement in \eqref{eq:abcdv_n2} we have
	\begin{equation*}
		C_{h_k}\ldots C_{h_1}(e_1^{(n)}\otimes u)= 
		A_{h_k}\ldots A_{h_1} e_1^{(n)}\otimes  
		C_{h_k}\ldots C_{h_1}u.
	\end{equation*}
	When applying a product of the $D_j$'s to this vector, 
	a nonzero term with $e_1^{(n)}$ in the first tensor factor
	may appear only if we act each time by the operators $D$ on both
	tensor factors, see the fourth statement in \eqref{eq:abcdv_n2}.
	This (together with the fact 
	that $\langle \cdot,\cdot \rangle $
	is multiplicative with respect to the tensor product)
	leads to \eqref{eq:G_proof_main_proposition_1_proof1}.
	For 
	\eqref{eq:G_proof_main_proposition_1_proof2}, we use
	\Cref{lemma:D_action_twofold} expressing the action of a product of the $D_j$'s on a
	tensor product, and observe that a nonzero term with
	$e_0^{(n)}$ in the first
	tensor factor
	may appear only if $|\mathcal{I}| =1$ in the right-hand side of \eqref{eq:D_action_twofold}.

	The action of all the operators on $e_1^{(n)}$ in the right-hand sides 
	of \eqref{eq:G_proof_main_proposition_1_proof1}--\eqref{eq:G_proof_main_proposition_1_proof2}
	is explicit by 
	\eqref{eq:abcdv}
	and
	\eqref{eq:weights_W}:
	\begin{equation*}
		\begin{split}
			&
			\Bigg\langle
			e_1^{(n)}, \bigg( \displaystyle\prod_{j \notin \mathcal{H}} D_j
			\bigg) A_{h_k} \cdots A_{h_1} e_1^{(n)} \Bigg\rangle
			=
			\prod_{k\in \mathcal{H}}
			\frac{s_n^2(x_k-r_k^2 y_n)}{r_k^2(y_n-s_n^2x_k)}
			\prod_{j\notin \mathcal{H}}
			\frac{y_n-s_n^2r_j^{-2} x_j}{y_n-s_n^2x_j};
			\\
			&
			\Bigg\langle
			e_0^{(n)}, B_i \bigg( \displaystyle\prod_{j \notin \mathcal{H}
			\cup \{ i \}} D_j \bigg) A_{h_k} \cdots A_{h_1} e_1^{(n)}
			\Bigg\rangle
			\\
			&
			\hspace{120pt}=
			\frac{s_n^2x_i(r_i^{-2}-1)}{y_n-s_n^2x_i}
			\prod_{k\in \mathcal{H}}
			\frac{s_n^2(x_k-r_k^2 y_n)}{r_k^2(y_n-s_n^2x_k)}
			\prod_{j\notin \mathcal{H} \cup \left\{ i \right\}}
			\frac{y_n-s_n^2r_j^{-2} x_j}{y_n-s_n^2x_j}.
		\end{split}
	\end{equation*}
	This means that we can continue our identities as
	\begin{equation}
		\label{eq:G_proof_main_proposition_1_proof1_next}
		\begin{split}
			\eqref{eq:G_proof_main_proposition_1_proof1}&=
			\Bigg\langle w, \bigg( \displaystyle\prod_{j \notin \mathcal{H}}
			D_j \bigg) C_{h_k} \cdots C_{h_1} u \Bigg\rangle
			\prod_{k\in \mathcal{H}}
			\frac{s_n^2(x_k-r_k^2 y_n)}{r_k^2(y_n-s_n^2x_k)}
			\prod_{j\notin \mathcal{H}}
			\frac{y_n-s_n^2r_j^{-2} x_j}{y_n-s_n^2x_j};
			\\
			\eqref{eq:G_proof_main_proposition_1_proof2}&=
			\sum_{i \notin \mathcal{H}} 
			\Bigg\langle w, \bigg( \prod_{j \notin
			\mathcal{H} \cup \{ i \}} D_j \bigg) C_i C_{h_k} \cdots C_{h_1}
			u \Bigg\rangle 
			\frac{s_n^2x_i(r_i^{-2}-1)}{y_n-s_n^2x_i}
			\prod_{k\in \mathcal{H}}
			\frac{s_n^2(x_k-r_k^2 y_n)}{r_k^2(y_n-s_n^2x_k)}
			\\ &\hspace{80pt} \times 
			\prod_{j\notin \mathcal{H} \cup \left\{ i \right\}}
			\frac{y_n-s_n^2r_j^{-2} x_j}{y_n-s_n^2x_j}
			\prod_{j
			\notin \mathcal{H} \cup \{ i \} } \frac{r^{-2}_i x_i -
			x_j}{r^{-2}_i x_i - r^{-2}_j x_j}.
		\end{split}
	\end{equation}

	Now
	we can evaluate
	\begin{equation*}
		\left\langle e_{\mathcal{S}_N(\lambda)}\otimes v_2,
		D(x_M,r_M)\ldots D(x_2,r_2)D(x_1,r_1) (e_{[1,N]}\otimes v_1)\right\rangle
	\end{equation*}
	by repeatedly using 
	\eqref{eq:G_proof_main_proposition_1_proof1_next}. 
	Start with $\mathcal{H}=\varnothing$, and apply the 
	first identity in \eqref{eq:G_proof_main_proposition_1_proof1_next}
	for each $n\notin \mu=\left\{ 1,\ldots,N  \right\}\setminus \mathcal{S}(\lambda)$,
	and the second identity in \eqref{eq:G_proof_main_proposition_1_proof1_next} for each $n\in \mu$.
	Each application of the latter
	involves choosing an index $i\notin \mathcal{H}$. 
	This freedom is encoded by the data
	$(\mathcal{I},\sigma)$, where $\mathcal{I}=\{i_1<i_2< \ldots < i_d \}\subseteq\left\{ 1,\ldots,M  \right\}$ 
	and $\sigma\in \mathfrak{S}_d$,
	such that at each step when $n=\mu_k\in \mu$ we remove the index 
	$i_{\sigma(k)}$.
	For each fixed $(\mathcal{I},\sigma)$
	we have the following factors in the resulting expansion:
	\begin{enumerate}[$\bullet$]
		\item 
			The inner product term 
			$\displaystyle \Bigl\langle v_2, \Bigl( \prod\limits_{j\notin \mathcal{I}} D_j\Bigr)
			C_{i_d}\ldots C_{i_1} 
			v_1\Bigr\rangle 
			\prod\limits_{1\le \upalpha<\upbeta\le d\colon \sigma(\upbeta)<\sigma (\upalpha)}
			\frac{r_{i_{\sigma(\upbeta)}}^{-2}
			x_{i_{\sigma(\upbeta)}}-
			x_{i_{\sigma(\upalpha)}}}
			{r_{i_{\sigma(\upalpha)}}^{-2}
			x_{i_{\sigma(\upalpha)}}-x_{i_{\sigma(\upbeta)}}}
			$,
			where the last factor comes from reordering the $C$ operators thanks to \Cref{lemma:C_commutation}.
		\item The factor 
			$\displaystyle \prod\limits_{i\in \mathcal{I},\, j\notin \mathcal{I}}
			\frac{r^{-2}_i x_i-x_j}{r^{-2}_i x_i - r^{-2}_j x_j}
			\prod\limits_{1\le \upalpha<\upbeta\le d}
			\frac{r^{-2}_{i_{\sigma(\upalpha)}}
			x_{i_{\sigma(\upalpha)}}-x_{i_{\sigma(\upbeta)}}}{r^{-2}_{i_{\sigma(\upalpha)}}
			x_{i_{\sigma(\upalpha)}} - r^{-2}_{i_{\sigma(\upbeta)}}
			x_{i_{\sigma(\upbeta)}}}$
			arises by applying the second identity in \eqref{eq:G_proof_main_proposition_1_proof1_next}
			for each $n\in \mu$.
			Reordering the denominator in the second factor gives
			\begin{equation*}
			\prod\limits_{1\le \upalpha<\upbeta\le d}
			\frac{1}
			{r^{-2}_{i_{\sigma(\upalpha)}}
			x_{i_{\sigma(\upalpha)}} - r^{-2}_{i_{\sigma(\upbeta)}}
			x_{i_{\sigma(\upbeta)}}}
			=\mathop{\mathrm{sgn}}(\sigma)
			\prod_{i,j\in \mathcal{I},\, i<j}
			\frac{1}{r_i^{-2}x_i-r_j^{-2}x_j}.
			\end{equation*}
		\item The product $\displaystyle\prod\limits_{j=1}^d
			\frac{s_{\mu_j}^2 x_{i_{\sigma(j)}}(r_{i_{\sigma(j)}}^{-2}-1)}{y_{\mu_j}-s_{\mu_j}^2x_{i_{\sigma(j)}}}$
			is composed of one factor per each 
			application of the second identity in \eqref{eq:G_proof_main_proposition_1_proof1_next}
			corresponding to $n=\mu_j\in \mu$.
		\item The product 
			$\displaystyle\prod_{j=1}^{d}\prod_{n=\mu_j+1}^N
			\frac{s_n^2(x_{i_{\sigma(j)}}-r_{i_{\sigma(j)}}^2 y_n)}{r_{i_{\sigma(j)}}^2(y_n-s_n^2x_{i_{\sigma(j)}})}
			$
			arises from both identities in
			\eqref{eq:G_proof_main_proposition_1_proof1_next}
			which contain the same products over $k\in \mathcal{H}$.
		\item Finally, the product 
			$\displaystyle
			\biggl(
				\prod_{n=1}^N\prod_{j=1}^{M}
				\frac{y_n-s_n^2r_j^{-2} x_j}{y_n-s_n^2x_j}
			\biggr)
			\biggl(
				\prod_{j=1}^d
				\prod_{n=\mu_j}^{N}
				\frac{y_n-s_n^2x_{i_{\sigma(j)}}}{y_n-s_n^2r_{i_{\sigma(j)}}^{-2} x_{i_{\sigma(j)}}}
			\biggr)
			$
			arises from the products over $j\notin \mathcal{H}$ or $j\notin\mathcal{H}\cup \left\{ i \right\}$
			in \eqref{eq:G_proof_main_proposition_1_proof1_next}.
	\end{enumerate}
	Combining all the terms
	yields the desired identity.
\end{proof}

\subsubsection{Commutation of the operators $C$ and $D$}
\label{appA:G_CD_commutation}

In this subsection we establish one of the key formulas concerning the commutation of the 
operators $C$ and $D$. We fix $M,N\ge1$ and sequences of complex numbers
\begin{equation*}
	\mathbf{x}=(x_1,\ldots,x_N ),\qquad 
	\mathbf{r}=(r_1,\ldots,r_N ),\qquad 
	\mathbf{w}=(w_1,\ldots,w_M ),\qquad 
	\boldsymbol\uptheta=(\theta_1,\ldots,\theta_M ).
\end{equation*}

\begin{proposition}
	\label{prop:C_D_commutation}
	We have 
	\begin{equation}
		\label{eq:C_D_commutation}
		\begin{split}
			&D(x_N,r_N)\ldots D(x_1,r_1) 
			C(w_M,\theta_M)\ldots C(w_1,\theta_1) 
			\\&\hspace{5pt}
			=
			\sum_{\substack{\mathcal{I}\subseteq\left\{ 1,\ldots,N  \right\}\\
			\mathcal{H}\subseteq \left\{ 1,\ldots,M  \right\}\\
			|\mathcal{I}|+|\mathcal{H}|=M}}
			C(x_{i_k},r_{i_k})\ldots C(x_{i_1},r_{i_1})\,
			C(w_{h_{M-k}},\theta_{h_{M-k}})\ldots C(w_{h_1},\theta_{h_1}) 
			\prod_{j\notin \mathcal{H}}D(w_j,\theta_j)
			\prod_{j\notin \mathcal{I}}D(x_j,r_j)
			\\&\hspace{40pt}
			\times
			\prod_{i\in \mathcal{I}}(1-r_i^{-2})x_i
			\prod_{i\in \mathcal{I},\, j\notin \mathcal{I}}\frac{r_j^{-2}x_j-x_i}{x_j-x_i}
			\prod_{h\in \mathcal{H},\, j\notin \mathcal{H}}\frac{1}{w_j-w_h}
			\prod_{h\in \mathcal{H},\, j\notin \mathcal{I}}\frac{r_j^{-2}x_j-w_h}{x_j-w_h}
			\prod_{i\in \mathcal{I},\, j\notin \mathcal{H}}\frac{1}{x_i-w_j}
			\\&\hspace{40pt}
			\times
			\prod_{i,j\in \mathcal{I},\, i<j}(r_i^{-2}x_i-x_j)
			\prod_{i,h\in \mathcal{H},\, h<i}\frac{1}{\theta_i^{-2}w_i-w_h}
			\prod_{1\le i<j\le M}(\theta_j^{-2}w_j-w_i).
		\end{split}
	\end{equation}
	Here $\mathcal{I}=(i_1<\ldots<i_k )$ and 
	$\mathcal{H}=(h_1<\ldots<h_{M-k} )$.
\end{proposition}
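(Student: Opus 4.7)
The plan is to prove \eqref{eq:C_D_commutation} by induction on $N$, iterating the single-step commutation \eqref{eq:D2_C1}. The base case $N=0$ is immediate: only the term $\mathcal{I}=\varnothing$, $\mathcal{H}=\{1,\ldots,M\}$ survives on the RHS, the product $\prod_{1\le i<j\le M}(\theta_j^{-2}w_j - w_i)$ cancels against $\prod_{i,h\in\mathcal{H}, h<i}(\theta_i^{-2}w_i - w_h)^{-1}$, all other products are empty, and both sides reduce to $C(w_M,\theta_M)\cdots C(w_1,\theta_1)$.

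For the inductive step, I would apply the formula at level $N-1$ inside
\[
D(x_N,r_N)\cdot\bigl[D(x_{N-1},r_{N-1})\cdots D(x_1,r_1)\,C(w_M,\theta_M)\cdots C(w_1,\theta_1)\bigr],
\]
producing a sum over pairs $(\mathcal{I}',\mathcal{H}')$ with $\mathcal{I}'\subseteq\{1,\ldots,N-1\}$ and $|\mathcal{I}'|+|\mathcal{H}'|=M$. I would then commute $D(x_N,r_N)$ to its normal-ordered position on the right, using \eqref{eq:D2_C1} to move it through each remaining $C$-operator, \eqref{eq:D2D1_commute} to move it through $D$-operators, and \Cref{lemma:C_commutation} to reorder the resulting $C$-operators. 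Each use of \eqref{eq:D2_C1} splits into a \emph{pass-through} term (retaining $D(x_N,r_N)$, contributing a scalar factor of the form $(r_N^{-2}x_N-\ast)/(x_N-\ast)$) and a \emph{swap} term (in which $D(x_N,r_N)$ turns into $C(x_N,r_N)$ inserted at the vacated position while the former $C$-operator becomes a $D$-operator with the swapped parameters, contributing a factor $(1-r_N^{-2})x_N/(x_N-\ast)$). Re-indexing this doubled sum by setting $\mathcal{I}=\mathcal{I}'$ (no swap) or $\mathcal{I}=\mathcal{I}'\cup\{N\}$ (single swap, at some position in $\mathcal{H}'$ or in $\mathcal{I}'$) and correspondingly adjusting $\mathcal{H}$ should reproduce the sum on the RHS.

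The main obstacle is the combinatorial bookkeeping: after all the commutations, one must check that the cumulative scalar factors combine to the explicit product on the RHS of \eqref{eq:C_D_commutation}. The factor $\prod_{i\in\mathcal{I}}(1-r_i^{-2})x_i$ records the swap events, $\prod_{i\in\mathcal{I},\,j\notin\mathcal{H}}(x_i-w_j)^{-1}$ collects their denominators, $\prod_{h\in\mathcal{H},\,j\notin\mathcal{I}}(r_j^{-2}x_j-w_h)/(x_j-w_h)$ collects the pass-through contributions, and the $x$-antisymmetry factors $\prod_{i\in\mathcal{I},j\notin\mathcal{I}}(r_j^{-2}x_j-x_i)/(x_j-x_i)$ arise from reordering $C(x_N,r_N)$ into its canonical slot via \Cref{lemma:C_commutation}, while the remaining $w$-reordering terms telescope as in the base case. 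Matching the signs correctly is the bulk of the work. A cleaner alternative, which I would pursue in parallel, is to invoke the uniqueness principle of \Cref{rmk:uniqueness_of_coeffs}: once the LHS is placed in the normal form $\prod C\cdot\prod D$, the coefficient of each monomial is a uniquely determined rational function of the parameters, so it suffices to match both sides on a generating family of vectors in a highest-weight representation. This reduces the proof to a finite check which can be verified by specialization (e.g., residues at $x_i=w_h$ to isolate swap terms, and at $r_i\to 1$ to kill them).
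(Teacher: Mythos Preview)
Your induction on $N$ is a legitimate strategy, but it runs in the opposite direction from the paper and is combinatorially harder for a structural reason you have not accounted for. The paper proceeds by first establishing the case $M=1$ (\Cref{lemma:C_D_commutation_M1_case}): moving a single $C(w,\theta)$ leftward through $D(x_N,r_N)\cdots D(x_1,r_1)$. In that direction, once a swap $D_iC_w\to C_iD_w$ occurs, the new operator $C(x_i,r_i)$ continues to move through the remaining $D$'s, and if it swaps again the index $i$ is lost; hence each of the $N+1$ target monomials $C_?\prod D_?$ is reached by exactly one sequence of choices, and the coefficient is a single product. The paper then handles general $M$ by fixing the special $\mathcal{H}=\{1,\dots,M-k\}$, applying the $M=1$ step to each $C(w_j)$ in turn (encoded by a permutation $\sigma\in\mathfrak{S}_k$), summing over $\sigma$ to a determinant already computed in \eqref{eq:fully_deformed_Cauchy_determinant}, and finally passing to arbitrary $\mathcal{H}$ by the $(w_i,\theta_i)$-symmetry of the normalized $C$-products.

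Your direction is not symmetric to this. When you push a single $D(x_N,r_N)$ rightward through $C_{\mathcal{I}'}C_{\mathcal{H}'}$, a swap produces $C(x_N,r_N)$ together with a new $D$-operator carrying the parameters of the displaced $C$, and this new $D$ must continue through the remaining $C$'s, where it can swap again. Thus the target monomial with, say, final $D$-index equal to $h_1$ is reached by $2^{M-1}$ distinct pass/swap sequences, not one, and the coefficient is a genuine sum that has to be simplified (it does collapse to a product, but this is a lemma, not a bookkeeping exercise). Moreover, a fixed target $(\mathcal{I},\mathcal{H})$ with $N\in\mathcal{I}$ receives contributions from many different $(\mathcal{I}',\mathcal{H}')$ at level $N-1$, one for each element of $\mathcal{I}^c\cup\mathcal{H}^c$ that could have been the ejected $D$-index. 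Your sentence ``$\mathcal{I}=\mathcal{I}'\cup\{N\}$ (single swap, at some position in $\mathcal{H}'$ or in $\mathcal{I}'$)'' and the subsequent attribution of each product factor to a single commutation event do not reflect this; the factors on the right-hand side of \eqref{eq:C_D_commutation} are not in bijection with individual uses of \eqref{eq:D2_C1}. Your approach can be salvaged by first proving the $N=1$ analogue of \Cref{lemma:C_D_commutation_M1_case} (one $D$ through $M$ $C$'s, with the path sums evaluated) and then inducting, but as written the combining step is missing. The uniqueness principle you invoke at the end only tells you the coefficients are well-defined; it does not compute them, and the residue/specialization checks you propose would have to be carried out in full to constitute a proof.
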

Recall that the operators $D(x_j,r_j)$ commute by \eqref{eq:D2D1_commute},
so we can write their products in any order. This is not the case 
for the operators $C(w_j,\theta_j)$, which is why their order in \eqref{eq:C_D_commutation}
must be specified explicitly.

The rest of this subsection is devoted to the proof of \Cref{prop:C_D_commutation}.
As a first step, let us establish the claim for $M=1$:
\begin{lemma}[\Cref{prop:C_D_commutation} for $M=1$]
	\label{lemma:C_D_commutation_M1_case}
	We have
	\begin{equation}
		\label{eq:C_D_commutation_M1_case}
		\begin{split}
			&
			D(x_N,r_N)\ldots D(x_1,r_1)
			C(w,\theta)
			=
			C(w,\theta)
			D(x_1,r_1)\ldots D(x_N,r_N)
			\prod_{j=1}^{N}\frac{r_j^{-2}x_j-w}{x_j-w}
			\\&\hspace{100pt}
			+
			\sum_{i=1}^{N}
			\biggl(
				C(x_i,r_i)D(w,\theta)\prod_{j\ne i}D(x_j,r_j)
			\biggr)
			\frac{(1-r_i^{-2})x_i}{x_i-w}\prod_{j\ne i}\frac{r_j^{-2}x_j-x_i}{x_j-x_i}.
		\end{split}
	\end{equation}
\end{lemma}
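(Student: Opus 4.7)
My plan is to prove Lemma A.13 by induction on $N$, using repeatedly the commutation relation \eqref{eq:D2_C1} from \Cref{prop:ABCD_YBE}, namely
\begin{equation*}
D(y,r')C(x,r)=\frac{r'^{-2}y-x}{y-x}\,C(x,r)D(y,r')+\frac{(1-r'^{-2})y}{y-x}\,C(y,r')D(x,r).
\end{equation*}
The base case $N=1$ is this identity verbatim (with $(y,r')=(x_1,r_1)$ and $(x,r)=(w,\theta)$), so the content is in the inductive step.

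For the inductive step, I would write $D(x_N,r_N)\cdots D(x_1,r_1)\,C(w,\theta)=D(x_N,r_N)\cdot\bigl[\text{RHS}_{N-1}\bigr]$ and push $D(x_N,r_N)$ through each $C$-operator appearing in $\text{RHS}_{N-1}$ using \eqref{eq:D2_C1}. Each application splits a term in two: the ``diagonal'' piece produces a term of the shape already present in the claimed RHS for $N$, while the ``exchange'' piece produces a $C(x_N,r_N)$ term. Explicitly, from the first summand $C(w,\theta)D(x_1,r_1)\cdots D(x_{N-1},r_{N-1})\prod_{j<N}\frac{r_j^{-2}x_j-w}{x_j-w}$ I get (i) the new first summand with the extra factor $\frac{r_N^{-2}x_N-w}{x_N-w}$, plus (ii) a $C(x_N,r_N)D(w,\theta)\prod_{j<N}D(x_j,r_j)$ contribution. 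From each $i$-th summand ($i<N$) I get (iii) the new $i$-th summand with the extra factor $\frac{r_N^{-2}x_N-x_i}{x_N-x_i}$, and (iv) another $C(x_N,r_N)D(w,\theta)\prod_{j<N}D(x_j,r_j)$ contribution, using that the $D$'s commute by \eqref{eq:D2D1_commute} to rearrange $D(x_i,r_i)D(w,\theta)\prod_{j<N,j\ne i}D_j=D(w,\theta)\prod_{j<N}D_j$.

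After this bookkeeping, (i) and (iii) give precisely the first summand and the $i$-th summands ($i<N$) of the RHS for $N$. The nontrivial part is showing that the sum of all (ii) and (iv) contributions collapses to the single $i=N$ summand $\frac{(1-r_N^{-2})x_N}{x_N-w}\prod_{j<N}\frac{r_j^{-2}x_j-x_N}{x_j-x_N}$. After factoring out $(1-r_N^{-2})x_N$, this reduces to the rational identity in $w$:
\begin{equation*}
\prod_{j=1}^{N-1}\frac{r_j^{-2}x_j-w}{x_j-w}+(x_N-w)\sum_{i=1}^{N-1}\frac{1}{x_N-x_i}\cdot\frac{x_i(1-r_i^{-2})}{x_i-w}\prod_{\substack{j<N\\ j\ne i}}\frac{r_j^{-2}x_j-x_i}{x_j-x_i}=\prod_{j=1}^{N-1}\frac{r_j^{-2}x_j-x_N}{x_j-x_N}.
\end{equation*}

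This identity is the main computational obstacle, but it is not deep: both sides are rational in $w$, the RHS is constant in $w$, and on the LHS the potential simple poles at $w=x_i$ ($i<N$) cancel between the first term and the $i$-th summand (a direct residue check gives equal and opposite residues $\pm x_i(1-r_i^{-2})\prod_{j<N,j\ne i}\frac{r_j^{-2}x_j-x_i}{x_j-x_i}$). Hence the LHS is a bounded entire function of $w$, i.e.\ a constant; evaluating at $w=\infty$ (where the first term tends to $1$ and the $i$-th summand tends to $\frac{x_i(1-r_i^{-2})}{x_N-x_i}\prod_{j<N,j\ne i}\frac{r_j^{-2}x_j-x_i}{x_j-x_i}$) recognizes this constant as the partial-fraction evaluation at $w=x_N$ of $\prod_{j<N}\frac{r_j^{-2}x_j-w}{x_j-w}$, which equals the RHS. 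This completes the induction and hence the lemma.
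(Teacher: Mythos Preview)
Your induction argument is correct, and the rational identity you isolate indeed follows from the residue cancellation and partial-fraction evaluation you describe. However, the paper's proof takes a different and shorter route that avoids this identity altogether. Rather than inducting on $N$, the paper computes the coefficient of each individual operator product $C(\,\cdot\,)\prod D$ in one shot: the term $C(w,\theta)\prod_j D(x_j,r_j)$ can arise only by always picking the first (non-swap) branch of \eqref{eq:D2_C1}; for the term $C(x_i,r_i)D(w,\theta)\prod_{j\ne i}D(x_j,r_j)$, the commutativity of the $D$ operators lets one reorder so that $D(x_i,r_i)$ sits rightmost, and then the only way to end up with $C$ carrying parameters $(x_i,r_i)$ is to swap at the very first commutation and never again. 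This relies implicitly on the uniqueness-of-coefficients argument from \Cref{rmk:uniqueness_of_coeffs}, together with the $\mathfrak{S}_N$ symmetry in the $(x_j,r_j)$. Your approach trades that structural input for an explicit computation; it is more self-contained (no appeal to linear independence in a highest-weight representation) but pays for it with the partial-fraction identity. Either way, the $i=N$ coefficient is what the paper's symmetry argument gives for free, while you have to assemble it from the ``exchange'' contributions at every previous step.
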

\begin{proof}
	The first term containing $C(w,\theta)
	D(x_1,r_1)\ldots D(x_N,r_N)$ may only arise 
	if we are picking the first summand in 
	\eqref{eq:D2_C1} for each commutation. This produces the desired product
	$\prod_{j=1}^{N}\frac{r_j^{-2}x_j-w}{x_j-w}$ as a prefactor.

	Now let us explain how to get the summand in the second sum corresponding to $i=1$.
	Thanks to the commutativity of the $D(x_j,r_j)$'s, the form of the other summands then would follow.
	To get the term containing $C(x_1,r_1)D(w,\theta)D(x_2,r_2)\ldots D(x_N,r_N)$,
	we must pick the second summand in \eqref{eq:D2_C1} once,
	when moving $C(w,\theta)$ to the left of $D(x_1,r_1)$. This produces 
	$C(x_1,r_1)D(w,\theta)\frac{(1-r_1^{-2})x_1}{x_1-w}$.
	After that, we move $C(x_1,r_1)$ to the left of all the other $D(x_j,r_j)$'s,
	always picking the first summand in \eqref{eq:D2_C1}.
	This produces the desired identity.
\end{proof}

We now consider the general case $M,N\ge1$ of \eqref{eq:C_D_commutation}.
First, repeatedly using 
relations \eqref{eq:C2C1_commute}, \eqref{eq:D2D1_commute}, and
\eqref{eq:D2_C1}, we have
\begin{equation}
	\label{eq:C_D_commutation_proof_1}
	\begin{split}
			&D(x_N,r_N)\ldots D(x_1,r_1) 
			C(w_M,\theta_M)\ldots C(w_1,\theta_1) 
			\\&\hspace{25pt}
			=
			\sum_{\mathcal{I},\mathcal{H}}
			C(x_{i_k},r_{i_k})\ldots C(x_{i_1},r_{i_1})\,
			C(w_{h_{M-k}},\theta_{h_{M-k}})\ldots C(w_{h_1},\theta_{h_1}) 
			\\&\hspace{140pt}
			\times
			\prod_{j\notin \mathcal{H}}D(w_j,\theta_j)
			\prod_{j\notin \mathcal{I}}D(x_j,r_j)
			R_{\mathcal{I};\mathcal{H}}(\mathbf{w};\mathbf{x};\boldsymbol\uptheta;\mathbf{r}),
	\end{split}
\end{equation}
where the sum is taken over
$\mathcal{I}\subseteq\left\{ 1,\ldots,N  \right\}$ and 
$\mathcal{H}\subseteq \left\{ 1,\ldots,M  \right\}$,
such that 
$|\mathcal{I}| =k$, $|\mathcal{H}| =M-k$, and $k$ is arbitrary (see \eqref{eq:C_D_commutation}).
Here $R_{\mathcal{I};\mathcal{H}}$ are some rational functions
which we will now evaluate.
\begin{lemma}[Evaluation of $R_{\mathcal{I};\mathcal{H}}$ in a special case]
	\label{lemma:C_D_commutation_H_special_case}
	Let $\mathcal{H}=\{1,2,\ldots,M-k \}$,
	and $\mathcal{I}=(i_1<\ldots<i_k )\subseteq\left\{ 1,\ldots,N  \right\}$
	with $|\mathcal{I}| =k$ be arbitrary.
	Then
	\begin{equation}
		\label{eq:C_D_commutation_H_special_case}
		\begin{split}
			&
			R_{\mathcal{I};\mathcal{H}}(\mathbf{w};\mathbf{x};\boldsymbol\uptheta;\mathbf{r})
			=
			\prod_{i\in \mathcal{I}}(1-r_i^{-2})x_i
			\prod_{i\in \mathcal{I},\, j\notin \mathcal{I}}\frac{r_j^{-2}x_j-x_i}{x_j-x_i}
			\prod_{h\in \mathcal{H},\, j\notin \mathcal{H}}\frac{1}{w_j-w_h}
			\prod_{h\in \mathcal{H},\, j\notin \mathcal{I}}\frac{r_j^{-2}x_j-w_h}{x_j-w_h}
			\\&\hspace{40pt}
			\times
			\prod_{i\in \mathcal{I},\, j\notin \mathcal{H}}\frac{1}{x_i-w_j}
			\prod_{i,j\in \mathcal{I},\, i<j}(r_i^{-2}x_i-x_j)
			\prod_{i,h\in \mathcal{H},\, h<i}\frac{1}{\theta_i^{-2}w_i-w_h}
			\prod_{1\le i<j\le M}(\theta_j^{-2}w_j-w_i).
		\end{split}
	\end{equation}
\end{lemma}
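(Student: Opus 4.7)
The plan is to prove formula \eqref{eq:C_D_commutation_H_special_case} by induction on $M$, using Lemma~\ref{lemma:C_D_commutation_M1_case} as the base case and iterating it to peel off the leftmost $C$-operator at each step.

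For the base case $M=1$, only two configurations are possible: $(k,\mathcal{I},\mathcal{H})=(0,\varnothing,\{1\})$, in which \eqref{eq:C_D_commutation_H_special_case} collapses to $\prod_{j=1}^{N}\frac{r_j^{-2}x_j-w_1}{x_j-w_1}$, matching the pass coefficient in Lemma~\ref{lemma:C_D_commutation_M1_case}; and $(1,\{i\},\varnothing)$, in which the surviving factors $(1-r_i^{-2})x_i \cdot \frac{1}{x_i-w_1} \cdot \prod_{j\ne i}\frac{r_j^{-2}x_j-x_i}{x_j-x_i}$ match the convert coefficient. Both verifications are direct.

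For the inductive step, I would assume the formula for $M-1$ $C$-operators with an arbitrarily labeled $D$-set, and apply Lemma~\ref{lemma:C_D_commutation_M1_case} to commute $D(x_N,r_N)\cdots D(x_1,r_1)$ past the leftmost $C$-operator $C(w_M,\theta_M)$, producing one pass term and $N$ convert terms. In the case $k=0$ (so $M\in\mathcal{H}$), only the pass term contributes, and the inductive hypothesis applied to the remainder $D(x_N)\cdots D(x_1) C(w_{M-1})\cdots C(w_1)$ with $\mathcal{I}'=\varnothing$, $\mathcal{H}'=\{1,\ldots,M-1\}$ yields the result. In the case $k\ge1$ (so $M\notin\mathcal{H}$), each convert term with index $j$ produces a contribution of the form $C(x_j,r_j)\cdot\bigl[D(w_M,\theta_M)\prod_{l\ne j}D(x_l,r_l)\,C(w_{M-1})\cdots C(w_1)\bigr]$ multiplied by the convert coefficient of Lemma~\ref{lemma:C_D_commutation_M1_case}; the bracketed expression is handled by the inductive hypothesis applied with the modified $D$-set $\{(w_M,\theta_M)\}\cup\{(x_l,r_l):l\ne j\}$, restricted to contributions in which $D(w_M,\theta_M)$ is not re-converted back to $C(w_M,\theta_M)$ (since the target has $M\notin\mathcal{H}$).

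A key combinatorial point is that multiple iteration paths contribute to the coefficient of the same target operator. A path is encoded by a permutation $(\alpha_1,\ldots,\alpha_k)$ of the elements of $\mathcal{I}$, where $\alpha_s$ is the index chosen at the $s$-th convert step; the path produces $C(x_{\alpha_1})C(x_{\alpha_2})\cdots C(x_{\alpha_k})$ from left to right on the output, which must be reordered to the canonical form $C(x_{i_k})\cdots C(x_{i_1})$ via repeated application of \eqref{eq:C2C1_commute}, contributing a path-dependent prefactor governed by the inversions of the permutation.

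The hard part will be the final algebraic verification: showing that the sum of these $k!$ path contributions, together with all factors accumulated from the iterated applications of Lemma~\ref{lemma:C_D_commutation_M1_case}, simplifies precisely to \eqref{eq:C_D_commutation_H_special_case}. In particular, the factor $\prod_{1\le i<j\le M}(\theta_j^{-2}w_j-w_i)$ in the numerator and the pair factor $\prod_{i,h\in\mathcal{H},\,h<i}\frac{1}{\theta_i^{-2}w_i-w_h}$, which can only arise from reorderings of $C(w_i,\theta_i)$-operators among themselves (via iterated use of \eqref{eq:C2C1_commute}), should emerge from a telescoping partial-fraction identity in the $w$-variables. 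I have checked this directly in the simplest nontrivial case $M=N=2$, $\mathcal{I}=\{1,2\}$, $\mathcal{H}=\varnothing$, where the two contributing paths combine via a two-term partial-fraction collapse to produce the factor $(\theta_2^{-2}w_2-w_1)$; the analogous mechanism is expected to extend to arbitrary $M$ and $k$. An alternative strategy that avoids the explicit sum over permutations would be to recognize $R_{\mathcal{I};\mathcal{H}}$ as the partition function of an auxiliary free fermion cross-vertex model with rapidity families $\mathbf{x}$ and $\mathbf{w}$, and to evaluate it by an Izergin--Korepin type uniqueness argument.
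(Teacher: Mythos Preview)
Your iteration of Lemma~\ref{lemma:C_D_commutation_M1_case}, tracking a permutation $\sigma\in\mathfrak{S}_k$ that records which $x_i$ each $C(w_{M-k+j})$ converts into, is exactly the computation the paper carries out (the paper organizes it as a direct repeated application rather than formal induction on $M$, but the resulting sum over $\sigma$ is the same). The genuine gap is in what you call ``the hard part'': you conjecture a telescoping partial-fraction mechanism and verify only $M=N=2$, whereas the paper closes the argument with a structural observation you are missing.

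After collecting the $\sigma$-dependent factors and the reordering prefactor from \eqref{eq:C2C1_commute}, the combination
\[
\prod_{1\le h<j\le k}\frac{r_{i_{\sigma(h)}}^{-2}x_{i_{\sigma(h)}}-x_{i_{\sigma(j)}}}{x_{i_{\sigma(h)}}-x_{i_{\sigma(j)}}}
\cdot\prod_{\substack{1\le\alpha<\beta\le k\\ \sigma(\beta)<\sigma(\alpha)}}\frac{r_{i_{\sigma(\beta)}}^{-2}x_{i_{\sigma(\beta)}}-x_{i_{\sigma(\alpha)}}}{r_{i_{\sigma(\alpha)}}^{-2}x_{i_{\sigma(\alpha)}}-x_{i_{\sigma(\beta)}}}
=\mathrm{sgn}(\sigma)\prod_{i,j\in\mathcal{I},\,i<j}\frac{r_i^{-2}x_i-x_j}{x_i-x_j}
\]
reduces to a $\sigma$-independent factor times $\mathrm{sgn}(\sigma)$. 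What remains inside the sum over $\sigma$ is then precisely a $k\times k$ determinant,
\[
\sum_{\sigma\in\mathfrak{S}_k}\mathrm{sgn}(\sigma)\prod_{j=1}^k\Bigl(\frac{1}{x_{i_{\sigma(j)}}-w_{M-k+j}}\prod_{h=M-k+j+1}^{M}\frac{\theta_h^{-2}w_h-x_{i_{\sigma(j)}}}{w_h-x_{i_{\sigma(j)}}}\Bigr)
=\det\Bigl[\frac{1}{x_{i_\beta}-w_{M-k+\alpha}}\prod_{h=M-k+\alpha+1}^{M}\frac{\theta_h^{-2}w_h-x_{i_\beta}}{w_h-x_{i_\beta}}\Bigr]_{\alpha,\beta=1}^k.
\]
This determinant is (after renaming variables) exactly the deformed Cauchy determinant already evaluated in \eqref{eq:fully_deformed_Cauchy_determinant}, and its product form supplies both $\prod_{i\in\mathcal{I},\,j\notin\mathcal{H}}\frac{1}{x_i-w_j}$ and the factor $\prod_{i,j\notin\mathcal{H},\,i<j}(\theta_j^{-2}w_j-w_i)$ that you were hoping would appear via partial fractions. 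Once you see this, the proof is complete; your proposed Izergin--Korepin alternative is unnecessary.
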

\begin{proof}
	From the left-hand side of \eqref{eq:C_D_commutation_proof_1}, we apply \eqref{eq:D2_C1}
	(together with permutation relations \eqref{eq:C2C1_commute}, \eqref{eq:D2D1_commute} for
	the operators $C,D$)
	to move all the operators $C$ to the left of all the operators $D$. 
	The operator 
	\begin{equation*}
		C(x_{i_k},r_{i_k})\ldots C(x_{i_1},r_{i_1})
		C(w_{M-k},\theta_{M-k})\ldots C(w_1,\theta_1) 
		\prod_{j=M-k+1}^{M}D(w_j,\theta_j)
		\prod_{j\notin \mathcal{I}}D(x_j,r_j)
	\end{equation*}
	may arise, after a sequence of
	applications of \Cref{lemma:C_D_commutation_M1_case},
	only if there exists a permutation $\sigma\in \mathfrak{S}_k$
	such that the following two conditions are met:
	\begin{enumerate}[$\bullet$]
		\item When moving each $C(w_{M-k+j},\theta_{M-k+j})$, $1\le j\le k$, to the left,
			turn $(w_{M-k+j},\theta_{M-k+j})$ into $(x_{i_{\sigma(j)}},r_{i_{\sigma(j)}})$.
			This corresponds to picking the second summand in \eqref{eq:D2_C1}, and this 
			swapping of parameters may happen only once per each $C$ operator.
		\item 
			When moving each $C(w_j,\theta_j)$, $1\le j\le M-k$, to the left, 
			we always pick the first summand in \eqref{eq:D2_C1}, and 
			the parameters $(w_j,\theta_j)$ stay the same throughout the exchanges. 
	\end{enumerate}
	
	To be able to put all the coefficients together,
	denote 
	$\sigma_t (\mathcal{I}) = \big( i_{\sigma (t)}, i_{\sigma (t + 1)},
	\ldots , i_{\sigma (k)} \big)$ for each $1\le t\le k$.
	Then, for each integer $1\le j\le k$, when attempting to commute
	$C(w_{M - k + j}, \theta_{M - k + j})$ to the left of 
	\begin{equation*}
	\prod_{h \notin \sigma_{j + 1} (\mathcal{I})}  D (x_h, r_h) \prod_{h = j + 1}^k D (w_{M - k + h}, \theta_{M - k + h}),
	\end{equation*}
	we obtain 
	\begin{equation*}
		C \big( x_{i_{\sigma (j)}}, r_{i_{\sigma (j)}} \big) \prod_{h \notin \sigma_j (\mathcal{I})} D (x_h, r_h) \prod_{h = j}^k D (w_{M - k + h}, \theta_{M - k + h}).
	\end{equation*}
	By \Cref{lemma:C_D_commutation_M1_case}, this contributes a factor of
	\begin{equation}
		\label{eq:C_D_commutation_proof_2}
		\frac{\big( 1 - r^{-2}_{i_{\sigma (j)}} \big) x_{i_{\sigma
		(j)}}}{x_{i_{\sigma (j)}} - w_{M - k + j}} \prod_{h = M - k + j +
		1}^M \frac{\theta_h^{-2} w_h - x_{i_{\sigma (j)}}}{w_h - x_{i_{\sigma
		(j)}}} \prod_{h \notin \sigma_j (\mathcal{I})} \frac{r^{-2}_h x_h -
		x_{i_{\sigma (j)}}}{x_h - x_{i_{\sigma (j)}}}.
	\end{equation}
	This deals with the first case above when we swap the parameters between $C$ and $D$ operators.

	In the second case when we do not swap the parameters, 
	each $C (w_j, \theta_j)$ for $1\le j\le M-k$ must be commuted to the left of 
	$\prod_{h \notin \mathcal{I}}  D (x_h, r_h) \prod_{h = M - k + 1}^M D (w_h, \theta_h)$,
	which contributes
	\begin{equation}
	\label{eq:C_D_commutation_proof_3}
	\prod_{h \notin \mathcal{I}} \frac{r^{-2}_h
		x_h - w_j}{x_h - w_j} \prod_{h = M - k + 1}^M
		\frac{\theta_h^{-2} w_h - w_j}{w_h - w_j}.  
	\end{equation}	
		
	Observe that
	\begin{equation}
		\label{eq:C_D_commutation_proof_4}
		\begin{split}
			\prod_{j = 1}^k \big( 1 -
			r^{-2}_{i_{\sigma (j)}} \big) x_{i_{\sigma (j)}} &=
			\prod_{i \in \mathcal{I}} (1 - r^{-2}_i) x_i; 
			\\ 
			\prod_{j = 1}^k \prod_{h \notin \sigma_j
			(\mathcal{I})} \frac{r^{-2}_h x_h - x_{i_{\sigma
			(j)}}}{x_h - x_{i_{\sigma (j)}}} &= \prod_{
				i\in \mathcal{I},\, h \notin  \mathcal{I}} \frac{r^{-2}_h
			x_h - x_i}{x_h - x_i} \prod_{1 \le h < j \le k}
			\frac{r^{-2}_{i_{\sigma (h)}} x_{i_{\sigma (h)}} -
			x_{i_{\sigma (j)}}}{x_{i_{\sigma (h)}} - x_{i_{\sigma (j)}}},
		\end{split}
	\end{equation}

	Now, combining the product of 
	\eqref{eq:C_D_commutation_proof_2} over $1\le j\le k$ and 
	\eqref{eq:C_D_commutation_proof_3} over $1\le j\le M-k$, and using 
	\eqref{eq:C_D_commutation_proof_4}, we see that 
	the desired coefficient depending on $\sigma\in \mathfrak{S}_k$ is equal to
	\begin{equation}
		\label{eq:C_D_commutation_proof_5}
		\begin{split}
			&
			\prod_{i \in \mathcal{I}}  (1 - r^{-2}_i) x_i
			\prod_{i \in \mathcal{I},\, h \notin
			\mathcal{I}} \frac{r^{-2}_h x_h - x_i}{x_h - x_i}
			\prod_{j = 1}^{M - k} \Bigg( \prod_{h
				\notin \mathcal{I}} \frac{r^{-2}_h x_h - w_j}{x_h - w_j}
				\prod_{h = M - k + 1}^M \frac{\theta^{-2}_h w_h -
			w_j}{w_h - w_j} \Bigg) 
			\\ & \hspace{30pt}
			\times 
			\prod_{1 \le h < j
			\le k} \frac{r^{-2}_{i_{\sigma (h)}} x_{i_{\sigma (h)}} -
			x_{i_{\sigma (j)}}}{x_{i_{\sigma (h)}} - x_{i_{\sigma (j)}}}
			\prod_{j = 1}^k \Bigg(
			\frac{1}{x_{i_{\sigma (j)}} - w_{M - k + j}}
			\prod_{h = M - k + j + 1}^M \frac{\theta^{-2}_h w_h -
			x_{i_{\sigma (j)}}}{w_h - x_{i_{\sigma (j)}}} \Bigg).
		\end{split}
	\end{equation}
	Note that this is the coefficient of the operator
	\begin{equation*}
		C(x_{i_{\sigma(k)}},r_{i_{\sigma(k)}})
		\ldots 
		C(x_{i_{\sigma(1)}},r_{i_{\sigma(1)}})
		C(w_{M-k},\theta_{M-k})\ldots C(w_1,\theta_1) 
		\prod_{j=M-k+1}^{M}D(w_j,\theta_j)
		\prod_{j\notin \mathcal{I}}D(x_j,r_j),
	\end{equation*}
	and permuting the first $k$ of the $C$ operators to the desired order
	$C(x_{i_k},r_{i_k})\ldots C(x_{i_1},r_{i_1}) $
	results in an additional factor
	\begin{equation}
		\label{eq:C_D_commutation_proof_6}
		\prod_{\substack{1\le \upalpha<\upbeta\le k\\ \sigma(\upbeta)<\sigma(\upalpha)}}
		\frac{r^{-2}_{i_{\sigma(\upbeta)}}x_{i_{\sigma(\upbeta)}}-x_{i_{\sigma(\upalpha)}}}
		{r^{-2}_{i_{\sigma(\upalpha)}}x_{i_{\sigma(\upalpha)}}-x_{i_{\sigma(\upbeta)}}},
	\end{equation}
	by \Cref{lemma:C_commutation}.

	This implies that the full coefficient 
	$R_{\mathcal{I};\mathcal{H}}(\mathbf{w};\mathbf{x};\boldsymbol\uptheta;\mathbf{r})$
	equals to the sum of 
	\eqref{eq:C_D_commutation_proof_5} times \eqref{eq:C_D_commutation_proof_6} over all $\sigma\in \mathfrak{S}_k$.
	We have
	\begin{equation*}
		\prod_{1 \le h < j
		\le k} \frac{r^{-2}_{i_{\sigma (h)}} x_{i_{\sigma (h)}} -
		x_{i_{\sigma (j)}}}{x_{i_{\sigma (h)}} - x_{i_{\sigma (j)}}}
		\prod_{\substack{1\le \upalpha<\upbeta\le k\\ \sigma(\upbeta)<\sigma(\upalpha)}}
		\frac{r^{-2}_{i_{\sigma(\upbeta)}}x_{i_{\sigma(\upbeta)}}-x_{i_{\sigma(\upalpha)}}}
		{r^{-2}_{i_{\sigma(\upalpha)}}x_{i_{\sigma(\upalpha)}}-x_{i_{\sigma(\upbeta)}}}
		=
		\mathrm{\mathop{sgn}}(\sigma)
		\prod_{i,j\in \mathcal{I},\, i<j}\frac{r_i^{-2}x_i-x_j}{x_i-x_j}.
	\end{equation*}
	Therefore, the summation over $\sigma$ amounts to computing 
	the determinant:
	\begin{equation}
		\label{eq:sum_over_sigma_G_proof}
		\begin{split}
			&
			\sum_{\sigma\in \mathcal{I}}
			\mathop{\mathrm{sgn}}(\sigma)
			\prod_{j = 1}^k \Bigg(
			\frac{1}{x_{i_{\sigma (j)}} - w_{M - k + j}}
			\prod_{h = M - k + j + 1}^M \frac{\theta^{-2}_h w_h -
			x_{i_{\sigma (j)}}}{w_h - x_{i_{\sigma (j)}}} \Bigg)
			\\&\hspace{120pt}=
			\det
			\left[
				\frac{1}{x_{i_{\upbeta}} - w_{M - k + \upalpha}}
				\prod_{h = M - k + \upalpha + 1}^M
				\frac{\theta^{-2}_h w_h -
				x_{i_{\upbeta}}}{w_h - x_{i_{\upbeta}}}
			\right]_{\upalpha,\upbeta=1}^{k}
			.
		\end{split}
	\end{equation}
	We have already computed this determinant (up to renaming the variables)
	in 
	\eqref{eq:fully_deformed_Cauchy_determinant},
	and so
	\begin{equation*}
		\eqref{eq:sum_over_sigma_G_proof}=
		\prod_{i\in \mathcal{I},\, j\notin \mathcal{H}}
		\frac1{x_i-w_j}
		\prod_{i,j\notin \mathcal{H},\,i<j}(\theta_j^{-2}w_j-w_i)
		\prod_{i,j\in \mathcal{I},\,i<j}(x_i-x_j)
		,
		%
		%
		%
	\end{equation*}
	where we recalled that $\mathcal{H}=\left\{ 1,2,\ldots,M-k  \right\}$.
	Combining this with the remainder of \eqref{eq:C_D_commutation_proof_5},
	we arrive at the desired expression
	\eqref{eq:C_D_commutation_H_special_case}, thus concluding the proof of \Cref{lemma:C_D_commutation_H_special_case}.
\end{proof}

Finally, to get $R_{\mathcal{I};\mathcal{H}}$ for general $\mathcal{H}$, we can permute the $C$ operators
in the left-hand side of \eqref{eq:C_D_commutation}
thanks to \eqref{eq:C2C1_commute}. More precisely, the two expressions 
\begin{equation*}
	\begin{split}
		&
		C(w_M,\theta_M)\ldots C(w_1,\theta_1)\prod_{1\le i<j\le M}\frac{1}{\theta_j^{-2}w_j-w_i},
		\\
		&
		C(w_{h_{M-k}},\theta_{h_{M-k}})\ldots C(w_{h_1},\theta_{h_1})
		\prod_{i,j\in \mathcal{H},\,i<j}\frac{1}{\theta_j^{-2}w_j-w_i}
	\end{split}
\end{equation*}
are symmetric in $(w_i,\theta_i)$, $1\le i\le M$, and 
$(w_h,\theta_h)$, $h\in \mathcal{H}$, respectively.
Defining
\begin{equation}
	\label{eq:C_D_commutation_proof_7}
	\widehat{R}_{\mathcal{I};\mathcal{H}}(\mathbf{w};\mathbf{x};\boldsymbol\uptheta;\mathbf{r})
	=
	R_{\mathcal{I};\mathcal{H}}(\mathbf{w};\mathbf{x};\boldsymbol\uptheta;\mathbf{r})\,
	\frac{\prod_{i,j\in \mathcal{H},\,i<j}(\theta_j^{-2}w_j-w_i)}{\prod_{1\le i<j\le M}(\theta_j^{-2}w_j-w_i)},
\end{equation}
we see that for any permutation $\tau\in \mathfrak{S}_M$ we have
$
\widehat{R}_{\mathcal{I};\tau(\mathcal{H})}(\tau(\mathbf{w});\mathbf{x};\tau(\boldsymbol\uptheta);\mathbf{r})
=
\widehat{R}_{\mathcal{I};\mathcal{H}}(\mathbf{w};\mathbf{x};\boldsymbol\uptheta;\mathbf{r})
$.
The renormalization in \eqref{eq:C_D_commutation_proof_7} cancels out with the two last factors in 
$R_{\mathcal{I};\left\{ 1,\ldots,M-k  \right\}}$ in
\eqref{eq:C_D_commutation_H_special_case}.
This together with the symmetry of \eqref{eq:C_D_commutation_proof_7}
implies that $R_{\mathcal{I};\mathcal{H}}$
for general $\mathcal{H}$ is given by the same formula.
We have thus completed 
the proof of \Cref{prop:C_D_commutation}.

\subsubsection{Action of $C$ operators on a two-fold tensor product}
\label{appA:G_C_action}

In this subsection we perform computations with 
row operators acting on tensor products
which are
parallel to those in \Cref{appA:F_two_spaces,appA:G_action_two},
but now involve the $C$ operators.

\begin{lemma}
	\label{lemma:C_action_two}
	Let $\mathbf{x}=(x_1,\ldots,x_M )$, $\mathbf{r}=(r_1,\ldots,r_M )$.
	On any tensor product $V_1\otimes V_2$ we have:
	\begin{equation}
		\label{eq:action_of_C_twofold}
		\begin{split}
			&
			C(x_M,r_M)\ldots C(x_1,r_1)
			\\
			&\hspace{3pt}
			=
			\sum_{\mathcal{I}\subseteq \{1,\ldots,M \}}
			C(x_{i_k},r_{i_k})\ldots C(x_{i_1},r_{i_1}) 
			A(x_{j_{M-k}},r_{j_{M-k}})\ldots A(x_{j_1},r_{j_1}) 
			\\&\hspace{30pt}\otimes
			C(x_{j_{M-k}},r_{j_{M-k}})\ldots C(x_{j_1},r_{j_1}) 
			D(x_{i_k},r_{i_k})\ldots D(x_{i_1},r_{i_1})
			\\&\hspace{40pt}\times
			\prod_{i\in \mathcal{I},\, j\in \mathcal{J}}\frac{1}{x_i-x_j}
			\prod_{1\le i<j\le M}(r_j^{-2}x_j-x_i)
			\prod_{i,j\in \mathcal{I},\,i<j}
			\frac{1}{r_j^{-2}x_j-x_i}
			\prod_{i,j\in \mathcal{J},\,i<j}
			\frac{1}{r_j^{-2}x_j-x_i},
		\end{split}
	\end{equation}
	where
	$\mathcal{I}=(i_1<\ldots<i_k )$ and $\mathcal{J}=\left\{ 1,\ldots,M  \right\}\setminus \mathcal{I}=
	(j_1<\ldots<j_{M-k} )$.
\end{lemma}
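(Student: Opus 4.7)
The plan is to mirror the proof of \Cref{lemma:D_action_twofold} (the analogous identity for the $D$ operators), which itself follows the template used in \Cref{appA:F_two_spaces} for $F_\lambda$.  The first step is to expand
\begin{equation*}
	C(x_M,r_M)\cdots C(x_1,r_1)=\sum_{\mathcal{K}\subseteq\{1,\ldots,M\}} X_\mathcal{K}\otimes Y_\mathcal{K}
\end{equation*}
via the third identity in \eqref{eq:abcdv_n2}, where $X_\mathcal{K}=X_M(\mathcal{K})\cdots X_1(\mathcal{K})$, $Y_\mathcal{K}=Y_M(\mathcal{K})\cdots Y_1(\mathcal{K})$ with $X_i(\mathcal{K})=C_i$, $Y_i(\mathcal{K})=D_i$ when $i\in\mathcal{K}$ and $X_i(\mathcal{K})=A_i$, $Y_i(\mathcal{K})=C_i$ otherwise (using the shorthand of \Cref{rmk:Shorthand_notation_ABCD_appA2}).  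Next I would rewrite each $X_\mathcal{K}\otimes Y_\mathcal{K}$ in the ``normal form'' appearing in \eqref{eq:action_of_C_twofold}, namely $C_{i_k}\cdots C_{i_1} A_{j_{M-k}}\cdots A_{j_1}\otimes C_{j'_{M-m}}\cdots C_{j'_1} D_{i'_m}\cdots D_{i'_1}$.  On the first tensor factor this is achieved by iterating the $A$-$C$ commutation (the relation immediately preceding \eqref{eq:ADBC_1}) to push all $A$s to the right of all $C$s, combined with \eqref{eq:A2A1} and \eqref{eq:C2C1_commute}; on the second factor, one uses \eqref{eq:C2_D1} to push all $D$s to the right of all $C$s, together with \eqref{eq:D2D1_commute} and \eqref{eq:C2C1_commute}.

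Writing the result as $\sum_{\mathcal{I},\mathcal{I}'} h_{\mathcal{I};\mathcal{I}'}(\mathbf{x};\mathbf{r})\cdot(\text{normal form})$, I would invoke \Cref{rmk:uniqueness_of_coeffs} so that the coefficients are independent of the order in which the commutation relations are applied, and then, mimicking \Cref{lemma:F_proof_lemma1}, argue that $h_{\mathcal{I};\mathcal{I}'}$ vanishes unless $\mathcal{I}\cap\mathcal{I}'=\varnothing=\mathcal{J}\cap\mathcal{J}'$, forcing $\mathcal{I}'=\mathcal{J}$.  The key observation is that at each index $m$ the pair $(X_m(\mathcal{K}),Y_m(\mathcal{K}))$ equals either $(C_m,D_m)$ or $(A_m,C_m)$, so a $C_m$ cannot be produced simultaneously in both tensor factors of the normal form; neither the $A$-$C$ swap nor \eqref{eq:C2_D1} can cure this.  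After this reduction, only a single label $\mathcal{I}$ parametrizes the nonzero terms and we may write $h_\mathcal{I}:=h_{\mathcal{I};\mathcal{J}}$.

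To compute $h_\mathcal{I}$, I would first take the special case $\mathcal{I}=I_k=\{M-k+1,\ldots,M\}$, where only the single choice $\mathcal{K}=\{1,\ldots,M-k\}$ can contribute, and the unique commutation sequence yielding the target normal form selects the first summand in each $A$-$C$ and $C$-$D$ swap.  Multiplying the resulting per-swap factors $(r_i^{-2}x_i-x_j)/(x_j-x_i)$ (from the first tensor factor) and $(r_i^{-2}x_i-x_j)/(r_i^{-2}x_i-r_j^{-2}x_j)$ (from the second, with appropriate signs) yields $h_{I_k}$.  For a general $\mathcal{I}$, I would apply \Cref{lemma:C_commutation} to the left-hand side of \eqref{eq:action_of_C_twofold} to permute the $C(x_i,r_i)$'s into an order that reduces $\mathcal{I}$ to $I_k$ with relabelled parameters; this introduces a scalar correction factor which, combined with $h_{I_k}(\sigma(\mathbf{x});\sigma(\mathbf{r}))$, produces the full coefficient.

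The main obstacle will be the bookkeeping needed to reassemble these contributions into the single symmetric product formula stated.  The simultaneous appearance of the doubly-antisymmetric denominator $\prod_{i,j\in\mathcal{I},\,i<j}(r_j^{-2}x_j-x_i)^{-1}\prod_{i,j\in\mathcal{J},\,i<j}(r_j^{-2}x_j-x_i)^{-1}$, the mixed factor $\prod_{i\in\mathcal{I},\,j\in\mathcal{J}}(x_i-x_j)^{-1}$, and the global numerator $\prod_{1\le i<j\le M}(r_j^{-2}x_j-x_i)$ must arise without residual factors.  This final step is slightly more intricate than the corresponding one in \Cref{lemma:D_action_twofold} because the second tensor factor here contains its own block of $C$ operators (rather than only $D$s), so the $\mathcal{J}$-denominator is produced by an independent simplification and must be tracked in parallel with the $\mathcal{I}$-denominator from the first factor.
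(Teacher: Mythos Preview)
Your overall strategy is exactly the paper's: expand via the coproduct rule, reduce to a single index set $\mathcal{I}$ by the \Cref{lemma:F_proof_lemma1}-style argument, compute $h_{\mathcal{I}}$ for a special $\mathcal{I}$, and then transport to general $\mathcal{I}$ by the $C$-operator symmetry (\Cref{lemma:C_commutation}).  The paper packages the last step slightly more cleanly by normalizing $h_{\mathcal{I}}$ so that the rescaled quantity $\widehat h_{\mathcal{I}}$ is manifestly symmetric, but this is cosmetic.

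There is, however, a concrete error in your special-case computation.  For $\mathcal{I}=I_k=\{M-k+1,\ldots,M\}$, the unique contributing $\mathcal{K}$ is $\mathcal{K}=I_k$, not its complement: with your conventions, $X_{\mathcal{K}}=C_M\cdots C_{M-k+1}A_{M-k}\cdots A_1$ is then already in normal form on the first tensor factor, so \emph{no} $A$--$C$ swaps occur and there is no factor coming from the first factor at all.  All of $h_{I_k}$ comes from commuting $D_M\cdots D_{M-k+1}$ past $C_{M-k}\cdots C_1$ in the second factor via \eqref{eq:D2_C1}, each such swap contributing $(r_i^{-2}x_i-x_j)/(x_i-x_j)$ with $i\in I_k$, $j\notin I_k$; the factor $(r_i^{-2}x_i-x_j)/(r_i^{-2}x_i-r_j^{-2}x_j)$ you quote is the one from \eqref{eq:C2_D1} used in \Cref{lemma:D_action_twofold}, not here.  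Once you correct this, the bookkeeping in your final paragraph collapses to the one-line identity the paper records in \eqref{eq:action_of_C_twofold_proof}, and the rest goes through.
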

\begin{proof}
	In the proof we use the 
	shorthand notation for the operators from \Cref{rmk:Shorthand_notation_ABCD_appA2}.
	Due to \eqref{eq:abcdv_n2}, relations in \Cref{prop:ABCD_YBE},
	and an argument identical to the beginning 
	of the proof of \Cref{lemma:D_action_twofold},
	we see that 
	the left-hand side of \eqref{eq:action_of_C_twofold}
	can be written in the form
	\begin{equation*}
		\sum_{\mathcal{I}\subseteq\left\{ 1,\ldots,M  \right\}}
		h_{\mathcal{I}}(\mathbf{x};\mathbf{r})
		C_{i_k}\ldots C_{i_1}A_{j_{M-k}}\ldots A_{j_1} 
		\otimes
		C_{j_{M-k}}\ldots C_{j_1} D_{i_k}\ldots D_{i_{1}},
	\end{equation*}
	where the notation $\mathcal{I},\mathcal{J}$ is as in \eqref{eq:action_of_C_twofold}.

	We first evaluate $h_{\mathcal{I}}$ in the special case 
	$\mathcal{I}=\mathcal{I}_k=\left\{ M-k+1,\ldots,M-1,M  \right\}$.
	The contribution containing the operator 
	$C_M\ldots C_{M-k+1}A_{M-k}\ldots A_1\otimes
	C_{M-k}\ldots C_1 D_{M}\ldots D_{M-k+1}$
	may arise only if we use \eqref{eq:D2_C1} in the second tensor factor
	to commute all $C_j$, $j\notin \mathcal{I}_k$, 
	to the left of all $D_i$, $i\in \mathcal{I}$,
	without swapping their arguments.
	Each such commutation gives rise to the factor
	$\frac{r_i^{-2}x_i-x_j}{x_i-x_j}$. Therefore,
	\begin{equation}
		\label{eq:action_of_C_twofold_proof}
		\begin{split}
			&h_{\mathcal{I}_k}(\mathbf{x};\mathbf{r})
			=
			\prod_{i=M-k+1}^{M}\prod_{j=1}^{M-k}\frac{r_i^{-2}x_i-x_j}{x_i-x_j}
			\\
			&\hspace{25pt}=
			\prod_{i\in \mathcal{I}_k,\, j\notin \mathcal{I}_k}\frac{1}{x_i-x_j}
			\prod_{1\le i<j\le M}(r_j^{-2}x_j-x_i)
			\prod_{i,j\in \mathcal{I}_k,\,i<j}
			\frac{1}{r_j^{-2}x_j-x_i}
			\prod_{i,j\notin \mathcal{I}_k,\,i<j}
			\frac{1}{r_j^{-2}x_j-x_i}.
		\end{split}
	\end{equation}

	Next, 
	thanks to \eqref{eq:C2C1_commute} the three expressions 
	\begin{equation*}
		\frac{C_M \ldots C_1}{\prod_{1\le i<j\le M}(r_j^{-2}x_j-x_i)},
		\qquad 
		\frac{C_{i_k} \ldots C_{i_1}}{\prod_{i,j\in \mathcal{I},\,i<j}(r_j^{-2}x_j-x_i)},
		\qquad 
		\frac{C_{j_{M-k}} \ldots C_{j_1}}{\prod_{i,j\notin \mathcal{I},\,i<j}(r_j^{-2}x_j-x_i)}
	\end{equation*}
	are symmetric in the pairs $(x_i,r_i)$ of variables they depend on
	(where $1\le i\le M$, $i\in \mathcal{I}$, and $i\notin \mathcal{I}$, respectively).
	Therefore, the function
	\begin{equation*}
		\widehat 
		h_{\mathcal{I}}(\mathbf{x};\mathbf{r})=
		h_{\mathcal{I}}(\mathbf{x};\mathbf{r})\,
		\frac{
			\prod_{i,j\in \mathcal{I},\, i<j}(r_j^{-2}x_j-x_i)
			\prod_{i,j\notin \mathcal{I},\, i<j}(r_j^{-2}x_j-x_i)
		}{
			\prod_{1\le i<j\le M}(r_j^{-2}x_j-x_i)
		}
	\end{equation*}
	satisfies $\widehat h_{\tau(\mathcal{I})}(\mathbf{x};\mathbf{r})
	=\widehat h_{\mathcal{I}}(\tau^{-1}(\mathbf{x});\tau^{-1}(\mathbf{r}))$
	for any permutation $\tau\in \mathfrak{S}_M$.
	Together with \eqref{eq:action_of_C_twofold_proof}
	this shows that for any $\mathcal{I}$ we have
	$\widehat h_{\mathcal{I}}(\mathbf{x};\mathbf{r})=\prod_{i\in \mathcal{I},\,j\notin \mathcal{I}}(x_i-x_j)^{-1}$,
	which implies
	the claim.
\end{proof}

In the next proposition, 
let $e_0 = e_0^{(i_1)} \otimes e_0^{(i_2)} \otimes \cdots \otimes
e_0^{(i_n)} \in V^{(i_1)} \otimes V^{(i_2)} \otimes \cdots \otimes
V^{(i_n)}$ for any integers $i_1 < i_2 < \cdots < i_n$. 
Moreover, fix $M \ge 1, N \ge 0$, and $\mathcal{T} = (t_1<
t_2< \ldots < t_M) \subset \mathbb{Z}_{\ge1}$.
Define the vector $e_{\mathcal{T}; N} =
e_{m_1}^{(N + 1)} \otimes e_{m_2}^{(N + 2)} \otimes \cdots \in V^{(N +
1)} \otimes V^{(N + 2)} \otimes \cdots$, where $m_i=1$ 
if $i\in \mathcal{T}$, and $0$ otherwise.

\begin{proposition}
	\label{prop:C_action_two}
	With the above notation we have
	\begin{equation*}
		\begin{split}
		&
		\bigl\langle e_{\mathcal{T}; N}, C(x_M,r_M) \cdots C(x_1,r_1)\, e_0 \bigr\rangle 
		=
		\prod_{1 \le i< j \le M} \frac{r_j^{-2} x_j - x_i}{x_i - x_j}
		\\ & \hspace{120pt}
		\times 
		\sum_{\sigma \in \mathfrak{S}_M}
		\mathop{\mathrm{sgn}}(\sigma)
		\prod_{j = 1}^M \Bigg( \frac{y_{t_j + N}
		\big( 1 - s_{t_j + N}^2 \big)}{y_{t_j + N} - s_{t_j + N}^2
		x_{\sigma (j)}} \prod_{i = N + 1}^{t_j + N - 1}
		\frac{s_i^2 \big(y_i - x_{\sigma (j)} \big)}{y_i - s_i^2
		x_{\sigma (j)}} \Bigg),
		\end{split}
	\end{equation*}
	where the inner product is taken in the space $V^{(N+1)}\otimes V^{(N+2)}\otimes\ldots$.
\end{proposition}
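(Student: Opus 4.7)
The plan is to induct on $M \ge 0$, splitting the tensor product $V^{(N+1)} \otimes \bigl(V^{(N+2)} \otimes V^{(N+3)} \otimes \cdots\bigr)$ and invoking \Cref{lemma:C_action_two}. The base case $M = 0$ reduces to $\langle e_0, e_0\rangle = 1$, matching the empty product and empty determinant on the right. For the inductive step I would distinguish two cases depending on whether $t_1 = 1$ or $t_1 \ge 2$, i.e., whether position $N+1$ is occupied in $e_{\mathcal{T};N}$.

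In the case $t_1 \ge 2$, the first tensor factor of $e_{\mathcal{T};N}$ equals $e_0^{(N+1)}$. Since a single $C$ operator sends $e_0^{(N+1)}$ to a multiple of $e_1^{(N+1)}$, among the summands in \Cref{lemma:C_action_two} only the one with $\mathcal{I} = \emptyset$ contributes; the associated coefficient is $h_\emptyset = 1$, the action on $V^{(N+1)}$ becomes $\prod_{j=1}^M A(x_j,r_j)\cdot e_0^{(N+1)} = \prod_{j=1}^M W(0,1;0,1\mid x_j;y_{N+1};r_j;s_{N+1})\cdot e_0^{(N+1)}$, and the action on the remaining factors is $C(x_M,r_M)\cdots C(x_1,r_1)$ applied to $e_0^{\mathrm{rest}}\in V^{(N+2)}\otimes\cdots$. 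The inductive hypothesis with $N$ replaced by $N+1$ and $\mathcal{T}$ replaced by the shifted set $(t_1-1,\ldots,t_M-1)$ yields the desired formula for the reduced problem. The extra prefactor $\prod_j s_{N+1}^2(y_{N+1}-x_j)/(y_{N+1}-s_{N+1}^2 x_j)$ coming from the $A$ operators is then absorbed into the products $\prod_{i=N+2}^{t_j+N-1} s_i^2(y_i-x_{\sigma(j)})/(y_i-s_i^2x_{\sigma(j)})$ produced by the induction, extending their lower limits to $i = N+1$ and thereby matching the proposition.

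In the case $t_1 = 1$, the first tensor factor of $e_{\mathcal{T};N}$ equals $e_1^{(N+1)}$, so in \Cref{lemma:C_action_two} only the terms with $|\mathcal{I}| = 1$ survive (two $C$ operators on a single $V^{(k)}$ annihilate one another, since the state $e_2^{(k)}$ does not exist). For $\mathcal{I} = \{i\}$ the action on $V^{(N+1)}$ produces the scalar $W(0,1;1,0\mid x_i;y_{N+1};r_i;s_{N+1}) \prod_{j\ne i} W(0,1;0,1\mid x_j;y_{N+1};r_j;s_{N+1})$ times $e_1^{(N+1)}$, while the action on $V^{(N+2)}\otimes\cdots$ becomes $C(x_{j_{M-1}},r_{j_{M-1}})\cdots C(x_{j_1},r_{j_1})$ after noting that $D(x_i,r_i)$ acts as the identity on each $e_0^{(k)}$. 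Applying the inductive hypothesis to the reduced $(M-1)$-fold product on the variables $\{x_j : j\ne i\}$ with particle set $(t_2-1,\ldots,t_M-1)$ at level $N+1$ and then summing over $i$, one obtains the Laplace expansion of the $M$-variable determinant along the $j=1$ row: the contribution of a permutation $\sigma$ with $\sigma(1) = i$ is weighted by the cofactor sign $(-1)^{i-1}$.

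The main technical hurdle is reconciling the rational prefactors. Specifically, to close the induction in the case $t_1 = 1$ one must verify that
\begin{equation*}
	h_{\{i\}}\prod_{\substack{1\le\alpha<\beta\le M\\ \alpha,\beta\ne i}}\frac{r_\beta^{-2}x_\beta-x_\alpha}{x_\alpha-x_\beta}
	=(-1)^{i-1}\prod_{1\le\alpha<\beta\le M}\frac{r_\beta^{-2}x_\beta-x_\alpha}{x_\alpha-x_\beta},
\end{equation*}
which reduces to the identity $h_{\{i\}} = \prod_{\alpha<i}\frac{r_i^{-2}x_i-x_\alpha}{x_i-x_\alpha}\prod_{\beta>i}\frac{r_\beta^{-2}x_\beta-x_i}{x_i-x_\beta}$, a direct specialization of the coefficient from \Cref{lemma:C_action_two}. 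In the case $t_1\ge 2$, the analogous check is the shift of the lower limit of the inner products described above. Once these rational identities are confirmed, the induction closes and the proposition follows.
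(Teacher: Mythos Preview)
Your approach is essentially the same as the paper's: peel off one tensor factor at a time using \Cref{lemma:C_action_two}, handle the empty versus occupied first-position cases separately, and assemble the resulting products into the determinant. The paper organizes this as a direct iteration over positions $n = N+1, N+2, \ldots$, recording all index-removal choices at once by a permutation $\sigma \in \mathfrak{S}_M$; you phrase the same recursion as an induction, recovering the determinant via Laplace expansion at each occupied step. The computations of the $A$- and $C$-contributions on $e_0^{(N+1)}$ and the coefficient check for $h_{\{i\}}$ match the paper's line by line.

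One small gap in your presentation: the induction on $M$ alone does not close. In the case $t_1 \ge 2$ you invoke ``the inductive hypothesis'' for the \emph{same} value of $M$ (only $N$ and $\mathcal{T}$ shift), which is not available under induction on $M$. The fix is immediate---induct on a quantity that strictly decreases in both cases, e.g.\ on $M + t_M$, or on $(M, t_1)$ lexicographically---but as written the argument is circular at that step.
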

Observe that this formula is determinantal, and is in fact equivalent to the 
determinantal formula for $F_\lambda$ from \Cref{thm:F_formula}
proven in \Cref{appA:F}, up to swapping horizontal 
arrows with empty horizontal edges, and renormalizing. 
Here, however, we present an independent proof which is more convenient
given our previous statements.
\begin{proof}[Proof of \Cref{prop:C_action_two}]
	In the proof we use the shorthand notation for the operators from \Cref{rmk:Shorthand_notation_ABCD_appA2}.
	Fix $n>N$ and vectors $v_1,v_2\in V^{(n+1)}\otimes V^{(n+2)}\otimes\ldots $.
	By \Cref{lemma:C_action_two}, we have
	\begin{equation}
		\label{eq:C_action_two_in_proposition_proof}
		\begin{split}
			\big\langle e_0^{(n)} \otimes v_2, C_M C_{M - 1} \cdots C_1 e_0
			\big\rangle 
			&= 
			\big\langle e_0^{(n)}, A_M A_{M - 1} \cdots A_1
			e_0^{(n)} \big\rangle \langle v_2, C_M C_{M - 1} \cdots C_1 e_0
			\rangle; 
			\\ 
			\big\langle e_1^{(n)} \otimes v_2, C_M C_{M - 1}
			\cdots C_1 e_0 \big\rangle 
			&= 
			\sum_{i = 1}^M
			\big\langle e_1^{(n)}, C_i A_M \cdots A_{i + 1} A_{i - 1} \cdots
			A_1 e_0^{(n)} \big\rangle 
			\\ & 
			\qquad \qquad \times \langle v_2,
			C_M \cdots C_{i + 1} C_{i - 1} \cdots C_1 D_i e_0 \rangle 
			\\ &
			\qquad \qquad \times \prod_{j \ne i}
			\frac{1}{x_i - x_j} \prod_{j = 1}^{i -
			1} (r^{-2}_i x_i - x_j) \prod_{j = i + 1}^M 
			(r^{-2}_j x_j - x_i).
		\end{split}
	\end{equation}
	These quantities can be computed as follows:
	\begin{equation*}
		\begin{split}
			D_ie_0& =e_0;
			\\
			\big\langle e_0^{(n)},  A_M A_{M - 1} \cdots A_1 e_0^{(n)}
			\big\rangle 
			&= \prod_{j = 1}^M
			\frac{s_n^2 (y_n - x_j)}{y_n - s_n^2 x_j}; 
			\\
			\big\langle e_1^{(n)}, C_i A_M \cdots A_{i + 1} A_{i - 1} \cdots
			A_1 e_0^{(n)} \big\rangle 
			&= \frac{y_n (1 -
			s_n^2)}{y_n - s_n^2 x_i} \prod_{j \ne i}
			\frac{s_n^2 (y_n - x_j)}{y_n - s_n^2 x_j},
		\end{split}
	\end{equation*}
	using the definition of the operators \eqref{eq:abcdv}
	and formulas for the vertex weights $W$ \eqref{eq:weights_W}.
	Therefore, \eqref{eq:C_action_two_in_proposition_proof} is continued as
	\begin{equation}
		\label{eq:C_action_two_in_proposition_proof_2}
		\begin{split}
			&
			\big\langle e_0^{(n)} \otimes v_2, C_M C_{M - 1} \cdots C_1 e_0
			\big\rangle  = \langle v_2, C_M C_{M - 1} \cdots C_1 e_0
			\rangle
			\prod_{j = 1}^M
			\frac{s_n^2 (y_n - x_j)}{y_n - s_n^2 x_j};
			\\
			&\big\langle e_1^{(n)} \otimes v_2, C_M C_{M - 1}
			\cdots C_1 e_0 \big\rangle  = \sum_{i = 1}^M
			\langle v_2,
			C_M \cdots C_{i + 1} C_{i - 1} \cdots C_1 e_0 \rangle
			\\& \hspace{50pt}\times 
			\frac{y_n (1 -
			s_n^2)}{y_n - s_n^2 x_i} \prod_{j \ne i}
			\frac{s_n^2 (y_n - x_j)}{y_n - s_n^2 x_j}
			\prod_{j \ne i}
			\frac{1}{x_i - x_j} \prod_{j = 1}^{i -
			1} (r^{-2}_i x_i - x_j) \prod_{j = i + 1}^M 
			(r^{-2}_j x_j - x_i).
		\end{split}
	\end{equation}
	
	Now we can evaluate $\langle  e_{\mathcal{T};N},C_M \ldots C_1 e_0  \rangle $
	by repeatedly applying \eqref{eq:C_action_two_in_proposition_proof_2}. Throughout these applications,
	we use first or second identity in 
	\eqref{eq:C_action_two_in_proposition_proof_2}, respectively,
	for each $n$ belonging or not belonging to the set $\left\{ t_1+N,t_2+N,\ldots,t_M+N  \right\}$.
	In the latter case, for $n=N+t_j$, we choose which index $i=i_j\in  \left\{ 1,\ldots,M  \right\}$
	to remove. These choices are encoded by a permutation $\sigma\in \mathfrak{S}_M$ as
	$i_j=\sigma(j)$. This leads to the desired claim, where, in particular, 
	$\mathop{\mathrm{sgn}}(\sigma)$ 
	arises from reordering the denominators $x_{\sigma(i)}-x_{\sigma(j)}$ to $x_i-x_j$ over all $1\le i<j\le M$.
\end{proof}
\subsubsection{Completing the proof}
\label{appA:G_completing_proof}

To finalize the proof of \Cref{thm:G_formula}, 
let us recall the formula to be established.
Fix 
an arbitrary signature 
$\lambda=(\lambda_1\ge \ldots\ge \lambda_N \ge0)$.
Let $d = d(\lambda) \ge 0$ denote
the integer such that $\lambda_d \ge d$ and $\lambda_{d + 1} < d + 1$.
Denote by $\ell_j=\lambda_j+N-j+1$, $j=1,\ldots,N $, the elements of the set $\mathcal{S}(\lambda)$.
Moreover, we define
$\mu = (\mu_1< \mu_2< \ldots < \mu_d) =
\{1,\ldots,N \}  \setminus \big( \mathcal{S}(\lambda) \cap \{1,\ldots,N \} \big)$.
Our goal is to show that
\begin{align}
	\nonumber
	&G_{\lambda} (\mathbf{x}; \mathbf{y}; \mathbf{r}; \mathbf{s}) 
	=
	\prod_{j=1}^{M}\prod_{k=1}^{N}
	\frac{y_k-s_k^2r_j^{-2}x_j}{y_k-s_k^2 x_j}
	\sum_{\substack{\mathcal{I},\mathcal{J}\subseteq \left\{ 1,\ldots,M  \right\}\\ |\mathcal{I}|=|\mathcal{J}|=d}}
	\prod_{\substack{i\in \mathcal{I}\\ 1\le j\le M}}(r_i^{-2}x_i-x_j)
	\prod_{\substack{i\in \mathcal{I}\\j\in \mathcal{I}^c}}
	\frac{1}{r_i^{-2}x_i-r_j^{-2}x_j}
	\\
	\label{eq:G_lambda_formula_in_appendix_desired}
	&\hspace{70pt}
	\times
	\prod_{\substack{i,j\in \mathcal{I}\\i<j}}\frac{1}{r_i^{-2}x_i-r_j^{-2}x_j}
	\prod_{\substack{i\in \mathcal{I}^c\\j\in \mathcal{J}}}(r_i^{-2}x_i-x_j)
	\prod_{\substack{i\in \mathcal{J}^c\\j\in \mathcal{J}}}
	\frac{1}{x_i-x_j}
	\prod_{\substack{i,j\in \mathcal{J}\\i<j}}\frac1{x_j-x_i}
	\\
	\nonumber
	&\hspace{70pt}
	\times
	\sum_{\sigma,\rho\in \mathfrak{S}_d}
	\mathop{\mathrm{sgn}}(\sigma\rho)
	\prod_{h = 1}^d \biggl( \frac{y_{\ell_h}
	\big( 1 - s_{\ell_h}^2 \big)}{y_{\ell_h} - s_{\ell_h}^2
	x_{j_{\rho (h)}}} \prod_{i = N + 1}^{\ell_h - 1}
	\frac{s_i^2 \big(y_i - x_{j_{\rho (h)}} \big)}{y_i - s_i^2
	x_{j_{\rho (h)}}} \biggr)
	\\&\hspace{100pt}\times
	\prod_{m=1}^d
	\biggl(
		\frac{s_{\mu_m}^2 }{y_{\mu_m} - s_{\mu_m}^2
		r^{-2}_{i_{\sigma (m)}} x_{i_{\sigma (m)}}} 
		\prod_{k =
		\mu_m + 1}^N 
		\frac{s_k^2 \big(r^{-2}_{i_{\sigma (m)}}
		x_{i_{\sigma (m)}} - y_k \big)}{y_k - s_k^2 r^{-2}_{i_{\sigma (m)}}
		x_{i_{\sigma (m)}}}
	\biggr).
	\nonumber
\end{align}	
where
$\mathcal{I}= (i_1< i_2< \ldots < i_d)$
and $\mathcal{J}= (j_1< j_2< \ldots < j_d)$.

\medskip

Recall that
\begin{equation*}
	G_{\lambda}(\mathbf{x};\mathbf{y};\mathbf{r};\mathbf{s})= \left\langle
	e_{\mathcal{S}_N(\lambda)}\otimes e_{\mathcal{S}_{>N}}(\lambda), D(x_M,r_M)\ldots
	D(x_2,r_2)D(x_1,r_1)(e_{[1,N]}\otimes e_0) \right\rangle,
\end{equation*}
where we have split the vectors into $e_{\mathcal{S}_N(\lambda)},e_{[1,N]}\in
V^{(1)}\otimes\ldots\otimes  V^{(N)}$ (cf. \eqref{eq:e_S_N_lambda_vector}), 
and the remaining two vectors belong to 
$V^{(N+1)}\otimes V^{(N+2)}\otimes \ldots $. 
Note that the vector $e_{\mathcal{S}_{>N}(\lambda)}$ has exactly $d$ tensor
components of the form $e_1^{(k)}$,
and the other components are of the form $e_0^{(k)}$.
We can use
\Cref{prop:G_proof_main_proposition_1} to write:
\begin{equation}
	\label{eq:G_final_proof_1}
	\begin{split}	
		&
		\left\langle e_{\mathcal{S}_N(\lambda)}\otimes e_{\mathcal{S}_{>N}(\lambda)},
		D(x_M,r_M)\ldots D(x_2,r_2)D(x_1,r_1) (e_{[1,N]}\otimes e_0)\right\rangle 
		\\&
		\hspace{2pt}=
		\prod_{j=1}^{M}\prod_{k=1}^{N}
		\frac{y_k-s_k^2r_j^{-2}x_j}{y_k-s_k^2 x_j}
		\sum_{\substack{\mathcal{I}\subseteq \left\{ 1,\ldots,M  \right\}\\ |\mathcal{I}|=d}}
		\left\langle e_{\mathcal{S}_{>N}(\lambda)},
		\biggl( 
			\prod_{j\notin \mathcal{I}}D(x_j,r_j)
		\biggr)
		C(x_{i_d},r_{i_d})
		\ldots 
		C(x_{i_1},r_{i_1})
		e_0
		\right\rangle 
		\\&\hspace{80pt}
		\times
		\prod_{i \in \mathcal{I},\, j\notin \mathcal{I}}
		\frac{r_i^{-2}x_i-x_j}{r_i^{-2}x_i-r_j^{-2}x_j}
		\prod_{i,j \in \mathcal{I},\,i<j}
		\frac{r_i^{-2}x_i-x_j}{r_i^{-2}x_i-r_j^{-2}x_j}
		\\&\hspace{80pt}
		\times
		\sum_{\sigma\in \mathfrak{S}_d}
		\mathop{\mathrm{sgn}}(\sigma)
		\prod_{j=1}^d
		\biggl(
			\frac{s_{\mu_j}^2 x_{i_{\sigma (j)}} \big(
			r^{-2}_{i_{\sigma (j)}} - 1\big)}{y_{\mu_j} - s_{\mu_j}^2
			r^{-2}_{i_{\sigma (j)}} x_{i_{\sigma (j)}}} 
			\prod_{k =
			\mu_j + 1}^N 
			\frac{s_k^2 \big(r^{-2}_{i_{\sigma (j)}}
			x_{i_{\sigma (j)}} - y_k \big)}{y_k - s_k^2 r^{-2}_{i_{\sigma (j)}}
			x_{i_{\sigma (j)}}}
		\biggr).
	\end{split}
\end{equation}
Let us denote 
\begin{equation*}
	D_{\mathcal{I}^c}:=\prod_{i\notin \mathcal{I}}D(x_j,r_j),
	\qquad 
	C_{\mathcal{I}}:=
	C(x_{i_d},r_{i_d})
	\ldots 
	C(x_{i_1},r_{i_1}),
\end{equation*}
and use similar notation in what follows.
In particular, in all such products of the $C$ operators the indices are decreasing from 
left to right.
Employ \Cref{prop:C_D_commutation} to write
\begin{align*}
	&D_{\mathcal{I}^c}C_{\mathcal{I}}=
	\sum_{\substack{\mathcal{K}\subseteq \mathcal{I}^c,\, \mathcal{H}\subseteq \mathcal{I}\\
	|\mathcal{K}|+|\mathcal{H}|=d}}
	C_{\mathcal{K}}C_{\mathcal{H}}D_{\mathcal{I}\setminus \mathcal{H}}D_{\mathcal{I}^c \setminus \mathcal{K}}
	\\&\hspace{60pt}
	\times
	\prod_{k\in \mathcal{K}}(1-r_k^{-2})x_k
	\prod_{i\in \mathcal{K}\cup \mathcal{H},\, j\in \mathcal{I}^c\setminus \mathcal{K}}\frac{r_j^{-2}x_j-x_i}{x_j-x_i}
	\prod_{h\in \mathcal{H},\, j\in \mathcal{I}\setminus\mathcal{H}}\frac{1}{x_j-x_h}
	\prod_{i\in \mathcal{K},\, j\in \mathcal{I}\setminus \mathcal{H}}\frac{1}{x_i-x_j}
	\\&\hspace{60pt}
	\times
	\prod_{i,j\in \mathcal{K},\, i<j}(r_i^{-2}x_i-x_j)
	\prod_{i,h\in \mathcal{H},\, h<i}\frac{1}{r_i^{-2}x_i-x_h}
	\prod_{i,j\in \mathcal{I},\, i<j}(r_j^{-2}x_j-x_i).
\end{align*}
Let us insert this into \eqref{eq:G_final_proof_1}.
Observe that all operators $D$ preserve the vector $e_0$.
Thus, we can continue the computation as
\begin{align*}
	&
	\left\langle e_{\mathcal{S}_N(\lambda)}\otimes e_{\mathcal{S}_{>N}(\lambda)},
	D(x_M,r_M)\ldots D(x_2,r_2)D(x_1,r_1) (e_{[1,N]}\otimes e_0)\right\rangle 
	\\&
	=
	\prod_{j=1}^{M}\prod_{k=1}^{N}
	\frac{y_k-s_k^2r_j^{-2}x_j}{y_k-s_k^2 x_j}
	\sum_{\substack{\mathcal{I}\subseteq \left\{ 1,\ldots,M  \right\}\\ |\mathcal{I}|=d}}
	\prod_{i,j\in \mathcal{I}}(r_i^{-2}x_i-x_j)
	\prod_{\substack{i \in \mathcal{I}\\ j\in \mathcal{I}^c}}
	\frac{r_i^{-2}x_i-x_j}{r_i^{-2}x_i-r_j^{-2}x_j}
	\prod_{\substack{i,j \in \mathcal{I}\\i<j}}
	\frac{1}{r_i^{-2}x_i-r_j^{-2}x_j}
	\\&\hspace{60pt}
	\times
	\sum_{\substack{\mathcal{K}\subseteq \mathcal{I}^c,\, \mathcal{H}\subseteq \mathcal{I}\\
	|\mathcal{K}|+|\mathcal{H}|=d}}
	\left\langle e_{\mathcal{S}_{>N}(\lambda)},
	C_\mathcal{K}C_\mathcal{H}
	e_0
	\right\rangle 
	\prod_{k\in \mathcal{K}}(1-r_k^{-2})x_k
	\prod_{i\in \mathcal{K}\cup \mathcal{H},\, j\in \mathcal{I}^c\setminus \mathcal{K}}\frac{r_j^{-2}x_j-x_i}{x_j-x_i}
	\\&\hspace{60pt}\times
	\prod_{h\in \mathcal{H},\, j\in \mathcal{I}\setminus\mathcal{H}}\frac{1}{x_j-x_h}
	\prod_{i\in \mathcal{K},\, j\in \mathcal{I}\setminus \mathcal{H}}\frac{1}{x_i-x_j}
	\prod_{i,j\in \mathcal{K},\, i<j}(r_i^{-2}x_i-x_j)
	\prod_{i,h\in \mathcal{H},\, h<i}\frac{1}{r_i^{-2}x_i-x_h}
	\\&\hspace{60pt}
	\times
	\sum_{\sigma\in \mathfrak{S}_d}
	\mathop{\mathrm{sgn}}(\sigma)
	\prod_{j=1}^d
	\biggl(
		\frac{s_{\mu_j}^2 }{y_{\mu_j} - s_{\mu_j}^2
		r^{-2}_{i_{\sigma (j)}} x_{i_{\sigma (j)}}} 
		\prod_{k =
		\mu_j + 1}^N 
		\frac{s_k^2 \big(r^{-2}_{i_{\sigma (j)}}
		x_{i_{\sigma (j)}} - y_k \big)}{y_k - s_k^2 r^{-2}_{i_{\sigma (j)}}
		x_{i_{\sigma (j)}}}
	\biggr).
\end{align*}

Now we are going to apply \Cref{prop:C_action_two} to compute the remaining inner product. 
Recall that $e_{\mathcal{S}_{>N}(\lambda)}$ has exactly $d$ tensor components equal to $e_1^{(m)}$,
for $m \in\left\{ \ell_1,\ldots,\ell_d  \right\}$.
Denote $(x_1',\ldots,x_d' )=(x_{h_1},\ldots,x_{h_{|\mathcal{H}|}}, x_{k_1},\ldots,x_{k_{|\mathcal{K}|}} )$,
where $h_1<\ldots<h_{|\mathcal{H}|} $, $k_1<\ldots<k_{|\mathcal{K}|} $.
Then we have
\begin{equation}
	\label{eq:G_final_proof_99}
	\begin{split}
		&\left\langle e_{\mathcal{S}_{>N}(\lambda)},
		C_\mathcal{K}C_\mathcal{H}
		e_0
		\right\rangle 
		=
		(-1)^{\frac{d(d-1)}{2}}
		\prod_{i,j\in \mathcal{H},\, i<j} \frac{r_j^{-2} x_j - x_i}{x_i - x_j}
		\prod_{i,j\in \mathcal{K},\, i<j} \frac{r_j^{-2} x_j - x_i}{x_i - x_j}
		\\
		&
		\hspace{50pt}
		\times
		\prod_{i\in \mathcal{H},\,j\in \mathcal{K}} \frac{r_j^{-2} x_j - x_i}{x_i - x_j}
		\sum_{\rho \in \mathfrak{S}_d}
		\mathop{\mathrm{sgn}}(\rho)
		\prod_{j = 1}^d \biggl( \frac{y_{\ell_j}
		\big( 1 - s_{\ell_j}^2 \big)}{y_{\ell_j} - s_{\ell_j}^2
		x'_{\rho (j)}} \prod_{i = N + 1}^{\ell_j - 1}
		\frac{s_i^2 \big(y_i - x'_{\rho (j)} \big)}{y_i - s_i^2
		x'_{\rho (j)}} \biggr)
		.
	\end{split}
\end{equation}
The sign $(-1)^{\frac{d(d-1)}{2}}$ arises from the fact that the $t_j$'s in \Cref{prop:C_action_two}
are increasing, while the $\ell_j$'s in \eqref{eq:G_final_proof_99} are decreasing, so the 
sign of $\rho$ has to be multiplied by $(-1)^{\frac{d(d-1)}{2}}$.
This allows to continue our computation as follows:
\begin{align*}
	&
	\left\langle e_{\mathcal{S}_N(\lambda)}\otimes e_{\mathcal{S}_{>N}(\lambda)},
	D(x_M,r_M)\ldots D(x_2,r_2)D(x_1,r_1) (e_{[1,N]}\otimes e_0)\right\rangle 
	\\
	&\hspace{20pt}
	=
	(-1)^{\frac{d(d-1)}{2}}
	\prod_{j=1}^{M}\prod_{k=1}^{N}
	\frac{y_k-s_k^2r_j^{-2}x_j}{y_k-s_k^2 x_j}
	\\&\hspace{40pt}
	\times
	\sum_{\substack{\mathcal{I}\subseteq \left\{ 1,\ldots,M  \right\}\\ |\mathcal{I}|=d}}
	\prod_{\substack{i\in \mathcal{I}\\ 1\le j\le M}}(r_i^{-2}x_i-x_j)
	\prod_{\substack{i\in \mathcal{I}\\j\in \mathcal{I}^c}}
	\frac{1}{r_i^{-2}x_i-r_j^{-2}x_j}
	\prod_{\substack{i,j\in \mathcal{I}\\i<j}}\frac{1}{r_i^{-2}x_i-r_j^{-2}x_j}
	\\
	&\hspace{40pt}
	\times
	\sum_{\substack{\mathcal{K}\subseteq \mathcal{I}^c,\, \mathcal{H}\subseteq \mathcal{I}\\
	|\mathcal{K}|+|\mathcal{H}|=d}}
	\prod_{\substack{i\in \mathcal{I}^c\\j\in \mathcal{K}\cup \mathcal{H}}}(r_i^{-2}x_i-x_j)
	\prod_{\substack{i\notin \mathcal{K}\cup \mathcal{H}\\j\in \mathcal{K}\cup \mathcal{H}}}
	\frac{1}{x_i-x_j}
	\prod_{\substack{i,j\in \mathcal{H}\\i<j}}\frac1{x_i-x_j}
	\prod_{\substack{i,j\in \mathcal{K}\\i<j}}\frac1{x_i-x_j}
	\prod_{\substack{i\in \mathcal{H}\\j\in \mathcal{K}}}\frac1{x_i-x_j}
	\\
	&\hspace{70pt}
	\times
	\sum_{\sigma,\rho\in \mathfrak{S}_d}
	\mathop{\mathrm{sgn}}(\sigma\rho)
	\prod_{j = 1}^d \biggl( \frac{y_{\ell_j}
	\big( 1 - s_{\ell_j}^2 \big)}{y_{\ell_j} - s_{\ell_j}^2
	x'_{\rho (j)}} \prod_{i = N + 1}^{\ell_j - 1}
	\frac{s_i^2 \big(y_i - x'_{\rho (j)} \big)}{y_i - s_i^2
	x'_{\rho (j)}} \biggr)
	\\&\hspace{100pt}\times
	\prod_{j=1}^d
	\biggl(
		\frac{s_{\mu_j}^2 }{y_{\mu_j} - s_{\mu_j}^2
		r^{-2}_{i_{\sigma (j)}} x_{i_{\sigma (j)}}} 
		\prod_{k =
		\mu_j + 1}^N 
		\frac{s_k^2 \big(r^{-2}_{i_{\sigma (j)}}
		x_{i_{\sigma (j)}} - y_k \big)}{y_k - s_k^2 r^{-2}_{i_{\sigma (j)}}
		x_{i_{\sigma (j)}}}
	\biggr).
\end{align*}
Upon denoting $\mathcal{J}=\mathcal{K}\cup\mathcal{H}=(j_1<\ldots<j_d )$, we arrive at the 
desired statement \eqref{eq:G_lambda_formula_in_appendix_desired}. 
Note that reordering the indices in $(x_1',\ldots,x_d' )$ in the increasing order
leads to an extra $\pm$ sign coming from $\mathop{\mathrm{sgn}}(\rho)$, 
but this sign is compensated by writing 
\begin{equation}
	\label{eq:G_final_proof_99_1}
	\prod_{i,j\in \mathcal{H},\,i<j}\frac1{x_i-x_j}
	\prod_{i,j\in \mathcal{K},\,i<j}\frac1{x_i-x_j}
	\prod_{i\in \mathcal{H},\,j\in \mathcal{K}}\frac1{x_i-x_j}
	=
	\pm
	\prod_{i,j\in \mathcal{J},\,i<j}\frac{1}{x_i-x_j}
\end{equation}
(equivalently, one may refer to the symmetry as in the proof of \Cref{lemma:C_action_two}).
Finally,
replacing $x_i-x_j$ in \eqref{eq:G_final_proof_99_1} with $x_j-x_i$ absorbs the sign $(-1)^{\frac{d(d-1)}{2}}$.
This completes the proof of 
\Cref{thm:G_formula}.

\section{Correlation kernel via Eynard--Mehta approach}
\label{appB:Eynard_Mehta}

Here we prove 
\Cref{thm:ascending_FG_process_kernel} on 
the determinantal structure of the FG measures and 
processes. We employ an Eynard--Mehta type approach
based on \cite{borodin2005eynard},
see also \cite{eynard1998matrices}.

\subsection{Representation of the ascending FG process in a determinantal form}
\label{sup:appB_det_form_representation}

Recall the notation of the ascending FG process \eqref{eq:ascending_process} from \Cref{sub:probability_from_Cauchy}.
Throughout
\Cref{appB:Eynard_Mehta}
we omit the notation $\mathbf{y},\mathbf{s}$
in the functions $G_{\mu/\varkappa}(w_i;\mathbf{y};\theta_i;\mathbf{s})$
and other similar quantities.

Here we use the 
determinantal formulas for the functions $F_\lambda$
(\Cref{thm:F_formula})
and 
$G_\mu$, $G_{\nu/\lambda}$ to rewrite the probabilities 
\eqref{eq:ascending_process} in a determinantal form.
The formulas for $G_\mu$ and $G_{\nu/\lambda}$
are of Jacobi--Trudy type and follow from 
Cauchy identities and biorthogonality as in 
\Cref{sub:G_det_formula}.

Recall the notation \eqref{eq:phi_def}:
\begin{equation*}
	\varphi_k(x)=
	\frac{1}{y_{k+1}-x}
	\prod_{j=1}^{k}
	\frac{y_j-s_j^2x}{s_j^2(y_j-x)}
	,\qquad k\ge0.
\end{equation*}
By \Cref{thm:F_formula}, we have
\begin{equation}
	\label{eq:F_det_formula_for_Eynard}
	F_\lambda(\rho)=
	\mathrm{const}\cdot
	\det\left[ \varphi_{\lambda_j+N-j}(x_i) \right]_{i,j=1}^{N},
\end{equation}
where the constant is independent of $\lambda$ (we adopt this convention for all 
such constants throughout \Cref{appB:Eynard_Mehta}, and will denote all of them 
by $\mathrm{const}$).

Next, recall the functions
$\psi_k$
\eqref{eq:psi_def}:
\begin{equation*}
	\psi_k(x)
	=
	\frac{y_{k+1}(s_{k+1}^2-1)}{y_{k+1}-s^2_{k+1}x}\,
	\prod_{j=1}^{k}
	\frac{s_j^2(y_j-x)}{y_j-s_j^2x}
	,\qquad k\ge1.
\end{equation*}
For
$(\mathbf{w};\boldsymbol\uptheta)=(w_a,\ldots,w_b;\theta_a,\ldots,\theta_b)$,
$a\le b$, 
let us define a slight generalization of
\eqref{eq:nontilde_h_function_for_JT_G_nonskew}:
\begin{equation*}
	\mathsf{h}_{k,p}(\mathbf{w};\boldsymbol\uptheta)
	:=
	\frac{1}{2\pi\mathbf{i}}\oint_{\Gamma_{y,w}}dz\,
	\frac{\psi_k(z)}{y_p-z}
	\prod_{j=a}^{b}\frac{z-\theta_j^{-2}w_j}{z-w_j},
	\qquad k\ge0,\quad p\ge1,
\end{equation*}
where the integration contour $\Gamma_{y,w}$ is positively oriented, 
surrounds all $y_i, w_j$, and leaves out all $s_i^{-2}y_i$.
The function $G_{\lambda^{(1)}}$ in \eqref{eq:ascending_process}
has the following determinantal form (with $a=b=1$ in $\mathsf{h}_{k,l}$):
\begin{equation}
	\label{eq:G_pure_det_formula_for_Eynard}
		G_{\lambda^{(1)}}
		(w_1;\theta_1)
		=
		\mathrm{const}\cdot
		\det\bigl[ \mathsf{h}_{\lambda^{(1)}_i+N-i,\,j}(w_1;\theta_1) \bigr]_{i,j=1}^{N}.
\end{equation}

Finally, recall the functions 
$\widetilde{\mathsf{h}}_l$ \eqref{eq:tilde_h_function_for_JT_G}
and 
$\mathsf{g}_{l/k}$
\eqref{eq:g_skew_better_notation}:
\begin{equation*}
	\mathsf{g}_{l/k}(\mathbf{w};\boldsymbol\uptheta)
	=
	\widetilde{\mathsf{h}}_{l-k}(\mathbf{w};\tau_k \mathbf{y};\boldsymbol\uptheta;\tau_k \mathbf{s})
	=
	\frac{\mathbf{1}_{l\ge k}}{2\pi \mathbf{i}}\oint_{\Gamma_{y,w}}
	dz\,
	\varphi_k(z)\psi_l(z)
	\prod_{j=a}^{b}\frac{z-\theta_j^{-2}w_j}{z-w_j},
\end{equation*}
where the integration contour is around $y_j,w_i$ and not $s_j^{-2}y_j$,
and 
$(\mathbf{w};\boldsymbol\uptheta)=(w_a,\ldots,w_b;\theta_a,\ldots,\theta_b)$ with $a\le b$.
The skew functions in \eqref{eq:ascending_process} take the following determinantal form:
\begin{equation}
	\label{eq:G_skew_det_formula_for_Eynard}
	G_{\lambda^{(t)}/\lambda^{(t-1)}}(w_t;\theta_t)
	=
	\det
	\bigl[ 
		\mathsf{g}_{(\lambda^{(t)}_i+N-i)/(\lambda^{(t-1)}_j+N-j)}
		(w_t;\theta_t) 
	\bigr]_{i,j=1}^{N}.
\end{equation}

We observe that when evaluated at a single pair of variables $(w;\theta)$, both
$\mathsf{h}_{k,j}$ (for $k\ge j$) and $\mathsf{g}_{l/k}$ become explicit:
\begin{lemma}
	\label{lemma:explicit_Eynard_h_g}
	We have
	\begin{equation*}
			\mathsf{h}_{k,j}(w;\theta)=
			\begin{cases}
			\dfrac{w(1-\theta^{-2})\psi_k(w)}{y_j-w},&k \ge j;\\
			\textnormal{\small{}a non-product expression},& k < j,
			\end{cases}
			\quad 
			\mathsf{g}_{l/k}(w;\theta)=
			\begin{cases}
				w(1-\theta^{-2})\varphi_k(w)\psi_l(w),& l>k;\\[5pt]
				\dfrac{\theta^{-2}w-s_{k+1}^{-2}y_{k+1}}{w-s_{k+1}^{-2}y_{k+1}},& l=k;\\
				0,& l<k.
			\end{cases}
	\end{equation*}
\end{lemma}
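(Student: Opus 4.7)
The plan is to compute both $\mathsf{h}_{k,j}(w;\theta)$ and $\mathsf{g}_{l/k}(w;\theta)$ by directly extracting residues from the defining single-variable contour integrals, with the contour encircling the $y_i$ and $w$ but excluding the $s_i^{-2}y_i$. For $\mathsf{h}_{k,j}(w;\theta)$, the integrand $\psi_k(z)(z-\theta^{-2}w)/[(y_j-z)(z-w)]$ has only two potential poles inside $\Gamma_{y,w}$, namely $z=w$ and $z=y_j$, since every singularity of $\psi_k(z)$ lies at some $s_i^{-2}y_i$ and is thus outside the contour. The key observation is that for $k\ge j\ge 1$ the numerator of $\psi_k$ contains the factor $s_j^2(y_j-z)$, which vanishes at $z=y_j$; hence the residue at $z=y_j$ drops out and only the residue at $z=w$ contributes, giving the stated product expression. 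When $k<j$ this cancellation fails, both residues survive, and the result is genuinely non-product.

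For $\mathsf{g}_{l/k}(w;\theta)$ the case $l<k$ is immediate from the indicator $\mathbf{1}_{l\ge k}$. For $l\ge k$ one tracks how the poles of $\varphi_k$ at $z=y_1,\dots,y_{k+1}$ interact with the zeros of $\psi_l$ at $z=y_1,\dots,y_l$ coming from the numerator factors $s_i^2(y_i-z)$. When $l>k$ every pole in $y_i$ is cancelled, so the only interior pole is $z=w$; the residue there is $w(1-\theta^{-2})\varphi_k(w)\psi_l(w)$, as claimed. When $l=k$ all cancellations occur except the pole at $y_{k+1}$, and a short direct manipulation gives the clean identity
\begin{equation*}
\varphi_k(z)\psi_k(z)=\frac{y_{k+1}(s_{k+1}^2-1)}{(y_{k+1}-z)(y_{k+1}-s_{k+1}^2 z)},
\end{equation*}
after which the integral reduces to a sum of two explicit residues at $z=w$ and $z=y_{k+1}$.

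The one substantive step is the last algebraic simplification in the $l=k$ case: adding the two residues over the common denominator $(y_{k+1}-w)(y_{k+1}-s_{k+1}^2 w)$, one checks that the numerator factors as $(y_{k+1}-w)(y_{k+1}-\theta^{-2}s_{k+1}^2 w)$, so the first factor cancels and, after dividing numerator and denominator by $s_{k+1}^2$ and flipping signs, one arrives at the stated expression $(\theta^{-2}w-s_{k+1}^{-2}y_{k+1})/(w-s_{k+1}^{-2}y_{k+1})$. Everything else in the proof is bookkeeping of which poles lie inside the contour and which factors vanish; the mild obstacle is only to keep this final factorization transparent rather than expand and recollect by brute force.
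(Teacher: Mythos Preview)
Your proof is correct and follows the same residue-computation approach as the paper. The only minor difference is in the $l=k$ case of $\mathsf{g}_{l/k}$: the paper observes that the sole pole \emph{outside} the contour is $z=s_{k+1}^{-2}y_{k+1}$ (with no residue at infinity) and computes that single residue, whereas you sum the two interior residues at $z=w$ and $z=y_{k+1}$ and then perform the factorization you describe; both routes yield the same answer with comparable effort.
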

\begin{proof}
	For $\mathsf{h}_{k,j}$ with $k\ge j$, the only pole inside the contour is $z=w$, which leads
	to the desired formula.
	The exact form of the functions
	$\mathsf{h}_{k,j}$ with $k < j$ is not very explicit (apart from
	the original contour integral expression), but they are 
	not involved in 
	our computations.

	For $\mathsf{g}_{l/k}$, in the case $l=k$, the only singularity outside 
	the contour is $z=s_{k+1}^{-2}y_k$, 
	and for $l>k$ the only singularity inside the contour is $z=w$. 
	The respective residues in these two cases lead to the desired formulas.
	For $l<k$, there are no singularities outside the integration contours, and 
	the integral vanishes.
\end{proof}

Putting together \eqref{eq:F_det_formula_for_Eynard}, 
\eqref{eq:G_pure_det_formula_for_Eynard}, and \eqref{eq:G_skew_det_formula_for_Eynard}, we get:
\begin{proposition}
	\label{prop:det_form_for_Eynard}
	The probability weights under the
	ascending FG process \eqref{eq:ascending_process}
	have the following product-of-determinants form.
	For $\ell^{(t)}_j:=\lambda^{(t)}_{j}+N+1-j$, we have
	\begin{equation*}
		\begin{split}&
		\mathscr{AP}(\lambda^{(1)},\lambda^{(2)},\ldots,\lambda^{(T)})
		\\&\hspace{30pt}=
		\mathrm{const}\cdot
		\det\bigl[ \mathsf{h}_{\ell^{(1)}_i-1,\,j}(w_1;\theta_1) \bigr]
		\prod_{t=2}^{T}
		\det\bigl[ 
			\mathsf{g}_{(\ell^{(t)}_i-1)/(\ell^{(t-1)}_j-1)}
			(w_t;\theta_t) 
		\bigr]
		\det\bigl[ \varphi_{\ell^{(T)}_j-1}(x_i) \bigr],
		\end{split}
	\end{equation*}
	where all determinants are 
	taken with respect to $1\le i,j\le N$, and 
	$\mathrm{const}$ is a normalizing constant which does not depend on 
	the $\ell^{(j)}$'s.
\end{proposition}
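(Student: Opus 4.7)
The proof plan is to substitute the three Jacobi--Trudy type determinantal formulas for $F_\lambda$, $G_\nu$, and $G_{\nu/\lambda}$ directly into the definition \eqref{eq:ascending_process} of $\mathscr{AP}$, verify that the index conversion $\ell^{(t)}_j = \lambda^{(t)}_j + N + 1 - j$ works out correctly, and then absorb all factors independent of the $\lambda^{(t)}$'s into the normalizing constant.

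More concretely, I would proceed as follows. First, by the determinantal formula \eqref{eq:F_det_formula_in_theorem} for $F_\lambda$ (Theorem~\ref{thm:F_formula}), we have
$F_{\lambda^{(T)}}(\rho) = C_F(\mathbf{x};\mathbf{r}) \det\bigl[\varphi_{\lambda^{(T)}_j + N - j}(x_i)\bigr]_{i,j=1}^N$
where the prefactor $C_F$ depends only on $(\mathbf{x},\mathbf{r})$ and not on $\lambda^{(T)}$. Since $\ell^{(T)}_j - 1 = \lambda^{(T)}_j + N - j$, this is exactly the $\det[\varphi_{\ell^{(T)}_j - 1}(x_i)]$ factor needed. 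Second, by \Cref{prop:nonskew_G_Jacobi_Trudi} (in the single-variable case $\mathbf{w} = (w_1)$, $\boldsymbol\uptheta = (\theta_1)$, which is contained in the statement with $M=1$), we have
$G_{\lambda^{(1)}}(w_1;\theta_1) = C_G \det\bigl[\mathsf{h}_{\lambda^{(1)}_i + N - i,\, j}(w_1;\theta_1)\bigr]_{i,j=1}^N$
with $C_G$ depending only on $\mathbf{y},\mathbf{s}$. Third, for each $t \ge 2$, the skew Jacobi--Trudy formula \eqref{eq:skew_G_Jacobi_Trudi} of \Cref{prop:skew_G_Jacobi_Trudi} gives
$G_{\lambda^{(t)}/\lambda^{(t-1)}}(w_t;\theta_t) = \det\bigl[\mathsf{g}_{(\lambda^{(t)}_i + N - i)/(\lambda^{(t-1)}_j + N - j)}(w_t;\theta_t)\bigr]_{i,j=1}^N$
with no prefactor at all. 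Multiplying these three expressions and dividing by $Z$ from \eqref{eq:Z_ascending_process} yields the desired product of determinants, with the cumulative $\lambda$-independent factor $C_F \cdot C_G / Z$ playing the role of $\mathrm{const}$.

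There is essentially no obstacle beyond bookkeeping: the three component identities have already been established earlier in the paper, and the only thing to verify is the index shift $\ell^{(t)}_j - 1 = \lambda^{(t)}_j + N - j$ together with the fact that the prefactors in the determinantal identities for $F_\lambda$ and $G_\nu$ depend only on the parameters of the process and not on the signatures. The one mild subtlety worth flagging is that in \Cref{prop:skew_G_Jacobi_Trudi} the skew formula genuinely has no prefactor, whereas both $F$ and the non-skew $G$ carry Vandermonde-type prefactors; this asymmetry is exactly why only the endpoint factors $C_F$ and $C_G$ appear before being folded into $\mathrm{const}$. Hence the proposition follows immediately from the three previously proven determinantal formulas.
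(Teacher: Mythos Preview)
Your proposal is correct and matches the paper's approach exactly: the paper simply states ``Putting together \eqref{eq:F_det_formula_for_Eynard}, \eqref{eq:G_pure_det_formula_for_Eynard}, and \eqref{eq:G_skew_det_formula_for_Eynard}, we get'' the proposition, which is precisely the substitution of the three determinantal formulas (for $F_\lambda$, $G_\nu$, and $G_{\nu/\lambda}$) you describe. Your observation about the asymmetry in prefactors between the skew and non-skew formulas, and the index-shift verification, are more explicit than what the paper provides but otherwise identical in substance.
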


\subsection{Application of the Eynard--Mehta theorem}
\label{sub:appB_Eynard_application}

The form of the probability weights 
as in \Cref{prop:det_form_for_Eynard}
puts the ascending FG process into the domain of applicability of the Eynard--Mehta theorem
(see, for example, \cite{eynard1998matrices}, \cite[Theorem 1.4]{borodin2005eynard}).
To express the determinantal correlation kernel of the point process
\begin{equation}
	\label{eq:determ_pp_Eynard}
	\{(t,\ell^{(t)}_j)\colon t=1,\ldots,T,\,j=1,\ldots,N \}\subset\left\{ 1,\ldots,T  \right\}
	\times\mathbb{Z}_{\ge1},\qquad \ell^{(t)}_j=\lambda^{(t)}_j+N+1-j,
\end{equation}
one first needs to invert the $N\times N$ ``Gram matrix'' given by
\begin{equation}
	\label{eq:Eynard_Mij_summation}
	M_{ij}=\sum_{a_1,\ldots,a_m\ge 0}\mathsf{h}_{a_1,i}(w_1;\theta_1)
	\,
	\mathsf{g}_{a_2/a_1}(w_2;\theta_2)
	\ldots 
	\mathsf{g}_{a_T/a_{T-1}}(w_T;\theta_T)
	\,
	\varphi_{a_T}(x_j).
\end{equation}
Note that by \Cref{lemma:explicit_Eynard_h_g},
this series converges absolutely under the condition
\eqref{eq:Eynard_asc_FG_condition}.

\begin{proposition}
	\label{prop:Eynard_Gram_matrix}
	We have
	\begin{equation}
		\label{eq:Eynard_Mij_computation}
		M_{ij}=
		\frac{1}{y_i-x_j}\prod_{t=1}^{T}\frac{x_j-\theta_t^{-2}w_t}{x_j-w_t}.
	\end{equation}
\end{proposition}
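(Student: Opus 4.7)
The plan is to reduce the nested $T$-fold sum in \eqref{eq:Eynard_Mij_summation} to a single contour integral that evaluates by residues to the claimed product. The key algebraic gadget is a pair of identities for the biorthogonal functions $\varphi_k, \psi_k$. Setting
$$P_k(z,x) := \prod_{j=1}^{k} \frac{(y_j-z)(y_j-s_j^2 x)}{(y_j-s_j^2 z)(y_j-x)},$$
a direct calculation gives $\psi_k(z)\varphi_k(x) = (P_k(z,x) - P_{k+1}(z,x))/(x-z)$, which telescopes to the completeness relation $\sum_{l \ge k} \psi_l(z)\varphi_l(x) = P_k(z,x)/(x-z)$, provided $P_N(z,x)\to 0$ uniformly for $z \in \gamma'$. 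A second direct cancellation gives the reproducing identity $\varphi_k(z) P_k(z,x) = (y_{k+1}-x)\varphi_k(x)/(y_{k+1}-z)$.

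First I would establish a Pieri-type formula
$$\sum_{l \ge k} \mathsf{g}_{l/k}(w;\theta)\,\varphi_l(x) = \varphi_k(x)\cdot\frac{x-\theta^{-2} w}{x-w}.$$
Substituting the contour integral definition of $\mathsf{g}_{l/k}$, interchanging sum and integral, and applying the two identities above reduces the left side to
$$\frac{(y_{k+1}-x)\varphi_k(x)}{2\pi\mathbf{i}} \oint_{\gamma'} \frac{1}{(y_{k+1}-z)(x-z)} \cdot \frac{z-\theta^{-2} w}{z-w}\, dz,$$
whose only poles inside $\gamma'$ are at $z = y_{k+1}$ and $z = w$ (with $x$ outside). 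Summing the two residues and simplifying via the algebraic identity $(y-\theta^{-2}w)(x-w) + w(1-\theta^{-2})(y-x) = (y-w)(x-\theta^{-2}w)$ produces the right side.

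Iterating this Pieri identity telescopes the sums over $a_T, a_{T-1}, \ldots, a_2$ in \eqref{eq:Eynard_Mij_summation}, leaving
$$M_{ij} = \Biggl(\sum_{a_1 \ge 0} \mathsf{h}_{a_1,i}(w_1;\theta_1)\,\varphi_{a_1}(x_j)\Biggr) \prod_{t=2}^T \frac{x_j - \theta_t^{-2} w_t}{x_j - w_t}.$$
I would handle the remaining sum by the same method: substituting the integral representation for $\mathsf{h}$ and using the completeness relation with $k=0$ (so $P_0 \equiv 1$) converts it to
$$\sum_{a_1 \ge 0} \mathsf{h}_{a_1,i}(w_1;\theta_1)\,\varphi_{a_1}(x_j) = \frac{1}{2\pi\mathbf{i}}\oint_{\gamma'} \frac{1}{(y_i - z)(x_j - z)} \cdot \frac{z - \theta_1^{-2} w_1}{z - w_1}\, dz,$$
with $x_j$ taken outside $\gamma'$. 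The two residues at $z = y_i$ and $z = w_1$ combine via the same cross-ratio identity to give $\frac{1}{y_i - x_j}\cdot\frac{x_j - \theta_1^{-2} w_1}{x_j - w_1}$, which multiplied by the product over $t\ge 2$ yields the claimed formula.

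The main obstacle is justifying the interchange of summation and integration and the convergence of the completeness series to $1/(x-z)$. Both reduce to the uniform bound $P_N(z, x_j) \to 0$ on the contour $\gamma'$, which is exactly the content of the compatibility condition \eqref{eq:Eynard_asc_FG_condition}: identifying factors, one checks that $\bigl|\tfrac{(y_k - z)(y_k - s_k^2 x_j)}{(y_k - s_k^2 z)(y_k - x_j)}\bigr| = \bigl|\tfrac{(x_j - s_k^{-2}y_k)(w-y_k)}{(x_j - y_k)(w - s_k^{-2}y_k)}\bigr|$ at $z = w$, and by continuity the bound $1-\delta$ propagates to a uniform bound on all of $\gamma'$. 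Once this uniform estimate is in place, the rest of the argument is routine, and the contour choice (tight around the $y_j$'s and $w_t$'s, excluding $x_j$ and $s_j^{-2} y_j$) is exactly that already used throughout the paper.
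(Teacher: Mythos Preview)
Your proof is correct and rests on the same telescoping identity as the paper's (your completeness relation $\sum_{l \ge k}\psi_l(z)\varphi_l(x) = P_k(z,x)/(x-z)$ is the partial-sum version of \Cref{lemma:phi_psi_summation}). The organization, however, is different. The paper represents each $\mathsf{h}$ and $\mathsf{g}$ factor as a separate contour integral, producing a $T$-fold nested integral in $z_1,\ldots,z_T$; the sums over $a_1,\ldots,a_T$ are then performed under the integral via \Cref{lemma:phi_psi_summation}, and the resulting multi-integral is collapsed by taking iterated residues at the single exterior pole $z_T=x_j$, then $z_{T-1}=x_j$, and so on. You instead collapse one summation index at a time, packaging the single-variable computation into a clean Pieri-type identity $\sum_{l\ge k}\mathsf{g}_{l/k}(w;\theta)\,\varphi_l(x)=\varphi_k(x)\cdot\tfrac{x-\theta^{-2}w}{x-w}$ and iterating. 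Your route avoids the nested-contour bookkeeping and makes the inductive structure explicit; the paper's route is more uniform and reuses \Cref{lemma:phi_psi_summation} verbatim without the auxiliary reproducing identity $\varphi_k(z)P_k(z,x)=\tfrac{y_{k+1}-x}{y_{k+1}-z}\varphi_k(x)$. Both require $x_j$ to lie outside the integration contour and dispose of the convergence constraint by the same analytic-continuation device from an open set of parameters.
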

The proof is based on the following lemma:
\begin{lemma}
	\label{lemma:phi_psi_summation}
	Let $\bigl|
		\frac{u-s_j^{-2}y_j}{u-y_j}
		\frac{v-y_j}{v-s_j^{-2}y_j}
	\bigr|<1-\delta<1$ for all sufficiently large $j\ge1$.
	Then we have
	\begin{equation*}
		\sum_{k=0}^{\infty}\varphi_k(u)\psi_k(v)=\frac{1}{u-v}.
	\end{equation*}
\end{lemma}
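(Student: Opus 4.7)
The plan is to prove the identity by exhibiting a telescoping structure. Writing out the product explicitly,
\begin{equation*}
\varphi_k(u)\,\psi_k(v)=\frac{y_{k+1}(s_{k+1}^2-1)}{(y_{k+1}-u)(y_{k+1}-s_{k+1}^2 v)}\prod_{j=1}^{k}\frac{(y_j-s_j^2 u)(y_j-v)}{(y_j-u)(y_j-s_j^2 v)},
\end{equation*}
one notices that the finite product is built from the factors
\begin{equation*}
R_j:=\frac{(y_j-s_j^2 u)(y_j-v)}{(y_j-u)(y_j-s_j^2 v)},
\end{equation*}
and that, under the rewriting in the proof of \Cref{prop:ABCD_infinite_volume} or by direct manipulation, the hypothesis of the lemma is precisely $|R_j|<1-\delta$ for all sufficiently large $j$.

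The key step is to introduce the auxiliary quantity
\begin{equation*}
T_k:=\frac{1}{u-v}\prod_{j=1}^{k}R_j,\qquad k\ge 0,
\end{equation*}
so that $T_0=1/(u-v)$, and to verify the identity $T_k-T_{k+1}=\varphi_k(u)\psi_k(v)$. This reduces to the algebraic fact that
\begin{equation*}
(y_{k+1}-u)(y_{k+1}-s_{k+1}^2 v)-(y_{k+1}-s_{k+1}^2 u)(y_{k+1}-v)=y_{k+1}(s_{k+1}^2-1)(u-v),
\end{equation*}
which is immediate upon expansion; dividing by $(y_{k+1}-u)(y_{k+1}-s_{k+1}^2 v)$ and multiplying by $T_k$ recovers $\varphi_k(u)\psi_k(v)$ from the formula above.

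Once this telescoping identity is established, summing gives
\begin{equation*}
\sum_{k=0}^{N}\varphi_k(u)\psi_k(v)=T_0-T_{N+1}=\frac{1}{u-v}-\frac{1}{u-v}\prod_{j=1}^{N+1}R_j.
\end{equation*}
The hypothesis on $|R_j|$ implies that the partial products are eventually bounded by $C(1-\delta)^{N}$, hence $T_{N+1}\to 0$, and passing to the limit yields the claim. There is no real obstacle here: the only thing to watch is the sign/convention match between the product $R_j$ arising in the derivation and the product appearing in the convergence hypothesis, which is straightforward.
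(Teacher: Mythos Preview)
Your proof is correct and is essentially the same telescoping argument as the paper's: the paper rewrites $\varphi_k(u)\psi_k(v)$ as $\frac{1}{u-v}\bigl(1-R_{k+1}\bigr)\prod_{j=1}^{k}R_j$ and observes the sum telescopes, which is exactly your identity $T_k-T_{k+1}=\varphi_k(u)\psi_k(v)$ with $T_k=\frac{1}{u-v}\prod_{j=1}^{k}R_j$. You are slightly more explicit about the algebraic verification, but the structure and the use of the hypothesis to kill the tail are identical.
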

\begin{proof}
	We have
	\begin{equation*}
		\begin{split}
			\sum_{k=0}^{\infty}\varphi_k(u)\psi_k(v)&=
			\sum_{k=0}^{\infty}
			\frac{1}{u-y_{k+1}}
			\frac{y_{k+1}(1-s_{k+1}^{-2})}{v-s_{k+1}^{-2}y_{k+1}}
			\prod_{j=1}^{k}
			\frac{u-s_j^{-2}y_j}{u-y_j}
			\frac{v-y_j}{v-s_j^{-2}y_j}
			\\&=
			\sum_{k=0}^{\infty}
			\frac{1}{u-v}\left( 1-
			\frac{u-s_{k+1}^{-2}y_{k+1}}{u-y_{k+1}}
			\frac{v-y_{k+1}}{v-s_{k+1}^{-2}y_{k+1}}
			\right)
			\prod_{j=1}^{k}
			\frac{u-s_j^{-2}y_j}{u-y_j}
			\frac{v-y_j}{v-s_j^{-2}y_j},
		\end{split}
	\end{equation*}
	and the sum telescopes to $1 / (u-v)$ if it converges 
	(which holds under the condition in the hypothesis).
\end{proof}

\begin{proof}[Proof of \Cref{prop:Eynard_Gram_matrix}]
	We represent $\mathsf{h}_{a_1,i}$ as an integral over $z_1$,
	and each $\mathsf{g}_{a_t/a_{t-1}}$ as an integral 
	over $z_t$, $2\le t\le T$. Initially all the integration variables 
	belong to the same contour $\Gamma_{y,w}$.
	However, in order to apply \Cref{lemma:phi_psi_summation}
	under the integrals, we need to have the following conditions on the contours
	for all sufficiently large $k\ge1$:
	\begin{equation*}
		\Bigl|
			\frac{z_{t+1}-s_k^{-2}y_k}{z_{t+1}-y_k}
			\frac{z_t-y_k}{z_{t}-s_k^{-2}y_k}
		\Bigr|
		<1-\delta<1
		,\qquad 
		\Bigl|
			\frac{x_j-s_k^{-2}y_k}{x_j-y_k}
			\frac{z_T-y_k}{z_{T}-s_k^{-2}y_k}
		\Bigr|
		<1-\delta<1,
	\end{equation*}
	where $t=1,\ldots,T-1$, $j=1,\ldots,N $.
	Clearly, under certain restrictions on the parameters,
	such contours exist. 
	Moreover, we may also choose them to be nested:
	$z_1$ around all $y_k$ and $w_t$, $z_{B}$ around $z_A$ if $B>A$,
	and all contours must leave outside all the points $s_k^{-2}y_k$.
	On these contours, we have
	by \Cref{lemma:phi_psi_summation}:
	\begin{equation*}
		M_{ij}=
		\frac{1}{(2\pi\mathbf{i})^{T}}
		\oint\ldots\oint 
		\frac{1}{y_i-z_1}
		\frac{dz_1\ldots dz_T }{(x_j-z_T)(z_T-z_{T-1})\ldots(z_2-z_1)}
		\prod_{t=1}^{T}\frac{z_t-\theta_t^{-2}w_t}{z_t-w_t}.
	\end{equation*}
	This integral is computed as follows. 
	First, for $z_T$ there is a single pole $z_T=x_j$
	outside the contour (and the integrand 
	has the zero residue at infinity).
	Taking the residue clears the denominator
	$x_j-z_T$ and substitutes $z_T=x_j$. After that, 
	we repeat the procedure for $z_{T-1},\ldots,z_1$,
	which leads to the desired formula.

	Finally, the restrictions on the parameters under which the contours exist 
	are lifted by an analytic continuation, since 
	\Cref{lemma:explicit_Eynard_h_g,lemma:phi_psi_summation} imply that 
	the summation
	in \eqref{eq:Eynard_Mij_summation} produces an a priori rational function.
\end{proof}

The matrix $M=[M_{ij}]_{i,j=1}^{N}$ is readily inverted:
\begin{lemma}
	\label{lemma:Eynard_M_inversion}
	We have, for $i,j=1,\ldots,N$,
	\begin{equation}
		\label{eq:Eynard_M_inversion}
		\begin{split}
			M_{ij}^{-1}&=
			\frac{1}{x_i-y_j}
			\frac{\prod_{k=1}^{N}(x_i-y_k)(y_j-x_k)}{\prod_{k\ne i}(x_i-x_k)\prod_{k\ne j}(y_j-y_k)}
			\prod_{t=1}^{T}\frac{x_i-w_t}{x_i-\theta_t^{-2}w_t}
			\\&=
			\frac{1}{(2\pi\mathbf{i})^2}
			\oint_{\Gamma_{x_i}}d\xi\oint_{\Gamma_{y_j}}d\eta\,
			\frac{1}{\xi-\eta}
			\prod_{k=1}^{N}\frac{(\xi-y_k)(\eta-x_k)}{(\xi-x_k)(\eta-y_k)}
			\prod_{t=1}^{T}\frac{\xi-w_t}{\xi-\theta_t^{-2}w_t},
		\end{split}
	\end{equation}
	where the contours for $\xi$ and $\eta$ are
	small nonintersecting
	positively oriented circles around $x_i$ and 
	$y_j$, respectively, which do not include any other poles of the integrand.
\end{lemma}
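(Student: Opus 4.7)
\emph{Proof proposal for Lemma B.3 (inversion of the Gram matrix $M$).}

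The plan is to observe that $M$ has a clean factorized structure and then reduce everything to the standard Cauchy determinant inversion, finishing by converting the resulting rational expression into the double contour integral.

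First, I would write $M = A \, D$, where $A$ is the Cauchy matrix with entries $A_{ij} = 1/(y_i - x_j)$ and $D$ is the diagonal matrix with
\[
D_{jj} = \prod_{t=1}^{T} \frac{x_j - \theta_t^{-2} w_t}{x_j - w_t}.
\]
Since $D$ is diagonal and invertible (by the nonnegativity conditions \eqref{eq:Eynard_asc_FG_condition} combined with \eqref{eq:x_r_spec_parameters}, the factor $x_j - w_t$ is nonzero), we have $M^{-1} = D^{-1} A^{-1}$, and the problem reduces to inverting a Cauchy matrix. The classical formula gives
\[
(A^{-1})_{ij} = \frac{1}{x_i - y_j} \cdot \frac{\prod_{k=1}^N (x_i - y_k)(y_j - x_k)}{\prod_{k \ne i}(x_i - x_k) \prod_{k \ne j}(y_j - y_k)},
\]
which can be verified either by direct residue calculus or by appealing to the Cauchy determinant identity. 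Multiplying by $D^{-1}_{ii}$, which contributes the factor $\prod_t \tfrac{x_i - w_t}{x_i - \theta_t^{-2} w_t}$, yields the first displayed equality for $M_{ij}^{-1}$ in \eqref{eq:Eynard_M_inversion}.

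For the contour integral form, I would simply evaluate the double residue. The only singularities of the integrand inside $\Gamma_{x_i}$ in the variable $\xi$ and inside $\Gamma_{y_j}$ in the variable $\eta$ are the simple poles $\xi = x_i$ (coming from $1/(\xi - x_i)$) and $\eta = y_j$ (coming from $1/(\eta - y_j)$); the factor $1/(\xi - \eta)$ contributes no pole since the two contours are disjoint. Taking the residue at $\xi = x_i$ produces
\[
\frac{\prod_k (x_i - y_k)}{\prod_{k \ne i}(x_i - x_k)} \prod_t \frac{x_i - w_t}{x_i - \theta_t^{-2} w_t} \cdot \frac{1}{x_i - \eta} \prod_k \frac{\eta - x_k}{\eta - y_k},
\]
and then the residue at $\eta = y_j$ contributes $\tfrac{1}{x_i - y_j}\,\tfrac{\prod_k (y_j - x_k)}{\prod_{k \ne j}(y_j - y_k)}$. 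Combining reproduces the product expression, establishing the second equality.

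There is no real obstacle here: the first equality is a direct consequence of a well-known identity, and the contour-integral reformulation is a two-line residue computation. The only small point to check is that the contours $\Gamma_{x_i}$ and $\Gamma_{y_j}$ can indeed be taken disjoint and enclosing only the claimed poles, which is guaranteed by the genericity of the parameters under the standing assumptions of \Cref{sub:probability_from_Cauchy}.
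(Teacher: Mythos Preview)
Your proposal is correct and follows essentially the same approach as the paper: the paper invokes the Cauchy determinant to compute the cofactors of $M$, which is equivalent to your factorization $M = AD$ followed by the classical Cauchy inverse formula, and the contour integral verification is identical. Your presentation via the diagonal factorization is arguably cleaner, but there is no substantive difference.
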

\begin{proof}
	The first expression for $M_{ij}^{-1}$ is obtained
	using the Cauchy determinant,
	since all minors (and hence all cofactors) of $M$ are determinants of similar form.
	The contour integral expression corresponds to taking 
	residues at the simple poles $\xi=x_i$
	and $\eta=y_j$.
\end{proof}

By the Eynard--Mehta theorem as in \cite[Theorem 1.4]{borodin2005eynard}, the correlation kernel of the 
determinantal point process
\eqref{eq:determ_pp_Eynard}
on $\left\{ 1,\ldots,T  \right\}\times \mathbb{Z}_{\ge1}$
takes the form (the shifts $a+1,a'+1$ correspond to the shifts in the 
determinantal representation in \Cref{prop:det_form_for_Eynard}):
\begin{align}
	\nonumber
		&
		K_{\mathscr{AP}}(t,a+1;t',a'+1)
		\\&\hspace{10pt}=
		-\mathbf{1}_{t>t'}\sum_{\alpha_{t'+1},\ldots,\alpha_{t-1}\ge0 }
		\mathsf{g}_{\alpha_{t'+1}/a'}(w_{t'+1};\theta_{t'+1})
		\ldots
		\mathsf{g}_{\alpha_{t-1}/\alpha_{t-2}}(w_{t-1};\theta_{t-1})
		\mathsf{g}_{a/\alpha_{t-1}}(w_t;\theta_t)
		\nonumber
		\\ &\hspace{20pt}+
		\sum_{i,j=1}^{N}
		M_{ji}^{-1}
		\sum_{\alpha_1,\ldots,\alpha_{t-1}\ge0 }
		\mathsf{h}_{\alpha_1,i}(w_1;\theta_1)
		\mathsf{g}_{\alpha_2/\alpha_1}(w_2;\theta_2)
		\ldots
		\mathsf{g}_{a/\alpha_{t-1}}(w_t;\theta_t)
		\label{eq:K_AP_Eynard_first}
		\\
		\nonumber&\hspace{60pt}
		\times
		\sum_{\beta_{t'+1},\ldots,\beta_T\ge0 }
		\mathsf{g}_{\beta_{t'+1}/a'}(w_{t'+1};\theta_{t'+1})
		\ldots \mathsf{g}_{\beta_T/\beta_{T-1}}(w_T;\theta_T)
		\varphi_{\beta_T}(x_j).
\end{align}
The iterated sums over the $\alpha_j$'s in the first and the second terms are finite and thus converge,
and the sum over the $\beta_j$'s is infinite but converges 
under \eqref{eq:Eynard_asc_FG_condition}, see \Cref{lemma:explicit_Eynard_h_g}.

\subsection{Computation of the kernel}
\label{eq:appB_computation_kernel}

Let us now compute all the sums in \eqref{eq:K_AP_Eynard_first}, and 
arrive at the resulting formula for the correlation kernel.

For the first summand arising when $t>t'$, we 
pass to the nested contours
($z_{B}$ around $z_A$ if $B>A$) as in the proof of \Cref{prop:Eynard_Gram_matrix}.
We obtain
\begin{align}
		&
		\sum_{\alpha_{t'+1},\ldots,\alpha_{t-1}\ge0 }
		\mathsf{g}_{\alpha_{t'+1}/a'}(w_{t'+1};\theta_{t'+1})
		\ldots
		\mathsf{g}_{\alpha_{t-1}/\alpha_{t-2}}(w_{t-1};\theta_{t-1})
		\mathsf{g}_{a/\alpha_{t-1}}(w_t;\theta_t)
		\label{eq:K_Eynard_additional_summand_1}
		\\&\hspace{5pt}
		\nonumber=
		\frac{1}{(2\pi\mathbf{i})^{t-t'}}
		\oint\ldots\oint 
		\varphi_{a'}(z_{t'+1})\psi_a(z_t)\,
		\frac{dz_{t'+1}\ldots dz_t}{(z_{t'+2}-z_{t'+1})\ldots(z_{t-1}-z_{t-2})(z_t-z_{t-1}) }
		\prod_{i=t'+1}^t \frac{z_i-\theta_i^{-2}w_i}{z_i-w_i},
\end{align}
where we extended the sum over the
$\alpha_j$'s to all $\alpha_j\ge 0$
under the integral, and the infinite sums under the integral are
computed using \Cref{lemma:phi_psi_summation}.
Next, 
deforming the contours $z_{t-1},z_{t-2},\ldots,z_{t'+1} $
(in this order) to infinity, each integration in $z_i$ picks up a residue at a single pole outside the 
integration contour at $z_i=z_t$. This leaves a single integral:
\begin{equation}
		\label{eq:K_Eynard_additional_summand}
		\eqref{eq:K_Eynard_additional_summand_1}=
		\frac{1}{2\pi\mathbf{i}}
		\oint_{\Gamma_{y,w}} dz \,
		\varphi_{a'}(z)\psi_a(z)\,
		\prod_{i=t'+1}^t \frac{z-\theta_i^{-2}w_i}{z-w_i}.
\end{equation}

Arguing in a similar manner, we can compute
\begin{align*}
		&\sum_{\alpha_1,\ldots,\alpha_{t-1}\ge0 }
		\mathsf{h}_{\alpha_1,i}(w_1;\theta_1)
		\mathsf{g}_{\alpha_2/\alpha_1}(w_2;\theta_2)
		\ldots
		\mathsf{g}_{a/\alpha_{t-1}}(w_t;\theta_t)
		\\&\hspace{20pt}
		=
		\frac{1}{(2\pi\mathbf{i})^{t}}
		\oint\ldots\oint
		\frac{\psi_a(z_t)}{y_i-z_1}
		\frac{dz_1\ldots dz_t }{(z_2-z_1)(z_3-z_2)\ldots (z_t-z_{t-1}) }
		\prod_{d=1}^t \frac{z_d-\theta_d^{-2}w_d}{z_d-w_d}
		\\&\hspace{20pt}
		=
		\frac{1}{2\pi\mathbf{i}}
		\oint_{\Gamma_{y,w}}
		\frac{\psi_a(z)\,dz}{y_i-z}
		\prod_{d=1}^t \frac{z-\theta_d^{-2}w_d}{z-w_d},
\end{align*}
and
\begin{align*}
		&
		\sum_{\beta_{t'+1},\ldots,\beta_T\ge0 }
		\mathsf{g}_{\beta_{t'+1}/a'}(w_{t'+1};\theta_{t'+1})
		\ldots \mathsf{g}_{\beta_T/\beta_{T-1}}(w_T;\theta_T)
		\varphi_{\beta_T}(x_j)
		\\&\hspace{10pt}
		=
		\frac{1}{(2\pi\mathbf{i})^{T-t'}}
		\oint\ldots\oint 
		\frac{\varphi_{a'}(z_{t'+1})}{x_j-z_T}
		\frac{dz_{t'+1}\ldots dz_T}{(z_{t'+2}-z_{t'+1})\ldots(z_{T-1}-z_{T-2})(z_T-z_{T-1}) }
		\prod_{c=t'+1}^T \frac{z_c-\theta_c^{-2}w_c}{z_c-w_c}
		\\&\hspace{10pt}
		=
		\varphi_{a'}(x_j)
		\prod_{c=t'+1}^T \frac{x_j-\theta_c^{-2}w_c}{x_j-w_c}.
\end{align*}
In the latter computation we pick the residues at $z_T=x_j,\ldots,z_{t'+1}=x_j$
(in this order), which is the only pole outside the corresponding integration contour.
Finally, we take the last two 
quantities, multiply by $M_{ji}^{-1}$, and sum as in \eqref{eq:K_AP_Eynard_first}.
Using \eqref{eq:Eynard_M_inversion}, we have
\begin{align*}
		&
		\sum_{i,j=1}^N
		\frac{1}{(2\pi\mathbf{i})^2}
		\oint_{\Gamma_{x_j}}d\xi\oint_{\Gamma_{y_i}} d \eta\,
		\frac{1}{\xi-\eta}
		\prod_{k=1}^{N}\frac{(\xi-y_k)(\eta-x_k)}{(\xi-x_k)(\eta-y_k)}
		\prod_{t=1}^{T}\frac{\xi-w_t}{\xi-\theta_t^{-2}w_t}
		\\
		&\hspace{40pt}
		\times
		\frac{1}{2\pi\mathbf{i}}
		\oint_{\Gamma_{y,w}}
		\frac{\psi_a(z)\,dz}{y_i-z}
		\prod_{d=1}^t \frac{z-\theta_d^{-2}w_d}{z-w_d}
		\,\varphi_{a'}(x_j)
		\prod_{c=t'+1}^T \frac{x_j-\theta_c^{-2}w_c}{x_j-w_c}
		\\&\hspace{20pt}
		=
		\frac{1}{(2\pi\mathbf{i})^3}
		\oint_{\Gamma_{x}}d\xi\oint_{\Gamma_{y}} d \eta
		\oint_{\Gamma_{y,w}}
		dz
		\,
		\frac{1}{\eta-\xi}
		\frac{1}{\eta-z}
		\prod_{k=1}^{N}\frac{(\xi-y_k)(\eta-x_k)}{(\xi-x_k)(\eta-y_k)}
		\\
		&\hspace{40pt}
		\times
		\frac{y_{a+1}(1-s_{a+1}^{-2})}{z-s_{a+1}^{-2}y_{a+1}}
		\frac{1}{y_{a'+1}-\xi}
		\prod_{j=1}^{a}
		\frac{z-y_j}{z-s_j^{-2}y_j}
		\prod_{j=1}^{a'}
		\frac{\xi-s_j^{-2}y_j}{\xi-y_j}
		\prod_{d=1}^t \frac{z-\theta_d^{-2}w_d}{z-w_d}
		\prod_{c=1}^{t'}\frac{\xi-w_c}{\xi-\theta_c^{-2}w_c}.
\end{align*}
To obtain the latter expression
we substituted $x_j=\xi$, $y_i=\eta$, and
changed the contours $\Gamma_x,\Gamma_y$ for these variables to encircle 
all $x_k$'s or all $y_k$'s, respectively, while leaving all other poles outside.
Observe now that the only pole in $\eta$ outside the integration contour
which produces a nonzero residue is at $\eta=z$.
Indeed, the residue at $\eta=\xi$ eliminates all poles inside the $\xi$ contour, and thus vanishes. 
Therefore, we may continue the above computation as follows:
\begin{align*}
	&=
		\frac{1}{(2\pi\mathbf{i})^2}
		\oint_{\Gamma_{x}}d\xi
		\oint_{\Gamma_{y,w}}
		dz
		\,
		\frac{1}{z-\xi}
		\prod_{k=1}^{N}\frac{(\xi-y_k)(z-x_k)}{(\xi-x_k)(z-y_k)}
		\\
		&\hspace{40pt}
		\times
		\frac{y_{a+1}(1-s_{a+1}^{-2})}{z-s_{a+1}^{-2}y_{a+1}}
		\frac{1}{\xi-y_{a'+1}}
		\prod_{j=1}^{a}
		\frac{z-y_j}{z-s_j^{-2}y_j}
		\prod_{j=1}^{a'}
		\frac{\xi-s_j^{-2}y_j}{\xi-y_j}
		\prod_{d=1}^t \frac{z-\theta_d^{-2}w_d}{z-w_d}
		\prod_{c=1}^{t'}\frac{\xi-w_c}{\xi-\theta_c^{-2}w_c}.
\end{align*}
Let us drag the $\xi$ contour through infinity, 
so that now it encircles the $z$ contour
$\Gamma_{y,w}$, and also all the points $\theta_i^{-2}w_i$.
This leads to an extra minus sign.

Finally, we need to add the additional summand
\eqref{eq:K_Eynard_additional_summand} if $t>t'$.
In this case, observe that dragging the $z$ contour so that it is outside of the $\xi$ contour
produces the same expression as \eqref{eq:K_Eynard_additional_summand},
but with the opposite sign. 
Moreover,
we need to undo the
shifts $a+1,a'+1$ corresponding to the
determinantal representation in \Cref{prop:det_form_for_Eynard}. Renaming the integration variables
as $\xi=u$, $z=v$ 
leads to the final expression
for the correlation kernel of the ascending FG process:
\begin{equation*}
	\begin{split}
		&K_{\mathscr{AP}}(t,a;t',a')
		=
		\frac{1}{(2\pi\mathbf{i})^2}
		\oint_{\Gamma_{y,w,\theta^{-2}w}}du
		\oint_{\Gamma_{y,w}}
		dv
		\,
		\frac{1}{u-v}
		\prod_{k=1}^{N}\frac{(u-y_k)(v-x_k)}{(u-x_k)(v-y_k)}
		\\
		&\hspace{20pt}
		\times
		\frac{y_{a}(1-s_{a}^{-2})}{v-s_{a}^{-2}y_{a}}
		\frac{1}{u-y_{a'}}
		\prod_{j=1}^{a-1}
		\frac{v-y_j}{v-s_j^{-2}y_j}
		\prod_{j=1}^{a'-1}
		\frac{u-s_j^{-2}y_j}{u-y_j}
		\prod_{d=1}^t \frac{v-\theta_d^{-2}w_d}{v-w_d}
		\prod_{c=1}^{t'}\frac{u-w_c}{u-\theta_c^{-2}w_c}.
	\end{split}
\end{equation*}
where the $u$ contour is outside for $t\le t'$, and
the $v$ contour is outside for $t>t'$.
This completes the proof of \Cref{thm:ascending_FG_process_kernel} in the ascending 
FG process case.

\bibliography{bib}

@article{gunna2022integrable,
  title={Integrable systems and crystals for edge labeled tableaux},
  author={Gunna, A. and Scrimshaw, T.},
  journal={arXiv preprint},
  year={2022},
	note={arXiv:2202.06004 [math.CO]}
}

@article{hardt2021lattice,
  title={{Lattice Models, Hamiltonian Operators, and Symmetric Functions}},
  author={Hardt, A.},
  journal={arXiv preprint},
  year={2021},
	note={arXiv:2109.14597 [math.RT]}
}

@article{guo2019identities,
  title={Identities on factorial Grothendieck polynomials},
  author={Guo, P. and Sun, S.},
  journal={Adv. Appl. Math.},
  volume={111},
  pages={101933},
  year={2019},
	note={arXiv:1812.04390 [math.CO]},
  publisher={Elsevier}
}

@article{feher2012equivariant,
  title={Equivariant classes of matrix matroid varieties},
  author={Feh{\'e}r, L. and N{\'e}methi, A. and Rim{\'a}nyi, R.},
  journal={Commentarii Mathematici Helvetici},
  volume={87},
  number={4},
  pages={861--889},
  year={2012},
	note={arXiv:0812.4871 [math.AG]}
}

@article{motegi2020integrability,
  title={{Integrability approach to Feh\'er-N\'emethi-Rim\'anyi-Guo-Sun type identities for factorial Grothendieck polynomials}},
  author={Motegi, K.},
  journal={Nuclear Physics B},
	note={arXiv:1909.02278 [math.CO]},
  volume={954},
  pages={114998},
  year={2020},
  publisher={Elsevier}
}

@article{zinn2009littlewood,
  title={{Littlewood--Richardson coefficients and integrable tilings}},
  author={Zinn-Justin, P.},
  journal = {Electron. J. Comb.},
  volume={16},
  number={R12},
  pages={1},
  year={2009},
  note={arXiv:0809.2392 [math-ph]}
}

@article{borodin2000distributions,
  title={Distributions on Partitions, Point Processes, and the Hypergeometric Kernel},
  author={Borodin, A. and Olshanski, G.},
  journal={Communications in Mathematical Physics},
  volume={211},
  number={2},
  pages={335--358},
  year={2000},
  publisher={Springer},
	note={arXiv:math/9904010 [math.RT]}
}

@article{oota2003quantum,
  title={Quantum projectors and local operators in lattice integrable models},
  author={Oota, T.},
  journal={Jour. Phys. A},
  volume={37},
  number={2},
  pages={441},
  year={2003},
	note={arXiv:hep-th/0304205},
  publisher={IOP Publishing}
}

@article{kitanine2002spin,
  title={Spin--spin correlation functions of the XXZ-12 Heisenberg chain in a magnetic field},
  author={Kitanine, N. and Maillet, J.M. and Slavnov, N.A. and Terras, V.},
  journal={Nucl. Phys. B},
  volume={641},
  number={3},
  pages={487--518},
  year={2002},
	note={arXiv:hep-th/0201045},
  publisher={Elsevier}
}

@incollection{nakagawa2001tableau,
  title={{Tableau representation for Macdonald's ninth variation of Schur functions}},
  author={Nakagawa, J. and Noumi, M. and Shirakawa, M. and Yamada, Y.},
  booktitle={Physics and combinatorics},
  pages={180--195},
  year={2001},
  publisher={World Scientific}
}

@article{ferrari2006domino,
  title={{Domino tilings and the six-vertex model at its free-fermion point}},
  author={Ferrari, P.L. and Spohn, H.},
  journal={Jour. Phys. A},
  volume={39},
  number={33},
  pages={10297},
  year={2006},
	note={arXiv:cond-mat/0605406 [cond-mat.stat-mech]},
  publisher={IOP Publishing}
}

@article{zinn2000six,
  title={{Six-vertex model with domain wall boundary conditions and one-matrix model}},
  author={Zinn-Justin, Paul},
  journal={Phys. Rev. E},
  volume={62},
  number={3},
  pages={3411},
  year={2000},
	note={arXiv:math-ph/0005008},
  publisher={APS}
}

@article{ikeda2009excited,
  title={{Excited Young diagrams and equivariant Schubert calculus}},
  author={Ikeda, T. and Naruse, H.},
  journal={Trans. AMS},
  volume={361},
  number={10},
  pages={5193--5221},
	note={arXiv:math/0703637 [math.AG]},
  year={2009}
}

@article{MPP2,
  title={{Hook formulas for skew shapes II. Combinatorial proofs and enumerative applications}},
  author={Morales, A.H. and Pak, I. and Panova, G.},
  journal={SIAM J. Discr. Math.},
  volume={31},
  number={3},
  pages={1953--1989},
  year={2017},
	note={arXiv:1610.04744 [math.CO]},
  publisher={SIAM}
}

@article{felderhof1973direct,
	author = {Felderhof, B.U.},
	journal = {Physica},
	number = {3},
	pages = {421--451},
	publisher = {Elsevier},
	title = {Direct diagonalization of the transfer matrix of the zero-field free-fermion model},
	volume = {65},
	year = {1973}}

@article{felderhof1973diagonalization3,
	author = {Felderhof, B.U.},
	journal = {Physica},
	number = {3},
	pages = {509--526},
	publisher = {Elsevier},
	title = {Diagonalization of the transfer matrix of the free-fermion model. III},
	volume = {66},
	year = {1973}}

@article{felderhof1973diagonalization2,
	author = {Felderhof, B.U.},
	journal = {Physica},
	number = {2},
	pages = {279--297},
	publisher = {Elsevier},
	title = {Diagonalization of the transfer matrix of the free-fermion model. II},
	volume = {66},
	year = {1973}}

@inproceedings{yau2013wigner,
	author = {Yau, Horng-Tzer},
	booktitle = {Notices of the International Congress of Chinese Mathematicians},
	number = {1},
	organization = {International Press of Boston},
	pages = {10--13},
	title = {{The Wigner-Dyson-Gaudin-Mehta Conjecture}},
	volume = {1},
	year = {2013}}

@article{mehta1960density,
	author = {Mehta, M.L. and Gaudin, M.},
	journal = {Nuclear Physics},
	pages = {420--427},
	publisher = {Elsevier},
	title = {On the density of eigenvalues of a random matrix},
	volume = {18},
	year = {1960}}

@incollection{decreusefond2016determinantal,
	author = {Decreusefond, L. and Flint, I. and Privault, Nicolas and T., Giovanni L.},
	booktitle = {Stochastic Analysis for Poisson Point Processes},
	pages = {311--342},
	publisher = {Springer},
	title = {Determinantal point processes},
	year = {2016}}

@article{kulesza2012determinantal,
	author = {Kulesza, A. and Taskar, B.},
	journal = {Foundations and Trends in Machine Learning},
	note = {arXiv:1207.6083 [stat.ML]},
	number = {2--3},
	pages = {123--286},
	publisher = {Now Publishers, Inc.},
	title = {{Determinantal Point Processes for Machine Learning}},
	volume = {5},
	year = {2012}}

@article{lyons2003determinantal,
	author = {Lyons, R.},
	journal = {Publ. IHES},
	note = {arXiv:math/0204325 [math.PR]},
	pages = {167--212},
	title = {Determinantal probability measures},
	volume = {98},
	year = {2003}}

@article{macchi1975coincidence,
	author = {Macchi, O.},
	journal = {Adv. Appl. Probab.},
	number = {1},
	pages = {83--122},
	publisher = {Cambridge University Press},
	title = {The coincidence approach to stochastic point processes},
	volume = {7},
	year = {1975}}

@article{gleizer2000littlewood,
	author = {Gleizer, O. and Postnikov, A.},
	journal = {International Mathematics Research Notices},
	number = {14},
	pages = {741--774},
	publisher = {OUP},
	title = {{Littlewood-Richardson coefficients via Yang-Baxter equation}},
	volume = {2000},
	year = {2000}}

@article{lascoux1997flag,
	author = {Lascoux, A. and Leclerc, B. and Thibon, J.-Y.},
	journal = {Letters in Mathematical Physics},
	number = {1},
	pages = {75--90},
	publisher = {Springer},
	title = {{Flag varieties and the Yang-Baxter equation}},
	volume = {40},
	year = {1997}}

@inproceedings{fomin1994grothendieck,
	author = {Fomin, S. and Kirillov, A.N.},
	booktitle = {Proc. Formal Power Series and Alg. Comb},
	pages = {183--190},
	title = {{Grothendieck polynomials and the Yang-Baxter equation}},
	year = {1994}}

@article{fomin1996yang,
	author = {Fomin, S. and Kirillov, A.N.},
	journal = {Discrete Mathematics},
	number = {1-3},
	pages = {123--143},
	publisher = {Elsevier},
	title = {{The Yang-Baxter equation, symmetric functions, and Schubert polynomials}},
	volume = {153},
	year = {1996}}

@article{kirillov1988bethe,
	author = {Kirillov, A.N. and Reshetikhin, N.Yu.},
	journal = {Journal of Soviet Mathematics},
	number = {2},
	pages = {925--955},
	publisher = {Springer},
	title = {{The Bethe ansatz and the combinatorics of Young tableaux}},
	volume = {41},
	year = {1988}}

@article{korff2021cylindric,
	author = {Korff, C.},
	journal = {Commun. Math. Phys.},
	note = {arXiv:1906.02565 [math-ph]},
	number = {2},
	pages = {591--640},
	publisher = {Springer},
	title = {{Cylindric Hecke Characters and Gromov--Witten Invariants via the Asymmetric Six-Vertex Model}},
	volume = {381},
	year = {2021}}

@article{motegi2017izergin,
	author = {Motegi, K.},
	journal = {Advances in Mathematical Physics},
	note = {arXiv:1704.03575 [math-ph]},
	pages = {7563781},
	title = {{Izergin-Korepin analysis on the projected wavefunctions of the generalized free-fermion model}},
	volume = {2017},
	year = {2017}}

@article{bouttier2017aztec,
	author = {Bouttier, J. and Chapuy, G. and Corteel, S.},
	journal = {Trans. AMS},
	note = {arXiv:1407.0665 [math.CO]},
	number = {8},
	pages = {5921--5959},
	title = {{From Aztec diamonds to pyramids: steep tilings}},
	volume = {369},
	year = {2017}}

@article{wheeler2018hall,
	author = {Wheeler, M. and Zinn-Justin, P.},
	journal = {Jour. Comb. Th. A},
	note = {arXiv:1603.01815 [math-ph]},
	pages = {107--163},
	title = {{Hall polynomials, inverse Kostka polynomials and puzzles}},
	volume = {159},
	year = {2018}}

@article{gorin2021lectures,
	author = {Gorin, Vadim},
	journal = {Cambridge Studies in Advanced Mathematics. Cambridge University Press},
	note = {Available at \url{https://people.math.wisc.edu/~vadicgor/Random_tilings.pdf}},
	title = {Lectures on random lozenge tilings},
	year = {2021}}

@article{gaudin1960demonstration,
	author = {Gaudin, M.},
	journal = {Nuclear Physics},
	pages = {89--91},
	publisher = {Elsevier},
	title = {Une d{\'e}monstration simplifi{\'e}e du th{\'e}oreme de wick en m{\'e}canique statistique},
	volume = {15},
	year = {1960}}

@article{betea2019periodic,
	author = {Betea, D. and Bouttier, J.},
	journal = {Mathematical Physics, Analysis and Geometry},
	note = {arXiv:1807.09022 [math-ph]},
	number = {1},
	pages = {3},
	publisher = {Springer},
	title = {{The periodic Schur process and free fermions at finite temperature}},
	volume = {22},
	year = {2019}}

@article{charlier2021doubly,
	author = {Charlier, C.},
	journal = {Studies in Applied Mathematics},
	note = {arXiv:2001.11095 [math-ph]},
	number = {1},
	pages = {3--80},
	publisher = {Wiley Online Library},
	title = {Doubly periodic lozenge tilings of a hexagon and matrix valued orthogonal polynomials},
	volume = {146},
	year = {2021}}

@article{Mkrtchyan2019,
	author = {Mkrtchyan, S.},
	date-added = {2021-06-13 21:10:48 +0500},
	date-modified = {2021-06-13 21:11:20 +0500},
	journal = {arXiv preprint},
	note = {arXiv:1908.01246 [math.PR]},
	title = {Turning point processes in plane partitions with periodic weights of arbitrary period},
	year = {2019}}

@article{gorin2019universality,
	author = {Gorin, V. and Petrov, L.},
	journal = {Annals of Probability},
	note = {arXiv:1608.03243 [math.PR]},
	number = {5},
	pages = {2686--2753},
	publisher = {Institute of Mathematical Statistics},
	title = {Universality of local statistics for noncolliding random walks},
	volume = {47},
	year = {2019}}

@article{moens2003determinantal,
	author = {Moens, E.M. and Van der Jeugt, J.},
	journal = {Journal of Algebraic Combinatorics},
	number = {3},
	pages = {283--307},
	publisher = {Springer},
	title = {{A determinantal formula for supersymmetric Schur polynomials}},
	volume = {17},
	year = {2003}}

@article{forrester2019meet,
	author = {Forrester, P.J.},
	journal = {Random Matrices: Theory and Applications},
	note = {arXiv:1806.10411 [math-ph]},
	number = {02},
	pages = {1930001},
	publisher = {World Scientific},
	title = {{Meet Andr{\'e}ief, Bordeaux 1886, and Andreev, Kharkov 1882--1883}},
	volume = {8},
	year = {2019}}

@article{hamel1995lattice,
	author = {Hamel, A.M. and Goulden, I.P.},
	date-modified = {2021-01-03 21:57:29 -0500},
	journal = {Canad. J. Math.},
	number = {2},
	pages = {364--382},
	publisher = {Cambridge University Press},
	title = {{Lattice paths and a Sergeev-Pragacz formula for skew supersymmetric functions}},
	volume = {47},
	year = {1995}}

@article{lascoux20076,
	author = {Lascoux, A.},
	date-modified = {2021-09-01 19:42:10 -0700},
	journal = {SIGMA},
	note = {arXiv:math/0610719 [math.CO]},
	pages = {029},
	title = {{The 6 vertex model and Schubert polynomials}},
	volume = {3},
	year = {2007}}

@article{mcnamara2009factorial,
	author = {McNamara, P.},
	journal = {arXiv preprint},
	note = {arXiv:0910.5288 [math.CO]},
	title = {{Factorial Schur functions via the six vertex model}},
	year = {2009}}

@article{macdonald1992schur_Theme,
	author = {Macdonald, I.G.},
	date-added = {2020-12-22 12:54:47 -0500},
	date-modified = {2020-12-22 12:56:00 -0500},
	journal = {S{\'e}m. Lothar. Combin},
	pages = {5--39},
	title = {Schur functions: Theme and variations},
	volume = {28},
	year = {1992}}

@article{agg-bor-wh2020-sl1n,
	author = {Aggarwal, A. and Borodin, A. and Wheeler, M.},
	date-added = {2020-10-29 09:21:11 -0400},
	date-modified = {2021-01-06 13:59:57 -0500},
	journal = {arXiv preprint},
	note = {arXiv:2101.01605 [math.CO]},
	title = {{Colored Fermionic Vertex Models and Symmetric Functions}},
	year = {2021}}

@article{olshanski2019interpolation,
	author = {Olshanski, G.},
	journal = {Jour. Comb. Th. A},
	note = {arXiv:1712.08018 [math.CO]},
	pages = {65--117},
	publisher = {Elsevier},
	title = {{Interpolation Macdonald polynomials and Cauchy-type identities}},
	volume = {162},
	year = {2019}}

@article{molev2009comultiplication,
	author = {Molev, A.},
	journal = {Electron. J. Comb.},
	note = {arXiv:0807.2127 [math.CO]},
	title = {{Comultiplication rules for the double Schur functions and Cauchy identities}},
	volume = {R13},
	year = {2009}}

@article{brubaker2011schur,
	author = {Brubaker, B. and Bump, D. and Friedberg, S.},
	date-modified = {2021-01-03 10:53:57 -0500},
	journal = {Comm. Math. Phys.},
	number = {2},
	pages = {281},
	title = {{Schur polynomials and the Yang-Baxter equation}},
	volume = {308},
	year = {2011}}

@article{bump2011factorial,
	author = {Bump, D. and McNamara, P. and Nakasuji, M.},
	date-modified = {2021-01-23 15:50:47 -0500},
	journal = {Rikkyo-daigaku-sugaku-zasshi},
	note = {arXiv:1108.3087 [math.CO]},
	number = {1-2},
	pages = {23--45},
	title = {{Factorial Schur functions and the Yang-Baxter equation}},
	volume = {63},
	year = {2014}}

@article{Pak_F_Petrov2020,
	author = {Pak, I. and Petrov, F.},
	date-added = {2020-04-19 11:52:14 -0400},
	date-modified = {2020-04-19 11:52:39 -0400},
	journal = {Electron. J. Combin.},
	volume={27},
	number={3},
	pages={P3.44},
	note = {arXiv:2003.14236 [math.CO]},
	title = {Hidden symmetries of weighted lozenge tilings},
	year = {2020}}

@article{MPP3,
	author = {Morales, A. and Pak, I. and Panova, G.},
	journal = {Algebraic Combinatorics},
	note = {arXiv:1707.00931 [math.CO]},
	number = {5},
	pages = {815--861},
	title = {{Hook formulas for skew shapes III. Multivariate and product formulas}},
	volume = {2},
	year = {2019}}

@article{aggarwal2019universality,
	author = {Aggarwal, A.},
	journal = {arXiv preprint},
	note = {arXiv:1907.09991 [math.PR]},
	title = {{Universality for Lozenge Tiling Local Statistics}},
	year = {2019}}

@article{theodoros2019_determ,
	author = {Assiotis, T.},
	date-added = {2019-10-18 23:24:56 -0400},
	date-modified = {2020-10-22 21:59:10 -0400},
	journal = {Ann. Inst. H. Poincar\'e},
	note = {arXiv:1910.09500 [math.PR]},
	pages = {909-940},
	title = {{Determinantal Structures in Space Inhomogeneous Dynamics on Interlacing Arrays}},
	volume = {21},
	year = {2020}}

@article{cohn-elki-prop-96,
	author = {Cohn, Henry and Elkies, Noam and Propp, James},
	doi = {10.1215/S0012-7094-96-08506-3},
	journal = {Duke Math. J.},
	note = {arXiv:math/0008243 [math.CO]},
	number = {1},
	pages = {117--166},
	title = {Local statistics for random domino tilings of the {A}ztec diamond},
	url = {http://dx.doi.org.ezproxy.library.wisc.edu/10.1215/S0012-7094-96-08506-3},
	volume = {85},
	year = {1996},
	Bdsk-Url-1 = {http://dx.doi.org.ezproxy.library.wisc.edu/10.1215/S0012-7094-96-08506-3},
	Bdsk-Url-2 = {https://doi.org/10.1215/S0012-7094-96-08506-3}}

@article{berggren2021domino,
	author = {Berggren, T.},
	journal = {Ann. Probab.},
	note = {arXiv:1911.01250 [math.PR]},
	number = {4},
	pages = {1965--2011},
	publisher = {Institute of Mathematical Statistics},
	title = {{Domino tilings of the Aztec diamond with doubly periodic weightings}},
	volume = {49},
	year = {2021}}

@article{duits2017two,
	author = {Duits, M. and Kuijlaars, A.},
	date-added = {2018-07-15 06:48:04 +0000},
	date-modified = {2018-07-15 06:48:32 +0000},
	journal = {Journal of the European Mathematical Society},
	note = {arXiv:1712.05636 [math.PR]},
	number = {4},
	pages = {1075--1131},
	title = {{The two periodic Aztec diamond and matrix valued orthogonal polynomials}},
	volume = {23},
	year = {2020}}

@article{chhita2016domino,
	author = {Chhita, S. and Johansson, K.},
	date-added = {2018-07-15 06:47:33 +0000},
	date-modified = {2018-07-15 06:49:04 +0000},
	journal = {Adv. Math.},
	note = {arXiv:1410.2385 [math.PR]},
	pages = {37--149},
	publisher = {Elsevier},
	title = {{Domino statistics of the two-periodic Aztec diamond}},
	volume = {294},
	year = {2016}}

@article{elkies1992alternating,
	author = {Elkies, N. and Kuperberg, G. and Larsen, M. and Propp, J.},
	date-added = {2018-07-12 20:50:26 +0000},
	date-modified = {2018-07-12 20:51:57 +0000},
	journal = {Jour. Alg. Comb.},
	number = {2-3},
	pages = {111-132 and 219-234},
	title = {Alternating-sign matrices and domino tilings},
	volume = {1},
	year = {1992}}

@article{BorodinPeche2009,
	author = {Borodin, A. and Peche, S.},
	date-added = {2018-02-13 16:40:08 +0000},
	date-modified = {2018-07-24 15:18:21 +0000},
	journal = {Jour. Stat. Phys.},
	note = {arXiv:0712.1086v3 [math-ph]},
	number = {2},
	pages = {275-290},
	title = {Airy kernel with two sets of parameters in directed percolation and random matrix theory},
	volume = {132},
	year = {2008}}

@article{pauling1935structure,
	author = {Pauling, L.},
	date-added = {2018-01-24 10:11:41 +0000},
	date-modified = {2018-01-24 10:11:48 +0000},
	journal = {Journal of the American Chemical Society},
	number = {12},
	pages = {2680--2684},
	title = {The structure and entropy of ice and of other crystals with some randomness of atomic arrangement},
	volume = {57},
	year = {1935}}

@article{tsilevich2006quantum,
	author = {Tsilevich, N.},
	journal = {Functional Analysis and Its Applications},
	note = {arXiv:math-ph/0510073},
	number = {3},
	pages = {207--217},
	publisher = {Springer},
	title = {{Quantum inverse scattering method for the q-boson model and symmetric functions}},
	volume = {40},
	year = {2006}}

@article{boutillier2015dimers,
	author = {Boutillier, C. and Bouttier, J. and Chapuy, G. and Corteel, S. and Ramassamy, S.},
	date-added = {2017-11-03 14:46:13 +0000},
	date-modified = {2017-11-03 14:46:49 +0000},
	journal = {Annales de l'Institut Henri Poincar{\'e} D},
	note = {arXiv:1504.05176 [math-ph]},
	number = {4},
	pages = {479--539},
	title = {Dimers on rail yard graphs},
	volume = {4},
	year = {2017}}

@article{BorodinPetrov2016inhom,
	author = {Borodin, A. and Petrov, L.},
	date-added = {2016-01-17 15:23:09 +0000},
	date-modified = {2018-04-04 23:01:25 +0000},
	journal = {Selecta Math.},
	note = {arXiv:1601.05770 [math.PR]},
	number = {2},
	pages = {751--874},
	title = {Higher spin six vertex model and symmetric rational functions},
	volume = {24},
	year = {2018}}

@article{korff2013cylindric,
	author = {Korff, C.},
	date-added = {2015-11-03 12:19:32 +0000},
	date-modified = {2015-11-03 12:25:48 +0000},
	journal = {Commun. Math. Phys.},
	note = {arXiv:1110.6356 [math-ph]},
	number = {1},
	pages = {173--246},
	publisher = {Springer},
	title = {{Cylindric versions of specialised Macdonald functions and a deformed Verlinde algebra}},
	volume = {318},
	year = {2013}}

@book{QISM_book,
	author = {Korepin, V. and Bogoliubov, N. and Izergin, A.},
	date-added = {2015-09-03 19:05:06 +0000},
	date-modified = {2020-06-19 21:51:05 -0400},
	doi = {10.1017/CBO9780511628832},
	isbn = {0-521-37320-4; 0-521-58646-1},
	mrclass = {81U40 (81-02 81T40 82-02 82B10)},
	mrreviewer = {Makoto Idzumi},
	pages = {xx+555},
	publisher = {Cambridge University Press, Cambridge},
	title = {Quantum inverse scattering method and correlation functions},
	year = {1993},
	Bdsk-Url-1 = {http://www.ams.org/mathscinet-getitem?mr=1245942},
	Bdsk-Url-2 = {http://dx.doi.org/10.1017/CBO9780511628832}}

@article{FelderVarchenko1996,
	author = {Felder, G. and Varchenko, A.},
	date-added = {2015-09-03 19:03:16 +0000},
	date-modified = {2015-09-03 19:10:34 +0000},
	doi = {10.1016/S0550-3213(96)00461-0},
	fjournal = {Nuclear Physics. B},
	issn = {0550-3213},
	journal = {Nuclear Phys. B},
	mrclass = {82B23 (17B37 81R50)},
	note = {arXiv:q-alg/9605024},
	number = {1-2},
	pages = {485--503},
	title = {Algebraic {B}ethe ansatz for the elliptic quantum group {$E_{\tau,\eta}({\rm sl}_2)$}},
	url = {http://dx.doi.org/10.1016/S0550-3213(96)00461-0},
	volume = {480},
	year = {1996},
	Bdsk-Url-1 = {http://www.ams.org/mathscinet-getitem?mr=1421711},
	Bdsk-Url-2 = {http://dx.doi.org/10.1016/S0550-3213(96)00461-0}}

@incollection{reshetikhin2010lectures,
	author = {Reshetikhin, N.},
	booktitle = {{Exact Methods in Low-dimensional Statistical Physics and Quantum Computing}},
	date-added = {2015-08-29 12:01:42 +0000},
	date-modified = {2015-08-29 12:05:07 +0000},
	note = {arXiv:1010.5031 [math-ph]},
	pages = {197--266},
	publisher = {Oxford Univ. Press},
	title = {Lectures on the integrability of the 6-vertex model},
	year = {2010}}

@article{Johansson2005arctic,
	author = {Johansson, K.},
	coden = {APBYAE},
	date-added = {2014-10-14 10:51:30 +0000},
	date-modified = {2016-08-08 00:48:27 +0000},
	fjournal = {The Annals of Probability},
	journal = {Ann. Probab.},
	mrclass = {60K35 (15A52 33C90 52C20 82B20)},
	mrreviewer = {Thomas Polaski},
	note = {arXiv:math/0306216 [math.PR]},
	number = {1},
	pages = {1--30},
	title = {The arctic circle boundary and the {A}iry process},
	volume = {33},
	year = {2005},
	Bdsk-Url-1 = {http://www.ams.org/mathscinet-getitem?mr=2118857}}

@book{baxter2007exactly,
	author = {Baxter, R.},
	date-added = {2014-10-08 14:30:44 +0000},
	date-modified = {2014-10-08 14:30:51 +0000},
	publisher = {Courier Dover Publications},
	title = {{Exactly solved models in statistical mechanics}},
	year = {2007}}

@article{BereleRegev,
	author = {Berele, A. and Regev, A.},
	date-added = {2014-01-26 22:07:04 +0000},
	date-modified = {2014-01-26 22:08:17 +0000},
	journal = {Adv. Math.},
	number = {2},
	pages = {118-175},
	title = {{Hook Young diagrams with applications to combinatorics and representations of Lie superalgebras}},
	volume = {64},
	year = {1987}}

@book{AndersonGuionnetZeitouniBook,
	author = {Anderson, G.W. and Guionnet, A. and Zeitouni, O.},
	date-added = {2013-10-26 00:37:52 +0000},
	date-modified = {2013-10-26 00:38:44 +0000},
	publisher = {Cambridge University Press},
	title = {{An introduction to random matrices}},
	year = {2010}}

@incollection{Borodin2009,
	abstract = {We present a list of algebraic, combinatorial, and analytic mechanisms
	that give rise to determinantal point processes.},
	author = {Borodin, A.},
	booktitle = {Oxford Handbook of Random Matrix Theory},
	comments = {This is a contribution to the Oxford Handbook of Random Matrix Theory},
	date-modified = {2016-10-23 14:28:45 +0000},
	editor = {Akemann, G. and Baik, J. and Di Francesco, P.},
	note = {arXiv:0911.1153 [math.PR]},
	publisher = {Oxford University Press},
	title = {Determinantal point processes},
	year = {2011}}

@article{borodin2007periodic,
	author = {Borodin, A.},
	date-modified = {2014-10-25 19:09:09 +0000},
	journal = {Duke J. Math.},
	note = {arXiv:math/0601019 [math.CO]},
	number = {3},
	pages = {391--468},
	title = {{Periodic Schur process and cylindric partitions}},
	volume = {140},
	year = {2007}}

@article{BorodinCorwin2011Macdonald,
	author = {Borodin, A. and Corwin, I.},
	date-added = {2012-09-17 02:00:24 +0000},
	date-modified = {2018-02-28 19:38:39 +0000},
	journal = {Probab. Theory Relat. Fields},
	note = {arXiv:1111.4408 [math.PR]},
	pages = {225-400},
	title = {Macdonald processes},
	volume = {158},
	year = {2014},
	Bdsk-Url-1 = {http://arxiv.org/abs/1111.4408}}

@article{BorFerr2008DF,
	abstract = {We construct a family of stochastic growth models in 2+1 dimensions,
	that belong to the anisotropic KPZ class. Appropriate projections
	of these models yield 1+1 dimensional growth models in the KPZ class
	and random tiling models. We show that correlation functions associated
	to our models have determinantal structure, and we study large time
	asymptotics for one of the models. The main asymptotic results are:
	(1) The growing surface has a limit shape that consists of facets
	interpolated by a curved piece. (2) The one-point fluctuations of
	the height function in the curved part are asymptotically normal
	with variance of order ln(t) for time t>>1. (3) There is a map of
	the (2+1)-dimensional space-time to the upper half-plane H such that
	on space-like submanifolds the multi-point fluctuations of the height
	function are asymptotically equal to those of the pullback of the
	Gaussian free (massless) field on H.},
	author = {Borodin, A. and Ferrari, P.},
	date-added = {2013-05-10 12:08:22 +0000},
	date-modified = {2014-10-25 19:13:06 +0000},
	journal = {Commun. Math. Phys.},
	note = {arXiv:0804.3035 [math-ph]},
	pages = {603-684},
	title = {{Anisotropic growth of random surfaces in 2+1 dimensions}},
	volume = {325},
	year = {2014}}

@article{Borodin2000b,
	abstract = {We consider the asymptotics of the Plancherel measures on partitions
	of $n$ as $n$ goes to infinity. We prove that the local structure
	of a Plancherel typical partition (which we identify with a Young
	diagram) in the middle of the limit shape converges to a determinantal
	point process with the discrete sine kernel. On the edges of the
	limit shape, we prove that the joint distribution of suitably scaled
	1st, 2nd, and so on rows of a Plancherel typical diagram converges
	to the corresponding distribution for eigenvalues of random Hermitian
	matrices (given by the Airy kernel). This proves a conjecture due
	to Baik, Deift, and Johansson by methods different from the Riemann-Hilbert
	techniques used in their original papers math.CO/9810105 and math.CO/9901118
	and from the combinatorial approach proposed by Okounkov in math.CO/9903176.
	Our approach is based on an exact determinantal formula for the correlation
	functions of the poissonized Plancherel measures involving a new
	kernel on the 1-dimensional lattice. This kernel is expressed in
	terms of Bessel functions and we obtain it as a degeneration of the
	hypergeometric kernel from the paper math.RT/9904010 by Borodin and
	Olshanski. Our asymptotic analysis relies on the classical asymptotic
	formulas for the Bessel functions and depoissonization techniques.},
	author = {Borodin, A. and Okounkov, A. and Olshanski, G.},
	comments = {43 pages, AMS LaTeX, 1 figure, added a section about a commuting difference operator and other material},
	date-modified = {2018-02-28 19:44:20 +0000},
	journal = {Jour. AMS},
	note = {arXiv:math/9905032 [math.CO]},
	number = {3},
	oai2identifier = {math/9905032},
	owner = {leo},
	pages = {481--515},
	timestamp = {2009.11.02},
	title = {{Asymptotics of Plancherel measures for symmetric groups}},
	volume = {13},
	year = {2000}}

@article{borodin2005eynard,
	author = {Borodin, A. and Rains, E.M.},
	date-modified = {2018-02-28 19:41:34 +0000},
	journal = {J. Stat. Phys},
	note = {arXiv:math-ph/0409059},
	number = {3},
	pages = {291--317},
	publisher = {Springer},
	title = {{Eynard--Mehta theorem, Schur process, and their Pfaffian analogs}},
	volume = {121},
	year = {2005}}

@article{borodin2010gibbs,
	author = {Borodin, A. and Shlosman, S.},
	date-modified = {2016-08-08 00:48:10 +0000},
	journal = {Commun. Math. Phys.},
	note = {arXiv:0804.0564 [math-ph]},
	number = {1},
	pages = {145--170},
	publisher = {Springer},
	title = {{Gibbs ensembles of nonintersecting paths}},
	volume = {293},
	year = {2010}}

@article{CohnKenyonPropp2000,
	author = {Cohn, H. and Kenyon, R. and Propp, J.},
	date-added = {2012-02-02 21:17:08 +0000},
	date-modified = {2018-02-28 19:42:56 +0000},
	journal = {Jour. AMS},
	note = {arXiv:math/0008220 [math.CO]},
	number = {2},
	pages = {297-346},
	title = {A variational principle for domino tilings},
	volume = {14},
	year = {2001}}

@article{dyson1962brownian,
	author = {Dyson, F.J.},
	date-modified = {2011-09-10 14:58:46 +0000},
	journal = {Journal of Mathematical Physics},
	number = {6},
	pages = {1191--1198},
	title = {{A Brownian motion model for the eigenvalues of a random matrix}},
	volume = {3},
	year = {1962}}

@article{eynard1998matrices,
	author = {Eynard, B. and Mehta, M.L.},
	date-modified = {2018-06-13 15:33:18 +0000},
	journal = {J. Phys. A},
	pages = {4449--4456},
	publisher = {IOP Publishing},
	title = {{Matrices coupled in a chain: I. Eigenvalue correlations}},
	volume = {31},
	year = {1998}}

@article{Faddeev_Lectures,
	author = {Faddeev, L.D.},
	note = {Les-Houches lectures (1996), arXiv:hep-th/9605187},
	title = {{How algebraic Bethe ansatz works for integrable model}},
	year = {1996}}

@article{peres2006determinantal,
	author = {Hough, J.B. and Krishnapur, M. and Peres, Y. and Vir{\'a}g, B.},
	journal = {Probability Surveys},
	note = {arXiv:math/0503110 [math.PR]},
	pages = {206--229},
	title = {{Determinantal processes and independence}},
	volume = {3},
	year = {2006}}

@article{Johansson2005lectures,
	author = {Johansson, K.},
	date-added = {2013-04-09 02:21:33 +0000},
	date-modified = {2013-04-09 02:21:55 +0000},
	note = {arXiv:math-ph/0510038},
	title = {Random matrices and determinantal processes},
	year = {2005}}

@article{johansson2002non,
	author = {Johansson, K.},
	date-modified = {2018-02-28 19:39:25 +0000},
	journal = {Probab. Theory Relat. Fields},
	note = {arXiv:math/0011250 [math.PR]},
	number = {2},
	pages = {225--280},
	publisher = {Springer},
	title = {{Non-intersecting paths, random tilings and random matrices}},
	volume = {123},
	year = {2002}}

@article{Mkrtchyan2014Periodic,
	author = {Mkrtchyan, S.},
	journal = {Letters in Mathematical Physics},
	note = {arXiv:1309.4825 [math.PR]},
	number = {9},
	pages = {1053-1078},
	title = {Plane partitions with 2-periodic weights},
	volume = {104},
	year = {2014}}

@book{Kac1990InfiniteDim,
	address = {Cambridge},
	author = {Kac, Victor G.},
	date-added = {2011-11-07 17:23:15 +0000},
	date-modified = {2011-11-07 17:23:27 +0000},
	doi = {10.1017/CBO9780511626234},
	edition = {Third edition},
	isbn = {0-521-37215-1; 0-521-46693-8},
	mrclass = {17B65 (17B67 17B68 58F07)},
	mrnumber = {1104219 (92k:17038)},
	pages = {xxii+400},
	publisher = {Cambridge University Press},
	title = {Infinite-dimensional {L}ie algebras},
	url = {http://dx.doi.org/10.1017/CBO9780511626234},
	year = {1990},
	Bdsk-Url-1 = {http://www.ams.org/mathscinet-getitem?mr=1104219},
	Bdsk-Url-2 = {http://dx.doi.org/10.1017/CBO9780511626234}}

@article{Kenyon2007Lecture,
	author = {Kenyon, R.},
	date-added = {2012-02-02 21:25:23 +0000},
	date-modified = {2014-10-11 15:51:07 +0000},
	note = {arXiv:0910.3129 [math.PR]},
	title = {Lectures on dimers},
	year = {2009}}

@article{Kenyon2001GFF,
	author = {Kenyon, R.},
	date-added = {2012-02-03 02:49:54 +0000},
	date-modified = {2018-02-28 19:46:40 +0000},
	journal = {Ann. Probab.},
	note = {arXiv:math-ph/0002027},
	number = {3},
	pages = {1128--1137},
	title = {{Dominos and the Gaussian Free Field}},
	volume = {29},
	year = {2001}}

@article{OkounkovKenyon2007Limit,
	author = {Kenyon, R. and Okounkov, A.},
	date-added = {2012-01-12 21:58:35 +0000},
	date-modified = {2012-02-02 20:53:54 +0000},
	journal = {Acta Math.},
	note = {arXiv:math-ph/0507007},
	number = {2},
	pages = {263--302},
	title = {Limit shapes and the complex {B}urgers equation},
	volume = {199},
	year = {2007},
	Bdsk-Url-1 = {http://www.ams.org/mathscinet-getitem?mr=2358053}}

@article{KOS2006,
	author = {Kenyon, R. and Okounkov, A. and Sheffield, S.},
	date-added = {2012-02-03 00:30:59 +0000},
	date-modified = {2012-02-03 00:31:37 +0000},
	journal = {Ann. Math.},
	note = {arXiv:math-ph/0311005},
	pages = {1019--1056},
	title = {Dimers and amoebae},
	volume = {163},
	year = {2006}}

@article{Konig2005,
	author = {K{\"o}nig, W.},
	date-modified = {2011-09-18 06:27:53 +0000},
	journal = {Probab. Surv.},
	note = {arXiv:math/0403090 [math.PR]},
	pages = {385--447},
	title = {{Orthogonal polynomial ensembles in probability theory}},
	volume = {2},
	year = {2005}}

@article{konig2002non,
	author = {K{\"o}nig, W. and O'Connell, N. and Roch, S.},
	date-modified = {2018-02-28 19:50:15 +0000},
	journal = {Electron. J. Probab.},
	number = {5},
	pages = {1-24},
	title = {{Non-colliding random walks, tandem queues, and discrete orthogonal polynomial ensembles}},
	volume = {7},
	year = {2002}}

@book{Macdonald1995,
	author = {Macdonald, I.G.},
	edition = {2nd},
	owner = {leo},
	publisher = {Oxford University Press},
	timestamp = {2009.03.14},
	title = {Symmetric functions and {H}all polynomials},
	year = {1995}}

@article{nagao1998multilevel,
	author = {Nagao, T. and Forrester, P.J.},
	journal = {Physics Letters A},
	number = {1-2},
	pages = {42--46},
	publisher = {Elsevier},
	title = {{Multilevel dynamical correlation functions for Dyson's Brownian motion model of random matrices}},
	volume = {247},
	year = {1998}}

@incollection{Okounkov2002,
	author = {Okounkov, A.},
	booktitle = {Symmetric functions 2001: Surveys of Developments and Perspectives},
	editor = {S. Fomin},
	note = {arXiv:math/0309074 [math.CO]},
	owner = {leo},
	publisher = {Kluwer Academic Publishers},
	timestamp = {2010.08.25},
	title = {Symmetric functions and random partitions},
	year = {2002}}

@article{okounkov2001infinite,
	author = {Okounkov, A.},
	journal = {Selecta Math.},
	note = {arXiv:math/9907127 [math.RT]},
	number = {1},
	pages = {57--81},
	publisher = {Springer},
	title = {{Infinite wedge and random partitions}},
	volume = {7},
	year = {2001}}

@article{okounkov2003correlation,
	author = {Okounkov, A. and Reshetikhin, N.},
	date-modified = {2018-02-28 19:42:45 +0000},
	journal = {Jour. AMS},
	note = {arXiv:math/0107056 [math.CO]},
	number = {3},
	pages = {581--603},
	publisher = {American Mathematical Society},
	title = {{Correlation function of Schur process with application to local geometry of a random 3-dimensional Young diagram}},
	volume = {16},
	year = {2003}}

@article{Petrov2012GFF,
	author = {Petrov, L.},
	date-added = {2012-06-29 15:20:17 +0000},
	date-modified = {2016-05-04 16:39:09 +0000},
	eprint = {1206.5123},
	journal = {{Ann. Probab.}},
	note = {arXiv:1206.5123 [math.PR].},
	number = {1},
	pages = {1-43},
	title = {{Asymptotics of Uniformly Random Lozenge Tilings of Polygons. Gaussian Free Field}},
	volume = {43},
	year = {2015}}

@article{Sheffield2008,
	author = {Sheffield, S.},
	date-added = {2012-02-03 00:22:32 +0000},
	date-modified = {2012-02-03 00:52:09 +0000},
	journal = {Ast\'erisque},
	note = {arXiv:math/0304049 [math.PR]},
	title = {Random surfaces},
	volume = {304},
	year = {2005}}

@article{Soshnikov2000,
	abstract = {The paper contains an exposition of recent as well as old enough results
	on determinantal random point fields. We start with some general
	theorems including the proofs of the necessary and sufficient condition
	for the existence of the determinantal random point field with Hermitian
	kernel and a criterion for the weak convergence of its distribution.
	In the second section we proceed with the examples of the determinantal
	random point fields from Quantum Mechanics, Statistical Mechanics,
	Random Matrix Theory, Probability Theory, Representation Theory and
	Ergodic Theory. In connection with the Theory of Renewal Processes
	we characterize all determinantal random point fields in R^1 and
	Z^1 with independent identically distributed spacings. In the third
	section we study the translation invariant determinantal random point
	fields and prove the mixing property of any multiplicity and the
	absolute continuity of the spectra. In the fourth (and the last)
	section we discuss the proofs of the Central Limit Theorem for the
	number of particles in the growing box and the Functional Central
	Limit Theorem for the empirical distribution function of spacings.},
	author = {Soshnikov, A.},
	date-modified = {2018-02-06 02:15:04 +0000},
	journal = {Russian Mathematical Surveys},
	note = {arXiv:math/0002099 [math.PR]},
	number = {5},
	pages = {923--975},
	title = {Determinantal random point fields},
	volume = {55},
	year = {2000}}

@article{ZinnJustin20096Vertex,
	author = {Zinn-Justin, P.},
	date-added = {2011-08-03 07:27:50 +0000},
	date-modified = {2011-08-03 07:31:08 +0000},
	note = {arXiv:0901.0665 [math-ph]},
	title = {{Six-Vertex, Loop and Tiling models: Integrability and Combinatorics}},
	year = {2009}}

\end{document}